\documentclass[a4paper, 12pt, oneside, notitlepage]{amsart}
\usepackage[margin=3cm]{geometry}
\usepackage{amsmath,amssymb,amsthm,graphicx,mathrsfs,bbm,url,stmaryrd,amscd,cancel}
\usepackage[normalem]{ulem}
\usepackage{marginnote}
\usepackage{amsthm}
\usepackage{wrapfig}
\usepackage{enumitem}
\usepackage{mathtools}
\usepackage[utf8]{inputenc}
 \usepackage[T1]{fontenc}
\usepackage[usenames,dvipsnames]{color}
\usepackage[colorlinks=true,linkcolor=Red,citecolor=Green]{hyperref}
\usepackage[super]{nth}
\usepackage[open, openlevel=2, depth=3, atend]{bookmark}
\hypersetup{pdfstartview=XYZ}
\usepackage[font=footnotesize]{caption}
\usepackage{a4wide}

\theoremstyle{plain}
\newtheorem{theorem}{Theorem}[section]
\newtheorem*{theorem*}{Theorem}

\newtheorem{lemma}[theorem]{Lemma}
\newtheorem{proposition}[theorem]{Proposition}
\newtheorem{corollary}[theorem]{Corollary}

\theoremstyle{definition}
\newtheorem{definition}[theorem]{Definition}
\newtheorem{example}[theorem]{Example}

\theoremstyle{remark}
\newtheorem{remark}[theorem]{Remark}

\numberwithin{equation}{section}

\newcommand{\C}{\mathbb{C}}
\newcommand{\R}{\mathbb{R}}

\newcommand{\CC}{\mathbb{C}}
\newcommand{\RR}{\mathbb{R}}
\newcommand{\HH}{\mathbb{H}}

\newcommand{\Z}{\mathbb{Z}}

\newcommand{\V}{\mathbb{V}}

\newcommand{\s}{\mathbf{s}}
\newcommand{\z}{\mathbf{z}}
\newcommand{\e}{\mathbf{e}}
\newcommand{\X}{\mathbf{X}}

\newcommand{\mc}{\mathcal}

\newcommand{\E}{\mathcal{E}}

\newcommand{\M}{\mathcal{M}}
\newcommand{\N}{\mathcal{N}}

\newcommand{\fo}{\mathfrak{o}}
\newcommand{\fh}{\mathfrak{h}}

\newcommand{\fg}{\mathfrak{g}}
\newcommand{\fz}{\mathfrak{z}}
\newcommand{\fk}{\mathfrak{k}}

\newcommand{\fn}{\mathfrak{n}}
\newcommand{\fs}{\mathfrak{s}}
\newcommand{\fp}{\mathfrak{p}}

\newcommand{\fm}{\mathfrak{m}}

\newcommand{\fa}{\mathfrak{a}}
\newcommand{\fap}{{\fa_{+}}}
\newcommand{\fapp}{{\fa_{++}}}

\newcommand{\roots}{\Delta}

\newcommand{\odms}{\Omega_{\mathrm{DMS}}^\scrC}

\newcommand{\scrC}{\mathscr{C}}
\newcommand{\scrL}{\mathscr{L}}

\newcommand{\scrK}{\mathscr{K}}
\newcommand{\scrV}{\mathscr{V}}
\newcommand{\scrU}{\mathscr{U}}
\newcommand{\scrJ}{\mathscr{J}}

\newcommand{\eps}{\varepsilon}

\newcommand{\ms}{\mathscr}

\newcommand{\dd}{\mathrm{d}}

\DeclareMathOperator{\vol}{vol}

\DeclareMathOperator{\Tr}{Tr}

\DeclareMathOperator{\WF}{WF}

\DeclareMathOperator{\ran}{ran}

\DeclareMathOperator{\supp}{supp}

\DeclareMathOperator{\comp}{comp}

\let\Re\undefined
\DeclareMathOperator{\Re}{Re}
\let\Im\undefined
\DeclareMathOperator{\Im}{Im}

\newcommand{\be}{\begin{equation}}
\newcommand{\ee}{\end{equation}}

\newcommand{\ProofDel}[1]{{\color{red}\sout{#1}}}

\newcommand{\ProofMathDel}[1]{{\color{red}\cancel{#1}}}
\newcommand{\ProofIssue}[2]{%
  \textsuperscript{{\color{Purple}\tiny$\blacktriangle$}}%
  \marginnote{\raggedright\tiny\color{Purple}\textbf{#1}: #2}%
}
\newcommand{\ProofIssueV}[3]{%
  \textsuperscript{{\color{Purple}\tiny$\blacktriangle$}}%
  \marginnote{\raggedright\tiny\color{Purple}\textbf{#1}: #2}[#3]%
}
\title[The spectrum of Anosov representations]{The spectrum of Anosov representations}

\author[Guedes Bonthonneau]{Yannick Guedes Bonthonneau}
\address{LAGA, CNRS, 99 avenue Jean Baptiste Clément, Villetaneuse.}
\email{bonthonneau@math.univ-paris13.fr}

\author[Lefeuvre]{Thibault Lefeuvre}
\address{Université Paris-Saclay, CNRS, Laboratoire de mathématiques d’Orsay, 91405, Orsay, France.}
\email{thibault.lefeuvre1@universite-paris-saclay.fr}

\author[Weich]{Tobias Weich}
\address{Institut für Mathematik, Universit\"at Paderborn}
\email{weich@math.upb.de}

\begin{document}

\begin{abstract}
Given a $P$-Anosov representation into a noncompact semisimple real algebraic group $G$, where $P < G$ is a parabolic subgroup, we construct a natural resonance spectrum for the classical dynamics associated with the representation. This spectrum is a complex analytic hypersurface in $(\fa_P^*)_{\C}$, the complexified dual of the Lie algebra of the split component of the associated Levi group $L < P$. We reinterpret several objects from the theory of Anosov representations within this spectral framework and investigate, in higher rank, questions that are classically related to Pollicott-Ruelle theory in the rank-one setting. In particular, the “leading resonance”—which is now a hypersurface—is identified with the critical hypersurface of the representation and, as in the rank-one case, the resonant states lead to continuous families of invariant measures. We prove that the multivariate zeta functions and Poincaré series associated with Anosov representations admit a meromorphic extension to $(\fa_P^*)_{\C}$. We also establish an asymptotic expansion in inverse powers of time for the correlation function of the diagonal flow under a Diophantine condition on the representation. Most of our results concerning Anosov representations are obtained as a byproduct of a general theory of \emph{Axiom A actions of type $(k,1)$}, where $k+1:=\dim \fa_P$, that we introduce in the article. 
\end{abstract}

\maketitle
\thispagestyle{empty}

\section{Introduction}

\subsection{Motivation} 

Let $S$ be a closed oriented surface of genus $\geq 2$ and let $\Gamma:=\pi_1(S)$. The Teichmüller space $\mc{T}$ is the space of hyperbolic metrics on $S$, up to isometries that are isotopic to the identity. It admits a natural representation-theoretic description as follows. The holonomy map identifies $\mathcal T$ with one of the two Fuchsian components of the character variety
\[
 \operatorname{Hom}(\Gamma,\mathrm{PSL}_2(\R))/\mathrm{PSL}_2(\R),
\]
that is, with a connected component consisting entirely of discrete and faithful representations; see \cite{Goldman-88}. This point of view led Hitchin to consider, for a split real semisimple Lie group $G$, the character variety $\mathrm{Hom}(\Gamma,G)/G$ of $\Gamma$ in $G$, where $G$ acts on $\mathrm{Hom}(\Gamma,G)$ by conjugation, and to single out distinguished components now known as \emph{Hitchin components}; see \cite{Hitchin-92}. This is generally regarded as the first example of a \emph{higher Teichmüller} space. It was subsequently incorporated into the broader theory of \emph{Anosov representations}, initiated by Labourie \cite{Labourie-06}: for a word-hyperbolic group $\Gamma$ and a proper parabolic subgroup $P<G$, the \emph{$P$-Anosov representations} (see Definition \ref{definition:theta-anosov}) form an open subset of the character variety of $\Gamma$ in $G$; see, for instance, \cite{Guichard-Wienhard-12,Gueritaud-Guichard-Kassel-Wienhard-17,Kapovich-Leeb-Porti-17}.

An important feature of these deformation spaces is that an Anosov representation $\rho:\Gamma\to G$ naturally gives rise to much more than a point in a character variety. Indeed, with each representation, one can typically associate geometric structures, dynamical systems, and spectral invariants, and much of the richness of the subject stems from the interplay between these different perspectives. The main purpose of the present paper is to construct a natural spectral invariant for general Anosov representations. To motivate our construction, we first review the rank-one picture and explain what has been missing so far in higher rank.

\subsubsection{Compact rank one setting}

By the uniformization theorem, a point $\rho\in\mathcal T$ determines a marked closed hyperbolic surface
\[
 X_\rho:=\rho(\Gamma)\backslash\mathbb H^2
 \simeq \rho(\Gamma)\backslash\mathrm{PSL}_2(\R)/\mathrm{PSO}(2).
\]
The positive Laplace--Beltrami operator on $X_\rho$ has purely discrete $L^2$-spectrum in $[0,\infty)$. It is a fundamental spectral-geometric invariant of the hyperbolic structure and is regarded as the \emph{quantum} spectrum of $X_\rho$ in mathematical physics. Its study remains a very active subject; see, for instance, the recent developments around the $1/4$-conjecture
 \cite{Wu-Xue-22,Lipnowski-Wright-24,Hide-Magee-23,Anantharaman-Monk-25,Hide-Macera-Thomas-25}.
 
The surface $X_\rho$ or, rather, its unit tangent bundle $SX_\rho$, also carries the geodesic flow, a paradigmatic model of chaotic dynamics. Under the identification $SX_\rho\simeq\rho(\Gamma)\backslash\mathrm{PSL}_2(\R)$, this flow is the right action of the diagonal subgroup
\[
A=\left\{\begin{pmatrix}e^{t/2}&0\\0&e^{-t/2}\end{pmatrix}:t\in\R\right\}.
\]
The corresponding \emph{classical} spectral invariant is called the \emph{Pollicott--Ruelle resonance spectrum} of the generator of the flow. It is a discrete subset of $\C$ and was originally studied to understand invariant measures and mixing properties of the geodesic flow; see \cite{Pollicott-85, Ruelle-86, Ruelle-87, Liverani-04,Faure-Sjostrand-11,Dyatlov-Zworski-16} among other references. For closed hyperbolic manifolds in all dimensions, there is an identification of the first band of these resonances, up to explicit shifts, with spectral parameters of the Laplace--Beltrami operator \cite{Dyatlov-Faure-Guillarmou-15,Guillarmou-Hilgert-Weich-18}. This is known as a \emph{quantum--classical correspondence}.

\subsubsection{Noncompact rank one setting}
The spectral picture becomes more delicate in the noncompact setting. In real rank one, (images of) Anosov representations are precisely the convex co-compact subgroups. If $X=\rho(\Gamma)\backslash\mathbb H^2$ is convex co-compact but noncompact, then the $L^2$-spectrum of its Laplacian is no longer discrete: it consists of the absolutely continuous spectrum $[1/4,\infty)$ together with at most finitely many eigenvalues in $(0,1/4)$ which are very often absent altogether, see \cite{Borthwick-07}. A discrete \emph{quantum} spectral invariant is recovered by meromorphically continuing the cut-off resolvent of the Laplacian. Its poles are the \emph{Laplace} or \emph{quantum resonances} \cite{Mazzeo-Melrose-87,Guillope-Zworski-97,Borthwick-07}. On the classical side, the geodesic flow has a discrete Pollicott--Ruelle resonance spectrum associated with its trapped set \cite{Dyatlov-Guillarmou-16}. The quantum--classical correspondence continues to relate these two resonance spectra \cite{Guillarmou-Hilgert-Weich-18}.

\subsubsection{Compact higher rank setting} Let us describe the natural analogues of the Laplace--Beltrami operator and the geodesic flow in higher rank. The Laplacian is replaced by the commutative algebra $\mathbb D(G/K)$ of invariant differential operators on the Riemannian symmetric space $G/K$ (where $K$ is a maximal compact subgroup), while the geodesic flow is replaced by the Weyl chamber action of a higher-dimensional split torus $A \simeq \R^{\mathrm{rank}(G)}$ on $G/M$, where $M$ is the centralizer of $A$ in $K$.

If $\Gamma<G$ is a uniform lattice, $\Gamma\backslash G/K$ is compact and the joint $L^2$-spectrum of $\mathbb D(G/K)$ is discrete. As $\mathbb{D}(G/K)$ is generated by $\dim(\fa)$ generators, the spectrum is a discrete subset of $\CC^{\dim(\fa)}$ and, via the Harish--Chandra isomorphism, it can be intrinsically expressed as a discrete subset of $\fa_\CC^*$.
It is a well-studied spectral invariant: the higher-dimensional spectral problem for $\mathbb D(G/K)$ was already proposed by Selberg \cite{Selberg-56}; see also the seminal contributions of Gel'fand \cite{Gelfand-62}, Gangolli \cite{Gangolli-68}, and Duistermaat--Kolk--Varadarajan \cite{Duistermaat-Kolk-Varadarajan-79}.
On the classical side, a discrete joint resonance spectrum in $\fa_\CC^*$ for the Weyl chamber action on $\Gamma\backslash G/M$ was constructed more recently in \cite{Guedes-Bonthonneau-Guillarmou-Hilgert-Weich-20,Guedes-Bonthonneau-Guillarmou-Weich-24}. Again, a quantum--classical correspondence showed that, in the cocompact higher-rank case, classical and quantum spectra are still two sides of the same coin; see \cite{Hilgert-Weich-Wolf-23}.

\subsubsection{Noncompact higher rank setting} Once one replaces the uniform lattice $\Gamma$ by the image of an Anosov representation $\rho : \Gamma \to G$, a natural ``good'' joint spectrum is no longer available. One can, of course, still consider the joint $L^2$-spectrum of $\mathbb D(G/K)$ on $\rho(\Gamma)\backslash G/K$. However, as in the case of noncompact convex co-compact surfaces, this does not appear to be a satisfactory spectral invariant. Indeed, the results of \cite{Weich-Wolf-23, Lutsko-Weich-Wolf-26} show that in most cases the spectrum is the entire imaginary plane without embedded eigenvalues and carries practically no information on $\rho(\Gamma)$. To the best of our knowledge, a theory of joint \emph{quantum} resonances for $\mathbb D(G/K)$ on noncompact higher-rank locally symmetric spaces is entirely absent. Likewise, there is currently no spectral theory for higher-rank Weyl chamber flows on such spaces.

\subsubsection{Content of the present paper}

The present paper intends to develop the spectral theory of higher-rank Weyl chamber flows on noncompact higher-rank spaces. Given a $P$-Anosov subgroup, we use a smooth domain of discontinuity around its trapped set to construct a joint resonance spectrum for the $A_{P}$-action. The approach of using this domain of discontinuity for the spectral theory is inspired by the recent work of Delarue-Monclair-Sanders \cite{Delarue-Monclair-Sanders-24,Delarue-Monclair-Sanders-25}. In contrast to the cocompact case, unless $P$ is a maximal proper parabolic subgroup, this spectrum is no longer discrete but a complex subvariety of codimension one in $\fa^*_\C$ (or in $(\fa_P^*)_\C$ if $P$ is not minimally parabolic).

We present many strong indications that this is a natural spectral invariant for Anosov subgroups:
\begin{itemize}
  \item Its leading real resonance hypersurface recovers the critical hypersurface of the representation which ubiquitously appears in the study of Anosov representations, see Theorem~\ref{theorem:intro-leading};
  \item For each point on the leading real resonance hypersurface, the corresponding resonant states lead to natural measures. Working on different types of tensor bundles, we recover Patterson--Sullivan, Burger--Roblin, and Bowen--Margulis--Sullivan measures that have been studied before in the context of homogeneous dynamics (cf. \cite{Quint-2002,Sambarino-14,ELO23,Lee-Oh-20}) and provide a unifying spectral framework for the analysis of these measures; see Theorems~\ref{theorem:leading} and~\ref{theorem:measures-max-entropy};
  \item In the rank one case, or when $P$ is maximally parabolic, it holds that $\dim(\fa_P)= 1$, thus a complex subvariety of codimension one is discrete. In this case, our resonance spectrum reduces to the usual Pollicott-Ruelle spectrum of the geodesic flow on the convex cocompact locally symmetric space introduced in \cite{Dyatlov-Guillarmou-16}, respectively to the Pollicott-Ruelle spectrum of the Axiom A flow constructed in \cite{Delarue-Monclair-Sanders-24,Delarue-Monclair-Sanders-25}.
\end{itemize}

Furthermore, as in the rank one case, the spectral framework allows us to prove new results that, \emph{a priori}, have nothing to do with spectral theory, but are of interest in their own right. In particular, we prove:
\begin{itemize}
\item A meromorphic continuation of multivariate zeta functions, see Theorem~\ref{theorem:intro-zeta-anosov};
\item An asymptotic expansion of the correlation function of the Weyl chamber action under a Diophantine condition on the representation, see Theorem~\ref{theorem:mixing-intro};
\item A meromorphic continuation of the Poincaré series associated with the Anosov representation, see Theorem~\ref{theorem:intro-poincare}.
\end{itemize}

\subsection{Higher-rank actions associated with Anosov subgroups}

We now describe the higher-rank action appearing in our results and introduce relevant notation. We refer to \S\ref{section:structure} for a more detailed exposition.

\subsubsection{Higher rank setting}

Throughout the article, $G$ denotes a noncompact connected semisimple real algebraic group.

We fix the identity component of a maximal split torus $A\cong\RR^{n}< G$, where $n$ is the rank of $G$. Let $P< G$ denote a proper parabolic subgroup containing $A$. The parabolic group $P$ does not need to be minimal; after conjugation, every parabolic may be assumed to contain $A$. We then choose a Langlands decomposition
\[
P=M_P A_P N_P,
\]
where $M_P$ is reductive, $A_P\cong \RR^{k+1}< A$ for some $k\geq 0$, and $N_P$ is unipotent. Let $\fa_P:=\operatorname{Lie}(A_P)$. As $A_P$ and $M_P$ commute, there is a natural homogeneous dynamical system via the right action of $A_P$ on $G/M_P$:
\[
A_P \times G/M_P \to G/M_P, \qquad (a,gM_P) \mapsto g a M_P.
\]
We will study a locally homogeneous version of this action. That is, we will quotient on the left by a discrete subgroup $\Gamma < G$.

In a higher-rank dynamical system, it is natural to single out an open cone of ``positive'' time directions. In the present setting, such a cone is given by a \emph{generalized Weyl chamber} $\scrC\subset \fa_P$ consisting of those elements $a \in \fa_P$ such that $\operatorname{ad}(a)|_{\fn_P}$ has only positive eigenvalues, where $\fn_P$ is the Lie algebra of $N_P$ (see \S\ref{sssec:weyl-chambers}). Since $\fa_P \subset \fa$, the cone $\scrC$ can also be viewed as a cone of $\fa$. Note that it is open in $\fa$ if and only if $\fa_P = \fa$; equivalently, $P$ is a minimal parabolic subgroup of $G$. Such open cones in $\fa$ are called \emph{open Weyl chambers} and will usually be denoted by $\fapp \subset \fa$. In what follows, we fix an open Weyl chamber $\fapp\subset\fa$ such that $\scrC\subset\overline{\fapp} =: \fap$. 

Conversely, parabolic subgroups of $G$ containing $A$ can be parametrized by cones $\scrC\subset \fa$ of this type (see \S\ref{sssec:levi}). We shall therefore use $\scrC$ to label the corresponding parabolic subgroup and its associated data, writing $P_\scrC=P, A_\scrC=A_P, M_\scrC=M_P, N_\scrC^+=N_P$.

\begin{example}[$\mathrm{PSL}(2,\R)$] The toy model in this setting is $G=\operatorname{PSL}(2,\RR)$, with $\fa = \{\mathrm{diag}(t,-t) ~:~ t\in\RR\}$. In this case, there are two proper parabolic subgroups containing $A$: the upper and lower triangular subgroups, corresponding to the cones  $\scrC_\pm = \{\mathrm{diag}(t,-t) : t\in\RR_\pm\}$. In both cases, $M_\scrC$ is trivial, and $A_{\pm\scrC} \cong A\cong \RR$ acts from the right on $\Gamma\backslash G$. It is exactly the geodesic flow on $\Gamma\backslash \HH^2$ and the choice of $\pm\scrC$ is simply the choice of positive time direction in $A$. 
\end{example}

\begin{example}[$\mathrm{SL}(3,\R)$] For $G = \mathrm{SL}(3,\R)$, we may take $\fa = \{\mathrm{diag}(x_1,x_2,x_3) ~:~ x_1+x_2+x_3=0\}$ with simple roots $\alpha_1 = x_1-x_2$ and $\alpha_2 = x_2-x_3$. We fix the open Weyl chamber $\fapp = \{x_1 > x_2 > x_3\} = \{\alpha_1 > 0, \alpha_2 > 0\}$, $\fap = \overline{\fapp}$. Relative to these choices, there are three natural choices of cones $\scrC \subset \fap$ leading to the standard proper parabolics containing $A$:
\begin{itemize}
\item The minimal parabolic (upper triangular matrices) corresponds to $\fa_{\scrC}=\fa$, $\scrC = \fapp$;
\item The maximal parabolic of type $(1,2)$, stabilizing the line $\R e_1$, corresponds to $\fa_{\scrC} = \ker \alpha_2 = \{\mathrm{diag}(2t,-t,t) ~:~ t \in \R\}$, and the positive cone is the ray $\scrC=\{\mathrm{diag}(2t,-t,t) ~:~ t > 0\}$. Its nilpotent radical corresponds to the roots $\alpha_1,\alpha_1+\alpha_2$;
\item The maximal parabolic of type $(2,1)$, stabilizing the plane $\R e_1 \oplus \R e_2$, corresponds to $\fa_{\scrC} = \ker \alpha_1 = \{\mathrm{diag}(t,t,-2t) ~:~ t \in \R\}$ with $\scrC = \{\mathrm{diag}(t,t,-2t) ~:~ t > 0\}$. Its nilpotent radical corresponds to $\alpha_2, \alpha_1 + \alpha_2$.
\end{itemize}
\end{example}

\begin{example}[Minimal parabolic subgroup] In higher rank, for a minimal parabolic $P_\scrC$, $M_\scrC$ is compact, and the right $A_\scrC$-action on $\Gamma\backslash G/M_\scrC$ is the \emph{Weyl chamber flow}. This flow has been widely studied: for uniform lattices, it is a standard example of a higher-rank Anosov action \cite{Katok-Spatzier-94}; for nonuniform lattices, it is studied in a number-theoretic context \cite{Einsiedler-Lindenstrauss-Michel-Venkatesh-11}.
\end{example}

\subsubsection{Anosov representations and subgroups}

We will study the case of Anosov subgroups that are associated with general proper parabolic subgroups $P_\scrC < G$.

Let $\mu_\fapp,\lambda_\fapp:G\to\overline{\fapp}$ be the associated Cartan and Jordan projections, respectively. 
If $\pi_\scrC:\fa\to\fa_\scrC$ denotes the orthogonal projection (with respect to the inner product coming from the restriction of the Killing form to $\fa$), we set $\mu_\scrC:=\pi_\scrC\circ\mu_\fapp$ and $\lambda_\scrC:=\pi_\scrC\circ\lambda_\fapp$, see \S\ref{section:structure} for details. Let $\roots\subset\fa^*$ be the roots of $(G,A)$. Then the parabolic subgroup $P_{\scrC}$ determines the subset $\roots_\scrC^+:=\{\alpha\in\roots:\fg_\alpha\subset\fn_\scrC^+\}$ and, conversely, this set of roots uniquely determines $P_\mathscr C$, which is the description of parabolic subgroups used in \cite[Section 2.2]{Gueritaud-Guichard-Kassel-Wienhard-17} for instance. The roots in $\Delta$ give rise to the following definition of Anosov representations based on Cartan projections:

\begin{definition}[$P_\scrC$-Anosov representation/subgroup]
\label{definition:theta-anosov}
Let $\Gamma$ be a finitely generated group, $|\bullet|$ be a word length on $\Gamma$ chosen with respect to some finite generating set, and $\rho : \Gamma \to G$ be a discrete and faithful representation. The representation $\rho$ is called \emph{$P_\scrC$-Anosov} if there exist positive constants $C_1,C_2 > 0$ such that for all $\gamma \in \Gamma$ and $\alpha \in \roots_\scrC^+$,
\[
\alpha\left( \mu_\fapp(\rho(\gamma))\right) \geq C_1|\gamma|-C_2.
\]
If $P_\scrC$ is a minimal parabolic, we say that the representation is \emph{Borel Anosov}\footnote{We use this slightly imprecise terminology because it is widely used in the literature, even though, when $G$ is not quasi-split, minimal parabolic subgroups are not Borel subgroups in the sense of being maximal connected solvable algebraic subgroups.}.
A \emph{$\scrC$-Anosov subgroup} is the image $\rho(\Gamma)$ in $G$ of a $\scrC$-Anosov representation. To simplify the notation, we also use the expressions \emph{$\scrC$-Anosov representation} and \emph{$\scrC$-Anosov subgroup}.
\end{definition}

This definition is not the original one from Labourie \cite{Labourie-06}, but it was shown to be equivalent; see \cite{Guichard-Wienhard-12,Gueritaud-Guichard-Kassel-Wienhard-17, Kapovich-Leeb-Porti-17,Bochi-Potrie-Sambarino-19} for instance. The Anosov property implies that $\Gamma$ is word-hyperbolic (see \cite[Theorem 1.4]{Kapovich-Leeb-Porti-18} or \cite[Theorem 3.2]{Bochi-Potrie-Sambarino-19}). Throughout this article, we will mostly use the notion of Anosov subgroups rather than Anosov representations. In particular, in the rest of the introduction, $\Gamma < G$ now denotes a $\scrC$-Anosov subgroup in the sense of Definition \ref{definition:theta-anosov}.

\subsubsection{Axiom A actions of type $(k,1)$}

We now further assume that $\Gamma < G$ is a torsion-free $\scrC$-Anosov subgroup.

In general, $M_\scrC$ is noncompact for a nonminimal parabolic $P_\scrC$, so $\Gamma$ need not act properly discontinuously on the whole of $G/M_\scrC$. A central insight of Delarue--Monclair--Sanders \cite{Delarue-Monclair-Sanders-24,Delarue-Monclair-Sanders-25} is that, for $\scrC$-Anosov subgroups, there exist non-empty $A_\scrC$-invariant open subsets $\odms\subset G/M_\scrC$ such that $\Gamma$ acts properly discontinuously on them. We can thus consider the (non-empty) quotient manifold $\M := \Gamma\backslash\odms$; by construction, the right $A_\scrC$-action descends to the quotient. In addition, $\M$ contains a closed $A_\scrC$-invariant subset $\scrJ \subset \M$ (called the \emph{trapped set}) where the right $A_\scrC$-action has interesting dynamical features, which we now introduce.

Define the \emph{incoming} ($-$) and \emph{outgoing} ($+$) \emph{tails} as
\[
 \mc T_\pm := \{gM_\scrC \in \M ~:~ \overline{\scrC}\ni a \mapsto g\exp(\mp a)M_\scrC \text{ is not a proper map}\},
\]
and the trapped set as
\[
 \scrJ := \mc T_+\cap\mc T_-.
\]
The tails are also called forward ($-$) and backward ($+$) tails. The trapped set $\scrJ$ should be thought of as a higher rank analogue of the trapped set in rank $1$ (the set of unit vectors generating a geodesic trapped in a compact region of the manifold) but the main difference is that it is now noncompact. It is the relevant set for the dynamics of the $A_\scrC$-action.

We will establish that the $A_\scrC$-action on $\scrJ$ is hyperbolic. That is
there is a continuous splitting of $T\M$ over $\scrJ$
\[
T_{\scrJ}\M = E_0\oplus E_s\oplus E_u,
\]
such that $E_0$ is tangent to the $A_\scrC$-orbits, and there is a Riemannian metric on $\M$ such that for each $a\in\scrC$ there exist constants $C_a,\beta_a>0$ such that, for all $t\geq0$,
\begin{align*}
 \|d(\exp(ta)) v\| \leq C_a e^{-\beta_a t}\|v\|, & \qquad \forall v\in E_s|_{\scrJ},\\
 \|d(\exp(ta)) v\| \geq C_a^{-1} e^{\beta_a t}\|v\|, & \qquad  \forall v\in E_u|_{\scrJ}.
\end{align*}
See \eqref{eq:stable-unstable} and Proposition~\ref{prop:hyperbolicity} for the relevant technical lemmas establishing hyperbolicity, and Theorem~\ref{thm:DMS-main} which summarizes the main properties of the $A_\scrC$-action on $\M$. Additionally, it is known that the higher-rank $A_\scrC$-action on $\scrJ$ fibers as an Abelian cocycle extension over a \emph{metric Anosov flow} on $\scrJ$ (see \cite{Sambarino-14,Sambarino-15}). However, the trapped set $\mathscr{J}$ typically has a highly fractal structure—even in rank $1$—and is therefore poorly suited for analysis. It is merely a metric space, on which one cannot define distributions of arbitrary order. 

It turns out that the action on $\M$ also fibers as a cocycle over a \emph{smooth} Axiom A flow. See \cite[Corollary B]{Delarue-Monclair-Sanders-25} and Theorem~\ref{thm:DMS-main} below for a slightly refined version adapted to our spectral-theoretic purposes and the application to Poincaré series. We will refer to such an object as an \emph{Axiom A action of type $(k,1)$} (see Definition~\ref{definition:axiom-a-action} for details). Let us emphasize that the ability to work on a smooth manifold $\M$ (as opposed to the fractal metric space $\scrJ$) is crucial for microlocal analysis. Just as Pollicott--Ruelle resonances extend beyond geodesic flows on locally symmetric spaces, our resonance analysis applies in the broader setting of Axiom A actions of type $(k,1)$ . These are higher-rank dynamical systems satisfying certain assumptions on the dynamics on $\scrJ$ and $\mc T_\pm$ and carrying the free Abelian cocycle structure described above. Theorem~\ref{thm:DMS-main} shows that actions arising from Anosov representations belong to this broader class. For the sake of simplicity, we only state the results in the introduction for Anosov representations and refer the reader to \S\ref{sec:introduction-geometric-setting}--\S\ref{section:decay} for more general statements.

\subsection{Resonance spectrum of Anosov representations}
\label{ssection:spectral-theory}

We can now introduce the resonance spectrum as a joint spectrum of the commuting vector fields generating the $A_\scrC$-action.

\subsubsection{Definition and existence}

As $A_\scrC$ acts on $\M$ by diffeomorphisms, there is a corresponding infinitesimal representation of $\fa_\scrC$ by vector fields on $\M$, 
denoted by
\[
\X : \fa_\scrC \to C^\infty(\M,T\M), \qquad a \mapsto \X(a).
\]
We define a joint spectrum for this commuting family through distributional joint eigenstates with geometric support and wavefront restrictions.
For this, we rely on the standard notion of wavefront set $\WF$ (see \cite{Hormander-90}), which records the locations and cotangent directions of singularities of a distribution. We also use the co-unstable bundle $E_u^*:=(E_0\oplus E_u)^\perp \subset T^*\M$, defined in \eqref{equation:t*m}.

We first introduce the notion of \emph{dynamical resonances}:

\begin{definition}[Dynamical resonances]
\label{definition:rt}
Let $\scrC\subset \fa$ be a generalized Weyl chamber, and $\Gamma < G$ be a torsion-free $\scrC$-Anosov subgroup. The point $\mathbf{s} \in (\fa_\scrC^*)_{\C}$ is
a dynamical resonance if there exists a distribution $u \in \mc{D}'(\M), u \neq 0,$ such that
\begin{equation}\label{eq:res_cond}
\WF(u) \subset E_u^*, \qquad \supp(u) \subset \mc{T}_+, \qquad (-\X(a)-\mathbf{s}(a)) u = 0, \quad\forall a\in\fa_\scrC.
\end{equation}
The resonance spectrum is the set of all dynamical resonances and is denoted by $\sigma_{\mathrm{RS}}^{(0)}$.
\end{definition}
The dynamical resonances are the joint eigenvalues of the infinitesimal $\fa_\scrC$-action on the space of distributions on $\M$ with wavefront set in $E_u^*$ and support in the tail $\mc{T}_+$. The latter two conditions are natural as they already appear in the context of Axiom A flows; see \cite{Dyatlov-Guillarmou-16}. In addition, the set of dynamical resonances can be shown to coincide with a joint \emph{Taylor spectrum} on anisotropic Hilbert spaces (see an earlier version of the present article available online \cite[\S4.7]{arxiv_v1}). However, as this statement involves substantial additional notation which is not used in the rest of the article, we refrain from proving it here.

More generally, the resonance spectrum can be defined for any \emph{admissible bundle} $\E \to \M$ (see \S\ref{sssection:admissible} for the definition of admissible bundles). In the introduction, we restrict to one of the most important examples. Let $\Sigma_\M$ denote the (smooth) vector bundle of $1$-forms in the joint kernel of the contractions $\iota_{\X(a)}$, $a\in\fa_\scrC$. Note that $\mathrm{rk}(\Sigma_\M) = \dim(\M)-(k+1)$.
In other words, $\Sigma_\M$ is the bundle of covectors transverse to the $A_\scrC$-orbits.
Using the Lie derivative $\X(a)u := \mathcal L_{\X(a)}u$ for $u \in C^\infty(\M,\Lambda^m \Sigma_{\M})$, we can extend the operator to
\[
\X : C^\infty(\M,\Lambda^m \Sigma_{\M}) \otimes \fa_\scrC \to C^\infty(\M,\Lambda^m \Sigma_{\M}).
\]
Similarly to \eqref{eq:res_cond}, we then define the dynamical resonances $\sigma_{\mathrm{RS}}^{(m)} \subset (\fa_\scrC^*)_{\C}$ as the set of all $\mathbf{s} \in (\fa_\scrC^*)_{\C}$ such that there exists $u \in \mc{D}'(\M, \Lambda^m \Sigma_{\M}), u \neq 0$ such that \eqref{eq:res_cond} holds.

We shall prove the following:

\begin{theorem}[Existence of a resonance spectrum]
\label{theorem:intro-rt}
Let $\scrC\subset \fa$ be a generalized Weyl chamber, and $\Gamma < G$ be a torsion-free $\scrC$-Anosov subgroup. For $0\leq m\leq\operatorname{rk}(\Sigma_\M)$, the resonance spectrum $\sigma_{\mathrm{RS}}^{(m)} \subset (\fa_\scrC^*)_{\C}$ is a complex analytic variety of codimension $1$.
\end{theorem}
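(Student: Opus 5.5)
The plan is to use the structure of Axiom A actions of type $(k,1)$, which Theorem~\ref{thm:DMS-main} provides for $\M=\Gamma\backslash\odms$. The goal is to reduce the joint spectral problem for $\X(\fa_\scrC)$ to a one-parameter family of Pollicott--Ruelle problems for a single Axiom A flow. Concretely, the $A_\scrC$-action on $\M$ is a free Abelian cocycle extension of rank $k$ over a smooth Axiom A flow. One generator direction $a_0\in\scrC$ plays the role of the flow. The remaining $k$ directions $\fa_\scrC/\R a_0\cong\R^k$ act freely and properly, and they commute with $\X(a_0)$.

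The first step is to quotient (or Fourier decompose) along the free $\R^k$-directions. Take a joint eigendistribution $u$ with $(-\X(a)-\s(a))u=0$ for all $a\in\fa_\scrC$. Its behaviour along the $\R^k$-fibres is then exponential, prescribed by $\s$ restricted to a complement of $\R a_0$. So $u$ corresponds bijectively to a distribution $\tilde u$ on the base $N:=\M/\R^k$ of the cocycle, which carries the smooth Axiom A flow $Y$. The correspondence is
\[
(-Y - \s(a_0) - V_{\s})\,\tilde u = 0 .
\]
Here $V_\s$ is a smooth potential (the derivative of the cocycle), depending affinely, hence holomorphically, on the transverse components $\s' \in (\fa_\scrC^*/\R a_0^*)_\C\cong\C^k$. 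I need to check that the conditions on wavefront set and support pass to the quotient. The tails $\mc T_\pm$ and $E_u^*$ are $A_\scrC$-invariant, so they descend to the corresponding objects for $Y$, with $E_u^*$ annihilating the fibre directions. For the bundles $\Lambda^m\Sigma_\M$ the same reduction works, because these bundles are $A_\scrC$-equivariant and trivial along the orbits.

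The second step is to invoke the Dyatlov--Guillarmou theory \cite{Dyatlov-Guillarmou-16} for Axiom A flows on the base. For each $\s'$, the operator $P_{\s'}:=-Y-V_{\s'}$ acting on anisotropic spaces $\mc H$ (or via the cut-off resolvent) has a meromorphic resolvent $(P_{\s'}-\lambda)^{-1}$ in $\lambda\in\C$. Its poles $\lambda$ are exactly the $\lambda$ for which a resonant state exists with $\WF\subset E_u^*$ and support in $\mc T_+$. The anisotropic spaces can be chosen uniformly on compact sets in $\s'$, because the potential is smooth and bounded on the relevant compact region. The operator then depends holomorphically on $(\s',\lambda)$, and $P_{\s'}-\lambda$ is Fredholm of index $0$ for $\Re\lambda > -c(\s')$, with $c\to\infty$ as the regularity grows. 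So locally one can write $P_{\s'}-\lambda = \mathrm{Fredholm}$ and apply a Grushin problem or Gohberg--Sigal reduction. Near any point $(\s'_0,\lambda_0)$ this gives a finite matrix $E_{-+}(\s',\lambda)$, holomorphic in both variables, whose determinant vanishes exactly on $\sigma_{\mathrm{RS}}^{(m)}$.

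The resonance set is therefore locally the zero set of a single holomorphic function $D(\s',\lambda)=\det E_{-+}$ on $\C^{k+1}$, so it is an analytic set. It has pure codimension $1$ provided $D\not\equiv0$. This holds because for $\Re\lambda$ large the resolvent exists (the propagator $e^{-tP}$ is bounded by $e^{Ct}$ on $\mc H$), so there are no resonances there, and the parameter domain is connected. It remains to see that the set is nonempty, if "codimension $1$" is intended to include nonemptiness. For fixed $\s'$, the leading resonance given by the pressure of $V_{\s'}$ is a genuine resonance, for instance for $m=0$ the Ruelle--Perron--Frobenius eigenvalue. Alternatively the nonemptiness follows from the zeta function argument. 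The main obstacle I expect is the first step: making the reduction to the base flow rigorous with uniformity in $\s'$, in particular showing that support in $\mc T_+$ and $\WF\subset E_u^*$ are equivalent upstairs and downstairs when the fibres are noncompact. The difficulty is that distributions on $\M$ are not tempered along the fibres. I would handle it by working with the joint equation directly, which forces exact exponential dependence along each fibre, instead of Fourier analysis.
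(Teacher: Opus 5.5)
Your proposal is correct and follows essentially the paper's route. You trivialize $\M\simeq\N\times\fh$ with $\fh=\ker\beta$, and use the joint equation along $\fh$ to write resonant states as $u=\mathbf{e}(\s')f$; the paper restricts $u$ to the zero section, which is the same bookkeeping. You then reduce to the family $P_+(\s)=-\X_\N-s_0+\s'(w)$ on $\N$, and combine Dyatlov--Guillarmou anisotropic Fredholm theory (holomorphic in $\s'$, invertible for $\Re s_0\gg 1+|\Re\s'|$) with a several-variable analytic Fredholm theorem. The paper proves that theorem by exactly the Schur-complement/Grushin reduction you describe.

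The one step you gloss over is that $\N$ itself is noncompact, so Dyatlov--Guillarmou does not apply verbatim. The paper first convexifies the flow near the boundary of a neighbourhood of $\scrK$ (Conley--Easton) and identifies the resolvents there. It then extends the continuation to all of $\N$ using the uniform properness assumption (A2).

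Nonemptiness is not part of the claim: codimension $1$ only means locally the zero set of a not identically vanishing holomorphic function. Your leading-resonance argument for nonemptiness would in any case only cover $m=d_s$ and $m=0$, and for $m=0$ the relevant pressure is that of $J_u+\s'(w)$.
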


That is, $\sigma_{\mathrm{RS}}^{(m)}$ is globally described as the zero set of a holomorphic function. This holds more generally for any admissible bundle $\E \to \M$. The (non-)connectedness of $\sigma_{\mathrm{RS}}^{(m)} \subset (\fa_\scrC^*)_{\C}$ is still poorly understood by the authors. In rank $1$, one finds that $\sigma_{\mathrm{RS}} \subset \C$ is a collection of isolated points (the poles of a resolvent).

\subsubsection{Leading resonant hypersurface} As in rank $1$, the first resonant hypersurface plays an important role as it controls the relevant statistics of the underlying dynamical system (this will be further discussed in \S\ref{sssection:correlation}). Let us recall that the \emph{Benoist cone} of a discrete subgroup $\Gamma < G$ is defined as the asymptotic cone of its Cartan projection
 	\[
 	\mathscr L :=
 \mathrm{AC}\bigl(\mu_\scrC(\Gamma)\bigr) =
 \left\{
 \lim_{\ell\to\infty}t_\ell \mu_\scrC(\gamma_\ell)
 ~:~
 t_\ell\to0^+,\ \gamma_\ell\in \Gamma
 \right\} \subset \fa_\scrC.
 \] 
 Given $\varphi \in \bigl(\scrL \bigr)^{*,\circ}\subset\fa_\scrC^*$, the interior of the dual Benoist cone (i.e. $\varphi>0$ on $\scrL$),  one defines the following critical exponent
\begin{equation}
\label{equation:delta-intro}
\delta(\varphi) := \lim_{t \to \infty} t^{-1} \log \sharp \{\gamma \in \Gamma ~:~ \varphi(\mu_\scrC(\gamma)) \leq t\} \in (0,\infty).
\end{equation}
The following holds:

\begin{theorem}[Leading resonant hypersurface]
\label{theorem:intro-leading}
There exists a real-analytic codimension $1$ submanifold $\mathbf{C}^{(d_s)} \subset \sigma_{\mathrm{RS}}^{(d_s)} \cap \fa_\scrC^*$ such that:
\begin{enumerate}[label=\emph{(\roman*)}]
\item $\mathbf{C}^{(d_s)}$ coincides with $\{\delta(\varphi)=1\}$;
\item $\fa_\scrC^* \setminus \mathbf{C}^{(d_s)} = \mathbf{C}^{(d_s),+} \sqcup \mathbf{C}^{(d_s),-}$ is the union of two open connected components;
\item $\mathbf{C}^{(d_s),+}$ is convex and is strictly convex if $\Gamma < G$ is Zariski dense;
\item If $0\leq m < d_s$, and $\mathbf{s} \in \sigma^{(m)}_{\mathrm{RS}}$, then
 \[
\Re(\mathbf{s})  \in \mathbf{C}^{(d_s),- }.
\]
\end{enumerate}
\end{theorem}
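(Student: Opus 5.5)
The strategy is to reduce the statement to a one-parameter thermodynamic problem for the smooth Axiom A flow over which the $A_\scrC$-action fibers (Theorem~\ref{thm:DMS-main}). On $\M$ the action is a free Abelian cocycle extension of an Axiom A flow $\varphi_t$, generated by some $\X(a_0)$ with $a_0\in\scrC$. Over the trapped set there is an $\fa_\scrC$-valued Hölder cocycle $\tau$ whose periods along closed orbits are the Jordan projections $\lambda_\scrC(\gamma)$. For $\mathbf{s}\in(\fa_\scrC^*)_\C$, the joint eigenvalue problem \eqref{eq:res_cond} reduces to a single Pollicott--Ruelle problem for $-\X(a_0)$ twisted by the potential $-\mathbf{s}(\tau)$. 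Consequently:
\begin{itemize}
\item $\mathbf{s}\in\sigma^{(m)}_{\mathrm{RS}}$ exactly when $0$ is a Pollicott--Ruelle resonance of the twisted flow on $\Lambda^m\Sigma_\M$;
\item the holomorphic function cutting out $\sigma^{(m)}_{\mathrm{RS}}$ (Theorem~\ref{theorem:intro-rt}) is the corresponding dynamical determinant.
\end{itemize}

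For real $\mathbf{s}=\varphi$ and $m=d_s=\dim E_s$, I would use the Guillemin trace formula on $\Lambda^{d_s}\Sigma_\M$. The factor $|\det(1-P_\gamma)|^{-1}$ times $\Tr\Lambda^{d_s}P_\gamma$ cancels the unstable Jacobian up to bounded factors. Hence the leading real resonance of the twisted generator is governed by the topological pressure $P(-\varphi\circ\tau)$ of the trapped flow. I would then invoke Dyatlov--Guillarmou-type results for Axiom A flows: the leading resonance on the top-degree stable forms is simple and sits at the pressure. This would give
\[
\mathbf{C}^{(d_s)}:=\{\varphi\in\fa_\scrC^*: P(-\varphi\circ\tau)=0\}\subset\sigma^{(d_s)}_{\mathrm{RS}}\cap\fa_\scrC^*,
\]
where the resonant state is a Patterson--Sullivan-type measure. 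The pressure $\varphi\mapsto P(-\varphi\circ\tau)$ is real-analytic and convex (Ruelle). Its derivative equals minus the integral of $\tau$ against an equilibrium state; this lies in the interior of $\scrL$, so it is nonzero, and the zero set is therefore a real-analytic hypersurface.

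For (i), I would apply Sambarino's results for $\scrC$-Anosov subgroups. For $\varphi$ in the interior of the dual Benoist cone, $P(-t\varphi\circ\tau)=0$ exactly at $t=\delta(\varphi)$, where $\delta(\varphi)$ is computed with Jordan or Cartan projections (these agree by the Anosov property). Hence $\delta(\varphi)=1$ iff $P(-\varphi\circ\tau)=0$. I also need that the pressure is $+\infty$ or positive outside the dual cone, so that the hypersurface lies inside it.

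For (ii) and (iii), set $\mathbf{C}^{(d_s),+}=\{P(-\varphi\circ\tau)<0\}$. This set is a sublevel set of a convex function, hence convex, and its complement in $\fa_\scrC^*$ splits off the zero set as a single boundary. Connectedness of $\mathbf{C}^{(d_s),-}$ would follow because $t\mapsto P(-t\varphi\circ\tau)$ is strictly decreasing along rays in the dual cone. For strict convexity in the Zariski dense case, I would use Benoist's non-arithmeticity: the Jordan projections of a Zariski dense $\Gamma$ generate a dense subgroup, so $\langle\psi,\tau\rangle$ is not cohomologous to a constant for any $\psi\ne0$. The Hessian of the pressure (a variance) is then nondegenerate.

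For (iv), if $\mathbf{s}\in\sigma^{(m)}_{\mathrm{RS}}$ with $m<d_s$, take the resonant state and pair it against the top-degree one, or use the trace bound directly. On $\Lambda^m\Sigma_\M$, the growth of $|\Tr\Lambda^mP_\gamma|/|\det(1-P_\gamma)|$ is weighted by an extra factor that decays exponentially, compared with $\Lambda^{d_s}$, since fewer stable directions are included. Consequently the twisted determinant is holomorphic and nonvanishing whenever $P(-\Re\mathbf{s}\circ\tau-J^{(m)})<0$, where $J^{(m)}\ge\beta>0$. This region contains the closure of $\mathbf{C}^{(d_s),+}$, so $\Re\mathbf{s}\in\mathbf{C}^{(d_s),-}$.

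\textbf{Main obstacle.} The hardest step is turning the determinant or trace estimate into the sharp statement that resonances satisfy a pressure bound. In particular, I must show that no resonance occurs exactly on $\mathbf{C}^{(d_s)}$ for $m<d_s$, which is where the strict gap $\beta>0$ is needed. I would try to obtain this via a weighted anisotropic-space a priori estimate, similar to the rank-one Dyatlov--Guillarmou bound on $\Re s$.
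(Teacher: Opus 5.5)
Your reduction is the paper's. Lemma~\ref{lemma:sigma_RT_equals_sigma} turns joint resonances into poles of $(-\X_\N-s_0+\s'(w))^{-1}$, and Proposition~\ref{proposition:leading2} identifies $\mathbf{C}^{(d_s)}$ with your zero set of $\mathrm{Pr}(-\varphi\circ\tau)$. The input there is Humbert's first-resonance theorem on $d_s$-forms at fixed $\s'$, not Dyatlov--Guillarmou.

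\textbf{The strict-convexity step is wrong as written.} Let $\alpha\in\fa_\scrC^*$ satisfy $\alpha(a_0)=1$ and $\alpha|_{\fh}=0$. Then $\alpha(\lambda(\gamma))=\ell_\gamma$ for every closed orbit, so $\alpha\circ\tau$ is cohomologous to $1$. Hence your claim that $\psi\circ\tau$ is never cohomologous to a constant for $\psi\neq0$ is false. In fact $\mathrm{Pr}(-(\varphi+t\alpha)\circ\tau)=\mathrm{Pr}(-\varphi\circ\tau)-t$, so the Hessian of $\varphi\mapsto\mathrm{Pr}(-\varphi\circ\tau)$ on $\fa_\scrC^*$ \emph{always} has $\alpha$ in its kernel and is never nondegenerate.

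What strict convexity of $\mathbf{C}^{(d_s),+}$ actually needs is positivity of the Hessian restricted to $T_\varphi\mathbf{C}^{(d_s)}=\ker d_\varphi\mathrm{Pr}(-\bullet\circ\tau)$. Suppose $\psi$ is tangent and $\mathrm{Var}_{\mu}(\psi\circ\tau)=0$ for the equilibrium state $\mu$. Then $\psi\circ\tau\sim c$, and tangency gives $c=\int\psi\circ\tau\,\dd\mu=0$. So $\psi$ vanishes on every $\lambda(\gamma)$, and density (or full rank) forces $\psi=0$.

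The paper avoids this by writing $\mathrm{Pr}(-\varphi\circ\tau)=\mathrm{Pr}(\varphi'(w))-\varphi_0$ in the splitting $\fa^*=\R\alpha_\M\oplus\fh^*$. Then $\mathbf{C}^{(d_s)}$ is the graph of $\s'\mapsto\mathrm{Pr}(\s'(w))$, and only $\fh^*$-directions enter the variance. The same graph description gives analyticity and (ii) at once, as the epigraph and hypograph of a continuous function on $\fh^*$; your ray argument for connectedness of $\mathbf{C}^{(d_s),-}$ only covers the dual cone. It also shows the derivative is nonzero, since its $\alpha_\M$-component is $-1$. Your justification that $\int\tau\,\dd\mu$ lies in the interior of $\scrL$ is unnecessary, and false when the cocycle is not of full rank, since then $\scrL$ has empty interior.

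\textbf{The remaining steps are a valid route that differs from the paper's.} For (i), you apply Ledrappier's lemma along rays. For $\varphi$ in the interior of the dual cone, $t\mapsto\mathrm{Pr}(-t\varphi\circ\tau)$ is strictly decreasing with unique zero $t=\delta(\varphi)$, and $\delta(t\varphi)=\delta(\varphi)/t$, so both inclusions follow at once. The paper instead proves $\mathbf{C}^{(d_s)}\subset\{\delta=1\}$ via Lemma~\ref{lemma:ledrappier}, and the converse by contradiction using holomorphy and blow-up of $\zeta$.

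For (iv), the paper applies Humbert's intermediate-degree theorem at each fixed $\s'$, which requires his result for complex potentials. You instead bound $|\Tr\Lambda^mP_\gamma|/|\det(1-P_\gamma)|\leq Ce^{-\beta\ell_\gamma}$ for $m\neq d_s$. That works, and your ``main obstacle'' is not one. The series in \eqref{eq:def-general-zeta} converges absolutely and locally uniformly on the connected open set $\{\mathrm{Pr}(-\Re\s\circ\tau)<\beta\}$. This set contains both $\{|\Re\s|\gg0\}$ and $\{\Re\s\in\overline{\mathbf{C}^{(d_s),+}}\}$. By the identity theorem the continued determinant equals $\exp(-\text{series})\neq0$ there, and Theorem~\ref{theorem:general-zeta} identifies its zeros with $\sigma^{(m)}_{\mathrm{RS}}$. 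No anisotropic a priori estimate is needed, and you even get the quantitative bound $\mathrm{Pr}(-\Re\s\circ\tau)\geq\beta$ for these resonances.
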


The real codimension $1$ submanifold $\mathbf{C}^{(d_s)} + i \fa_\scrC^*$ plays the role of the ``critical axis'' $h_{\mathrm{top}}(\phi_1) + i\R$ in rank $1$. We shall call $\mathbf{C}^{(d_s)}$ the \emph{critical hypersurface}, or the \emph{leading resonant hypersurface}. Theorem~\ref{theorem:leading}, items~\emph{\ref{it:leading-strict-convexity}} and~\emph{\ref{it:leading-concavity}}, describes the convexity of $\mathbf{C}^{(d_s),+}$ and shows that its complexification (the local holomorphic codimension $1$ submanifold passing through $\mathbf{C}^{(d_s)}$) $\mathbf{C}^{(d_s)}_{\C} \subset (\fa_\scrC^*)_{\C}$ is concave in the $i  \fa_\scrC^*$ direction at $\mathbf{C}^{(d_s)}$. See Figure \ref{figure:spectrum2}.

In addition, we show that the leading resonant states associated with a point $\varphi\in\mathbf{C}^{(d_s)}$ correspond to the Burger--Roblin measure of the Anosov representation (introduced for Borel Anosov groups in \cite{ELO23}); see \S\ref{ssection:mme} and, in particular, Theorem~\ref{theorem:measures-max-entropy}. The pairing of the resonant state with the corresponding coresonant state associated with $\varphi$ leads to the \emph{Bowen--Margulis--Sullivan (BMS) measure} $\mu_\varphi$ on the trapped set $\scrJ$. This measure has infinite mass when $k>0$ (and is finite when $k=0$) and depends on a choice of $\varphi \in \mathbf{C}^{(d_s)}$. See \S\ref{sssection:correlation}, where it is further described.

\begin{figure}[h!]
\begin{center}
\includegraphics[scale=1.4]{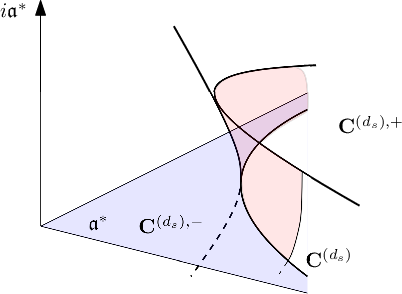}
\caption{In blue: $\fa_\scrC^*$, the real part of $(\fa_\scrC^*)_{\C}$. In red: the complex leading resonant hypersurface $\mathbf{C}^{(d_s)}_{\C}$. The intersection of $\mathbf{C}^{(d_s)}_{\C}$, the red variety, with $\fa_\scrC^*$, the blue plane, is $\mathbf{C}^{(d_s)}$, the critical hypersurface.}
\label{figure:spectrum2}

\end{center}

\end{figure}

Let us emphasize that $\mathbf{C}^{(d_s)}$ was already identified as an important object in the literature (see \cite[Theorem A]{Sambarino-24} for instance). It follows from Proposition \ref{prop:cartan==lyapunov} below that the spectrally defined critical hypersurface $\mathbf{C}^{(d_s)}$ \emph{coincides} with the critical hypersurface identified in the literature.

\subsection{Local mixing/decay of correlations for the diagonal flow} \label{sssection:correlation}

Let $\varphi \in \mathbf{C}^{(d_s)}$ be a point on the critical hypersurface. The tangent space $T_\varphi \mathbf{C}^{(d_s)} \subset \fa_\scrC^*$ induces a line
\[
L_\varphi := \{a \in \fa_\scrC ~:~ \alpha(a) = 0, \forall \alpha \in T_\varphi \mathbf{C}^{(d_s)}\} \subset \fa_\scrC.
\]
Since $T_\varphi \mathbf{C}^{(d_s)} \oplus \R \varphi = \fa_\scrC^*$ (see Lemma \ref{lemma:transverse-intersection}), one can choose $u_\varphi \in L_\varphi$ as the unique vector such that $\varphi(u_\varphi)=1$. We are interested in the ergodic properties of the flow $(e^{tu_\varphi})_{t \in \R}$ generated by $u_\varphi$ on the trapped set $\mathscr{J}$. This flow is called the \emph{diagonal flow} or \emph{directional flow}.\footnote{Sambarino's refraction flow is the flow on the quotient $\mathscr K_\varphi$ introduced below.}

The BMS measure $\mu_\varphi$ introduced in \S\ref{ssection:spectral-theory} has infinite mass, is preserved by $(e^{tu_\varphi})_{t \in \R}$ and, more generally, by the full $A_\scrC$-action, and has support equal to $\mathscr{J}$. It plays the role of a measure of maximal entropy in this problem. More precisely, set $\fh_\varphi:=\ker(\varphi)\subset\fa_\scrC$ and $H_\varphi:=\exp(\fh_\varphi)$. Then $H_\varphi$ acts properly on $\mathscr{J}$ (Theorem \ref{thm:DMS-main}) and
\[
\pi_\varphi : \mathscr{J} \to \mathscr{K}_\varphi, \qquad \mathscr{K}_\varphi := \mathscr{J}/H_\varphi,
\]
is a principal $H_\varphi$-bundle over a compact metric space. The flow $(e^{tu_\varphi})_{t \in \R}$ descends to $\mathscr{K}_\varphi$ and yields a (metric) Anosov flow $(\phi_t^\varphi)_{t \in \R}$. Up to a positive normalization constant, the measure $\mu_\varphi$ is then given by $\mu_\varphi = \vol_{\fh_\varphi} \wedge \pi_\varphi^* \nu_\varphi$, where $\nu_\varphi$ is the (probability) measure of maximal entropy of $(\phi_t^\varphi)_{t \in \R}$ on $\mathscr{K}_\varphi$ and $\vol_{\fh_\varphi}$ is the Lebesgue measure on $\fh_\varphi$.

We are interested in the \emph{local mixing} (or \emph{decay of correlations}) for the diagonal flow with respect to $\mu_\varphi$. If $\Gamma < G$ is Zariski dense, it is known that there exists $\kappa>0$ such that for all $f,g \in C^0_{\comp}(\mathscr{J})$:
\[
\begin{split}
\int_\mathscr{J} f\circ e^{tu_\varphi} \cdot g~ \dd \mu_\varphi &= \kappa \cdot t^{-k/2} \int_\mathscr{J} f ~\dd \mu_\varphi \int_\mathscr{J} g ~\dd \mu_\varphi + o(t^{-k/2}),\qquad t\to+\infty.
\end{split}
\]
This was established by Thirion \cite{Thirion-09} for ping-pong groups in $\mathrm{SL}_n(\R)$, by Sambarino \cite{Sambarino-15} for Zariski-dense hyperconvex representations of surface groups, by Chow--Sarkar \cite{Chow-Sarkar-23} for Borel Anosov subgroups on $\Gamma\backslash G$, and by Sambarino \cite{Sambarino-24} for general Zariski-dense Anosov representations. See also \cite{Dang-24} for the topological mixing of diagonal flows on $\Gamma\backslash G$.

In certain cases, this expansion can be refined. The next statement involves a notion of \emph{Diophantine representation}, which we refrain from introducing here due to its technicality. Roughly speaking, we say that a representation is Diophantine if there exists a finite subset of its Lyapunov spectrum (see \S\ref{sssection:lyapunov}) which is Diophantine, that is, badly approximable by rational numbers (see Definition \ref{definition:diophantine} for a precise statement). In certain cases, this is easily seen to be a generic condition with respect to the representation (Proposition \ref{proposition:almost-diophantine}). For instance, we show that Lebesgue almost every Hitchin representation $\rho : \pi_1(\Sigma) \to \mathrm{SL}_n(\R)$ is Diophantine provided $2g \geq n^3$, where $g$ is the genus of $\Sigma$; see the discussion after Proposition \ref{proposition:almost-diophantine}.

\begin{theorem}[Local mixing/decay of correlations for the diagonal flow]
\label{theorem:mixing-intro}
If $\Gamma < G$ is Zariski dense and Diophantine, then for all $f,g \in C^\infty_{\comp}(\M)$ and $t \geq 1$,
\begin{equation}
\label{equation:stationary-phase-intro}
\begin{split}
\int_\mathscr{J} f\circ e^{tu_\varphi} \cdot g~ \dd \mu_\varphi &= \kappa \cdot  t^{-k/2}\left(\int_\mathscr{J} f ~\dd \mu_\varphi \int_\mathscr{J} g ~\dd \mu_\varphi + a(t,f,g)\right),
\end{split}
\end{equation}
where $a : [1,+\infty) \times C^\infty_{\comp}(\M) \times C^\infty_{\comp}(\M)$ is smooth in all variables, linear in the second and third variables, and
\[
a(t,f,g) \sim \sum_{j \geq 1} t^{-j} C_j(f,g),
\]
where $C_j : C^\infty_{\comp}(\M) \times C^\infty_{\comp}(\M) \to \C$ is a continuous bilinear form. The asymptotic expansion means that, for every $N\in\Z_{\geq0}$ and every compact subset $K\subset\M$, there exist $C:=C(N,K)>0$ and $\ell:=\ell(N,K)\in\Z_{\geq0}$ such that for all $f,g \in C^\ell_{\comp}(\M)$ with support in $K$, for all $t \geq 1$:
\[
\left|a(t,f,g)- \sum_{j =1}^{N} t^{-j} C_j(f,g)\right| \leq C t^{-(N+1)}\|f\|_{C^{\ell}(\M)}\|g\|_{C^{\ell}(\M)}.
\]

\end{theorem}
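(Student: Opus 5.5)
The strategy is to turn the correlation function into a Fourier integral over the dual $i\fa_\scrC^*$ of the noncompact directions. We then apply stationary phase near the critical hypersurface, and use the Diophantine condition to control everything away from it.

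\emph{Fourier decomposition.} Fix $\varphi\in\mathbf{C}^{(d_s)}$ and split $\fa_\scrC=\fh_\varphi\oplus\R u_\varphi$. Because $H_\varphi$ acts properly on $\scrJ$, a neighbourhood of $\scrJ$ in $\M$ is a principal $H_\varphi$-bundle, locally modelled on a neighbourhood of $\scrK_\varphi$ times $\fh_\varphi$. On this bundle we Fourier-transform $f,g\in C^\infty_{\comp}(\M)$ along the $H_\varphi$-fibres. This produces sections $\hat f(\xi),\hat g(\xi)$, for $\xi\in\fh_\varphi^*\simeq\R^k$, of a twisted line bundle on the quotient. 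They are Schwartz in $\xi$, with seminorms bounded by $C^\ell$-norms of $f,g$. Plancherel then gives
\[
\int_\scrJ f\circ e^{tu_\varphi}\, g\,\dd\mu_\varphi=(2\pi)^{-k}\int_{\fh_\varphi^*}\langle e^{t\X_\xi}\hat f(\xi),\hat g(\xi)\rangle_{\nu_\varphi}\,\dd\xi,
\]
where $\X_\xi$ is the rank-one twisted transfer operator of the $u_\varphi$-flow on the Axiom A quotient. Its Pollicott--Ruelle spectrum is the slice of the joint spectrum $\sigma^{(d_s)}_{\mathrm{RS}}$ (Theorem~\ref{theorem:intro-rt}) over $\varphi+i\xi$, after the normalisation that identifies resonant/coresonant pairs with $\nu_\varphi$.

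\emph{Near $\xi=0$ and the leading term.} Near $\xi=0$, Theorem~\ref{theorem:intro-leading} provides an isolated simple leading resonance $\lambda(\xi)$, depending analytically on $\xi$. It is the local parametrisation of the complexified hypersurface $\mathbf{C}^{(d_s)}_\C$ above $\varphi+i\fh_\varphi^*$, with $\lambda(0)=0$ and $\nabla\lambda(0)\in i\R^k$. Concavity in the $i\fa^*_\scrC$ direction, together with strict convexity from Zariski density, gives $\Re\lambda(\xi)\le -c|\xi|^2$ with nondegenerate Hessian. Since $\mu_\varphi$ is $A_\scrC$-invariant, the linear imaginary term is removable: it is a translation along $\fh_\varphi$, which the choice $u_\varphi\in L_\varphi$ kills. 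Write $\Pi(\xi)$ for the analytic spectral projector. Then $\langle e^{t\lambda(\xi)}\Pi(\xi)\hat f,\hat g\rangle$ has smooth amplitude $\langle\Pi(\xi)\hat f(\xi),\hat g(\xi)\rangle$. Laplace's method, i.e.\ stationary phase with a complex nondegenerate phase, yields $\kappa t^{-k/2}\bigl(\int f\,\dd\mu_\varphi\int g\,\dd\mu_\varphi+\sum_{j\ge1}t^{-j}C_j(f,g)\bigr)$ with remainders controlled by finitely many derivatives in $\xi$. Odd-order terms vanish by symmetry of the Gaussian, which is why only integer powers $t^{-j}$ appear. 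The rest of the spectrum at these $\xi$ lies in $\Re\le-\epsilon$ by a uniform spectral gap near $0$, so it contributes $O(e^{-\epsilon t})$.

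\emph{Large $\xi$: the main obstacle.} For $|\xi|\ge\delta$ we need a polynomial bound $\|e^{t\X_\xi}\|\le C\langle\xi\rangle^{N}e^{-c\,t\,\langle\xi\rangle^{-M}}$ on anisotropic spaces. Since $\hat f,\hat g$ decay rapidly in $\xi$, integrating this bound gives $O(t^{-\infty})$ after splitting $|\xi|\lessgtr t^{1/(2M)}$, which suffices. This is where the Diophantine hypothesis enters. First, non-arithmeticity of the Lyapunov/length spectrum, from Zariski density, excludes resonances on $\Re=0$ for $\xi\ne0$. Second, the Diophantine (badly approximable) condition on finitely many periods should give a quantitative lower bound on $|1-e^{i\langle\xi,\ell(\gamma)\rangle}|$ of size $\langle\xi\rangle^{-M}$. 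I would feed this into a Dolgopyat-type argument, or more simply into a resolvent estimate along the lines of Dolgopyat's rapid-mixing result for Diophantine suspensions. That yields a spectral gap of size $\langle\xi\rangle^{-M}$ with polynomially bounded resolvent, and hence the semigroup bound above. Establishing this uniform polynomial gap, with uniformity in the twist and constants only polynomial in $\xi$, is the step I expect to be hardest.

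Collecting the three regions gives \eqref{equation:stationary-phase-intro}. Smoothness of $a$ in $t$ and the $C^\ell$ control of the remainder follow from the Schwartz dependence of $\hat f,\hat g$ on $(f,g)$.
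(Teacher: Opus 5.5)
The architecture you use is the one in the paper: a Fourier transform along the $k$ noncompact directions, a small-frequency region handled by the analytic simple leading resonance plus stationary phase, and a large-frequency region handled by a Dolgopyat-type Diophantine estimate. Fibring over $\fh_\varphi$ rather than the reference $\fh$ is harmless, and it makes $\nabla\lambda(0)=0$ transparent.

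\textbf{The gap is in the small-$\xi$ region.} There you assert that, apart from $\lambda(\xi)$, the spectrum of $\X_\xi$ lies in $\Re\le-\epsilon$ uniformly for small $|\xi|$, and conclude that the remainder is $O(e^{-\epsilon t})$. Nothing available justifies this.
\begin{itemize}
\item The leading-resonance analysis only shows two things: $\lambda(\xi)$ is the unique resonance in a small neighbourhood of $0$, and, by non-arithmeticity, the critical line $\Re s_0=0$ carries no resonance other than $\lambda(0)=0$.
\item It says nothing about resonances with $-\epsilon<\Re s_0<0$ and $|\Im s_0|$ large; these may accumulate at the critical line.
\item A uniform resonance-free strip of this kind is an essential spectral gap. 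The paper lists this as open, and expects that it would make the Diophantine hypothesis superfluous.
\item Even such a strip would not by itself control $e^{t\X_\xi}$ on anisotropic spaces. Turning spectral information into decay needs resolvent bounds as $|\Im s_0|\to\infty$, uniformly in $\xi$.
\end{itemize}

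\textbf{The repair} is to use your large-frequency input in the small-$\xi$ region as well. Dolgopyat's Diophantine argument concerns the joint frequency $(b,\xi)$, where $b$ is the Fourier variable along the flow. Run with $\xi$ as a small extra parameter, it gives resolvent bounds on the imaginary axis that are polynomial in $|b|$, uniformly for $|\xi|\le 2\epsilon_0$. Deforming the contour past the simple pole then gives
$\langle e^{t\X_\xi}\hat f,\hat g\rangle=e^{t\lambda(\xi)}\langle\Pi(\xi)\hat f,\hat g\rangle+O(\langle t\rangle^{-N}\|\hat f\|_{C^\ell}\|\hat g\|_{C^\ell})$
uniformly in small $\xi$. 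This is exactly what the paper uses, and a remainder $O(t^{-N})$ for every $N$ is all the expansion needs.

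\textbf{The same correction applies to your large-$\xi$ bound.} The Diophantine argument delivers polynomial resolvent bounds on the imaginary axis, hence rapid decay $C_N\langle t\rangle^{-N}\langle\xi\rangle^{N'}$. It does not give a resonance-free strip of width $\langle\xi\rangle^{-M}$ uniform in $b$, so your exponential bound $e^{-ct\langle\xi\rangle^{-M}}$ is not justified either. The weaker rapid-decay bound still suffices once combined with the Schwartz decay of $\hat f,\hat g$ in $\xi$.
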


The constant $\kappa$ is explicit and given by the determinant of a covariance matrix, see \eqref{equation:hessian-phase}. The integers $\ell > 0$ could be computed explicitly as well from the stationary phase lemma. Finally, it can be shown that the bilinear forms $C_j$ are non-zero for $j \geq 1$; see \cite[Lemma 4.16]{Cekic-Lefeuvre-Munoz-26} where a similar result is established. We believe that it should be possible to extend Theorem \ref{theorem:mixing-intro} to $\Gamma\backslash G$ for Borel Anosov subgroups, as $M_\scrC = M$ is compact in this case. In addition, the Diophantine condition seems technical and could likely be removed, although we do not know how to do it; see \S\ref{sssection:counting} for further discussion.

\subsection{Zeta functions and Poincaré series} \label{ssection:consequences}

\subsubsection{Meromorphic extension of dynamical zeta function}

The \emph{dynamical zeta function} associated with the Anosov subgroup $\Gamma$ is defined by
\begin{equation}
\label{equation:zeta-intro}
\zeta_\Gamma(\mathbf{s}) := \prod_{[\gamma] \in [\Gamma]^\sharp} (1-e^{-\mathbf{s}(\lambda_\scrC(\gamma))})^{-1}, \qquad \Re(\mathbf{s})\in\mathbf{C}^{(d_s),+},
\end{equation}
where $[\Gamma]^\sharp$ denotes the set of primitive conjugacy classes in $\Gamma$. This complex function is the analogue of the dynamical Ruelle zeta function for Anosov or Axiom A flows (rank $1$ case). It was established in \cite{Sambarino-24} that $\zeta_\Gamma$ is holomorphic in $\{\mathbf{s} \in (\fa_\scrC^*)_{\C} ~:~ \Re(\mathbf{s}) \in \mathbf{C}^{(d_s),+}\}$ and has a singularity of order $1$ on $\mathbf{C}^{(d_s)}$ (see the discussion after Theorem \ref{theorem:leading} for a definition of an order one singularity).

In this article, we prove the following result:

\begin{theorem}
\label{theorem:intro-zeta-anosov}
For any $\scrC$-Anosov subgroup,  the zeta function $\zeta_\Gamma$ defined in \eqref{equation:zeta-intro} admits a meromorphic extension to $(\fa_\scrC^*)_{\C}$.
\end{theorem}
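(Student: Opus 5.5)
We reduce the statement to a flow problem in one variable and then use the rank-one theory. The starting point is that the $A_\scrC$-action on $\M$ is a free Abelian cocycle extension over a smooth Axiom A flow (Theorem~\ref{thm:DMS-main}, Definition~\ref{definition:axiom-a-action}).

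Concretely, pick a splitting $\fa_\scrC=\R u\oplus\fh$ with $u\in\scrC$. Because $\fh\cong\R^k$ acts properly and cocompactly near $\scrJ$, the quotient $\N:=\M/\exp(\fh)$ is a smooth manifold. On $\N$ the vector field $\X(u)$ descends to an Axiom A flow $\phi_t$ with compact trapped set $\scrK=\scrJ/\exp(\fh)$.

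Primitive conjugacy classes $[\gamma]$ correspond bijectively to primitive periodic $A_\scrC$-orbits, which are tori-like cylinders in $\M$. Each such cylinder projects to a primitive closed orbit $\bar\gamma$ of $\phi_t$ on $\N$. The full period $\lambda_\scrC(\gamma)\in\fa_\scrC$ is recovered as the period of $\bar\gamma$ plus an $\fh$-valued cocycle, namely the holonomy of a closed $\fh$-valued $1$-form $\theta$ on $\N$ (the connection of the principal $\fh$-bundle) integrated along $\bar\gamma$. So, writing $\mathbf{s}=s_0u^*+\mathbf{s}_\fh$, we get
\[
e^{-\mathbf{s}(\lambda_\scrC(\gamma))}=e^{-s_0 T_{\bar\gamma}}\exp\Bigl(-\int_{\bar\gamma}\mathbf{s}_\fh(\theta)\Bigr).
\]
Hence $\zeta_\Gamma(\mathbf{s})$ is a twisted Ruelle zeta function for $\phi_t$ with the potential $V_{\mathbf{s}_\fh}:=\mathbf{s}_\fh(\theta)(X)$. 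This potential is smooth and depends holomorphically (in fact linearly) on $\mathbf{s}_\fh\in(\fh^*)_\C$.

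The core argument is the Dyatlov--Guillarmou theory for Axiom A flows \cite{Dyatlov-Guillarmou-16}, applied with holomorphic parameters. Use the Guillemin trace formula on the bundles $\Lambda^m\Sigma$ and the usual alternating-sum identity
\[
\zeta_\Gamma(\mathbf{s})^{\pm1}=\prod_m \zeta_m(\mathbf{s})^{(-1)^{m}}.
\]
Here $\log\zeta_m$ has derivative in $s_0$ equal to the flat trace of $e^{-t(X+V_{\mathbf{s}_\fh}+s_0)}$ on $\Lambda^m\Sigma$ (cut off near $\scrK$). Using the linearized Poincaré maps this is the standard identity, with the orientation sign handled by the $\pm$ bundles.

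For each $m$ and each $\mathbf{s}_\fh$ in a fixed compact set, the resolvent $(X+V_{\mathbf{s}_\fh}+s_0)^{-1}$, restricted to anisotropic spaces supported near $\mc T_+$, is meromorphic in $s_0$ on $\{\Re s_0>-C\}$ with $C$ arbitrary. We will show that the construction (escape functions, parametrix, Fredholm theory) is uniform and holomorphic in $\mathbf{s}_\fh$, so that we obtain a Fredholm family that is holomorphic in all of $\mathbf{s}$. Then $\partial_{s_0}\log\zeta_m$ extends as the $s_0$-derivative of a log-determinant of $\mathrm{Id}+K(\mathbf{s})$, where $K(\mathbf{s})$ is trace class, holomorphic in $\mathbf{s}$, and defined on $\{\Re s_0>-C\}\times\{|\mathbf{s}_\fh|<R\}$. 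Its residues in $s_0$ are integers, namely ranks of spectral projectors, as in \cite{Dyatlov-Zworski-16}.

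Since $C$ and $R$ are arbitrary, each $\zeta_m$ is locally a quotient of holomorphic functions on $(\fa_\scrC^*)_\C$, and so is $\zeta_\Gamma$.

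The main obstacle is the passage from one-variable meromorphy to joint meromorphy. There are two ways one might proceed:

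\begin{itemize}
\item Use Hartogs-type results for functions that are meromorphic in $s_0$ with integer residues and holomorphic in the parameters. This does not give meromorphy in several variables for free.
\item Express $\zeta_m$ directly as a regularized Fredholm determinant $\det(\mathrm{Id}-K(\mathbf{s}))$, holomorphic in $\mathbf{s}$ on each chart, with the chosen anisotropic space fixed locally. This forces the zero/pole set to be the analytic hypersurface $\sigma^{(m)}_{\mathrm{RS}}$ of Theorem~\ref{theorem:intro-rt}.
\end{itemize}

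I would try the second route, reusing the holomorphic Fredholm family already built for Theorem~\ref{theorem:intro-rt}. On overlapping charts the determinants differ by nonvanishing holomorphic factors, since the dynamical traces agree for $\Re s_0\gg1$, and these glue to a global meromorphic function.
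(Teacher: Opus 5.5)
Your reduction matches the paper's. $\zeta_\Gamma$ is the twisted Ruelle zeta function of the base Axiom A flow on $\N$ with potential $\s_\fh(w)$. It factors, via the orientation-twisted form bundles, into dynamical determinants $\zeta_m$. The resolvent $P_+(\s)^{-1}$ comes from a Fredholm family holomorphic in all of $\s$.

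The gap is at the step you yourself flag as the main obstacle. You claim that $\zeta_m$ is, up to a holomorphic unit, a Fredholm determinant $\det(\mathrm{Id}+K(\s))$ with $K$ trace class, and you justify the gluing by ``the dynamical traces agree for $\Re s_0\gg1$''. But the orbit sums defining $\partial_{s_0}\log\zeta_m$ are \emph{flat} (Guillemin) traces of $e^{-tP}$, not operator traces. Neither the propagator nor the resolvent is trace class on the anisotropic spaces. No trace formula identifies $\operatorname{tr}\bigl((\mathrm{Id}+K)^{-1}\partial_{s_0}K\bigr)$ with the orbit sum.

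Take a determinant $D(\s)$ built from a smoothing perturbation that makes $P_+(\s)$ invertible. It only gives fiberwise information. For fixed $\s_\fh$, \cite{Dyatlov-Guillarmou-16} together with Gohberg--Sigal shows that $\zeta_m(\cdot,\s_\fh)$ and $D(\cdot,\s_\fh)$ have the same zeros with multiplicities, so $\zeta_m/D$ is entire and nonvanishing in $s_0$. Whether this ratio is holomorphic jointly in $(s_0,\s_\fh)$ is exactly the several-variables problem again. So the determinant does not bypass it, and your gluing argument presupposes it.

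The paper closes the gap in two steps. First, it shows that $\partial_{s_0}\log\zeta_\E(\s)$ is itself jointly meromorphic on $\fa^*_\C$. This derivative is the flat trace of a cut-off, time-shifted $P_+(\s)^{-1}$. Theorem~\ref{theorem:rt-anosov2}(ii) gives a jointly meromorphic Schwartz kernel whose wavefront set lies in $\mathrm{C_{sing}}$ \emph{uniformly in $\s$}, and this uniformity is what makes the flat trace meromorphic in all variables. Second, it applies Lemma~\ref{lemma:extension}: a function holomorphic and nonvanishing on a domain with connected $s_0$-fibers, whose $s_0$-log-derivative extends jointly meromorphically with fiberwise simple poles of positive integer residue, extends holomorphically.

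The Hartogs route you discarded also works, provided you apply it to $\zeta_m$ rather than to its log-derivative. Each $\zeta_m(\cdot,\s_\fh)$ is \emph{entire} by \cite{Dyatlov-Guillarmou-16}, and $\zeta_m$ is jointly holomorphic on the region of absolute convergence. Expand in $s_0$ around a point of that region and apply Hartogs' lemma on subharmonic functions to $\frac1n\log|a_n(\s_\fh)|$. This gives joint holomorphy on all of $(\fa_\scrC^*)_\C$.

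Either way, each $\zeta_m$ is entire, not merely locally a quotient of holomorphic functions. $\zeta_\Gamma$ is then meromorphic as a finite alternating product of them.
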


Partial results (for projective Anosov representations) were established in \cite{Pollicott-Sharp-24, Delarue-Monclair-Sanders-24}. It should be possible to deduce from Theorem \ref{theorem:intro-zeta-anosov} counting results for elements in $\Gamma$; see \S\ref{sssection:counting} for further discussion.

\subsubsection{Poincaré series} We introduce the following Poincaré series of the Anosov subgroup:
\begin{equation}
\label{equation:poincare}
\eta_\Gamma(\mathbf{s}) := \sum_{\gamma \in \Gamma} e^{-\mathbf{s}(\mu_\scrC(\gamma))}, \qquad \Re(\mathbf{s})\in\mathbf{C}^{(d_s),+}.
\end{equation}
 This is the higher-rank analogue of the series $\sum_\gamma e^{-sd_{\HH^n}(o,\gamma o)}$ in rank $1$, where $o \in \HH^n$ is an arbitrary origin. Meromorphic continuation of this series in rank $1$ goes back to Huber \cite{Huber-56,Huber-59}, who obtained it for compact hyperbolic surfaces. It was subsequently extended to all geometrically finite hyperbolic manifolds by Guillarmou--Mazzeo \cite{Guillarmou-Mazzeo-12}. The multivariate higher-rank version of the Poincar\'e series \eqref{equation:poincare} appears ubiquitously, in particular, in the context of higher-rank Patterson--Sullivan theory, and dates back at least to the definition of Manhattan curves by Burger \cite{Burger-93}. For Anosov representations, the complex function $\eta_\Gamma$ is holomorphic in $\{\mathbf{s} \in (\fa_\scrC^*)_{\C} ~:~ \Re(\mathbf{s}) \in \mathbf{C}^{(d_s),+}\}$, and, by \cite[Corollary 5.7.2]{Sambarino-24}, it diverges at every point of $\mathbf{C}^{(d_s)}$. It thus identifies this hypersurface as the boundary of its domain of absolute convergence. See also the recent book \cite{DKLZ26} which considers Poincaré series for Borel Anosov representations with values in distributions on the complete flag variety, and computes its renormalized value at the boundary of its domain of absolute convergence.
 
 In contrast to the rank-one setting, we are not aware of any previous result establishing a meromorphic extension of the multivariate Poincar\'e series in higher rank. In the present article, we prove the following:
\begin{theorem}
\label{theorem:intro-poincare}
For any torsion-free $\scrC$-Anosov subgroup, the Poincaré series $\eta_\Gamma$ defined in \eqref{equation:poincare} admits a meromorphic extension to $(\fa_\scrC^*)_{\C}$.
\end{theorem}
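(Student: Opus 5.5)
The plan is to realize $\eta_\Gamma(\mathbf{s})$ as a pairing of two geometric distributions on $\M$ against the resolvent of the $A_\scrC$-action, and then to use the meromorphic continuation of that resolvent from the general theory of Axiom A actions of type $(k,1)$.

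\textbf{Step 1: geometric model of the sum.} Fix a basepoint $o=K\in G/K$. Write $\gamma = k_\gamma \exp(\mu_\fapp(\gamma)) k'_\gamma$ (Cartan decomposition). Then $\mu_\scrC(\gamma)$ is, up to a uniformly bounded and smooth (in $\gamma$, for large $|\gamma|$) correction, the unique $a\in\overline{\scrC}$ such that $g\exp(a)M_\scrC$ meets $\gamma\, g' M_\scrC$, where $g, g'$ are taken from the compact set $KM_\scrC\cap\odms$. Concretely, consider $\Sigma_o := \Gamma\backslash(\Gamma K M_\scrC/M_\scrC\cap\odms)$, the projection to $\M$ of the $K$-orbit through $o$. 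The Anosov property (Definition~\ref{definition:theta-anosov}) guarantees that $\alpha(\mu_\fapp(\gamma))\to\infty$ for $\alpha\in\roots_\scrC^+$. This forces the relevant pieces of the orbit to lie in $\odms$ eventually, so that a compactly supported piece $\Sigma$ of this submanifold lies in $\M$. I would choose a smooth cutoff $\chi$ on $\Sigma$ and a smooth density. The returns of $\exp(a)\Sigma$ to $\Sigma$ are then indexed by $\gamma\in\Gamma$, with return "time" $a=\mu_\scrC(\gamma)$. To make this exact rather than approximate, I would use $K$-invariance, exactly as in rank one (Guillarmou--Mazzeo / Dyatlov--Guillarmou style).

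\textbf{Step 2: Laplace transform identity.} Define the currents $\delta_\Sigma$ and $\delta'_\Sigma$, the integration currents of the two transverse pieces (one "unstable-like" piece through $\mathcal T_+$ and one "stable-like" piece through $\mathcal T_-$), paired as sections of the appropriate $\Lambda^m\Sigma_\M$. For $\Re(\mathbf{s})\in\mathbf{C}^{(d_s),+}$, I will show
\[
\int_{\scrC}\langle e^{-\X(a)}\delta_\Sigma,\delta'_\Sigma\rangle e^{-\mathbf{s}(a)}\,\dd a=\sum_{\gamma}F(\mathbf{s},\gamma)e^{-\mathbf{s}(\mu_\scrC(\gamma))},
\]
where $F$ is a smooth Jacobian factor that I arrange to equal $1$. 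Cancelling the Jacobian will require some care: I would compute the pairing in the transverse $K$-slice, where the integrand is explicit, using the Iwasawa/Cartan comparison. The left side is $\langle R(\mathbf{s})\delta_\Sigma,\delta'_\Sigma\rangle$, the pairing against the multiparameter resolvent/Laplace transform of the $A_\scrC$-action restricted to the cone. Any residual difference between the $K$-orbit return times and $\mu_\scrC$ is a convergent correction, holomorphic on a larger domain; in the worst case I would handle it by an iterative expansion.

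\textbf{Step 3: meromorphic continuation.} By the general theory used for Theorem~\ref{theorem:intro-rt} (the cone resolvent of the Axiom A action of type $(k,1)$ continues meromorphically to $(\fa_\scrC^*)_\C$ as an operator between anisotropic spaces, with poles on $\sigma_{\mathrm{RS}}$), the pairing $\langle R(\mathbf{s})\delta_\Sigma,\delta'_\Sigma\rangle$ continues meromorphically. This requires the wavefront conditions $\WF(\delta_\Sigma)\cap E_s^*=\emptyset$ and $\WF(\delta'_\Sigma)\cap E_u^*=\emptyset$, i.e. transversality of the conormals $N^*\Sigma$ to the stable/unstable co-bundles. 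I expect \textbf{this transversality, together with the support conditions relative to $\mathcal T_\pm$, to be the main obstacle}. It requires choosing the slices, which come from $K$-orbits, to be transverse to the $N^\pm_\scrC$-foliations. Transversality follows from the Bruhat decomposition (the $K$-orbit fibers over the full flag variety, which is transverse to horospheres), but the noncompactness of $M_\scrC$ means that I would first have to check that the slice meets $\odms$ in a compact set after quotienting by $\Gamma$. I would try to obtain this from the refined version of the Delarue--Monclair--Sanders structure in Theorem~\ref{thm:DMS-main}, which, per the introduction, is adapted to the Poincaré series application. Finally, a cone-resolvent pairing of two such compactly supported distributions is meromorphic in all of $(\fa_\scrC^*)_\C$, which gives the claim.
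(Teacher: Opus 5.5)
\textbf{Genuine gap: Step~2 is not established.} Your architecture matches the paper's: lift to $G/M_\scrC$, write the series as a pairing of $K$-orbit currents against $\iota_{H_{k+1}}\cdots\iota_{H_1}T^{\scrC}(\s)$, and continue via Theorem~\ref{theorem:product-resolvents}. But the identity of Step~2 carries the whole proof, and Step~1 is false when $P_\scrC$ is not minimal.

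The return time is not unique. By Lemma~\ref{lemma:form-of-intersection}, $xKM_\scrC\cap\gamma yK\exp(\scrC)M_\scrC$ contains the regular point $z_\scrC(x,\gamma y)$ of \eqref{eq:def-z_C}, but also further points $k\dot wM_\scrC$. These are indexed by $[w]\neq[e]$ in $W(G,A)/\langle\roots^0_\scrC\rangle$ and sit at $a=\pi_\scrC(w^{-1}\mu_\fapp(y^{-1}\gamma^{-1}x))$ whenever this lies in $\scrC$. Each would add a spurious exponential to your pairing. The paper removes them as follows:
\begin{itemize}
\item Lemma~\ref{lemma:bad-intersection-are-away} shows these points eventually leave every $\Gamma$-invariant set with precompact image in $\Gamma\backslash\odms$. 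It uses $\scrC$-transversality and the reduction to $\iota_\fapp$-symmetric $\scrC$ (Remark~\ref{rem:sym_scrC}).
\item Proposition~\ref{prop:late-intersections-are-in-DMS} shows that the regular points $z_\scrC(x,\gamma y)$, and the unpropagated points $z_{-\scrC}(\gamma y,x)$, stay in a fixed precompact set.
\item Lemma~\ref{lem:cutoff_poinc} turns this into cutoffs.
\end{itemize}
Checking that ``the slice meets $\odms$ in a compact set'' is the wrong target, since $KM_\scrC\cap\odms$ is merely open in $KM_\scrC$. What you need is control of where the intersection points lie. The regular point is exact by the $KAK$ decomposition, which matters: your fallback would fail, because $e^{-\s(\mu_\scrC(\gamma)+c_\gamma)}-e^{-\s(\mu_\scrC(\gamma))}$ is not summable past the critical hypersurface when $c_\gamma$ is merely bounded.

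\textbf{Further missing points.}
\begin{itemize}
\item \emph{Signs, not Jacobians.} There is no Jacobian to cancel. By Lemma~\ref{lem:submanifold_propagation}, the contracted sweep of $[yK]$ over $\mc B$ is $e^{-\s(\mathbf a)}[yK\exp(\mc B)]$, and pairing transverse currents of complementary dimension counts signed intersections. Because $xKM_\scrC\simeq G/P_\scrC$ can be non-orientable, you must use currents twisted by $\fo(E_u)$ (Corollary~\ref{cor:submanifold-pairing-non-oriented}). You must then prove every intersection number at $z_\scrC(x,\gamma y)$ equals $+1$; this is Lemma~\ref{lem:orientations:compatible}, via $G$-equivariance and connectedness of the regular set.
\item \emph{Apex of the cone.} You cannot integrate over $\scrC$ starting from its apex. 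By \eqref{equation:wftc}, the part of the kernel with $a\in\partial\scrC$ (in particular $a=0$, where it is the diagonal) pairs $N^*\Sigma$ with itself when $x=y$. Indeed $\Sigma$ lies in the boundary of its own $\scrC$-sweep, so the pairing is undefined. The paper instead uses $H_0+\scrC$ with $H_0$ generic, which also moves the finitely many exceptional $\gamma$ into a holomorphic finite sum.
\item \emph{Transversality is not the obstacle.} At every point of $xKM_\scrC$ the tangent space is $\bigoplus_{\alpha\in\roots^+_\scrC}\fk_\alpha$, as in \eqref{eq:TxK}. This is a graph over $\fn^+_\scrC$ and over $\fn^-_\scrC$, so $N^*(xKM_\scrC)$ meets neither $E_s^*$ nor $E_u^*$. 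Both currents are $K$-orbit currents; you need no stable/unstable choice of slices and no support condition relative to $\mc T_\pm$.
\end{itemize}
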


For certain classes of representations, it should be possible to compute explicitly the value at $0$ of the Poincaré series, that is, $\eta_\Gamma(0)$.
See \S\ref{sssection:poincare-zero} for further discussion.

\subsection{Perspectives} We detail a non-exhaustive list of questions arising from our work.

\subsubsection{Quantum-classical correspondence} 

\label{sssection:correspondence}

On the unit tangent bundle $SN$ of a convex cocompact (resp.\ closed) hyperbolic manifold
$N := \Gamma\backslash\HH^n$ (rank one), it is known that resonant states of the geodesic flow 
associated with resonances in a ``first band'' can be integrated over the spheres of the
fibration $SN \to N$ to produce resonant states (resp.\ eigenfunctions) for the Laplacian on $N$. 
See \cite{Dyatlov-Faure-Guillarmou-15} for closed hyperbolic manifolds (or 
\cite{Guillarmou-Hilgert-Weich-21} for compact locally symmetric spaces of rank one), and
\cite{Guillarmou-Hilgert-Weich-18} for compact and convex cocompact hyperbolic surfaces.

In higher rank, a quantum--classical correspondence was established in 
\cite{Hilgert-Weich-Wolf-23} when $\Gamma < G$ is a uniform lattice; the corresponding quantum
operators form a family of commuting invariant differential operators on the locally symmetric space of
higher rank. We expect that there should also exist a higher-rank analogue of the first band 
when $\Gamma < G$ is an Anosov subgroup, which would allow one to construct resonant states 
for the commuting family of invariant differential operators on the corresponding (noncompact) quotient of the symmetric space. Note that in \cite{Delarue-Guillarmou-Monclair-2025} a quantum--classical correspondence for special classes of Anosov representations in $\mathrm{SO}(2,2)$ is established. Although this is still dynamically a rank-one situation (which corresponds to a spacelike geodesic flow on $\mathrm{AdS}^3$), it indicates that the methods of quantum--classical correspondence are compatible with Anosov subgroups.

\subsubsection{Value at $0$ of the Poincaré series. Order of vanishing at $0$ of the Ruelle zeta function} \label{sssection:poincare-zero} The value at $0$ of the meromorphic extension of the Poincaré series \eqref{equation:poincare} should have a topological interpretation in terms of $\Gamma$ and/or $G$. For a representation $\rho:\Gamma\to G$, write $\eta_\rho:=\eta_{\rho(\Gamma)}$. More precisely, it is reasonable to expect that $\rho \mapsto \eta_\rho(0)$ is locally constant on the subset of Anosov representations inside the character variety. We believe that such a statement is approachable for Hitchin representations close to the Fuchsian locus. The expected result is $\eta_\rho(0)=1-1/\chi(\Sigma)$, where $\chi(\Sigma)$ is the Euler characteristic of the surface (of genus $\geq 2$). This should be thought of as a generalization of similar results obtained on negatively curved surfaces (rank-one case); see, for instance, \cite{Dang-Riviere-24}. One hope is to be able to detect the connected component of the space of Anosov representations using $\eta_\rho(0)$ as a topological invariant. Similarly, the order of vanishing of the Ruelle zeta function at zero could be a topological invariant; see \cite{Dyatlov-Zworski-17, Dang-Guillarmou-Riviere-Shen-20, Cekic-Delarue-Dyatlov-Paternain-22} among other references.

\subsubsection{Essential spectral gap} \label{sssection:counting} Assume in this subsection that $\Gamma$ is torsion-free and Zariski dense. Given $\varphi\in\bigl(\scrL\bigr)^{*,\circ}\subset\fa_\scrC^*$, one may look at
\[
N_\varphi(T) := \sharp \{[\gamma] \in [\Gamma]^\sharp ~:~ \varphi(\lambda_\scrC(\gamma)) \leq T\},
\]
where $[\Gamma]^\sharp$ denotes the nontrivial primitive conjugacy classes of $\Gamma$. It was shown in \mbox{\cite{Sambarino-14,Potrie-Sambarino-17}} that this number satisfies
\begin{equation}
\label{equation:asymptotic-nt}
N_\varphi(T) \sim_{T \to +\infty} \mathrm{Li}(e^{\delta(\varphi)T}),
\end{equation}
where $\mathrm{Li}(x) := \int_2^x \dd t/\log(t)$. This result also follows from the meromorphic extension of the zeta function (Theorem \mbox{\ref{theorem:intro-zeta-anosov}}), using a standard Tauberian type argument called the Wiener-Ikehara theorem (see \mbox{\cite{Parry-Pollicott-90}} for instance). It was further established in \mbox{\cite[Theorem E]{Delarue-Monclair-Sanders-24}} (projective Anosov case), \mbox{\cite[Theorem 5.5]{Pollicott-Sharp-24}} ($(1,2)$-Anosov representations of surface subgroups into $\mathrm{SL}_n(\R)$ which are $(1,1,2)$ hyperconvex) and \mbox{\cite[Theorem 1.9]{Chow-Sarkar-24}} (general case) that \mbox{\eqref{equation:asymptotic-nt}} holds with a power saving remainder term.

We believe that this indicates an \emph{essential spectral gap} for the affine subspace $\varphi + i\fa^*_\scrC$, namely that, outside of any small neighborhood of $\varphi$, the remainder of the spectrum $\sigma_{\mathrm{RS}}$ lies at a positive, uniform distance $d(\varphi) > 0$ away from $\varphi + i\fa^*_\scrC$. This essential spectral gap would also likely imply that the asymptotic expansion \eqref{equation:stationary-phase-intro} of the correlation function in inverse powers of time (Theorem \ref{theorem:mixing-intro}) holds independently of the Diophantine condition, which we believe to be true. It might even be plausible that an essential spectral gap holds uniformly for the whole of $\mathbf{C}^{(d_s)} + i\fa^*_\scrC$, that is, $\inf_{\varphi \in \mathbf{C}^{(d_s)}} d(\varphi) > 0$.

More generally, one may consider $\varphi_1, \ldots, \varphi_{d} \in \fa_\scrC^*$, $d$ linear forms in the interior of the dual of the Benoist cone ($\varphi_i > 0$ on $\scrL$). A natural quantity is to compute the asymptotic as $T \to +\infty$ of
\[
N_{\varphi_1,\ldots,\varphi_{d}}(T) := \sharp \{[\gamma] \in [\Gamma]^\sharp ~:~ \varphi_j(\lambda_\scrC(\gamma))\leq T\quad\text{for every }1\leq j\leq d\}.
\]
Likewise, this should behave as $N(T)\sim_{T \to +\infty} c \cdot e^{h T}/T^\alpha$, for some constants $c, h, \alpha > 0$, and one may also ask whether a power-saving error term can be obtained.

\subsubsection{Trace formula} For analytic (or Gevrey) Anosov flows, Jézéquel \mbox{\cite{Jezequel-21}} established a trace formula relating the resonance spectrum (Pollicott--Ruelle resonances) of the flow generator to periodic orbits. Namely, in the sense of distributions, he proved that
\begin{equation}
\label{equation:trace-formula}
\sum_{\lambda \in \sigma_{\mathrm{RS}}} m(\lambda)e^{\lambda t} = \sum_{\gamma \in \mathbf P} \dfrac{\ell_\gamma^\sharp \delta_{\ell_\gamma}(t)}{|\det(1-P_\gamma)|},
\end{equation}
where $\mathbf P$ denotes the set of periodic orbits, $m(\lambda)$ is the algebraic multiplicity of the resonance, $\ell_\gamma^\sharp$ the primitive length, $P_\gamma$ the Poincaré map along $\gamma$, and $\delta_{\ell_\gamma}(t)$ the Dirac mass at $\ell_\gamma$. Observe that the sum on the left-hand side is, indeed, over a discrete multiset because the resonance spectrum is discrete in rank one.

The $A_\scrC$-action on $\M$ associated with an Anosov subgroup is a Lie group action on a locally homogeneous space; it is thus analytic. It is therefore natural to ask whether a higher-rank analogue of the trace formula \eqref{equation:trace-formula} could hold. In such a setting, the formula should be replaced by a distribution of $\mathbf{t} \in \fa_\scrC$, and the right-hand side should be a weighted sum of Dirac masses $\delta_{\lambda_\scrC(\gamma)}(\mathbf{t})$ located at the Jordan projections.

However, the spectral side (left-hand side) is less clear. It would likely involve an integral of the form
\[
\int_{\sigma_{\mathrm{RS}}} e^{\varphi(\mathbf{t})} \, \dd\mu(\varphi),
\]
for a certain natural measure $\mu$ on $\sigma_{\mathrm{RS}}$, but we have not yet been able to identify this measure.

\subsubsection{Other perspectives} We now list several related questions:
\begin{itemize}

\item Relatively Anosov representations are a significant relaxation of the usual Anosov subgroup assumption (Definition \mbox{\ref{definition:theta-anosov}}) which can be thought of as a higher rank version of geometrically finite groups. In \S\ref{section:domain}, we construct a domain of discontinuity for such subgroups. However, it remains unclear whether the spectral approach developed in the present paper can be extended to this setting. To some extent, this situation is analogous to defining Pollicott--Ruelle resonances on negatively curved manifolds with cusps, as carried out in \cite{Bonthonneau-Weich-22}.

\item For Hitchin representations into $\mathrm{SL}_3(\R)$, it is known that they can be interpreted as the holonomy representations of convex real projective structures (see \cite{Choi-Goldman-97}). This interpretation provides additional geometric structure, which could allow one to reinterpret the resonance spectrum we define in terms of objects intrinsic to the surface itself.

\item We will see in Proposition \ref{proposition:almost-diophantine} that almost every Hitchin representation $\rho : \pi_1(\Sigma) \to \mathrm{SL}_n(\R)$ is Diophantine, provided $2g \geq n^3$, where $g \geq 2$ is the genus of the closed surface $\Sigma$. This is likely to be true without any restriction on the pair $(g,n)$. More generally, when $\Gamma$ is word-hyperbolic and $G$ is a noncompact semisimple Lie group, \emph{most representations} $\rho : \Gamma \to G$ should be Diophantine. This is left for future investigation.

\end{itemize}

\subsection{Comparison with earlier work} 

\subsubsection{Free Abelian cocycles} The study of ergodic properties of free Abelian cocycles over hyperbolic flows, including questions concerning multidimensional periodic data (called the \emph{Lyapunov spectrum} below), goes back to the pioneering work of Lalley \cite{Lalley-87,Lalley-89a,Lalley-89b}; see also the work of Babillot--Ledrappier \cite{Babillot-Ledrappier-98}. This topic is intimately connected to the counting of periodic orbits for hyperbolic flows under homological constraints; see \cite{Katsuda-Sunada-86, Phillips-Sarnak-87, Pollicott-91, Sharp-92, Sharp-93, Anantharaman-00, Pollicott-Sharp-01} among other references. However, all these articles make use of symbolic codings (Markov partitions) and exploit the fact that the flow can be coded by a suspension flow over a subshift of finite type. Our approach to free Abelian cocycles is completely different and builds on modern tools from microlocal analysis; see \S\ref{sssection:spectrum-biblio}. In particular, this allows us to recover known results and to establish new results that are out of reach of classical tools, such as the meromorphic extension of Ruelle zeta functions.

\subsubsection{(Quasi-)invariant measures}
Associating natural (quasi-)invariant measures with discrete subgroups $\Gamma$ is a classical topic dating back to the work of Patterson \cite{Patterson-76} and Sullivan \cite{Sullivan-79}. In their seminal works, both authors not only constructed these quasi-invariant \emph{Patterson--Sullivan measures} supported on the limit set but also provided a spectral interpretation in terms of boundary values of Laplace eigenfunctions; see \cite[Theorem 3.7]{Patterson-76} and \cite[Proposition 28]{Sullivan-79}. Sullivan also showed, for geometrically finite groups, how a product formula relates the Patterson--Sullivan measures on the boundary to the measure of maximal entropy for the geodesic flow. The latter is called the \emph{Bowen--Margulis--Sullivan (BMS) measure}; see \cite[\S4]{Sullivan-79} or \cite{Margulis-69,Bowen-74}. Another related class of measures on the unit tangent bundle consists of the \emph{Burger--Roblin measures}. These are quasi-invariant under the geodesic flow, invariant under the horocyclic flow, and were introduced by Burger \cite{Burger-90} and later generalized to the $\mathrm{CAT}(-1)$ setting by Roblin \cite{Roblin-03}.

For general Zariski-dense discrete subgroups in higher rank, Albuquerque \cite{Albuquerque-99} and Quint \cite{Quint-2002} constructed Patterson--Sullivan measures on the visual and Furstenberg boundaries of the symmetric space, respectively (see also related work of Link \cite{Link-04,Link-06}). Specializing to the case of Anosov subgroups, Lee--Oh \cite{Lee-Oh-20} and Sambarino \cite{Sambarino-24} showed that the space of Patterson--Sullivan measures is uniquely parametrized by the critical hypersurface. Moreover, Sambarino \cite{Sambarino-15,Sambarino-24} used the Patterson--Sullivan measures to construct the family of $A_\scrC$-invariant measures $\mu_\varphi$ on $\scrJ$ (which is the higher-rank version of the BMS measure) and showed how these measures relate to the measures of maximal entropy for the refraction flows. Lee--Oh \cite{Lee-Oh-20} proved ergodicity and classification results for horospherical invariant measures, while Edwards--Lee--Oh \cite{ELO23} introduced higher-rank Burger--Roblin measures and studied their role in local mixing and matrix-coefficient asymptotics. Generalizations beyond the Borel Anosov case appear in more recent work by Kim \cite{Kim-25} and Choi--Kim \cite{Choi-Kim-26}. In the present paper, all these measures appear naturally as resonant states (or pairings thereof) on different types of tensor bundles and are parametrized by the leading resonant hypersurface. Although we do not provide any new or more general constructions of invariant measures, our approach provides a spectral-theoretic perspective on these measures that was previously unavailable in higher rank.

\subsubsection{Dynamics on the trapped set} The $A_\scrC$-invariant BMS measures $\mu_\varphi$ (see \S\ref{sssection:correlation}) on $\scrJ$ allow one to study the statistical properties of the $A_\scrC$-action (ergodicity, mixing, etc.). When $k>0$, however, these BMS measures have infinite mass, so mixing must be understood locally; Oh--Pan introduced the term \emph{local mixing} for the resulting diffusive decay of correlations \cite{Oh-Pan-19}.

Local mixing (or \emph{decay of correlations}) on the 
trapped set with respect to the BMS measures was initially considered
by Thirion \cite{Thirion-09} for ping-pong groups in $\mathrm{SL}(n,\R)$ in a particular direction (the maximal growth direction), by Sambarino \cite{Sambarino-15} for Zariski-dense hyperconvex representations of surface groups, by Chow--Sarkar \cite
{Chow-Sarkar-23} for Borel Anosov groups, and by Sambarino \cite{Sambarino-24} for general Zariski-dense Anosov representations. We also refer to Oh's ICM plenary talk \cite[\S\S 7 and 8]{Oh-ICM-26} for a recent survey on these local mixing results. However, so far, none of these results is
quantitative and only provides the first term in the
expansion of the correlation function \eqref
{equation:stationary-phase-intro}.

Motivated by the results of the
present article, the second author, together with Cekić
and Muñoz-Thon, proved a related result on a full asymptotic
expansion of the correlation function on Abelian covers
of isometric extensions of Anosov flows
\cite{Cekic-Lefeuvre-Munoz-26}. We are also aware of an independent work in preparation by Kim-Oh-Pan \cite{KOP26} on establishing a full asymptotic expansion of the correlation function for cusped positive representations without assuming a Diophantine condition.

\subsubsection{Domain of discontinuity. Spectral theory of smooth Axiom A flows for Anosov representations}
Our work builds on essential ideas from \cite{Delarue-Monclair-Sanders-24,Delarue-Monclair-Sanders-25} concerning the construction of domains of discontinuity for Anosov subgroups. A key insight of these works is that these domains give rise to smooth hyperbolic dynamics, which can be viewed either as hyperbolic higher-rank actions—a viewpoint elaborated in the present paper—or as families of Axiom A flows, which is the perspective adopted in \cite{Delarue-Monclair-Sanders-24,Delarue-Monclair-Sanders-25}.
Most parts of Theorem~\ref{thm:DMS-main} are established in \cite{Delarue-Monclair-Sanders-25}.
However, since the primary focus of \cite{Delarue-Monclair-Sanders-24,Delarue-Monclair-Sanders-25} is on Axiom A flows rather than on higher-rank actions (although the latter are mentioned), we could not base our microlocal work entirely on their results.
In particular, Assumption $\hyperlink{AA2}{\rm(A2)}$ for the Axiom A action of type (k,1) is not addressed there (note, however, that a very similar assumption appears in \cite[Section 5]{Delarue-Guillarmou-Monclair-2025} in a related special case of spacelike geodesic flows on AdS$^3$).

We decided to give a self-contained proof of Theorem~\ref{thm:DMS-main} in \S\ref{section:domain} for the following reasons: First, while many of the results in \S\ref{section:domain} were already shown in \cite{Delarue-Monclair-Sanders-25}, our techniques and language differ, so this provides an alternative approach to their important theorems.
While \cite{Delarue-Monclair-Sanders-25} largely avoids Lie group decompositions that involve choices, such as Iwasawa or horospherical decompositions, our approach instead fully exploits those.
Second, the (sometimes quite advanced) structure theory of reductive groups introduced in \S\ref{section:structure} is required anyway for the meromorphic continuation of the Poincar\'e series in \S\ref{section:poincare}.
Furthermore, we also need to refer to the internal mechanisms of the proof of the domain of discontinuity for the analysis of non-transversal points of intersection in the proof of the meromorphic continuation of the Poincar\'e series.
While it should, in principle, be possible to rely instead on the corresponding arguments in \cite{Delarue-Monclair-Sanders-25}, doing so would make the exposition hard to follow due to the differences in language and techniques.

Finally, our approach naturally shows that most properties of domains of discontinuity continue to hold under weaker assumptions on $\Gamma$, such as $\Gamma$ being a divergence group or a transverse subgroup of a regular subgroup (we refer to  \S\ref{section:domain} for details) and affirmatively answers a question raised in \cite[\S 1.4]{Delarue-Monclair-Sanders-25}.
This result could prove particularly valuable for extending the spectral theory beyond Anosov subgroups, for instance to the class of relatively Anosov subgroups, for which we verify that all assumptions hold except the compactness of the trapped set.

Apart from merely constructing the domains of discontinuity and the hyperbolic dynamical systems, \cite{Delarue-Monclair-Sanders-24,Delarue-Monclair-Sanders-25} also derive important consequences for the spectral theory, dynamical properties of the flow, and related zeta functions. In \cite{Delarue-Monclair-Sanders-24}, in the setting of projective Anosov representations, dynamical resonances for the associated Axiom A flow are defined. Combined with results of Stoyanov, this provides the first proof of exponential mixing for Sambarino's refraction flow in this setting. In \cite[Theorem E]{Delarue-Monclair-Sanders-25}, a meromorphic extension of a family of zeta functions for general Anosov subgroups is shown; in \cite{Delarue-Guillarmou-Monclair-2025}, Delarue, Guillarmou, and Monclair prove a meromorphic continuation of Poincar\'e series associated with the spacelike geodesic flow on quotients of $\mathrm{AdS}^3$.
The main difference is that their approach reduces the higher-rank dynamics to a rank-one dynamical system (a flow), whereas our aim in the present paper is to study the full higher-rank action and to associate to it joint resonances, multivariate meromorphic zeta functions, and Poincaré series.

\subsubsection{Spectral theory of flows/actions using microlocal analysis} \label{sssection:spectrum-biblio} The theory developed in \S\ref{section:results-cocycles}--\S\ref{section:decay} builds on earlier work \cite{Faure-Roy-Sjostrand-08,Faure-Sjostrand-11,Faure-Tsuji-13,Dyatlov-Zworski-16,Dyatlov-Guillarmou-16} in rank~1, which establishes the meromorphic extension of the resolvent on anisotropic spaces for hyperbolic dynamical systems using microlocal analysis. See also \cite{Liverani-04,Gouezel-Liverani-06,Baladi-Tsuji-07} for a similar approach based on anisotropic Banach spaces. In turn, these techniques can be exploited to establish the meromorphic extension of the Ruelle zeta function; see \cite{Giulietti-Liverani-Pollicott-13,Dyatlov-Guillarmou-16,Dyatlov-Zworski-17,Dyatlov-Guillarmou-18}.

For Anosov actions on compact manifolds ---including Weyl chamber actions associated with uniform lattices in semisimple Lie groups---, a microlocal theory was recently developed in \cite{Guedes-Bonthonneau-Guillarmou-Hilgert-Weich-20,Guedes-Bonthonneau-Guillarmou-Weich-24} to study SRB measures and measures of maximal entropy \cite{Humbert-25}. The approach developed in \S\ref{section:results-cocycles}--\S\ref{section:decay} of the present paper is, to a certain extent, reminiscent of these works. However, in the setting of uniform lattices, the spectrum---also defined in $\C^{\mathrm{rank}(G)}$---consists of isolated points, whereas for Anosov representations one obtains complex codimension-one subvarieties. This is a notable feature of Anosov-subgroup quotients as opposed to lattices.

\subsection{Organization of the paper}

The paper is organized as follows:
\begin{itemize}
\item In \S\ref{sec:introduction-geometric-setting}, we introduce the notion of Abelian cocycles over hyperbolic flows and review some of their basic properties;
\item In \S\ref{section:results-cocycles}, we construct the resonance spectrum and prove the meromorphic extension of the product resolvent and dynamical zeta functions;
	\item In \S\ref{section:leading-resonance}, we study the leading resonant hypersurface and the associated invariant measures (of maximal entropy);
	\item In \S\ref{section:decay}, we discuss local mixing for the diagonal flows $(e^{tu_\varphi})_{t\in\R}$ with respect to the measures of maximal entropy;
\item In \S\ref{section:structure}, we introduce the necessary Lie-theoretic background. In particular, we discuss the properties of parabolic subgroups of $G$. We review some geometric aspects of the space $G/M_\scrC$, whose homogeneous dynamics we study. We also introduce the notion of divergent subgroups of $G$ and define Anosov representations and all related notions;
\item In \S\ref{section:domain}, we establish the existence of a domain of discontinuity and show that Anosov representations fit into the theory developed in \S\ref{section:results-cocycles}--\S\ref{section:decay}; in particular, we prove Theorems \ref{theorem:intro-rt}, \ref{theorem:intro-leading}, \ref{theorem:intro-zeta-anosov}, and \ref{theorem:mixing-intro};
\item In \S\ref{section:poincare}, we prove the meromorphic extension of the Poincaré series of Anosov representations (Theorem \ref{theorem:intro-poincare}).

\end{itemize}
\subsection{Notation and conventions}

\label{sssection:distributions}

If $H$ and $G$ are groups, $H<G$ means that $H$ is a subgroup of $G$, not necessarily a proper subgroup.

Let $\M$ be a smooth manifold without boundary. The space $\mc{D}'(\M)$ denotes the space of distributions on $\M$, that is, the dual of smooth compactly supported densities $C^\infty_{\comp}(\M,\Omega^1 \M)$, where $\Omega^1 \M \to \M$ denotes the density bundle. (In other words, $\mc{D}'(\M)$ is a space of ``generalized functions''.) The space $\mc{D}'_{\comp}(\M)$ denotes the space of compactly supported distributions on $\M$.

More generally, if $\mc{E} \to \M$ is a complex vector bundle, then $\mc{D}'(\M,\mc{E})$ is the dual of $C^\infty_{\comp}(\M, \mc{E}^* \otimes \Omega^1 \M)$. Given $u \in \mc{D}'(\M,\mc{E})$ and $\varphi \in C^\infty_{\comp}(\M,\mc{E}^*\otimes \Omega^1\M)$, we write $(u,\varphi)$ for the standard $\C$-bilinear pairing, and $\langle u,\varphi\rangle := (u,\overline{\varphi})$. If 
\[
P : C^\infty(\M,\mc{E}) \to C^\infty(\M,\mc{E})
\]
is a differential operator, its adjoint operator
\[
P^* : C^\infty(\M, \mc{E}^* \otimes \Omega^1 \M) \to C^\infty(\M, \mc{E}^* \otimes \Omega^1 \M)
\]
is defined by the equality $\langle P \varphi, \psi \rangle = \langle \varphi, P^* \psi \rangle$, for all $\varphi \in C^\infty_{\comp}(\M,\mc{E}), \psi \in C^\infty_{\comp}(\M,  \mc{E}^* \otimes \Omega^1 \M)$. If $\Gamma \subset T^*\M \setminus \{0\}$, then $\mc{D}'_\Gamma(\M)$ denotes the space of distributions with wavefront set in $\Gamma$.

Finally, given $\s = (s_0,\s') \in \C \oplus \C^k$, we will use the notation $|\Re(\s)| \gg 0$ to denote an open domain $U \subset \C^{k+1}$ such that there exists a constant $C > 0$ such that
\begin{equation}
\label{equation:bigres}
U \subset \{\s \in \C^{k+1} ~:~ \Re(s_0) > C(|\Re(\s')|+1)\},
\end{equation}
where $|\bullet|$ is the standard Euclidean norm on $\R^k$. \\

\noindent \textbf{AI disclosure.} AI was exclusively used during the final stage of the preparation of this manuscript to correct grammatical errors, polish language, verify mathematical notations and consistency, and conduct additional literature searches. All AI-assisted suggestions and references were independently verified by the authors. The authors take full responsibility for the content and correctness of the article. \\

\noindent \textbf{Acknowledgments.} We thank Andrés Sambarino for many fruitful discussions that led to this work. We are also grateful to Benjamin Delarue and Daniel Monclair for numerous helpful discussions concerning their domains of discontinuity. We thank Joachim Hilgert, Nicolas Tholozan, and Gaëtan Chenevier for motivating discussions and valuable comments. We also thank Hee Oh and Pratyush Sarkar for comments on an earlier version of this article. This project has received funding from the European Research Council (ERC) under the European Union's Horizon research and innovation programme (grant agreement No. 101162990), the Deutsche Forschungsgemeinschaft (DFG) Grant No. SFB-TRR 358/1 2023--491392403 (CRC ``Integral Structures in Geometry and Representation Theory''), and the Agence Nationale de la Recherche (ANR) through the grant ADYCT (ANR-20-CE40-0017).

\section{Axiom A actions of type $(k,1)$}
\label{sec:introduction-geometric-setting}
 
We introduce the notion of \emph{free Abelian cocycles over flows} and explain their relationship with locally free Abelian actions on $\R^k$-principal bundles; see \S\ref{ssection:definition-free-abelian-cocycles}. A key feature of free Abelian cocycles is that they may be interpreted as reparametrizations of a single flow, a point of view developed in \S\ref{sec:reparameterization}. We then restrict our attention to the case of interest in which the base flow is hyperbolic. In this case, we call these objects \emph{Axiom A actions of type $(k,1)$}; this is the subject of \S\ref{ssection:hyperbolic-base-dynamics}. The notion of the \emph{limit cone}, together with related concepts, is discussed in \S\ref{ssection:lyapunov-spectrum}. Finally, we conclude this section with a discussion of admissible bundles and associated distributions in \S\ref{ssection:bundles-distributions}.

\subsection{Free Abelian cocycles over flows} \label{ssection:definition-free-abelian-cocycles}

Fix a nonnegative integer $k$. Let $\M$ be a smooth connected $d_{\M}$-dimensional manifold without boundary. Let $A = \R^{k+1}$, and let
\[
\tau : A \times \M \to \M, 
\]
be a locally free Abelian action, mapping elements of $A$ to diffeomorphisms of $\M$. We denote by $\fa$ the Lie algebra of $A$, and identify it with an algebra of vector fields on $\M$. This infinitesimal representation is denoted by
\[
\fa \to C^\infty(\M,T\M), \qquad a \mapsto \X(a) := \partial_t \tau_{e^{ta}}|_{t=0}.
\]
We let $E_{0,\M} := \fa \subset T\M$. We will make the following assumption:
\begin{itemize}
\item[\textbf{\hypertarget{AA1}{(A1)}}] There exists a hyperplane $\fh \subset \fa$ such that the action of $H := \exp(\fh)$ on $\M$ is \emph{free and proper}.
\end{itemize}

This implies that $\N := \M/H$ is a smooth connected manifold without boundary (not necessarily compact) and $\pi : \M \to \N$ is an $H$-principal bundle, where $H\simeq\R^k$. Let
\[
d_{\mc{N}} := d_{\M}-k
\]
be the dimension of $\mc{N}$. Since $\R^k$ is contractible, the principal bundle $\M\to\mc{N}$ is trivial; that is, $\M \simeq \mc{N} \times H$ (see Lemma \ref{lemma:trivial}). After choosing an arbitrary smooth section $\sigma : \mc{N} \to \M$, a trivializing diffeomorphism is given by
\begin{equation}
\label{equation:trivialization}
\mc{N} \times H \ni (x, h) \mapsto \tau_h\left(\sigma(x)\right) \in \M.
\end{equation}

Let $A' = A/H \simeq \R$ be the quotient group; it acts on $\N$ and is generated by some element $X_\N \in C^\infty(\N,T\N)$ identified with a vector field on $\N$. This corresponds to choosing some $X_\M\in \fa \setminus \fh$ such that $\pi_\ast(X_\M)=X_\N$, with $X_\M$ and $X_\N$ viewed as vector fields on $\M$ and $\N$, respectively. Then in the trivialization $\M \simeq \N \times H$ given by \eqref{equation:trivialization}, we can write
\[
X_\M(x,h) \simeq_{\sigma} X_\N(x) + w(x)
\]
for some $w \in C^\infty(\N, \fh)$. Changing the trivialization via a global section $\sigma'$ we can always write $\sigma'(x):=\tau_{u(x)}(\sigma(x))$ for some smooth function $u\in C^\infty(\N,\fh)$. We then find
\begin{equation}
\label{equation:lala-cob}
X_\M(x,h) \simeq_{\sigma'} X_\N + w - X_\N u.
\end{equation}
Hence, once $\fh$ and $X_\M$ are fixed, the object that is well-defined independently of the choice of trivialization $\sigma$ is the class of $w$ modulo coboundaries $\{ X_{\N}u ~:~ u \in C^\infty(\N, \fh)\}$. We emphasize that we will mostly use an additive notation for cocycles and write $\fh$ instead of $H$.

Let $(\phi_t)_{t \in \R}$ and $(\Phi_t)_{t \in \R}$ be the flows generated respectively by $X_\N$ and $X_\M$. Note that
\begin{equation}\label{eq:abelian-extension-dynamics}
\Phi_t(x,h) = \left(\phi_t(x), h + \int_0^t w\circ\phi_s(x) \dd s\right).
\end{equation}
The choice of $X_\M$ also provides a decomposition of $\fa$ into $\fa = \R X_\M \oplus \fh$. The flow $(\Phi_t)_{t \in \R}$ is an \emph{extension} of $(\phi_t)_{t \in \R}$ in the sense that
\begin{equation}
\label{equation:extension}
\pi \circ \Phi_t = \phi_t \circ \pi, \qquad \forall t \in \R.
\end{equation}
In addition, it is \emph{equivariant} with respect to the principal bundle structure, namely,
\begin{equation}
\label{equation:equivariant}
\tau_h \circ \Phi_t = \Phi_t \circ \tau_h, \qquad \forall  t \in \R,\ h \in H.
\end{equation}
Note that, equivalently, one may start with a triple $(\mc{N},(\phi_t)_{t \in \R},w)$ and construct a flow $(\Phi_t)_{t \in \R}$ on the trivial principal $\R^k$-bundle $\N \times \R^k$ by \eqref{eq:abelian-extension-dynamics}. This leads to the following definition:

\begin{definition}[Free Abelian cocycles over flows]
\label{definition:free-abelian-cocycles}
A \emph{free Abelian cocycle over a flow} consists of a triple $(\mc{M},A,\tau)$ satisfying $\hyperlink{AA1}{\rm(A1)}$ or, equivalently, of a triple $(\mc{N}, (\phi_t)_{t \in \R}, w)$ as above.
\end{definition}

Later, we will focus on the case where $(\phi_t)_{t \in \R}$ is a hyperbolic flow. See \S\ref{ssection:hyperbolic-base-dynamics}.

\subsection{Properness of $\R^k$-actions}
\label{sec:reparameterization}

We now investigate the change of $\fh$ or $X_\M$ in the decomposition $\fa = \fh \oplus \R X_\M$. With the notation above, for $\alpha\in\fh^\ast$, we let
\[
\fh' := \{ \alpha(h)X_\M + h  ~:~ h\in\fh\}\subset \fa, \qquad H' := \exp(\fh').
\]
The following holds:

\begin{lemma}\label{lemma:properness-action-abstract}

\begin{enumerate}[label=\emph{(\roman*)}]
	\item The action of $H'$ on $\M$ is free if and only if for every periodic point $x\in\N$ of $(\phi_t)_{t \in \R}$ with period $T$, 
	\[
	\frac{1}{T}\int_0^T \alpha(w)\circ\phi_s(x) \dd s \neq - 1.
	\]
	
	\item The action of $H'$ on $\M$ is proper if and only if the map $F : \N \times \R \to \N \times \N \times \R$ defined by
\[
F(x,t) := \Big(x,\phi_t(x), \int_0^t \left(\alpha(w)\circ\phi_s(x) +1\right) \dd s \Big)
\]
is proper.
	\item If $\N$ is compact, $F$ is proper if and only if there exist $\eps,T>0$ so that
\begin{equation}
\label{equation:yannickerie}
\left|1 + \frac{1}{T}\int_0^T \alpha(w)\circ\phi_s(x) \dd s \right| > \eps, \qquad \forall x \in \N
\end{equation}
\end{enumerate}
\end{lemma}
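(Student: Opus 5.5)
Throughout, work in the trivialization $\M\simeq\N\times\fh$ given by \eqref{equation:trivialization}. There $H$ acts by translation in the $\fh$ factor and $\Phi_t$ is given by \eqref{eq:abelian-extension-dynamics}. Since $H$ commutes with $\Phi_t$, the element $\exp(\alpha(h)X_\M+h)\in H'$ acts by
\[
(x,v)\mapsto \Bigl(\phi_{\alpha(h)}(x),\ v+h+\int_0^{\alpha(h)} w\circ\phi_s(x)\,\dd s\Bigr).
\]
The key reduction is to eliminate the $\fh$-coordinate. For $g\in H'$, write $t:=\alpha(h)$ and set $W_t(x):=\int_0^t w\circ\phi_s(x)\,\dd s$. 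The relation $g\cdot(x,v)=(y,v')$ is then equivalent to $y=\phi_t(x)$ and $h=v'-v-W_t(x)$. Applying $\alpha$ to the second equation turns the constraint $\alpha(h)=t$ into the scalar equation
\[
\int_0^t\bigl(\alpha(w)\circ\phi_s(x)+1\bigr)\dd s=\alpha(v'-v).
\]
Thus $H'$-orbit relations are parametrized by pairs $(x,t)$ satisfying this equation, and $h$ is recovered from $(x,t,v,v')$.

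\emph{(i) Freeness.} A fixed point means $v'=v$, so $h=-W_t(x)$ and $\phi_t(x)=x$. If $t=0$ then $h=0$, so the action is free at such points. If $t\neq0$, then $x$ is periodic with period dividing $|t|$. The scalar equation reads $\int_0^t(\alpha(w)\circ\phi_s+1)\,\dd s=0$, which says that the average of $\alpha(w)$ over $[0,t]$ equals $-1$; this average is the same as the average over the prime period $T$. Conversely, given a periodic orbit on which the average of $\alpha(w)$ is $-1$, take $t=T$ and $h=-W_T(x)$. Then $\alpha(h)=T$ holds precisely because of that average, and this produces a nontrivial stabilizer element.

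\emph{(ii) Properness.} Properness means the map $H'\times\M\to\M\times\M$, $(g,p)\mapsto(p,g\cdot p)$, is proper. Since $H'\cong\fh$ via $h$, compact sets of $g$ correspond to compact sets of $h$. I will show both directions by sequences.
\begin{itemize}
\item Suppose $F$ is proper, and take $(x_n,v_n)$ and $g_n\cdot(x_n,v_n)$ in compact sets. Then $x_n$, $\phi_{t_n}(x_n)$ and $v'_n-v_n$ stay bounded, so the third component of $F(x_n,t_n)$, which equals $\alpha(v'_n-v_n)$, is bounded. Properness of $F$ gives $t_n$ bounded. Then $W_{t_n}(x_n)$ is bounded by continuity, hence $h_n$ is bounded.
\item Conversely, suppose $F(x_n,t_n)$ stays in a compact set while $t_n\to\infty$. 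Choose $v_n=0$ and $v'_n=h_n+W_{t_n}(x_n)$, i.e.\ $h_n=v'_n-W_{t_n}(x_n)$, with $v'_n$ chosen so that $\alpha(h_n)=t_n$. Concretely, fix $h_0\in\fh$ with $\alpha(h_0)=1$ and take $v'_n:=c_nh_0+(W_{t_n}(x_n)-\pi_{\ker\alpha}W_{t_n}(x_n))$, where $\pi_{\ker\alpha}$ is a projection onto $\ker\alpha$ along $h_0$. The scalar $c_n$ is then fixed by the bounded quantity $\int_0^{t_n}(\alpha(w)\circ\phi_s+1)\,\dd s$, so $v'_n$ is bounded and $(x_n,0)$, $(\phi_{t_n}x_n,v'_n)$ stay in a compact set. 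Meanwhile $h_n$ is unbounded because $\alpha(h_n)=t_n\to\infty$, which contradicts properness.
\end{itemize}
The hard part is this bookkeeping of the $\ker\alpha$ component, and I expect it to be the main technical step.

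\emph{(iii) Compact base.} When $\N$ is compact, properness of $F$ is equivalent to the statement that $|\int_0^t(\alpha(w)\circ\phi_s+1)\,\dd s|\to\infty$ as $|t|\to\infty$, uniformly in $x$.
\begin{itemize}
\item Assume \eqref{equation:yannickerie}. Write $t=nT+r$ with $0\le r<T$ and split the integral into blocks of length $T$. Each block contributes at least $T\eps$ in absolute value. By connectedness of $\N$ and continuity in $x$, the block averages have a constant sign, so the block contributions add up; the remainder over $[0,r]$ is bounded. The integral therefore grows linearly in $|t|$, which gives properness.
\item Conversely, if \eqref{equation:yannickerie} fails for every $\eps$ and $T$, take $T_n\to\infty$ and points $x_n$ at which the $T_n$-average tends to $0$. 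A bounded-slope argument then shows that the partial integrals cannot escape uniformly. Alternatively, use intermediate values along the orbit: if the integral is continuous in $t$ and $|{\cdot}|$ stays bounded somewhere along a sequence $t_n\to\infty$, this directly contradicts properness. This produces $(x_n,t_n)$ with $F(x_n,t_n)$ bounded and $t_n\to\infty$.

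The sign-constancy claim in the previous step also needs care. I will justify it by connectedness of $\N$ together with continuity of $x\mapsto\frac1T\int_0^T(\alpha(w)\circ\phi_s(x)+1)\,\dd s$, which is bounded away from $0$ by \eqref{equation:yannickerie}.
\end{itemize}
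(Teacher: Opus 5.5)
Your arguments for (i), for (ii), and for the implication \eqref{equation:yannickerie} $\Rightarrow$ properness in (iii) are essentially the paper's. Your appeal to connectedness of $\N$ and continuity of $x\mapsto\frac1T\int_0^T(1+\alpha(w))\circ\phi_s(x)\,\dd s$ justifies the paper's ``without loss of generality'' about the sign.

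There is one slip in the converse of (ii). With $\pi_{\ker\alpha}$ the projection along $h_0$, your $v_n'$ equals $(c_n+\alpha(W_{t_n}(x_n)))h_0$. The constraint $\alpha(h_n)=t_n$ then forces $c_n=t_n$, which is unbounded. What is bounded is $\alpha(v_n')=\int_0^{t_n}(\alpha(w)\circ\phi_s(x_n)+1)\,\dd s$, and hence $v_n'$ itself. Read this way, your choice is exactly the paper's $h_n=t_nu-\int_0^{t_n}(w-u\alpha(w))\circ\phi_s(x_n)\,\dd s$ with $u=h_0$. So there is no real ``bookkeeping'' difficulty there.

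The genuine gap is the implication ``$F$ proper $\Rightarrow$ \eqref{equation:yannickerie}'' in (iii).

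\begin{itemize}
\item From the failure of \eqref{equation:yannickerie} you only retain points $x_n$ whose $T_n$-averages of $f:=1+\alpha(w)$ tend to $0$. That gives $\bigl|\int_0^{T_n}f\circ\phi_s(x_n)\,\dd s\bigr|=o(T_n)$.
\item Non-properness of $F$ needs an $O(1)$ bound along times tending to infinity, not $o(T_n)$.
\item Neither the bounded-slope estimate nor the intermediate value theorem bridges $o(T_n)$ to $O(1)$. A function like $\sqrt t$ has bounded slope for $t\geq1$, vanishing averages and no sign change, yet never returns to a bounded set.
\end{itemize}

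So your sentence ``This produces $(x_n,t_n)$ with $F(x_n,t_n)$ bounded'' does not follow from what precedes it.

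The fix is to use the negation at full strength, since it holds for \emph{every} pair $(\eps,T)$. Taking $\eps=1/T$ gives, for each $T>0$, a point $x_T$ with $\bigl|\int_0^Tf\circ\phi_s(x_T)\,\dd s\bigr|\leq1$. Then $F(x_T,T)\in\N\times\N\times[-1,1]$ while $T\to\infty$, so $F$ is not proper.

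Equivalently, you can argue directly as the paper does. Properness of $F$ with $\N$ compact gives, for any $C_0>0$, some $C_1>0$ such that $\bigl|\int_0^tf\circ\phi_s(x)\,\dd s\bigr|>C_0$ for all $x$ and all $|t|>C_1$. Then $T:=C_1+1$ and $\eps:=C_0/T$ satisfy \eqref{equation:yannickerie}.
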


In particular, if there exists $\varepsilon>0$ such that for all $x\in\N$, $\alpha(w)(x) > -1+\varepsilon$, then $H'$ acts freely and properly on $\M$.

\begin{proof}
If $\alpha=0$, then $H'=H$ and all three assertions are immediate; hence assume $\alpha\ne0$. 

(i) Consider a fixed point of the action of $H'$:
\[
\tau_{h + \alpha(h)X_\M}(x,v)= (x,v) \in \M.
\]
This implies that $\phi_{\alpha(h)}(x)=x$, so $x$ is a periodic point of $(\phi_t)_{t \in \R}$ with period $T=\alpha(h)$ and 
\[
h + \int_0^{\alpha(h)} w\circ\phi_s(x) \dd s = 0. 
\]
Applying $\alpha$ and dividing by $T$ yields
\[
\dfrac{1}{T}\int_0^T \alpha(w)\circ\phi_s(x) \dd s = -1. 
\]
Conversely, assume that, for some $x\in\N$ and some nonzero $T\in\R$,
\[
\phi_T(x)=x,\qquad \int_0^T \alpha(w)\circ\phi_s(x) \dd s = - T.
\]
We set
\[
h = - \int_0^T w\circ\phi_s(x) \dd s.
\]
Then $\alpha(h) = T$, and $(x,0)$ is a fixed point of $\tau_{h + \alpha(h)X_\M}$. \\

(ii) We assume that the function $F$ is proper, and prove the properness of the action. For this, consider a sequence of points contained in a compact subset $z_n=(x_n,v_n) \in \N \times \fh \simeq_\sigma \M$ and a sequence $h_n\in\fh$ that leaves every compact set, such that
\[
\tau_{h_n + \alpha(h_n)X_\M}(z_n) = \left(\phi_{\alpha(h_n)}x_n, v_n + h_n + \int_0^{\alpha(h_n)} w \circ \phi_s(x_n) \dd s\right)
\]
is contained in a compact subset. In particular, $(\phi_{\alpha(h_n)}(x_n))_{n \geq 0}$ is bounded. We want to prove that 
\[
\left\|h_n + \int_0^{\alpha(h_n)}w\circ\phi_s(x_n) \dd s\right\| \to +\infty. 
\]
If $(\alpha(h_n))_{n \geq 0}$ is bounded, the statement is immediate, so we may assume that $|\alpha(h_n)|\to +\infty$. However, $F$ is proper by assumption so
\[
\left|\alpha(h_n) + \int_0^{\alpha(h_n)}\alpha(w)\circ\phi_s(x_n) \dd s\right| \to +\infty,
\]
which proves that $H'$ is proper. Conversely, let us assume the map $F$ is not proper and consider sequences $(x_n, t_n)_{n \geq 0}$ such that $(x_n)_{n \geq 0}$ and $(\phi_{t_n}(x_n))_{n \geq 0}$ are each contained in a compact subset, $(t_n)_{n \geq 0}$ leaves every compact of $\R$, and
\[
t_n + \int_0^{t_n} \alpha(w)\circ\phi_s(x_n) \dd s
\]
is bounded. Take $u$ so that $\alpha(u)=1$; set $h_n^0 = t_n u$ and
\[
h_n = h_n^0 - \int_0^{t_n} (w-u\alpha(w))\circ\phi_s(x_n) \dd s. 
\]
We find that $\alpha(h_n) = t_n$ and note that $(h_n)_{n \geq 0}$ leaves every compact set. Then:
\[
\tau_{h_n + \alpha(h_n) X_\M}(x_n, 0) = \left(\phi_{t_n}(x_n), u\left(t_n + \int_0^{t_n} \alpha(w)\circ\phi_s(x_n) \dd s\right)\right)
\]
is contained in a compact subset while $(h_n)_{n \geq 0}$ is unbounded. This completes the proof. \\ 

(iii) Since $\N$ is compact, by item (ii), $F$ is proper if and only if 
\[
\N \times \R \ni (x,t)\mapsto \int_0^t (1+\alpha(w))\circ\phi_s(x) \dd s \in \R
\]
is proper. This means that for any $C_0>0$, there exists $C_1>0$ such that
\[
|t|> C_1 \Longrightarrow \left|\int_0^t (1+\alpha(w))\circ\phi_s(x) \dd s \right| > C_0, \qquad \forall x \in \N.
\]
Taking $T=C_1+ 1$ and $\eps = C_0/(1+C_1)$, we obtain \eqref{equation:yannickerie}. On the other hand, let us assume that \eqref{equation:yannickerie} holds. Without loss of generality, we may assume that
\[
\int_0^T(1+\alpha(w))\circ\phi_s(x)\dd s > T \eps, \qquad \forall x \in \N.
\]
This implies that
\[
\int_0^{nT} (1+\alpha(w))\circ\phi_s(x)\dd s > n T \eps, \qquad \forall x \in \N, n \geq 0,
\]
Set $f:=1+\alpha(w)$ and $M:=\|f\|_\infty$. Splitting an arbitrary orbit segment into subsegments of length $T$ (and shifting the initial point when $t<0$) gives
\[
\left|\int_0^t f\circ\phi_s(x)\dd s\right|\geq
\left\lfloor\frac{|t|}{T}\right\rfloor T\eps-TM.
\]
Hence, $F$ is proper. 
\end{proof}

It is easily verified that the conditions in items (i), (ii), and (iii) only depend on the class of $w$ modulo coboundaries $\{X_{\N} u ~:~ u \in C^\infty(\N,\fh)\}$. The choice of a specific representative $w$ in the class can be described geometrically:

\begin{lemma}\label{lemma:transverse-graph-section}
The following statements hold:
\begin{enumerate}[label=\emph{(\roman*)}]
\item The graph of the section $\sigma : \N \to \M$ is transverse to the orbits of the $H'$-action if and only if for every $x\in\N$, $\alpha(w)(x) \neq -1$.

\item If there exists $\eps > 0$ such that $\alpha(w) > -1+\varepsilon$ for all $x \in \N$, then $\N\simeq \M/H'$ and $\sigma : \N \to \M$ is a section for the action of $H'$ (that is, $\N \times \fh' \to \M, (x,h') \mapsto \tau_{h'}\sigma(x)$ is a diffeomorphism).
\end{enumerate}
\end{lemma}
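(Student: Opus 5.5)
We work throughout in the trivialization $\M \simeq_\sigma \N\times\fh$ of \eqref{equation:trivialization}, in which $\sigma(x)=(x,0)$ and the $H'$-action is
\[
\tau_{h+\alpha(h)X_\M}(x,v)=\Bigl(\phi_{\alpha(h)}(x),\, v+h+\int_0^{\alpha(h)}w\circ\phi_s(x)\,\dd s\Bigr).
\]
As in the proof of Lemma \ref{lemma:properness-action-abstract}, assume $\alpha\neq0$ and fix $u\in\fh$ with $\alpha(u)=1$.

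\emph{Item (i).} The graph $\sigma(\N)=\N\times\{0\}$ has tangent space $T_x\N\oplus\{0\}$. The infinitesimal $H'$-action at $(x,0)$ is generated by
\[
\X(h+\alpha(h)X_\M)=\alpha(h)X_\N(x)+\bigl(h+\alpha(h)w(x)\bigr)\in T_x\N\oplus\fh.
\]
The dimensions are complementary ($d_\N+k=d_\M$). Moreover, the orbit directions span a $k$-dimensional space, since the action is locally free. So transversality at $(x,0)$ is equivalent to: no nonzero $h$ has $h+\alpha(h)w(x)=0$.

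If $h+\alpha(h)w(x)=0$ with $h\ne0$, then $\alpha(h)\neq0$, since otherwise $h=0$. Applying $\alpha$ and dividing by $\alpha(h)$ gives $\alpha(w)(x)=-1$. Conversely, if $\alpha(w)(x)=-1$, take $h=-w(x)$. Then $\alpha(h)=1$ and $h+\alpha(h)w(x)=0$, with $h\neq0$. This is just linear algebra of the map $h\mapsto h+\alpha(h)w(x)$, whose determinant is $1+\alpha(w(x))$ by the matrix determinant lemma.

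\emph{Item (ii).} The condition $\alpha(w)>-1+\eps$ implies, by the remark after Lemma \ref{lemma:properness-action-abstract}, that $H'$ acts freely and properly. Hence $\M/H'$ is a manifold. By (i), the map $\Psi:\N\times\fh'\to\M$, $(x,h')\mapsto\tau_{h'}\sigma(x)$, is a local diffeomorphism, since its differential is an isomorphism at $h'=0$ and equivariance propagates this everywhere. It remains to show that $\Psi$ is bijective.

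For $(y,v)\in\M$, we solve $\tau_{h+\alpha(h)X_\M}(x,0)=(y,v)$. Write $t=\alpha(h)$, so $x=\phi_{-t}(y)$, and the fibre equation reads
\[
h+\int_0^{t}w\circ\phi_{s-t}(y)\,\dd s=v.
\]
Applying $\alpha$ gives the scalar equation
\[
g_y(t):=\int_{-t}^0(1+\alpha(w))\circ\phi_s(y)\,\dd s=\alpha(v).
\]
Since $g_y'(t)=(1+\alpha(w))(\phi_{-t}y)>\eps$, the function $g_y$ is strictly increasing with $g_y(t)\to\pm\infty$. It therefore has a unique solution $t$. Then $h$ is recovered uniquely from the fibre equation, and this $h$ automatically satisfies $\alpha(h)=t$. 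This gives bijectivity. Smooth dependence of the inverse follows from the implicit function theorem, since $g_y'\neq0$.

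Finally, $\Psi$ is $H'$-equivariant, so it descends to a diffeomorphism $\N\simeq\M/H'$. The main point needing care is surjectivity, and the monotonicity of $g_y$ settles it without any compactness of $\N$.
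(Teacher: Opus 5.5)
Your proof is correct and follows essentially the paper's route. Item (i) is the same computation of the orbit tangent space $\{\alpha(h)X_\N(x)+h+\alpha(h)w(x)\}$ at $(x,0)$. In item (ii), your fibre equation is exactly the paper's $\tilde\kappa_y(h)=v$, which you solve by reducing to a scalar equation in $t=\alpha(h)$ whose derivative is bounded below by $\eps$.

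The only difference is cosmetic. You get injectivity and surjectivity at once from the unique solvability of that equation, while the paper uses $\kappa_x$ for injectivity and $\tilde\kappa_x$ for surjectivity. Your appeal to the remark giving freeness and properness is harmless but unnecessary, since an $H'$-equivariant diffeomorphism $\N\times\fh'\to\M$ yields both automatically.
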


\begin{proof}
If $\alpha=0$, both assertions are immediate; hence assume $\alpha\ne0$.

(i) In the identification $\M\simeq \N\times \fh$ given by $\sigma$, at a point $(x,0) \in \M$, the tangent space to the $\fh'$-orbit is
\[
\{ (\alpha(h)X_\N, h + \alpha(h)w(x)) ~:~ h\in\fh\}.
\]
It is transverse to $\{(u,0) ~|~ u\in T_x\N\}$ if and only if $h + \alpha(h)w(x) \neq 0$ for all $h \neq 0$.

Now, if $h + \alpha(h)w(x) \neq 0$, then, taking $h \in \fh$ such that $\alpha(h) \neq 0$, we find that $\alpha(w)(x) \neq -1$. Conversely, if $\alpha(w)(x)\neq -1$, then either $\alpha(h)=0$, and thus $h + \alpha(h)w(x) \neq 0$ for $h \neq 0$, or $\alpha(h) \neq 0$, and thus $h + \alpha(h)w(x) \neq 0$. \\

(ii) Assume that $\alpha(w)> -1 +\varepsilon$. Then for each $x\in\N$, the map
\[
\kappa_x:h \mapsto h + \int_0^{\alpha(h)}w\circ\phi_s(x)\dd s
\]
is a diffeomorphism from $\fh$ onto itself. Indeed, to show this, pick $v\in\fh$ so that $\alpha(v)=1$ and decompose any $h\in\fh$ as $h= u + t v$, where $\alpha(u)=0$ and $t\in\R$. For a given $h' \in \fh$, solving $\kappa_x(h)=h'$ gives
\[
t(h') = t(h) + \int_0^{t(h)}\alpha(w)\circ\phi_s(x)\dd s.
\]
The condition $\alpha(w) > -1 + \eps$ implies that the right-hand side is a function of $t(h)$, with derivative larger than $\eps>0$, so it is a diffeomorphism of the line. This determines a unique solution $t(h)$ to the equation. Then
\[
u(h) = u(h') +\int_0^{t(h)} \Big(\alpha(w) v - w\Big)\circ\phi_s(x) \dd s.
\]
This proves that $\kappa_x$ is a bijection, and it is elementary to show that it is a local diffeomorphism. Likewise, by the same argument, one shows that the map
\begin{equation}
\label{equation:kappatilde}
\tilde{\kappa}_x:h \mapsto h + \int_0^{\alpha(h)}w\circ\phi_{s-\alpha(h)}(x)\dd s
\end{equation}
is a diffeomorphism from $\fh$ onto itself.

We claim that this implies that the map
\[
\N\times \fh' \owns (x,h')\mapsto \tau_{h'}\sigma(x) \in\M
\]
is a bijection. Since we already know that it is a local diffeomorphism (because of the transversality established above), this would finish the proof. First, assume that
\[
\tau_{h'}\sigma(x)=\tau_{h''}\sigma(x'). 
\]
Without loss of generality, we may assume $h''=0$, and deduce directly that $\kappa_x(h) = 0$, so that $h=0$, and $x=x'$. On the other hand, set $y=\tau_{h}\sigma(x)$ for some $h\in\fh$. Solving $\tau_{h'}\sigma(x')=y$ gives $\tau_{-h'+h}\sigma(x)=\sigma(x')$. Writing $h' = h_0 + \alpha(h_0)X_\M$ for some $h_0 \in \fh$, we find that $\phi_{\alpha(h_0)}x'=x$ and $h = h_0 + \int_0^{\alpha(h_0)} w(\phi_s x') \dd s$. That is $h = \tilde{\kappa}_x(h_0)$, where the diffeomorphism $\tilde{\kappa}_x : \fh \to \fh$ was introduced in \eqref{equation:kappatilde}. Therefore, $h_0$ is uniquely determined and so is $x'$.
\end{proof}

\subsection{Hyperbolic base dynamics} 

\label{ssection:hyperbolic-base-dynamics}

We now further investigate the properties of the free Abelian cocycles when the flow $(\phi_t)_{t \in \R}$ on the base is hyperbolic.

\subsubsection{Axiom A actions of type $(k,1)$} \label{sssecton:assumptions}

We shall now make further assumptions on the flow $(\phi_t)_{t \in \R}$ on the base. Note that similar assumptions are used in \cite[\S5]{Delarue-Guillarmou-Monclair-2025}.

\begin{enumerate}
\item[\textbf{\hypertarget{AA2}{(A2)}}] \textbf{Uniform properness.} There exist two $\phi_t$-invariant closed subsets $\mc{T}_{\pm} \subset \mc{N}$ such that the following holds: for every compact subset $C_\mp \subset \mc{N} \setminus \mc{T}_{\mp}$, the map
\[
\phi : C_\mp \times \R_\pm \to \mc{N}, \qquad (x,t) \mapsto \phi_t(x)
\]
is proper. In addition, there exists a relatively compact open subset $\scrV \subset \mc{N}$ such that for all $x \in \mc{T}_\pm$, there exists $T \geq 0$ such that for all $t \geq T$, $\phi_{\mp t} x \in \scrV$.
\end{enumerate}
We shall refer to these properties as uniform properness in the future (on $\mc{N} \setminus \mc{T}_{-}$) or in the past (on $\mc{N} \setminus \mc{T}_{+}$). The sets $\mc{T}_{\pm}$ are called the incoming ($-$) and outgoing ($+$) tails. Assumption $\hyperlink{AA2}{\rm(A2)}$ guarantees that points either escape to infinity in the future, or they are trapped in a compact region of $\mc{N}$ (and the same holds in the past).

\begin{enumerate}
\item[\textbf{\hypertarget{AA3}{(A3)}}] \textbf{Hyperbolicity.} The \emph{trapped set} $\scrK := \mc{T}_- \cap \mc{T}_+$ is nonempty and compact, and there exists a continuous flow-invariant splitting of $T\N$ over $\mathscr{K}$ as
\[
T_\mathscr{K}\N = E_{0,\N} \oplus E_{s,\N} \oplus E_{u,\N},
\]
where $E_{0,\N}$ is spanned by the flow direction, $E_{s,\N}$ and $E_{u,\N}$ satisfy
\begin{equation}
\label{equation:hyperbolicity-condition}
\|d \phi_t|_{E_{s,\mc{N}}}\| \leq C e^{-\lambda_0 t}, \qquad \|d \phi_{-t}|_{E_{u,\mc{N}}}\| \leq C e^{-\lambda_0 t}, \qquad \forall t \geq 0,
\end{equation}
for some uniform constants $C, \lambda_0 > 0$ that depend on an arbitrary choice of an auxiliary norm $\|\bullet\|$ on $T \N$. 

\end{enumerate}
It follows from $\hyperlink{AA2}{\rm(A2)-(A3)}$ that, if $\scrK \neq \N$, every relatively compact open subset $\scrV\subset \N$ satisfying $\scrK \Subset \scrV$ satisfies $\ms{K} = \cap_{t \in \R} \phi_t(\mathscr{V})$. Such a set $\scrK$ is called \emph{locally maximal} in the literature. In addition, one verifies that, under the above assumptions, for all $x \in \mc{T}_\pm$, $d(\phi_{\mp t} x, \scrK) \to_{t \to +\infty} 0$. We will also assume throughout that $\scrK$ is not reduced to a finite collection of hyperbolic periodic orbits, and that the topological entropy of $(\phi_t)_{t \in \R}$ is strictly positive on $\scrK$.

When $\mathscr{K} = \mc{N}$ and $\mc{N}$ is compact, the flow is said to be \emph{Anosov}. When $\mathscr{K} \neq \mc{N}$, the subset $\mathscr{K}$ is usually very fractal. We let
\begin{equation}
\label{equation:j}
\mathscr{J} := \mathscr{K} \times \fh \subset \mc M.
\end{equation}
Notice that $\mathscr{J}$ is $A$-invariant. Although not compact, it will play the role of the ``trapped set'' for the whole $A$-action and we will loosely refer to $\scrJ$ as the trapped set. We also define

\begin{equation}
\label{equation:tails2}
 \mc{T}_{\pm,\M} := \mc{T}_{\pm} \times \fh.
\end{equation}
To avoid confusion, we may also denote by $\mc{T}_{\pm,\mc{N}} := \mc{T}_{\pm}$ the incoming and outgoing tails on the base $\mc{N}$. It is a standard fact that $E_{s,\mc{N}}$ (resp. $E_{u,\mc{N}}$) admits a natural flow-invariant extension to the incoming tail $\mc{T}_-$ (resp. the outgoing tail $\mc{T}_+$); see, for instance, \cite[Lemma 2.10]{Dyatlov-Guillarmou-16}. In addition, these bundles can be continuously extended to $\mc{N}$ (but they are \emph{a priori} not flow-invariant outside $\mc{T}_\pm$).

This leads to the following definition:
\begin{definition}[Axiom A actions of type $(k,1)$]
\label{definition:axiom-a-action}
Let $(\mc{M},A,\tau)$ be a free Abelian cocycle over a flow (Definition \ref{definition:free-abelian-cocycles}). If the assumptions $\hyperlink{AA1}{\rm(A1)}$, $\hyperlink{AA2}{\rm(A2)}$ and $\hyperlink{AA3}{\rm(A3)}$ are satisfied for some choice of hyperplane $\fh \subset \fa$, we say that the action of $A$ on $\M$ is an \emph{Axiom A action of type $(k,1)$}.
\end{definition}

\begin{remark}
More generally, actions of type $(k,\ell)$ could be defined by requiring that $\R^{k+\ell}$ acts on $\M$ in a locally free manner, that there exists a plane $\mathfrak{h} \simeq \R^k$ such that $H := \exp(\mathfrak{h})$ acts freely and properly on $\M$, and that there is an Anosov $\R^\ell$\-action on the quotient space $\mc{N} := \M/H$.
Most of the spectral analysis carried out in this first part should be transferable to such actions of type $(k,\ell)$ and one should obtain a resonance spectrum consisting of varieties of codimension $\ell$ in $\mathfrak a_\C^*\cong \C^{k+\ell}$. Note that $(0,\ell)$-actions with compact $\mc N$ are the usual rank-$\ell$ Anosov actions and it was shown in \cite{Guedes-Bonthonneau-Guillarmou-Hilgert-Weich-20} that the resonance spectrum is discrete (i.e., a subvariety of complex codimension $\ell$ in $\C^\ell$). It seems to be a very interesting question to explore whether there are classes of discrete subgroups that are structurally in between lattices and Anosov subgroups and that have sufficiently strong structural properties to lead to Axiom A actions of type $(k,\ell)$.
\end{remark}

When describing the leading resonant hypersurface of the $A$-action, it will be convenient to make the following additional assumption on the dynamics over the trapped set:

\begin{itemize}
\item[\textbf{\hypertarget{AA4}{(A4)}}] The flow $(\phi_t)_{t \in \R}$ is topologically transitive on $\scrK$.
\end{itemize}
In particular, Assumption $\hyperlink{AA4}{\rm(A4)}$ excludes pathological path-disconnected trapped sets.

\subsubsection{Partially hyperbolic flow} \label{sssection:phf}

The flow $(\Phi_t)_{t \in \R}$ on $\ms{J}$ is a partially hyperbolic flow with central direction given by $E_{0,\M} \simeq \fa$ (direction of the action) and there is an invariant splitting
\begin{equation}
\label{equation:tm}
T_{\ms{J}}\M = E_{0,\M} \oplus E_{s,\M} \oplus E_{u,\M},
\end{equation}
where $E_{s,\M}$ and $E_{u,\M}$ satisfy estimates similar to \eqref{equation:hyperbolicity-condition}. These bundles have an explicit expression in terms of the cocycle $w \in C^\infty(\N,\fh)$, see \cite[Example 4.2.4]{Cekic-Lefeuvre-24}. It can be checked that $\dd \pi : E_{s/u,\M} \to E_{s/u,\mc{N}}$ is an isomorphism, where $\pi : \M \to \N$ is the footpoint projection. Notice that the decomposition \eqref{equation:tm} is invariant under the full $A$-action.

We also introduce the dual splitting
\begin{equation}\label{equation:t*m}
T_{\ms{J}}^\ast \M = E_{0,\M}^* \oplus E_{s,\M}^* \oplus E_{u,\M}^*.
\end{equation}
Here,
\[
E^*_{0,\M}(E_{s,\M} \oplus E_{u,\M})  = E^*_{s,\M}(E_{0,\M} \oplus E_{s,\M}) = E^*_{u,\M}(E_{0,\M} \oplus E_{u,\M})= 0.
\]
By duality, it is immediate to verify that $\dd\pi^\top : E^*_{s/u,\N} \to E^*_{s/u,\M}$ is an isomorphism as well. In addition, in the trivialization $\M \simeq \N \times \fh$, writing $T_m^*\M \simeq T_{\pi(m)}^*\N \oplus \fh^*$, we have
\begin{equation}
\label{equation:esm}
E^*_{s/u,\M} = \{ (x,h ; \xi, 0) ~:~ x \in \N, h \in \fh, \xi \in E^*_{s/u,\N}\}.
\end{equation}

In what follows, when the context is clear, we shall drop the subscripts $\M$ and $\mc{N}$ decorating these bundles. Observe that, although neither $E_{s,\M}^*$ nor $E_{u,\M}^*$ is a smooth bundle in general, $E_{s,\M}^* \oplus E_{u,\M}^*$ is always a smooth vector bundle as it is the annihilator of the direction of the action
\begin{equation}
\label{equation:smooth-tralala}
E_{s,\M}^* \oplus E_{u,\M}^* = \{ \xi \in T^*\M ~:~ \forall a \in \fa, \xi(\X(a)) = 0\}.
\end{equation}

\subsection{Lyapunov spectrum and limit cone} \label{ssection:lyapunov-spectrum}

\subsubsection{Definition}\label{sssection:lyapunov}

We want to introduce the Lyapunov projection and the limit cone for Axiom A actions of type $(k,1)$. Both terms are borrowed from the language of discrete subgroups in higher rank. In \S\ref{sec:connection-axiom-a}, we will see that for Axiom A actions obtained from Anosov subgroups, this terminology is consistent.

Recall that, in the identification $\M \simeq \N \times \fh$, the flow $(\Phi_t)_{t\in \R}$ of $X_{\M}$ can be written in the form \eqref{eq:abelian-extension-dynamics} (free Abelian cocycle over $(\phi_t)_{t \in \R}$). Let $\mathbf{P}$ be the set of periodic orbits of $(\phi_t)_{t \in \R}$ on $\mathscr{K}$, and let $\mathbf{P}^\sharp$ denote the primitive orbits. For $\gamma \in \mathbf{P}$, we introduce its \emph{Lyapunov projection}
\begin{equation}
\label{equation:lyapunov-projection}
\lambda(\gamma) := \ell_\gamma X_{\M} - \int_0^{\ell_\gamma} w(\phi_s x) \dd s \in \fa,
\end{equation}
where $\ell_\gamma$ denotes the period of $\gamma$ and $x \in \gamma$ is an arbitrary point. 

It follows from \eqref{eq:abelian-extension-dynamics} and $\hyperlink{AA1}{\rm(A1)-(A3)}$ that the set $\mathrm{L}$ of nonzero elements $a \in \fa$ such that there exists $z \in \M$ such that $\tau_a z = z$ is discrete and splits into two disjoint subsets separated by the hyperplane $\mathfrak h \subset \mathfrak a$:
\begin{equation}\label{eq:lyapunov_are_A_periods}
\mathrm{L} = \{\lambda(\gamma) ~:~ \gamma \in \mathbf{P}\} \cup \{-\lambda(\gamma) ~:~\gamma \in \mathbf{P}\}.
\end{equation}
The hyperplane $\fh$ divides $\fa$ into two disjoint open half-spaces; in the above union, the elements $+\lambda(\gamma)$ correspond to the elements contained in the same half-space as $X_{\M}$. In particular, $\lambda(\gamma)$ does not depend on the choice of $\fh$ and $X_\M$. We call
\begin{equation}
\label{equation:lyapunov-spectrum}
\mathrm{L}^+ := \{\lambda(\gamma) ~:~ \gamma \in \mathbf{P}\}
\end{equation}
the \emph{Lyapunov spectrum} of the action. By construction, $\mathrm L$ is intrinsic to the $A$-action on $\M$ and $\mathrm{L}^+$ depends on the choice of sign for $X_{\M}$.

We can now introduce an analog of the limit cone defined by Benoist for discrete subgroups $\Gamma < G$ of semisimple Lie groups in the seminal paper \cite{Benoist-97}:

\begin{definition}[Limit cone of Axiom A actions of type $(k,1)$] \label{definition:limit-cone-cocycle}
The \emph{limit cone} $\mathscr{L} \subset \fa$ is the asymptotic cone of $\mathrm{L}^+$ defined as
\begin{equation}\label{eq:limit-cone}
\mathscr{L} := \overline{\{ t \lambda(\gamma) ~:~ t > 0, \gamma \in \mathbf{P}\}} = \overline{\RR_+\mathrm{L}^+}.
\end{equation}
\end{definition}

\subsubsection{Description of the limit cone} \label{sssection:convexity-limit-cone}We first establish its convexity:

\begin{lemma}[Convexity of the limit cone]
\label{lemma:convexity-limit-cone}
Assume $\hyperlink{AA1}{\rm(A1)-(A4)}$ hold. Then the limit cone  $\mathscr{L}$ is convex, that is for $v_1,v_2 \in \mathscr{L}$, $v_1 + v_2 \in \mathscr{L}$.
\end{lemma}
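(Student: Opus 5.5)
Since $\mathscr{L}$ is by definition a closed cone (invariant under multiplication by $t>0$), it suffices to show: given two primitive periodic orbits $\gamma_1,\gamma_2\in\mathbf{P}$ and integers $n_1,n_2\geq 1$, the vector $n_1\lambda(\gamma_1)+n_2\lambda(\gamma_2)$ is approximated, up to a uniformly bounded error, by $\lambda(\gamma)$ for some periodic orbit $\gamma$. Indeed, given $v_1,v_2\in\mathscr{L}$, write them as limits of $t_j\lambda(\gamma_j^{(m)})$. Choose integers $n_j\approx t_j N$ with $N\to\infty$. Then $N^{-1}\lambda(\gamma)\to$ approximately $v_1+v_2$, the bounded error disappearing after division by $N$. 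Closedness of $\mathscr{L}$ then gives $v_1+v_2\in\mathscr{L}$.

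The key tool is the Anosov closing lemma, together with the shadowing/specification property for the hyperbolic flow on the locally maximal, compact set $\scrK$. This is available by (A3) and local maximality, combined with topological transitivity (A4). Fix points $x_j\in\gamma_j$. By transitivity, there are orbit segments in $\scrK$ from a small neighbourhood of $x_1$ to a small neighbourhood of $x_2$, and back. Their lengths $T_{12}$ and $T_{21}$ are fixed once and for all, independently of $n_1,n_2$. Concatenate four pieces:
\begin{itemize}
\item $n_1$ turns around $\gamma_1$;
\item the segment from $x_1$ to $x_2$;
\item $n_2$ turns around $\gamma_2$;
\item the segment back from $x_2$ to $x_1$.
\end{itemize}
This gives an $\eps$-pseudo-orbit of total length $n_1\ell_{\gamma_1}+n_2\ell_{\gamma_2}+T_{12}+T_{21}$. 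The Anosov closing/shadowing lemma for hyperbolic sets then produces a genuine periodic orbit $\gamma\subset\scrK$ (by local maximality) that $\delta$-shadows it, with a time reparametrization close to the identity.

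Next compute $\lambda(\gamma)$ using \eqref{equation:lyapunov-projection}. Here $\lambda(\gamma)=\ell_\gamma X_\M-\int_0^{\ell_\gamma}w\circ\phi_s(x)\,\dd s$, and $w$ is smooth, so uniformly Lipschitz near the compact set $\scrK$. The main obstacle is controlling the error so that it is \emph{uniformly bounded in $n_1,n_2$}, rather than growing linearly. A naive $\delta$-closeness estimate would give an error of order $\delta(n_1\ell_{\gamma_1}+n_2\ell_{\gamma_2})$. After division by $N$, this only gives $v_1+v_2$ up to $O(\delta)$. That still suffices: since $\delta$ can be taken arbitrarily small and $\mathscr{L}$ is closed, we conclude anyway.

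One can do better using exponential shadowing. The shadowing orbit is exponentially close to $\gamma_j$ away from the junctions, so the errors are summable and $\lambda(\gamma)=n_1\lambda(\gamma_1)+n_2\lambda(\gamma_2)+O(1)$. Either version works, and I would present the cruder $O(\delta)$ one. In that version the period error $|\ell_\gamma-(\text{pseudo-orbit length})|$ is controlled by the same closing lemma, and the contribution of the connecting segments is $O(T_{12}+T_{21})$, which is bounded. Dividing by $N$, letting $N\to\infty$ and then $\delta\to0$ completes the argument. Note that the $\mathbf{P}$ used in \eqref{eq:limit-cone} allows nonprimitive orbits, so no primitivity issue arises.
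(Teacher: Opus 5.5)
Your proposal is correct and follows the paper's proof: the paper also uses transitivity (A4) to connect $\gamma_1$ and $\gamma_2$, closes the pseudo-orbit $\gamma_1^n\sqcup S_+\sqcup\gamma_2^n\sqcup S_-$ with the shadowing lemma, and divides by $n$ using closedness of $\mathscr{L}$. The differences are minor. You allow unequal multiplicities $n_1,n_2$, whereas the paper takes them equal and calls the passage to general $v_1,v_2$ ``easy''. You also accept an $O(\delta)$-per-unit-time error with a double limit, where the paper asserts an $O(1)$ error.
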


\begin{proof}
Let $\gamma_1, \gamma_2 \in \mathbf{P}$ and $x_1 \in \gamma_1, x_2 \in \gamma_2$. Let $S_+ \subset \mathscr{K}$ be an orbit segment of $(\phi_t)_{t \in \R}$ connecting an $\eps$-neighborhood of $x_1$ to an $\eps$-neighborhood of $x_2$, where $\eps > 0$ is chosen small enough (such a segment exists by transitivity of $(\phi_t)_{t \in \R}$ on $\mathscr{K}$ (Assumption $\hyperlink{AA4}{\rm(A4)}$)). Similarly, let $S_- \subset \mathscr{K}$ be an orbit segment connecting (a neighborhood of) $x_2$ to $x_1$. Let $n \geq 1$. The pseudo-orbit $\gamma_1^n \sqcup S_+ \sqcup \gamma_2^n \sqcup S_-$ starting at $x_1$ ends close to $x_1$ and can thus be closed by the shadowing lemma; that is, there exists a genuine periodic orbit $\gamma_n \in \mathbf{P}$ shadowing this pseudo-orbit. In addition, a quick computation shows that
\[
\lambda(\gamma_n) = n\lambda(\gamma_1) + n \lambda(\gamma_2) + \mc{O}(1).
\] 
Dividing by $n$ and passing to the limit $n \to +\infty$, we find that $\lambda(\gamma_1) + \lambda(\gamma_2) \in \mathscr{L}$ as $\mathscr{L}$ is closed. This easily implies the claimed result.
\end{proof}

In the next lemma, $\mathrm{M}_\phi$ denotes the set of probability measures supported on the trapped set $\mathscr{K}$ and invariant under the flow $(\phi_t)_{t \in \R}$. The following characterizations of the limit cone also hold:

\begin{lemma}
\label{lemma:formula-for-L}
Assume $\hyperlink{AA1}{\rm(A1)-(A3)}$ hold. Then the limit cone $\mathscr{L}$ coincides with
\begin{enumerate}[label=\emph{(\roman*)}]
\item The set of accumulation points, as $T \to +\infty$, of
\begin{equation}
\label{equation:suzanne}
\left\{\mu \left(X_\M - T^{-1} \int_0^T w(\phi_s x) \dd s\right) ~:~ \mu > 0, \phi_s(x) \in \mathscr{V}, \forall s \in [0,T]\right\},
\end{equation}
where $\scrK\Subset\scrV\Subset\N$ is a relatively compact open neighborhood of $\scrK$.
\item $\R_+ C$, where
\[
C := \left\{ X_\M - \int_{\mathscr{K}} w(x) \dd m(x) ~:~ m \in \mathrm{M}_{\phi}\right\}.
\]
\end{enumerate}
\end{lemma}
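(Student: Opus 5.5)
We prove the three descriptions coincide by a cycle of inclusions: $\mathscr{L} \subset \mathscr{L}_{(i)} \subset \R_+ C \subset \mathscr{L}$, where $\mathscr{L}_{(i)}$ denotes the set in (i). Throughout, a closed orbit $\gamma\in\mathbf{P}$ of period $\ell_\gamma$ through $x$ gives, by \eqref{equation:lyapunov-projection},
\[
\lambda(\gamma)=\ell_\gamma\Big(X_\M-\ell_\gamma^{-1}\int_0^{\ell_\gamma} w(\phi_s x)\,\dd s\Big),
\]
and the orbit stays in $\scrK\subset\scrV$.

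\emph{Step 1: $\mathscr{L}\subset\mathscr{L}_{(i)}$.} Taking $T=n\ell_\gamma$ along iterates shows that each ray $\R_+\lambda(\gamma)$ is contained in $\mathscr{L}_{(i)}$. A diagonal argument shows that $\mathscr{L}_{(i)}$ is closed, since it is a set of accumulation points. This gives the inclusion.

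\emph{Step 2: $\mathscr{L}_{(i)}\subset\R_+C$.} Take an orbit segment $[0,T_n]$ with $T_n\to\infty$ whose image lies in $\scrV$. Consider the empirical probability measures
\[
m_n:=T_n^{-1}\int_0^{T_n}\delta_{\phi_s x_n}\,\dd s,
\]
supported in the compact set $\overline{\scrV}$. Any weak-$*$ limit $m$ is $\phi_t$-invariant and supported in $\overline{\scrV}$. The key point is that its support lies in $\scrK$. Every point in the support of an invariant measure is recurrent, and a recurrent point whose forward and backward orbits stay in the compact set $\overline{\scrV}$ lies in $\mc T_+\cap\mc T_-=\scrK$. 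This uses \hyperlink{AA2}{(A2)}: a point outside $\mc T_-$ escapes every compact set in the future, by uniform properness. Since $w$ is continuous, $T_n^{-1}\int_0^{T_n} w(\phi_s x_n)\,\dd s\to\int w\,\dd m$. Multiplying by the bounded scalars $\mu$ (rescaled as needed, the ray structure handling their limit) gives the inclusion. One must also check that $\R_+C$ is closed. This holds because $\mathrm{M}_\phi$ is weak-$*$ compact and convex, and every element of $C$ has $X_\M$-component equal to $1$ in $\fa=\R X_\M\oplus\fh$, so $0\notin\overline{C}$. Hence the cone $\R_+ C$ is closed.

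\emph{Step 3: $\R_+C\subset\mathscr{L}$.} This is the main obstacle. It is a density statement: for a hyperbolic basic set, the periodic-orbit measures $\ell_\gamma^{-1}\int_\gamma\delta$ are weak-$*$ dense in $\mathrm{M}_\phi$. Since \hyperlink{AA4}{(A4)} is not assumed here, I would proceed as follows. First reduce to ergodic $m$, using convexity: $C$ is the closed convex hull of its ergodic points, and $\R_+C$ is the closed cone over it.

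For ergodic $m$, pick an $m$-generic point $x\in\scrK$. By Birkhoff's theorem, $T^{-1}\int_0^T w(\phi_s x)\,\dd s\to\int w\,\dd m$. By Poincaré recurrence there are arbitrarily large times $T$ with $\phi_T x$ close to $x$. The Anosov closing lemma on the locally maximal hyperbolic set $\scrK$ (available by \hyperlink{AA3}{(A3)}) then yields a periodic orbit $\gamma\subset\scrK$ of period $\approx T$ that shadows the segment. Its Lyapunov projection satisfies $\lambda(\gamma)/\ell_\gamma = X_\M-\int w\,\dd m+o(1)$, by uniform continuity of $w$ and exponential closeness of the shadowing. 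Thus $X_\M-\int w\,\dd m\in\mathscr{L}$.

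For non-ergodic $m$, convex combinations of ergodic points are still needed. Without \hyperlink{AA4}{(A4)}, Lemma \ref{lemma:convexity-limit-cone} is unavailable. I would instead use the spectral decomposition of $\scrK$ into finitely many transitive basic pieces. Within a piece, the shadowing argument of Lemma \ref{lemma:convexity-limit-cone} gives convexity. Across pieces, one only needs that $\mathscr{L}$ is the closed cone generated by the pieces' cones. I expect this to be the delicate point, and I would check whether the intended statement implicitly uses \hyperlink{AA4}{(A4)} for it.
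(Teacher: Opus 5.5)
Your cycle $\mathscr{L}\subset\mathscr{L}_{(i)}\subset\R_+C\subset\mathscr{L}$ is correct once (A4) is assumed. The paper's own proof makes the same assumption tacitly. It invokes Lemma~\ref{lemma:convexity-limit-cone}, which is stated under (A1)--(A4), in both parts, and it reduces (i) to a single basic set.

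Your Step~1 is the paper's reverse inclusion ($T=n\ell_\gamma$, $\mu=r\ell_\gamma$). Your Step~3 is also the paper's argument: approximate ergodic measures by closed orbits via the closing lemma, then combine ergodic decomposition with convexity of $\mathscr{L}$.

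The genuine difference is Step~2. The paper proves $\mathscr{L}_{(i)}\subset\mathscr{L}$ directly. It codes an orbit segment in $\mathscr{V}$ by a Markov partition and cuts at last occurrences of symbols into at most $N+1$ loops. Each loop is closed by shadowing, and convexity of $\mathscr{L}$ absorbs the sum. You instead pass through empirical measures and land in $\R_+C$. Your route avoids Markov partitions, including the awkward point that segments in $\mathscr{V}$ are not in $\mathscr{K}$. It also shows that $\mathscr{L}\subset\mathscr{L}_{(i)}\subset\R_+C$ holds under (A1)--(A3) alone, so (A4) enters only through convexity of $\mathscr{L}$. The paper's route, where carried out, gives a per-segment $O(T^{-1})$ distance to $\mathscr{L}$, which is more quantitative than needed here.

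Three corrections to your Step~2 and Step~3:
\begin{itemize}
\item The claim that every point in the support of an invariant measure is recurrent is false in general; only $m$-a.e.\ points are. You do not need recurrence. $\operatorname{supp}(m)$ is a compact $\phi_t$-invariant subset of $\overline{\mathscr{V}}$, so each of its points has its whole orbit in a compact set. Then (A2), applied to the singleton $\{y\}$, forces $y\in\mathcal{T}_+\cap\mathcal{T}_-=\mathscr{K}$.
\item The scalars $\mu_n$ are bounded for a concrete reason: $\alpha_\M$ of $\mu_n\bigl(X_\M-T_n^{-1}\int_0^{T_n}w\bigr)$ equals $\mu_n$. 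Hence $\mu_n\to\alpha_\M(v)$.
\item The cross-piece repair in your last paragraph cannot work, because without (A4) the statement is false. Then $\mathscr{L}$ is the closure of the \emph{union} of the pieces' cones, not the cone they generate. For instance, suppose $\mathscr{K}=\mathscr{K}_1\sqcup\mathscr{K}_2$ consists of two basic sets and $w$ equals distinct constants $c_1\neq c_2$ in $\fh$ near them. Then $\mathscr{L}=\R_+(X_\M-c_1)\cup\R_+(X_\M-c_2)$. However, $\R_+C$ contains $X_\M-\tfrac12(c_1+c_2)$, which comes from $m=\tfrac12(m_1+m_2)$ and lies on neither ray.
\end{itemize}
So simply add (A4), as the paper effectively does, and your argument is complete.
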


\begin{proof}
(i) We assume that there is a single basic set; the general case follows from an adaptation of the argument. Consider a Markov partition with rectangles $R_0, ..., R_N$ for the flow $(\phi_t)_{t \in \R}$ on $\mathscr{V}$ of small diameter. Let $x \in \mathscr{V}$ such that $\phi_t(x) \in \mathscr{V}$ for all $t \in [0,T]$. The trajectory of $x$ is encoded by $a_0 ... a_p$, where $a_i \in \{0,...,N\}$ and $R_{a_0}...R_{a_p}$ is the sequence of rectangles encountered by the flowline of $x$. Define $j_0 := a_0 \in \{0,...,N\}$, $r_0:= 0$, and let $0 \leq s_0 \leq p$ be the last occurrence of $j_0$ in the sequence $a_0...a_p$, that is
\[
\underbrace{a_0}_{=j_0} ... \underbrace{a_{s_0}}_{=j_0} a_{s_0+1} ... a_p. 
\]
By the shadowing lemma, the orbit of $x$ between $R_{a_0} = R_{a_{r_0}}$ and $R_{a_{s_0}}$ can be approximated by a periodic orbit $\gamma_0$. (If $s_0=0$, the orbit cannot be periodic and the first segment of the orbit of $x$ is simply discarded.) Then define $j_1 := a_{s_0+1} \neq j_0$, $r_1 := s_0+1$ and $r_1 \leq s_1 \leq p$ the last occurrence of $j_1$ in the sequence $a_0...a_p$. Similarly, by the shadowing lemma, the orbit of $x$ between $R_{a_{r_1}}$ and $R_{a_{s_1}}$ can be approximated by a periodic orbit $\gamma_1$. We then iterate this process a finite number of times until the sequence $a_0...a_p$ is exhausted and $\gamma$ is approximated by a concatenation $\gamma_0, \gamma_1, ..., \gamma_\ell$ of periodic orbits in $\mathscr{V}$. We then have:
\[
\mu \cdot T^{-1} \int_0^T w(\phi_s x) \dd s = \mu  \cdot T^{-1} \left(\int_{\gamma_0} w + ... +\int_{\gamma_\ell} w + \mc{O}(1)\right),
\]
where the remainder term $\mc{O}(1)$ is uniform with respect to $x$ and $T > 0$. Hence:
\[
\begin{split}
&\mu X_\M - \mu \cdot T^{-1} \int_0^T w(\phi_s x) \dd s \\
&= \mu(1-T^{-1}(\underbrace{\ell_{\gamma_0} + ... + \ell_{\gamma_\ell})}_{=T + \mc{O}(1)})X_\M + \mu T^{-1}(\lambda(\gamma_0)+...+\lambda(\gamma_\ell)) + \mc{O}(T^{-1}) \\
& = \underbrace{\mu T^{-1}(\lambda(\gamma_0)+...+\lambda(\gamma_\ell))}_{\in \mathscr{L}} + \mc{O}(T^{-1}).
\end{split}
\]
The claim then easily follows from Lemma \ref{lemma:convexity-limit-cone}. Conversely, for $\gamma\in\mathbf P$, $x\in\gamma$, and $r>0$, take $T=n\ell_\gamma$ and $\mu=r\ell_\gamma$. Then the expression in \eqref{equation:suzanne} equals $r\lambda(\gamma)$ for every $n$; varying $r$ and taking closures gives the reverse inclusion. \\

(ii) Every invariant probability measure admits an ergodic decomposition. In turn, invariant ergodic measures are weak-$*$ limits of periodic orbits on a hyperbolic basic set. Using Lemma \ref{lemma:convexity-limit-cone}, the result is immediate. 
\end{proof}

Finally, the \emph{dual limit cone} $\mathscr{L}^* \subset \fa^*$ is defined as
\begin{equation}\label{eq:dual-limit-cone}
\scrL^\ast = \{ \theta\in\fa^\ast \ |\ \theta\geq 0 \text{ on }\scrL\}.
\end{equation}
It will play an important role later on.

\subsubsection{Further properties of the limit cone}
We can now introduce a notion of rank for the cocycle:

\begin{definition}[Rank of the cocycle]
\label{definition:rank}
The \emph{rank} of $(\Phi_t)_{t \in \R}$ is defined as the minimal dimension $0 \leq d \leq k$
such that there exists a $d+1$-dimensional subspace $V \subset \fa$ with $\mathscr{L} \subset V$. We say that the cocycle has \emph{full rank} if $d=k$.
\end{definition}
\begin{remark}
 Note that in the setting of discrete subgroups, Benoist \cite{Benoist-97} showed that for a Zariski-dense $\Gamma < G$, the limit cone has nonempty interior. The full rank condition should thus be seen as an analog of the Zariski-density condition for discrete subgroups.
\end{remark}
As the limit cone is (up to the choice of a positive direction for $X_{\M}$) intrinsic to the $A$-action on $\M$, the rank does not depend on the choice of either $\fh$ or $X_\M$.

We now introduce a notion of non-arithmeticity for the Lyapunov spectrum.

\begin{definition}[Non-arithmetic condition]
\label{definition:non-arithmetic}
We say that the cocycle is \emph{non-arithmetic} if there are no nonzero linear forms $\theta \in \fa^*$ such that for all $\gamma \in \mathbf{P}$, $\theta(\lambda(\gamma)) \in 2\pi\Z$.
\end{definition}

Equivalently, after fixing any Euclidean inner product on $\fa$, there are no lines in $\fa$ such that the orthogonal projection of $\mathrm{L}$ onto this line is contained in a lattice.

We now discuss a strengthening of the non-arithmetic condition. We first introduce the following terminology for a subset of $\R^{k+1}$:

\begin{definition}[Diophantine subsets of $\R^{k+1}$]
\label{definition:diophantine}
Let $B \subset \R^{k+1}$ be a finite subset. We say that it is \emph{Diophantine} if there exist $C, \nu > 0$ such that, for every $\xi \in \R^{k+1}$ with $|\xi| \geq C$, there exists $\lambda \in B$ such that:
\[
|e^{i\xi\cdot\lambda}-1| \geq |\xi|^{-\nu}.
\]
\end{definition}

In Appendix \ref{appendix:diophantine}, it is established that Lebesgue-almost every subset of cardinality $k+2$ in $\R^{k+1}$ is Diophantine. This leads to the following condition:

\begin{definition}[Diophantine condition] \label{definition:diophantine-lyapunov} We say that the cocycle is \emph{Diophantine} if there exists a finite subset $B \subset \{\lambda(\gamma) ~:~ \gamma \in \mathbf{P}\} \subset \fa\simeq\R^{k+1}$ of the Lyapunov spectrum (see \eqref{equation:lyapunov-spectrum}) which is Diophantine in the sense of Definition \ref{definition:diophantine}.
\end{definition}

Note that the Diophantine condition (Definition \ref{definition:diophantine-lyapunov}) implies the non-arithmetic condition (Definition \ref{definition:non-arithmetic}).

\subsubsection{Growth functional}

The next construction should be seen as a generalization of the $\varphi$-critical exponent introduced by Quint \cite{Quint-2001}. For $\varphi \in \fa^*$ such that $\varphi > 0$ on the limit cone $\scrL\setminus\{0\}$, we introduce the exponent 
\begin{equation}
\label{equation:delta}
\delta(\varphi) := \lim_{t\to+\infty} t^{-1} \log \sharp \{\gamma \in \mathbf{P} ~:~ \varphi(\lambda(\gamma)) \leq t\}.
\end{equation}
This limit exists whenever $\varphi > 0$ on the limit cone, see \cite{Quint-2001}.

\subsubsection{Hyperplanes with proper action}

When $\tau : A\times\M \to \M$ is an Axiom A action of type $(k,1)$, one can give an explicit and tractable criterion characterizing the set of hyperplanes $\fh'\subset\fa$ having a free and proper action on $\M$, refining the general results of \S\ref{sec:reparameterization}.

We recall that $X_\M \in \fa$ is the chosen generator complementary to $\fh$; $\fh$ is a choice of reference hyperplane in $\fa$ whose action is free and proper and $\N = \M / \exp \fh$ is the quotient. In the following statement, $\omega$ stands for (real) analytic regularity.

\begin{proposition}
\label{proposition:proper-action-phi}
Let $(\M,A,\tau)$ be an Axiom A action of type $(k,1)$ (Definition \ref{definition:axiom-a-action}) of regularity $C^r$ ($r=\infty$ or $r=\omega$). Let $\fh'\subset \fa$ be a hyperplane transverse to $X_\M$. Then:
\begin{enumerate}[label=\emph{(\roman*)}]
\item $H':= \exp(\fh')$ has a proper action on $\M$ if and only if $\fh' \cap \mathscr{L} = \{0\}$.

\item In this case, $\M/H'$ is $C^r$-diffeomorphic to $\N$.

\item Writing $\fh' = \ker \varphi$ for some linear form $\varphi \in \mathrm{int}(\scrL^*)$, choose $u_\varphi \in \fa$ such that $\varphi(u_\varphi)=1$. Then $u_\varphi$ induces a flow $(\phi_t^\varphi)_{t \in \R}$ on the quotient space $\N$ which is a reparameterization of $(\phi_t)_{t \in \R}$. It satisfies Assumptions $\hyperlink{AA2}{\rm(A2)}$ and $\hyperlink{AA3}{\rm(A3)}$ (and likewise for $\hyperlink{AA4}{\rm(A4)}$ when applicable). Its set of periods is given by 
\[
\{\varphi(\lambda(\gamma)) ~:~ \gamma \in \mathbf{P}\}.
\]
\end{enumerate}
\end{proposition}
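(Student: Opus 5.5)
The plan is to reduce everything to the reparametrization framework of \S\ref{sec:reparameterization}. Since $\fh'$ is transverse to $X_\M$, I would write $\fh'=\{\alpha(h)X_\M+h ~:~ h\in\fh\}$ for a unique $\alpha\in\fh^*$. The relevant function on the base is then $f_\alpha:=1+\alpha(w)\in C^r(\N)$, and Lemma \ref{lemma:properness-action-abstract}(ii) says that properness of $H'$ is equivalent to properness of $F(x,t)=(x,\phi_t x,\int_0^t f_\alpha\circ\phi_s(x)\dd s)$. Next I would express the condition $\fh'\cap\scrL=\{0\}$ through $f_\alpha$. A vector $cX_\M-\int w\,\dd m$ with $c>0$ lies in $\fh'$ exactly when $c=-\alpha(\int w\,\dd m)$, that is, when $\int f_\alpha\,\dd m=0$. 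By Lemma \ref{lemma:formula-for-L}(ii), $\fh'\cap\scrL=\{0\}$ is therefore equivalent to $\int f_\alpha\,\dd m\neq0$ for every $m\in\mathrm M_\phi$. Because $\mathrm M_\phi$ is weak-$*$ compact and convex and $\scrK$ is transitive (or by treating each basic set separately), this amounts to $f_\alpha$ having a fixed sign on $\mathrm M_\phi$, bounded away from $0$. Replacing $\varphi$ by $-\varphi$ if needed, we may assume $\int f_\alpha\,\dd m\geq 2\eps>0$ for all $m$.

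For (i), direction ``$\Leftarrow$'': a standard compactness argument upgrades the measure condition to a uniform Birkhoff bound, namely there is $T$ with $\frac1T\int_0^T f_\alpha\circ\phi_s(x)\dd s>\eps$ for every $x$ in a small neighbourhood $\scrV$ of $\scrK$. Otherwise, empirical measures along bad segments would accumulate on an invariant measure supported in $\bigcap_t\phi_t(\overline{\scrV})=\scrK$, which gives a contradiction. I then check properness of $F$. Take sequences with $x_n$ and $\phi_{t_n}x_n$ in a compact set and $|t_n|\to\infty$. Using (A2), a long orbit segment with both endpoints in a compact set spends all but a uniformly bounded amount of time inside $\scrV$; this is the uniform properness off the tails combined with local maximality. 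Splitting into blocks of length $T$ exactly as in the proof of Lemma \ref{lemma:properness-action-abstract}(iii) then gives $|\int_0^{t_n}f_\alpha|\geq \eps|t_n|-C\to\infty$. Direction ``$\Rightarrow$'': if $0\neq v\in\fh'\cap\scrL$, then by Lemma \ref{lemma:formula-for-L}(i) there are periodic orbits, or long trapped segments, whose normalized $f_\alpha$-averages tend to $0$. After concatenating them via shadowing as in Lemma \ref{lemma:convexity-limit-cone}, I would produce points $x_n\in\scrK$ and times $t_n\to\infty$ with $\int_0^{t_n} f_\alpha$ bounded, contradicting properness of $F$. The hard part is obtaining boundedness and not just $o(t_n)$. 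For this I would combine orbits on both sides of $\fh'$, which exist when $v$ is a limit from the boundary, or else use an intermediate-value argument along a one-parameter family of shadowing orbits. This is the step I expect to be the main obstacle.

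For (ii), I would avoid the pointwise hypothesis $\alpha(w)>-1+\eps$ of Lemma \ref{lemma:transverse-graph-section} by first changing the representative $w$ by a coboundary. The goal is to find $u\in C^r(\N,\fh)$ such that $f_\alpha-X_\N\alpha(u)>\eps/2$ everywhere. On a neighbourhood of $\scrK$ I would use the averaged function $\frac1T\int_0^T f_\alpha\circ\phi_s\,\dd s=f_\alpha+X_\N(g)$ with $g=\frac1T\int_0^T(T-s)f_\alpha\circ\phi_s\dd s$, and then take $u$ with $\alpha(u)=-g$. Away from $\scrK$, orbits escape by (A2), so I would cut off $g$ and patch, using an escape function to get positivity off $\scrV$. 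In the analytic case I would replace this by approximating by an analytic $u$, which is fine since positivity is an open condition. With such a $u$, Lemma \ref{lemma:transverse-graph-section}(ii) applied to the section $\sigma'=\tau_u\sigma$ gives $\M/H'\cong\N$, $C^r$-diffeomorphically. For (iii), under this identification $u_\varphi$ generates $\frac{1}{f}X_\N$ up to a positive constant, with $f>0$ the modified function. So $\phi^\varphi$ is a time change of $\phi_t$ with the same orbits, and hence the same tails, trapped set, transitivity and properness, which gives (A2) and (A4). Hyperbolicity (A3) is preserved under positive time changes by standard results. Finally, the period of $\gamma$ for $\phi^\varphi$ is $\int_\gamma f=\varphi(\lambda(\gamma))$, computed directly from \eqref{equation:lyapunov-projection} and the fact that coboundaries integrate to zero over periodic orbits.
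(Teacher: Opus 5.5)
\textbf{The gap: (i), ``$\Rightarrow$''.} The unproved step is the implication ``$H'$ proper $\Rightarrow$ $\fh'\cap\scrL=\{0\}$'' in (i). You try to contradict properness of $F$ by producing $x_n\in\scrK$ and $t_n\to\infty$ with $\int_0^{t_n}f_\alpha\circ\phi_s(x_n)\,\dd s$ bounded. As you note yourself, shadowing only gives $o(t_n)$, and neither of your suggested remedies fixes this. Consider the borderline situation where $\fh'$ is a supporting hyperplane of the convex cone $\scrL$, touching it along a ray that is only a limit of normalized Lyapunov vectors. There one can have $\int_\gamma f_\alpha>0$ for every $\gamma\in\mathbf P$, with $\inf_\gamma \ell_\gamma^{-1}\int_\gamma f_\alpha=0$. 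In that case there are no orbits on the other side of $\fh'$ to combine with. An intermediate-value argument over a family of orbits whose Birkhoff sums are all positive also cannot produce a bounded sum. As written, this half of (i) is not proved.

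\textbf{How to close it.} No construction is needed; run Lemma \ref{lemma:properness-action-abstract} forward on the compact set $\scrK$.
\begin{itemize}
\item If $H'$ acts properly, $F$ is proper by item (ii), hence so is its restriction to the closed set $\scrK\times\R$.
\item Since $\scrK$ is compact, the argument of item (iii) gives $\eps,T>0$ with $\bigl|\frac1T\int_0^T f_\alpha\circ\phi_s(x)\,\dd s\bigr|>\eps$ for all $x\in\scrK$.
\item This continuous function has constant sign on $\scrK$: follow a dense orbit using (A4), or argue basic set by basic set.
\item Integrating it against $m\in\mathrm M_\phi$ and using invariance gives $|\int f_\alpha\,\dd m|\geq\eps$. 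By your own reformulation through Lemma \ref{lemma:formula-for-L}(ii), this is exactly $\fh'\cap\scrL=\{0\}$.
\end{itemize}
This is how the paper argues. If you wanted a constructive proof instead, the relevant tool would be an Atkinson-type recurrence theorem for an ergodic $m$ with $\int f_\alpha\,\dd m=0$, not shadowing.

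\textbf{The rest.} Your ``$\Leftarrow$'' argument is a valid alternative to the paper's route. It uses uniformly bounded time outside $\scrV$, compactness of empirical measures, and sign constancy from convexity of $\mathrm M_\phi$, whereas the paper reduces to $\scrJ$ by comparison with periodic points. Your (iii) agrees with the paper. In (ii) there are two points to fix.
\begin{itemize}
\item Replacing $\varphi$ by $-\varphi$ does not change $f_\alpha=1+\alpha(w)$, which depends only on $\fh'$ and $X_\M$. So the case $\int f_\alpha\,\dd m<0$ for all $m$ genuinely occurs, and it needs the mirror version of Lemma \ref{lemma:transverse-graph-section}(ii).
\item The paper's patching avoids the cross term $(X_\N\chi)\,g$ that your cutoff creates. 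It asserts, without detailed proof, that $(1-\chi)w_T$ is a coboundary, so $w_T$ may be replaced by $\chi w_T$. Then $1+\alpha(\chi w_T)=\chi\,(1+\alpha(w_T))+(1-\chi)$ is a convex combination of positive functions; in the negative case one uses $\chi w_T-2h_0(1-\chi)$.
\end{itemize}
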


\begin{proof}
(i) By item (ii) of Lemma \ref{lemma:properness-action-abstract}, the action of $H'$ is proper if and only if for all sequences $(x_n, t_n)_{n \geq 0}$ such that $(x_n)_{n \geq 0}$ and $(\phi_{t_n}(x_n))_{n \geq 0}$ are each contained in a compact subset, and $t_n \to \infty$, 
\[
\left|\int_0^{t_n}(1+\alpha(w))\circ\phi_s(x_n) \dd s\right| \to +\infty. 
\]
Using the closing lemma, it can be established that there exist a constant $C>0$ and periodic points $y_n\in \scrK$ such that, for all $n \geq 0$,
\[
\left|\int_0^{t_n}(1+\alpha(w))\circ\phi_s(x_n) \dd s - \int_0^{t_n}(1+\alpha(w))\circ\phi_s(y_n) \dd s \right| \leq C. 
\]
This proves that the action of $H'$ is proper if and only if it is proper when restricted to $\scrJ$. Since $\scrJ \simeq \scrK \times \fh$ and $\scrK$ is compact, item (iii) of Lemma \ref{lemma:properness-action-abstract} implies that $H'$ acts properly if and only if there exists $\eps,T>0$ such that for all $x\in\scrK$
\begin{equation}\label{eq:good-average-on-K}
\left| \frac{1}{T} \int_0^T (1+\alpha(w))\circ\phi_s(x) \dd s\right| > \eps . 
\end{equation}
On each (path-connected) basic set of $\scrK$, \eqref{eq:good-average-on-K} has constant sign; without loss of generality, let us assume it is positive (if not, one applies the argument given below to each basic set separately). The lower bound \eqref{eq:good-average-on-K} then implies that for all $t > T$:
\[
\int_0^t (1+\alpha(w))\circ\phi_s(x) \dd s > (t-T)\eps.
\]
In turn, this implies that for all periodic orbits $\gamma$ of $(\phi_t)_{t \in \R}$, $\int_\gamma (1+\alpha(w)) > \eps$, and by density of periodic orbits, that for every flow-invariant probability measure $m$ on $\scrK$, $\int_{\scrK} (1+\alpha(w)) \dd m > \eps$.

We now relate this to the limit cone. Observe that it follows from Lemma \ref{lemma:formula-for-L} that $\fh'\cap \scrL = \{0\}$ if and only if there exists $\eps>0$ such that for every $h\in \fh$ such that $\alpha(h)=1$ and every invariant probability measure $m\in \mathrm{M}_\phi$,
\begin{equation}
\label{equation:ptit-dej}
\left| \int_{\scrK} (h+w)~ \dd m \right| > \eps. 
\end{equation}
In turn, this is equivalent to the existence of $\eps>0$ such that for every $m\in\mathrm{M}_\phi$, 
\begin{equation}
\label{equation:ptit-dej2}
\left| \int_{\scrK} (1+\alpha(w))\dd m\right| > \eps. 
\end{equation}
(Indeed, that \eqref{equation:ptit-dej2} implies \eqref{equation:ptit-dej} is immediate. For the converse, if \eqref{equation:ptit-dej2} does not hold, then there exists a measure $m \in \mathrm{M}_\phi$ such that $\int_{\scrK} (1+\alpha(w))\dd m=0$. Setting $h := -\int_{\scrK} w~\dd m$, we find that $\alpha(h)=1$ and $\int_{\scrK} (h+w)~ \dd m = 0$.) The condition \eqref{equation:ptit-dej2} is precisely the condition derived in the previous paragraph for the properness of the $H'$-action, which concludes the proof.  \\

(ii) We start by working in the $C^\infty$ setting and deal with the analytic case using an approximation argument at the end. Start by observing that according to \eqref{equation:lala-cob}, setting
\[
u_T = -\frac{1}{T} \int_0^T (T-t)w\circ\phi_t  \dd t \in C^\infty(\N,\fh),
\]
and replacing the trivializing section $\sigma$ by $\sigma_T := \tau_{u_T}\sigma$, we replace $w$ by the new cocycle
\[
w_{T} := w - X_\N u_T =\frac{1}{T} \int_{0}^T w\circ\phi_t \dd t \in C^\infty(\N,\fh).
\]
We now work in a sufficiently small neighbourhood $U_\mathscr{K}$ of $\mathscr{K}$. By \eqref{eq:good-average-on-K}, provided $U_\mathscr{K}$ is chosen small enough, for all $x\in U_\mathscr{K}$, for some $T,\eps>0$,
\[
\frac{1}{T} \left| \int_0^{T} (1+\alpha(w))\circ\phi_s(x)\dd s\right| > \eps. 
\]
This ensures that the new $w_{\scrK}=w_{T|U_\mathscr{K}} : U_\scrK \to \fh$ satisfies
\[
|\alpha(w_\scrK) +1| > \eps.
\]
Now, let $\chi$ be a smooth cutoff supported in $U_\scrK$, and equal to $1$ near $\scrK$. First assume that $\alpha(w_\scrK)+1>0$. Since $(1-\chi)w_T$ is supported outside of $\scrK$, there exists a smooth function $u$ so that 
\[
(1-\chi)w_T = X_\N u. 
\]
It follows that by an additional change of section, we can replace further $w_T$ by $\tilde{w}=\chi w_T$. Naturally, we have globally
\[
\alpha(\tilde{w}) + 1  > \eps. 
\]
In the other case, when $\alpha(w_\scrK)+1 < 0$, it is better to find $u$ smooth satisfying
\[
(1-\chi)(w_T + 2 h_0) = X_\N u,\text{ where }h_0\in \fh\text{ with } \alpha(h_0)= 1
\]
so that now $\tilde{w}= \chi w_T - 2h_0(1-\chi)$. In both cases, $|\alpha(\tilde{w}) + 1 | > \eps$, and Lemma \ref{lemma:transverse-graph-section} applies to prove that the corresponding map $\tilde{\sigma}:\N\to\M$, which is a section of the bundle $\M\to \M/H$ is actually also a section of the bundle $\M\to \M/H' \simeq \N$. 

Since we have used a cutoff function, we cannot directly upgrade this argument to the analytic case. However writing $\tilde{\sigma}= \tau_{v}\sigma$, for some $C^\infty$ map $v$, we can find $v'$ another map, analytic, that approximates $v$ arbitrarily well in global $C^1$ norm \cite[Proposition 8]{Grauert-58}. The corresponding $w' = \tilde{w} + X_\N(v'-v)$ is arbitrarily $L^\infty$-close to $\tilde{w}$, so that the condition of Lemma \ref{lemma:transverse-graph-section} is still satisfied, and this provides an analytic solution to the problem. \\

(iii) To see that $(\phi^\varphi_t)_{t \in \R}$ is a reparameterization of $(\phi_t)_{t \in \R}$, it suffices to observe that both flows have the same orbit foliation. The rest of the statement is elementary. 
\end{proof}

\subsection{Bundles and distributions}

\label{ssection:bundles-distributions}

\subsubsection{Admissible bundles}
\label{sssection:admissible} 

Let $\mc{E} \to \M$ be a smooth vector bundle. As $\M$ retracts onto $\mc{N}$, $\mc{E} = \pi^* \mc{E}_{\mc{N}}$ for some smooth vector bundle $\mc{E}_{\mc{N}} \to \mc{N}$,
where $\pi : \M \to \mc{N}= \M/H$ is the projection. A section $f \in C^\infty(\M,\E)$ is called a \emph{pullback} section if there exists $s \in C^\infty(\N,\E_\N)$ such that $f = \pi^*s$.

Consider a linear Lie algebra embedding into first-order differential operators:
\[
\mathbf{X}_{\mc{E}} : \fa \to \mathrm{Diff}^1(\M,\mc{E}).
\]
That is, $[\X_{\mc{E}}(a),\X_{\mc{E}}(b)] = 0$ for all $a,b \in \fa$.
We say that $\mathbf{X}_{\mc{E}}$ is an \emph{admissible lift} of the action to $\mc{E}$ if:
\begin{itemize}
\item $\mathbf{X}_{\mc{E}}(a)(\varphi  \otimes s) = \mathbf{X}(a)\varphi  \otimes s + \varphi  \otimes \mathbf{X}_{\mc{E}}(a)s$ for all $a \in \fa$, $\varphi \in C^\infty(\M)$ and $s \in C^\infty(\M,\mc{E})$;
\item $\mathbf{X}_{\mc{E}}(h)\left(\pi^* s\right) = 0$ for all $s \in C^\infty(\N, \E_\N)$, $h \in \fh$.
\end{itemize}
In short, we say that $\mc{E} \to \M$ is an \emph{admissible} bundle if it is equipped with such a lift of the action. Note that the following holds:

\begin{lemma}
\label{lemma:juilley}
A section $f \in C^\infty(\M,\E)$ is a pullback section if and only if $\X_{\E}(h) f = 0$ for all $h \in \fh$.
\end{lemma}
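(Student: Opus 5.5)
The forward implication is immediate from the definition of an admissible lift: if $f=\pi^*s$ for some $s\in C^\infty(\N,\E_\N)$, then the second defining property gives $\X_\E(h)f=0$ for all $h\in\fh$. The content is the converse, which I plan to reduce to the scalar case using a local frame of pullback sections.

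For the converse, I will work in the global trivialization $\M\simeq\N\times\fh$ from \eqref{equation:trivialization}. In these coordinates the vector field $\X(h)$, $h\in\fh$, is the constant translation field $\partial_h$ in the $\fh$-factor, since $H$ acts by translations on the second factor. Over a small open set $U\subset\N$, I choose a local frame $(s_1,\dots,s_r)$ of $\E_\N$. Then $(\pi^*s_1,\dots,\pi^*s_r)$ is a local frame of $\E=\pi^*\E_\N$ over $\pi^{-1}(U)\simeq U\times\fh$. On this set I write $f=\sum_j f_j\,\pi^*s_j$ with $f_j\in C^\infty(U\times\fh)$. The Leibniz rule (first property of an admissible lift) together with $\X_\E(h)\pi^*s_j=0$ gives
\[
\X_\E(h)f=\sum_j \bigl(\X(h)f_j\bigr)\,\pi^*s_j .
\]
Hence $\X_\E(h)f=0$ for all $h$ is equivalent to $\partial_h f_j=0$ for all $h\in\fh$ and all $j$. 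Because the fibres $\{x\}\times\fh$ are connected, each $f_j$ is independent of the $\fh$-variable, so $f_j=\pi^*g_j$ for some $g_j\in C^\infty(U)$.

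The local sections $s_U:=\sum_j g_j s_j$ satisfy $f|_{\pi^{-1}(U)}=\pi^*s_U$. On overlaps $U\cap U'$ we have $\pi^*s_U=\pi^*s_{U'}$, and $\pi$ is surjective, so $s_U=s_{U'}$. The local sections therefore glue to a global $s\in C^\infty(\N,\E_\N)$ with $f=\pi^*s$. Equivalently, one may simply set $s:=\sigma^*f$ using the section $\sigma$ and check $f=\pi^*s$ via the constancy along fibres.

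The only point needing care is the identification $\E=\pi^*\E_\N$ together with the claim that pullbacks of a local frame form a local frame. This holds by construction, and no genuine obstacle arises. The connectedness of the fibres $H\simeq\R^k$ is what makes invariance under the infinitesimal action equivalent to constancy along fibres.
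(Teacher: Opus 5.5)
Your proof is correct and follows the paper's argument: write $f$ locally in a frame of pullback sections $\pi^*s_i$, use the Leibniz rule together with $\X_\E(h)\pi^*s_i=0$ to reduce to $\X(h)f_i=0$, and conclude that the coefficients are constant along the $H$-fibres and hence are pullbacks. You also spell out two steps the paper leaves implicit, namely the use of the connectedness of the fibres $\fh\simeq\R^k$ and the gluing of the local sections $s_U$ into a global one.
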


\begin{proof}
By assumption, any pullback section satisfies $\X_{\E}(h)( \pi^*s) = 0$, so it remains to prove the converse statement. Let $U \subset \N$ be an open subset. Suppose $\X_{\E}(h) f = 0$ for all $h \in \fh$ and write the section locally on $\pi^{-1}(U) \subset \M$ as $f = \sum_i f_i \cdot \pi^* s_i$ for some functions $f_i$ defined on $\M$ and a frame $s_i\in C^\infty(U, \E_{\N})$. Then $\X_{\E}(h) f = 0 = \sum_i \X(h) f_i \cdot \pi^* s_i$, so $\X(h) f_i = 0$ for all $h \in \fh$. That is, $f_i$ is constant on each $H$-fiber of $\pi^{-1}(U)$ and can thus be written as $f_i = \pi^* \widetilde{f}_i$ for some function defined on $U \subset \N$.
\end{proof}

Any admissible lift $\X_\E$ of the action allows us to define a differential operator $\X_{\E_\N}$ of order $1$ acting on smooth sections $C^\infty(\N,\E_\N)$ of $\E_\N \to \N$ which satisfies the Leibniz rule
\[
\X_{\E_\N}(f\otimes s) = X_\N f \otimes s + f \otimes \X_{\E_\N}\ s, \quad \forall f \in C^\infty(\mc{N}), s \in C^\infty(\mc{N},\mc{E}_{\N}).
\]
We call such a differential operator a \emph{lift} of $X_\N$ to $\E_\N \to \N$. Indeed, given $s \in C^\infty(\N,\mc{E}_\N)$, it is immediate to verify that $\X_{\E}(X_\M) (\pi^* s) = \pi^*f$ for some (unique) $f \in C^\infty(\N,\E_\N)$ by Lemma \ref{lemma:juilley}, and we define $\X_{\E_\N}$ by setting $\X_{\E_\N} s = f$. A quick computation shows that $\X_{\E_\N}$ is a differential operator of order $1$ satisfying the Leibniz rule.

Conversely, we claim that any lift $\X_{\E_\N}$ of the vector field $X_\N$ to $\E_\N$ induces a lift $\X_\E$ of the $A$-action to $\E$. Indeed, any smooth section of $\E = \pi^*\E_\N$ can be locally written as a linear combination of pullback sections $\pi^* s$, where $s \in C^\infty(\N,\E_\N)$ (with $C^\infty$ coefficients defined on an open subset of $\M$). Hence, as $\X_\E$ is required to satisfy the Leibniz rule, it suffices to define $\X_\E$ on pullback sections and we set:
\[
\X_\E(X_\M) (\pi^* s) := \pi^*(\X_{\E_\N} s), \qquad \X_\E(h)(\pi^*s) := 0,  \forall h \in \mathfrak{h}.
\]
It is immediate to verify that $[\mathbf{X}_{\mc{E}}(a), \mathbf{X}_{\mc{E}}(b)] = 0$ for all $a, b \in \fa$ in this case.

\begin{lemma}
The above constructions provide a one-to-one correspondence between lifts $\X_{\E_\N}$ of $X_\N$ on the vector bundle $\E_\N \to \N$ and admissible lifts $\X_\E$ of the action on the vector bundle $\E \to \M$, where $\E=\pi^*\E_\N$.
\end{lemma}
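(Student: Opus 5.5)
We will show that the two constructions described above, $\X_\E \mapsto \X_{\E_\N}$ and $\X_{\E_\N} \mapsto \X_\E$, are well defined and mutually inverse. The main tool is Lemma \ref{lemma:juilley}, together with the fact that $\E = \pi^*\E_\N$ is locally spanned over $C^\infty(\M)$ by pullback sections.

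\textbf{The map $\X_\E \mapsto \X_{\E_\N}$.} Fix $s \in C^\infty(\N,\E_\N)$. For $h \in \fh$, commutativity gives
\[
\X_\E(h)\X_\E(X_\M)\pi^*s = \X_\E(X_\M)\X_\E(h)\pi^*s = 0.
\]
By Lemma \ref{lemma:juilley}, $\X_\E(X_\M)\pi^*s$ is therefore a pullback section $\pi^* f$. The section $f$ is unique because $\pi$ is a surjective submersion, so we may set $\X_{\E_\N}s := f$. To obtain the Leibniz rule, take $g \in C^\infty(\N)$ and write $\pi^*(gs) = (g\circ\pi)\,\pi^*s$. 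The Leibniz property of $\X_\E$ together with $\X(X_\M)(g\circ\pi) = (X_\N g)\circ\pi$, which follows from $\pi_*X_\M = X_\N$, yields the Leibniz rule for $\X_{\E_\N}$.

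\textbf{The map $\X_{\E_\N} \mapsto \X_\E$.} Well-definedness is the only delicate point. Locally over $\pi^{-1}(U)$ we choose a frame $(s_i)$ of $\E_\N|_U$ and define $\X_\E(a)\bigl(\sum_i f_i\,\pi^*s_i\bigr)$ by the Leibniz rule, extended linearly in $a \in \fa = \R X_\M \oplus \fh$. Independence of the frame holds because a change of frame $s_i' = \sum_j g_{ij}s_j$ has coefficients $g_{ij}\circ\pi$ pulled back from $\N$. The definition is then consistent precisely because $\X_{\E_\N}$ satisfies the Leibniz rule on $\N$, and because $\X(h)(g\circ\pi) = 0$ for $h \in \fh$ while $\X(X_\M)(g\circ\pi) = (X_\N g)\circ\pi$. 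We must also check that the operators commute. On pullback sections, $[\X_\E(X_\M),\X_\E(h)]\pi^*s = 0$ because both compositions vanish. On general sections, the commutator of two Leibniz-type operators acting on $f\,\pi^*s$ is $[\X(X_\M),\X(h)]f \otimes \pi^*s$ plus terms that vanish, and $[\X(X_\M),\X(h)] = 0$ since $\fa$ is abelian. Hence $\X_\E$ is an admissible lift.

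\textbf{Mutual inverseness.} Starting from $\X_{\E_\N}$, passing to $\X_\E$ and back returns $\X_{\E_\N}$, directly from $\X_\E(X_\M)\pi^*s = \pi^*\X_{\E_\N}s$. Starting from an admissible $\X_\E$, the reconstructed lift agrees with it on pullback sections. On these sections the $X_\M$-component agrees by definition, and the $\fh$-components agree by admissibility. Two operators satisfying the same Leibniz rule with respect to $\X(a)$ that coincide on a local spanning family of sections coincide everywhere, and linearity in $a$ completes the argument.

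The main obstacle, which is modest, is the well-definedness check in the second construction, namely independence of the local frame. It reduces to the observation that transition functions are pulled back from $\N$ and that $\pi_*\X(X_\M) = X_\N$.
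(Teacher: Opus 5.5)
Your proposal is correct and follows the paper's route exactly. The paper constructs $\X_{\E_\N}$ from $\X_\E$ via Lemma~\ref{lemma:juilley}, and $\X_\E$ from $\X_{\E_\N}$ by prescribing it on pullback sections and extending by the Leibniz rule, then calls the verification immediate. You carry out the checks it leaves implicit (frame independence, commutation of the $\X_\E(a)$, and that the two constructions are mutually inverse), and all of them are correct.
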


The proof is immediate. While any vector bundle $\E_\N \to \N$ admits an admissible lift $\X_\E$, this is most useful when we have a \emph{geometric description} of such a lift. Typical examples are provided by $\mc{E} = \Lambda^m T^*\M$.

\begin{example}[Differential forms] The bundles $\mc{E} = \Lambda^m (E_{s,\M}^*\oplus E^*_{u,\M})$, for $0 \leq m \leq d_{\mc{N}}-1$, are admissible. They correspond to the vector bundles of $m$-forms over $\M$ vanishing on $\fa$. There is a natural operator $\mathbf{X}_{\mc{E}}$ defined on these bundles by setting, for $a \in \fa$ and $u \in C^\infty(\M,\mc{E})$,
\begin{equation}
\label{equation:lie}
\X_{\mc{E}}(a) u := \mc{L}_{\X(a)} u,
\end{equation}
where $\mc{L}$ stands for the Lie derivative. We will mostly omit the index $\mc{E}$ and write $\mathbf{X}$ in this case. Notice that $\E = \pi^*\E_\N$ with
$\E_\N = \Lambda^m(E_{s,\N}^*\oplus E_{u,\N}^*)$ and $\pi : \M \to \N$ is the footpoint projection.
\end{example}

We also introduce the shorthand notations
\[
\X_\M := \X_\E(X_\M), \qquad \X_\N := \X_{\E_\N}.
\]
In the particular case where $\E$ is a (sub)bundle of forms on $T^*\M$, we may also write $\mc{L}_{X_\M}$ and $\mc{L}_{X_\N}$.

Finally, we say that a norm $\|\bullet\|$ on $\E$ is \emph{admissible} if, for all $z \in \M$ and $h \in H$, the linear map $E_h(z) : \E_z \to \E_{\tau_h z}$ is an isometry or, equivalently, $\|\bullet\|$ is the pullback of a norm from $\E_{\N}$. Similarly, a connection $\nabla^{\E}$ is \emph{admissible} if $\nabla^{\E} = \pi^* \nabla^{\E_{\N}}$, for some smooth connection on $\E_{\N} \to \N$.

\subsubsection{Propagator} \label{sssection:propagator}

Given $a \in \fa$, the first-order differential operator $\X_\E(a)$ defines a propagator $t \mapsto e^{-t\X_\E(a)}$ such that for any $f \in C^\infty(\M,\E)$, $f(t) := e^{-t\X_\E(a)} f \in C^\infty(\M,\E)$ satisfies the transport equation:
\[
f(0) = f, \qquad \partial_t f(t) = -\X_\E(a) f(t).
\]
It can be verified that
\begin{equation}
\label{equation:propagator-def}
[e^{-\X_\E(a)}f](\tau_a z) = E_a(z) (f(z))
\end{equation}
for some linear map $E_a(z) : \E_{z} \to \E_{\tau_az}$ which depends smoothly on all parameters; see, for instance, \cite[Lemma 9.1.1]{Lefeuvre-book}. (Recall that $\tau : A \times \M \to \M$ denotes the action.) \\

Let $\scrK \Subset \scrV_0 \Subset \scrV_0'$ be two relatively compact open neighborhoods of $\scrK$ and further assume that the following holds: whenever $x \in \scrV_0$ and $T \in \R$ satisfy $\phi_T(x) \in \scrV_0$, one has $\phi_t(x) \in \scrV_0'$ for all $t \in [\min\{0,T\},\max\{0,T\}]$. See Lemma \ref{lemma:deja} for the existence of such neighborhoods. We also let $\scrU_0 := \pi^{-1}(\scrV_0)$ where $\pi : \M \to \N$ is the projection. 

\begin{lemma}
\label{lemma:vilain}
Let $\|\bullet\|$ be an admissible norm on $\E$ (see \S\ref{sssection:admissible}). Then there exists $C > 0$ such that for all $t \geq 0$:
\[
\|e^{t \X_{\M}}\|_{L^\infty(\scrU_0,\E) \to L^\infty(\scrU_0,\E)} \leq C e^{C t}.
\]
\end{lemma}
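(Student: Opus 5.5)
The plan is to write $e^{t\X_\M}$ explicitly through the propagator formula \eqref{equation:propagator-def}, reduce the norm estimate to the base $\N$ using admissibility of the norm, and close with a Gronwall argument along orbit segments that stay in the relatively compact set $\scrV_0'$. I read the operator on $L^\infty(\scrU_0,\E)$ as: extend $f$ by zero outside $\scrU_0$, apply the propagator, and restrict to $\scrU_0$.

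\emph{Step 1 (pointwise formula).} By \eqref{equation:propagator-def} applied with $a=-tX_\M$, for $z\in\scrU_0$ we have
\[
[e^{t\X_\M}f](z)=E_{-tX_\M}(\Phi_t z)\,f(\Phi_t z).
\]
This vanishes unless $\Phi_t z\in\scrU_0$. Hence
\[
\|e^{t\X_\M}\|_{L^\infty(\scrU_0,\E)\to L^\infty(\scrU_0,\E)}\le \sup\big\{\|E_{-tX_\M}(y)\| ~:~ y\in\scrU_0,\ \Phi_{-t}y\in\scrU_0\big\}.
\]

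\emph{Step 2 (reduction to the base).} The lift commutes with $\X_\E(h)$ for $h\in\fh$, and the maps $E_h$ are isometries for an admissible norm. So $\|E_{-tX_\M}(y)\|$ depends only on $x=\pi(y)\in\N$. It equals the operator norm of the parallel-transport-type map $\E_{\N,x}\to\E_{\N,\phi_{-t}x}$ generated by $\X_{\E_\N}$ along the orbit of $\phi$. By the definition of $\scrU_0$, both endpoints $x$ and $\phi_{-t}x$ lie in $\scrV_0$. The defining property of the pair $\scrV_0\Subset\scrV_0'$ (Lemma \ref{lemma:deja}) then forces the whole segment $\phi_{-s}x$, $s\in[0,t]$, to lie in $\scrV_0'$, whose closure $\overline{\scrV_0'}$ is compact.

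\emph{Step 3 (Gronwall).} On $\overline{\scrV_0'}$, cover by finitely many charts with local frames of $\E_\N$. In such a frame, $\X_{\E_\N}=X_\N+B(x)$ with $B$ a smooth matrix-valued potential. Comparing the admissible norm with the frame norms gives
\[
\big|\partial_s \log\|v(s)\|\big|\le C_0,
\]
where $v(s)$ is the transported vector and $C_0$ depends only on $\sup_{\overline{\scrV_0'}}|B|$ and on the norm-comparison constants. A cleaner route is to fix an admissible connection $\nabla^\E=\pi^*\nabla^{\E_\N}$ and write $\X_{\E_\N}=\nabla^{\E_\N}_{X_\N}+B$ with $B\in C^\infty(\N,\mathrm{End}(\E_\N))$, which avoids patching charts. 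Integrating from $0$ to $t$ yields $\|E_{-tX_\M}(y)\|\le e^{C_0 t}$ (up to a constant if the norm is only compared to a metric-compatible one), which is the claim.

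The main obstacle is Step 2. It is essential that the supremum is taken only over orbit segments whose endpoints are both in $\scrU_0$. Without the convexity-type property of $\scrV_0\Subset\scrV_0'$, the segment could wander off to infinity in $\N$, where $B$ is unbounded. The $\fh$-directions are harmless thanks to admissibility.
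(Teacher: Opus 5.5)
Your proof is correct and follows the same route as the paper, whose proof is the one-line remark that the bound follows from compactness and the admissibility of the norm. You make explicit the two ingredients that remark leaves implicit: admissibility turns the propagator norm into a quantity on $\N$, and Lemma~\ref{lemma:deja} keeps the relevant orbit segments inside the relatively compact set $\scrV_0'$, so a Gronwall bound with a uniform constant applies.
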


More generally, the bound holds by replacing $L^\infty(\scrU_0,\E)$ by $C^k(\scrU_0,\E)$, where the $C^k$-norm is measured using an admissible connection $\nabla^{\E}$ (see \S\ref{sssection:admissible} for admissible connections); that is,
\[
\|f\|_{C^k(\scrU_0,\E)} := \sum_{j=0}^k \|(\nabla^{\E})^j f\|_{L^\infty(\scrU_0,\E\otimes(T^*\M)^{\otimes j})}.
\]

\begin{proof} 
This follows immediately from the compactness of $\scrV_0$ and the admissibility of the norm.
\end{proof}

\subsubsection{Distributions} We now equip $\fa$ with a Euclidean metric and an orientation. Recall that $E_{s,\M}^* \oplus E_{u,\M}^*$ is a smooth vector bundle over $\M$ (see \eqref{equation:smooth-tralala}). Let $E \subset T\M$ be an arbitrary smooth subbundle of rank $d_{\M}-(k+1)$, everywhere transverse to $E_{0,\M} \simeq \fa$; that is, $E \oplus E_{0,\M} = T\M$. We define
\[
\mathrm{vol}_{\fa} \in C^\infty(\M,\Lambda^{k+1} T^*\M),
\]
by 
\begin{equation}
\label{equation:def-vola}
\iota_v\mathrm{vol}_{\fa}=0,\quad \forall v\in E, \qquad \mathrm{vol}_{\fa}|_{E_{0,\M}} = \mathrm{Lebesgue}_{\fa},
\end{equation}
where $\mathrm{Lebesgue}_{\fa}$ stands for the Euclidean volume form of a fixed Euclidean metric on $\fa$. In what follows, we write $\dd a$ for Lebesgue measure on $\fa$ and $\dd\xi$ for the induced Lebesgue measure on $\fa^*$.

The smooth $(k+1)$-form $\mathrm{vol}_{\fa}$ is \emph{not} invariant under the $A$-action. However, for $0 \leq m \leq d_{\N}-1$, we may define a sesquilinear pairing
\[
C^\infty_{\comp}(\M,\Lambda^m (E_{s,\M}^* \oplus E_{u,\M}^*)) \times C^\infty(\M,\Lambda^{d_{\N}-1-m} (E_{s,\M}^* \oplus E_{u,\M}^*))) \to \C
\]
by
\begin{equation}
\label{equation:pairing}
\big\langle\!\langle \varphi, \psi \rangle\!\big\rangle_{\M} := \int_{\M}\vol_{\fa}\wedge \varphi \wedge \overline{\psi},
\end{equation}
and a straightforward computation shows that \eqref{equation:pairing} is \emph{independent} of $E$. In addition, this pairing can be extended to compactly supported distributions provided they satisfy the usual transverse wavefront set condition (see \cite[Lemma 4.3.1]{Lefeuvre-book} for instance).

\begin{lemma}
For all $\varphi, \psi$ as above and $a \in \fa$, the following identities hold:
\[
\big\langle\!\langle\mc{L}_{\mathbf{X}(a)}\varphi, \psi\rangle\!\big\rangle_{\M} = -\big\langle\!\langle \varphi, \mc{L}_{\mathbf{X}(a)} \psi \rangle\!\big\rangle_{\M}, \qquad \big\langle\!\langle (e^a)^*\varphi,\psi \rangle\!\big\rangle_{\M}= \big\langle\!\langle \varphi, (e^{-a})^*\psi \rangle\!\big\rangle_{\M}.
\]
\end{lemma}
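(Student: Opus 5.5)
The first identity comes from a Cartan-formula computation combined with Stokes' theorem. The second then follows by integrating the first along $t\mapsto e^{ta}$.

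\textbf{Step 1: the Lie derivative of $\vol_{\fa}$.} Fix $a\in\fa$ and write $X:=\X(a)$. Since $\fa$ is abelian, $\mc L_X \X(b)=[X,\X(b)]=0$ for every $b\in\fa$. Contract $\vol_\fa$ with $\X(b_0),\dots,\X(b_k)$, where $b_0,\dots,b_k$ is a basis of $\fa$. By \eqref{equation:def-vola} the result is the constant $\mathrm{Lebesgue}_\fa(b_0,\dots,b_k)$. Hence $(\mc L_X\vol_\fa)(\X(b_0),\dots,\X(b_k))=0$.

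\textbf{Step 2: the product rule.} Set $\omega:=\vol_\fa\wedge\varphi\wedge\overline\psi$, a top-degree form ($(k+1)+m+(d_\N-1-m)=d_\M$) with compact support. The forms $\varphi$ and $\psi$ vanish whenever one argument lies in $E_{0,\M}$. In any wedge product with such forms, only the component of $\vol_\fa$ that is nonzero on $\Lambda^{k+1}E_{0,\M}$ survives. Step 1 shows this component of $\mc L_X\vol_\fa$ vanishes; other components could appear, so one should check this. Consequently
\[
\mc L_X\vol_\fa\wedge\varphi\wedge\overline\psi=0 .
\]
This is the same mechanism that makes \eqref{equation:pairing} independent of the choice of $E$. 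The Lie derivative is a derivation commuting with conjugation, and $\mc L_X\varphi$ and $\mc L_X\psi$ still vanish on $\fa$. Therefore
\[
\mc L_X\omega=\vol_\fa\wedge\mc L_X\varphi\wedge\overline\psi+\vol_\fa\wedge\varphi\wedge\overline{\mc L_X\psi}.
\]

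\textbf{Step 3: Stokes.} Cartan's formula gives $\mc L_X\omega=d\iota_X\omega$, because $d\omega=0$ in top degree. The form $\iota_X\omega$ has compact support, so $\int_\M \mc L_X\omega=0$ by Stokes' theorem. This is the first identity.

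\textbf{Second identity.} Define $F(t):=\langle\!\langle (e^{ta})^*\varphi,(e^{-ta})^*\psi\rangle\!\rangle_\M$. Note that $(e^{ta})^*\varphi$ stays compactly supported. Differentiating gives
\[
F'(t)=\big\langle\!\langle \mc L_X\varphi_t,\psi_{-t}\big\rangle\!\big\rangle_\M-\big\langle\!\langle\varphi_t,\mc L_X\psi_{-t}\big\rangle\!\big\rangle_\M ,
\]
where $\varphi_t:=(e^{ta})^*\varphi$ and $\psi_{-t}:=(e^{-ta})^*\psi$. Here we use that pullbacks commute with $\mc L_X$ and preserve the bundles (the action preserves $E_{0,\M}$). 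By the first identity, $F'(t)=-2\langle\!\langle \varphi_t,\mc L_X\psi_{-t}\rangle\!\rangle_\M$, which is not automatically zero. This points to the sign problem below.

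\textbf{Main obstacle: the sign.} The sign is the delicate point. Is the first identity naturally $\langle\!\langle\mc L\varphi,\psi\rangle\!\rangle=-\langle\!\langle\varphi,\mc L\psi\rangle\!\rangle$, with the pullback transpose being $(e^{-a})^*$? Alternatively, one can compute directly: $(e^{a})^*$ is a diffeomorphism preserving orientation, so
\[
\int\vol_\fa\wedge(e^a)^*\varphi\wedge\overline\psi=\int(e^{-a})^*\vol_\fa\wedge\varphi\wedge\overline{(e^{-a})^*\psi}.
\]
Step 1 applied to the flow shows $(e^{-a})^*\vol_\fa$ has the same $\Lambda^{k+1}E_{0,\M}$-component as $\vol_\fa$. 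This yields the second identity directly, without differentiating. Differentiating this identity at $t=0$ then reproduces the first identity with the minus sign. I would present the proof this way: first the invariance of the relevant component of $\vol_\fa$, then the change of variables, then differentiation. The only real check is that the non-$E_{0,\M}$ components drop out of the wedge product.
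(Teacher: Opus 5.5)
Your final argument is correct and is essentially the paper's proof. The action preserves the $\Lambda^{k+1}E_{0,\M}$-component of $\vol_{\fa}$ (the paper's observation that $e^a$ acts isometrically on $E_{0,\M}\simeq\fa$), and all other components are killed by the top-degree transverse form $\varphi\wedge\overline{\psi}$. A change of variables then gives the second identity, and your Cartan--Stokes Steps 1--3 (equivalently, differentiating in $a$) give the first.

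The ``sign problem'' you flag is illusory: it comes from differentiating the wrong function. The second identity is equivalent to constancy of $t\mapsto\langle\!\langle (e^{ta})^*\varphi,(e^{ta})^*\psi\rangle\!\rangle_{\M}$, whose derivative vanishes by the first identity. Your $F(t)$ equals $\langle\!\langle\varphi,(e^{-2ta})^*\psi\rangle\!\rangle_{\M}$ and is genuinely non-constant. So the two identities are consistent, and each implies the other.
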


\begin{proof}
Let $a \in \fa$. Since $e^a$ acts isometrically on $E_{0,\M}\simeq \fa$, we have $(e^a)^*\vol_\fa|_{E_{0,\M}}=\mathrm{Lebesgue}_{\fa}$. Hence $(e^a)^*\vol_{\fa}-\vol_{\fa} \in \Lambda^{k+1}(E_{u,\M}^*\oplus E_{s,\M}^*)$ and thus the needed wedge identity holds for any $\varphi, \psi$ as in the lemma:
\[
\vol_{\fa}\wedge \varphi \wedge \overline{\psi}  =  (e^a)^*\vol_{\fa} \wedge \varphi \wedge \overline{\psi} .
\]
Differentiating with respect to $a$, we also obtain
\begin{equation}
\label{equation:dodo-dodo}
 \mathcal{L}_{\X(a)}  \vol_{\fa}\wedge \varphi \wedge \overline{\psi}  = 0.
\end{equation}
The above two identities easily imply the claimed results after integration over $\M$.
\end{proof}

\begin{remark}
In the specific case in which $E_{s,\M}\oplus E_{u,\M}$ is smooth, one can choose $E=E_{s,\M}\oplus E_{u,\M}$ in \eqref{equation:def-vola}, and $\vol_{\fa}$ then becomes invariant under the $A$-action as a (smooth) $(k+1)$-form. In rank one, this condition holds, for instance, if the flow is contact. For Anosov representations, $E_{s,\M}\oplus E_{u,\M}$ is always smooth and such a choice can be made.
\end{remark}

\section{Spectral theory}
\label{section:results-cocycles}

\subsection{Resonance spectrum} \label{ssection:results1} We present our main results in a form that is intrinsic to the $A$-action on $\M$, without relying on any particular choice of decomposition $\M \simeq \N \times \R^k$ (i.e., a choice of global trivializing section $\sigma : \N \to \M$). Throughout, $\mc{E} \to \M$ is a fixed admissible vector bundle (see \S\ref{sssection:admissible}). 

\subsubsection{Existence of a resonance spectrum} We start by introducing the notion of a resonance spectrum. 

\begin{definition}[Resonance spectrum]
\label{definition:rt-intro}
A point $\mathbf{s} \in \fa^*_{\C}$ is called a \emph{dynamical resonance} (for $\mc{E}$-valued sections) if there exists a nonzero $u \in \mc{D}'(\M,\mc{E})$ such that
\[
\WF(u) \subset E_{u,\M}^*, \quad \supp(u) \subset \mc{T}_{+,\M}, \quad (-\X(a)-\s(a))u = 0, \forall a\in\mathfrak{a}.
\]
We denote the set of dynamical resonances by $\sigma_{\mathrm{RS},+} \subset \fa_\C^*$.
\end{definition}

We also use $(-\X-\mathbf{s})u=0$ as shorthand. In the Anosov case ($\mathscr{K}=\mc{N}$), the support condition on $u$ is vacuous. The space of \emph{resonant states} is defined as
\begin{equation}
\label{equation:ruelle-taylor-resonant}
\mathrm{Res}(\s) := \{u \in \mc{D}'_{E_u^*}(\M,\mc{E}) ~:~ \supp(u) \subset \mc{T}_{+,\M}, (-\X-\s) u = 0\}.
\end{equation}

A ``negative'' resonance spectrum $\sigma_{\mathrm{RS},-}$ can be defined as the set of $\mathbf{s} \in \fa^*_{\C}$ for which there are nontrivial solutions of $(+\X-\mathbf{s})u = 0$ satisfying $\WF(u) \subset E_{s,\M}^*$ and $\supp(u) \subset \mc{T}_{-,\M}$.
In certain contexts, if only the positive resonances appear, we will drop the index $+$ and simply write $\sigma_{\mathrm{RS}}$. If there is more than one vector bundle, we might also write $\sigma_{\mathrm{RS},\pm}(\mc{E})$ to distinguish their spectra.

Our first aim is to prove the following result:
\begin{theorem}[Existence of the resonance spectrum]
\label{theorem:rt-anosov1}
Under assumptions $\hyperlink{AA1}{\rm(A1)-(A3)}$, the following properties hold:
\begin{enumerate}[label=\emph{(\roman*)}]
\item \textbf{\emph{Resonance spectrum.}} The subset
\[
\sigma_{\mathrm{RS},+} \subset \mathfrak a_\C^*
\]
is a complex variety of codimension $1$. It is called the \emph{resonance spectrum} of the action.

\item \textbf{\emph{Resonant states.}} For $\mathbf{s} \in \sigma_{\mathrm{RS},+}$, $\mathrm{Res}(\mathbf s)$ is a finite-dimensional vector space. \end{enumerate}
\end{theorem}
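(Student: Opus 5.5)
The strategy is to reduce the joint spectral problem on $\M\simeq\N\times\fh$ to a family of rank-one problems on the base $\N$, parametrized holomorphically by the dual variable of $\fh$. We fix the decomposition $\fa=\R X_\M\oplus\fh$ and a trivialization $\sigma$ as in \eqref{equation:trivialization}, and write $\s=(s_0,\s')$ with $s_0=\s(X_\M)$ and $\s'=\s|_{\fh}\in\fh^*_\C$. Let $u$ satisfy $(-\X(h)-\s'(h))u=0$ for all $h\in\fh$. Since $\fh$ acts by translations in the $\fh$-factor, and since $\X_\E(h)$ kills pullback sections (Lemma \ref{lemma:juilley}), we expect $u=e^{-\s'(\cdot)}\,\pi^*v$, meaning $u(x,h)=e^{-\s'(h)}v(x)$, for a unique $v\in\mc D'(\N,\E_\N)$. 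The joint eigen-equation then becomes the single equation
\[
\bigl(-\X_\N-\s'(w)-s_0\bigr)v=0 ,
\]
where $\s'(w)\in C^\infty(\N,\C)$ is the scalar potential obtained by pairing $\s'$ with the cocycle $w$. The conditions on $u$ transfer to conditions on $v$: $\WF(u)\subset E^*_{u,\M}$ is equivalent to $\WF(v)\subset E^*_{u,\N}$ by \eqref{equation:esm}, and $\supp u\subset\mc T_{+,\M}$ is equivalent to $\supp v\subset\mc T_+$. This gives a bijection between $\mathrm{Res}(\s)$ and the space of Pollicott--Ruelle resonant states at $s_0$ of the Axiom A flow operator $P_{\s'}:=\X_\N+\s'(w)$ acting on sections of $\E_\N$.

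The next step is to invoke the rank-one theory for Axiom A flows satisfying (A2)--(A3), namely Dyatlov--Guillarmou \cite{Dyatlov-Guillarmou-16} in the version with a potential and a vector bundle. For every $\s'\in\fh^*_\C$, the resolvent $R_{\s'}(s_0)=(P_{\s'}+s_0)^{-1}$, defined for $\Re s_0\gg0$ and cut off near $\scrK$ (say $\mathbf 1_{\scrU_0}$-type cutoffs with $\scrV_0$ as in \S\ref{sssection:propagator}), continues meromorphically to $s_0\in\C$. Its poles have finite rank, and the resonant states at a pole are finite-dimensional and coincide with the $v$ described above. This immediately yields (ii).

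For (i) we need the poles to depend holomorphically on $\s'$ jointly. We would build the Dyatlov--Guillarmou parametrix on a fixed anisotropic space $\mathcal H^{r}$ whose escape function is independent of $\s'$, valid locally uniformly for $\s'$ in compact sets with $r$ chosen large relative to $\sup|\Re\s'(w)|$ on $\scrV_0'$. This gives a Fredholm family $\s\mapsto P_{\s'}+s_0$ on $\mathcal H^r$, of index zero and depending holomorphically on $(s_0,\s')$. After a finite-rank perturbation that is holomorphic in a neighborhood, the Grushin problem (or a Fredholm determinant of $1+K(\s)$ with $K$ trace class, which is available after composing with a smoothing operator) realizes $\sigma_{\mathrm{RS},+}$ locally as the zero set of a holomorphic function $F(\s)$. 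This function is not identically zero, since for $\Re s_0>C(|\Re\s'|+1)$ the resolvent exists; this is exactly the meaning of the region $|\Re(\s)|\gg0$ in \eqref{equation:bigres}, and it follows from Lemma \ref{lemma:vilain} with the growth of $e^{-\int\s'(w)}$. Hence the zero set is a hypersurface, and the local defining functions glue, by Cousin II on the Stein space $\C^{k+1}$, to a global one. Finally, independence of the choices of $\sigma$ and $\fh$ follows because changing the section conjugates $P_{\s'}$ by the multiplier $e^{\s'(u)}$ from \eqref{equation:lala-cob}, which preserves the wavefront and support conditions.

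The main obstacle is the uniformity in $\s'$. The threshold regularity of the anisotropic space depends on $\Re\s'(w)$, so a single space only works on bounded sets of $\s'$. The local analytic descriptions must therefore be patched, and we must check that resonances and multiplicities do not depend on the choice of space. For this we would use the intrinsic characterization via $\WF$ and support, so that pole sets computed on different spaces agree on overlaps.
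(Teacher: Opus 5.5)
Your proposal is correct and follows essentially the paper's argument. The reduction $u=\mathbf e(\s')v$ to the base operator $P_+(\s)$ is Lemma~\ref{lemma:sigma_RT_equals_sigma}; the paper likewise gets a holomorphic index-zero Fredholm family on anisotropic spaces over $\{\Re(s_0)>C_0+C_1|\Re(\s')|-C_2r\}$, after a Conley--Easton modification making the boundary of a neighbourhood of $\scrK$ convex (which your cut-off formulation glosses over), and concludes with the multivariable analytic Fredholm theorem (Theorem~\ref{theorem:fredholm-several}), whose Schur-complement proof is your Grushin reduction — one harmless sign slip: since $\X_{\M}\mathbf e(\s')=\mathbf e(\s')(\X_{\N}-\s'(w))$, the base equation is $(-\X_{\N}+\s'(w)-s_0)v=0$, not $(-\X_{\N}-\s'(w)-s_0)v=0$.
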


When $\E$ is a real vector bundle and $\X$ is a real operator, $\sigma_{\mathrm{RS},+}$ is invariant under the complex conjugation operator $\mathrm{Conj} : \fa^*_{\C} \to \fa^*_{\C}, \s \mapsto \overline{\s}$. Indeed, \eqref{equation:ruelle-taylor-resonant} shows that $\s \in \sigma_{\mathrm{RS},+}$ if and only if $\overline{\s} \in \sigma_{\mathrm{RS},+}$; if $u$ is a resonant state at $\s$, then $\overline{u}$ is a resonant state at $\overline{\s}$.

By a complex variety of codimension $1$, we mean that $\sigma_{\mathrm{RS},+}$ can be locally described as the $0$-level set of a holomorphic function. We shall see in \S\ref{section:general-zeta-function-result} that it actually coincides with the $0$-level set of a holomorphic function defined globally on $\fa^*_\C$, called the \emph{dynamical determinant}. The same result holds for the negative resonance spectrum $\sigma_{\mathrm{RS},-}$ by replacing $-\mathbf{X}$ by $\mathbf{X}$, after interchanging the roles of $\mc{T}_-$ and $\mc{T}_+$ and of $E_s^*$ and $E_u^*$.

\begin{remark}
\label{remark:duality}
The positive and negative resonance spectra are related by
\[
\sigma_{\mathrm{RS},+}(\mc{E}) = \mathrm{Conj}(\sigma_{\mathrm{RS},-}(\mc{E}^* \otimes \Omega^1\M)),
\]
where $\mathrm{Conj} : \fa^*_{\C} \to \fa^*_{\C}$ denotes complex conjugation. When $\mc{E} = \Lambda^m (E_{s,\M}^*\oplus E_{u,\M}^*)$ and $\X$ is the Lie derivative operator, for $0 \leq m \leq d_{\mc{N}}-1$, the duality \eqref{equation:pairing} yields
\[
\begin{split}
\sigma_{\mathrm{RS},+}(\Lambda^m(E_{s,\M}^* \oplus E_{u,\M}^*)) & = \mathrm{Conj}(\sigma_{\mathrm{RS},-}(\Lambda^{d_{\mc{N}}-1-m} (E_{s,\M}^* \oplus E_{u,\M}^*))) \\
& = \sigma_{\mathrm{RS},-}(\Lambda^{d_{\mc{N}}-1-m} (E_{s,\M}^* \oplus E_{u,\M}^*)),
\end{split}
\]
 where the second equality follows because the vector bundle is real.
\end{remark}

\subsubsection{Technical results} \label{ssection:results2} The results described in the previous subsection are all intrinsic to the action of $A$ on $\M$. However, they will be derived using the explicit factorization of the action in terms of an abelian cocycle over the hyperbolic flow $(\phi_t)_{t \in \R}$. We will assume throughout this section that we have chosen a vector field $X_\M \in \fa$ as well as a $k$-dimensional subspace $\fh\subset \fa$ acting freely and properly on $\M$. We then obtain a vector field $X_{\mc{N}}$ on $\N$ which generates the flow $(\phi_t)_{t \in \R}$ on $\mc{N}$. Let $\alpha_{\mc{N}} \in C^0(\mc{N},T^*\mc{N})$ be the $1$-form such that
\begin{equation}
\label{equation:alpha}
\alpha_{\mc{N}}(X_{\N}) = 1, \qquad \alpha_{\mc{N}}(E_{s,\mc{N}} \oplus E_{u,\mc{N}})=0,
\end{equation}
where the stable and unstable bundles are continuously extended outside the incoming and outgoing tails $\mc{T}_{\pm,\mc{N}}$. Note that $X_{\mc{N}}$ does not vanish on a small neighborhood $\mathscr{V}$ of $\scrK$, so $\alpha_{\mc{N}}$ is well-defined near $\scrK$, and it is extended arbitrarily on the complement of an open neighborhood of $\scrK$.

There is a dual splitting
\[
T^*_{\mathscr{K}}\mc{N} = E^*_{0,\mc{N}} \oplus E^*_{s,\mc{N}} \oplus E^*_{u,\mc{N}},
\]
where the bundles are defined by the annihilation relations
\begin{equation}
\label{equation:pratique}
E^*_{0,\mc{N}}(E_{s,\mc{N}} \oplus E_{u,\mc{N}}) = E^*_{s,\mc{N}}(E_{0,\mc{N}} \oplus E_{s,\mc{N}}) = E^*_{u,\mc{N}}(E_{0,\mc{N}} \oplus E_{u,\mc{N}})= 0.
\end{equation}

If $\mc{E} \to \M$ is an admissible bundle in the sense of \S\ref{sssection:admissible} and $\X$ is an admissible lift of the action, recall the notation $\X_{\M}=\X_{\E}(X_\M)$. Furthermore, $\mc{E} = \pi^* \mc{E}_{\mc{N}}$ for some smooth bundle $\mc{E}_{\mc{N}} \to \mc{N}$. We let $\mathfrak{h}^*_{\C} := \mathfrak{h}^* \otimes_{\R} \C$ be the complexified dual.  For $\mathbf{s}' \in \mathfrak{h}^*_{\C}$, we then introduce the extension operator
\[
\mathbf{e}(\mathbf{s}') : C^\infty(\mc{N}, \mc{E}_{\mc{N}}) \to C^\infty(\M,\mc{E}), \qquad \mathbf{e}(\mathbf{s}') u(x,h) := e^{-\mathbf{s}'(h)} \pi^* u(x,h).
\]
 For every $h_0 \in \mathfrak{h}$ , it satisfies the relations
\begin{equation}
\label{equation:relations-e}
\mathbf{X}(h_0)\mathbf{e}(\mathbf{s}') = -\mathbf{s}'(h_0) \mathbf{e}(\mathbf{s}'), \qquad \X_{\M}\mathbf{e}(\mathbf{s}') = \mathbf{e}(\mathbf{s}')(\X_{\N} - \mathbf{s}'(w)),
\end{equation}
 where $w\in C^\infty(\N, \fh)$ is the cocycle given by the choice of $X_\M$ and $\fh$. The proof of these identities is a straightforward computation.

Let $\alpha_\M\in\fa^*$ be defined by $\alpha_\M(X_\M)=1$ and $\alpha_\M|_{\fh}=0$. Recall that $\fa^*_{\C} = \C \alpha_{\M} \oplus \mathfrak{h}_\C^*$. For
\[
\mathbf{s} = s_0 \alpha_{\M} + \mathbf{s}' \in \fa^*_{\C},
\]
 (which we often abbreviate as $\mathbf{s}=(s_0,\mathbf{s}')$), it is convenient to introduce the operator
\begin{equation}
\label{equation:ps}
P_\pm(\mathbf{s}) := \mp \X_{\N} -s_0 + \mathbf{s}'(w),
\end{equation}
acting on $C^\infty(\mc{N},\mc{E}_{\mc{N}})$.

 Equation \eqref{equation:relations-e} shows that, for $f \in C^\infty(\mc{N},\mc{E}_{\mc{N}})$,
\begin{equation}
 \label{equation:ps_extension}
(-\X_{\M}-s_0)\mathbf{e}(\mathbf{s}')f = \mathbf{e}(\mathbf{s}')(-\X_{\N} -s_0+ \mathbf{s}'(w)) f = \mathbf{e}(\mathbf{s}') P_+(\mathbf{s})f.
\end{equation}
Combining this identity with the first equation in \eqref{equation:relations-e}, this suggests a relation between the resonance spectrum $\sigma_{\mathrm{RS},+}$ of the cocycle and the Pollicott--Ruelle spectrum of the operator $P_+(\mathbf{s})$.

In the following, $K_A$ denotes the Schwartz kernel of an operator $A$. For a closed cone $\mathrm{C} \subset T^*\M$, recall that $\mc{D}'_{\mathrm{C}}(\M)$ denotes the subset of distributions with wavefront set in $\mathrm{C}$. Our aim is to prove the following result:

\begin{theorem}
\label{theorem:rt-anosov2}
The following properties hold:
\begin{enumerate}[label=\emph{(\roman*)}]
\item\label{it:mero} \textbf{\emph{Meromorphic extension.}} The family of operators 
\begin{equation}
\label{equation:meromorphic-extension-eu0}
\fa^*_{\C} \ni \mathbf{s} \mapsto P_+(\mathbf{s})^{-1}\in \mc{L}(C^\infty_{\comp}(\mc{N},\mc{E}_{\mc{N}}), \mc{D}'(\mc{N}, \mc{E}_{\mc{N}}))
\end{equation}
is holomorphic for $|\Re(\s)| \gg 0$ and admits a meromorphic extension to $\fa^*_{\C}$ with singularities again given by $\sigma_{\mathrm{RS},+}$. In addition,
\begin{equation}
\label{equation:meromorphic-extension-eu}
\fa^*_{\C} \ni \mathbf{s} \mapsto P_+(\mathbf{s})^{-1}\in \mc{L}(\mc{D}'_{\comp,E_{u,\N}^*}(\mc{N},\mc{E}_{\mc{N}}), \mc{D}'_{E_{u,\N}^*}(\mc{N}, \mc{E}_{\mc{N}}))
\end{equation}
also admits a meromorphic extension to $\fa^*_{\C}$ with singularities given by $\sigma_{\mathrm{RS},+}$.

\item\label{it:wavefront} \textbf{\emph{Wavefront set of the Schwartz kernel.}} For every $\mathbf{s}_\star \in \fa^*_{\C}$, there exist
\begin{enumerate}[label=\emph{(\alph*)}]
\item a neighborhood $U \subset \fa^*_{\C}$ of $\mathbf{s}_\star$;
\item holomorphic operators $U \ni \mathbf{s} \mapsto H(\mathbf{s}), R(\mathbf{s})$ bounded as maps $C^\infty_{\comp}(\mc{N},\mc{E}_{\mc{N}}) \to \mc{D}'(\mc{N},\mc{E}_{\mc{N}})$, where $R(\mathbf{s})$ has rank at most $N$;
\item\label{it:singularities} continuous functions $\fh_\C^*\ni \mathbf{s}' \mapsto \sigma_i(\mathbf{s}')\in \C$ for $i=1,...,N$, such that $\mathbf{s} \mapsto (s_0-\sigma_1(\mathbf{s}'))...(s_0-\sigma_N(\mathbf{s}'))$ is holomorphic on $U\subset \fa_\C^*$ and $\sigma_{\mathrm{RS}} \cap U = \{s_0 = \sigma_i(\mathbf{s}')$ for some $1 \leq i \leq N\}$,
\end{enumerate}
such that
\begin{equation}
\label{equation:p-}
P_+(\mathbf{s})^{-1} = H(\mathbf{s}) + \dfrac{R(\mathbf{s})}{(s_0-\sigma_1(\mathbf{s}'))...(s_0-\sigma_N(\mathbf{s}'))}, \qquad \forall \mathbf{s} \in U.
\end{equation}
In addition,
\begin{equation}
\begin{split}
\label{equation:unif}
 \WF'(K_{H(\mathbf{s})}), \WF'(K_{R(\mathbf{s})}) \subset_{\mathrm{unif.}} \mathrm{C_{sing}},
\end{split}
\end{equation}
where
\[
\mathrm{C_{sing}} := \Delta(T^*\mc{N}) \cup \Omega_+ \cup (E_u^*|_{\mc{T}_+} \times E_s^*|_{\mc{T}_-}) \subset T^*(\mc{N} \times \mc{N}),
\]
$\Delta(T^*\mc{N}) \subset T^*\mc{N} \times T^*\mc{N}$ denotes the diagonal, 
\[
\Omega_+ := \{(\widetilde{\phi}_t(x,\xi), (x,\xi)) ~:~ t \geq 0, (\xi,X_\mc{N}(x)) = 0, x,\phi_t(x) \in \mc{N}\},
\] 
and $\widetilde{\phi}_t(x,\xi):= (\phi_t(x),d\phi_t^{-\top}(x)\xi)$ is the symplectic lift of the flow.

\end{enumerate}
\end{theorem}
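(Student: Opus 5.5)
The plan is to treat $P_+(\mathbf{s}) = -\X_\N - s_0 + \mathbf{s}'(w)$ as a holomorphic family, in $\mathbf{s}' \in \fh^*_\C$, of Pollicott--Ruelle operators for the single Axiom A flow $(\phi_t)$ on $\N$. These operators carry a smooth potential $\mathbf{s}'(w)$ and spectral parameter $s_0$, and we apply the open-system microlocal framework of Dyatlov--Guillarmou \cite{Dyatlov-Guillarmou-16} uniformly in $\mathbf{s}'$.

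\textbf{Step 1: construction of the escape function and the uniform Fredholm family.} Using (A2)--(A3), we build an escape function adapted to $E_s^*|_{\mc T_-}$ and $E_u^*|_{\mc T_+}$, together with an anisotropic space $\mathcal H^{r}$ of order $r$. We also fix a complex absorbing operator $Q \ge 0$ supported away from $\scrV$, with $WF(Q)$ disjoint from the trapped set. Let $\mathbf{P}_Q(\mathbf{s}) := P_+(\mathbf{s}) - iQ$. Propagation of singularities, radial estimates at $E_u^*$ (threshold depending on $\Re s_0$ and $\sup|\Re \mathbf{s}'(w)|$ on $\scrV$), and the absorption estimate give the following: for $\mathbf{s}$ in a set $\{\Re s_0 > -c r + C(|\Re\mathbf{s}'|+1)\}$, the operator $\mathbf{P}_Q(\mathbf{s}) : \{u\in\mathcal H^r : P u \in \mathcal H^r\} \to \mathcal H^r$ is Fredholm of index $0$ and depends holomorphically on $\mathbf{s}$. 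For $\Re s_0 \gg |\Re \mathbf{s}'|$, it is invertible, by the Laplace-transform formula $\int_0^\infty e^{-t(-\X_\N + \mathbf{s}'(w)) - ts_0}dt$ together with Lemma \ref{lemma:vilain} (and a large-parameter semiclassical estimate). Since $r$ is arbitrary, these regions exhaust $\fa^*_\C$. Analytic Fredholm theory in several variables then shows that $\mathbf{P}_Q(\mathbf{s})^{-1}$ is meromorphic on $\fa^*_\C$.

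\textbf{Step 2: from the absorbed operator to $P_+(\mathbf{s})^{-1}$ and its singular set.} We remove $Q$ in the standard way, as in \cite[\S4]{Dyatlov-Guillarmou-16}. Because $Q$ lives where trajectories have escaped, the identity $P_+^{-1} = \mathbf{P}_Q^{-1} + \ldots$ restricted via cutoffs, or equivalently the independence of the restricted resolvent from $Q$ shown by the support-propagation argument (A2), yields \eqref{equation:meromorphic-extension-eu0} on $C^\infty_{\comp} \to \mc D'$. By density and wavefront calculus, it also yields \eqref{equation:meromorphic-extension-eu}. We then identify the poles with $\sigma_{\mathrm{RS},+}$. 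A pole at $\mathbf{s}$ produces, via the residue, $u\in\mc D'_{E_u^*}(\N)$ with $\supp u\subset\mc T_+$ and $P_+(\mathbf{s})u=0$. Then $\mathbf{e}(\mathbf{s}')u$ satisfies the joint equation by \eqref{equation:relations-e} and \eqref{equation:ps_extension}, and has the required support and wavefront set by \eqref{equation:esm}. Conversely, a joint resonant state $v$ on $\M$ satisfies $\X(h)v=-\mathbf{s}'(h)v$, so $e^{\mathbf{s}'(h)}v$ is $\fh$-invariant. By a distributional version of Lemma \ref{lemma:juilley}, it equals $\pi^*u$, which gives a resonant state of $P_+(\mathbf{s})$ in the Dyatlov--Guillarmou sense. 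This is exactly a pole, since for an open-system Fredholm problem the poles coincide with the existence of such generalized resonant states.

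\textbf{Step 3: local structure.} This is the main obstacle, because in several complex variables poles are not isolated. Near $\mathbf{s}_\star$, we apply a Grushin reduction to the Fredholm family $\mathbf{P}_Q(\mathbf{s})$. This gives
\[
\mathbf{P}_Q^{-1} = E(\mathbf{s}) - E_+(\mathbf{s})E_{-+}(\mathbf{s})^{-1}E_-(\mathbf{s}),
\]
with $E_{-+}$ an $N\times N$ holomorphic matrix. The determinant $D(\mathbf{s}) = \det E_{-+}(\mathbf{s})$ is holomorphic. Writing $E_{-+}^{-1} = \mathrm{adj}/D$ gives the form \eqref{equation:p-} with $R = -E_+\,\mathrm{adj}\,E_-$ of rank at most $N$. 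To get the factorisation $D = (\text{unit})\cdot\prod_i(s_0-\sigma_i(\mathbf{s}'))$, we note that $D(s_0,\mathbf{s}'_\star)\not\equiv 0$, since the operator is invertible for $\Re s_0$ large. The Weierstrass preparation theorem then applies in the variable $s_0$, giving a Weierstrass polynomial whose roots $\sigma_i(\mathbf{s}')$ are continuous; the unit is absorbed into $R$. This proves the codimension-one claim as well. Finally, the wavefront bound \eqref{equation:unif} uniform in $\mathbf{s}$ follows as in \cite[Lemma 3.5]{Dyatlov-Guillarmou-16}. The factors $E,E_\pm$ are built from $\mathbf{P}_Q^{-1}$ at a regular point plus finite-rank smoothing corrections whose ranges lie in the resonant-state spaces. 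Microlocal propagation of the parametrix near $\Delta$ and along $\Omega_+$, together with the radial-source and radial-sink estimates producing $E_u^*|_{\mc T_+}\times E_s^*|_{\mc T_-}$, are all uniform on compact parameter sets.
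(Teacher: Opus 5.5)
Your architecture is the paper's:
\begin{itemize}
\item the Dyatlov--Guillarmou theory for $(\phi_t)$, with the potential $\s'(w)$ carried as a holomorphic parameter;
\item the several-variable analytic Fredholm theorem (Theorem~\ref{theorem:fredholm-several});
\item identification of the singular set with joint resonant states through $\mathbf{e}(\s')$;
\item Weierstrass preparation in $s_0$.
\end{itemize}
For the identification step, Lemma~\ref{lemma:sigma_RT_equals_sigma} restricts to the zero section $\N\times\{0\}$, which is allowed since $N^*(\N\times\{0\})\cap E_{u,\M}^*=\{0\}$. You use a distributional version of Lemma~\ref{lemma:juilley} instead; both work.

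For the local structure you use a Grushin problem. The paper (Theorem~\ref{theorem:fredholm-several2}) instead exploits that $P_+(\s)=A(\s')-s_0$ depends on $s_0$ only through a scalar shift. The contour projector $\Pi_{\s'}$ is then holomorphic of constant rank $N$, the algebraic multiplicity. One gets $H=(\mathbf 1-\Pi_{\s'})P_+(\s)^{-1}(\mathbf 1-\Pi_{\s'})$ and $R=\prod_i(s_0-\sigma_i(\s'))\,\Pi_{\s'}P_+(\s)^{-1}\Pi_{\s'}$, and the simple-pole form \eqref{equation:cas-cool2} used in \S\ref{section:leading-resonance} is immediate.

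Your route covers general Fredholm families, with three caveats:
\begin{itemize}
\item The Weierstrass degree is the vanishing order of $D(\cdot,\s'_\star)$. This may exceed the size of $E_{-+}$, which is harmless for the rank bound.
\item $D(\cdot,\s'_\star)\not\equiv0$ near $s_{0\star}$ must come from one-variable analytic Fredholm theory on the slice $\{\s'=\s'_\star\}$, because $D$ only exists near $\s_\star$. Your residue argument in Step~2 also needs this slice.
\item $R_\pm$ should be chosen smoothing. Resonant states are not smooth, so ``smoothing corrections with ranges in the resonant-state spaces'' is not right as stated.
\end{itemize}

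The genuine gap is in Steps~1--2. You set up anisotropic spaces and a fixed absorber directly on the noncompact manifold $\N$, and there the Fredholm claim for every $r$ fails.
\begin{itemize}
\item The order $r$ only helps in the fibre variable. At spatial infinity, along escaping trajectories, $-\X_\N-s_0+\s'(w)-Q$ is a transport operator with no compactness. In a flow box it looks like $-\partial_x-s_0-c$ on $L^2(\R)$, which is not Fredholm on $\{\Re s_0=-c\}$.
\item $w$ need not be bounded on $\N$.
\end{itemize}
There are also sign slips. Since the propagator is $e^{tP_+(\s)}$, the damped operator is $P_+(\s)-Q$, not $P_+(\s)-iQ$. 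Your Laplace formula has the signs of $\X_\N$ and $\s'(w)$ reversed relative to \eqref{equation:explicit-inverse}.

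You must pass to a compact model, and then non-return must hold for the flow used in that model. (A2) for $\phi_t$ alone does not give this. The paper handles it as follows:
\begin{itemize}
\item Lemma~\ref{lemma:deja} gives $\scrK\Subset\scrV_0\Subset\scrV_0'$ with the non-return property.
\item The Conley--Easton trick \cite{Conley-Easton-71} modifies $X_\N$ only near $\partial\scrV_1$, making it strictly convex so that \cite{Dyatlov-Guillarmou-16} applies.
\item By \eqref{equation:explicit-inverse}, the modified and original inverses agree on $C^\infty_{\comp}(\scrV_0)\to\mc D'(\scrV_0)$ for $|\Re(\s)|\gg0$.
\end{itemize}

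What your phrase ``restricted via cutoffs'' leaves out is the extension to $C^\infty_{\comp}(\N)\to\mc D'(\N)$. In the paper:
\begin{itemize}
\item For $f$ supported in $\scrV_0$ and $\psi$ localized near the forward flowout of $\supp f$, one writes $(u(\s),\psi)=(u(\s),e^{TP_-(\s)}\psi)-\int_0^T(e^{tP_+(\s)}f,\psi)\,\dd t$, where $e^{TP_-(\s)}\psi$ is supported in $\scrV_0$.
\item For general compactly supported $f$, one splits $f=\chi f+(1-\chi)f$ with $\chi$ localized near $K\cap\mc T_-$.
\item The term with $(1-\chi)f$ is holomorphic by uniform properness.
\item The term with $\chi f$ uses $P_+^{-1}=-\int_0^Te^{tP_+}\,\dd t+P_+^{-1}e^{TP_+}$.
\end{itemize}
These same identities are what you need to transfer your local decomposition from a neighbourhood of $\scrK$ to all of $\N$; composition with $e^{TP_\pm(\s)}$ preserves the rank bound.
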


The same result holds for $P_-(\s)^{-1}$ up to reversing the $\pm$ signs. The kernel of $R(\mathbf{s})$ actually satisfies the bounds
\begin{equation}
\label{equation:precise}
 \supp(K_{R(\mathbf{s})}) \subset \mc{T}_+ \times \mc{T}_-, \qquad \WF'(K_{R(\mathbf{s})}) \subset E_u^* \times E_s^*,
 \end{equation}
as we shall see in the proof. That $\mathbf{s} \mapsto P_+(\mathbf{s})^{-1}$ is meromorphic as a family of operators
\[
\mc{L}(C^\infty_{\comp}(\mc{N},\mc{E}_{\mc{N}}), \mc{D}'(\mc{N},\mc{E}_{\mc{N}}))
\]
means that for all $u \in C^\infty_{\comp}(\mc{N},\mc{E}_{\mc{N}})$ and $v \in C^\infty_{\comp}(\mc{N},\mc{E}_{\mc{N}}^* \otimes \Omega^1\mc{N})$, the family
\[
\mathbf{s} \mapsto (P_+(\mathbf{s})^{-1}u, v)\in \C
\]
is meromorphic with singularities contained in $\sigma_{\mathrm{RS}}$. The meromorphic extension of \eqref{equation:meromorphic-extension-eu} is an immediate consequence of \eqref{equation:meromorphic-extension-eu0} and \eqref{equation:unif}.

In \emph{\ref{it:mero}}, recall that \eqref{equation:bigres} means that there exists a constant $C > 0$ such that $\s \mapsto P_+(\mathbf{s})^{-1}$ is holomorphic in $\{\Re(s_0) > C(|\Re(\s')|+1)\}$. In \emph{\ref{it:wavefront}}, the uniform inclusion of the wavefront set in \eqref{equation:unif} is understood with respect to $\s\in U$, i.e., for any closed cone $\mathrm{C}' \subset T^*(\mc{N}\times\mc{N}) \setminus \{0\}$ such that $\mathrm{C_{sing}} \cap \mathrm{C}' = \emptyset$, for any $\chi \in C^\infty(\mathcal{N} \times \mathcal{N})$ with sufficiently small support, for all $q > 0$, there exist a constant $C_q > 0$ and an open subset $U' \subset U$ with $\mathbf{s}_\star \in U'$ such that for all $\mathbf{s} \in U'$,
\[
|\widehat{\chi K_{H(\mathbf{s})}}(\mathbf{f}(\xi))|, |\widehat{\chi K_{R(\mathbf{s})}}(\mathbf{f}(\xi))| \leq C_q \langle \xi \rangle^{-q}, \qquad \forall \xi \in \mathrm{C}'.
\]
Here, $\widehat{\bullet}$ is the Fourier transform in $\R^{2d_{\mc N}}$ and $\chi$ is assumed to have sufficiently small support in a coordinate patch of $\mc{N} \times \mc{N}$, so that both the distributions $\chi K_{H(\mathbf{s}),R(\mathbf{s})}$ and the cotangent vectors in $T^*(\mc{N}\times\mc{N})$ can be identified with their Euclidean counterparts. Finally, writing $\xi = (\eta_1,\eta_2) \in \R^{2d_{\N}}$, $\mathbf{f}(\xi) := (\eta_1,-\eta_2)$ is the flip map in the last cotangent variable.

In item \emph{\ref{it:wavefront}\ref{it:singularities}}, if $\s_\star \notin \sigma_{\mathrm{RS}}$, the functions $\mathbf{s}' \mapsto \sigma_i(\mathbf{s}')$ are not defined and $R \equiv 0$; the statement reduces to a claim about the wavefront set of $P_+(\mathbf{s})^{-1}$. The case $N=1$ in \emph{\ref{it:wavefront}\ref{it:singularities}} will play an important role in describing the leading hypersurface, and a simplification occurs in this case. This is stated in the following remark.

\begin{remark}[Simple resonance]
\label{remark:simple}
When $N=1$, \eqref{equation:p-} simplifies using \eqref{equation:cas-cool} to
\begin{equation}
\label{equation:cas-cool2}
P_+(\mathbf{s})^{-1} = \widetilde{H}(\mathbf{s}) - \dfrac{\Pi_{\mathbf{s}'}}{s_0-\sigma(\mathbf{s}')},
\end{equation}
where $\sigma(\mathbf{s}') := \sigma_1(\mathbf{s}')$ is the unique root (depending holomorphically on $\mathbf{s}'$) and
\[
\Pi_{\mathbf{s}'} : C^\infty_{\comp}(\mc{N},\mc{E}_{\mc{N}}) \to \mc{D}'(\mc{N},\mc{E}_{\mc{N}})
\]
is a rank-one operator depending holomorphically on $\mathbf{s}'$, satisfying $\Pi_{\mathbf{s}'}^2 = \Pi_{\mathbf{s}'}$. Notice that $\Pi_{\mathbf{s}'}^2$ is well-defined because of the wavefront-set condition \eqref{equation:unif} and the support property \eqref{equation:precise}. 
\end{remark}

\subsubsection{Proofs}

\label{section:spectrum}

In order to prove Theorem \ref{theorem:rt-anosov2}, we will need the following lemma, which allows us to construct neighborhoods of the trapped set $\scrK$ with good dynamical properties. 

\begin{lemma}
\label{lemma:deja}
There exist two relatively compact neighborhoods $\scrK \Subset \scrV_0 \Subset \scrV'_0$ such that for every $x \in \scrV_0$ and every $T \geq 0$:
\[
\begin{split}
& \phi_T (x) \in \scrV_0 \implies \phi_t(x) \in \scrV'_0, \forall t \in [0,T], \\
& \phi_{-T} (x) \in \scrV_0 \implies \phi_t(x) \in \scrV'_0, \forall t \in [-T,0].
\end{split}
\]
These neighborhoods can be chosen arbitrarily small around $\scrK$.
\end{lemma}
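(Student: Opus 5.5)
The construction will be a compactness argument driven by the uniform properness assumption \hyperlink{AA2}{\rm(A2)}. Let $W\Supset\scrK$ be any prescribed relatively compact open neighborhood; the aim is to find $\scrK\Subset\scrV_0\Subset\scrV'_0\subset W$. Take $\scrV'_0 := W$, or a slightly smaller relatively compact open set containing $\scrK$. It then suffices to produce an open $\scrV_0\ni\scrK$ with the following property: any orbit segment $\{\phi_t(x)\}_{t\in[0,T]}$ with both endpoints in $\scrV_0$ stays in $\scrV'_0$. The backward statement then follows from the forward one applied to $y=\phi_{-T}(x)$, so only the forward one needs proof.

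I will argue by contradiction. Choose a decreasing sequence of open neighborhoods $\scrV_n\downarrow\scrK$, for instance the $1/n$-neighborhoods for an auxiliary metric, all inside $\scrV'_0$. Suppose none of them works. Then there are $x_n\in\scrV_n$, $T_n\ge0$ and $t_n\in[0,T_n]$ such that $\phi_{T_n}(x_n)\in\scrV_n$ but $y_n:=\phi_{t_n}(x_n)\notin\scrV'_0$.

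The first step is to handle bounded times. Since $\scrV'_0$ is an open neighborhood of the compact invariant set $\scrK$, uniform continuity of the flow on compact time intervals gives the following: for each fixed $S$, points sufficiently close to $\scrK$ stay in $\scrV'_0$ for $|t|\le S$. Applying this both forward from $x_n$ and backward from $\phi_{T_n}(x_n)$ forces $t_n\to\infty$ and $T_n-t_n\to\infty$.

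The second step is to take limits. Pass to a subsequence with $x_n\to x\in\scrK$ and $\phi_{T_n}(x_n)\to z\in\scrK$, which is possible by the compactness of $\overline{\scrV_1}$. I expect the main obstacle to be controlling where $y_n$ goes: it may escape to infinity in $\N$, so it need not converge. To handle this, fix a compact neighborhood $C$ of $\scrK$ and let $s_n\in[0,T_n]$ be the first time the orbit of $x_n$ leaves $\overline{\scrV'_0}$, and $r_n$ the last time before $T_n$ at which it re-enters. The points $\phi_{s_n}(x_n)$ lie on $\partial\scrV'_0$, which is compact, so after extraction $\phi_{s_n}(x_n)\to p\in\partial\scrV'_0$, with $s_n\to\infty$ by the first step.

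It remains to show that $p\in\mc{T}_+\cap\mc{T}_-=\scrK$, which contradicts $p\in\partial\scrV'_0$. The orbit of $\phi_{s_n}(x_n)$ stays in $\overline{\scrV'_0}$ for the times $[-s_n,0]$, with $s_n\to\infty$. Passing to the limit, the whole backward orbit of $p$ stays in the compact set $\overline{\scrV'_0}$. Hence $p\notin\mc{N}\setminus\mc{T}_+$: otherwise, by (A2) applied to the compact set $\{p\}\subset\N\setminus\mc T_+$, the backward orbit $t\mapsto\phi_{-t}(p)$ would be proper and leave every compact set. So $p\in\mc{T}_+$.

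For the forward direction, the orbit from $\phi_{s_n}(x_n)$ eventually returns to $\scrV_n$ at time $T_n-s_n$. If $T_n-s_n$ is bounded, then after extraction $\phi_{T_n-s_n}(p)=\lim\phi_{T_n}(x_n)\in\scrK$, so $p\in\scrK$ by invariance. Otherwise $T_n-s_n\to\infty$. In that case I use the properness in (A2) in its uniform form, on the compact set $K_\delta=\partial\scrV'_0\setminus\{\text{$\delta$-neighborhood of } \mc{T}_-\}$: for $K_\delta\times\R_+\to\N$ proper, forward orbits starting in $K_\delta$ eventually avoid the compact $\overline{\scrV_1}$ forever. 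So if $p\notin\mc T_-$, then $\phi_{s_n}(x_n)\in K_\delta$ for large $n$ and its forward orbit could not return to $\scrV_n$ at arbitrarily large times, a contradiction. Hence $p\in\mc{T}_-$.

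In all cases $p\in\scrK$, contradicting $p\in\partial\scrV'_0$. Arbitrary smallness of $\scrV_0,\scrV'_0$ follows because $W$ was arbitrary.
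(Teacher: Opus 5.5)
Your proposal is correct and follows essentially the paper's proof: you argue by contradiction with neighborhoods shrinking to $\scrK$, extract a limit of exit points lying outside $\scrV'_0$, and use the past and future halves of the uniform properness assumption (A2) to force this limit into $\mc T_+\cap\mc T_-=\scrK$, with the bounded-return-time case handled by invariance of $\scrK$. Your reduction of the backward statement to the forward one via $y=\phi_{-T}(x)$ is a tidy shortcut. One small slip: the contradiction hypothesis only gives $\phi_{t_n}(x_n)\notin\scrV'_0$, so $s_n$ should be the first time the orbit leaves the open set $\scrV'_0$, not $\overline{\scrV'_0}$. With that definition you still get $\phi_{s_n}(x_n)\in\partial\scrV'_0$, with the backward segment of length $s_n$ contained in $\overline{\scrV'_0}$, and the rest goes through unchanged.
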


\begin{proof}
We fix an arbitrarily small neighborhood $\scrV'_0$ of $\scrK$. We argue by contradiction and assume that there is no such neighborhood $\scrV_0$ of $\scrK$. We prove only the positive-time claim; the negative-time claim follows by a straightforward adaptation of the argument. 
 Then there is a nested sequence of relatively compact open neighborhoods $(\scrV_n)_{n\geq0}$ with $\overline{\scrV_{n+1}}\subset\scrV_n$ and $\bigcap_{n\geq0}\overline{\scrV_n}=\scrK$. For every $n$, there are $x_n\in\scrV_n$, $T_n\geq0$, and $t_n\in(0,T_n)$ such that $\phi_{T_n}x_n\in\scrV_n$ and $\phi_{t_n}x_n \notin \scrV'_0$. Notice that, by changing $t_n$ if necessary, we can always assume that $y_n := \phi_{t_n} x_n \in K_0 \setminus \scrV'_0$, where $K_0$ is a compact neighborhood of $\scrV'_0$.

First, observe that $t_n \to +\infty$ (hence $T_n \to +\infty$). Indeed, if $(t_n)_{n\geq0}$ is bounded, after passing to a subsequence, $t_n \to t_\infty$ and similarly $x_n \to x_\infty \in \scrK$. Then $y_n = \phi_{t_n} x_n \to \phi_{t_\infty} x_\infty =: y_\infty \in \scrK$, which is absurd since $y_\infty \in K_0 \setminus \scrV'_0$.

Hence $x_n \to x_\infty \in \scrK$ and $t_n \to +\infty$. After passing to a subsequence, we can further assume $y_n = \phi_{t_n} x_n \to y_\infty \in K_0 \setminus \scrV'_0$. Thus $\phi_{-t_n} y_n = x_n \to x_\infty \in \scrK$. Notice that, since the flow is uniformly proper in the past (Assumption \hyperlink{AA2}{\rm(A2)}), we must have $y_\infty \in \mc{T}_+$. Set $s_n:=T_n-t_n$. We then have $\phi_{s_n+t_n}x_n = \phi_{T_n}x_n \in \scrV_n$ and $\phi_{T_n} x_n \to x'_\infty \in \scrK$ after passing to a subsequence. Notice that $s_n > \eps > 0$ because there is a uniform positive lower bound on the time needed to flow from $\scrV_n$ to $K_0 \setminus \scrV'_0$. Let us show that $(s_n)_{n \geq 0}$ is bounded. If not, as the flow is uniformly proper in the future (Assumption \hyperlink{AA2}{\rm(A2)}), we must have $y_n \to y_\infty \in \mc{T}_-$ since $\phi_{s_n} y_n = \phi_{T_n} x_n \to x'_\infty \in \scrK$. Hence $y_\infty \in \mc{T}_- \cap \mc{T}_+ = \scrK$, which contradicts $y_\infty \in K_0 \setminus \scrV'_0$. This implies that $(s_n)_{n \geq 0}$ is bounded, so $s_n \to s_\infty > 0$ after passing to a subsequence. Then $\phi_{s_n}y_n = \phi_{T_n} x_n \to \phi_{s_\infty}(y_\infty) = x'_\infty \in \scrK$. This is again a contradiction because $y_\infty \in K_0 \setminus \scrV'_0$. This concludes the proof.
\end{proof}

\begin{proof}[Proof of Theorem \ref{theorem:rt-anosov2}]
For $|\Re(\s)| \gg 0$ (recall that this means that $\Re(s_0) > C(1+|\Re(\s')|)$ for some constant $C > 0$, see \eqref{equation:bigres}), the families $\s \mapsto P_\pm(\s)^{-1} \in \mc{L}(C^\infty_{\comp}(\mc{N},\mc{E}_{\mc{N}}), \mc{D}'(\mc{N}, \mc{E}_{\mc{N}}))$ are holomorphic with inverses given by the convergent integral
\begin{equation}
\label{equation:explicit-inverse}
P_\pm(\s)^{-1} = - \int_0^{+\infty} e^{t P_\pm(\s)} \dd t.
\end{equation}
Note that the convergence of this integral is straightforward when $\E_{\N} = \C \times \N$ is the trivial line bundle (hence $\E = \C \times \M$ is also trivial); for general admissible bundles, the convergence relies on Lemma \ref{lemma:vilain} , whose hypotheses follow from Lemma \ref{lemma:deja}. From now on, we assume that $\E_\N = \N \times \C$ is the trivial bundle, which entails only the evident notational changes.

Let $\scrK \Subset \scrV_0 \Subset \scrV_0' \Subset \N$ be as in Lemma \ref{lemma:deja}. Let $\scrV_1 \Supset \scrV_0'$ be a slightly larger neighborhood with smooth boundary. Using a standard trick due to Conley-Easton \cite{Conley-Easton-71}, one can modify the vector field $X_{\N}$ near the boundary of $\scrV_1$ so that it becomes strictly convex (see \cite{Dyatlov-Guillarmou-18} or \cite[Section 5.2]{Jin-Tao-25} where this trick is used). That is, there is a vector field $\widetilde{X}_{\N}$ agreeing with $X_{\N}$ except near $\partial\scrV_1$, whose flow $(\widetilde{\phi}_t)_{t\in\R}$ satisfies $x, \widetilde{\phi}_T x \in \scrV_1 \implies \widetilde{\phi}_t x \in \scrV_1$ for all $t \in [0,T]$. We let $\widetilde{P}_\pm(\s)$ be the corresponding operators with $X_{\N}$ replaced by $\widetilde{X}_{\N}$. It then follows from a straightforward adaptation of \cite[Theorem 1]{Dyatlov-Guillarmou-16} or \cite[Theorem 5.14]{Cekic-Guillarmou-Lefeuvre-24} (allowing the potential to depend analytically on an extra parameter $\s' \in \fh^*_{\C}$) that $\widetilde{P}_\pm(\s) : \mc{H}^r_{\comp}(\scrV_1) \to \mc{H}^r(\scrV_1)$ is Fredholm of index $0$ on some scale of anisotropic Sobolev spaces $\mc{H}^r(\scrV_1)$ on $\{\Re(s_0) > C_0 + C_1 |\Re(\s')| - C_2 r\} \subset \fa^*_{\C}$ for some constants $C_i > 0, i=0,1,2$. 
Using Theorem \ref{theorem:fredholm-several}, we deduce that $\widetilde{P}_\pm(\s)^{-1} : C^\infty_{\comp}(\scrV_1) \to \mc{D}'(\scrV_1)$ admit a meromorphic extension to $\fa^*_{\C}$.

The properties of the nested sets $\scrV_0 \Subset \scrV_0' \Subset \scrV_1$ (Lemma \ref{lemma:deja}) and \eqref{equation:explicit-inverse} show that $\widetilde{P}_\pm(\s)^{-1} = P_\pm(\s)^{-1}$ 
as operators in $\mc{L}(C^\infty_{\comp}(\scrV_0), \mc{D}'(\scrV_0))$ for $|\Re(\s)| \gg 0$ (this simply follows from the fact that $(\phi_t)_{t \in \R}$ and $(\widetilde{\phi}_t)_{t \in \R}$ coincide in $\scrV_0'$, as well as the integral expression \eqref{equation:explicit-inverse}). 
Therefore, the unmodified operator $P_\pm(\s)^{-1} \in \mc{L}(C^\infty_{\comp}(\scrV_0), \mc{D}'(\scrV_0))$ also admits a meromorphic extension to $\fa^*_{\C}$.

We now claim that this implies that the families $P_\pm(\s)^{-1} : C^\infty_{\comp}(\N) \to \mc{D}'(\N)$ admit a meromorphic extension to $\fa^*_{\C}$ as operators on the whole manifold $\N$. 
Consider first $f \in C^\infty_{\comp}(\scrV_0)$ and set $u(\s) := P_+^{-1}(\s)f$ (which is well-defined for $|\Re(\s)| \gg 0$). 
Let $\psi \in C^\infty_{\comp}(\N, \Omega^1\N)$. We need to show that $\s \mapsto (u(\s),\psi)$ can be meromorphically extended. Note that if $\psi$ is supported outside of the closure of the forward flowout of the support of $f$, we have $(u(\s),\psi)=0$ for all $\s \in \fa^*_{\C}$. Therefore, we can assume that $\psi$ is supported near a point in the forward flowout of the support of $f$. As a consequence, there exists $T > 0$ such that $e^{TP_-(\s)} \psi$ has compact support in $\scrV_0$. Using \eqref{equation:explicit-inverse}, we obtain, for $|\Re(\s)| \gg 0$:
\[
(u(\s),\psi) = (u(\s), e^{TP_-(\s)}\psi) - \int_0^T (e^{tP_+(\s)} f, \psi) \dd t.
\]
The second term is holomorphic with respect to $\s$, while the first term admits a meromorphic extension to $\fa^*_{\C}$ by the previous paragraph.

We now fix an arbitrary compact set $K \subset \mc{N}$. Let $f \in C^\infty(\mc{N})$ with compact support in $K$. Let $\scrV_K$ be a small open neighborhood of $K\cap\mc{T}_-$ and there exists a time $T > 0$ such that $\phi_T(\scrV_K) \subset \scrV_0$. Let $\chi \in C^\infty_{\comp}(\mc{N})$ be a cutoff function with support in $\scrV_K$ and equal to $1$ on a neighborhood of $K\cap\mc{T}_-$. We write
\[
P_+^{-1}(\s) f = P_+^{-1}(\s)(\chi f) + P_+^{-1}(\s)((1-\chi)f).
\]
Notice that $\s \mapsto P_+^{-1}(\s)((1-\chi)f) \in C^\infty(\N)$ is holomorphic since the flow is uniformly proper in the future (Assumption $\hyperlink{AA2}{\rm(A2)}$); the integral \eqref{equation:explicit-inverse} defining $P_+^{-1}(\s)$ therefore ranges over only a finite time interval. To treat the first term, we write
\[
P_+^{-1}(\s) = - \int_0^{+\infty} e^{tP_+(\s)} \dd t = - \int_0^T e^{tP_+(\s)} \dd t + P_+^{-1}(\s) e^{TP_+(\s)}.
\]
This yields:
\[
P_+^{-1}(\s)(\chi f) = - \int_0^T e^{tP_+(\s)} (\chi f)\dd t + P_+^{-1}(\s) (e^{TP_+(\s)} \chi f).
\]
The first term is clearly holomorphic with respect to $\s \in \fa^*_{\C}$. For the second term, notice that $e^{TP_+(\s)} \chi f$ has support in $\scrV_0$ by construction. By the previous paragraph, it admits a meromorphic extension to $\fa^*_{\C}$. This concludes the proof of \eqref{equation:meromorphic-extension-eu0}.

The claims on the wavefront set (Theorem \ref{theorem:rt-anosov2}, item \emph{\ref{it:wavefront}}) can be deduced by straightforward adaptations of, for instance, analogous statements in \cite[Theorem 2]{Dyatlov-Guillarmou-16}. Finally, \eqref{equation:meromorphic-extension-eu} follows from \eqref{equation:meromorphic-extension-eu0} and \eqref{equation:unif}.

\end{proof}

We denote by $\sigma \subset \fa^*_{\C}$ the set of singularities of $P_+(\s)^{-1} : C^\infty_{\comp}(\N) \to \mc{D}'(\N)$. Note that $\sigma$ is a complex variety of codimension $1$. It also follows from the proof of Theorem \ref{theorem:rt-anosov2} that $\sigma$ coincides with the poles of $P_+(\s)^{-1} : C^\infty_{\comp}(\scrV_0) \to \mc{D}'(\scrV_0)$.

\begin{lemma}\label{lemma:sigma_RT_equals_sigma}
 We have $\sigma= \sigma_{\mathrm{RS}}$. In particular, $\sigma_{\mathrm{RS}}\subset \mathfrak a_\C^*$ is a complex variety of codimension one which coincides with the poles of
\[
\mathfrak a_\C^*\ni \mathbf{s} \mapsto P_+(\mathbf{s})^{-1} \in \mc{L}(C^\infty_{\comp}(\N),\mc{D}'(\N)).
\]
\end{lemma}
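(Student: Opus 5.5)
We prove the two inclusions $\sigma\subset\sigma_{\mathrm{RS}}$ and $\sigma_{\mathrm{RS}}\subset\sigma$. The bridge between the two sides is the extension operator $\mathbf{e}(\s')$ together with the intertwining relations \eqref{equation:relations-e} and \eqref{equation:ps_extension}. Every element of the paper is already set up so that resonant states on $\M$ correspond exactly to resonant states of $P_+(\s)$ on $\N$ with matching support and wavefront conditions.

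\textbf{First step: reduction from $\M$ to $\N$.} Let $u\in\mathrm{Res}(\s)$ and write $\s=(s_0,\s')$. The equations $(-\X(h)-\s'(h))u=0$ for $h\in\fh$ mean that $\mathbf{e}(-\s')u$ is annihilated by every $\X(h)$. By the distributional version of Lemma \ref{lemma:juilley}, which holds because in the trivialization $\M\simeq\N\times\fh$ a distribution with vanishing $\fh$-derivatives is of the form $\pi^*v$, we get $u=\mathbf{e}(\s')v$ for a unique $v\in\mc{D}'(\N,\E_\N)$. By \eqref{equation:esm}, the condition $\WF(u)\subset E^*_{u,\M}$ is equivalent to $\WF(v)\subset E^*_{u,\N}$. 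The condition $\supp u\subset\mc{T}_{+,\M}=\mc{T}_+\times\fh$ is equivalent to $\supp v\subset\mc{T}_+$. By \eqref{equation:ps_extension}, the remaining equation reads $P_+(\s)v=0$. So $\s\in\sigma_{\mathrm{RS}}$ if and only if $P_+(\s)$ has a nonzero generalized resonant state $v$ on $\N$ with $\WF(v)\subset E_u^*$ and $\supp v\subset\mc{T}_+$. From here on we work only on $\N$.

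\textbf{Second step: $\sigma\subset\sigma_{\mathrm{RS}}$.} Take $\s_\star\in\sigma$ and use the local structure \eqref{equation:p-} near $\s_\star$. We restrict to a generic complex line $s_0\mapsto(s_0,\s'_\star)$, on which the resolvent is an ordinary meromorphic family in $s_0$ with a genuine pole at $s_0=\sigma_i(\s'_\star)$. Standard Laurent expansion of $P_+(\s)^{-1}$ in $s_0$, using the relation $P_+(\s)P_+(\s)^{-1}=\mathrm{Id}$ on $C^\infty_{\comp}$, shows that the leading Laurent coefficient $A$ satisfies $P_+(\s_\star)A=0$. Here $A$ is a nonzero operator of finite rank. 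By \eqref{equation:precise}, its range consists of distributions supported in $\mc{T}_+$ with wavefront set in $E_u^*$. Any nonzero element of the range of $A$ is then a resonant state, and the first step lifts it to $\M$.

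\textbf{Third step: $\sigma_{\mathrm{RS}}\subset\sigma$.} Suppose $v\neq0$ satisfies $P_+(\s_\star)v=0$ with the support and wavefront conditions, but $\s_\star\notin\sigma$. Then $P_+(\s)^{-1}$ is holomorphic near $\s_\star$, and \eqref{equation:meromorphic-extension-eu} lets it act on $\mc{D}'_{\comp,E_u^*}$. The difficulty is that $v$ is not compactly supported. To handle this, we cut off: let $\chi\in C^\infty_{\comp}$ with $\chi=1$ near $\scrK$, or more precisely on a large compact piece of $\mc{T}_+$, and set $f:=P_+(\s)(\chi v)=[P_+,\chi]v=\mp(X_\N\chi)v$. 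This $f$ is compactly supported with wavefront set in $E_u^*$. For $|\Re\s|\gg0$ we have the identity $P_+(\s)^{-1}P_+(\s)w=w$ for compactly supported $w$; we want it at $\s_\star$ by analytic continuation. The cleanest route is the uniqueness argument of \cite{Dyatlov-Guillarmou-16}: on $\scrV_1$ with the convexified field $\widetilde X_\N$, the state $\chi v$ lies in the anisotropic space $\mc{H}^r$ for $r$ large, because its wavefront set is in $E_u^*$ and its support lies in $\mc{T}_+$, away from the incoming boundary. Invertibility of the Fredholm operator $\widetilde P_+(\s_\star)$ on $\mc{H}^r$ then gives $\chi v=\widetilde P_+(\s_\star)^{-1}f$. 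Since $\supp v\subset\mc{T}_+$, the function $f$ is supported where points escape in the past; choosing $\chi$ so that $X_\N\chi$ is supported on $\mc{T}_+$ outside $\scrV_0$, the backward-propagation (outgoing) structure of the resolvent forces $\widetilde P_+^{-1}f$ to vanish near $\scrK$. This contradicts $v\neq0$: by invariance and $\supp v\subset\mc{T}_+$, flowing backward from any point of $\supp v$ reaches $\scrK$, so $v$ cannot vanish near $\scrK$.

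I expect this third step to be the main obstacle, specifically justifying that $\chi v$ belongs to the anisotropic space and that the resolvent identity persists at $\s_\star$. I would first try to borrow verbatim the argument of \cite[Lemma 3.5]{Dyatlov-Guillarmou-16}, with $\s'$ frozen as a parameter.
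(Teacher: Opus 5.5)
Your overall route matches the paper's. You reduce via $\mathbf{e}(\s')$ to $P_+(\s)$ on $\N$, then use the fixed-$\s'$ correspondence between poles and resonant states. Your first step uses invariance of $e^{\s'(h)}u$ under $\fh$ plus a distributional version of Lemma~\ref{lemma:juilley}; this is an equally good substitute for the paper's restriction to the zero section. Your second step makes explicit what the paper only asserts, via the Laurent-coefficient argument combined with \eqref{equation:precise}. That the pole on the line $\{\s'=\s'_\star\}$ is genuine follows from one-variable Fredholm theory, not from genericity: for a general several-variable meromorphic family, the restriction to a line through a singular point can be holomorphic. For $\sigma_{\mathrm{RS}}\subset\sigma$ the paper cites \cite[Theorem 2]{Dyatlov-Guillarmou-16}, whereas you attempt a direct proof, and that step is wrong as written.

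In your third step the dynamical directions are reversed.
\begin{itemize}
\item By (A2), points of $\mc{T}_+$ are trapped in the \emph{past}. Since $\mc{T}_+\cap\mc{T}_-=\scrK$ and $\chi=1$ near $\scrK$, the set $\supp f\subset\supp(X_\N\chi)\cap\mc{T}_+$ is a compact subset of $\N\setminus\mc{T}_-$, so it escapes in the \emph{future}.
\item $P_+(\s)^{-1}=-\int_0^{+\infty}e^{tP_+(\s)}\,\dd t$ transports supports \emph{forward}: $\supp(e^{tP_+(\s)}f)=\phi_t(\supp f)$.
\end{itemize}
With your premises (``escape in the past'', ``backward propagation'') nothing would vanish near $\scrK$, because the backward flowout of a subset of $\mc{T}_+$ accumulates on $\scrK$. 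Your own closing sentence (flowing backward from $\supp v$ reaches $\scrK$) contradicts the first premise.

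Once the directions are corrected, the anisotropic-space membership $\chi v\in\mc{H}^r$ that you flag as the main obstacle is unnecessary.
\begin{itemize}
\item By uniform properness in the future, $\s\mapsto P_+(\s)^{-1}f$ is given for every $\s$ by a finite-time integral, exactly as for $(1-\chi)f$ in the proof of Theorem~\ref{theorem:rt-anosov2}. It is therefore entire and vanishes on a fixed neighborhood of $\scrK$.
\item Since $\chi v\in\mc{D}'_{\comp,E_u^*}(\N)$ and the family \eqref{equation:meromorphic-extension-eu} is holomorphic near $\s_\star\notin\sigma$, the identity $P_+(\s)^{-1}P_+(\s)(\chi v)=\chi v$, valid for $\Re(s_0)\gg0$, persists at $\s_\star$ by the identity theorem.
\item Hence $\chi v=P_+(\s_\star)^{-1}f$ vanishes near $\scrK$. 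Your invariance argument then gives $v=0$: $\supp v$ is flow-invariant and contained in $\mc{T}_+$, whose points approach $\scrK$ in backward time.
\end{itemize}
This also removes the tacit identification, in your version, of $\sigma$ with the non-invertibility set of $\widetilde P_+$ on $\mc{H}^r$. It proves $\sigma_{\mathrm{RS}}\subset\sigma$ from Theorem~\ref{theorem:rt-anosov2} alone rather than by a second appeal to \cite{Dyatlov-Guillarmou-16}.
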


\begin{proof}
Let $\mathbf{s}_\star = (s_{0\star},\mathbf s_\star') \in \sigma_{\mathrm{RS}}$; by definition, there exists $u \in \mc{D}'(\M), u \neq 0$ such that $\supp(u) \subset \mc{T}_{+,\M}$, $\WF(u) \subset E^*_{u,\M}$, and $(-\X-\mathbf{s}_\star)u = 0$. We claim that $u = \mathbf{e}(\mathbf{s}_\star')f$ for some $f \in \mc{D}'(\N)$, $f\neq 0$, $\supp(f) \subset \mc{T}_{+,\mathcal{N}}$, $\WF(f) \subset E^*_{u,\mathcal{N}}$. Indeed, $u$ can be restricted to the zero section $\mc{N} \times \{0\} \subset \M$; this follows from the fact that the conormal bundle $N^*(\mc{N} \times \{0\})$ is transverse to $E_{u,\M}^*$, so the restriction is well-defined (see \cite[Lemma 4.3.2]{Lefeuvre-book}).
 Since the footpoint projection $\pi : \mc{N} \times \{0\} \to \mc{N}$ is a diffeomorphism, we can write $u|_{\mc{N} \times \{0\}} = \pi^*f$ for some $f \in \mc{D}'(\N)$.
Using \eqref{equation:relations-e}, it is then immediate to check that $u = \mathbf{e}(\mathbf{s}_\star')f$ and $f$ satisfies the required properties. In addition, using the second equation of \eqref{equation:relations-e}, we find that $P_+(\mathbf{s}_\star)f = 0$. This implies that $f$ is a resonant state of $P_+(\s)$ (see, for instance, \cite[Theorem 2]{Dyatlov-Guillarmou-16}) and consequently $(P_+(\s))^{-1}$ has a pole at $\s_\star$.

Conversely, if $\mathbf{s}_\star \in \sigma$, there exists $f \in \mc{D}'(\N), f\neq 0$, with $\supp(f) \subset \mc{T}_{+,\mc{N}}$, $\WF(f) \subset E^*_{u,\mc{N}}$ such that $P_+(\mathbf{s}_\star)f = 0$. 
Setting $u := \mathbf{e}(\mathbf{s}_\star')f$, we have $(-\X-\mathbf{s}_\star)u = 0$ and $u$ satisfies the required support condition on $\M$ by construction. 
Also, note that $u$ is a tensor product distribution on $\M \simeq \N \times \fh$; its wavefront set is thus given in the trivialization $T^*\M \simeq T^*\N \oplus \fh^*$ by $\{(\xi,0) ~:~ \xi \in E_{u,\N}^*\}$ (see \cite[Lemma 4.2.7]{Lefeuvre-book}), which coincides with $E_{u,\M}^*$ by \eqref{equation:esm}. This concludes the proof.
\end{proof}

\begin{proof}[Proof of Theorem \ref{theorem:rt-anosov1}] This is an immediate consequence of Lemma \ref{lemma:sigma_RT_equals_sigma}.
\end{proof}

\subsection{Laplace transforms and product resolvents}
\label{section:laplace_transform}

The following result will be key to the proof of the meromorphic extension of Poincaré series (Theorem \ref{theorem:intro-poincare}).

\subsubsection{Meromorphic continuation of the Laplace transform of cones} We recall that $X_{\M} \in \fa$ is transverse to $\mathfrak h$. Let
\[
\fa_{\geq 0} := \R_+X_\M + \fh, \qquad \fa_{> 0} := \R_+^*X_\M + \fh.
\]
Let $\scrC\subset \fa_{> 0}$ be an open convex cone such that $\overline{\scrC} \subset \fa_{> 0}$ and $\overline{\scrL} \subset \scrC$. For $f\in C^\infty_{\comp}(\M, \mc{E})$, $\mathbf s\in\mathfrak a_\C^*$, we set
\begin{equation}\label{eq:Laplace-transform-dynamics}
T^\scrC_\pm(\mathbf{s})f = \int_{\scrC} e^{(\mp\X_{\mc{E}}-\s)(a)} f ~ \dd \vol_{\fa}(a).
\end{equation}
Using Lemma \ref{lemma:vilain}, this is readily seen to be well-defined for $|\Re(\s)| \gg 0$ (see \eqref{equation:bigres} for the definition of this notation) as $|e^{-\s(a)}| \leq e^{-\eps |\Re(\s)| |a|}$ for some $\eps > 0$ if $\Re(\s) \in \mathrm{int}(\scrC^{*})$ (the interior of the dual cone of $\scrC$). 
That is, there exists $C > 0$ such that for all $\s' \in \fh^*_{\C}$, \eqref{eq:Laplace-transform-dynamics} converges for $\Re(s_0) > C(1+|\Re(\s')|)$. 
As we shall see, the condition $\overline{\scrC} \subset \fa_{> 0}$ can actually be dropped when we consider compactly supported distributions. 

 For the next statement, we use the temporary notation $E_+^* := E_{u,\M}^*, E_-^*:=E_{s,\M}^*$. We also recall that $E_{s,\M}^* \subset T^*\M$ (resp. $E_{u,\M}^*$) is the annihilator of $E_{s,\M} \oplus E_{0,\M} \subset T\M$ (resp. $E_{u,\M} \oplus E_{0,\M}$), see \S\ref{sssection:phf}.

\begin{theorem}[Laplace transform of cones] \label{theorem:product-resolvents}
Let $\scrC \subset \fa_{\geq 0}$ be an open convex cone such that $\overline{\mathscr{L}}\subset \scrC$. Then:
\[
T^\scrC_\pm(\mathbf{s}) : \mc{D}'_{\comp,E_{\pm,\M}^*}(\M,\E) \to \mc{D}'_{E_{\pm,\M}^*}(\M, \mc{E})
\]
admits a meromorphic extension to $\fa^*_{\C}$. In addition,
\begin{equation}
\label{equation:wftc}
\begin{split}
\WF'(T^\scrC_\pm(\s))& \subset \{((m,\xi);e^{\mp a}(m,\xi)) ~:~ a\in \partial \scrC, (m,\xi) \in T^*\M \setminus \{0\}\} \\
& \qquad \cup\{ ((m,\xi); e^{\mp a}(m,\xi)) ~:~ a\in\scrC, m \in \M, \xi\in E_{-,\M}^* \oplus E_{+,\M}^*\} \\
							&\qquad \cup E^*_{\pm,\M}|_{\mc{T}_{\pm, \M}} \times E^*_{\mp,\M}|_{\mc{T}_{\mp, \M}}. 
\end{split}
\end{equation}
The Schwartz kernel of $T^\scrC_\pm(\s)$ has a meromorphic continuation to $\fa^*_{\C}$ as a distribution with wavefront set contained in the right-hand side of \eqref{equation:wftc}.
\end{theorem}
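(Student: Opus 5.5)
The plan is to reduce the Laplace transform over the cone $\scrC$ to the one-parameter resolvents of Theorem~\ref{theorem:rt-anosov2}, in three steps. I treat $T^\scrC_+$; $T^\scrC_-$ follows by reversing time, and the general admissible bundle costs only notation. Step 1 removes the sharp boundary of $\scrC$, Step 2 reduces to $P_+(\s)^{-1}$ on $\N$, and Step 3 integrates over the remaining parameters.

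\textbf{Step 1: remove the boundary of $\scrC$.} Fix $f,g$ compactly supported. The claim is that $a\mapsto \langle e^{-\X(a)}f,g\rangle$ vanishes for $a$ in $\overline{\scrC}$, outside any conic neighborhood $\Theta$ of $\overline{\scrL}$, once $|a|$ is large. To prove it, cover the complement of $\Theta$ in the closed cone $\overline{\scrC}$ by finitely many hyperplanes $\fh'$ with $\fh'\cap\scrL=\{0\}$ and use Proposition~\ref{proposition:proper-action-phi}(i): on $\scrJ$ the relevant displacements satisfy uniform linear bounds. The closing-lemma comparison in that proof should show that the return set of $\operatorname{supp} f$ to $\operatorname{supp} g$ lies within bounded distance of $\R_+\mathrm{L}^+$.

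Next, choose a function $\chi$ on $\fa\setminus\{0\}$ that is homogeneous of degree $0$ (up to a compact modification near $0$), smooth, supported in $\scrC$, and equal to $1$ near $\overline{\scrL}$. Then $\int (1_{\scrC}-\chi)(a)\,e^{-(\X+\s)(a)}f\,\dd a$ is an integral over a compact set of $a$ once it is paired with $g$. It is therefore entire in $\s$. Its kernel is a compact superposition of the graphs $((m,\xi);e^{-a}(m,\xi))$, and the sharp cutoff only contributes singular directions at $a\in\partial\scrC$. This accounts for the first term of \eqref{equation:wftc}.

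\textbf{Step 2: reduce to resolvents on $\N$.} Write $a=t(X_\M+v)$ with $t>0$ and $v$ in the compact set $S:=\{v\in\fh:\chi(X_\M+v)\neq0\}$. Fourier-decompose $f$ in the fibre variable $h$ as $f=\int_{i\fh^*}\mathbf{e}(\sigma')\hat f_{\sigma'}\,\dd\sigma'$, where $\hat f_{\sigma'}\in C^\infty_{\comp}(\N)$ is Schwartz in $\sigma'$. By \eqref{equation:relations-e}, $e^{-\X(a)}\mathbf{e}(\sigma')u=\mathbf{e}(\sigma')e^{tQ_v(\sigma')}u$, where $Q_v(\sigma')=-\X_\N+\sigma'(w)+\sigma'(v)$. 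The $t$-integral then produces a power of a resolvent:
\[
\int_0^\infty t^k e^{-t(s_0+\s'(v))}e^{tQ_v(\sigma')}\,\dd t = k!\,\bigl(-P_+(s_0+(\s'-\sigma')(v),\sigma')\bigr)^{-k-1}.
\]
Hence
\[
T^\scrC_+(\s)f=k!\int_{i\fh^*}\int_S \chi(X_\M+v)\,\mathbf{e}(\sigma')\bigl(-P_+(s_0+(\s'-\sigma')(v),\sigma')\bigr)^{-k-1}\hat f_{\sigma'}\,\dd v\,\dd\sigma',
\]
valid for $|\Re(\s)|\gg0$.

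\textbf{Step 3: integrate over $(v,\sigma')$ — main obstacle.} The integrand is meromorphic, with poles of order $\le k+1$ located on the variety $\{(s_0+(\s'-\sigma')(v),\sigma')\in\sigma_{\mathrm{RS},+}\}$. These poles move as $v$ and $\sigma'$ vary, so the double integral must be shown to continue meromorphically in $\s$. The resolvent itself is handled by Theorem~\ref{theorem:rt-anosov2}\ref{it:wavefront}: locally $P_+^{-1}=H+R/\prod_i(s_0-\sigma_i(\s'))$, with $R$ of finite rank. The difficulty is the exchange of integration with continuation.

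For the $\sigma'$-integral, I would try to use the Schwartz decay of $\hat f_{\sigma'}$ to shift the $\sigma'$-contour into $\fh^*_\C$ together with $\s'$, so that in effect only $\s'-\sigma'$ is integrated. This should be possible because $g$ has compact support, so the pairing is entire in the conjugate variable. It must be checked against the growth of $P_+^{-1}$ in $\Im\sigma'$, which uniform anisotropic-space bounds should control.

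For the $v$-integral, I would analyze $\int_S \chi\,(z+\ell(v))^{-k-1}\,\dd v$ with $\ell$ linear. By the same mechanism as the $\sigma'$-shift, the singular locus is the hypersurface itself, so this integral is meromorphic across it and contributes no new singular directions. The finite-rank parts $R$ carry the supports $\mc T_+\times\mc T_-$ and wavefront $E_u^*\times E_s^*$ from \eqref{equation:precise}, which gives the third term of \eqref{equation:wftc}. The holomorphic parts $H$, integrated over $v$ and $\sigma'$, give the flowout term with $\xi\in E_{-,\M}^*\oplus E_{+,\M}^*$ by standard wavefront calculus for superpositions. This last step is where I expect most of the work.
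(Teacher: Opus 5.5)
Your Step~3 is a genuine gap, and the mechanism you propose for it does not work. After Step~2 the resolvent is evaluated at the moving point $(s_0+(\s'-\sigma')(v),\sigma')$. You are therefore integrating a meromorphically continued family over a real $2k$-dimensional cycle of spectral parameters. For fixed $\s$ outside the region of convergence, this cycle typically meets the polar set in a real codimension-two set. There a pole of order at least $k+1\geq 2$ (for $k\geq1$) is not locally integrable, and integrals of this type generically have branch cuts rather than poles.

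Your claim that the $v$-integral is meromorphic across the hypersurface already fails in the simplest model. Take $k=1$, $\ell(v)=cv$, and $\chi_1(v):=\chi(X_\M+v)$, which has compact support. Integration by parts gives
\[
\int \chi_1(v)\,(z+cv)^{-2}\,\dd v=\frac1c\int \chi_1'(v)\,(z+cv)^{-1}\,\dd v .
\]
This is a Cauchy transform of $\chi_1'$, so it jumps across the segment $\{-cv ~:~ v\in\supp\chi_1'\}$. Since $\chi_1$ is not holomorphic, there is no contour in $v$ to deform, and the ``same mechanism as the $\sigma'$-shift'' is not available. The $\sigma'$-shift itself would also need bounds on the continued $P_+(\s)^{-1}$ as $|\Im\sigma'|\to\infty$. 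Theorem~\ref{theorem:rt-anosov2} does not provide these; they are Dolgopyat-type high-frequency estimates (cf.\ \S\ref{section:decay}). Meromorphy of the total could only come from cancellations between the $v$- and $\sigma'$-integrals, and your plan gives no way to control them.

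The missing idea is to stop truncating in the $\fh$-directions. Your Step~1 argument is, in the paper, Lemma~\ref{lemma:onsebarre} and Proposition~\ref{prop:Laplace-outside-L}. It should be proved by the direct compactness argument from \hyperlink{AA2}{\rm(A2)} and Lemma~\ref{lemma:formula-for-L}(i), since finitely many hyperplanes cannot cover an open cone. That argument applies equally to $\fa_{\geq0}\setminus\scrC$, because $\scrL\cap\fh=\{0\}$ and $\overline{\scrL}\subset\scrC$. Hence $T^\scrC_\pm(\s)-T^{\fa_{\geq0}}_\pm(\s)$ is entire.

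For the half-space, use coordinates $a=tX_\M+h'$ instead of $t(X_\M+v)$. Since $f$ has compact support and $H$ acts properly, the $h'$-integral over all of $\fh$ can be done directly on $f$. It produces the weighted pushforward $\pi_*(\mp\s)$, which is entire in $\s'$ and preserves the wavefront condition. The remaining $t$-integral is then the resolvent at the single point $\s$, for instance
\[
T^{\fa_{\geq0}}_+(\s)f(x,h)=-e^{-\s'(h)}\bigl(P_+(\s)^{-1}\pi_*(-\s)f\bigr)(x),
\]
the analogue of \eqref{equation:formula-sympa}. Meromorphy follows at once from Theorem~\ref{theorem:rt-anosov2}. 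The bound \eqref{equation:wftc} follows from \eqref{equation:unif}, the tensor-product structure in $h$, and the support of the kernel. In your Fourier picture, integrating over all of $\fh$ is exactly what collapses the $\sigma'$-integral onto $\sigma'=\s'$, the effect you hoped to get from the contour shift.
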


The most relevant application of Theorem \ref{theorem:product-resolvents} is when $\scrC$ is an orthant, namely
\[
\mathscr{C} = \left\{ \textstyle \sum_{i=0}^k x_i \e_i ~:~ x_i \geq 0 \right\},
\]
where $(\e_0, ..., \e_k)$ is a basis of $\fa$. In this case, writing $\mathbf{s}= \sum_{i=0}^k s_i \e_i^*$, where $(\e_0^*, ..., \e_k^*)$ denotes the dual basis, and $X_i := \X_{\mc{E}}(\e_i)$, one has $T^\scrC_-(\mathbf{s}) = (s_0 - X_0)^{-1} \dots (s_k - X_k)^{-1}$. As in the preceding sections, the proof carries over \emph{verbatim} to vector-valued distributions, so we only treat the case $\E = \M \times \C$.

\subsubsection{Non-trapped directions}
\label{sec:7-prelim}

We begin by studying Laplace transforms over cones disjoint from the limit cone $\scrL$. For simplicity, we only treat the case of $T_-^{\scrC}(\s)$, the case of $T_+^{\scrC}(\s)$ being completely analogous.	

\begin{proposition}\label{prop:Laplace-outside-L}
Let $\scrC' \subset \fa$ be an open cone such that
\[
\overline{\scrC'} \cap \left(\mathscr{L} \cup -\scrL\right) = \{0\}.
\]
Then:
\begin{enumerate}[label=\emph{(\roman*)}]
\item $T^{\scrC'}_-$ admits a \emph{holomorphic} continuation to $\fa^\ast_\C$ as an operator 
\[
T^{\scrC'}_-(\mathbf{s}) : C^\infty_{\comp}(\M) \to C^\infty(\M),\quad \mathcal{D}'_{\comp}(\M) \to \mathcal{D}'(\M).
\]
\item The wavefront set of the Schwartz kernel of $T_-^{\scrC'}(\s)$ is given by
\begin{equation}
\label{equation:gaga}
\begin{split}
\WF'(T^{\scrC'}_-(\s)) \subset & \{((m,\xi);e^a(m,\xi)) ~:~ a\in \partial \scrC', (m,\xi) \in T^*\M \setminus \{0\}\} \\
& \cup\{ ((m,\xi); e^{a}(m,\xi)) ~:~ a\in\scrC', m \in \M, \xi\in E_{s,\M}^* \oplus E_{u,\M}^*\setminus \{0\}\}.
\end{split}
\end{equation}
\end{enumerate}
\end{proposition}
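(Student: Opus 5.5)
The plan is to show that, for a cone $\scrC'$ avoiding $\pm\scrL$, the integrand $e^{-\s(a)} f\circ\tau_a$ (with $f\in C^\infty_{\comp}(\M)$, trivial bundle) is compactly supported in $a\in\scrC'$ locally uniformly in the base point. Once this holds, the integral is over a compact set for any given compact in $\M$ and is entire in $\s$. Concretely, we will prove:

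\emph{Properness claim.} For compacts $K_1,K_2\subset\M$ the set
\[
\{a\in\overline{\scrC'} ~:~ \tau_a(K_1)\cap K_2\neq\emptyset\}
\]
is compact. Then $T_-^{\scrC'}(\s)f(z)=\int_{\scrC'}e^{-\s(a)}f(\tau_a z)\,\dd\vol_\fa(a)$ is, for $z$ in a compact $K_1$ and $\supp f\subset K_2$, an integral over a fixed compact region of $\fa$. It is therefore entire in $\s$ and smooth in $z$. Duality (the transpose is the analogous operator with $-\scrC'$) gives the extension to $\mc{D}'_{\comp}\to\mc{D}'$.

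To prove the claim, we argue by contradiction. Suppose there are $a_n\in\overline{\scrC'}$ with $|a_n|\to\infty$ and $z_n\in K_1$, $\tau_{a_n}z_n\in K_2$. Write $a_n=t_nX_\M+h_n$ with $h_n\in\fh$. If $(t_n)$ is bounded, properness of the free $H$-action (A1) contradicts $|h_n|\to\infty$. So, after passing to a subsequence and up to the symmetric case $t_n\to-\infty$, assume $t_n\to+\infty$. Projecting to $\N$, the points $x_n=\pi(z_n)$ and $\phi_{t_n}x_n$ stay in compacts. By uniform properness (A2), $x_n$ accumulates on $\mc T_-$ and $\phi_{t_n}x_n$ on $\mc T_+$. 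By Lemma \ref{lemma:deja}, together with a bounded initial and final transit time, most of the orbit segment lies in a small neighbourhood $\scrV_0'$ of $\scrK$. Along this segment, the $\fh$-displacement is $-h_n=\int_0^{t_n}w\circ\phi_s(x_n)\,\dd s+\mc O(1)$. Applying the closing/shadowing argument from the proof of Lemma \ref{lemma:formula-for-L}(i) (equivalently, that lemma itself, since the segment stays in $\scrV$), we get that $a_n/|a_n|$ is within $o(1)$ of $\R_+ \big(X_\M-t_n^{-1}\int_0^{t_n}w\circ\phi_s\big)$. Its accumulation points therefore lie in $\scrL$ (or in $-\scrL$ in the case $t_n\to-\infty$). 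Since $a_n/|a_n|\in\overline{\scrC'}$, we obtain a nonzero vector of $\overline{\scrC'}\cap(\scrL\cup-\scrL)$, a contradiction. The main obstacle is this step: controlling the portion of the segment spent outside $\scrV_0'$ (uniformly bounded in time by (A2) and Lemma \ref{lemma:deja}) and making the shadowing estimate uniform in $x_n$ lying near but not on $\scrK$. I would handle the latter by the same Markov-partition decomposition used in Lemma \ref{lemma:formula-for-L}.

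For the wavefront set, the kernel of $T_-^{\scrC'}(\s)$ is the pushforward under $(a,z)\mapsto(z,\tau_a z)$ of the smooth density $\mathbf 1_{\scrC'}(a)e^{-\s(a)}\,\dd a$, and the properness claim makes this pushforward legitimate. Split $\mathbf 1_{\scrC'}$ into a smooth part in the interior plus a part near $\partial\scrC'$. The interior part is a superposition of kernels of the diffeomorphisms $\tau_a$, with graphs $\{(m,\xi);e^a(m,\xi)\}$. Integrating in $a$ kills every covector with nonzero pairing against the orbit directions, i.e.\ it leaves only $\xi$ annihilating $E_{0,\M}$, which by \eqref{equation:smooth-tralala} is $E_{s,\M}^*\oplus E_{u,\M}^*$. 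The boundary part contributes at worst the full graphs over $a\in\partial\scrC'$, by the standard pushforward calculus (H\"ormander's bound $\WF(F_*u)\subset\{(y,\eta):\exists x,\ (x,dF^\top\eta)\in\WF(u)\cup 0\}$). This gives \eqref{equation:gaga}.
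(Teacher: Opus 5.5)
Your proposal is correct and follows essentially the same route as the paper. Your properness claim is the paper's uniform non-trapping Lemma~\ref{lemma:onsebarre}, and you prove it by the same contradiction argument: properness of the $H$-action when $t_n$ stays bounded, (A2) to place the endpoints on $\mc T_\mp$, bounded entry and exit times so that the middle segment stays near $\scrK$, and Lemma~\ref{lemma:formula-for-L}(i) to land in $\pm\scrL$; holomorphy then follows from integrating over a compact set of $a$ together with duality. Your wavefront bound, obtained by pushing forward $\mathbf 1_{\scrC'}(a)e^{-\s(a)}\,\dd a$ under $(a,z)\mapsto(z,\tau_a z)$, is exactly the standard calculus the paper invokes: the interior contributes covectors in $E_{s,\M}^*\oplus E_{u,\M}^*$, and $\partial\scrC'$ contributes the full graphs.
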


The bound on the wavefront set at the boundary $\partial \scrC'$ in (ii) is not always sharp and can sometimes be improved under additional assumptions on $\partial \scrC'$ (for instance, if $\scrC'$ admits a stratification). 
In particular, it follows from (i) that the choice of cone $\scrC$ in the definition of $T_-^\scrC(\s)$ (see \eqref{eq:Laplace-transform-dynamics}) is irrelevant to the singularities of $T_-^{\scrC}(\s)$ provided it contains the closure of the limit cone $\overline{\scrL}$. That is, if $\scrC_1 \subset \fa_{\geq 0}$ is another cone containing $\overline{\scrL}$, then $T_-^\scrC(\s)-T_-^{\scrC_1}(\s)$ is holomorphic on $\fa^*_{\C}$. In addition, it follows from the second item of the proposition above that, if $\mathrm{C}$ is a closed cone in $T^\ast \M\setminus\{0\}$ not intersecting $E^\ast_{u,\M}$, and $U \subset \M$ is a relatively compact open subset of $\M$, then there exists $\mathrm{C}'\supset \mathrm{C}$ not intersecting $E^\ast_{u,\M}$ such that $\s \mapsto T_-^{\scrC'}(\mathbf{s})$ extends holomorphically as a continuous operator from $\mathcal{D}'_{\mathrm{comp},\mathrm{C}}(\M)$ to $\mathcal{D}'_{\mathrm{C}'}(U)$. If $\mathrm{C}$ is a narrow cone around $E^\ast_{s,\M}$, then $\mathrm{C}'$ can also be chosen to be a narrow cone around $E^\ast_{s,\M}$.

The proof of Proposition \ref{prop:Laplace-outside-L} relies on the following uniform nontrapping statement:

\begin{lemma}
\label{lemma:onsebarre}
Let $K\subset \M$ be a compact subset and $\scrC' \subset \fa$ as in Proposition \ref{prop:Laplace-outside-L}. Then there exists $T>0$ such that for all $a\in\scrC'$ with $|a|> T$, $e^a(K)\cap K = \emptyset$. 
\end{lemma}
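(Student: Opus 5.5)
We argue by contradiction, combining the description of the dynamics via the cocycle \eqref{eq:abelian-extension-dynamics} with the characterization of the limit cone in Lemma~\ref{lemma:formula-for-L}. Suppose there exist $a_n\in\scrC'$ with $|a_n|\to\infty$ and $z_n\in K$ such that $e^{a_n}z_n\in K$. Work in the trivialization $\M\simeq\N\times\fh$ and decompose $a_n=t_nX_\M+h_n$ with $t_n\in\R$, $h_n\in\fh$. Writing $z_n=(x_n,v_n)$, we get
\[
e^{a_n}z_n=\Big(\phi_{t_n}x_n,\;v_n+h_n+\int_0^{t_n}w(\phi_sx_n)\,\dd s\Big).
\]
Because $K$ is compact, both $x_n$ and $\phi_{t_n}x_n$ lie in a fixed compact subset of $\N$. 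Likewise $h_n+\int_0^{t_n}w(\phi_sx_n)\,\dd s$ is bounded. We first rule out bounded $t_n$: in that case the integral is bounded, so $h_n$ is bounded, which contradicts $|a_n|\to\infty$. Hence $|t_n|\to\infty$ along a subsequence. Replacing $a_n$ by $-a_n$ and swapping the roles of $z_n$ and $e^{a_n}z_n$ if necessary (this uses the symmetry $\pm\scrL$ in the hypothesis), we may assume $t_n\to+\infty$.

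The key step is to show that the normalized directions $a_n/t_n$ accumulate on $\scrL$. By boundedness, $a_n/t_n=X_\M-t_n^{-1}\int_0^{t_n}w(\phi_sx_n)\,\dd s+o(1)$. To use Lemma~\ref{lemma:formula-for-L}(i), we need the orbit segment $\phi_{[0,t_n]}x_n$ to remain in a neighborhood $\scrV$ of $\scrK$ for most of its length. Uniform properness \hyperlink{AA2}{(A2)} supplies this. Since both endpoints lie in a compact set and $t_n\to\infty$, the segment can spend only a bounded time $T_0$ outside $\scrV$ near each end. Indeed, points not close to $\mc T_-$ escape to infinity in bounded time and cannot come back to $K$. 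Symmetrically in the past, the portion near the end lies close to $\mc T_+$, and the middle portion lies in $\scrV$; this is the argument of Lemma~\ref{lemma:deja} made quantitative. We then discard the two end pieces of bounded length. Since $w$ is bounded on compact sets, doing so changes the average by only $O(1/t_n)$. Lemma~\ref{lemma:formula-for-L}(i), applied with $\mu=1$, then shows that every accumulation point of $a_n/t_n$ lies in $\scrL$. Each such accumulation point is nonzero, because its $X_\M$-component equals $1$.

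We then conclude. After normalization, $a_n/|a_n|$ converges along a subsequence to a unit vector in $\overline{\scrC'}$. This vector is a positive multiple of a limit of $a_n/t_n$, and the ratio $t_n/|a_n|$ stays bounded below because the limits of $a_n/t_n$ are bounded. It therefore lies in $\scrL\setminus\{0\}$, contradicting $\overline{\scrC'}\cap\scrL=\{0\}$.

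The main obstacle is the localization step: proving uniformly in $n$ that the orbit segment between two points of a compact set spends all but a bounded amount of time near $\scrK$. I would obtain this by a compactness argument modeled on Lemma~\ref{lemma:deja}. Any time spent far from $\scrK$ in the middle of a long segment forces a limit point in $\mc T_+\cap\mc T_-=\scrK$, which contradicts being far from $\scrK$.
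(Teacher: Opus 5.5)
Your proposal is correct and follows essentially the same route as the paper. Both arguments trivialize $\M\simeq\N\times\fh$ and use the cocycle formula to rule out bounded $t_n$ and reduce to $t_n\to+\infty$, then localize the orbit segment near $\scrK$ up to bounded end pieces and invoke Lemma~\ref{lemma:formula-for-L}(i) to produce a nonzero point of $\overline{\scrC'}\cap\scrL$. For the localization step you flag as the main obstacle, the paper gets it in one line from \hyperlink{AA2}{(A2)}: limit points of $x_n$ lie in $\mc T_-$ and limit points of $\phi_{t_n}x_n$ lie in $\mc T_+$, so both ends enter a small neighborhood of $\scrK$ in bounded time. Lemma~\ref{lemma:deja} can then be applied directly to keep the middle portion there, so no new compactness argument is needed.
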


\begin{proof}
 Suppose, for contradiction, that no such $T > 0$ exists. We can find sequences $(a_n)_{n \geq 0}$ with $a_n \in \scrC'$, $|a_n| \to +\infty$ and $(m_n)_{n \geq 0}$, $m_n \in K$ such that $e^{a_n}m_n \in K$. Write $m_n = (x_n,v_n)$ and $a_n=(t_n,h_n)$. Observe that $(t_n)_{n \geq 0}$ is unbounded because $\fh$ acts freely and properly on $\M$ by assumption. After passing to a subsequence, either $t_n\to+\infty$ or $t_n\to-\infty$; we treat the former case, while the latter is analogous and yields an accumulation point in $-\scrL$. After passing to a subsequence, we may also assume that $m_n \to m_\infty = (x_\infty,v_\infty)$ as well as $e^{a_n}m_n \to m_\infty' = (x_\infty',v_\infty')$. 
By uniform properness (Assumption $\hyperlink{AA2}{\rm(A2)}$), we have $x_\infty \in \mc{T}_-$ and $x_\infty' \in \mc{T}_+$. Let $\scrV \subset \mc{N}$ be the neighborhood of $\scrK$ from the nonuniform trapping assumption. Since $x_\infty \in \mc{T}_-$ and $x_\infty'\in \mc{T}_+$, we can decompose $t_n = t_n^- + \tau_n + t_n^+$, where $0\leq t_n^\pm\leq C$ are bounded by a constant $C > 0$ independent of $n \geq 0$, $\tau_n \to +\infty$, and $\phi_{t_n^- + \tau}(x_n) \in \scrV$ for all $\tau \in [0,\tau_n]$.

By assumption,
\[
e^{a_n}m_n = (\phi_{t_n}x_n, v_n + h_n + \int_0^{t_n} w(\phi_s x_n) \dd s) \in K
\]
is bounded. As $\scrC'$ is a cone and $(t_n,h_n) \in \scrC'$, $(1,h_n/t_n) \in \scrC'$. Setting $y_n := \phi_{t_n^-}x_n \in \scrV$, the previous relation gives
\[
h_n/t_n = -t_n^{-1} \int_0^{t_n} w(\phi_s x_n) \dd s + o(1) = - \tau_n^{-1} \int_0^{\tau_n} w (\phi_s(y_n)) \dd s + o(1),
\]
that is,
\begin{equation}
\label{equation:mouche}
\left(1,-\tau_n^{-1} \int_0^{\tau_n} w (\phi_s(y_n)) \dd s + o(1)\right) \in \scrC'.
\end{equation}
However, by \eqref{equation:suzanne} in Lemma \ref{lemma:formula-for-L}, item (i), the accumulation points (as $n \to +\infty$) of \eqref{equation:mouche} belong to the limit cone $\scrL$. This contradicts the assumption $\overline{\scrC'} \cap \left(\mathscr{L} \cup -\scrL\right) = \{0\}$.
\end{proof}

We can now prove Proposition \ref{prop:Laplace-outside-L}.
\begin{proof}
(i) To prove the first part of the statement, it suffices to fix arbitrary compact sets $K_1, K_2 \subset \M$ and prove that $T_-^{\scrC'}(\s)$ maps smooth functions compactly supported in $K_1$ to smooth functions on $K_2$ (not necessarily compactly supported), and that this restriction extends to a holomorphic family on $\fa^\ast_\C$.
By enlarging these sets, we may assume $K=K_1=K_2$. By duality, it suffices to show that the map $C^\infty_{\comp}(K)\to C^\infty(K)$ extends holomorphically.
By Lemma \ref{lemma:onsebarre}, for $\varphi\in C^\infty_{\comp}(K)$, the restriction $T_-^{\scrC'}(\s)\varphi_{| K}$ is defined by an integral over a bounded set in $\fa$.
Elementary techniques of differentiation under the integral sign then apply. \\

(ii) The proof follows from standard wavefront-set analysis and compactness of the support. Indeed, the wavefront set of the Schwartz kernel of $(e^{a})^*$ is given by $\WF'((e^{a})^*)=\{(m,\xi;e^a(m,\xi)) ~:~ m \in \M, \xi \in T^*\M \setminus \{0\}\}$. Applying \cite[Exercise 6.2.10]{Lefeuvre-book} with $\chi = \mathbf{1}_{\scrC'}$, the characteristic function of $\scrC'$, we find the second term of the wavefront set in \eqref{equation:gaga}; the first term comes from the wavefront set of the step function $\chi$ itself.
\end{proof}

\subsubsection{Laplace transform of the half space} We begin by considering the particular case in which $\scrC := \fa_{\geq 0} = \R_+X_\M + \fh$, i.e., we study the operator $T_{-}^{\fa_{\geq 0}}(\s)$. Recall that $\pi : \M \to \N$ denotes the footpoint projection $\pi(x,h)=x$. For $\s \in \fa^*_{\C}$, we introduce the (weighted) pushforward map $\pi_*(\s) : C^\infty_{\comp}(\M) \to C^\infty_{\comp}(\N)$:
\[
\pi_*(\s) f(x) := \int_{\fh} e^{-\s'(h)} f(x,h) \dd h.
\]
This operator also extends to compactly supported distributions.

\begin{lemma}
Let $f \in C^\infty_{\comp}(\M)$ and $\s = (s_0,\s') \in \fa^*_\C$ such that $|\Re(\s)| \gg 0$. Then
\begin{equation}
\label{equation:formula-sympa}
T_{-}^{\fa_{\geq 0}}(\s)f(x,h) = - e^{\s'(h)}(P_-^{-1}(\s) \pi_*(\s)f)(x).
\end{equation}
\end{lemma}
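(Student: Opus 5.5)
The plan is a direct computation in the trivialization $\M \simeq \N \times \fh$ of \eqref{equation:trivialization}, valid first for $|\Re(\s)| \gg 0$, where every integral converges absolutely.

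\textbf{Step 1: write the integrand explicitly.} Write $a = tX_\M + h_1 \in \fa_{\geq 0}$ with $t \geq 0$ and $h_1 \in \fh$. I normalize $\dd\vol_\fa(a) = \dd t\,\dd h_1$; if the Euclidean structure differs, this only changes a constant Jacobian, which is absorbed in the normalization of $\dd h$. The propagator $e^{\X(a)}$ acts on functions by pullback under $\tau_a$. By \eqref{eq:abelian-extension-dynamics},
\[
\tau_{tX_\M+h_1}(x,h) = \Big(\phi_t x,\ h+h_1+\int_0^t w(\phi_s x)\,\dd s\Big).
\]
Hence
\[
T_-^{\fa_{\geq0}}(\s)f(x,h) = \int_0^\infty\!\!\int_\fh e^{-s_0 t - \s'(h_1)}\, f\Big(\phi_t x,\ h+h_1+\int_0^t w(\phi_s x)\,\dd s\Big)\,\dd h_1\,\dd t.
\]

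\textbf{Step 2: do the $\fh$-integral.} For fixed $t$ and $x$, substitute $h_2 := h+h_1+\int_0^t w(\phi_s x)\,\dd s$. This is a translation, so $\dd h_2 = \dd h_1$, and
\[
e^{-\s'(h_1)} = e^{\s'(h)}\, e^{\int_0^t \s'(w)(\phi_s x)\,\dd s}\, e^{-\s'(h_2)}.
\]
The $h_2$-integral is then exactly $\pi_*(\s)f(\phi_t x)$. This leaves
\[
T_-^{\fa_{\geq0}}(\s)f(x,h) = e^{\s'(h)} \int_0^\infty \exp\Big(\int_0^t (-s_0+\s'(w))(\phi_s x)\,\dd s\Big)\, (\pi_*(\s)f)(\phi_t x)\,\dd t.
\]

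\textbf{Step 3: identify the $t$-integral.} Take the trivial bundle, as in the proof of Theorem \ref{theorem:rt-anosov2}, and recall $P_-(\s) = \X_\N - s_0 + \s'(w)$. The Feynman--Kac formula for a transport operator with potential gives
\[
e^{tP_-(\s)}g(x) = \exp\Big(\int_0^t (-s_0+\s'(w))\circ\phi_s(x)\,\dd s\Big)\, g(\phi_t x).
\]
One checks this by differentiating in $t$ and using the group property of $(\phi_t)_{t\in\R}$. The $t$-integral is therefore $\int_0^\infty e^{tP_-(\s)}\pi_*(\s)f\,\dd t$, and by \eqref{equation:explicit-inverse} this equals $-P_-(\s)^{-1}\pi_*(\s)f$, which yields \eqref{equation:formula-sympa}.

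\textbf{Main obstacle: justifying Fubini and the substitution.} This needs absolute convergence of the double integral for $|\Re(\s)| \gg 0$. Since $f$ has compact support, the $\fh$-integrand is supported in a set of $h_2$ of bounded size. The weight then satisfies
\[
\big|e^{-\s'(h_1)}\big| \leq C_f\, e^{|\Re\s'|\,(|h| + Ct)},
\]
because $w$ is bounded on the relevant compact region; the bound uses Lemma \ref{lemma:deja} together with the fact that $\phi_t x$ must return to the projected support of $f$. The factor $e^{-\Re(s_0)t}$ beats this growth once $\Re(s_0) > C(1+|\Re(\s')|)$. For general admissible bundles, the same bookkeeping applies with the bound of Lemma \ref{lemma:vilain}.
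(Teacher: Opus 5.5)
Your proof is correct and is essentially the paper's argument: the same explicit computation in the trivialization $\M\simeq\N\times\fh$, the Feynman--Kac form of $e^{tP_-(\s)}$, and the integral formula \eqref{equation:explicit-inverse} for $P_-(\s)^{-1}$. The paper first uses $H$-invariance to write the operator as a convolution in $h$, checks the formula on tensor products $u\otimes v$, and then invokes density; your translation $h_2 = h+h_1+\int_0^t w(\phi_s x)\,\dd s$ handles general $f$ directly and so avoids the density step.
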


In particular, the Schwartz kernel of $T_{-}^{\fa_{\geq 0}}(\s)$ is given by $T_{-}^{\fa_{\geq 0}}(\s ; x,x',h,h') = - e^{\s'(h-h')} P_-(\s)^{-1}(x,x')$, and therefore it extends meromorphically to $\fa^*_{\C}$ by Theorem \ref{theorem:rt-anosov2} (see the comment after Theorem \ref{theorem:rt-anosov2} for the relevant statement on $P_-(\s)^{-1}$).

\begin{proof}
Observe that $T_{-}^{\fa_{\geq 0}}(\s)$ is invariant under the $H$-action. Since the $H$-action is a free and proper $\R^k$-action, $T_{-}^{\fa_{\geq 0}}(\s)$ is a convolution operator in the $h$-variable. That is, we may write
\[
T_{-}^{\fa_{\geq 0}}(\s)= \int_{\fh} K(\s)(h-h') \dd h',
\]
where $K(\s)(h) : C^\infty_{\comp}(\N) \to \mc{D}'(\N)$ is operator-valued and acts on functions of $\N$. We claim that
\begin{equation}
\label{equation:aspirateur}
K(\s)(h-h') = - e^{\s'(h-h')} P_-^{-1}(\s),
\end{equation}
which will prove the claim.

To compute $K(\s)(h)$, it suffices to consider $f \in C^\infty_{\comp}(\M)$ of the form $f(x,h) = u(x)\otimes v(h)$, where $u \in C^\infty_{\comp}(\N)$ and $v \in C^\infty_{\comp}(\fh)$; it suffices to consider such functions because tensor-product functions are dense.
Writing $a=tX_\M+h'$, we find:
\[
\begin{split}
T_{-}^{\fa_{\geq 0}}(\s) f (x,h) &= \int_{\fa_{\geq 0}} e^{(\X-\s)(a)} f(x,h) \dd a \\
				&= \int_0^{+\infty} \int_{\fh} e^{-ts_0 - \s'(h')} u(\phi_t(x)) v\Big( h + h' + \int_0^t w\circ\phi_r(x) \dd r \Big) \dd h' \dd t \\
				&=e^{\s'(h)} \left(\int_0^{+\infty} u(\phi_t(x)) e^{-s_0 t} e^{\s' (\int_0^t w\circ \phi_r(x) \dd r)} \dd t\right)	\left(\int_{\fh} e^{-\s'(h')} v(h')\dd h'\right) \\
				& = -e^{\s'(h)} [P_-^{-1}(\s)u](x) \left(\int_{\fh} e^{-\s'(h')} v(h')\dd h'\right)  \\
				&= - e^{\s'(h)} P_-^{-1}(\s) \left(\int_{\fh} e^{-\s'(h')} v(h')u(\bullet) \dd h'\right) = -e^{\s'(h)}(P_-^{-1}(\s) \pi_*(\s)f)(x).
				\end{split}
				\]
				This proves the claim.
\end{proof}

\begin{lemma}
\label{lemma:froid2}
The family $T_{-}^{\fa_{\geq 0}}(\s)$ extends meromorphically to $\fa^*_{\C}$ as a family of bounded operators
\[
T_{-}^{\fa_{\geq 0}}(\s) : \mc{D}'_{\comp,E_{s,\M}^*}(\M) \to \mc{D}'_{E_{s,\M}^*}(\M).
\]
\end{lemma}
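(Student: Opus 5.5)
The plan is to use the explicit formula \eqref{equation:formula-sympa}, which presents $T_{-}^{\fa_{\geq 0}}(\s)$ as the composition
\[
T_{-}^{\fa_{\geq 0}}(\s) = - \mathbf{e}(-\s')\circ P_-(\s)^{-1} \circ \pi_*(\s),
\]
where $\mathbf{e}(-\s')g(x,h)=e^{\s'(h)}\pi^*g(x,h)$. The identity holds for $|\Re(\s)|\gg 0$ on $C^\infty_{\comp}(\M)$. Each of the three factors is then treated separately as a map between spaces of distributions with wavefront-set conditions. The composition is meromorphic because the outer factors are entire in $\s$ and the middle factor is meromorphic.

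\textbf{Step 1: the pushforward.} Working in the trivialization $\M\simeq\N\times\fh$, I would show that $\pi_*(\s)$ maps $\mc{D}'_{\comp,E_{s,\M}^*}(\M)$ continuously into $\mc{D}'_{\comp,E_{s,\N}^*}(\N)$, and that it is entire in $\s$.
\begin{itemize}
\item Holomorphy is immediate, since the weight $e^{-\s'(h)}$ is entire and, on the compact support, integration over $\fh$ is a finite pairing.
\item For the wavefront set, I use the standard rule for pushforward along a fibration (e.g.\ \cite{Lefeuvre-book}): $\WF(\pi_* u)\subset\{(x,\xi): (x,h;\xi,0)\in \WF(u)\}$.
\item By \eqref{equation:esm}, the covectors of $E_{s,\M}^*$ have the form $(\xi,0)$ with $\xi\in E_{s,\N}^*$. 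So the output lies in $E^*_{s,\N}$, and uniform control over closed cones gives continuity.
\end{itemize}

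\textbf{Step 2: the resolvent.} Next I invoke Theorem~\ref{theorem:rt-anosov2}\ref{it:mero} in its version for $P_-$ (reverse the signs, so $E_u^*$ is replaced by $E_s^*$ and $\mc{T}_+$ by $\mc{T}_-$). This gives that
\[
P_-(\s)^{-1}: \mc{D}'_{\comp,E_{s,\N}^*}(\N)\to\mc{D}'_{E_{s,\N}^*}(\N)
\]
extends meromorphically to $\fa^*_\C$, with poles in $\sigma_{\mathrm{RS},-}$.

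\textbf{Step 3: the extension.} Finally I check that $g\mapsto e^{\s'(h)}\pi^*g$ maps $\mc{D}'_{E_{s,\N}^*}(\N)$ to $\mc{D}'_{E_{s,\M}^*}(\M)$.
\begin{itemize}
\item The pullback of a tensor product with $1$ on $\fh$ has wavefront set $\{(\xi,0):\xi\in\WF(g)\}$, as in the proof of Lemma~\ref{lemma:sigma_RT_equals_sigma}.
\item By \eqref{equation:esm} this set equals $E^*_{s,\M}$.
\item Multiplication by the smooth, entire-in-$\s$ factor $e^{\s'(h)}$ does not enlarge the wavefront set.
\end{itemize}

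\textbf{Main obstacle.} Composing the three steps gives a meromorphic family. It agrees with $T_-^{\fa_{\geq0}}(\s)$ for $|\Re(\s)|\gg0$ on smooth functions, hence on the whole distribution space by density in the Hörmander topology. The hardest point is making sure the wavefront-set conditions hold uniformly in $\s$ (over closed cones). This uniformity is what lets the composition be continuous in the $\mc{D}'_\Gamma$ topologies and meromorphic as an operator, rather than only pointwise in $\s$. I would handle it using the uniform statement \eqref{equation:unif} for the Schwartz kernel of $P_-(\s)^{-1}$, combined with the kernel formula $-e^{\s'(h-h')}P_-(\s)^{-1}(x,x')$. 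Concretely, its wavefront set is $\{(\eta_1,0;\eta_2,0)\}$ over $\mathrm{C_{sing}}$, and I would verify by the composition theorem that this kernel composes with $\mc{D}'_{\comp,E_{s,\M}^*}$. The $\Omega_-$ and diagonal parts of $\mathrm{C_{sing}}$ preserve cones around $E_s^*$ by flow invariance, and the $E_s^*\times E_u^*$ term only produces $E_s^*$ on the output side.
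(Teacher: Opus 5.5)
Your proposal is correct and follows the paper's proof essentially verbatim. The paper also factors $T_-^{\fa_{\geq 0}}(\s)$ via \eqref{equation:formula-sympa} into three steps: the weighted pushforward $\pi_*(\s)$ into $\mc{D}'_{\comp,E_{s,\N}^*}(\N)$, then the meromorphically continued $P_-(\s)^{-1}$ on $E_s^*$-wavefront spaces from Theorem~\ref{theorem:rt-anosov2}, then the tensor-product wavefront computation combined with \eqref{equation:esm}. The uniformity issue you single out as the main obstacle is already built into \eqref{equation:meromorphic-extension-eu}, which is deduced from \eqref{equation:unif}, so the paper simply cites it and your separate kernel-level composition argument is not needed.
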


\begin{proof}
Recall that $\dd \pi^\top : E_{s,\N}^* \to E_{s,\M}^*$ is an isomorphism (see \S\ref{sssection:phf}). Standard wavefront-set calculus (see \cite[Lemma 4.3.4]{Lefeuvre-book}) implies that, for $f \in \mc{D}'_{\comp,E_{s,\M}^*}(\M)$, $\pi_*(\s) f \in \mc{D}'_{\comp,E_{s,\N}^*}(\N)$. Therefore,
\[
\s \mapsto u(\s) := -P_-(\s)^{-1} \pi_*(\s) f \in \mc{D}'_{E_{s,\N}^*}(\N)
\]
admits a meromorphic extension to $\fa^*_{\C}$ by Theorem \ref{theorem:rt-anosov2}.

The distribution $T_{-}^{\fa_{\geq 0}}(\s)f = e^{\s'(h)}u(\s)$ on $\M$ is a tensor product distribution; its wavefront set is thus given in the trivialization $T^*\M \simeq T^*\N \oplus \fh^*$ by $\{(\xi,0) ~:~ \xi \in E_{s,\N}^*\}$ (see \cite[Lemma 4.2.7]{Lefeuvre-book}), which coincides with $E_{s,\M}^*$ by \eqref{equation:esm}. Thus $\s \mapsto T_{-}^{\fa_{\geq 0}}(\s)f \in \mc{D}'_{E_{s,\M}^*}(\M)$ depends meromorphically on $\s \in \fa^*_{\C}$.
\end{proof}

\subsubsection{Completion of the proof} Finally, we prove the meromorphic extension of $\s \mapsto T_-^{\scrC}(\s)$. We begin with the following lemma.

\begin{lemma}
\label{lemma:xxx}
Let $\scrC\subset \fa_{>0}$ be a open cone containing $\scrL$. There exists a family of bounded operators
\[
R_{\scrC}(\s) : C^\infty_{\comp}(\M) \to C^\infty(\M), \qquad \mc{D}'_{\comp,E_s^*}(\M) \to \mc{D}'_{E_s^*}(\M),
\]
holomorphic with respect to $\s \in \fa^*_{\C}$, such that $T_-^{\scrC}(\s) = T_{-}^{\fa_{\geq 0}}(\s)-R_{\scrC}(\s)$.
\end{lemma}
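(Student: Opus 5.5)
The plan is to write the difference of the two Laplace transforms as a Laplace transform over the complementary region and apply Proposition \ref{prop:Laplace-outside-L}. Since $\scrC \subset \fa_{>0}\subset\fa_{\geq 0}$, for $|\Re(\s)|\gg 0$ we have
\[
T_{-}^{\fa_{\geq 0}}(\s) - T_-^{\scrC}(\s) = \int_{\fa_{\geq 0}\setminus \scrC} e^{(\X-\s)(a)}\,\dd \vol_\fa(a) =: R_\scrC(\s),
\]
where both integrals converge on a common domain of the form \eqref{equation:bigres}. The region $\fa_{\geq 0}\setminus\scrC$ is a closed cone, but its closure meets $\scrL$ only at $0$ if we first shrink things suitably. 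The first point to settle is that $\overline{\scrL}\subset \scrC$ holds together with $\scrC$ open. Then $\overline{\fa_{\geq0}\setminus\scrC}\cap \scrL=\{0\}$, since $\scrL\setminus\{0\}$ lies in the open set $\scrC$. Moreover $-\scrL\cap\fa_{\geq 0}=\{0\}$ because $\scrL\setminus\{0\}\subset\fa_{>0}$. If the hypothesis only gives $\scrL\subset\scrC$, I will use that $\scrL$ is closed by definition, so this is the same condition.

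Next I will decompose $\fa_{\geq 0}\setminus\scrC$, up to a measure-zero boundary, into finitely many open cones $\scrC'_j$ with $\overline{\scrC'_j}\cap(\scrL\cup-\scrL)=\{0\}$. For instance, I can take $\fa_{>0}\setminus\overline{\scrC}$ itself, whose closure is $\fa_{\geq0}\setminus\scrC$, together with the boundary face $\fh$. Since $\fh$ has measure zero, it does not contribute. The cone $\fa_{>0}\setminus\overline{\scrC}$ need not be convex, but Proposition \ref{prop:Laplace-outside-L} only requires an open cone. Applying that proposition gives that $R_\scrC(\s)=T_-^{\fa_{>0}\setminus\overline{\scrC}}(\s)$ extends holomorphically to $\fa^*_\C$ as a map $C^\infty_{\comp}\to C^\infty$ and $\mc D'_{\comp}\to\mc D'$.

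For the mapping property $\mc{D}'_{\comp,E_s^*}\to\mc{D}'_{E_s^*}$, I will use the wavefront bound \eqref{equation:gaga} together with the composition rule for wavefront sets. The interior term in \eqref{equation:gaga} consists of pairs related by $e^a$ with $\xi\in E_s^*\oplus E_u^*$. The boundary term at $\partial\scrC'$ relates $(m,\xi)$ to $e^a(m,\xi)$ with $a$ in the boundary. Applied to $u$ with $\WF(u)\subset E_s^*$, the output wavefront lies in $e^{a}(E_{s}^*)$ intersected with the relevant supports. Since $E_s^*$ is $A$-invariant over the tail $\mc T_-$, and more generally flow-invariant where defined, this stays in $E_s^*$. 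Here I rely on the remark after Proposition \ref{prop:Laplace-outside-L}: cones narrow around $E_s^*$ map to cones narrow around $E_s^*$, uniformly on compacts.

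The main obstacle is precisely this wavefront statement away from the trapped set, where $E_s^*$ is only continuously extended and not invariant. To handle it, I will follow the remark: for each narrow conic neighbourhood $\mathrm{C}$ of $E_s^*$, Lemma \ref{lemma:onsebarre} shows that only a compact set of times $a$ contributes for compactly supported inputs and compact target regions. Over that compact set, the pushforwards $e^{a}$ map a sufficiently narrow cone into a prescribed one. Intersecting over narrow cones then gives the claimed $\mc D'_{E_s^*}$ mapping, with holomorphic dependence on $\s$.
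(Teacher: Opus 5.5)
Your proposal is correct and follows the paper's proof. The paper simply sets $R_\scrC(\s):=\int_{\fa_{\geq 0}\setminus\scrC}e^{(\X-\s)(a)}\,\dd a$ and invokes Proposition~\ref{prop:Laplace-outside-L}. Your additional checks fill in details the paper leaves implicit: the cone condition $\overline{\fa_{\geq 0}\setminus\scrC}\cap(\scrL\cup-\scrL)=\{0\}$, and the narrow-cone wavefront argument via the remark after that proposition.

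One minor point: when you replace $\fa_{\geq 0}\setminus\scrC$ by the open cone $\fa_{>0}\setminus\overline{\scrC}$, you discard $\partial\scrC\cap\fa_{>0}$ as well as $\fh$. That is harmless only when $\partial\scrC$ is null (for instance when $\scrC$ is convex). Alternatively, the proof of Proposition~\ref{prop:Laplace-outside-L}(i) via Lemma~\ref{lemma:onsebarre} works verbatim for any cone whose closure meets $\scrL\cup-\scrL$ only at $0$, so no open-cone replacement is needed.
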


In particular, we obtain the formula
\begin{equation}
\label{equation:nice-formula-t}
T_-^{\scrC}(\s)f(x,h) = - e^{\s'(h)} P_-^{-1}(\s) \pi_*(\s)f(x) + \mathrm{Hol}(\s),
\end{equation}
where the remainder $\mathrm{Hol}(\s)$ is holomorphic with respect to $\s \in \fa^*_{\C}$ and takes values in $\mc{D}'_{E_s^*}(\M)$.

\begin{proof}
 Define
\[
R_{\scrC}(\s) := \int_{\fa_{\geq 0} \setminus \scrC} e^{(\X-\s)(a)} \dd a
\]
 Then $T_-^{\scrC}(\s) = T_{-}^{\fa_{\geq 0}}(\s)-R_{\scrC}(\s)$, and the claim follows immediately from Proposition \ref{prop:Laplace-outside-L}.
\end{proof}

\begin{proof}[Proof of Theorem \ref{theorem:product-resolvents}]
 The meromorphic extension of
\[
T_-^\scrC(\mathbf{s}) : \mc{D}'_{\comp,E_{s,\M}^*}(\M,\E) \to \mc{D}'_{E_{s,\M}^*}(\M, \mc{E})
\]
to $\fa^*_{\C}$ follows immediately from Lemmas \ref{lemma:froid2} and \ref{lemma:xxx}.

We now prove the claim on the wavefront set. We first deal with $T_{-}^{\fa_{\geq 0}}(\s)$.
 Combining the expression \eqref{equation:formula-sympa} for $T_{-}^{\fa_{\geq 0}}(\s)$ with \eqref{equation:unif} (the bound on the wavefront set of the Schwartz kernel of $P_-(\s)^{-1}$) and the standard composition rules for Schwartz-kernel wavefront sets, we obtain
\[
\begin{split}
\WF'(T_{-}^{\fa_{\geq 0}}(\s)) &\subset \{((m,\xi);e^h(m,\xi)) ~:~ h \in \fh, (m,\xi) \in T^*\M \setminus \{0\}\} \\
& \qquad \cup\{ ((m,\xi); e^{a}(m,\xi)) ~:~ a\in \fa_{\geq 0}, m \in \M, \xi\in E_{s,\M}^* \oplus E_{u,\M}^*\} \\
							&\qquad \cup E^*_{s,\M}|_{\mc{T}_-} \times E^*_{u,\M}|_{\mc{T}_+}. 
							\end{split}
\]
To prove the bound on the wavefront set of the Schwartz kernel of $T_-^{\scrC}(\s)$, 
 we use the identity $T_-^{\scrC}(\s) = T_{-}^{\fa_{\geq 0}}(\s)-R_{\scrC}(\s)$ (Lemma \ref{lemma:xxx}).
 Combining the previous bound on $\WF'(T_{-}^{\fa_{\geq 0}}(\s))$ with Proposition \ref{prop:Laplace-outside-L}, item (ii), we find that
\begin{equation}
\label{equation:lardons}
\begin{split}
\WF'(T^{\scrC}_-(\s)) &\subset \{((m,\xi);e^a(m,\xi)) ~:~ a \in \fh \cup \partial \scrC, (m,\xi) \in T^*\M \setminus \{0\}\} \\
& \qquad \cup\{ ((m,\xi); e^{a}(m,\xi)) ~:~ a\in \fa_{\geq 0}, m \in \M, \xi\in E_{s,\M}^* \oplus E_{u,\M}^*\} \\
							&\qquad \cup E^*_{s,\M}|_{\mc{T}_-} \times E^*_{u,\M}|_{\mc{T}_+}. 
							\end{split}
\end{equation}
However, the Schwartz kernel of $T_-^{\scrC}(\s)$ is supported in the closure of $\{(m, e^a m) ~:~ m \in \M, a \in \scrC\}$.
Consequently, the support condition removes from \eqref{equation:lardons} all parameters $a$ such that $a \in \fa_{\geq 0} \setminus \overline{\scrC}$. This proves the claim.
\end{proof}

\subsection{Dynamical determinants and zeta functions}

\label{section:general-zeta-function-result}

We now introduce and discuss the notion of dynamical determinants for Axiom A actions of type $(k,1)$.

\subsubsection{Holonomies along closed orbits} \label{sssection:biberon} Recall that $\mathbf{P}$ (respectively, $\mathbf{P}^\sharp$) denotes the set of periodic (respectively, primitive periodic) orbits of $(\phi_t)_{t \in \R}$. Given $\gamma \in \mathbf{P}$ and $x \in \gamma$, we define the Poincaré return map
\[
P_\gamma \in \mathrm{GL}(E_{s,\N}(x)\oplus E_{u,\N}(x)), \qquad P_\gamma := \dd\phi_{\ell_\gamma},
\]
where $\ell_\gamma$ denotes the period of $\gamma$. Note that the conjugacy class of $P_\gamma$ is independent of the choice of $x \in \gamma$. We also introduce
\[
E_\gamma : \E_{\N}(x) \to \E_{\N}(x),
\]
the propagator of $\X_{\E_\N}$ along $\gamma$ over time $\ell_\gamma$ (see \eqref{equation:propagator-def}). Again, the conjugacy class of $E_\gamma$ does not depend on the choice of $x \in \gamma$.

The maps $P_\gamma$ and $E_\gamma$ can be interpreted intrinsically in terms of the Axiom A action of type $(k,1)$. Recall that $\pi : \M \to \mc{N}$ is the footpoint projection, $\dd \pi : E_{s/u,\M} \to E_{s/u,\mc{N}}$ is an isomorphism, and $\M \simeq \mc{N} \times \fh$. For $\gamma \in \mathbf{P}$, $x \in \gamma$, choose any point $z \in \pi^{-1}\{x\}$ in the preimage. Then $\tau_{e^{\lambda(\gamma)}}z = z$ by construction, where $\tau : A\times\M \to \M$ is the action. In addition,
\[
\dd(\tau_{e^{\lambda(\gamma)}})_z \in \mathrm{GL}(E_{s,\M}(z) \oplus E_{u,\M}(z))
\]
is conjugate to $P_\gamma$ via $\dd\pi$, and the holonomy of $\E \to \M$ (with respect to the generator $\X_{\mc{E}}$, as introduced in \S\ref{sssection:admissible}) along the loop $[0,1] \ni s \mapsto \tau_{e^{s\lambda(\gamma)}}z$ equals $E_\gamma$ by construction. Note that all these quantities are independent of the choice of decomposition $\fa = \R X_\M \oplus \fh$.

\subsubsection{Dynamical determinant and zeta function}

We define the following function, called the \emph{dynamical determinant}:
\begin{equation}\label{eq:def-general-zeta}
\zeta_\E(\s) = \exp\left( - \sum_{\gamma \in\mathbf{P}} e^{- \s(\lambda(\gamma))} \frac{ \ell_\gamma^\sharp \Tr(E_\gamma)}{\ell_\gamma |\det(1-P_\gamma)|} \right), \qquad \s \in \fa^*_{\C}.
\end{equation}
It follows from \S\ref{sssection:biberon} that \eqref{eq:def-general-zeta} is intrinsic to the $A$-action on $\M$, i.e., independent of the decomposition $\fa = \R X_\M \oplus \fh$.

We obtain the following result:
\begin{theorem}[Holomorphic extension of dynamical determinants to $\fa^*_{\C}$] \label{theorem:general-zeta}
Assume that $\hyperlink{AA1}{\rm(A1)-(A3)}$ hold. Then:
\begin{enumerate}[label=\emph{(\roman*)}]
\item The expression \eqref{eq:def-general-zeta} converges and is holomorphic for $|\Re \s| \gg 0$.
\item The function $\zeta_\E$ admits a holomorphic continuation to $\fa^\ast_\C$ and its zero set coincides with $\sigma_{\mathrm{RS},+}(\E)$.

\end{enumerate}
\end{theorem}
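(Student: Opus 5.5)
The plan is to reduce everything to the rank-one dynamical determinant of the operator $P_+(\s)=-\X_\N-s_0+\s'(w)$ on the base $\N$, viewed as a holomorphic family in the parameter $\s'\in\fh^*_\C$. Fix the decomposition $\fa=\R X_\M\oplus\fh$. For a periodic orbit $\gamma$, the definition \eqref{equation:lyapunov-projection} gives
\[
\s(\lambda(\gamma))=s_0\ell_\gamma-\s'\Big(\int_0^{\ell_\gamma}w(\phi_r x)\,\dd r\Big).
\]
Thus $e^{-\s(\lambda(\gamma))}\Tr(E_\gamma)$ is exactly the trace of the propagator of the twisted operator $\X_{\N}-\s'(w)$ along $\gamma$, multiplied by $e^{-s_0\ell_\gamma}$. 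Hence \eqref{eq:def-general-zeta} is precisely the Guillemin-type dynamical determinant $\zeta_{\s'}(s_0)$ of the Axiom A flow $(\phi_t)$ on $\N$, with the bundle $\E_\N$ and the connection $\X_{\E_\N}-\s'(w)$. It remains to prove holomorphy jointly in $(s_0,\s')$.

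For item (i), I would use the standard growth bound for the number of periodic orbits. This gives $\sharp\{\gamma:\ell_\gamma\le T\}\le Ce^{CT}$ together with $|\det(1-P_\gamma)|^{-1}\le C$ and $|\Tr E_\gamma|\le Ce^{C\ell_\gamma}$ (the latter by Lemma~\ref{lemma:vilain}). The bound $|\int_\gamma w|\le \|w\|_{L^\infty(\scrV)}\ell_\gamma$ then shows that the sum is dominated by $\sum_\gamma e^{-(\Re s_0-C|\Re\s'|-C)\ell_\gamma}$. This sum converges locally uniformly on the region $\Re s_0>C(1+|\Re\s'|)$, which gives holomorphy there.

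For item (ii), I would follow Dyatlov--Guillarmou. Differentiate in $s_0$ to obtain
\[
\partial_{s_0}\log\zeta_\E(\s)=\Tr^\flat\big(e^{-\eps P_+(\s)}\,\chi P_+(\s)^{-1}\chi\big)
\]
for $|\Re\s|\gg0$, up to the sign and $\eps$-shift conventions, where $\chi$ is a cutoff equal to $1$ near $\scrK$ and $\Tr^\flat$ is the flat trace. This is the Guillemin trace formula, applied to the convex modification $\widetilde X_\N$ on $\scrV_1$ used in the proof of Theorem~\ref{theorem:rt-anosov2}; periodic orbits all lie in $\scrK$, so they are unaffected by the modification. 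The wavefront bound \eqref{equation:unif} shows that $\WF'$ of the kernel avoids the conormal to the diagonal once the $\eps$-shift is in place, uniformly in $\s$. Hence the flat trace is well defined, and by the local decomposition \eqref{equation:p-} it is meromorphic on $\fa^*_\C$. Near $\s_\star$, the contribution of the term $H(\s)$ is holomorphic. The term $R(\s)/\prod_i(s_0-\sigma_i(\s'))$ is the holomorphic finite-rank part. Its flat trace should equal $\partial_{s_0}\log\det$ of the finite-dimensional matrix $P_+(\s)$ restricted to the generalized resonant space, that is, $\partial_{s_0}\log\prod_i(s_0-\sigma_i(\s'))$ counted with multiplicity.

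So locally $\zeta_\E(\s)=e^{g(\s)}\prod_i(s_0-\sigma_i(\s'))$ with $g$ holomorphic, where $g$ is obtained by integrating in $s_0$. To patch these local descriptions into a global holomorphic function, I would argue as follows. $\log\zeta_\E$ is holomorphic on the connected domain $|\Re\s|\gg0$. Its $s_0$-derivative extends meromorphically to $\fa^*_\C$ with only simple poles and integer residues along the hypersurfaces. Integrating along $s_0$-lines starting in the convergence region (for each fixed $\s'$, $\{\Re s_0 \text{ large}\}$ is always in that region) defines $\zeta_\E$ as an entire function of $s_0$ for each $\s'$. Joint holomorphy then follows from the local product form via Hartogs or Riemann extension. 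The zero set is $\{s_0=\sigma_i(\s')\}$, which equals $\sigma_{\mathrm{RS},+}(\E)$ by Lemma~\ref{lemma:sigma_RT_equals_sigma}.

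The main obstacle is the residue identification. One must show that the finite-rank part has trace equal to the multiplicity count in a way that is compatible across $\s'$, where the roots $\sigma_i(\s')$ may collide and are only continuous. I would handle this with the symmetric-function formulation, working with $\det$ of the finite-rank restriction, i.e. the Weierstrass polynomial $\prod_i(s_0-\sigma_i(\s'))$, which is holomorphic in $\s$. This avoids needing the individual roots to be holomorphic. A secondary point is the uniformity in $\s$ of the flat-trace wavefront estimates, which is provided by \eqref{equation:unif}.
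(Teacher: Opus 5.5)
Your proposal is correct and follows the same overall route as the paper. You identify $\zeta_\E(\cdot,\s')$ with the Dyatlov--Guillarmou determinant of the twisted operator $\X_\N-\s'(w)$, write $\partial_{s_0}\log\zeta_\E$ as a shifted flat trace of $P_+(\s)^{-1}$, continue it jointly via Theorem~\ref{theorem:rt-anosov2}, and recover $\zeta_\E$ by integrating along $s_0$-lines from the half-plane of convergence. Your direct estimate for item (i) replaces the paper's citation of \cite[Theorem 4]{Dyatlov-Guillarmou-16} and is fine.

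The genuine difference is the final patching step.
\begin{itemize}
\item \textbf{The paper} takes from \cite[Theorem 4]{Dyatlov-Guillarmou-16} only fiberwise information: for each fixed $\s'$, simple poles with positive integer residues. It combines this with joint meromorphy of the log-derivative and invokes the abstract Lemma~\ref{lemma:extension}. The proof of that lemma recovers the local form $\partial_{s_0}\log Q+H$ by splitting the polar set into irreducible Weierstrass components and showing the residue is constant on each.
\item \textbf{You} read this local form directly off the holomorphic group projector $\Pi_{\s'}$ of Theorem~\ref{theorem:fredholm-several2}. On $\ran\Pi_{\s'}$ the resolvent is $(Q(\s')-s_0)^{-1}$ for a holomorphic $N\times N$ matrix $Q$. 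Hence the singular part of the log-derivative is exactly $\partial_{s_0}\log\det(s_0-Q(\s'))$, which is already a holomorphic Weierstrass polynomial. This bypasses the component-by-component argument and handles colliding, merely continuous roots automatically.
\item \textbf{The cost} of your route is a finite-dimensional flat-trace identification done jointly in $\s$, rather than a fiberwise citation of the rank-one theorem.
\end{itemize}

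Two small precisions.
\begin{itemize}
\item With the convention $P_+(\s)^{-1}=-\int_0^\infty e^{tP_+(\s)}\dd t$, the shift must use the forward propagator. That is, $\partial_{s_0}\log\zeta_\E=-\Tr^\flat\bigl(e^{\eps P_+(\s)}\chi P_+(\s)^{-1}\chi\bigr)$ with $0<\eps<\min_\gamma\ell_\gamma$. On $\ran\Pi_{\s'}$ this shift contributes only the entire term $\Tr\bigl((e^{\eps(Q-s_0)}-1)(Q-s_0)^{-1}\bigr)$.
\item Holomorphy in $\s'$ of the continued function away from the polar set, and hence of the constant $c(\s')$ in your local product form $c(\s')\det(s_0-Q(\s'))e^{G(\s)}$, does not come from Hartogs. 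It comes from choosing the $s_0$-integration path locally uniformly in $\s'$ while avoiding the closed polar set, and differentiating under the integral. Integer residues make the result path-independent. This is the elementary first half of the paper's Lemma~\ref{lemma:extension}.
\end{itemize}
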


We now record a consequence of the preceding result.
The dynamical Ruelle zeta function associated with the Axiom A action of type $(k,1)$ is defined by the following product over all primitive closed orbits: 
\begin{equation}
\label{equation:zeta-anosov}
\zeta(\mathbf{s}) := \prod_{\gamma \in \mathbf{P}^\sharp} \left(1-e^{-\mathbf{s}(\lambda(\gamma))}\right)^{-1}, \qquad \mathbf{s} \in \fa^*_{\C}.
\end{equation}

The product converges for $|\Re(\s)| \gg 0$ (recall \eqref{equation:bigres}). An immediate corollary of Theorem \ref{theorem:general-zeta} is the following:

\begin{corollary}[Meromorphic extension of the Ruelle zeta function]
\label{corollary:zeta-ruelle}
Assume that $\hyperlink{AA1}{\rm(A1)-(A3)}$ hold. 
Then the Ruelle zeta function \eqref{equation:zeta-anosov} admits a meromorphic extension to $\fa^*_\C$.
\end{corollary}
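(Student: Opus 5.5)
The plan is to express the Ruelle zeta function \eqref{equation:zeta-anosov} as an alternating product of the dynamical determinants $\zeta_\E$ of Theorem \ref{theorem:general-zeta}, taken over the admissible bundles $\E_m := \Lambda^m(E_{s,\M}^*\oplus E_{u,\M}^*)$, $0\leq m\leq d_{\N}-1$, with the Lie derivative lift \eqref{equation:lie}. This is the classical Ruelle–Dyatlov–Zworski strategy. Since each $\zeta_{\E_m}$ continues holomorphically to $\fa^*_\C$ by Theorem \ref{theorem:general-zeta}\,(ii), any quotient of them is meromorphic. The content of the proof is therefore the algebraic identity, valid for $|\Re(\s)|\gg0$,
\[
\zeta(\s)^{\pm1} = \prod_{m=0}^{d_\N-1} \zeta_{\E_m}(\s)^{(-1)^{m+q}},
\]
for a suitable sign convention, where $q$ depends on the orientation behaviour of $E_s$. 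It then follows that $\zeta$ extends meromorphically.

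\textbf{Step 1: traces along closed orbits.} For $\gamma\in\mathbf P$, the propagator $E_\gamma$ on $\E_{m,\N}(x)=\Lambda^m(E_{s,\N}^*\oplus E_{u,\N}^*)(x)$ is, up to the transpose and inverse conventions coming from \eqref{equation:propagator-def}, the map $\Lambda^m P_\gamma^{-\top}$ (or $\Lambda^m P_\gamma^{\top}$). Applying the linear algebra identity $\sum_m(-1)^m\Tr\Lambda^m L=\det(1-L)$ to $L=P_\gamma^{\pm\top}$, we obtain
\[
\sum_m (-1)^m \Tr(E_\gamma)=\det(1-P_\gamma^{\pm1}) .
\]
One checks that $\det(1-P_\gamma^{-1})=\pm\det(1-P_\gamma)/\det P_\gamma$ can be handled with the same sign bookkeeping, so that the ratio $\det(1-P_\gamma^{\pm1})/|\det(1-P_\gamma)|=\epsilon_\gamma\in\{\pm1\}$ depends only on the orientation of $E_{u}$ (or $E_s$) along $\gamma$.

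\textbf{Step 2: log of the zeta function.} The alternating sum of logarithms of \eqref{eq:def-general-zeta} then equals
\[
-\sum_{\gamma\in\mathbf P}\epsilon_\gamma e^{-\s(\lambda(\gamma))}\frac{\ell^\sharp_\gamma}{\ell_\gamma}.
\]
Writing $\gamma=\gamma_0^n$ with $\gamma_0$ primitive, we have $\lambda(\gamma)=n\lambda(\gamma_0)$ and $\ell^\sharp_\gamma/\ell_\gamma=1/n$. Summing over $n$ gives $\sum_{\gamma_0}\log(1-\epsilon_{\gamma_0}e^{-\s(\lambda(\gamma_0))})$, using $\epsilon_{\gamma_0^n}=\epsilon_{\gamma_0}^n$. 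Absolute convergence for $|\Re \s|\gg0$ follows from Theorem \ref{theorem:general-zeta}\,(i), or directly from the exponential growth of periodic orbits. When every $\epsilon_\gamma=1$ (orientable $E_u$), the right-hand side is exactly $-\log\zeta(\s)$, which gives the identity above.

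\textbf{Main obstacle: orientation signs.} I expect the orientation issue to be the main difficulty. If $E_{u}$ is not orientable, I would twist by the orientation line bundle $o(E_{u,\N})$, which is flat along the flow and defined on a neighbourhood of $\scrK$ after extension. Concretely, I would replace $\E_m$ by $\E_m\otimes o(E_u)$. The flat lift of $X_\N$ to this bundle is admissible, and its holonomy along $\gamma$ is $\epsilon_\gamma$. This twist turns the factors $\epsilon_\gamma$ into $1$, with the same product formula. If the orientation bundle is only defined near $\scrK$, I would first restrict to the neighbourhood $\scrV_0$ as in the proof of Theorem \ref{theorem:rt-anosov2}, since only orbits in $\scrK$ contribute. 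Alternatively, one can combine the twisted and untwisted alternating products, which yields both $\prod(1-e^{-\s(\lambda)})$ and $\prod(1+\epsilon e^{-\s(\lambda)})$ and hence $\zeta$. Either way, $\zeta$ is a finite quotient of entire functions on $\fa^*_\C$, which proves Corollary \ref{corollary:zeta-ruelle}.
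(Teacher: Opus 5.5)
Your proposal follows the same route as the paper. There too, on the domain of convergence, $\zeta$ is written as the alternating product $\prod_m \zeta_m^{(-1)^{m+1+\dim E_s}}$ of dynamical determinants on orientation-twisted form bundles $\Lambda^m(E_{s,\M}^*\oplus E_{u,\M}^*)^{\fo}$ (the Borns-Weil--Shen trick), and meromorphy then follows from Theorem~\ref{theorem:general-zeta}. The only visible difference is that the paper twists by $\fo(E_s)$, whereas you twist by the orientation bundle of $E_u$.

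One claim in your Step 1 is wrong and should be fixed rather than hedged. If the trace were $\Tr\Lambda^m P_\gamma^{-1}$ while the denominator kept $|\det(1-P_\gamma)|$, the ratio would be $\pm|\det P_\gamma|^{-1}$, not a sign. The alternating product would then be a product weighted by $|\det P_\gamma|^{-1}$, not \eqref{equation:zeta-anosov}.

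The sign bookkeeping works only because the flat trace formula puts the \emph{same} linearized return map $\mathcal P_\gamma$ in the numerator and the denominator. Which orientation bundle you must twist by depends on that choice:
\begin{itemize}
\item For the forward map $d\phi_{\ell_\gamma}$, one has $\det(1-\mathcal P_\gamma)=(-1)^{d_u}\,\mathrm{sign}\det(\mathcal P_\gamma|_{E_u})\,|\det(1-\mathcal P_\gamma)|$, which is your $E_u$-twist.
\item For the backward map $d\phi_{-\ell_\gamma}$, the same identity holds with $E_s$ and $d_s$. This is the map produced by the propagator $e^{-t\mathbf{X}}$, and it is what the paper's $\fo(E_s)$-twist and exponent $(-1)^{m+1+\dim E_s}$ correspond to.
\end{itemize}
The signs $\mathrm{sign}\det(P_\gamma|_{E_s})$ and $\mathrm{sign}\det(P_\gamma|_{E_u})$ agree when $\N$ is orientable, as in the Anosov-subgroup application, but not under (A1)--(A3) alone. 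So you need to fix the convention before choosing the twist.

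Your alternative of combining twisted and untwisted products is unnecessary: the twisted alternating product alone already equals $\zeta^{\pm1}$.
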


When $k=0$ (rank-one case), one recovers the usual dynamical zeta function; its meromorphic extension was first established in \cite{GLP-13} and reproved using microlocal methods in \cite{Dyatlov-Zworski-16} when $E_s$ is orientable (see \cite{Dyatlov-Guillarmou-16,Dyatlov-Guillarmou-18} for the Axiom A case), while the general case was treated in \cite{BornsWeil-Shen-2020}. 

\subsubsection{Proofs}

\begin{proof}[Proof of Theorem \ref{theorem:general-zeta}] Taking the logarithmic derivative $\ell_{\E}(\s)$ of \eqref{eq:def-general-zeta}, we observe that the resulting infinite series has the form considered in \cite[Theorem 4]{Dyatlov-Guillarmou-16}, with the corresponding lifted operator $\mathbf{X} := \mathbf{X}_{\mc{N}} - \s'(w)$. Then, for fixed $\s' \in \fh^*_{\C}$, \cite[Theorem 4]{Dyatlov-Guillarmou-16} ensures that $s_0 \mapsto \zeta_\E(s_0,\s')$ converges for $|\Re \s| \gg 0$ and extends analytically in $s_0$ to an entire function, so that 
\[
\lim_{z\to 0} z^{-m}\zeta_\E(s_0 + z,\s') \in \C^\ast
\]
if and only if the spectral projector of $z\mapsto P_+(z,\s')$ at $z=s_0$ has rank $m$. In addition $\fa^*_{\C} \ni \s \mapsto \ell_{\E}(\s)$ admits a meromorphic extension to $\fa^*_{\C}$; this follows from adapting the arguments of \cite[Theorem 4]{Dyatlov-Guillarmou-16}, interpreting $\ell_{\E}(\s)$ as the flat trace $\Tr(P_+(\s)^{-1})$, and then using the meromorphic extension of the Schwartz kernel $P_+(\s)^{-1}$ on relevant anisotropic spaces of distributions established in Theorem \ref{theorem:rt-anosov2}. Next, we use Lemma \ref{lemma:extension} to deduce that $\zeta_\E$ is jointly holomorphic in all its variables.
\end{proof}

\begin{proof}[Proof of Corollary \ref{corollary:zeta-ruelle}]
Let $\fo(E_s)$ be the $\Z_2$ orientation bundle of $E_{s,\N}$. According to \cite[Lemma 2.1]{BornsWeil-Shen-2020}, the dynamical determinant
\begin{equation}
\label{equation:dynamical-determinant}
\zeta_m(\mathbf{s}) := \exp\left(-\sum_{\gamma \in \mathbf{P}} e^{- \s(\lambda(\gamma))} \dfrac{\ell_\gamma^\sharp \Tr(\Lambda^m P_\gamma)\mathrm{sign}\det({P_\gamma}_{|E_s})}{\ell_\gamma |\det(1-P_\gamma)|} \right),
\end{equation}
corresponds to the dynamical determinant introduced in \eqref{eq:def-general-zeta} with $\E = \Lambda^m(E_{s,\M}^*\oplus E_{u,\M}^*)^{\fo(E_s)} $, the bundle of forms twisted by $\fo(E_s)$\footnote{i.e., $E^\fo = \{ (u,\epsilon)~:~u\in E,\ \epsilon\in\fo\}/\{(u,\epsilon)\sim (-u,-\epsilon)\}$.}. By Theorem \ref{theorem:general-zeta}, it admits a holomorphic extension to $\fa^*_\C$. Following \cite[Lemma 2.2]{BornsWeil-Shen-2020}, one finds, on the domain of convergence, that
\begin{equation}
\label{equation:factorization}
\begin{split}
 \prod_{\gamma\in\mathbf P^\sharp} (1-e^{-\mathbf{s}(\lambda(\gamma))})^{-1}  = \prod_{m=0}^{d_{\mc N}-1} \zeta_m(\mathbf{s})^{(-1)^{m+1+\dim E_s}}.
\end{split}
\end{equation}
This proves the meromorphic extension of the Ruelle zeta function.
\end{proof}

\section{Leading resonant hypersurface}

\label{section:leading-resonance}

\subsection{Thermodynamic formalism} We recall standard facts from the thermodynamic formalism of uniformly hyperbolic flows; see \cite{Ruelle-78,Parry-Pollicott-90,Waddington-96} for further details. Throughout this section, we work under the assumptions $\hyperlink{AA1}{\rm(A1)-(A4)}$. 

\subsubsection{Pressure and equilibrium state}
Let $V \in C^\alpha(\mc{N})$ be a Hölder-continuous potential. The pressure of $V$ is defined as
\begin{equation}
\label{equation:pressure}
\mathrm{Pr}(V) = \sup_{\mu \in \mathrm{M}_\phi} h_\mu(\phi_1) + \int_{\mathscr{K}} V \dd \mu,
\end{equation}
where $\mathrm{M}_{\phi}$ denotes the set of flow-invariant probability measures supported on $\mathscr{K}$ introduced in \S\ref{sssection:convexity-limit-cone}, and $h_\mu(\phi_1)$ is the entropy of the measure $\mu$ with respect to the diffeomorphism $\phi_1$. It coincides with
\begin{equation}
\label{equation:definition-pressure}
\mathrm{Pr}(V) = \lim_{T \to +\infty} T^{-1} \log\left( \sum_{\gamma \in \mathbf{P}, \ell_\gamma \leq T} \exp\left(\int_\gamma V\right)\right),
\end{equation}
when $\mathrm{Pr}(V) \geq 0$, where we recall that $\mathbf{P}$ is the set of periodic orbits of the flow. The map $C^\alpha(\mc{N}) \ni V \mapsto \mathrm{Pr}(V)$ is real-analytic.

There exists a unique invariant probability measure $\mu_V$ maximizing \eqref{equation:pressure}, called the \emph{equilibrium state} associated with the potential $V$. One has $\mu_V = \mu_{V'}$ if and only if $V$ and $V'$ are cohomologous up to a constant, that is, there exist $u \in C^\eps(\mathscr{K})$ and $c \in \R$ such that $V = V' + X_{\mc{N}}u + c$. If $V,V' \in C^\alpha(\mc N)$ are not cohomologous (up to a constant), then $\mu_V$ and $\mu_V'$ are mutually singular.

\subsubsection{Derivatives of the pressure} \label{sssection:derivative} For a smooth family of potentials $\eps \mapsto V_\eps \in C^\alpha(\mc{N})$ such that $V := V_0$, the first derivative of the pressure is given by
\begin{equation}
\label{equation:derivative1}
\partial_{\eps} \mathrm{Pr}(V_\eps)|_{\eps = 0} = \int_{\mathscr{K}} \partial_\eps V_\eps|_{\eps=0} \dd \mu_{V}.
\end{equation}

Let $\scrV$ be a small relatively compact open neighborhood of $\scrK$. Given $f_1,f_2 \in C^\alpha(\mathscr{V})$, we let $f_i^{\vee} := f_i - \int_{\mathscr{K}} f_i \dd \mu_{V}$. The \emph{covariance} of $(f_1,f_2)$ with respect to $\mu_{V}$ is defined as
\begin{equation}
\label{equation:covariance}
\mathrm{Cov}_{\mu_{V}}(f_1,f_2) := \lim_{T \to +\infty} \dfrac{1}{T} \int_{\mathscr{K}} \left(\int_0^T f_1^{\vee}(\phi_t x) \dd t \int_0^T f_2^{\vee}(\phi_t x) \dd t\right) \dd \mu_{V}(x),
\end{equation}
and the \emph{variance} is defined by $\mathrm{Var}_{\mu_{V}}(f) := \mathrm{Cov}_{\mu_{V}}(f,f)$. Observe that $\mathrm{Var}_{\mu_{V}}(f) \geq 0$ with equality if and only if $f$ is cohomologous to a constant. The second derivative of the pressure is given by
\begin{equation}
\label{equation:derivative22}
\partial_{\eps_1}\partial_{\eps_2} \mathrm{Pr}(V_{\eps_1,\eps_2})|_{\eps_1=\eps_2= 0} = \mathrm{Cov}_{\mu_{V}}(\partial_{\eps_1} V_{\eps_1,\eps_2}, \partial_{\eps_2} V_{\eps_1,\eps_2})|_{\eps_1=\eps_2=0}+\int_{\mathscr K}\left.\partial_{\eps_1}\partial_{\eps_2}V_{\eps_1,\eps_2}\right|_{\eps_1=\eps_2=0}\dd\mu_V.
\end{equation}
In particular,
\begin{equation}
\label{equation:derivative2}
\partial_{\eps}^2 \mathrm{Pr}(V_\eps)|_{\eps = 0} =\mathrm{Var}_{\mu_{V}}(\partial_\eps V_\eps|_{\eps=0})+\int_{\mathscr K}\left.\partial_\eps^2V_\eps\right|_{\eps=0}\dd\mu_V.
\end{equation}
We will need the following lemma due to Ledrappier \cite{Ledrappier-95}. 

\begin{lemma}
\label{lemma:ledrappier}
Let $V \in C^\eps(\mathscr{K})$ be a Hölder-continuous potential. If $\mathrm{Pr}(-V) = 0$ and $\int_\gamma V > 0$ for each periodic orbit $\gamma \in \mathbf{P}$, then the following statements hold:
\begin{enumerate}[label=\emph{(\roman*)}]
\item\label{it:led1} $\lim_{T \to +\infty} T^{-1} \log \left(\sharp\{\gamma \in \mathbf{P} ~:~ \int_\gamma V \leq T\} \right) = 1$;
\item\label{it:led2} There exists a constant $C > 0$ such that $\int_\gamma V \geq C \ell_\gamma$.
\end{enumerate}
\end{lemma}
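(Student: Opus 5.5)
I would prove \ref{it:led2} first and then use it to handle \ref{it:led1}. Both rest on the variational principle \eqref{equation:pressure} and the density of periodic-orbit measures in $\mathrm{M}_\phi$ on a hyperbolic basic set.

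For \ref{it:led2}, consider the continuous function $m\mapsto \int_{\scrK} V\,\dd m$ on the weak-$*$ compact convex set $\mathrm{M}_\phi$. For a periodic orbit $\gamma$, the normalized orbit measure $m_\gamma$ satisfies $\int V\,\dd m_\gamma = \ell_\gamma^{-1}\int_\gamma V>0$. We need a uniform lower bound $\int V\,\dd m \geq C>0$ on all of $\mathrm{M}_\phi$. Suppose instead that some $m\in\mathrm{M}_\phi$ has $\int V\,\dd m\leq 0$; by ergodic decomposition, we may take $m$ ergodic. The hypothesis $\mathrm{Pr}(-V)=0$ gives $h_m(\phi_1)\leq \int V\,\dd m\leq 0$, so $h_m=0$ and $\int V\,\dd m = 0$, which means $m$ is an equilibrium state of $-V$. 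By uniqueness of equilibrium states for Hölder potentials on a transitive hyperbolic set (Assumption \hyperlink{AA4}{\rm(A4)}), $m=\mu_{-V}$. But $\mu_{-V}$ has full support on $\scrK$ and positive entropy (Gibbs property, together with our standing assumption that $\scrK$ is not a finite union of periodic orbits and has positive topological entropy). This contradicts $h_m=0$. Hence $\int V\,\dd m>0$ for every $m\in\mathrm{M}_\phi$, and compactness gives the uniform constant $C$. Taking $m=m_\gamma$ yields $\int_\gamma V\geq C\ell_\gamma$.

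For \ref{it:led1}, set $f := V/\min(V)$ after cohomologous modification. The main obstacle is that $V$ need not be pointwise positive. Using \ref{it:led2}, replace $V$ by the ergodic average $V_T := T^{-1}\int_0^T V\circ\phi_s\,\dd s$, which is cohomologous to $V$ (as in the proof of Proposition \ref{proposition:proper-action-phi}(ii)). For $T$ large, $V_T>0$ on a neighborhood of $\scrK$. Now reparametrize the flow with roof function $V_T$: the new flow $\psi_t$ is again a transitive hyperbolic flow on $\scrK$, and its periods are exactly $\int_\gamma V$. By Abramov's formula, $h_{\mu}(\psi)=h_m(\phi)/\int V_T\,\dd m$ under the correspondence $m\leftrightarrow\mu$. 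So $h_{\mathrm{top}}(\psi) = \sup_m h_m/\int V\,\dd m$, and this supremum equals the unique $\delta$ with $\mathrm{Pr}(-\delta V)=0$. Since $\delta\mapsto\mathrm{Pr}(-\delta V)$ is strictly decreasing (its derivative is $-\int V\,\dd\mu<0$ by \eqref{equation:derivative1} and \ref{it:led2}), that $\delta$ is $1$. Bowen's periodic-orbit growth for transitive hyperbolic flows, i.e. \eqref{equation:definition-pressure} with zero potential applied to $\psi$, then gives $T^{-1}\log\sharp\{\gamma:\int_\gamma V\leq T\}\to h_{\mathrm{top}}(\psi)=1$.
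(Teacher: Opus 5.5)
Your proof is correct, but it does not follow the paper, which gives no argument of its own. The paper imports the lemma from Ledrappier \cite{Ledrappier-95}; the only hint of a proof is the later remark that finiteness of $\delta(\varphi)$ follows ``using the specification property as in the proof of Lemma~\ref{lemma:ledrappier}''.

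Your route is self-contained and modular. For \ref{it:led2} you use only the variational principle, uniqueness of the equilibrium state $\mu_{-V}$, and its positive entropy. A useful by-product is that this part never uses the hypothesis $\int_\gamma V>0$. So under (A4), $\mathrm{Pr}(-V)=0$ alone already forces $\int V\,\dd m>0$ for all $m\in\mathrm{M}_\phi$, and the positivity hypothesis in the statement is redundant. For \ref{it:led1} you combine the replacement $V\to V_T$, Abramov's formula and the strict monotonicity of $\delta\mapsto\mathrm{Pr}(-\delta V)$. This reduces the count to periodic-orbit growth for a time-changed flow of topological entropy $1$, with specification hidden inside Bowen's counting theorem. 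A direct specification argument, as in Ledrappier, works with $\phi$ itself and does not need counting results to survive a merely Hölder time change. Yours makes the identification $h_{\mathrm{top}}=1$ transparent.

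Three points to tidy up:
\begin{itemize}
\item The sentence ``set $f:=V/\min(V)$ after cohomologous modification'' is meaningless, since $\min V$ may be nonpositive, and it is never used. Delete it.
\item Positivity of $V_T$ on $\scrK$ for large $T$ does not follow from \ref{it:led2} as stated for periodic orbits. It follows from the measure version $\int V\,\dd m\geq C$ that you actually proved, via weak-$*$ limits of the empirical measures $T_n^{-1}\int_0^{T_n}\delta_{\phi_s x_n}\,\dd s$. Say this explicitly, and drop ``on a neighborhood of $\scrK$'', since $V$ only lives on $\scrK$.
\item Because $V_T$ is only Hölder, $\psi_t$ is not a smooth Axiom A flow, so \eqref{equation:definition-pressure}, which the paper states for $\phi$, does not apply verbatim. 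Quote instead the counting theorem, variational principle and Abramov formula for suspension flows over a subshift of finite type with Hölder roof (Bowen's Markov coding together with \cite{Parry-Pollicott-90}). These cover Hölder reparametrizations.
\end{itemize}
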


\subsubsection{Leading resonance on functions and top forms} \label{sssection:functions} We refer to the article \cite{Humbert-24} for the following facts\footnote{Technically, \cite{Humbert-24} only considers Anosov flows, but the arguments apply \emph{verbatim} to Axiom A flows as well.}. For simplicity, we also assume that $\mc{N}$ is orientable; in this case, $\Lambda^{d_{\mc{N}}} T^*\mc{N}$ is trivial and naturally identified with the density bundle $\Omega^1 \mc{N}$. Recall that $(\phi_t)_{t \in \R}$ is assumed to be transitive (Assumption $\hyperlink{AA4}{\rm(A4)}$).

Let $V :=u+iv \in C^\infty(\mc{N})$ be a smooth potential. Let $\chi \in C^\infty_{\comp}(\mathscr{V})$ such that $\chi \equiv 1$ on a neighborhood of $\mathscr{K}$ ($\chi \equiv 1$ in the Anosov case). Let 
\[
R^{(m)}_\pm(s) := \chi(\mp\mc{L}_{X_{\mc{N}}}+V-s)^{-1}\chi
\]
denote the resolvent acting on $m$-forms, and let $R^{(m),0}_{\pm}(s)$ denote the resolvent on $m$-forms restricted to sections of $\Sigma^m := \Lambda^m(E_s^* \oplus E_u^*)$; that is, forms in the kernel of the contraction $\iota_{X_{\mc{N}}}$. The resolvent $s \mapsto R^{(m)}_{\pm}(s)$ admits a meromorphic extension to $\C$; see \cite{Faure-Sjostrand-11, Dyatlov-Zworski-16,Dyatlov-Guillarmou-16}; its poles are called the resonances of the flow $\phi_t$ on the corresponding bundles.

The resonances of $R^{(0)}_+(s)$ are contained in the half-space $\{\Re(s) \leq \mathrm{Pr}(J_u +u)\}$, where $J_u$ is the unstable Jacobian
\begin{equation}
\label{equation:unstable-jacobian}
J_u := -\partial_t \det(d\phi_t|_{E_u})|_{t=0}.
\end{equation}
Under the assumption that $(\phi_t)_{t \in \R}$ is transitive (Assumption $\hyperlink{AA4}{\rm(A4)}$), if $V = u$ is real-valued, then $s= \mathrm{Pr}(J_u+u)$ is a pole and the associated space of resonant states has dimension $1$; it is spanned by $m_s^{(0)} \in \mc{D}'(\mc{N})$, which satisfies
\[
\WF(m_s^{(0)}) \subset E_u^*,\quad \supp(m_s^{(0)}) \subset \mc{T}_+, \quad (-X_{\mc{N}}+u-\mathrm{Pr}(J_u+u)) m_s^{(0)} = 0.
\]
In addition, $m_s^{(0)}$ is a distribution of order $0$ (that is, it can be paired against continuous $d_{\mc N}$-forms).

Similarly, the resonances of $R^{(d_{\mc{N}})}_-(s)$ are contained in $\{\Re(s) \leq \mathrm{Pr}(J_u+u)\}$. Moreover, $s=\mathrm{Pr}(J_u+u)$ is a pole with a $1$-dimensional space of co-resonant states spanned by $m_u^{(d_{\mc{N}})} \in \mc{D}'(\mc{N},\Lambda^{d_{\mc{N}}} T^*\mc{N})$, a distribution of order $0$, such that 
\[
\WF(m_u^{(d_{\mc{N}})}) \subset E_s^*, \quad \supp(m_u^{(d_{\mc{N}})}) \subset \mc{T}_-, \quad (+\mc{L}_{X_{\mc{N}}}+u-\mathrm{Pr}(J_u+u)) m_u^{(d_{\mc{N}})} = 0.
\]
The product $m_s^{(0)} \times m_u^{(d_{\mc{N}})} \in \mc{D}'(\mc{N}, \Lambda^{d_{\mc{N}}} T^*\mc{N})$ is well-defined in the sense of distributions by the wavefront set calculus (see \cite[Lemma 4.3.1]{Lefeuvre-book} for instance) and is supported in $\mathscr{K} =\mc{T}_- \cap \mc{T}_+$. The resulting equilibrium state associated with the potential $J_u+u$ is
\[
\mu_{J_u+u} = m_s^{(0)} \times m_u^{(d_{\mc{N}})}.
\]
If the flow $(\phi_t)_{t \in \R}$ is topologically mixing, then it is weakly mixing (see \cite[Corollary 7.3.8]{Fisher-Hasselblatt-19}). In turn, this implies that $\mathrm{Pr}(J_u+u)$ is the only resonance for $R^{(0)}_+(s)$ and $R^{(d_{\mc{N}})}_-(s)$ with real part equal to $\mathrm{Pr}(J_u+u)$ (see \cite[Theorem 2]{Humbert-24}).

If $V = u + iv$ is complex-valued, then there is a resonance $s_0$ for $R^{(0)}_+(s)$ and $R^{(d_{\mc{N}})}_-(s)$ with $\Re(s_0) = \mathrm{Pr}(J_u+u)$ if and only if the following arithmeticity condition holds \cite[Corollary 3]{Humbert-24}:
\begin{equation}
\label{equation:arithmeticity}
\forall \gamma \in \mathbf{P}, \qquad \int_\gamma (-v + \Im(s_0)) \in 2\pi \Z.
\end{equation}

\subsubsection{Leading resonance on $d_s$- and $d_u$-forms} \label{sssection:ds-forms}

Similar results hold for $d_s$- and $d_u$-forms, where $d_s = \dim E_s$ and $d_u = \dim E_u$. The resonances of $R^{(d_s),0}_+(s)$ are contained in the half-space $\{\Re(s) \leq \mathrm{Pr}(u)\}$. If the flow $(\phi_t)_{t \in \R}$ is transitive (Assumption $\hyperlink{AA4}{\rm(A4)}$) and $V = u$ is real-valued, then $s= \mathrm{Pr}(u)$ is a pole. The associated space of resonant states is $1$-dimensional and spanned by $m_s^{(d_s)} \in \mc{D}'(\mc{N},\Sigma^{d_s})$; moreover, $m_s^{(d_s)}$ satisfies
\[
\WF(m_s^{(d_s)}) \subset E_u^*,\quad \supp(m_s^{(d_s)}) \subset \mc{T}_+, \quad (-\mc{L}_{X_{\N}}+u-\mathrm{Pr}(u)) m_s^{(d_s)} = 0.
\]
The resonances of $R^{(d_u),0}_-(s)$ are contained in $\{\Re(s) \leq \mathrm{Pr}(u)\}$. Moreover, $s=\mathrm{Pr}(u)$ is a pole with a $1$-dimensional space of co-resonant states spanned by $m_u^{(d_u)} \in \mc{D}'(\mc{N},\Sigma^{d_u})$, which satisfies
\begin{equation}
\label{equation:mudu}
\WF(m_u^{(d_u)}) \subset E_s^*, \quad \supp(m_u^{(d_u)}) \subset \mc{T}_-, \quad (+\mc{L}_{X_{\mc{N}}}+u-\mathrm{Pr}(u)) m_u^{(d_u)} = 0.
\end{equation}
Recall that $\alpha_{\mc{N}}$ is the Anosov $1$-form such that $\alpha_{\mc{N}}(X_{\mc{N}})=1$ and $\alpha_{\mc{N}}(E_s \oplus E_u) = 0$. The wedge product
\begin{equation}
\label{equation:muu}
\mu_u := \alpha_{\mc{N}} \wedge m_u^{(d_u)} \wedge m_s^{(d_s)} \in \mc{D}'(\mc{N}, \Lambda^{d_\mc{N}} T^*\mc{N})
\end{equation}
is well-defined, supported on $\mathscr{K}$, and defines a measure that is the equilibrium state of the potential $u$. If the flow is topologically mixing, then $\mathrm{Pr}(u)$ is the only resonance for $R^{(d_s),0}_+(s)$ and $R^{(d_u),0}_-(s)$ with real part equal to $\mathrm{Pr}(u)$.

If $V = u + iv$ is complex-valued, then there are no resonances $s_0$ for $R^{(d_s),0}_+(s)$ and $R^{(d_u),0}_-(s)$ with $\Re(s_0) = \mathrm{Pr}(u)$ unless the arithmeticity condition \eqref{equation:arithmeticity} holds.

\subsection{Leading resonant hypersurface}

\label{ssection:proof-leading}

We now introduce and study the leading resonant hypersurface. Recall that in contrast to \S\ref{section:results-cocycles}, we have additionally assumed that the flow $(\phi_t)_{t \in \R}$ is topologically transitive on the trapped set (Assumption $\hyperlink{AA4}{\rm(A4)}$) in \S\ref{section:leading-resonance}. This hypersurface is also called the \emph{critical hypersurface} in the literature; see \cite{Sambarino-24} for instance.

\subsubsection{Statement} For $\mathbf{s} \in \fa^*_{\C}$, we write $\mathbf{s} = \Re(\mathbf{s}) + i \Im(\mathbf{s})$. If $\mathbf{C} \subset \fa^*$ is a (local) analytic manifold, we denote by $\mathbf{C}_{\C}\subset \fa^*_{\C}$ its local holomorphic extension to the complexified Lie algebra. The dual Lie algebra $\fa^*$ splits as
\[
\fa^* = \R \alpha_\M \oplus \fh^*,
\]
where $\alpha_\M$ is the $1$-form such that $\alpha_\M(X_\M)=1$ and $\alpha_\M(\fh)=0$, while $\fh^*(X_\M)=0$. We let $\partial_\infty \fa^*$ denote the sphere at infinity of $\fa^*$, that is $\partial_\infty \fa^* = \fa^* \setminus \{0\} / \sim$ where $\varphi \sim \lambda \varphi$ for $\varphi \neq 0$ and $\lambda > 0$. A generic point in $\partial_\infty \fa^*$ is denoted by $[\varphi]$. Given a set $E \subset \fa^*\setminus \{0\}$, we let $[E] := \{ [\varphi] \in \partial_\infty \fa^* ~:~ \varphi \in E\}$ be its projection onto the sphere at infinity. Finally, recall that $\delta : \fa^* \to \R \cup \{\pm \infty\}$ was introduced in \eqref{equation:delta} and that $\ell_\gamma = \alpha_{\M}(\lambda(\gamma))$ is the period of the corresponding closed orbit for $(\phi_t)_{t \in \R}$ on $\scrK \subset \mc{N}$ (see Proposition \ref{proposition:proper-action-phi}, item (iii)).

\begin{theorem}[Leading resonant hypersurface]
\label{theorem:leading}
Assume that $\hyperlink{AA1}{\rm(A1)-(A4)}$ hold. Suppose that $\mc{E} = \Lambda^m(E_{s,\M}^* \oplus E_{u,\M}^*)$ with $m=0$ or $m=d_s$ and write $\sigma^{(m)}_{\mathrm{RS}}$ for the corresponding resonance spectrum.
\begin{enumerate}[label=\emph{(\roman*)}]
\item\label{it:leading-critical-hypersurface} \emph{\textbf{Critical hypersurface.}} There exists an analytic hypersurface $\mathbf{C}^{(m)} \subset\mathfrak{a}^*$, called the \emph{leading resonant hypersurface} (on $m$-forms), dividing $\fa^*$ into two open connected components $\mathbf{C}^{(m),\pm}$ such that
\begin{enumerate}[label=\emph{(\alph*)}]
\item $\mathbf{C}^{(m)} \subset \sigma_{\mathrm{RS}}^{(m)}$ and for all $\varphi \in \mathbf{C}^{(m)}$, the space of resonant states $\mathrm{Res}(\varphi)$ associated with $\varphi$ (see \eqref{equation:ruelle-taylor-resonant}) is $1$-dimensional;

\item $\Re(\sigma_{\mathrm{RS}}^{(m)}) \subset \overline{\mathbf{C}^{(m),-}}$
;
\item $\mathbf{C}^{(m),+}$ is convex;
\end{enumerate}

\item\label{it:leading-strict-convexity} \emph{\textbf{Strict convexity of $\mathbf{C}^{(m)}$.}} For each $d\in\{0,\ldots,k\}$, the following statements are equivalent:
\begin{enumerate}[label=\emph{(\alph*)}]
\item $\mathbf{C}^{(m)}$ has second-order contact at a point with a $d$-dimensional affine plane;
\item $\mathbf{C}^{(m)}$ contains a $d$-dimensional affine plane;
\item the cocycle has rank $\leq k-d$.
\end{enumerate}
In particular, $\mathbf{C}^{(m),+}$ is strictly convex if and only if the cocycle has full rank.

\item\label{it:leading-concavity} \emph{\textbf{Concavity of $\mathbf{C}^{(m)}_{\C}$.}} If the cocycle is non-arithmetic, $\mathbf{s} \in \sigma_{\mathrm{RS}}^{(m)}$ with $\Re(\mathbf{s}) \in \mathbf{C}^{(m)}$ if and only if $\mathbf{s} = \Re(\mathbf{s})$ is real.

\item\label{it:leading-intermediate-hypersurfaces} \emph{\textbf{Intermediate critical hypersurfaces.}} For every $0\leq q\leq d_{\mc N}-1$ with $q\neq d_s$, $\Re(\sigma_{\mathrm{RS}}^{(q)}) \subset \mathbf{C}^{(d_s),-}$.

\item\label{it:leading-zeta-function} \emph{\textbf{Ruelle zeta function.}} The Ruelle zeta function \eqref{equation:zeta-anosov} is holomorphic in $\{\mathbf{s} \in \fa^*_{\C} ~:~ \Re(\mathbf{s}) \in \mathbf{C}^{(d_s),+}\}$ and has a singularity of order $1$ along $\mathbf{C}^{(d_s)}$.

\item\label{it:characterization_C} \emph{\textbf{Intrinsic characterization of $\mathbf{C}^{(d_s)}$.}} The leading resonant hypersurface $\mathbf{C}^{(d_s)}$ coincides with $\{\delta(\varphi)=1\} \cap \mathrm{int}(\scrL^*)$. For all $\varphi \in \mathbf{C}^{(d_s)}$, there exists a constant $C > 0$ such that
\begin{equation}
\label{equation:turbulences0}
\varphi(\lambda(\gamma)) > C \ell_\gamma,\qquad \gamma\in\mathbf P.
\end{equation}
In addition, the map
\[
\mathbf{C}^{(d_s)} \to [\mathbf{C}^{(d_s)}], \qquad \varphi \mapsto [\varphi]
\]
is an analytic diffeomorphism and $[\mathbf{C}^{(d_s)}] = \mathrm{int}[\mathscr{L}^* \setminus \{0\}] \subset \partial_\infty \fa^*$.

\end{enumerate}
\end{theorem}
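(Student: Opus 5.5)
The plan is to reduce everything to the rank-one thermodynamic formalism of the base flow through the identity \eqref{equation:ps_extension} and Lemma \ref{lemma:sigma_RT_equals_sigma}. Writing $\mathbf{s}=(s_0,\mathbf{s}')$, the point $\mathbf{s}$ lies in $\sigma^{(m)}_{\mathrm{RS}}$ if and only if $s_0$ is a Pollicott--Ruelle resonance of $-\X_{\N}+\mathbf{s}'(w)$ on $\Sigma^m$. By \S\ref{sssection:functions}--\S\ref{sssection:ds-forms}, applied with the potential $V=\mathbf{s}'(w)$, the leading real resonance for real $\mathbf{s}'$ is $\sigma^{(m)}(\mathbf{s}'):=\mathrm{Pr}(J_u+\mathbf{s}'(w))$ for $m=0$ and $\mathrm{Pr}(\mathbf{s}'(w))$ for $m=d_s$. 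This resonance is simple, and all other resonances have real part at most this value.

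\textbf{Items (i) and (iii).} I define $\mathbf{C}^{(m)}$ as the graph $\{s_0=\sigma^{(m)}(\mathbf{s}')\}$ over $\fh^*$. It is analytic because the pressure is real-analytic, and $\mathbf{C}^{(m),\pm}=\{\pm(s_0-\sigma^{(m)}(\mathbf{s}'))>0\}$. Simplicity gives (a): the resonant states are $\mathbf{e}(\mathbf{s}')m_s^{(m)}$, as in the proof of Lemma \ref{lemma:sigma_RT_equals_sigma}. For (b), if $\mathbf{s}\in\sigma^{(m)}_{\mathrm{RS}}$ with $\Re\mathbf{s}'=u$, then $\Re s_0\le \mathrm{Pr}(\cdots+\mathbf{s}'(w))$ with real part of the potential equal to $u(w)$, so $\Re\mathbf{s}\in\overline{\mathbf{C}^{(m),-}}$. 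For (c), pressure is convex in the potential, so $\mathbf{s}'\mapsto\sigma^{(m)}(\mathbf{s}')$ is convex and its epigraph $\mathbf{C}^{(m),+}$ is convex. For (iii), I apply the arithmeticity criterion \eqref{equation:arithmeticity} with $v=\Im\mathbf{s}'(w)$: a resonance on the critical real part forces $\Im\mathbf{s}(\lambda(\gamma))\in2\pi\Z$ for all $\gamma$, since $\int_\gamma(-v+\Im s_0)=\Im\mathbf{s}(\lambda(\gamma))$ by \eqref{equation:lyapunov-projection}. Non-arithmeticity then gives $\Im\mathbf{s}=0$.

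\textbf{Item (ii).} I use the second derivative formula \eqref{equation:derivative2}: the Hessian of $\sigma^{(m)}$ at $\mathbf{s}'$ in a direction $\eta\in\fh^*$ is $\mathrm{Var}_\mu(\eta(w))$, which vanishes exactly when $\eta(w)$ is cohomologous to a constant $c$. That happens exactly when $(\eta-c\alpha_\M)(\lambda(\gamma))=0$ for all $\gamma$, by Livšic and \eqref{equation:lyapunov-projection}, i.e.\ when a nonzero form annihilates $\scrL$. The null space of the Hessian therefore has dimension $k-\mathrm{rank}$ independently of the point, and along null directions the potential changes by a coboundary plus a constant, so the pressure is affine there. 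This gives (a)$\Leftrightarrow$(b)$\Leftrightarrow$(c).

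\textbf{Items (iv)--(vi).} For (iv), I compare with $m=d_s$ via the standard bound that resonances on $\Sigma^q$ lie in $\Re s\le \mathrm{Pr}(\mathbf{s}'(w)+J_q)$, where $J_q<0$ is an expansion defect for $q\ne d_s$. This is the usual Dyatlov--Zworski/Humbert volume argument, and it gives strict inequality since the pressure is strictly monotone. Item (v) follows from the factorization \eqref{equation:factorization}: by (iv), only $\zeta_{d_s}$ vanishes on $\mathbf{C}^{(d_s)}$, and it does so to order one because the resonance is simple; holomorphy on $\mathbf{C}^{(d_s),+}$ follows from (i)(b) and (iv). For (vi), take $\varphi=(s_0,\mathbf{s}')$ real. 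Then $\varphi\in\mathbf{C}^{(d_s)}$ iff $\mathrm{Pr}(-V)=0$ with $V:=s_0-\mathbf{s}'(w)$, and $\int_\gamma V=\varphi(\lambda(\gamma))$. If $\varphi$ is positive on $\mathbf{P}$, Ledrappier's Lemma \ref{lemma:ledrappier} gives $\delta(\varphi)=1$ and \eqref{equation:turbulences0}. Conversely, $\delta(\varphi)=1$ with $\varphi\in\mathrm{int}\scrL^*$ forces $\mathrm{Pr}(-V)=0$, by monotonicity of $t\mapsto\mathrm{Pr}(-tV)$ and the periodic-orbit formula \eqref{equation:definition-pressure}.

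\textbf{Main obstacle.} The hardest part is showing that points of $\mathbf{C}^{(d_s)}$ lie in $\mathrm{int}\scrL^*$, and that the radial projection is a diffeomorphism onto $\mathrm{int}[\scrL^*\setminus\{0\}]$. For each $\varphi\in\mathrm{int}\scrL^*$, the function $t\mapsto \mathrm{Pr}(-tV_\varphi)$ is strictly decreasing; its derivative is $-\int V_\varphi\,d\mu<0$ by Lemma \ref{lemma:formula-for-L}(ii). It is $\ge h_{\mathrm{top}}>0$ at $t=0$ and tends to $-\infty$, so it has a unique zero, and the implicit function theorem makes the zero analytic. Conversely, if $\varphi\in\mathbf{C}^{(d_s)}$ failed to be positive on $\scrL$, some equilibrium state would satisfy $\int V\le0$, forcing $\mathrm{Pr}(-V)\ge h>0$, a contradiction. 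Transversality of the ray follows from the same derivative computation.
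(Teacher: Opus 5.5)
Your treatment of items (i)--(v) follows the paper's proof. $\mathbf{C}^{(m)}$ is the graph of $\mathbf{s}'\mapsto\Pr(\varepsilon J_u+\mathbf{s}'(w))$ (Proposition~\ref{proposition:leading2}); (ii) uses the variance formula \eqref{equation:derivative2} together with Liv\v{s}ic; (iii) uses Humbert's arithmeticity criterion; (iv) uses Humbert's bound at fixed $\mathbf{s}'$; and (v) uses the factorization \eqref{equation:factorization} with simplicity of the leading resonance.

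There are two slips to fix.
\begin{enumerate}
\item In (ii), $\eta(\lambda(\gamma))=-\int_\gamma\eta(w)=-c\,\ell_\gamma$. So the form annihilating $\scrL$ is $\eta+c\,\alpha_{\M}$, not $\eta-c\,\alpha_{\M}$.
\item In your last paragraph, take the measure $m\in\mathrm{M}_\phi$ that Lemma~\ref{lemma:formula-for-L}(ii) attaches to a point of $\scrL\setminus\{0\}$ where $\varphi\le 0$. It need not be an equilibrium state and may have zero entropy, for instance a periodic-orbit measure or a measure representing a boundary ray of $\scrL$. So ``$\Pr(-V)\ge h>0$'' is not justified as written. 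The correct chain is $0=\Pr(-V)\ge h_m(\phi_1)-\int V\,\dd m\ge 0$. Equality forces $m=\mu_{-V}$ by uniqueness of the equilibrium state, which contradicts $h_{\mu_{-V}}>0$. This is exactly how the paper obtains \eqref{equation:positive-pr}.
\end{enumerate}

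Where you genuinely diverge from the paper is the converse half of (vi).

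The paper proves $\{\delta=1\}\cap\mathrm{int}(\scrL^*)\subset\mathbf{C}^{(d_s)}$ spectrally. It sets the holomorphy of $\zeta$ on $\{\Re\mathbf{s}\in\mathbf{C}^{(d_s),+}\}$ and its divergence on $\mathbf{C}^{(d_s)}$ (item (v)) against the convergence, and blow-up at $s=1$, of the positive Dirichlet series $s\mapsto\zeta(s\varphi)$. It then gets the radial statement in three pieces:
\begin{itemize}
\item local diffeomorphism from Lemma~\ref{lemma:transverse-intersection};
\item injectivity from the homogeneity $\delta(\lambda\varphi)=\lambda^{-1}\delta(\varphi)$;
\item surjectivity from the bounds $h_{\mathrm{top}}/C_\varphi\le\delta(\varphi)\le h_{\mathrm{top}}/\varepsilon$.
\end{itemize}

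You instead stay inside thermodynamic formalism. For $\varphi\in\mathrm{int}(\scrL^*)$, $t\mapsto\Pr(-tV_\varphi)$ decreases strictly from $h_{\mathrm{top}}$ to $-\infty$, and its unique zero $t_*(\varphi)$ is analytic by the implicit function theorem. Moreover $\delta(\varphi)=t_*(\varphi)$, either via \eqref{equation:definition-pressure} or directly from Lemma~\ref{lemma:ledrappier} applied to $t_*V_\varphi$ together with homogeneity of $\delta$. This gives the converse inclusion, and $\varphi\mapsto t_*(\varphi)\varphi$ is the analytic inverse of the radial projection, with transversality read off from the same derivative.

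Your route makes (vi) independent of the meromorphic continuation of $\zeta$, and it handles injectivity, surjectivity and analyticity in one step. The paper's route uses more machinery, but it exhibits $\mathbf{C}^{(d_s)}$ as the boundary of the domain of convergence of $\zeta$ along rays. That ties the critical exponent directly to the spectral picture.
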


It follows from item~\emph{\ref{it:leading-critical-hypersurface}} that for all $\varphi \in \mathbf{C}^{(m)}$, there exists an open neighborhood $U \subset \fa^*_{\C}$ containing $\varphi$ such that
\[
\sigma_{\mathrm{RS}}^{(m)} \cap U = \mathbf{C}^{(m)}_{\C} \cap U,
\]
and the corresponding resonant spaces are $1$-dimensional. (Strictly speaking, it is actually a consequence of the fact that the corresponding space of generalized resonant states for $P_\pm(\s)^{-1}$ is one-dimensional.) The meaning of item \emph{\ref{it:leading-zeta-function}} is the following: for any $\mathbf{s}_\star \in \mathbf{C}^{(d_s)}$, there exist a neighborhood $U \subset \fa^*_{\C}$ of $\mathbf{s}_\star$, a holomorphic function $g_{\mathbf{s}_\star}$ on $U$ satisfying
\[
\mathbf{C}^{(d_s)}_{\C}\cap U=\{g_{\mathbf{s}_\star}=0\},
\qquad
\dd g_{\mathbf{s}_\star}\neq 0 \quad\text{on }\mathbf{C}^{(d_s)}_{\C}\cap U,
\]
and a holomorphic non-vanishing function $\widetilde{\zeta}$ on $U$ such that $\zeta(\mathbf{s})=g_{\mathbf{s}_\star}(\mathbf{s})^{-1}\widetilde{\zeta}(\mathbf{s})$. In particular, after shrinking $U$, there exists a constant $C>0$ such that, for all $\mathbf{s}\in U\setminus\mathbf{C}^{(d_s)}_{\C}$,
\[
|\zeta(\mathbf{s})| \geq C d(\mathbf{s},\mathbf{C}_\C^{(d_s)})^{-1},
\]
where $d$ denotes the distance induced by an arbitrary metric on $\fa^*_{\C}$. 

In the rank-one case, the first pole (or \emph{leading resonance}) of the zeta function is the topological entropy $h_{\mathrm{top}}(\phi_1)$ of the flow.

\subsubsection{Proof}

Note that, because $E_{u,\N}$ admits a Hölder-continuous extension outside $\mc{T}_{+,\N}$, the unstable Jacobian $J_u$ is defined on all of $\mc{N}$; only its value on the trapped set $\mathscr{K}$ will be relevant. Recall that $\mathbf s = (s_0,\mathbf{s}')\in \CC \alpha_{\mathcal M}\oplus \fh^*_{\C}$. We begin by giving an explicit description of the leading resonant hypersurface:

\begin{proposition}[Leading resonant hypersurface]
\label{proposition:leading2}
For $m=0$ or $m=d_s$, the leading resonant hypersurface $\mathbf{C}^{(m)}$ is given by
\[
\mathbf{C}^{(m)} = \{ (\mathrm{Pr}(\eps J_u+\mathbf{s}'(w)), \mathbf{s}') ~:~ \mathbf{s}' \in \mathfrak{h}^*\},
\]
where $\eps=1$ if $m=0$, and $\eps=0$ if $m=d_s$.
\end{proposition}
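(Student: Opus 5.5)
The plan is to reduce everything to the rank-one results recalled in \S\ref{sssection:functions}--\S\ref{sssection:ds-forms}, via the correspondence of Lemma \ref{lemma:sigma_RT_equals_sigma}. By that lemma, $\mathbf{s}=(s_0,\mathbf{s}')\in\sigma^{(m)}_{\mathrm{RS}}$ if and only if $s_0$ is a Pollicott--Ruelle resonance of
\[
P_+(\mathbf{s}) = -\X_{\N} - s_0 + \mathbf{s}'(w)
\]
acting on sections of $\E_\N$. For $m=0$ we have $\E_\N$ trivial and $\X_\N=X_\N$. For $m=d_s$ we have $\E_\N=\Lambda^{d_s}(E_{s,\N}^*\oplus E_{u,\N}^*)$ and $\X_\N=\mc{L}_{X_\N}$. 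So for fixed $\mathbf{s}'\in\fh^*$, the resonances of $P_+(\mathbf{s})$ in $s_0$ are exactly the resonances $s$ of $(-\mc{L}_{X_\N}+V-s)^{-1}$ on $\Sigma^m$, with the real potential $V=u:=\mathbf{s}'(w)\in C^\infty(\N)$.

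\textbf{Identifying the leading point.} Fix a real $\mathbf{s}'\in\fh^*$ and apply the facts from \cite{Humbert-24} recalled above. For $m=0$, the resonances lie in $\{\Re s\leq \mathrm{Pr}(J_u+\mathbf{s}'(w))\}$, and $s=\mathrm{Pr}(J_u+\mathbf{s}'(w))$ is a resonance whose resonant space is one-dimensional. For $m=d_s$, the same holds with bound $\mathrm{Pr}(\mathbf{s}'(w))$. The maximal real resonance in the $s_0$-variable is therefore $s_0=\mathrm{Pr}(\eps J_u+\mathbf{s}'(w))$.

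\textbf{Smoothness of the resulting set.} Since $\mathbf{s}'\mapsto \eps J_u+\mathbf{s}'(w)$ is affine in $\mathbf{s}'$ and the pressure is real-analytic on Hölder potentials, the graph
\[
\{(\mathrm{Pr}(\eps J_u+\mathbf{s}'(w)),\mathbf{s}') ~:~ \mathbf{s}'\in\fh^*\}
\]
is an analytic hypersurface of $\fa^*$. It contains, for each real $\mathbf{s}'$, the point of $\sigma^{(m)}_{\mathrm{RS}}$ with largest real part $s_0$. This is how $\mathbf{C}^{(m)}$ is to be understood, and it is the hypersurface we expect to obtain.

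\textbf{Main obstacle.} The delicate step is matching this with the notion of leading resonant hypersurface, that is, showing that $\mathbf{C}^{(m)}$ is exactly this graph and nothing more. I would handle it as follows.
\begin{enumerate}
\item Simplicity: the resonance at $s_0=\mathrm{Pr}(\eps J_u+\mathbf{s}'(w))$ is simple (rank-one spectral projector, by transitivity \hyperlink{AA4}{(A4)}). So near such a point we are in the case $N=1$ of Theorem \ref{theorem:rt-anosov2} and Remark \ref{remark:simple}, with a holomorphic root $\sigma(\mathbf{s}')$ that coincides with the pressure on real $\mathbf{s}'$.
\item No competing real resonances: for complex $\mathbf{s}'=\mathbf{a}+i\mathbf{b}$ the potential is $\mathbf{a}(w)+i\mathbf{b}(w)$. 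The bound $\Re s\leq \mathrm{Pr}(\eps J_u+\mathbf{a}(w))$ still holds. This shows that every resonance has real part on or below the graph over $\Re(\mathbf{s}')$.
\end{enumerate}
These two facts show that the graph is precisely the upper boundary of $\Re(\sigma^{(m)}_{\mathrm{RS}})$, which identifies it with $\mathbf{C}^{(m)}$ and proves the proposition.
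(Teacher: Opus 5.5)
Your proposal is correct and follows essentially the same route as the paper. Fixing real $\mathbf{s}'$, you use Lemma~\ref{lemma:sigma_RT_equals_sigma} and Theorem~\ref{theorem:rt-anosov2} to reduce to the rank-one leading-resonance facts recalled from \cite{Humbert-24} with potential $\mathbf{s}'(w)$. This reads off the simple first resonance $s_0=\mathrm{Pr}(\eps J_u+\mathbf{s}'(w))$, and the half-plane bound with $u=\Re(\mathbf{s}')(w)$ places all of $\Re(\sigma^{(m)}_{\mathrm{RS}})$ on or below this graph. The paper additionally records convexity of $\mathbf{C}^{(m),+}$ via \eqref{equation:derivative2}, which is not needed for the proposition itself.
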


When $J_u$ is constant, the critical hypersurface for functions is a translate of the critical hypersurface on $d_s$-forms, that is $\mathbf{C}^{(0)} = \mathbf{C}^{(d_s)} + J_u \alpha_{\M}$, where $\alpha_{\M}(X_{\M})=1$, $\alpha_{\M}(\mathfrak{h})=0$. This is what happens in algebraic settings such as Anosov representations. In addition, if $X_\M \in \scrL$, then $\mathrm{Pr}(\s'w) > 0$ for all $\s' \in \fh^*$, see \eqref{equation:pression-positive}.

\begin{proof}
Fix $\mathbf{s}' \in \mathfrak{h}^*$. By \S\ref{sssection:ds-forms}, the first resonance of $s_0 \mapsto P^{(d_s)}_+(s_0,\mathbf{s}')^{-1}$ is $s_0 = \mathrm{Pr}(\mathbf{s}'(w))$. It is simple; that is, the space of resonant states is $1$-dimensional. A point $(s_0,\mathbf{s}')$ is a pole of $P^{(d_s)}_+(s_0,\mathbf{s}')^{-1}$ if and only if $(s_0,\mathbf{s}') \in \sigma_{\mathrm{RS}}$ by Theorem~\ref{theorem:rt-anosov2}, item (i).

The hypersurface 
\[
\mathbf{C}^{(d_s)} := \{(\mathrm{Pr}(\mathbf{s}'(w)),\mathbf{s}') ~:~ \mathbf{s}' \in \mathfrak{h}^*\}
\]
divides $\fa^*$ into two connected components
\[
\mathbf{C}^{(d_s),\pm} := \{ (s_0,\mathbf{s}') \in \fa^* ~:~ \pm\bigl(s_0-\mathrm{Pr}(\mathbf{s}'(w))\bigr)>0\}.
\]
In addition, $\mathbf{s} \mapsto P^{(d_s)}_+(\mathbf{s})^{-1}$ is holomorphic on $\{\mathbf{s} \in \fa^*_{\C} ~:~ \Re(\mathbf{s}) \in \mathbf{C}^{(d_s),+}\}$ and the set of poles of $(P^{(d_s)}_+)^{-1}$, that is, the resonance spectrum, is contained in $\overline{\mathbf{C}^{(d_s),-}}$. Finally, the convexity of $\mathbf{C}^{(d_s),+}$ follows immediately from \eqref{equation:derivative2}.
\end{proof}

We also prove a transversality property:

\begin{lemma}
\label{lemma:transverse-intersection}
Let $\varphi = (\Pr(\mathbf s'(w)),\mathbf s') \in \mathbf{C}^{(d_s)}$ for some $\mathbf s'\in\mathfrak h^*$. Then $\R \varphi$ is transverse to $T_\varphi \mathbf{C}^{(d_s)}$.
\end{lemma}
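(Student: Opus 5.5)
The hypersurface is a graph over $\fh^*$, so we parametrize it by $F(\mathbf{s}') := \mathrm{Pr}(\mathbf{s}'(w))$. By Proposition \ref{proposition:leading2},
\[
\mathbf{C}^{(d_s)} = \{(F(\mathbf s'),\mathbf s') ~:~ \mathbf s'\in\fh^*\}.
\]
Since $F$ is real-analytic, the tangent space at $\varphi = (F(\mathbf s'),\mathbf s')$ is
\[
T_\varphi\mathbf{C}^{(d_s)} = \{(\dd F_{\mathbf s'}(\eta),\eta) ~:~ \eta\in\fh^*\},
\]
which has dimension $k = \dim\fa^* - 1$. Transversality of $\R\varphi$ is therefore equivalent to $\varphi\notin T_\varphi\mathbf{C}^{(d_s)}$, that is, to
\[
F(\mathbf s') \neq \dd F_{\mathbf s'}(\mathbf s').
\]

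Next we compute both sides. By \eqref{equation:derivative1},
\[
\dd F_{\mathbf s'}(\eta) = \int_{\scrK}\eta(w)\,\dd\mu_{\mathbf s'(w)},
\]
so $\dd F_{\mathbf s'}(\mathbf s') = \int \mathbf s'(w)\,\dd\mu$ with $\mu := \mu_{\mathbf s'(w)}$. Since $\mu$ is the equilibrium state, \eqref{equation:pressure} gives
\[
F(\mathbf s') = h_\mu(\phi_1) + \int\mathbf s'(w)\,\dd\mu.
\]
Subtracting, $F(\mathbf s') - \dd F_{\mathbf s'}(\mathbf s') = h_\mu(\phi_1)$. Geometrically, the tangent hyperplane meets the $\alpha_\M$-axis at height $h_\mu$, the Legendre-transform picture.

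It remains to show $h_\mu(\phi_1)>0$, and this is the main point to justify. The equilibrium state of a Hölder potential on a transitive hyperbolic basic set has positive entropy as soon as the topological entropy is positive and $\scrK$ is not a single periodic orbit, both of which are assumed in \S\ref{sssecton:assumptions}. We can argue directly: if $h_\mu = 0$, then
\[
F(\mathbf s') = \int\mathbf s'(w)\,\dd\mu.
\]
Because $\mu$ is the unique maximizer in \eqref{equation:pressure}, this forces $\mu$ to be a Gibbs measure with zero entropy. But Gibbs measures assign to Bowen balls measure $\asymp e^{-\mathrm{Pr}\,T + \int_0^T V}$, which gives them positive local entropy via the Brin–Katok formula. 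More concretely, one can compare with $\mu_{\max}$: since $h_{\mathrm{top}}>0$, the variational inequality can be perturbed to give a measure with $h>0$, and strict uniqueness of the equilibrium state then yields a contradiction.

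The obstacle is only this positivity of entropy. I would first cite the standard fact from \cite{Parry-Pollicott-90} or \cite{Bowen-74} that equilibrium states of Hölder potentials on non-trivial basic sets have positive entropy, using the Gibbs property. With that, $h_\mu>0$, so $\varphi\notin T_\varphi\mathbf{C}^{(d_s)}$ and $\R\varphi$ is transverse to $T_\varphi\mathbf{C}^{(d_s)}$.
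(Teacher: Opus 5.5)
Your proof is correct and follows the paper's argument exactly: describe $T_\varphi\mathbf{C}^{(d_s)}$ via \eqref{equation:derivative1}, reduce transversality to $\Pr(\mathbf s'(w))\neq\int_{\scrK}\mathbf s'(w)\,\dd\mu_{\mathbf s'(w)}$, and note that the difference is $h_{\mu_{\mathbf s'(w)}}(\phi_1)>0$, which the paper, like you, takes as the standard positivity of entropy for equilibrium states on a basic set that is not a finite union of periodic orbits. Drop your ad hoc sketches of that fact and rely on the citation instead: mixing $\mu$ with $\mu_{\max}$ produces no contradiction (it only gives a strictly smaller pressure functional unless $\mu_{\max}$ is also an equilibrium state), and Brin--Katok combined with the Gibbs bounds only recovers $h_\mu=\Pr(V)-\int V\,\dd\mu$, not its positivity.
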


\begin{proof}
By \eqref{equation:derivative1}, the tangent space to $\mathbf{C}^{(d_s)}$ at $\varphi$ is given by
\begin{equation}
\label{equation:espace-tangent}
T_\varphi \mathbf{C}^{(d_s)} = \left\{ \left( \int_{\mathscr{K}} h(w) \dd \mu_{\mathbf{s}'(w)}, h \right) ~:~ h \in \mathfrak{h}^*\right\}.
\end{equation}
To show that $\varphi \notin T_\varphi \mathbf{C}^{(d_s)}$, it thus suffices to show that
\[
\mathrm{Pr}(\mathbf{s}'(w)) \neq \int_{\mathscr{K}} \mathbf{s}'(w) \dd \mu_{\mathbf{s}'(w)},
\]
but this is immediate as $\mathrm{Pr}(\mathbf{s}'(w)) = h_{\mu_{\mathbf{s}'(w)}} + \int_{\mathscr{K}} \mathbf{s}'(w) \dd \mu_{\mathbf{s}'(w)}$ and $h_{\mu_{\mathbf{s}'(w)}} > 0$ (recall that $\scrK$ is not reduced to a finite union of periodic orbits and $(\phi_t)_{t \in \R}$ has positive topological entropy on $\scrK$ by assumption). The normalized transverse generator is
\[
u_\varphi:=\frac{X_\M-\int_{\mathscr K}w\,\dd\mu_{\mathbf s'(w)}}{h_{\mu_{\mathbf s'(w)}}(\phi_1)}.
\]
It satisfies $\varphi(u_\varphi)=1$, annihilates $T_\varphi\mathbf C^{(d_s)}$, and belongs to $\scrL$ by Lemma~\ref{lemma:formula-for-L}.
\end{proof}

\begin{lemma}
\label{lemma:rouge}
$X_\M \in \scrL$ if and only if there exists an invariant measure $\mu_0 \in \mathrm{M}_\phi$ such that $\int_\scrK w \dd\mu_0 = 0$.
\end{lemma}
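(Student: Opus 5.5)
The argument reduces to the description of the limit cone in Lemma \ref{lemma:formula-for-L}, item (ii). That lemma gives $\scrL = \R_+ C$, where
\[
C=\Bigl\{X_\M-\int_\scrK w\,\dd m ~:~ m\in\mathrm{M}_\phi\Bigr\}.
\]
One point to check first is that $\R_+ C$ is closed, since $\scrL$ is defined as a closure. The set $C$ is compact and convex, because $\mathrm{M}_\phi$ is weak-$*$ compact and convex and $m\mapsto\int w\,\dd m$ is continuous and affine. Every element of $C$ has $\alpha_\M$-component equal to $1$, so $0\notin C$. The cone over a compact set not containing $0$ is closed, and the cone over a compact convex set is convex. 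Hence $\scrL=\{tc ~:~ t\ge 0,\ c\in C\}$ exactly, with $t=0$ giving $0$.

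\textbf{Sufficiency.} Suppose $\mu_0\in\mathrm{M}_\phi$ satisfies $\int_\scrK w\,\dd\mu_0=0$. Then $X_\M=X_\M-\int w\,\dd\mu_0\in C\subset\scrL$.

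\textbf{Necessity.} Suppose $X_\M\in\scrL$. By the description above, $X_\M=t\bigl(X_\M-\int w\,\dd m\bigr)$ for some $t\ge0$ and $m\in\mathrm{M}_\phi$. Apply $\alpha_\M$ to both sides. Since $\alpha_\M(X_\M)=1$ and $\alpha_\M$ vanishes on $\fh\ni\int w\,\dd m$, this gives $t=1$. Comparing $\fh$-components then yields $\int w\,\dd m=0$, so we may take $\mu_0=m$.

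The main obstacle is justifying the identity $\scrL=\R_+C$ with no extra closure, and in particular that every point of $\scrL$ has a representative with positive $\alpha_\M$-component. Both follow from the compactness argument in the first paragraph.
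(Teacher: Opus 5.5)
Your proof is correct and is exactly the paper's argument: the paper just declares the lemma immediate from Lemma~\ref{lemma:formula-for-L}, item (ii), and you have written out the details by normalizing with $\alpha_\M$. Your closedness check for $\R_+C$ is a harmless extra safeguard, since that lemma already asserts $\scrL=\R_+C$.
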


\begin{proof}
Immediate by Lemma \ref{lemma:formula-for-L}, item (ii).
\end{proof}

Finally, we prove Theorem~\ref{theorem:leading}. We treat the case $m=d_s$; the case $m=0$ is similar. 

\begin{proof}[Proof of Theorem~\ref{theorem:leading}]
\emph{\ref{it:leading-critical-hypersurface}} This follows from Proposition \ref{proposition:leading2}. \\

\emph{\ref{it:leading-strict-convexity}} Suppose that $\mathbf{C}^{(d_s)}$ has second-order contact at a point $\mathbf{s}_\star = (s_{0\star}, \mathbf{s}_\star') \in \mathbf{C}^{(d_s)}$ with a $d$-dimensional plane. This means that there is a linearly independent family of $d$ covectors $\theta_1,..., \theta_d \in \fh^*$ such that
\[
\partial^2_{\eps} \mathrm{Pr}((\mathbf{s}'_\star+ \eps \theta_i)(w))|_{\eps = 0} = 0 = \mathrm{Var}_{\mu_{\mathbf{s}'_\star(w)}}(\theta_i(w)).
\]
Hence $\theta_i(w)$ is cohomologous to a constant $c_i \in \R$. By \eqref{equation:lyapunov-projection}, for every $\gamma\in \mathbf{P}$
\[
(c_i \alpha_{\mathcal M} + \theta_i)(\lambda(\gamma))= 0,
\]
and consequently $\mathscr{L} \subset\cap_{i=1}^d \ker (c_i\alpha_{\mathcal M} + \theta_i)$ which is of dimension $k+1-d$, that is the rank of the cocycle is $\leq k-d$. Hence (a) implies (c), and the converse is also straightforward by the same argument.

Let us show that (a) implies (b). For $t_1,...,t_d \in \R$, one has
\[
\mathrm{Pr}((\mathbf{s}'_\star + \sum_i t_i \theta_i)(w)) = \mathrm{Pr}(\mathbf{s}'_\star(w) + \sum_i t_i c_i) = \mathrm{Pr}(\mathbf{s}'_\star(w)) + \sum_i t_i c_i.
\]
This shows that
\[
\begin{split}
\left\{ \left( \mathrm{Pr}((\mathbf{s}'_\star + \sum_i t_i \theta_i)(w)), \mathbf{s}'_\star + \sum_i t_i \theta_i\right) ~:~ (t_1,\ldots,t_d)\in\R^d\right\} \subset \mathbf{C}^{(d_s)}
\end{split}
\]
is a $d$-dimensional affine plane contained in $\mathbf{C}^{(d_s)}$. Finally, the implication (b) $\implies$ (a) is immediate. \\

\emph{\ref{it:leading-concavity}} Suppose that $\mathbf{s} \in \sigma_{\mathrm{RS}}^{(d_s)}$ with $\Re(\mathbf{s}) \in \mathbf{C}^{(d_s)}$. By \S\ref{sssection:ds-forms}, this implies that for all periodic orbits $\gamma \in \mathbf{P}$:
\[
\int_\gamma (-\Im(\mathbf{s}')(w)+\Im(s_0)) \in 2\pi \Z,
\]
that is $\theta(\lambda(\gamma)) \in 2\pi \Z$, where $\theta = \Im(s_0)\alpha_{\M}+\Im(\mathbf{s}')$. By the non-arithmeticity assumption on the Lyapunov spectrum (see Definition~\ref{definition:non-arithmetic}), this forces $\theta =0$, that is $\Im(\mathbf{s}) = 0$. \\

\emph{\ref{it:leading-intermediate-hypersurfaces}} The claim on the intermediate critical hypersurfaces follows from the corresponding claim in \cite[Theorem 1]{Humbert-24}, applied at fixed $\mathbf{s}' \in \mathfrak{h}^*_{\C}$. \\

\emph{\ref{it:leading-zeta-function}} By item~\ref{it:leading-intermediate-hypersurfaces} and Theorem \ref{theorem:general-zeta}, the dynamical determinants in the factorization formula \eqref{equation:factorization} are all holomorphic and non-vanishing in $\{\Re(\s) \in \mathbf{C}^{(d_s),+}\}$, and so is $\zeta$. (Note that the twist by $\mathfrak o(E_s)$ does not affect the fact that the corresponding intermediate critical hypersurfaces for $\mathfrak o(E_s)$ twisted forms of intermediate degree are contained in $\mathbf{C}^{(d_s),-}$.)

That $\zeta$ is singular to order $1$ is a consequence of the following observation. For a fixed $\mathbf s_\star= (s_{0\star},\mathbf s_\star') \in \mathbf{C}^{(d_s)}$, the resolvent $s \mapsto P^{(d_s)}_+(s+s_{0\star},\mathbf{s_\star}')^{-1}$ has a pole of order $1$ at $s=0$ by \S\ref{sssection:ds-forms}. The structure of $\mathbf{s} \mapsto P^{(d_s)}_+(\mathbf{s})^{-1}$ is thus given by \eqref{equation:cas-cool2}, that is
\[
P^{(d_s)}_+(\mathbf{s})^{-1} = H(\mathbf{s}) - \dfrac{\Pi_{\mathbf{s}'}}{s_0-\sigma(\mathbf{s}')},
\]
where $\mathbf{s} \mapsto H(\mathbf{s})$ is holomorphic, $\Pi_{\mathbf{s}'}$ is the spectral projector onto the resonant state associated with the resonance $(\mathrm{Pr}(\mathbf{s}'(w)),\mathbf{s}')$ and $U'_\star\subset\mathfrak h^*_\C$ is a neighborhood of $\mathbf{s}'_\star$ on which $\sigma:U'_\star\to\C$ is holomorphic. On the real subspace $\mathfrak h^*$, we have $\sigma(\mathbf s') = \mathrm{Pr}(\mathbf{s}'(w))$; uniqueness of holomorphic extensions therefore gives
\[
P^{(d_s)}_+(\mathbf{s})^{-1} = H(\mathbf{s}) - \dfrac{\Pi_{\mathbf{s}'}}{s_0-\mathrm{Pr}(\mathbf{s}'(w))}.
\]
Here, $\mathbf{s}' \mapsto \mathrm{Pr}(\mathbf{s}'(w))$ denotes the local holomorphic germ of the pressure near $\mathbf{s}'_\star$. In turn, using \eqref{equation:factorization}, we find that all the factors $\mathbf{s}\mapsto \zeta_m(\mathbf{s})$ are holomorphic and non-vanishing ($\zeta_m(\mathbf{s})=0$ if and only if $\mathbf{s}$ is a resonance of $P^{(m)}_+$) for $\mathbf{s}$ close to $\mathbf{s}_\star$, except for $m=d_s$, for which we have $\zeta_{d_s}(\mathbf{s}) = (s_0-\mathrm{Pr}(\mathbf{s}'(w))) \widetilde{\zeta}(\mathbf{s})$ for some holomorphic non-vanishing function $\widetilde{\zeta}$. This implies that
\[
\zeta(\mathbf{s}) = (s_0-\mathrm{Pr}(\mathbf{s}'(w)))^{-1} \widetilde{\zeta}(\mathbf{s}),
\]
for some other holomorphic and nonvanishing function $\mathbf{s} \mapsto \widetilde{\zeta}(\mathbf{s})$. Since $(\mathrm{Pr}(\mathbf{s}'(w)),\mathbf{s}') \in \mathbf{C}^{(d_s)}$, we have $|s_0-\mathrm{Pr}(\mathbf{s}'(w))| \leq C d(\mathbf{s},\mathbf{C}^{(d_s)})$, and thus
\[
|\zeta(\mathbf{s})| \geq C d(\mathbf{s},\mathbf{C}_\C^{(d_s)})^{-1}.
\]
This proves the claim.\\

\emph{\ref{it:characterization_C}} Suppose that $\varphi = (\mathrm{Pr}(\mathbf{s}'(w)),\mathbf{s}') \in \mathbf{C}^{(d_s)}$. Observe that for any periodic orbit $\gamma \in \mathbf{P}$:
\begin{equation}
\label{equation:positive-pr}
\varphi(\lambda(\gamma)) = \varphi\left(\ell_\gamma X_{\M} - \int_{\gamma} w\right) = \mathrm{Pr}(\mathbf{s}'(w))\ell_\gamma -\int_\gamma \mathbf{s}'(w) > 0.
\end{equation}
Indeed, 
\[
\mathrm{Pr}(\mathbf{s}'(w)) = \sup_{\mu \in \mathrm{M}_{\phi}} h_\mu (\phi_1) + \int_{\mathscr{K}} \mathbf{s}'(w) \dd \mu,
\]
and the supremum is achieved for a unique measure, the equilibrium state $\mu_{\mathbf{s}'(w)}$. Since $h_\mu(\phi_1) \geq 0$ and $\mu_{\mathbf{s}'(w)} \neq \delta_\gamma$ (the Dirac mass carried by the periodic orbit) because $\mu_{\mathbf{s}'(w)}$ has full support on $\mathscr{K}$, we obtain \eqref{equation:positive-pr}. In particular, this implies that $\varphi \in \mathscr{L}^*$ as $\mathscr{L}$ is the cone generated by all the vectors $\lambda(\gamma)$.

Note that
\[
\mathrm{Pr}(-\varphi(X_{\M}-w)) = \mathrm{Pr}(\mathbf{s}'(w)-\mathrm{Pr}(\mathbf{s}'(w))) = 0.
\]
Applying Lemma~\ref{lemma:ledrappier}, we find that $\delta(\varphi)=1$; moreover, there exists a constant $C > 0$ such that for all $\gamma \in \mathbf{P}$,
\begin{equation}
\label{equation:turbulences}
\varphi(\lambda(\gamma)) \geq C \ell_\gamma.
\end{equation}
This shows \eqref{equation:turbulences0}. In addition, if $X_\M \in \scrL$, it follows from Lemma \ref{lemma:rouge} and \eqref{equation:positive-pr} (applied with a sequence of periodic orbits converging in the weak-$*$ topology to the measure $\mu_0$ provided by Lemma \ref{lemma:rouge}) that
\begin{equation}
\label{equation:pression-positive}
\mathrm{Pr}(\s'w) \geq C,
\end{equation}
where $C$ is the same constant as in \eqref{equation:turbulences}. The estimate \eqref{equation:turbulences} also implies that $\varphi \in \mathrm{int}(\mathscr{L}^*)$ as $|\lambda(\gamma)| \leq C' \ell_\gamma$ for some constant $C' > 0$. This shows that $\mathbf{C}^{(d_s)} \subset \{\delta(\varphi)=1\} \cap \mathrm{int}(\mathscr{L}^*)$.

Conversely, suppose that $\delta(\varphi)=1$ and $\varphi \in \mathrm{int}(\scrL^*)$. By definition of $\delta(\varphi)$, $\zeta(s\varphi)$ is thus given by the convergent product \eqref{equation:zeta-anosov} for $\Re(s) > 1$. For $s > 1$ real and $t > 0$, observe that $\zeta(s\varphi + t \alpha_{\M}) \leq \zeta(s\varphi)$ and $\zeta(s\varphi + t\alpha_{\M})$ is also given by the convergent product \eqref{equation:zeta-anosov}. If $\varphi \in \mathbf{C}^{(d_s),-}$, then, by taking $s>1$ sufficiently close to $1$ so that $s\varphi \in \mathbf{C}^{(d_s),-}$, we see that there must exist a value of $t >0$ such that $s\varphi + t\alpha_\M \in \mathbf{C}^{(d_s)}$. But then $\zeta(s\varphi + t\alpha_\M)=\infty$, which contradicts that $\zeta(s\varphi + t\alpha_\M) < \infty$ is given by the convergent expression \eqref{equation:zeta-anosov}. As a consequence, $\varphi \in \overline{\mathbf{C}^{(d_s),+}}$. If $\varphi \in \mathbf{C}^{(d_s),+}$, then using the definition of $\delta$, and the fact that $\varphi(\lambda(\gamma)) > 0$ by assumption, one easily obtains that $s \mapsto \zeta(s \varphi)$ is given for $s > 1$ by \eqref{equation:zeta-anosov} and blows up as $s \to 1^+$. This, however, contradicts the holomorphy of $\zeta$ in $\mathbf{C}^{(d_s),+}$. Hence $\varphi \in \mathbf{C}^{(d_s)}$. This completes the proof that $\mathbf{C}^{(d_s)}= \{\delta(\varphi)=1\} \cap \mathrm{int}(\scrL^*)$.

Note that $0 \notin \mathbf{C}^{(d_s)}$ as $\mathrm{Pr}(0) = h_{\mathrm{top}}(\phi_1) > 0$, so $[\mathbf{C}^{(d_s)}]$ is well-defined. We have already established that $[\mathbf{C}^{(d_s)}] \subset \mathrm{int}[\mathscr{L}^*\setminus\{0\}]$. Let us show the converse inclusion and that $\mathbf{C}^{(d_s)} \ni \varphi \mapsto [\varphi]$ is a diffeomorphism onto its image. That $\mathbf{C}^{(d_s)} \ni \varphi \mapsto [\varphi]$ is a local diffeomorphism is a direct consequence of Lemma~\ref{lemma:transverse-intersection}. It is also injective (hence a global diffeomorphism onto its image) because if $\varphi \in \mathbf{C}^{(d_s)} = \{\delta=1\}$, then $\delta(\lambda\varphi)=\lambda^{-1}\delta(\varphi)=\lambda^{-1}\neq 1$ for $\lambda \neq 1$, so $\lambda\varphi \notin \mathbf{C}^{(d_s)}$. Finally, it remains to prove that $\mathrm{int}[\mathscr{L}^*\setminus\{0\}] \subset [\mathbf{C}^{(d_s)}]$. If $\varphi \in \mathrm{int}(\mathscr{L}^*\setminus \{0\})$, then $\varphi(\lambda(\gamma)) \geq \eps \ell_\gamma$ for some $\eps > 0$. Since $w$ is bounded, there is also a constant $C_\varphi>0$ such that $\varphi(\lambda(\gamma))\leq C_\varphi\ell_\gamma$ for every $\gamma\in\mathbf P$. In turn, $\delta(\varphi) < +\infty$ is well-defined (using the specification property as in the proof of Lemma~\ref{lemma:ledrappier}) with $0<h_{\mathrm{top}}(\phi_1)/C_\varphi\leq\delta(\varphi)\leq h_{\mathrm{top}}(\phi_1)/\eps<\infty$. As a consequence, there exists a $\lambda > 0$ such that $\delta(\lambda\varphi) = \lambda^{-1}\delta(\varphi)=1$, that is $\lambda \varphi \in \mathbf{C}^{(d_s)}$. This finishes the proof.
\end{proof}

\subsection{Measures of maximal entropy}\label{ssection:mme} For every $\varphi \in \mathbf{C}^{(d_s)}$, there exists a natural measure $\mu_\varphi$ supported on $\mathscr{J}$, invariant under the $A$-action, which plays the role of a measure of maximal entropy.

\subsubsection{Definition of the measures}

 As $\varphi$ is a resonance with $1$-dimensional resonant space (Theorem~\ref{theorem:leading}, item~\ref{it:leading-critical-hypersurface}), there exists $m^s_\varphi \in \mc{D}'(\M, \Lambda^{d_s}(E_{s,\M}^* \oplus E_{u,\M}^*))$, a (nonzero) resonant state, such that
\begin{equation}
\label{equation:msphi}
(-\mathbf{X}-\varphi) m^s_\varphi = 0, \qquad \supp(m^s_\varphi) \subset \mc{T}_+, \qquad \WF(m^s_\varphi) \subset E_u^*.
\end{equation}

When $E_{s,\M}^*$ and $E_{u,\M}^*$ are smooth (i.e., coincide on the incoming/outgoing tails $\mc{T}_\pm$ with smooth subbundles of $T^*\M$ defined in a neighborhood of $\mc{T}_\pm$), a quick computation reveals that $m^s_\varphi$ has no component along $E_{s,\M}^*$:

\begin{lemma} \label{lemma:reduction-es}
If $E_{s,\M}^*$ and $E_{u,\M}^*$ are smooth, then $m^s_\varphi \in \mc{D}'(\M, \Lambda^{d_s} E_{u,\M}^*)$.
\end{lemma}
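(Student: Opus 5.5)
Since $E_{s,\M}^*$ and $E_{u,\M}^*$ are smooth near the tails, near $\mc{T}_+$ we may use the smooth, $A$-invariant splitting $E_{s,\M}^*\oplus E_{u,\M}^*$. This induces a decomposition of $\Lambda^{d_s}(E_{s,\M}^*\oplus E_{u,\M}^*)$ into bidegree components $\Lambda^{p}E_{s,\M}^*\otimes\Lambda^{q}E_{u,\M}^*$ with $p+q=d_s$. Each component is preserved by the Lie derivatives $\mc{L}_{\X(a)}$, because the splitting is invariant under the action. Write $m^s_\varphi=\sum_{p+q=d_s}m_{p,q}$ for the corresponding decomposition. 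The projections are smooth bundle maps, so they preserve support in $\mc{T}_+$ and the wavefront condition $\WF\subset E_u^*$. Each $m_{p,q}$ therefore satisfies \eqref{equation:msphi}, so $m_{p,q}\in\mathrm{Res}(\varphi)$ for the bundle $\Lambda^{p}E_{s,\M}^*\otimes\Lambda^{q}E_{u,\M}^*$. The goal is to show $m_{p,q}=0$ unless $p=0$, $q=d_s$. Since $\dim E_{u,\M}^*=d_s$, the only surviving piece is then $\Lambda^{d_s}E_{u,\M}^*$.

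For this I will use the description of the spectrum on each sub-bundle. By Lemma \ref{lemma:sigma_RT_equals_sigma}, $m_{p,q}=\mathbf{e}(\varphi')f_{p,q}$ with $P_+(\varphi)f_{p,q}=0$ on $\N$, acting on sections of $\Lambda^pE_{s,\N}^*\otimes\Lambda^qE_{u,\N}^*$. Fixing $\varphi'$, this is a rank-one Pollicott--Ruelle problem with real potential $\varphi'(w)$. The resonance in question is at $s_0=\mathrm{Pr}(\varphi'(w))$. Standard transfer-operator bounds along the trapped set control the spectral radius on this bundle. They give the bound $\Re s_0\le \mathrm{Pr}(\varphi'(w)+J_{p,q})$, where $J_{p,q}$ is the infinitesimal growth rate of $\Lambda^pE_s^*\otimes\Lambda^qE_u^*$ minus the unstable Jacobian coming from transport on the unstable-wavefront side, as in \cite{Humbert-24}. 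A covector in $E_s^*$ is expanded under the pushforward $d\phi_t^{-\top}$ and contracted in the decay direction; swapping one $E_u^*$ factor for an $E_s^*$ factor therefore costs a factor $e^{-\lambda_0 t}$. Hence $J_{p,q}<J_{0,d_s}$ strictly for $p\ge1$, uniformly on $\scrK$, and the top exponent on the $(p,q)$ piece is strictly less than $\mathrm{Pr}(\varphi'(w))$.

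It follows that $\mathrm{Pr}(\varphi'(w))$ is not a resonance on the $(p,q)$ bundle for $p\ge1$. So $f_{p,q}=0$, and hence $m_{p,q}=0$.

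The main obstacle is the strict bound on the $(p,q)$ component. Two points need care. First, the sharp spectral radius estimate on the anisotropic space must use only the trapped-set growth, which is what \cite{Humbert-24} and the intermediate-degree argument of Theorem \ref{theorem:leading} \ref{it:leading-intermediate-hypersurfaces} provide, now applied to bidegree pieces instead of total degree. Second, smoothness of $E_s^*$ and $E_u^*$ is required for these pieces to be genuine smooth admissible bundles. I would first try to reduce directly to the pressure estimate from \ref{it:leading-intermediate-hypersurfaces}, after checking that the proof there is in fact bidegree-wise.
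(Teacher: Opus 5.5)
Your proposal is correct and follows the paper's proof: split $\Lambda^{d_s}(E_{s,\M}^*\oplus E_{u,\M}^*)$ into the $A$-invariant pieces $\Lambda^{p}E_{s,\M}^*\otimes\Lambda^{q}E_{u,\M}^*$, note that each projected component of $m^s_\varphi$ is a resonant state at $\varphi$ on its piece, and use the uniform contraction of the $E_s^*$ factors to show that the spectrum of every piece with $p\geq 1$ lies strictly inside $\mathbf{C}^{(d_s),-}$, so those components vanish. Your reduction to $\N$ and the pressure bound simply spell out what the paper asserts in one line; there is one wording slip, namely that covectors in $E_s^*$ are contracted, not expanded, by the forward lift $d\phi_t^{-\top}$, which is exactly the cocycle of $e^{-t\mathcal{L}_{X_\N}}$ and is the contraction both you and the paper rely on.
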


\begin{proof}
We write
\begin{equation}
\label{equation:decomposition-algebre-exterieure}
\Lambda^{d_s}(E_{s,\M}^*\oplus E_{u,\M}^*) = \oplus_{k=0}^{d_s} \Lambda^k E_{s,\M}^* \otimes \Lambda^{d_s-k} E_{u,\M}^*,
\end{equation}
where each bundle in the sum is invariant under the $A$-action. Hence, the resonance spectrum for the bundle $\Lambda^{d_s}(E_{s,\M}^*\oplus E_{u,\M}^*)$ is the union of all the resonance spectra for $k = 0,...,d_s$. Because the forward action of the flow on $E_{s,\M}^*$ is uniformly contracting, the resonances of $\Lambda^k E_{s,\M}^* \otimes \Lambda^{d_s-k} E_{u,\M}^*$ for $k \neq 0$ must be contained in $\{\Re(\s) \in \mathbf{C}^{(d_s),-}\}$. Hence $m^s_\varphi \in \mc{D}'(\M, \Lambda^{d_s} E_{u,\M}^*)$.
\end{proof}

By duality (see Remark \ref{remark:duality}), there exists a nonzero co-resonant state $m^u_\varphi \in \mc{D}'(\M, \Lambda^{d_u}E_{s,\M}^*)$ such that
\[
(+\mathbf{X}-\varphi) m^u_\varphi = 0, \qquad \supp(m^u_\varphi) \subset \mc{T}_-, \qquad \WF(m^u_\varphi) \subset E_s^*.
\]
Because they correspond to a leading resonance, these resonant states are distributions of order $0$ (see, e.g., \cite{Humbert-24}). Also note that $m^{s,u}_\varphi$ are only well-defined up to rescaling by an arbitrary positive constant. The following facts will not be needed but are simply stated for comparison. More generally, it can be shown that Lemma \ref{lemma:reduction-es} holds regardless of the smoothness of $E_s^*$ and $E_u^*$; further assuming that these bundles are orientable, it can also be shown that $m^{s,u}_\varphi$ are measures (i.e., nonnegative distributions of order $0$), see \cite{Humbert-24}. The measures $m^{s,u}_\varphi$ are called Burger--Roblin measures in the context of Anosov representations in \cite{ELO23}.

The wedge products
\[
\begin{split}
&\mu'_\varphi := m^u_\varphi \wedge m^s_\varphi \in\mc{D}'(\M, \Lambda^{d_\N-1} T^*\M), \\&\mu_\varphi := \vol_{\fa}\wedge \mu'_\varphi \in \mc{D}'(\M, \Lambda^{d_\M} T^*\M),
\end{split}
\]
where $\vol_{\fa}$ was introduced in \S\ref{sssection:distributions}, are well-defined by the wavefront-set characterization above and the product rule (see \cite[Lemma 4.3.1]{Lefeuvre-book}).

The following properties hold:

\begin{lemma}$\mathbf{X} \mu'_\varphi = \X\mu_\varphi= 0$, $\supp(\mu'_\varphi), \supp(\mu_\varphi) \subset \mathscr{J}$, $\WF(\mu'_\varphi), \WF(\mu_\varphi) \subset E_s^*\oplus E_u^*$.
\end{lemma}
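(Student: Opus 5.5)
The plan is to verify the three claims separately, starting with invariance, then support, then wavefront set. In each case the input is the defining properties of $m^s_\varphi$ and $m^u_\varphi$ together with the Leibniz rule for the Lie derivative on wedge products.

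\emph{Invariance.} For $a\in\fa$, the Lie derivative $\mc{L}_{\X(a)}$ is a derivation of the wedge product. This extends to the distributional products at hand, because the wavefront sets $E_s^*$ and $E_u^*$ are transverse: $E_s^*\cap(-E_u^*)=\{0\}$. The extension follows by approximating one factor by smooth forms in the appropriate Hörmander topology and using continuity of the product (\cite[Lemma 4.3.1]{Lefeuvre-book}). One then gets
\[
\X(a)\mu'_\varphi=(\X(a)m^u_\varphi)\wedge m^s_\varphi+m^u_\varphi\wedge(\X(a)m^s_\varphi)=\varphi(a)\,\mu'_\varphi-\varphi(a)\,\mu'_\varphi=0,
\]
since $\X(a)m^u_\varphi=\varphi(a)m^u_\varphi$ and $\X(a)m^s_\varphi=-\varphi(a)m^s_\varphi$. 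For $\mu_\varphi$, write $\X(a)\mu_\varphi=(\mc{L}_{\X(a)}\vol_\fa)\wedge\mu'_\varphi+\vol_\fa\wedge\X(a)\mu'_\varphi$. The second term vanishes by the step just done. For the first term, use the argument of the duality lemma in \S\ref{sssection:distributions}: $\mc{L}_{\X(a)}\vol_\fa$ is a $(k+1)$-form vanishing on $E_{0,\M}^{\wedge(k+1)}$, so it has at least one factor in $E_s^*\oplus E_u^*$. Its wedge with the top-degree (in the transverse directions) form $\mu'_\varphi$ therefore has too many transverse factors and vanishes. This is exactly identity \eqref{equation:dodo-dodo}, applied by density to the distributional $\mu'_\varphi$.

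\emph{Support.} A product of distributions is supported in the intersection of the supports. This gives $\supp\mu'_\varphi\subset\mc{T}_{+,\M}\cap\mc{T}_{-,\M}=(\mc{T}_+\cap\mc{T}_-)\times\fh=\scrK\times\fh=\scrJ$, and the same for $\mu_\varphi$, since multiplying by the smooth form $\vol_\fa$ does not enlarge the support.

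\emph{Wavefront set.} This is the step I expect to be the main obstacle. The product rule gives
\[
\WF(m^u_\varphi\wedge m^s_\varphi)\subset E_s^*\cup E_u^*\cup(E_s^*+E_u^*),
\]
where $E_s^*+E_u^*=\{\xi+\eta\}$ over common base points; this lies in $E_s^*\oplus E_u^*$ over $\scrJ$. The delicate points are twofold. First, $E_s^*$ and $E_u^*$ are only defined on the tails, and are merely continuous there. Second, the product rule must be applied with closed conic sets. To handle this, I would use the invariant description \eqref{equation:smooth-tralala}: $E_s^*\oplus E_u^*$ is the smooth annihilator of $E_{0,\M}$, defined on all of $\M$. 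Since $\WF(m^u_\varphi)\subset E_s^*\subset E_s^*\oplus E_u^*$ and $\WF(m^s_\varphi)\subset E_u^*\subset E_s^*\oplus E_u^*$, the sum of these closed conic subsets is again contained in the closed smooth subbundle $E_s^*\oplus E_u^*$. Combined with the support statement, this gives the claim. Wedging with the smooth $\vol_\fa$ does not enlarge the wavefront set, so the inclusion also holds for $\mu_\varphi$.

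As a consistency check, the invariance $\X\mu=0$ alone already forces $\WF\subset\{\xi:\xi(\X(a))=0\ \forall a\}=E_s^*\oplus E_u^*$. This follows from microlocal ellipticity of the vector fields $\X(a)$ away from their characteristic set, so either route suffices.
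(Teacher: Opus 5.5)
Your proposal is correct and follows essentially the same route as the paper. For invariance you use the Leibniz rule with $\X(a)m^u_\varphi=\varphi(a)m^u_\varphi$ and $\X(a)m^s_\varphi=-\varphi(a)m^s_\varphi$, together with \eqref{equation:dodo-dodo}. For the support you intersect the supports, $\mc{T}_{+,\M}\cap\mc{T}_{-,\M}=\scrJ$. For the wavefront set you apply the product rule. Along the way you supply justification the paper leaves implicit, such as extending the Leibniz rule to the distributional product by density in the H\"ormander topology.

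Your closing remark is also valid, and is arguably the cleanest route to the wavefront claim. Since $\X(a)\mu'_\varphi=0$ for all $a\in\fa$ and $\X(a)$ has principal symbol $i\xi(\X(a))$, elliptic regularity alone gives $\WF(\mu'_\varphi)\subset\{\xi ~:~ \xi(\X(a))=0,\ \forall a\in\fa\}=E_s^*\oplus E_u^*$. This avoids the product-rule bookkeeping with the merely continuous bundles $E_s^*$ and $E_u^*$.
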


\begin{proof}
The support of $\mu_\varphi$ and $\mu'_\varphi$ is contained in the intersection of the supports of $m^{s/u}_{\varphi}$. Since $\mc{T}_+\cap\mc{T}_-=\scrJ$, the claim is immediate. The claim on wavefront sets follows from standard results on the product of distributions (see \cite[Lemma 4.3.1]{Lefeuvre-book} for instance). Finally, we prove the invariance under the action:
\[
\X \mu'_\varphi = \X m^u_\varphi \wedge m^s_\varphi + m^u_\varphi \wedge \X m^s_\varphi = \varphi \otimes m^u_\varphi \wedge m^s_\varphi - \varphi \otimes m^u_\varphi \wedge m^s_\varphi = 0,
\]
and using \eqref{equation:dodo-dodo}:
\[
\X\mu_\varphi = \X \vol_{\fa} \wedge \mu'_\varphi + \vol_{\fa} \wedge \X \mu'_\varphi = 0.
\]
\end{proof}

We let
\[
|\mu_\varphi| \in \mc{D}'(\M, \Omega^1 \M)
\]
be the corresponding distributional section with values in the density bundle. Note that it is supported on $\scrJ$. It follows from Theorem \ref{theorem:measures-max-entropy}, item \emph{\ref{it:existence_BMM}} below, that $|\mu_\varphi|$ is a measure; in particular, it is independent of the extension of $\vol_{\fa}$ outside $\mathscr{J}$ (it only depends on the $0$-jet of $\vol_{\fa}$ on $\mathscr{J}$).

Let $\fh_\varphi := \ker \varphi \subset \fa$. As $\ker \varphi \cap \scrL = \{0\}$, it follows from Proposition \ref{proposition:proper-action-phi}, item (i), that the action of $H_\varphi := \exp(\fh_\varphi)$ on $\M$ is free and proper. The flow generated by $u_\varphi \in \scrL$ on $\M$ descends to a hyperbolic flow $(\phi_t^\varphi)_{t \in \R}$ on $\N_\varphi := \M/H_\varphi$ with trapped set $\scrK_\varphi := \scrJ/H_\varphi$ and periods $\{\varphi(\lambda(\gamma)) ~:~ \gamma \in \mathbf{P}\}$ (Proposition \ref{proposition:proper-action-phi}, item (iii)). Let $\pi_\varphi:\M\to\N_\varphi$ denote the quotient map and also write $\pi_\varphi$ for its restriction to $\scrJ$. This flow satisfies Assumptions $\hyperlink{AA2}{\rm(A2)-(A4)}$ (it is hyperbolic with a single basic set). It is conjugate, up to a time reparametrization, to the flow $(\phi_t)_{t \in \R}$ on $\scrK \subset \N$.

\subsubsection{Statement}

In the following statement, $\sigma : \N \to \M$ and $\sigma_{\varphi} : \N_\varphi \to \M$ are two smooth sections trivializing the bundle as in \S\ref{ssection:definition-free-abelian-cocycles}. The following description holds:

\begin{theorem}[Measures of maximal entropy] \label{theorem:measures-max-entropy} Assume that $\hyperlink{AA1}{\rm(A1)-(A4)}$ hold. Then:

\begin{enumerate}[label=\emph{(\roman*)}]

\item \label{it:existence_BMM} \emph{\textbf{Bowen-Margulis-Sullivan measures.}} For all $f \in C^0_{\comp}(\scrJ)$, the measure $|\mu_\varphi|$ can be written
\begin{equation}
\label{equation:ventilo}
(|\mu_\varphi|,f) = c \int_{\scrK} \int_{\fh} f(\tau_h \sigma(x)) ~\dd h \dd \omega_\varphi(x) = c' \int_{\scrK_\varphi} \int_{\fh_\varphi} f(\tau_h \sigma_\varphi(x)) ~\dd h \dd \nu_\varphi(x),
\end{equation}
for some constants $c,c' > 0$, where:
\begin{itemize}
\item $\nu_\varphi$ is a probability measure supported on $\scrK_\varphi$ and is the measure of maximal entropy of the flow $(\phi_t^\varphi)_{t \in \R}$;
\item $\omega_\varphi$ is a probability measure supported on $\scrK = \scrJ/H$ and is the equilibrium state of the potential $\z'w$ for the flow $(\phi_t)_{t \in \R}$, corresponding to the decomposition $\varphi = z_0\alpha_{\M} + \z'$ (with $z_0=\Pr(\z'w)$, the pressure of the potential $\z'w$).
\end{itemize}

\item \label{it:horocyclic-invariance} \emph{\textbf{Horocyclic invariance of Burger-Roblin measures.}} Suppose that $E_{u,\M} \to \M$ is a smooth vector bundle. Then for all $Z_u \in C^\infty(\M,E_{u,\M})$, $\mc{L}_{Z_u} m^s_\varphi = 0$.

\item\label{it:analytic_BMM} \emph{\textbf{Analyticity of Bowen-Margulis-Sullivan measures.}} If we normalize $\mu_\varphi$ in such a way that $c'=1$ in \eqref{equation:ventilo}, then the map
\[
\mathbf{C}^{(d_s)} \ni \varphi \mapsto \mu_\varphi \in \mc{D}'(\M,\Lambda^{d_\M}T^*\M)
\]
is analytic. If the cocycle has full rank, then $\mu_\varphi$ and $\mu_{\varphi'}$ are mutually singular whenever $\varphi \neq \varphi'$.

\end{enumerate}

\end{theorem}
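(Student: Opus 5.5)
The plan is to reduce everything to the base flow on $\N$ via the extension operator $\mathbf{e}(\mathbf{s}')$, as in the proof of Lemma \ref{lemma:sigma_RT_equals_sigma}, and then apply the rank-one facts recalled in \S\ref{sssection:ds-forms}.

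\emph{Item \ref{it:existence_BMM}.} Write $\varphi = z_0\alpha_\M + \z'$ with $z_0 = \Pr(\z'w)$ (Proposition \ref{proposition:leading2}). Restricting to $\N\times\{0\}$ as in Lemma \ref{lemma:sigma_RT_equals_sigma}, I expect $m^s_\varphi = \mathbf{e}(\z') f_s$, where $f_s$ is the unique leading resonant state of $-\mc{L}_{X_\N}+\z'(w)$ on $d_s$-forms. Dually, $m^u_\varphi = e^{+\z'(h)}\pi^* f_u$, where $f_u$ is the co-resonant $d_u$-form of \eqref{equation:mudu}. The exponential weights $e^{\mp \z'(h)}$ cancel in the wedge product. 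Moreover $\vol_\fa$ restricted to $\scrJ$ splits, up to a constant, as $\alpha_\N\wedge \dd h$ in the trivialization, modulo terms killed by wedging with $\mu'_\varphi$ (which annihilates $E_0$ directions except through $\vol_\fa$). This gives $\mu_\varphi = c\,\dd h\wedge \pi^*(\alpha_\N\wedge f_u\wedge f_s)$. By \eqref{equation:muu}, the base factor is the equilibrium state $\omega_\varphi$ of $\z'w$, which proves the first equality, and positivity and order $0$ come for free.

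For the second equality, I would repeat the whole construction with the hyperplane $\fh_\varphi=\ker\varphi$, which acts freely and properly by Proposition \ref{proposition:proper-action-phi}, and with generator $u_\varphi$. In the decomposition $\fa=\R u_\varphi\oplus\fh_\varphi$, $\varphi$ has coordinates $(1,0)$. The same resonant state is therefore the leading state for the reparametrized flow with potential $0$ and pressure $1=\delta(\varphi)$ (Theorem \ref{theorem:leading}\ref{it:characterization_C}). The resulting base measure is the equilibrium state of the zero potential, i.e.\ the measure of maximal entropy $\nu_\varphi$. Resonant states are intrinsic by Definition \ref{definition:rt-intro}, so both descriptions describe the same $\mu_\varphi$ up to constants. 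The main obstacle is checking that $\vol_\fa$ and the pairing \eqref{equation:pairing} are independent of the choice of complement, so that the constants $c,c'$ are genuinely positive.

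\emph{Item \ref{it:horocyclic-invariance}.} Since $\mc{L}_{Z_u}$ does not commute with the action, I would argue by decay. By Lemma \ref{lemma:reduction-es}, $m^s_\varphi$ is a section of $\Lambda^{d_s}E_{u,\M}^*$. Consider $v:=\mc{L}_{Z_u}m^s_\varphi$ and use the invariance $e^{-t(\X(a)+\varphi(a))}m^s_\varphi=m^s_\varphi$ for $a\in\scrC$. This yields
\[
v = \mc{L}_{Z_u}e^{-t(\X(a)+\varphi(a))}m^s_\varphi = e^{-t(\X(a)+\varphi(a))}\mc{L}_{(e^{ta})_*Z_u}m^s_\varphi .
\]
Here $(e^{ta})_*Z_u$ contracts exponentially along $E_u$ when pulled back appropriately, and since $m^s_\varphi$ is of order $0$, the pairing of $v$ with test forms tends to $0$ as $t\to\infty$. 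Making this precise requires an anisotropic-space bound on the propagator on the leading hypersurface. This is where I expect to have to work, and the natural tool is the uniform bound $\|e^{-t(\X_\M+\varphi)}\|\lesssim 1$ from the simple-pole structure in Remark \ref{remark:simple}.

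\emph{Item \ref{it:analytic_BMM}.} Analyticity follows from Remark \ref{remark:simple}: the spectral projector $\Pi_{\mathbf{s}'}$ is holomorphic in $\mathbf{s}'$, and restricted to real $\z'$ it is analytic. Hence $f_s, f_u$ depend analytically on $\z'$, after normalizing by the pairing, which is analytic and nonvanishing. The normalization $c'=1$ is then an analytic rescaling. For mutual singularity, note that if $\varphi\neq\varphi'$ then $\z'(w)$ and $\z''(w)$ are not cohomologous up to a constant. Otherwise $\theta=\z'-\z''$ would satisfy that $\theta(w)$ is cohomologous to a constant, which by the argument in Theorem \ref{theorem:leading}\ref{it:leading-strict-convexity} contradicts full rank. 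Their equilibrium states $\omega_\varphi,\omega_{\varphi'}$ are then mutually singular, and the product structure \eqref{equation:ventilo} transfers this to $\mu_\varphi,\mu_{\varphi'}$.
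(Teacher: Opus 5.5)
Your items (i) and (iii) follow the paper's argument and are fine. You factor $m^{s,u}_\varphi=e^{\mp\z'(h)}\pi^*\widetilde m^{s,u}_\varphi$, identify $\alpha_\N\wedge\widetilde m^u_\varphi\wedge\widetilde m^s_\varphi$ with the equilibrium state of $\z'w$, and then descend along $H_\varphi=\exp(\ker\varphi)$, where the potential is $0$ and the leading resonance is $1=h_{\mathrm{top}}(\phi^\varphi_1)$. Mutual singularity comes from $\z'(w)$ not being cohomologous up to constants under full rank.

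The gap is in item (ii). Your identity $v=e^{-t(\X(a)+\varphi(a))}\mc{L}_{Y_t}m^s_\varphi$, with $Y_t$ the pulled-back field, is correct. The estimate needed to make it tend to $0$ is not available, for two reasons.
\begin{itemize}
\item A bound $\|e^{-t(\X_\M+\varphi)}\|\lesssim 1$ is proved nowhere in the paper. It also does not follow from Remark~\ref{remark:simple}, which only describes the resolvent near $\varphi$ and says nothing about the growth of the propagator on an anisotropic space. On $\M$ the noncompact $\fh$-directions are a further obstruction.
\item Even granting such a bound, $\mc{L}_{Y_t}m^s_\varphi=\iota_{Y_t}dm^s_\varphi$ is a distribution of order $1$. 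You therefore need $C^1$ (or anisotropic) control of $Y_t$ and of the transported test forms, not just $C^0$-smallness of $Y_t$. Derivatives of $Y_t$ along $E_u$ involve the expanding factor $d\tau_{ta}|_{E_u}$, so this control is not automatic.
\end{itemize}
As written, the argument does not close.

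The missing idea is the paper's spectral argument.
\begin{itemize}
\item By Lemma~\ref{lemma:reduction-es}, $m^s_\varphi$ is a section of $\Lambda^{d_s}E^*_{u,\M}$. Hence $\iota_{Z_u}m^s_\varphi=0$, and Cartan's formula gives $\mc{L}_{Z_u}m^s_\varphi=\iota_{Z_u}dm^s_\varphi$.
\item Only the component $d_um^s_\varphi$ of $dm^s_\varphi$ in the $A$-invariant summand $\Lambda^{d_s}E^*_{u,\M}\otimes E^*_{s,\M}$ contributes to this contraction.
\item Since $d$ commutes with Lie derivatives and the splitting is invariant, $d_u$ commutes with $\X$. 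It also does not enlarge supports or wavefront sets. So $d_um^s_\varphi$ is a resonant state at $\varphi$ on a bundle of $(d_s+1)$-forms, with support in $\mc{T}_+$ and wavefront set in $E^*_u$.
\item By Theorem~\ref{theorem:leading}\ref{it:leading-intermediate-hypersurfaces}, $\Re(\sigma^{(d_s+1)}_{\mathrm{RS}})\subset\mathbf{C}^{(d_s),-}$, while $\varphi\in\mathbf{C}^{(d_s)}$. Hence $d_um^s_\varphi=0$, and therefore $\mc{L}_{Z_u}m^s_\varphi=0$.
\end{itemize}
Your heuristic that the extra $E^*_s$ factor is contracted by the forward action is exactly why this spectrum lies to the left of $\mathbf{C}^{(d_s)}$. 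The paper, however, takes that shift from item (iv), which rests on Humbert's results at fixed $\mathbf{s}'$, rather than from a hand-made decay estimate.
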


Note that \eqref{equation:ventilo} is independent of the choice of trivializing sections $\sigma$ and $\sigma_\varphi$. In the following, we will simply write $\mu_\varphi$ in place of $|\mu_\varphi|$. It is likely that the horocyclic invariance (Theorem \ref{theorem:measures-max-entropy}, item \emph{\ref{it:horocyclic-invariance}}) could be established without the additional, highly restrictive assumption that $E_{u,\M}$ is smooth. This assumption, however, simplifies the proof. It poses no issue for applications to Anosov representations because the stable and unstable bundles are smooth.

\subsubsection{Proof} \label{sssection:proof-max-entropy} 

\begin{proof}[Proof of Theorem \ref{theorem:measures-max-entropy}]

\emph{\ref{it:existence_BMM}} Up to arguing on a double cover on which $E_{s/u,\N}^* \to \N$ are orientable, \eqref{equation:ventilo} boils down to
\[
\mu_\varphi= c \cdot \iota_{u_\varphi} \vol_\fa \wedge \pi_\varphi^* \nu_\varphi = c' \cdot \iota_{X_\M} \vol_{\fa} \wedge \pi^* \omega_\varphi.
\]
Let $\pi : \M \to \N$ be the footpoint projection. The Burger-Roblin states satisfy $(-\X-\varphi)m^s_\varphi = 0$ and $(+\X-\varphi)m^u_\varphi = 0$ (with conditions on their support and wavefront set). We write $\varphi = z_0 \alpha_{\M} + \z'$ (with $z_0 = \Pr(\z'w)$). Consequently, they can be written as $m^s_\varphi(x,h) = e^{-\z'(h)} \widetilde{m}_\varphi^s(x)$ and $m^u_\varphi(x,h)=e^{\z'(h)} \widetilde{m}_\varphi^u(x)$, where $\widetilde{m}_\varphi^s \in \mc{D}'(\N, \Lambda^{(d_s)}(E_{s,\N}^*\oplus E_{u,\N}^*))$ satisfies
\begin{equation}
\label{equation:covid1}
(-\mc{L}_{X_{\N}}-z_0+\z' w)\widetilde{m}_\varphi^s = 0, \quad \WF(\widetilde{m}_\varphi^s) \subset E_u^*, \quad \mathrm{supp}(\widetilde{m}_\varphi^s) \subset \mc{T}_+,
\end{equation}
and $\widetilde{m}_\varphi^u \in \mc{D}'(\N, \Lambda^{(d_u)}(E_{s,\N}^*\oplus E_{u,\N}^*))$ satisfies
\begin{equation}
\label{equation:covid2}
(+\mc{L}_{X_{\N}}-z_0+\z' w)\widetilde{m}_\varphi^u = 0, \quad \WF(\widetilde{m}_\varphi^u) \subset E_s^*, \quad \mathrm{supp}(\widetilde{m}_\varphi^u) \subset \mc{T}_-.
\end{equation}
It follows from \S\ref{sssection:ds-forms} that $\omega_\varphi := \alpha_{\N} \wedge \widetilde{m}_\varphi^u \wedge \widetilde{m}_\varphi^s$ is the equilibrium state of the potential $\z'w$ for the flow $(\phi_t)_{t \in \R}$. It is normalized (i.e., a probability measure) with a suitable normalization of $\widetilde{m}_\varphi^{s,u}$. To conclude, it suffices to observe that
\[
\mu_\varphi = \vol_{\fa} \wedge \mu'_\varphi = \iota_{X_\M} \vol_{\fa} \wedge \alpha_{\M} \wedge m^u_\varphi \wedge m^s_\varphi = \iota_{X_\M} \vol_{\fa} \wedge \pi^*\omega_{\varphi}.
\]

We now prove the second assertion in \ref{it:existence_BMM}, concerning the measure of maximal entropy. Let $\pi_\varphi : \mathscr{J} \to \mathscr{K}_\varphi := \mathscr{J}/H_\varphi$ be the projection. On $\scrK_\varphi$, the periods of the hyperbolic flow $(\phi_t^\varphi)_{t \in \R}$ generated by $u_\varphi$ are given by $\{\varphi(\lambda(\gamma)) ~:~\gamma \in \mathbf{P}\}$ and their exponential growth rate is $\delta(\varphi) = 1 = h_{\mathrm{top}}(\phi_1^\varphi)$. Let $X_\varphi$ be the generator of $(\phi_t^\varphi)_{t \in \R}$ and $\alpha_\varphi$ be the Anosov $1$-form associated with the flow (it is only well-defined over $\scrK_\varphi$). 

By construction, $\mu_\varphi = \vol_\fa\wedge\mu_\varphi'$, where $\mu_\varphi' = m^u_\varphi\wedge m^s_\varphi$. Using \eqref{equation:msphi} (and the analogous equation for $m^u_\varphi$), we find that $\X \mu_\varphi' = 0$ and $\mu'_\varphi \in \mc{D}'(\M,\Lambda^{d_\N-1}(E_{s,\M}^*\oplus E_{u,\M}^*))$. Thus $\mu'_\varphi$ is $H_\varphi$-invariant and horizontal, hence $H_\varphi$-basic. Consequently, $\mu_\varphi' = \pi_\varphi^*\nu'_\varphi$ for some distributional $(d_\N-1)$-form $\nu'_\varphi$ supported on $\mathscr{K}_\varphi$. We now claim that $\nu_\varphi:=\alpha_\varphi\wedge\nu'_\varphi$ is (up to rescaling) the measure of maximal entropy for the flow generated by $u_\varphi$ on $\mathscr{K}_\varphi$. 

 Since $(-\X-\varphi)m^s_\varphi = 0$, we find that for all $h \in \ker \varphi$, $-\X(h) m^s_\varphi = 0$ and $(-\X(u_\varphi)-1)m^s_\varphi = 0$. Lemma~\ref{lemma:reduction-es} also gives $\iota_{\X(h)}m^s_\varphi=0$ for $h\in\ker\varphi$; hence $m^s_\varphi$ is $H_\varphi$-basic and descends to a form $\tilde{m}^s_\varphi$ on $\N_\varphi := \M/H_\varphi$ such that $(-\mc{L}_{X_\varphi}-1)\tilde{m}^s_\varphi =(-\mc{L}_{X_\varphi}-h_{\mathrm{top}}(\phi_1^\varphi))\tilde{m}^s_\varphi = 0$, where $X_\varphi$ is the generator of $(\phi_t^\varphi)_{t \in \R}$. That is, $\tilde{m}^s_\varphi$ is the resonant state associated with the measure of maximal entropy. The same argument applies to the co-resonant state $\tilde{m}^u_\varphi$, and $\nu_\varphi=\alpha_\varphi\wedge\nu'_\varphi = \alpha_\varphi \wedge \tilde{m}^u_\varphi \wedge \tilde{m}^s_\varphi$ is the measure of maximal entropy. \\

\emph{\ref{it:horocyclic-invariance}} Let
\[
d_u : C^\infty(\M,\Lambda^{d_s} E_{u,\M}^*) \to C^\infty(\M,\Lambda^{d_s} E_{u,\M}^* \otimes E_{s,\M}^*)
\]
be the component of the exterior derivative mapping $\Lambda^{d_s} E_{u,\M}^*$ to the corresponding summand of the decomposition of $\Lambda^{d_s} E_{u,\M}^* \otimes T^*\M$ into $A$-invariant summands, similarly to \eqref{equation:decomposition-algebre-exterieure} (recall that $E_{s,\M}^*(E_{s,\M}\oplus E_{0,\M})=0$). Then $\X d_u = d_u \X$ by the flow-invariance of the bundles. As a consequence, $d_u m^s_\varphi \in \mc{D}'(\M, \Lambda^{d_s} E_{u,\M}^* \otimes E_{s,\M}^*)$ is a resonant state for the resonance $\varphi$. However, by Theorem \ref{theorem:leading}, item~\ref{it:leading-intermediate-hypersurfaces}, the spectrum on this bundle is contained in $\mathbf{C}^{(d_s),-}$, so $d_u m^s_\varphi=0$. That is, for all $Z_u \in C^\infty(\M, E_{u,\M})$, $\iota_{Z_u} d_u m^s_\varphi = \iota_{Z_u} d m^s_\varphi = 0$. By Cartan's formula, we find:
\[
\mc{L}_{Z_u} m^s_\varphi = \iota_{Z_u} d m^s_\varphi + d \iota_{Z_u} m^s_\varphi = 0,
\] 
using Lemma \ref{lemma:reduction-es}. \\

\emph{\ref{it:analytic_BMM}} This is an immediate consequence of \S\ref{sssection:ds-forms}. The mutual singularity of the equilibrium states follows because the full-rank assumption implies that $\mathbf{s}'_1(w)$ is not cohomologous to $c + \mathbf{s}'_2(w)$ for any constant $c \in \R$.
\end{proof}

\section{Local mixing}

\label{section:decay}

Throughout this section, we work under Assumptions $\hyperlink{AA1}{\rm(A1)-(A4)}$. Fix $\varphi\in\mathbf C^{(d_s)}$.

\subsection{Statement}

We are interested in the ergodic properties of the measures of maximal entropy defined above. We normalize the measure $\mu_\varphi$ such that $c=1$ in \eqref{equation:ventilo}. The probability measure $\nu_\varphi$ (resp. $\omega_{\varphi}$) on $\scrK_\varphi$ (resp. $\scrK$) is ergodic because the corresponding flow $(\phi_t^\varphi)_{t \in \R}$ (resp. $(\phi_t)_{t \in \R}$) is topologically transitive (Assumption \hyperlink{AA4}{\rm(A4)}); see, for instance, \cite[Corollary 7.3.8]{Fisher-Hasselblatt-19}. We let $(e^{tu_\varphi})_{t \in \R}$ be the flow generated by $u_\varphi$ on $\M$.

It is known that if $\hyperlink{AA1}{\rm(A1)-(A4)}$ hold and the cocycle has full rank and is non-arithmetic, then there exists a constant $\kappa > 0$ such that, as $t\to+\infty$, for all $f,g \in C^0_{\comp}(\mathscr{J})$
\begin{equation}
\label{equation:leading-term-asymptotic}
\begin{split}
\int_\mathscr{J} f\circ e^{tu_\varphi} \cdot g~ \dd \mu_\varphi &= \kappa \cdot t^{-k/2} \left(\int_\mathscr{J} f ~\dd \mu_\varphi \int_\mathscr{J} g ~\dd \mu_\varphi + o(1)\right).
\end{split}
\end{equation}
See \cite{Sambarino-15} or \cite[Appendix B]{Sambarino-24}. The constant $\kappa$ is explicit and given by the determinant of a covariance matrix (see \eqref{equation:hessian-phase}). Under the stronger assumption that the Lyapunov spectrum is Diophantine (Definition \ref{definition:diophantine-lyapunov}), one obtains a sharp result for the decay of correlations/local mixing:

\begin{theorem}[Sharp local mixing/decay of correlations]
\label{theorem:mixing-sharp}
Assume that $\hyperlink{AA1}{\rm(A1)-(A4)}$ hold and that the cocycle has full rank and is Diophantine. Then for all $f,g \in C^\infty_{\comp}(\M)$ and $t \geq 1$,
\[
\begin{split}
\int_\mathscr{J} f\circ e^{tu_\varphi} \cdot g~ \dd \mu_\varphi &= \kappa \cdot t^{-k/2}\left(\int_\mathscr{J} f ~\dd \mu_\varphi \int_\mathscr{J} g ~\dd \mu_\varphi + a(t,f,g)\right),
\end{split}
\]
where $a : [1,+\infty) \times C^\infty_{\comp}(\M) \times C^\infty_{\comp}(\M)$ is smooth in all variables, linear in the second and third variables, and
\[
a(t,f,g) \sim \sum_{j \geq 1} t^{-j} C_j(f,g),
\]
where each $C_j : C^\infty_{\comp}(\M) \times C^\infty_{\comp}(\M) \to \C$ is a continuous bilinear form. The asymptotic expansion means that, for every $N\in\Z_{\geq 0}$, for every compact subset $K \subset \M$, there exist $C := C(N,K) > 0$ and $\ell := \ell(N,K) \in\Z_{\geq 0}$ such that for all $f,g \in C^\ell_{\comp}(\M)$ with compact support in $K$:
\[
\left|a(t,f,g)- \sum_{j=1}^{N} t^{-j} C_j(f,g)\right| \leq C t^{-(N+1)}\|f\|_{C^{\ell}(\M)}\|g\|_{C^{\ell}(\M)}, \qquad \forall t \geq 1.
\]
\end{theorem}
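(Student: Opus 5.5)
We propose to reduce the correlation to a Fourier integral over $\fh_\varphi^*$ and evaluate it by stationary phase, with the Diophantine condition controlling the non-stationary region. We work in the decomposition $\fa = \R u_\varphi \oplus \fh_\varphi$ given by Proposition~\ref{proposition:proper-action-phi}: $\M \simeq \N_\varphi\times \fh_\varphi$, with base flow $(\phi^\varphi_t)_{t\in\R}$ and cocycle $w_\varphi$. In these coordinates $\mu_\varphi = \vol_{\fh_\varphi}\wedge \pi_\varphi^*\nu_\varphi$ by Theorem~\ref{theorem:measures-max-entropy}.

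\textbf{Step 1 (Fourier reduction).} Fourier transform $f,g\in C^\infty_{\comp}(\M)$ in the $\fh_\varphi$-variable, $\hat f(x,\xi) := \int_{\fh_\varphi} e^{i\xi(h)} f(x,h)\,\dd h$ for $\xi\in\fh_\varphi^*$. By Plancherel,
\[
\int_\scrJ f\circ e^{tu_\varphi}\, g\,\dd\mu_\varphi = (2\pi)^{-k}\int_{\fh_\varphi^*} \int_{\scrK_\varphi} \bigl(e^{t(X_\varphi + i\xi(w_\varphi))}\hat f(\cdot,\xi)\bigr)\,\overline{\hat g(\cdot,\xi)}\,\dd\nu_\varphi\,\dd\xi .
\]
This expresses the correlation through the twisted transfer operators $\mathcal L_{\xi,t}$ on $\N_\varphi$, i.e.\ the propagators of the operators $P_\pm(\mathbf s)$ with $\mathbf s' = i\xi$ purely imaginary. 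Since $f,g$ are smooth, $\hat f,\hat g$ decay rapidly in $\xi$, so only $|\xi|\le t^{\epsilon}$ contributes modulo $O(t^{-\infty})$. Here we use Lemma~\ref{lemma:vilain} together with the fact that $|\mathcal L_{\xi,t}|\le \mathcal L_{0,t}$ on measures, so the twisted propagators are bounded uniformly in $\xi$.

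\textbf{Step 2 (spectral decomposition near $\xi=0$).} By Remark~\ref{remark:simple} and Theorem~\ref{theorem:leading}, for $\xi$ in a small neighbourhood of $0$ the leading resonance is simple and given by $\sigma(i\xi) = \mathrm{Pr}(i\xi(w_\varphi))-1$, with projector $\Pi_{i\xi}$. We would like to write
\[
\mathcal L_{\xi,t} = e^{t\sigma(i\xi)}\Pi_{i\xi} + O(e^{-\eta t})
\]
on suitable anisotropic spaces, uniformly for small $\xi$. The difficulty is that, in the Axiom A setting, an essential spectral gap for the real-time propagator is not available in general; we obtain the expansion through the meromorphic resolvent instead. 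Concretely, we take the inverse Laplace transform in $s_0$ of \eqref{equation:cas-cool2} and shift the contour: $H(\mathbf s)$ is holomorphic, and the polynomial resolvent bounds needed to move the contour would come from the Dolgopyat-type estimates already implicit in \cite{Humbert-24} and \cite{Delarue-Monclair-Sanders-24}. Expanding by \eqref{equation:derivative1}--\eqref{equation:derivative2}, we get $\sigma(i\xi) = i\xi(c) - \tfrac12 \mathrm{Var}(\xi(w_\varphi)) + O(|\xi|^3)$, where $c=0$ because $u_\varphi$ annihilates $T_\varphi\mathbf C^{(d_s)}$. Full rank makes the quadratic form positive definite.

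\textbf{Step 3 (stationary phase).} Inserting this expansion, the main term is
\[
\int e^{t\sigma(i\xi)}\langle \Pi_{i\xi}\hat f,\hat g\rangle\,\dd\xi .
\]
This is a Laplace-type integral with a nondegenerate Gaussian critical point at $\xi=0$. Rescaling $\xi = t^{-1/2}\eta$ and applying the stationary phase lemma gives $\kappa t^{-k/2}$ times a full expansion in powers of $t^{-1/2}$. The odd powers vanish by parity of the Gaussian moments, which yields an expansion in $t^{-j}$; the leading coefficient is $\int f\,\dd\mu_\varphi\int g\,\dd\mu_\varphi$, because $\Pi_0$ is the pairing with $\nu_\varphi$. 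Smooth dependence of $\Pi_{i\xi}$ on $\xi$ gives continuity of the $C_j$ in $C^\ell$ norms.

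\textbf{Step 4 (main obstacle: $\xi$ away from $0$).} We must show
\[
\|\mathcal L_{\xi,t}\| \le C|\xi|^{N} t^{-M}
\]
for all $M$, uniformly for $\delta\le|\xi|\le t^\epsilon$. Non-arithmeticity, via Theorem~\ref{theorem:leading}\ref{it:leading-concavity}, only excludes resonances on the critical line $\Re s_0 = \mathrm{Pr}(0)$ for each fixed $\xi$, with no uniformity as $|\xi|\to\infty$. We plan to use the Diophantine set $B$: the periodic orbits $\gamma$ with $\lambda(\gamma)\in B$ give $|e^{i\xi\cdot\lambda(\gamma)}-1|\ge|\xi|^{-\nu}$. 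Combined with a quantitative version of the rigidity argument in \cite{Humbert-24} (a twisted resonant state on the critical axis must have unimodular holonomy along every closed orbit), this should push every resonance with $\mathbf s'=i\xi$ to $\Re s_0 \le \mathrm{Pr}(0) - c|\xi|^{-\nu'}$. After a contour shift, this produces decay $e^{-ct|\xi|^{-\nu'}}$, which is $O(t^{-\infty})$ when $|\xi|\le t^{\epsilon}$ with $\epsilon$ small. The hard part is making this quantitative gap uniform and pairing it with polynomial resolvent bounds in $|\Im s_0|$; we would attempt it via a transfer-operator perturbation argument along the orbits in $B$, in the spirit of \cite{Cekic-Lefeuvre-Munoz-26}.
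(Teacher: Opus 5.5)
Your architecture matches the paper's: the fibrewise Fourier transform, the truncation to $|\xi|\le t^{\epsilon}$ via $|e^{t(X_\varphi+i\xi(w_\varphi))}u|\le e^{tX_\varphi}|u|$ (which is valid), the simple leading resonance near $\xi=0$ with vanishing linear term since $\int w_\varphi\,\dd\nu_\varphi=0$, and Laplace/stationary phase with cancelling half-integer orders. Working in the $\fh_\varphi$-trivialisation is a harmless simplification.

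The gap is the analytic estimate that makes everything except the Gaussian main term $O(t^{-\infty})$. Both your Step~2 and your Step~4 rest on it. Formula \eqref{equation:cas-cool2} is only local near $\varphi$. Inverting the Laplace transform therefore needs polynomial bounds on the twisted resolvent along the whole critical axis, i.e.\ for all time-frequencies $\Im s_0$, uniformly in $\xi$. Without an essential spectral gap, which is unavailable here, the contour cannot be pushed to the left. Neither \cite{Humbert-24}, which only describes the first resonance, nor \cite{Delarue-Monclair-Sanders-24}, which treats a special algebraic case, supplies such bounds.

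Your Step~4 mechanism also fails as stated, for two reasons. First, a resonance-free strip of width $c|\xi|^{-\nu'}$ uniform in $\Im s_0$ is not something the Diophantine condition yields: for fixed $\xi$, resonances may approach the critical axis as $|\Im s_0|\to\infty$. Second, the relevant quantity is not $|e^{i\xi\cdot\lambda(\gamma)}-1|$. Holonomies of approximate eigenfunctions at time-frequency $b$ involve the joint covector $(b,\xi)$ paired with $\lambda(\gamma)\in\fa$.

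The missing idea is to run Dolgopyat's rapid-mixing argument \cite{Dolgopyat-98-2} on a symbolic model with a vector-valued roof function $\boldsymbol\tau=(\tau,-W)$, where $W$ is the cocycle integrated between returns. The spectral parameter is the joint one, $\boldsymbol b=(b,\xi)\in\R^{k+1}$. Suppose the twisted transfer operators fail to contract by $1-|\boldsymbol b|^{-\alpha}$ after about $\log|\boldsymbol b|$ iterates. Then one builds approximate eigenfunctions forcing $|e^{i\boldsymbol b'\cdot\lambda(\gamma_i)}-1|\le|\boldsymbol b'|^{-\alpha'}$ simultaneously for the finitely many Diophantine witnesses $\lambda(\gamma_i)\in B$. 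This is impossible once $\alpha'>\nu$.

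This gives two estimates:
\begin{enumerate}
\item For all $|\xi|\ge\eps_0$ and $t\ge0$, $|\langle \mathcal L_{\xi,t}\hat f(\cdot,\xi),\hat g(\cdot,\xi)\rangle|\le C\langle t\rangle^{-N}\langle\xi\rangle^{N'}\|\hat f(\cdot,\xi)\|_{C^\ell}\|\hat g(\cdot,\xi)\|_{C^\ell}$. The polynomial loss in $\xi$ is absorbed by the Schwartz decay of $\hat f,\hat g$, so neither a $t^{\epsilon}$ cutoff nor a $\xi$-dependent gap is needed.
\item With $\xi$ treated as a small extra parameter, $\langle \mathcal L_{\xi,t}f,g\rangle=e^{t\sigma(i\xi)}A_\xi(f,g)+O(\langle t\rangle^{-N})$ uniformly for $|\xi|\le\eps_0$. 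The coefficient $A_\xi$ is then identified with the spectral-projector pairing by deforming the contour around the simple pole only.
\end{enumerate}
Without this estimate, neither your near-zero remainder nor your far-region bound is established.
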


We believe that the Diophantine condition in Theorem \ref{theorem:mixing-sharp} is not needed, and that the non-arithmetic condition should suffice.

\subsection{Proof} 

\label{ssection:sharp-decay}

\begin{proof}[Proof of Theorem \ref{theorem:mixing-sharp}]
For $f, g \in C^\infty_{\comp}(\M)$ and $(x,h) \in \N \times \fh$, Fourier inversion gives
\[
f(x,h) = \dfrac{1}{(2\pi)^k} \int_{\fh^*} e^{i \s' h} \widehat{f}_{\s'}(x) \dd \s', \qquad \widehat{f}_{\s'}(x) := \int_{\fh} e^{-i\s' h} f(x,h) \dd h.
\]
Here, $\dd h$ is the Lebesgue measure on $\fh$ determined by $(\iota_{X_\M}\vol_\fa)|_\fh$, and $\dd\s'$ is the dual Lebesgue measure on $\fh^*$. Let $a \in \fa$, $a = tX_{\M} + v$, where $t \in \R, v \in \fh$.
Letting $X_{\s'} := X_{\N} + i\s' w$ for $\s' \in \fh^*$, we find:
\[
\begin{split}
e^{a}f(x,h) & = \dfrac{1}{(2\pi)^k} \int_{\s' \in \fh^*} e^{i \s' h} e^{i\s'v}e^{i \s' \int_0^t w(\phi_s x)\dd s} \widehat{f}_{\s'}(\phi_t x) \dd \s' \\
& = \dfrac{1}{(2\pi)^k} \int_{\s' \in \fh^*} e^{i \s' h} e^{i\s'v} \left(e^{tX_{\s'}} \widehat{f}_{\s'}\right)(x) \dd \s'.
\end{split}
\]
By Theorem \ref{theorem:measures-max-entropy}, item \emph{\ref{it:existence_BMM}}, the measure $\mu_\varphi$ is given by the product $\omega_{\varphi} \otimes \dd h$ in the $(x,h)$ coordinates. By Parseval's identity, we obtain:
\begin{equation}
\label{equation:fourier-etx}
\langle e^{a}f,g\rangle_{L^2(\scrJ,\mu_\varphi)} = \dfrac{1}{(2\pi)^k} \int_{\s' \in \fh^*} e^{i\s' v} \langle e^{tX_{\s'}} \widehat{f}_{\s'}, \widehat{g}_{\s'}\rangle_{L^2(\N,\omega_\varphi)} \dd \s'.
\end{equation}
This corresponds to the correlations in Theorem \ref{theorem:mixing-sharp} with the difference that this is now sesquilinear in $g$. However, up to changing $g$ by $\bar g$, it suffices to prove an expansion for $\langle e^{tu_\varphi}f,g\rangle_{L^2(\scrJ,\mu_\varphi)}$. 

Let $\chi \in C^\infty_{\comp}(\fh^*)$ be a smooth nonnegative bump function equal to $1$ on the ball of radius $\eps_0 > 0$ centered at $0 \in \fh^*$, where $\eps_0 > 0$ will be chosen small enough, and equal to $0$ outside the ball of radius $2\eps_0$. We decompose $u_\varphi = q_\varphi X_\M + v_\varphi$ with $q_\varphi = \alpha_\M(u_\varphi) > 0$, $v_\varphi \in \fh$. Given $f,g \in C^\infty_{\comp}(\mathscr{J})$, applying \eqref{equation:fourier-etx}, we find:
\[
\begin{split}
\langle e^{tu_\varphi}f,g\rangle_{L^2(\scrJ,\mu_\varphi)} & = \dfrac{1}{(2\pi)^k} \underbrace{\int_{\s' \in \fh^*} e^{i\s'(tu_\varphi)} \langle e^{tq_\varphi X_{\s'}} \widehat{f}_{\s'}, \widehat{g}_{\s'}\rangle_{L^2(\N,\omega_\varphi)} \chi(\s') \dd \s'}_{=: \text{(I)}} \\
&+ \dfrac{1}{(2\pi)^k} \underbrace{\int_{\s' \in \fh^*} e^{i\s'(tu_\varphi)} \langle e^{t q_\varphi X_{\s'}} \widehat{f}_{\s'}, \widehat{g}_{\s'}\rangle_{L^2(\N,\omega_\varphi)} (1-\chi(\s'))\dd \s'}_{=: \text{(II)}}.
\end{split}
\]
We deal with the two terms separately. \\

\emph{Term $\mathrm{(II)}$.} We claim that, for every $N\in\mathbb N$, there exist $C > 0$ and $\ell,N'\in\mathbb N_0$ (depending on $N$) such that, for all $|\s'| \geq \eps_0$ and $t \geq 0$,
\begin{equation}
\label{equation:rapid-decay}
|\langle e^{tX_{\s'}} \widehat{f}_{\s'}, \widehat{g}_{\s'}\rangle_{L^2(\N,\omega_{\varphi})}| \leq C \langle t \rangle^{-N} \langle \s' \rangle^{N'} \|\widehat{f}_{\s'}\|_{C^\ell(\N)} \|\widehat{g}_{\s'}\|_{C^\ell(\N)}.
\end{equation}
Here, $\langle y\rangle:=(1+|y|^2)^{1/2}$.
Assume for now that \eqref{equation:rapid-decay} holds. We then find that if
\begin{equation}
\label{equation:decay-fourier}
\|\widehat{f}_{\s'}\|_{C^\ell(\N)} \leq C \langle \s' \rangle^{-(N'+k+1)/2}
\end{equation}
for another constant $C > 0$, with the same estimate for $\widehat{g}_{\s'}$, then
\[
|\langle e^{tX_{\s'}} \widehat{f}_{\s'}, \widehat{g}_{\s'}\rangle_{L^2(\N,\omega_{\varphi})}| \leq C \langle t \rangle^{-N} \langle \s' \rangle^{-(k+1)}.
\] 
The estimate \eqref{equation:decay-fourier} holds for all $f,g \in C^\infty_{\comp}(\mathscr{J})$ and, more generally, for a broader class of functions. As a consequence, integrating over $\mathfrak{h}^*$, we find that
\[
\mathrm{(II)} = \int_{\s' \in \fh^*} e^{i\s'(tu_\varphi)} \langle e^{tq_\varphi X_{\s'}} \widehat{f}_{\s'}, \widehat{g}_{\s'}\rangle_{L^2(\N,\omega_\varphi)} (1-\chi(\s'))\dd \s' = \mc{O}(\langle t \rangle^{-N}).
\]

To prove \eqref{equation:rapid-decay}, we claim that it suffices to adapt the arguments in \cite{Dolgopyat-98-2}.
Choose $\gamma_1,\ldots,\gamma_M\in\mathbf P$ such that $\{\lambda(\gamma_1),\ldots,\lambda(\gamma_M)\}$ is Diophantine. In the notation of \cite{Dolgopyat-98-2}, we replace $\tau$ by a vector-valued function $\boldsymbol{\tau} : \Sigma \to \fa=\R^{k+1}$ and $b$ by a vector-valued spectral parameter $\boldsymbol{b} \in \R^{k+1}$, where $\Sigma$ is the symbolic shift associated with a Markov partition. In our case, $\boldsymbol{\tau} = (\tau,-W)$, where $\tau$ is the return time to the Markov partition, and $W : \Sigma \to \fh$ is the function obtained from $w$ by integration along flow lines between two consecutive return times to the Markov partition; the spectral parameter is $\boldsymbol{b} = (b, \s')$, where $b$ is the Fourier parameter corresponding to the time direction (as in \cite{Dolgopyat-98-2}) and $\s'$ is the Fourier parameter arising from the Fourier decomposition above.

The estimate in \cite[Item (v), Theorem 1]{Dolgopyat-98-2} holds under the Diophantine condition and implies \eqref{equation:rapid-decay} by repeating the argument in \cite[Section 10]{Dolgopyat-98-2}. Indeed, arguing as in \cite[Sections 8 and 13]{Dolgopyat-98-2}, if \cite[Item (v), Theorem 1]{Dolgopyat-98-2} does not hold, then one can construct a sequence of approximate eigenfunctions for a family of spectral parameters $|\boldsymbol{b}| \to +\infty$ with approximately constant modulus. As in \cite[Section 13]{Dolgopyat-98-2}, this results in the estimate
\[
|e^{i n(\beta,\boldsymbol{b}) \boldsymbol{b} \cdot \boldsymbol{\ell}_i}-1| \leq |\boldsymbol{b}|^{-\alpha},
\]
for all $1 \leq i \leq M$, where $n(\beta,\boldsymbol{b}) := [\beta \log|\boldsymbol{b}|]$, and $\boldsymbol{\ell}_i$ is now equal to the integral of $\boldsymbol{\tau}$ along the orbit of the periodic point $\omega_i \in \Sigma$, which corresponds to a point $x_i$ on the periodic orbit $\gamma_i$. The exponent $\alpha > 0$ may be chosen freely. After possibly changing the exponent $\alpha > 0$, the previous estimate may be rewritten as
\[
|e^{i \boldsymbol{b}'\cdot \boldsymbol{\ell}_i}-1| \leq |\boldsymbol{b}'|^{-\alpha'},
\]
Here, $\boldsymbol b':=n(\beta,\boldsymbol b)\boldsymbol b$; for a diverging family $|\boldsymbol{b}'| \to +\infty$, the periodic point $\omega_i$ of the shift corresponds to the periodic orbit $\gamma_i \subset \mathscr{K}$ and 
\[
\boldsymbol{\ell}_i = \ell_{\gamma_i} X_{\M} - \int_0^{\ell_{\gamma_i}} w(\phi_s x_i) \dd s = \lambda(\gamma_i). 
\]
Choose the approximate-eigenfunction accuracy so that $\alpha'>\nu$, where $\nu$ is the exponent of the Diophantine witness. This contradicts the Diophantine assumption and proves \eqref{equation:rapid-decay}. We also refer the reader to \cite[Section 2.2]{Field-Melbourne-Torok-07} where a similar argument is given. \\

\emph{Term $\mathrm{(I)}$.} Let $\s' \mapsto \sigma(\s') \in \C$ be the holomorphic function defined for $\s' \in V$, a neighborhood of $0$ in $\fh^*_{\C}$, such that $\mathbf{C}^{(d_s)}_{\C}$ coincides near $\varphi$ with $\varphi + \{(\sigma(\s'),\s') ~:~ \s' \in V\}$. This parametrization is well-defined for $\s'$ near $0$ because the leading resonance is simple (Theorem \ref{theorem:leading}, item (i)). In the first step above, we then choose $\eps_0 > 0$ such that the ball of radius $2\eps_0$, centered at $0$ is contained in $V \cap \fh^*$.

For $\s$ close to $0$, we can write:
\[
P_+(\s+\varphi)^{-1} = P_+^{\mathrm{hol}}(\s)-\dfrac{\Pi^+_{\s'}}{s_0-\sigma(\s')},
\]
where $\s \mapsto P_+^{\mathrm{hol}}(\s)$ is holomorphic as a bounded operator on compactly supported anisotropic distributions in $\mc{D}'_{E_u^*}$ (Theorem \ref{theorem:rt-anosov2}, item (i)), and
\[
\Pi^+_{\s'} =-\dfrac{1}{2i\pi} \int_\gamma P_+(s_0'\alpha_{\M}+\s'+\varphi)^{-1} \dd s_0'
\]
denotes the corresponding spectral projector onto the resonance $\varphi + \sigma(\s')\alpha_{\M} + \s'$ of $P_+$, with $\gamma$ a small counterclockwise-oriented contour around $0$. Similarly, as the poles of $P_-$ are equal to those of $P_+$ (see Remark \ref{remark:duality}), we can write
\begin{equation}
\label{equation:decalage}
P_-(\s+\varphi)^{-1}=P_-^{\mathrm{hol}}(\s)-\dfrac{\Pi^-_{\s'}}{s_0-\sigma(\s')}.
\end{equation}

Writing $\varphi=z_0\alpha_{\M}+\z'$, one has $\sigma(\s')=\Pr((\z'+\s')w)-\Pr(\z'w)$, where $\Pr(\bullet)$ stands for the analytic continuation of the pressure functional to complex-valued potentials. By \eqref{equation:derivative1} and \eqref{equation:derivative2}, $\fh^* \ni \s' \mapsto \sigma(i\s')$ admits the following expansion near $0$:
\begin{equation}
\label{equation:asymptotic-exp-sigma}
\sigma(i\s') = 0 + i \int_{\scrK} \s'w \dd \omega_\varphi - \dfrac{1}{2} \mathrm{Var}_{\omega_{\varphi}}(\s' w) + \mc{O}(|\s'|^3),
\end{equation}
and $\mathrm{Var}_{\omega_{\varphi}}(\s' w) \geq c|\s'|^2 > 0$ for $\s'\neq0$ as the cocycle has full rank (Theorem \ref{theorem:leading}, item (ii)). In particular, for $\s'$ near $0$, $\Re(\sigma(i\s')) \leq -c|\s'|^2$. 

We now claim that for every $N\in\mathbb N$, there exists $\ell\in\mathbb{Z}_{\geq 0}$ such that for all $\s' \in \fh^*, |\s'| \leq \eps_0$, for all $f,g \in C^\ell(\N)$, one has
\begin{equation}
\label{equation:mamy}
 \langle e^{tX_{\s'}} f, g\rangle_{L^2(\N,\omega_\varphi)} = e^{\sigma(i\s')t}\big\langle\!\langle  \Pi^-_{i\s'}(f m_u^{(d_u)}),g m_s^{(d_s)} \big\rangle\!\rangle_{\N}  + \mc{O}(\langle t \rangle^{-N} \|f\|_{C^\ell} \|g\|_{C^\ell}),
\end{equation}
where the remainder is uniform with respect to $\s'$, and, given $\beta_1 \in \mc{D}'_{E_u^*}(\N, \Lambda^k (E^*_{s,\N} \oplus E^*_{u,\N}))$ and $\beta_2 \in \mc{D}'_{E_s^*}(\N,\Lambda^{d_{\N}-k-1} (E^*_{s,\N} \oplus E^*_{u,\N}))$, we introduce, similarly to \eqref{equation:pairing}:
\begin{equation}
\label{equation:echographie}
\big\langle\!\langle \beta_1, \beta_2 \big\rangle\!\rangle_{\N} := \int_{\N} \alpha_{\N} \wedge \beta_1 \wedge \overline{\beta_2}.
\end{equation}
Note that
\[
\begin{split}
\big\langle\!\langle \Pi^-_{\mathbf{0}}(f m_u^{(d_u)}),g m_s^{(d_s)} \big\rangle\!\rangle_{\N} & = \big\langle\!\langle f m_u^{(d_u)}, m_s^{(d_s)} \big\rangle\!\rangle_{\N} \big\langle\!\langle  m_u^{(d_u)}, gm_s^{(d_s)} \big\rangle\!\rangle_{\N} \\
&= \int_\N f \dd \omega_\varphi \int_\N \overline{g} \dd\omega_\varphi
\end{split}
\]
by \eqref{equation:muu}. We now justify \eqref{equation:mamy}. First, it follows from an adaptation of \cite{Dolgopyat-98-2} that
\begin{equation}
\label{equation:dolgo-bound}
 \langle e^{tX_{\s'}} f,g\rangle_{L^2(\N,\omega_\varphi)} = e^{\theta(\s')t}\left(A_{\s'}(f,g) + \mc{O}(\langle t \rangle^{-N} \|f\|_{C^\ell} \|g\|_{C^\ell})\right),
 \end{equation}
 where the remainder is uniform with respect to $|\s'| \leq \eps_0$ and $\theta(\s') = \Pr((\z'+i\s')w)-\Pr(\z'w)=\sigma(i\s')$, $\theta(\mathbf{0})=0$ and $A_{\mathbf{0}}(f,g)=\int_{\N} f  \dd\omega_\varphi \int_{\N} \overline{g} \dd\omega_\varphi$. This is merely a version of the arguments in \cite{Dolgopyat-98-2} involving an extra small parameter $\s'$; see also \cite{Petkov-Stoyanov-16} where a similar analysis for transfer operators with two parameters is carried out. To identify $A_{\s'}(f,g)$ with $\big\langle\!\langle  \Pi^-_{i\s'}(f m_u^{(d_u)}),g m_s^{(d_s)} \big\rangle\!\rangle_{\N}$, one can argue as follows. Writing
 \[
 \langle e^{tX_{\s'}} f,g\rangle_{L^2(\N,\omega_\varphi)}  = \big\langle\!\langle e^{tP_-(\z+i\s')}(f m_u^{(d_u)}),g m_s^{(d_s)} \big\rangle\!\rangle_{\N},
 \]
 we obtain that \eqref{equation:dolgo-bound} provides a high-frequency estimate (as $s_0 \in \R, s_0 \to \pm \infty$) on the resolvent
 \[
 (P_-(\z+i\s')-is_0)^{-1} = P_-(\z+i(s_0,\s'))^{-1} = -\int_0^{+\infty} e^{tP_-(\z+i(s_0,\s'))} \dd t
 \]
 as a bounded map from $C^\ell(\N)\cdot m_u^{(d_u)}$ to the dual of $C^\ell(\N)\cdot m_s^{(d_s)}$. In turn, using the expansion \eqref{equation:decalage} and a standard contour deformation argument (see, for instance, the proof of \cite[Theorem 4.13]{Cekic-Lefeuvre-Munoz-26}), one obtains \eqref{equation:mamy}. In particular, this shows that $A_{\s'}(f,g)=\big\langle\!\langle  \Pi^-_{i\s'}(f m_u^{(d_u)}),g m_s^{(d_s)} \big\rangle\!\rangle_{\N}$.

In \eqref{equation:mamy}, after integration with respect to $\s'$ (over the support of $\chi$), the last term contributes an $\mc{O}(\langle t\rangle^{-N})$ term. Hence,
\begin{equation}
\label{equation:done}
\text{(I)} = \int_{\fh^*} e^{(i\s'+\sigma(i\s')\alpha_{\M})(tu_\varphi)}\big\langle\!\langle  \Pi^-_{i\s'}(\widehat{f_{\s'}} m_u^{(d_u)}),\widehat{g_{\s'}} m_s^{(d_s)} \big\rangle\!\rangle_{\N}  \chi(\s')\dd\s'+ \mc{O}(\langle t \rangle^{-N}).
\end{equation}
To apply the stationary phase lemma (see, for instance, \cite[Theorem 2.3]{Melin-Sjostrand-75}), we introduce the phase
\[
\Phi(\s') := (i\s'+\sigma(i\s')\alpha_{\M})(u_\varphi)
\]
It satisfies $\Phi(0)=0$ and $d\Phi(0)=0$ because $u_\varphi$ is in the kernel of the linear forms in $T_\varphi \mathbf{C}^{(d_s)}_{\C}$. In addition, for $\s'$ near $0$, $\Re(\Phi(\s')) \leq -c|\s'|^2$. The Hessian is given by
\[
d^2\Phi(0)(\eta,\eta)=-\mathrm{Var}_{\omega_{\varphi}}(\eta w) \alpha_{\M}(u_\varphi) < 0, \qquad \eta\in\fh^*\setminus\{0\}.
\]
A direct computation shows that
\[
\begin{split}
\big\langle\!\langle \Pi^-_{\mathbf{0}}(\widehat f_{0} m_u^{(d_u)}),\widehat g_{0} m_s^{(d_s)} \big\rangle\!\rangle_{\N} & = \int_{\scrK} \widehat f_0 \dd \omega_{\varphi} \int_{\scrK} \widehat g_0 \dd \omega_{\varphi} = \int_{\scrJ} f \dd \mu_\varphi \int_{\scrJ} g \dd\mu_\varphi.
\end{split}
\]
Moreover,
\begin{equation}
\label{equation:hessian-phase}
|\det d^2\Phi(0)|^{1/2} = \alpha_{\M}(u_\varphi)^{k/2} (\det H)^{1/2},
\end{equation}
where $H$ is the symmetric covariance matrix defined by $H(\eta,\eta) := \mathrm{Var}_{\omega_{\varphi}}(\eta w)$ for $\eta \in \fh^*$, and $\det H$ is relative to the dual measure $\dd\s'$.

The cutoff amplitude in \eqref{equation:done} is smooth with respect to $\s'$ with derivatives controlled by finitely many seminorms of $f$ and $g$. Applying the stationary phase lemma, we find
\[
\begin{split}
&\langle e^{tu_\varphi}f,g\rangle_{L^2(\scrJ,\mu_\varphi)}\\
& = \dfrac{t^{-k/2}}{(2\pi)^{k/2}|\det d^2\Phi(0)|^{1/2}}\left(\big\langle\!\langle \Pi^-_{\mathbf{0}}(\widehat f_{0} m_u^{(d_u)}),\widehat g_{0} m_s^{(d_s)} \big\rangle\!\rangle_{\N} + \sum_{j=1}^{N-1} t^{-j} C_j(f,g) + \mc{O}(t^{-N})\right) \\
& = \dfrac{t^{-k/2}}{(2\pi)^{k/2}|\det d^2\Phi(0)|^{1/2}}\left(\int_\mathscr{J} f ~\dd \mu_{\varphi} \int_\mathscr{J} g ~\dd \mu_{\varphi} + \sum_{j=1}^{N-1} t^{-j} C_j(f,g) + \mc{O}(t^{-N})\right).
\end{split}
\]
This concludes the proof.
\end{proof}

\section{Structure theory and dynamics for Anosov subgroups}

\label{section:structure}

In this section, we recall the necessary material from the theory of semisimple Lie groups (see \S\ref{ssection:structure-theory}) and present the locally homogeneous dynamics we will study (see \S\ref{ssec:locally-homogeneous-dynamics}). Discrete subgroups are discussed in \S\ref{ssection:discrete-subgroups}.

Our setting is similar to that of \cite{Gueritaud-Guichard-Kassel-Wienhard-17}. We consider a noncompact connected semisimple linear real algebraic group $\mathbf{G}$, and more specifically its real points $G =\mathbf{G}(\R)$. Here, connected means Zariski-connected: the complex points $\mathbf{G}(\C)$ form a connected complex Lie group in the usual topological sense. In particular, our results apply to the following classical groups: $\mathrm{SL}_n$, $\mathrm{Sp}_{2n}$, $\mathrm{SO}_{p,q}$, $\mathrm{SU}_{p,q}$, $\mathrm{Sp}_{p,q}$. The arguments would be unchanged if $G$ consisted of a finite number of components of $\mathbf{G}(\R)$ instead (which is the assumption made in \cite{Gueritaud-Guichard-Kassel-Wienhard-17}).

\subsection{Structure theory of real reductive groups} \label{ssection:structure-theory}

In this subsection, we review the structure theory of real reductive groups. It will be used later to study the dynamics related to Anosov subgroups. Throughout, we formulate almost all results for a real reductive group $G$ in the sense of Knapp \cite[\S VII.2]{Knapp-96}.
This includes the groups of real points considered in this paper. The greater generality also allows us to apply the same structure theory to the Langlands $M$-factors of parabolic subgroups: these are real reductive in Knapp's sense, but may be disconnected and need not be semisimple, even when the ambient group is connected and semisimple \cite[\S VII.7, Proposition 7.82]{Knapp-96}.
Unless otherwise stated, all results can be found in, or directly derived from \cite[\S VI \& \S VII]{Knapp-96}.

\subsubsection{Notation} The Lie algebra $\fg$ of $G$ is endowed with a \emph{Cartan involution} $\theta$, fixing a subalgebra $\fk$ and acting as $-1$ on a complement $\fs$ such that
	\[
	 [\fs,\fs]\subset \fk, \qquad [\fs,\fk]\subset\fs.
	 \]
	 Correspondingly, there is a global Cartan involution $\Theta:G\to G$ whose fixed-point subgroup $K$ is a maximal compact subgroup of $G$ with Lie algebra $\fk$. The Lie algebra $\fg$ is endowed with a $\theta$- and $\mathrm{Ad}(G)$-invariant non-degenerate bilinear form $\beta$ so that
	\begin{equation}
	\label{equation:produit-scalaire}
	\langle \bullet, \bullet \rangle := - \beta(\bullet, \theta \bullet)
	\end{equation}
	is a scalar product. For semisimple Lie groups $G$, $\beta$ can be chosen to be the Killing form. We use it to define orthogonal complements in all our arguments. The $\operatorname{ad}$-action of elements of $\fs$ is self-adjoint, so all elements of $\fs$ are semisimple.
	
	We choose a maximal subalgebra $\fa\subset \fs$, which is necessarily abelian and consists only of semisimple elements. Correspondingly, we obtain a set of (restricted) roots $\Delta$, which form a (possibly non-reduced) \emph{root system} in $\fa^\ast$. Corresponding to the roots, we have a root space decomposition
	\[
	\fg = \fm \oplus \fa \oplus \bigoplus_{\alpha\in \Delta} \fg_\alpha, 
	\]
	where $\fm = \fz_\fk(\fa)$ and $\fm\oplus \fa = \fz_{\fg}(\fa)$. If the group $\mathbf{G}$ is \emph{split}, as in the $\mathrm{SL}_n$ case, the $\fg_\alpha$ are all lines, but this need not be the case in general.

\subsubsection{Weyl chambers and Weyl groups}\label{sssec:weyl-chambers}

For each $H\in\fa$, denote by $\Delta^{+,-,0}(H)$ the sets of roots that are positive/negative/zero on $H$. The equivalence relation $H\sim H'$ given by $\Delta^\pm(H)=\Delta^\pm(H')$ divides $\fa$ into \emph{generalized Weyl chambers} (or GWC), generically denoted $\scrC$, and we write $\Delta^{+,-,0}(H)= \Delta^{+,-,0}_\scrC$ whenever $H\in\scrC$. We denote the set of all generalized Weyl chambers by $\mathrm{GWC}$.

The open generalized Weyl chambers are the usual \emph{Weyl chambers}, generically denoted $\fapp$, with closure $\fa_+ := \overline{\fapp}$. No root can vanish on an open Weyl chamber, so corresponding to some choice of $\fapp$, we get positive roots $\Delta^+=\Delta^+_\fapp$. We can also choose a simple system $\Sigma\subset \Delta^+$, so that $\Delta^+ \subset \mathrm{span}_{\mathbb{N}}(\Sigma)$. If $\scrC'\subset \overline{\scrC}$, we write $\scrC'\leq \scrC$. For $\scrC_1\leq \fapp$ and $\scrC_2\leq\fapp$, $\mathrm{conv}(\scrC_1,\scrC_2)$ is the strict (i.e., with coefficients in $(0,1)$) convex envelope of $\scrC_1$ and $\scrC_2$. It is the smallest chamber $\scrC$ with $\scrC_1,\scrC_2\leq \scrC$.

 We set $A:=\exp(\fa)$ and define the \emph{Weyl group}
 \[
 W(G,A) \coloneqq N_K(\fa)/Z_K(\fa).
 \]
 Its elements are denoted $w=[\dot{w}]$, where $\dot{w}$ is a choice of representative in $N_K(\fa)$. It is a finite group acting on $\fa$ by $w\cdot H = \mathrm{Ad}(\dot{w})H$ through Euclidean isometries, and $W(G,A)$ is generated by reflections across the hyperplanes $\ker\alpha, \alpha \in \roots$. The action of $W(G,A)$ on $\fa$ also induces an action on the roots in $\Delta$ and hence on the faces $\scrC\in \mathrm{GWC}$. Given an open chamber $\fapp$, $w\mapsto w\cdot\fapp$ is a bijection between $W(G,A)$ and the set of open Weyl chambers. Additionally, for $\scrC\in\mathrm{GWC}$, there exists $w\in W(G,A)$ such that $w\cdot\scrC \subset \fap$.

Since $W(G,A)$ acts simply and transitively on open Weyl chambers, for each $\fapp$, there is a unique element $w_\fapp\in W(G,A)$ with $w_\fapp\cdot\fapp = -\fapp$, called the \emph{longest Weyl group element}. The \emph{opposite involution} is
\begin{equation}\label{equation:opposite-involution}
	\iota_\fapp \coloneqq -w_\fapp,
\end{equation}
which maps $\iota_\fapp : \fa \to \fa$ and fixes $\fapp$ setwise (since $\iota_\fapp(\fapp) = -w_\fapp(\fapp) = \fapp$).

A generalized Weyl chamber $\scrC\leq\fapp$ is called \emph{$\iota_\fapp$-symmetric} if $\iota_\fapp(\scrC)=\scrC$. For a general $\scrC$,
\[
	\scrC^\circ \coloneqq \mathrm{conv}(\scrC,\iota_\fapp(\scrC))
\]
is the smallest $\iota_\fapp$-symmetric Weyl chamber containing $\scrC$.

\subsubsection{Parabolic subgroups}\label{sssec:levi} Associated with each $\scrC\in \mathrm{GWC}$, we have the following subalgebras and subgroups of $G$:
\[
\begin{split}
	&\fa_\scrC = \mathrm{span}\ \scrC = \bigcap_{\alpha\in\Delta^0_\scrC} \ker \alpha, \\
	&\fn_\scrC^{+,-,0}= \bigoplus_{\alpha \in \Delta^{+,-,0}_\scrC} \fg_\alpha, \\
	&\fm_\scrC = \fm \oplus \fa_\scrC^\perp \oplus \fn_\scrC^0,
	\end{split}
\]
where $\fa_\scrC^\perp$ is the orthogonal complement of $\fa_\scrC$ in $\fa$ (computed with respect to \eqref{equation:produit-scalaire}), $\fn^+_\scrC$ and $\fn^-_\scrC$ are nilpotent subalgebras, and $\fm_\scrC$ is a reductive subalgebra. The subspace
\[
	\fp_\scrC := \fm_\scrC\oplus\fa_\scrC\oplus \fn_\scrC^+
\]
is a self-normalizing subalgebra, called a \emph{parabolic subalgebra}. Parabolic subgroups can then be defined as
\[
	P_\scrC = N_G(\fp_\scrC).
\]
As $\fp_\scrC$ is self-normalizing, $P_{\scrC}$ is also self-normalizing. The set of all subgroups of $G$ that are conjugate to $P_\scrC$ is parametrized by the (partial) flag manifold $G/P_\scrC$. We also have for $[\dot{w}]\in W(G,A)$,
 \begin{equation}\label{eq:w_conj_parabolics}
	\dot{w} P_\scrC \dot{w}^{-1} = P_{w\cdot\scrC}.
 \end{equation}
 We may thus parametrize the conjugacy classes of parabolic subgroups by faces of a chosen Weyl chamber $\fapp$. A more standard convention (see e.g. \cite{Gueritaud-Guichard-Kassel-Wienhard-17}) is to parametrize the parabolics by subsets $\vartheta=\Sigma\setminus\Delta_\scrC^0$. As the Weyl chambers play a central role in our arguments, the parametrization of parabolics by Weyl chambers $\scrC$ is, however, more natural in our setting.

We also need some additional subgroups of $G$:
\[
	A_\scrC :=\exp(\fa_\scrC),\qquad  A_\scrC^\perp := \exp(\fa_\scrC^\perp), \qquad L_\scrC = Z_G(\fa_\scrC).
\]
The groups $A_\scrC$ and $A_\scrC^\perp$ are Abelian. The group $L_\scrC$ is a \emph{Levi subgroup} of $P_\scrC$ and satisfies
\[
	 L_\scrC = P_\scrC \cap P_{-\scrC}, \qquad  L_\scrC = M_\scrC A_\scrC,
\]
where $M_\scrC$ is a reductive subgroup of $G$ with compact center, and Lie algebra $\fm_\scrC$. The subspace $\fa_\scrC^\perp \subset \fm_\scrC$ is a Cartan subspace for $M_\scrC$ as $\fa$ is one for $G$. In addition,
\[
	K_\scrC = K\cap M_\scrC
\]
is a maximal compact subgroup of $M_\scrC$. It can equally be expressed as $K_\scrC = Z_K(\fa_\scrC) = Z_K(H)$ for some $H\in \scrC$.

Finally, the \emph{unipotent radical} of $P_\scrC$ is given by
\[
	N_\scrC^\pm = \exp(\fn^\pm_\scrC).
\]
These unipotent subgroups are also characterized by their behavior under conjugation by $A$. That is, if $Q \subset N^-_{\scrC}$ is a compact subset and $(H_\ell)_{\ell \geq 0} \subset \fap$ is a sequence with $\alpha(H_\ell)\to\infty $ for all $\alpha\in \roots_\scrC^+$, then
\begin{equation}\label{eq:unipotent_contraction}
e^{H_\ell} Q e^{-H_\ell} \to_{\ell \to +\infty} \{e_G\}.
\end{equation}

\subsubsection{Langlands decomposition}

The \emph{Langlands decomposition} of a parabolic subgroup $P_\scrC$ gives rise to the following diffeomorphisms:
\begin{equation}
\label{eq:langlands}
\begin{split}
	M_\scrC \times A_\scrC \times N_\scrC^+ \to P_\scrC,\quad (m,a,n)\mapsto man,\\
	N_\scrC^+ \times A_\scrC \times M_\scrC	\to P_\scrC,\quad (n,a,m)\mapsto nam.
\end{split}
\end{equation}
As $M_\scrC$ is itself a reductive group, all the structure theory of $G$ applies to $M_\scrC$ as well. The structure theory of $M_\scrC$ can be described consistently with that of the ambient group $G$. Because this compatibility will be crucial in some of our arguments, we briefly describe these relations.

The maximal compact subgroup of $M_\scrC$ has already been identified above as $K_\scrC$, and its Cartan subgroup is given by $A_\scrC^\perp \coloneqq \exp(\fa_\scrC^\perp)$. The root system of $M_\scrC$ is given by $\Delta^0_\scrC$ and accordingly the Weyl group in $M_\scrC$ is given by 
\begin{equation}\label{eq:weylgrp_of_MscrC}
	W(M_\scrC, A_\scrC^\perp) = \langle \Delta^0_\scrC \rangle.
\end{equation}
Moreover, the action of any $w\in \langle\roots^0_\scrC \rangle$ on $\roots\subset \fa^*$ leaves the splitting $\roots = \roots^+_\scrC \cup \roots^0_\scrC\cup \roots^-_\scrC$ invariant.
As a consequence, we can write the orthogonal projection $\pi_\scrC$ from $\fa$ to $\fa_\scrC$ as
\begin{equation}\label{eq:stab-Weyl-and-projector}
\pi_\scrC= \frac{1}{|\langle \Delta^0_\scrC\rangle|}\sum_{w\in \langle \Delta^0_\scrC\rangle} w.
\end{equation}
In particular, $\pi_\scrC(wH)= \pi_\scrC(H)$ for any $w\in \langle\roots_\scrC^0\rangle$. This implies that
\begin{equation}\label{eq:pi_scrC_fapp}
 \text{if } \fapp\geq \scrC \text{, then }\pi_\scrC(\fapp) = \scrC.
\end{equation}
Additionally, the following holds:
\begin{lemma}\label{lem:adjacent_open_weyl_chamber}
For any two open Weyl chambers $\fapp \geq\scrC$ and $\fapp'\geq \scrC$, there exists a unique $w\in\langle \Delta^0_\scrC \rangle$ so that $w\fapp = \fapp'$.
\end{lemma}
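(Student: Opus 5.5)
The plan is to reduce everything to the simple transitivity of the Weyl group $W(G,A)$ on open Weyl chambers and to the identification \eqref{eq:weylgrp_of_MscrC} of $\langle\Delta^0_\scrC\rangle$ with the Weyl group of $M_\scrC$.

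\emph{Uniqueness.} If $w,w'\in\langle\Delta^0_\scrC\rangle\subset W(G,A)$ both satisfy $w\fapp=w'\fapp=\fapp'$, then $w^{-1}w'$ fixes the open chamber $\fapp$. Since $W(G,A)$ acts simply transitively on open chambers, $w=w'$.

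\emph{Existence.} The first step is to describe open chambers $\fapp\geq\scrC$ in terms of the roots. The condition $\scrC\subset\overline{\fapp}$ is equivalent to requiring that the positive system $\Delta^+_\fapp$ contain $\Delta^+_\scrC$. Indeed, if $H\in\scrC$ and $\alpha\in\Delta^+_\scrC$, then $\alpha(H)>0$, and $H$ is a limit of points of $\fapp$; so $\alpha>0$ on $\fapp$, because $\alpha$ has constant sign on $\fapp$. Likewise every root in $\Delta^-_\scrC$ is negative on $\fapp$. The only freedom is therefore the sign pattern on $\Delta^0_\scrC$. Because $\fapp$ is open, $\fapp$ induces a positive system $\Delta^0_\scrC\cap\Delta^+_\fapp$ of the root system $\Delta^0_\scrC$ of $M_\scrC$, and the same holds for $\fapp'$.

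The second step is to transport between these two positive systems. The Weyl group $\langle\Delta^0_\scrC\rangle=W(M_\scrC,A^\perp_\scrC)$ of the root system $\Delta^0_\scrC$ acts transitively on its positive systems. So there is $w\in\langle\Delta^0_\scrC\rangle$ with
\[
w(\Delta^0_\scrC\cap\Delta^+_\fapp)=\Delta^0_\scrC\cap\Delta^+_{\fapp'}.
\]
By the invariance noted after \eqref{eq:weylgrp_of_MscrC}, $w$ preserves $\Delta^\pm_\scrC$. Hence
\[
w\Delta^+_\fapp=\Delta^+_\scrC\cup w(\Delta^0_\scrC\cap\Delta^+_\fapp)=\Delta^+_{\fapp'}.
\]
Since an open chamber is determined by its set of positive roots, $w\fapp=\fapp'$.

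The main obstacle I expect is rigor in the claim that $w$ preserves $\Delta^+_\scrC$. This claim holds because $w$ fixes $\fa_\scrC$ pointwise, being generated by reflections in $\ker\alpha\supset\fa_\scrC$ for $\alpha\in\Delta^0_\scrC$. So $(w\alpha)(H)=\alpha(w^{-1}H)=\alpha(H)$ for $H\in\scrC$, which preserves the sign. Transitivity on positive systems of the sub-root system is the standard fact that a root system's Weyl group acts simply transitively on its chambers; here it is applied to $\Delta^0_\scrC$ restricted to $\fa_\scrC^\perp$.
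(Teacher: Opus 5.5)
Your proof is correct. The paper gives no proof of this lemma: it appears among the structure-theory facts that the authors say can be found in, or directly derived from, Knapp. So there is no competing argument in the text, and yours is a sound self-contained derivation. For comparison, the usual textbook route fixes a point $H\in\scrC$ and uses two facts. First, the stabilizer of $H$ in $W(G,A)$ is generated by the reflections $s_\alpha$ with $\alpha(H)=0$, which is exactly $\langle\Delta^0_\scrC\rangle$. Second, this stabilizer acts simply transitively on the open chambers whose closure contains $H$, equivalently contains $\scrC$. Your route avoids the isotropy theorem in the first fact. You only need simple transitivity of the Weyl group of the subsystem $\Delta^0_\scrC$ on its positive systems, plus your correctly justified observation that $\langle\Delta^0_\scrC\rangle$ fixes $\fa_\scrC$ pointwise and therefore preserves $\Delta^\pm_\scrC$. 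As a by-product, your argument shows that open chambers $\fapp\geq\scrC$ correspond bijectively to positive systems of $\Delta^0_\scrC$. This is the correspondence implicitly used in \eqref{eq:fapp_M_scrC}.
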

Finally, a choice $\fapp\geq\scrC$ of a positive Weyl chamber for $G$ determines a positive Weyl chamber
\begin{equation}\label{eq:fapp_M_scrC}
\fa_{M_\scrC,++} := \{H\in\fa_{\scrC}^{\perp} : \alpha(H) > 0 \ \text{for all} \ \alpha\in \roots^0_\scrC\cap\roots_\fapp^+\}
\end{equation}
of $M_\scrC$.

\subsubsection{Group decompositions}		
	We recall the standard decompositions of a reductive group $G$:
	
	\begin{description}
	\item[\textbf{$KAK$-decomposition}] Once the open Weyl chamber $\fapp$ is fixed, each $g\in G$ can be written in the form $k_1 e^H k_2^{-1}$ with $k_1,k_2\in K$, $H\in \fap$. The element $H \in \fap$ is unique, and if $H\in\scrC$, $k_1$ and $k_2$ are uniquely defined modulo right multiplication by the \emph{same} element of $K_\scrC$. The \emph{Cartan projection} of $g$ is
	\[
	H=\mu_\fapp(g).
	\]
	The Cartan projection is a proper map, and corresponds to the decomposition of $G/K$ into concentric spheres around the base point $K$. More precisely, we have the following classical lemma (see \cite[Lemma 4.6]{Benoist-97}):
	\begin{lemma}
	\label{lemma:Benoist-compact-Cartan}
	Let $Q \subset G$ be compact. Then there exists a compact set $Q' \subset \fa$ such that for all $g \in G$:
	\[
	\mu_{\fapp}(QgQ) \subset \mu_\fapp(g) + Q'.
	\]
	\end{lemma}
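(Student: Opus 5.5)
The statement to prove is Lemma \ref{lemma:Benoist-compact-Cartan}: for compact $Q\subset G$ there is a compact $Q'\subset\fa$ with $\mu_\fapp(QgQ)\subset\mu_\fapp(g)+Q'$ for all $g\in G$.

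The plan is to reduce the claim to a Lipschitz-type estimate for the Cartan projection under left and right multiplication. Concretely, we aim to show that for every $g,h\in G$,
\[
\|\mu_\fapp(hg)-\mu_\fapp(g)\|\le \|\mu_\fapp(h)\|,
\qquad
\|\mu_\fapp(gh)-\mu_\fapp(g)\|\le \|\mu_\fapp(h)\|.
\]
The lemma then follows by taking $Q'$ to be the closed ball of radius $2\sup_{q\in Q}\|\mu_\fapp(q)\|$. This supremum is finite because $\mu_\fapp$ is continuous and $Q$ is compact.

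By symmetry it suffices to treat left multiplication. The right-multiplication case follows from the left one through inversion. Indeed, $\mu_\fapp(g^{-1})=\iota_\fapp\mu_\fapp(g)$, and $\iota_\fapp$ is an isometry of $\fa$, so writing $gh=(h^{-1}g^{-1})^{-1}$ reduces it to the left case.

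For the left estimate we use the geometry of the symmetric space $X=G/K$ with base point $o=K$. The $G$-invariant Riemannian metric on $X$ is induced by the scalar product \eqref{equation:produit-scalaire} restricted to $\fs$. The key fact is that the $\fa_+$-valued "vector distance" $d_{\fa}(xo,yo):=\mu_\fapp(x^{-1}y)$ satisfies the triangle-type inequality
\[
\|d_\fa(p,r)-d_\fa(p,q)\|\le d(q,r),
\]
where $d$ is the Riemannian distance. Applying this with $p=o$, $q=go$, $r=hgo$ gives the left estimate; note that $d(go,hgo)$ is bounded by a quantity depending only on $h$ and $g$, so $d(go,hgo)=\|\mu_\fapp(g^{-1}hg)\|$ must be replaced by something depending only on $h$. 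To avoid this issue, apply the inequality instead with $p=o$, $q=h^{-1}o$, $r=go$ after left-translating by $h^{-1}$. Then $d_\fa(h^{-1}o,go)=\mu_\fapp(hg)$, $d_\fa(o,go)=\mu_\fapp(g)$, and $d(o,h^{-1}o)=\|\mu_\fapp(h)\|$, using the vector inequality in the first variable together with symmetry of $d_\fa$ up to $\iota_\fapp$.

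The main obstacle is proving the vector-valued triangle inequality, i.e.\ that $d_\fa$ is $1$-Lipschitz in each variable with respect to the Riemannian metric. The standard route, which I would follow (as in Kassel and Benoist), goes through finite-dimensional representations. For each fundamental weight $\chi$ of $\fa$ (relative to $\fapp$), choose a proximal irreducible representation $(\rho_\chi,V_\chi)$ with a $K$-invariant norm for which $\rho_\chi(A)$ is diagonal. Then $\chi(\mu_\fapp(g))=\log\|\rho_\chi(g)\|$. Submultiplicativity of operator norms gives
\[
|\chi(\mu_\fapp(hg))-\chi(\mu_\fapp(g))|\le \max\bigl(\log\|\rho_\chi(h)\|,\log\|\rho_\chi(h^{-1})\|\bigr)\le C\|\mu_\fapp(h)\|.
\]
Since the fundamental weights form a basis of $\fa^*$, this yields $\|\mu_\fapp(hg)-\mu_\fapp(g)\|\le C'\|\mu_\fapp(h)\|$. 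This is weaker than the sharp constant $1$, but sufficient for the lemma, and it avoids the CAT(0) comparison argument entirely.
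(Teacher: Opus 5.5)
The paper does not prove this lemma; it only cites Benoist \cite[Lemma 4.6]{Benoist-97}. Your final argument is the standard proof of that fact and is correct: it combines Tits' representations $\rho_\alpha$, the identity $\log\|\rho_\alpha(g)\|=\chi_\alpha(\mu_\fapp(g))$ for a Euclidean norm that is $K$-invariant with $\rho_\alpha(\fa)$ self-adjoint, submultiplicativity of operator norms, and the fact that the $\chi_\alpha$ span $\fa^*$ for semisimple $G$. The symmetric-space detour can simply be dropped.

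Two small corrections. First, Tits' theorem only gives highest weights $\chi_\alpha=k_\alpha\omega_\alpha$ that are positive integer multiples of the fundamental weights. This is harmless, and proximality plays no role in the argument. Second, once you accept the non-sharp constant $C'$, the radius of $Q'$ must be $2C'\sup_{q\in Q}\|\mu_\fapp(q)\|$, not $2\sup_{q\in Q}\|\mu_\fapp(q)\|$.
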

	The Cartan projection satisfies some equivariance properties under the action of elements $w=[\dot{w}]$ of the Weyl group:
	\[
	\mu_{w\cdot\fapp}(g)=w\cdot\mu_{\fapp}(g) = \mathrm{Ad}(\dot{w})\mu_{\fapp}(g), \quad \mu_{-\fapp}(g^{-1}) = - \mu_\fapp(g).
	\]
	For $\scrC\leq \fapp$, we denote by $\mu_\scrC(g)$ the orthogonal projection of $\mu_\fapp(g)$ onto $\fa_\scrC$; it does not depend on the choice of the open Weyl chamber $\fapp\geq\scrC$ (combine Lemma~\ref{lem:adjacent_open_weyl_chamber} with \eqref{eq:stab-Weyl-and-projector} and the equivariance properties above).  \\

	\item[\textbf{Horospherical decomposition}] This decomposition may be viewed as a generalization of the Iwasawa decomposition for a generalized Weyl chamber $\scrC$.  Given $\scrC$, we have the identity
	\[
	G = K M_\scrC \times A_\scrC \times N_\scrC^+. 
	\]
	Notice that if $\scrC$ is open, $M_\scrC< K$, and we recover the usual Iwasawa decomposition $G=KAN$. Unlike the Iwasawa decomposition, the horospherical decomposition for general $\scrC$ is not a diffeomorphism. However, if we write the decomposition in the form
	\begin{equation}\label{eq:horospherical-decomposition}
	   g = k m_\scrC e^{H_\scrC(g)} n_\scrC,
    \end{equation}
	then $H_{\scrC}(g)$ is well-defined and unique.
	The map $g\mapsto H_\scrC(g)$ satisfies several equivariance properties. If $\scrC\leq \fapp$, then
	\[
	H_\scrC(g) = \pi_\scrC H_\fapp(g). 
	\]
	For $g\in G$, $p\in P_\scrC$, 
	\begin{equation}
		\label{eq:H_scrC_formula}
	H_\scrC(gp) = H_\scrC(g) + H_\scrC(p),
   \end{equation}
	and in particular for $m\in M_\scrC$, 
	\[
	H_\scrC(gm) = H_\scrC(g). 
	\]
	
	\item[\textbf{Bruhat decomposition}] The Bruhat decomposition describes the decomposition of a reductive group $G$ into double cosets of parabolic subgroups $P_\scrC$. For $\scrC = \fapp$, the Bruhat decomposition is parametrized by the Weyl group $W(G,A)$. For general $\scrC$ we need to introduce an equivalence relation:
	\begin{equation}\label{eq:equivalence-relation-coarse-Bruhat}
		W(G,A)/\sim\quad  = \quad \langle\roots^0_\scrC\rangle\backslash W(G,A) / \langle\roots^0_\scrC\rangle.
	\end{equation}
	The Bruhat decomposition is (see \cite[Proposition 3.16 (i)\&(iv)]{Borel-Tits-72}):
	\begin{equation}\label{eq:generalized-Bruhat}
		G= \bigsqcup_{[\dot{w}]\in W(G,A)/\sim } P_{-\scrC} \dot{w} P_\scrC.
	\end{equation}
	The only open double coset in this decomposition is $P_{-\scrC}eP_\scrC$, and we call it the \emph{open generalized Bruhat cell}. Furthermore, the map
	\[
		N_{-\scrC}^+\times P_\scrC \ni (n,p)\mapsto np\in P_{-\scrC}P_\scrC
	\]
	is an analytic diffeomorphism onto the open generalized Bruhat cell. \\
	
	\item[\textbf{Jordan decomposition}] We assume that $G$ is a connected algebraic group; it then admits a Jordan decomposition. That is, every element $g\in G$ can be decomposed into a commuting product $g= g_{\mathrm{ell}}\, g_{\mathrm{hyp}}\, g_{\mathrm{nil}}$, such that there exists $\gamma\in G$ so that
	\[
	\gamma g_{\mathrm{ell}} \gamma^{-1} \in K,\quad \gamma g_{\mathrm{hyp}} \gamma^{-1}\in \exp(\fap),\quad \gamma g_{\mathrm{nil}} \gamma^{-1} \in N^+_\fapp.
	\]
	We write $\gamma g_{\mathrm{hyp}} \gamma^{-1} = \exp(\lambda_\fapp(g))$. The map $\lambda_\fapp:G\to\fap,\ g\mapsto\lambda_\fapp(g)$ is well-defined and is called the \emph{Jordan projection}. In addition,
	\[
	\lambda_\fapp(g) = \lim_{n\to +\infty} \frac{1}{n}\mu_{\fapp}(g^n).
	\]
	We set
	\begin{equation}\label{eq:jordan_scrC}
	\lambda_\scrC(g) = \pi_\scrC\bigl(\lambda_\fapp(g)\bigr).
	\end{equation}
	As with the Cartan projection $\mu_\scrC$, this is independent of the choice of $\fapp\geq\scrC$.
	
	\end{description}

\subsection{Higher-rank locally homogeneous dynamics}\label{ssec:locally-homogeneous-dynamics}

Our interest lies in the right action of $A_\scrC$ on the quotient $G/M_\scrC$ given by 
\[
g M_\scrC . e^H = g e^H M_\scrC. 
\]
More precisely, we are interested in locally homogeneous dynamics modeled on the $A_\scrC$-action on $G/M_\scrC$. That is, we want to consider dynamical systems of the form
\[
\Gamma \backslash \Omega \curvearrowleft A_\scrC, 
\]
where $\Gamma < G$ is some discrete group and $\Omega \subset G/M_\scrC$ is a $\Gamma$- and $A_\scrC$-invariant open set. We choose an open Weyl chamber $\fapp\geq \scrC$. Unless explicitly stated, all concepts introduced do not depend on this choice.

\subsubsection{Flag manifolds and transverse flags}

A key role in describing the structure of $G/M_\scrC$ is played by the \emph{flag manifolds} $G/P_\scrC$, along with the notion of \emph{transverse (partial) flags}.

We first note that there is a unique open dense orbit for the action of $G$ on $G/P_\scrC \times G/P_{-\scrC}$. It is the orbit of $(P_\scrC,P_{-\scrC})$. The stabilizer of this pair is $P_\scrC\cap P_{-\scrC} = M_\scrC A_\scrC$. If $(\xi_+,\xi_-)\in G/P_\scrC \times G/P_{-\scrC}$ lies in this orbit, we write $\xi_+ \pitchfork \xi_-$ and say that they are \emph{transverse}. The orbit is denoted
\[
 G/P_\scrC \overset{\pitchfork}{\times} G/P_{-\scrC}.
\]
The set of $\xi^\mp\in G/P_{\mp\scrC}$ that are transverse to $eP_{\pm \scrC}$ is exactly $N^{\pm}_\scrC P_{\mp\scrC}$. More precisely, the map
\begin{equation}
\label{equation:diffeo-la}
 N^{\pm}_\scrC\ni n \mapsto nP_{\mp\scrC} \in \{\xi^{\mp}\in G/P_{\mp\scrC} ~:~ \xi^{\mp}\pitchfork eP_{\pm\scrC}\}
\end{equation}
is a diffeomorphism. More generally, $gP_\scrC \pitchfork h P_{-\scrC}$ if and only if
\begin{equation}\label{eq_transversality_cond}
h^{-1}g\in N_\scrC^- P_\scrC \quad\text{or equivalently}\quad g^{-1}h \in N_{\scrC}^+ P_{-\scrC}.
\end{equation}

We will sometimes need to strengthen the notion of transversality. We call two families of points $\xi_\alpha = g_\alpha P_\scrC \in G/P_\scrC$ and $\eta_\beta = h_\beta P_{-\scrC}\in G/P_{-\scrC}$ \emph{uniformly transverse} if the representatives $g_\alpha$ and $h_\beta$ can be chosen in $K$ and there exists a bounded set $B\subset N_\scrC^-$ such that for all $\alpha,\beta$,
\begin{equation}\label{eq:uniform_transverse}
h_\beta^{-1}g_\alpha\in B P_\scrC.
\end{equation}

Transversality behaves well under the projection maps $\mathrm{pr}_{\pm\scrC'}: G/P_{\pm\scrC} \to G/P_{\pm\scrC'}$ for $\scrC'\leq \scrC$, i.e.
\begin{equation}\label{eq:limit_pt_projection}
  \xi^+\pitchfork\xi^- \Longrightarrow \mathrm{pr}_{+\scrC'}(\xi^+) \pitchfork \mathrm{pr}_{-\scrC'}(\xi^-).
\end{equation}
 Furthermore, if $\scrC$ is $\iota_\fapp$-invariant, then by \eqref{eq:w_conj_parabolics}, $P_\scrC$ and $P_{-\scrC}$ are conjugate via $\dot{w}_\fapp$ and we get a natural equivariant diffeomorphism
\begin{equation}
\label{equation:ilesttard}
\mathcal I_\scrC: G/P_\scrC \overset{\simeq}{\longrightarrow} G/P_{-\scrC}, \qquad gP_\scrC \mapsto g \dot{w}_\fapp P_{-\scrC}
\end{equation}
and we have $\mathcal{I}_{-\scrC} = \mathcal{I}_{\scrC}^{-1}$, using that $w_\fapp$ is an involution. In this context, for $\xi^+,\xi^-\in G/P_\scrC$, the notation $\xi^+\pitchfork\xi^-$ means $\xi^+\pitchfork\mathcal I_\scrC(\xi^-)$.

\subsubsection{Hopf coordinates}
The description of transverse pairs
\[
G/P_\scrC \overset{\pitchfork}{\times} G/P_{-\scrC} \cong G/(M_\scrC A_\scrC)
\]
leads to the so-called \emph{generalized Hopf coordinates}, which give a decomposition of $G/M_\scrC$ into its $A_\scrC$-orbits. These are given by the following diffeomorphism:
\begin{equation}\label{eq:Hopf}
\Psi_\scrC: G/M_\scrC \to (G/P_\scrC \overset{\pitchfork}{\times} G/P_{-\scrC}) \times \fa_\scrC \qquad \Psi_\scrC(gM_\scrC) := (gP_\scrC, gP_{-\scrC}, H_\scrC(g)).
\end{equation}
Since $M_{-\scrC}=M_\scrC$ and $\fa_{-\scrC}=\fa_\scrC$, applying the same construction to $-\scrC$ gives the opposite Hopf coordinates
\[
\Psi_{-\scrC}(gM_\scrC)=\bigl(gP_{-\scrC},gP_\scrC,H_{-\scrC}(g)\bigr).
\]

\subsubsection{Tangent space of the phase space}

We next describe the tangent space $T(G/M_\scrC)$. As the homogeneous space $G/M_\scrC$ is reductive, the tangent bundle decomposes as an associated vector bundle
\begin{equation}\label{eq:hyperbolic_splitting_first}
T(G/M_\scrC) = G \times_{M_\scrC} (\fa_\scrC \oplus \fn^+_\scrC \oplus \fn^-_\scrC). 
\end{equation}
We denote the elements of such associated bundles by equivalence classes $[g,X]$ with $g\in G$ and $X\in \fa_\scrC \oplus \fn^+_\scrC \oplus \fn^-_\scrC$, where $[gm,\mathrm{Ad}(m^{-1})X]=[g,X]$ for all $m\in M_\scrC$.

This leads to the corresponding $G$-invariant decomposition of $T(G/M_\scrC)$:
\begin{equation}\label{eq:stable-unstable}
E_0:= G\times_{M_\scrC} \fa_{\scrC},\qquad E_{s}:= G \times_{M_\scrC} \fn^+_\scrC,\qquad E_{u}:= G \times_{M_\scrC} \fn^-_\scrC.
\end{equation}
These vector bundles are integrable and are tangent, respectively, to the orbit foliation, the stable foliation, and the unstable foliation. The bundles $E_{s/u}$ correspond to the stable/unstable bundles of the $A_\scrC$-action and descend to stable/unstable bundles of the Axiom A flows. However, this statement requires some additional care. As there need not exist $\mathrm{Ad}(M_\scrC)$-invariant norms on $\fn_\scrC^\pm$, there are no canonical metrics on these bundles and the formulation and proofs of hyperbolicity are somewhat involved (see \S\ref{sec:divergence-and-hyperbolicity}). We can likewise define the following integrable weak bundles:
\[
E_{ws}:= G \times_{M_\scrC} (\fa_{\scrC}\oplus \fn^+_\scrC),\qquad E_{wu}:= G \times_{M_\scrC} (\fa_{\scrC}\oplus \fn^-_\scrC).
\]

The leaves of the foliation $W^s$ tangent to $E_s$ have a simple expression in Hopf coordinates:
\begin{equation}\label{eq:formula-strong-stable-leaf}
W^s(gM_\scrC) \simeq \{ (\xi_+, \xi_-, H)\ |\ \xi_+ = gP_\scrC,\ H = H_\scrC(g) \}. 
\end{equation}
Indeed, the space on the right is naturally identified with $gN^+_\scrC M_\scrC$, and its tangent space is $E_s$. Likewise, the leaves of $W^{wu}$ tangent to $E_{wu}$ are given by
\begin{equation}\label{eq:formula-weak-unstable-leaf}
W^{wu}(gM_\scrC) \simeq \{ (\xi_+, \xi_-, H)\ |\ \xi_- = gP_{-\scrC} \}. 
\end{equation}

\subsubsection{Symmetries of the phase space}
Finally, we record a symmetry of the phase space. For $w=[\dot{w}]\in W(G,A)$, the map
	\[
	g M_\scrC \mapsto g \dot{w}^{-1} M_{w\cdot\scrC}
	\]
	defines a diffeomorphism between $G/M_\scrC$ and $G/M_{w\cdot\scrC}$. However, $M_\scrC$ only depends on $\fa_\scrC$; hence, whenever $w\cdot\fa_\scrC = \fa_\scrC$, this map defines a symmetry of the phase space.
	
	The most notable case occurs when $\scrC\leq\fapp$ and $\iota_\fapp \scrC = \scrC$. The corresponding symmetry, $S_\fapp \colon gM_\scrC \mapsto g\dot{w}_\fapp M_\scrC$, is called the \emph{antipodal symmetry}. It behaves as follows in Hopf coordinates:
\begin{lemma}\label{lemma:def-time-symmetry}
 Let $\scrC$ be an $\iota_\fapp$-invariant generalized Weyl chamber. If
 \[
 \Psi_{-\scrC}(gM_\scrC)=(\xi^-,\xi^+,H)\in G/P_{-\scrC}\times G/P_\scrC\times\fa_\scrC,
 \]
 then
 \[
 \Psi_\scrC(S_\fapp(gM_\scrC))=\bigl(\mathcal I_{-\scrC}(\xi^-),\mathcal I_\scrC(\xi^+),w_\fapp(H)\bigr)\in G/P_\scrC\times G/P_{-\scrC}\times\fa_\scrC.
 \]
 Equivalently,
 \[
 \Psi_\scrC\circ S_\fapp
 =
 \bigl(\mathcal I_{-\scrC}\times\mathcal I_\scrC\times w_\fapp\bigr)\circ\Psi_{-\scrC}.
 \]
\end{lemma}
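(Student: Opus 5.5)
The plan is a direct computation in Hopf coordinates, using only the definitions \eqref{eq:Hopf}, \eqref{equation:ilesttard} and the Weyl-group equivariance of the horospherical projection. Write $S_\fapp(gM_\scrC)=g\dot w_\fapp M_\scrC$. This is well defined: since $\iota_\fapp\scrC=\scrC$, we have $w_\fapp\cdot\scrC=-\scrC$, hence $w_\fapp\cdot\fa_\scrC=\fa_\scrC$. By \eqref{eq:w_conj_parabolics}, $\dot w_\fapp$ normalizes $M_\scrC=M_{-\scrC}$. By definition,
\[
\Psi_\scrC(g\dot w_\fapp M_\scrC)=\bigl(g\dot w_\fapp P_\scrC,\ g\dot w_\fapp P_{-\scrC},\ H_\scrC(g\dot w_\fapp)\bigr),
\]
and I will treat the three components separately.

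For the first two components I use \eqref{equation:ilesttard} directly. We have $\mathcal I_{-\scrC}(gP_{-\scrC})=g\dot w_\fapp P_{\scrC}$; here $\mathcal I_{-\scrC}$ is defined using the longest element of $-\fapp$, which is again $w_\fapp$, consistent with $\mathcal I_{-\scrC}=\mathcal I_\scrC^{-1}$. We also have $\mathcal I_\scrC(gP_\scrC)=g\dot w_\fapp P_{-\scrC}$. Since $\Psi_{-\scrC}(gM_\scrC)=(gP_{-\scrC},gP_\scrC,H_{-\scrC}(g))$, the first two entries match immediately. The only thing to check is that these maps are well defined on cosets, that is, that $\dot w_\fapp P_{\pm\scrC}\dot w_\fapp^{-1}=P_{\mp\scrC}$, and this is \eqref{eq:w_conj_parabolics}.

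The real content is the third component: the identity $H_\scrC(g\dot w_\fapp)=w_\fapp\bigl(H_{-\scrC}(g)\bigr)$. I plan to prove it from the horospherical decomposition for $-\scrC$. Write
\[
g=k\,m\,e^{H}\,n,\qquad k\in K,\ m\in M_\scrC,\ H=H_{-\scrC}(g)\in\fa_\scrC,\ n\in N^+_{-\scrC}=N^-_\scrC.
\]
Then
\[
g\dot w_\fapp=(k\dot w_\fapp)\,(\dot w_\fapp^{-1}m\dot w_\fapp)\,e^{\mathrm{Ad}(\dot w_\fapp^{-1})H}\,(\dot w_\fapp^{-1}n\dot w_\fapp).
\]
Each factor lies in the required group. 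First, $k\dot w_\fapp\in K$ because $\dot w_\fapp\in N_K(\fa)$. Second, conjugation by $\dot w_\fapp$ maps $M_\scrC$ to $M_{w\cdot\scrC}=M_{-\scrC}=M_\scrC$. Third, it maps $\fa_\scrC$ to itself. Fourth, it maps $N^-_\scrC$ onto $N^+_\scrC$, since $w_\fapp$ sends $\Delta^-_\scrC$ to $\Delta^+_\scrC$; this holds because $w_\fapp\scrC=-\scrC$ and $\Delta^\pm_{w\scrC}=w\Delta^\pm_\scrC$. Uniqueness of $H_\scrC$ in \eqref{eq:horospherical-decomposition} then gives $H_\scrC(g\dot w_\fapp)=\mathrm{Ad}(\dot w_\fapp^{-1})H$. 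Since $w_\fapp$ is an involution, $\mathrm{Ad}(\dot w_\fapp^{-1})$ and $\mathrm{Ad}(\dot w_\fapp)$ agree on $\fa$, so this equals $w_\fapp(H)$.

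The main obstacle is this last bookkeeping step: keeping track of which side $\dot w_\fapp$ acts on, and confirming that conjugation really exchanges $N^\mp_\scrC$ and preserves $M_\scrC$ even when $M_\scrC$ is disconnected. For the latter I would argue via $M_\scrC A_\scrC=Z_G(\fa_\scrC)$, which is preserved because $\dot w_\fapp$ normalizes $\fa_\scrC$. Then I would identify $M_\scrC$ inside $Z_G(\fa_\scrC)$ as the intersection of the kernels of $|\chi|$ over characters $\chi$, a characterization stable under automorphisms fixing $\fa_\scrC$ (cf. Knapp \S VII.7). The equivalent formulation in the statement is a restatement of the componentwise identity.
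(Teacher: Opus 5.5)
Your proposal is correct and is essentially the paper's proof. The first two Hopf coordinates are read off from the definition \eqref{equation:ilesttard} of $\mathcal I_{\pm\scrC}$. The third comes from conjugating the $-\scrC$ horospherical decomposition $g=kme^{H_{-\scrC}(g)}n^-$ by $\dot w_\fapp$, which exchanges $N^-_\scrC$ and $N^+_\scrC$, preserves $M_\scrC$, and acts as $w_\fapp$ on $\fa_\scrC$, and then invoking uniqueness of $H_\scrC$. The only addition is your justification, via $M_\scrC A_\scrC=Z_G(\fa_\scrC)$ and the characterization of $M_\scrC$ inside it, that $\dot w_\fapp$ normalizes $M_\scrC$ even when $M_\scrC$ is disconnected; the paper takes this for granted. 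That extra step is the right one, since \eqref{eq:w_conj_parabolics} by itself only gives normalization of $L_\scrC=P_\scrC\cap P_{-\scrC}$.
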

\begin{proof}
Since $\iota_\fapp(\scrC)=\scrC$ and $\iota_\fapp=-w_\fapp$, we have $w_\fapp\cdot\scrC=-\scrC$. Write the horospherical decomposition associated with $-\scrC$ as
\[
g=km e^{H_{-\scrC}(g)}n^-,
\qquad
k\in K,\quad m\in M_\scrC,\quad n^-\in N_\scrC^-.
\]
Set
\[
m':=\dot w_\fapp^{-1}m\dot w_\fapp\in M_\scrC,
\qquad
n^+:=\dot w_\fapp^{-1}n^-\dot w_\fapp\in N_\scrC^+.
\]
Since $w_\fapp^{-1}=w_\fapp$ in the Weyl group, we obtain
\begin{align*}
 g\dot w_\fapp
 &=km e^{H_{-\scrC}(g)}\dot w_\fapp n^+\\
 &=k\dot w_\fapp m'e^{w_\fapp(H_{-\scrC}(g))}n^+.
\end{align*}
This is a horospherical decomposition associated with $\scrC$, and hence
\[
H_\scrC(g\dot w_\fapp)=w_\fapp\bigl(H_{-\scrC}(g)\bigr).
\]
Moreover,
\[
g\dot w_\fapp P_\scrC=\mathcal I_{-\scrC}(gP_{-\scrC}),
\qquad
g\dot w_\fapp P_{-\scrC}=\mathcal I_\scrC(gP_\scrC).
\]
Combining these identities proves the claimed intertwining formula.
\end{proof}

\subsection{Discrete subgroups} \label{ssection:discrete-subgroups}

Let $\Gamma < G$ be a discrete torsion-free subgroup. We introduce several notions that will be used throughout the remainder of the paper.

\subsubsection{Limit cone} We begin with the notion of limit cone, which is a fundamental object in the study of discrete subgroups of higher-rank groups and was introduced by Benoist in \cite{Benoist-97}:

\begin{definition}[Jordan and Cartan limit cones]\label{definition:limit-cone-anosov}

For every generalized Weyl chamber $\scrC\leq\fapp$, we define the
\emph{Jordan limit cone} and the \emph{Cartan limit cone} of $\Gamma$ by
\[
\scrL_{\Gamma,\mathrm J}^{\scrC}
:=
\overline{\{t\lambda_\scrC(\gamma)~:~t>0,\ \gamma\in\Gamma\}}
\subset\overline{\scrC}
\]
and
\begin{equation}\label{eq:asymptotic-cone-limitcone}
\scrL_{\Gamma,\mathrm C}^{\scrC}
:=
\mathrm{AC}\bigl(\mu_\scrC(\Gamma)\bigr)
\subset\overline{\scrC}.
\end{equation}
Here, for a subset $S$ of a finite-dimensional real vector space,
\[
\mathrm{AC}(S)
:=
\left\{
\lim_{\ell\to\infty}t_\ell s_\ell
~:~
t_\ell\to0^+,\ s_\ell\in S
\right\}.
\]
In particular, since
$\lambda_\scrC=\pi_\scrC\circ\lambda_\fapp$,
\[
\scrL_{\Gamma,\mathrm J}^{\scrC}
=
\overline{\pi_\scrC\bigl(\scrL_{\Gamma,\mathrm J}^{\fapp}\bigr)}.
\]
\end{definition}

The two cones satisfy the following unconditional comparison relations:
for every generalized Weyl chamber $\scrC\leq\fapp$,
\[
\scrL_{\Gamma,\mathrm J}^{\scrC}
\subset
\scrL_{\Gamma,\mathrm C}^{\scrC},
\qquad
\scrL_{\Gamma,\mathrm C}^{\scrC}
=
\pi_\scrC\bigl(\scrL_{\Gamma,\mathrm C}^{\fapp}\bigr).
\]
Indeed, the inclusion follows from
$\lambda_\scrC(\gamma)=\lim_{n\to\infty}n^{-1}\mu_\scrC(\gamma^n)$,
whereas the projection identity is a special property of Cartan limit cones;
see \cite[\S2]{Dey-Oh-25}.

\begin{theorem}[Benoist, Dey--Oh]\label{theorem:benoist}

If the Zariski closure of $\Gamma$ is reductive, then, for every
generalized Weyl chamber $\scrC\leq\fapp$,
\[
\scrL_{\Gamma,\mathrm J}^{\scrC}
=
\scrL_{\Gamma,\mathrm C}^{\scrC}.
\]
\end{theorem}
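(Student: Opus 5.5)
The plan is to reduce to the case of the open Weyl chamber $\fapp$ and then project. We already know, unconditionally, that $\scrL_{\Gamma,\mathrm J}^{\scrC}\subset\scrL_{\Gamma,\mathrm C}^{\scrC}$, that $\scrL_{\Gamma,\mathrm C}^{\scrC}=\pi_\scrC(\scrL_{\Gamma,\mathrm C}^{\fapp})$, and that $\scrL_{\Gamma,\mathrm J}^{\scrC}=\overline{\pi_\scrC(\scrL_{\Gamma,\mathrm J}^{\fapp})}$. Suppose the equality $\scrL_{\Gamma,\mathrm J}^{\fapp}=\scrL_{\Gamma,\mathrm C}^{\fapp}$ holds for the full Weyl chamber. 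Applying $\pi_\scrC$ and taking closures then gives $\scrL_{\Gamma,\mathrm J}^{\scrC}=\overline{\pi_\scrC(\scrL_{\Gamma,\mathrm C}^{\fapp})}=\overline{\scrL_{\Gamma,\mathrm C}^{\scrC}}=\scrL_{\Gamma,\mathrm C}^{\scrC}$. The last equality holds because asymptotic cones are closed, and because the linear image of a closed cone contained in $\fap$ is closed: a closed cone meeting the unit sphere in a compact set has a closed image under a linear map that is injective on it, and $\pi_\scrC$ is injective on $\fap$ in the required sense since $\pi_\scrC(\fapp)=\scrC$ by \eqref{eq:pi_scrC_fapp}.

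So the main statement to establish is the case $\scrC=\fapp$. The inclusion $\subset$ is already known, so only $\mathrm{AC}(\mu_\fapp(\Gamma))\subset\overline{\R_+\lambda_\fapp(\Gamma)}$ remains. First reduce to Zariski-dense subgroups. Let $H$ be the Zariski closure of $\Gamma$; by hypothesis it is reductive. We then compare the Cartan and Jordan projections of $H$ with those of $G$. For reductive $H$ one can choose a Cartan decomposition of $H$ compatible with that of $G$ (Mostow). Then $\mu_\fapp|_H$ equals, up to a Weyl group element, the composition of $\mu_H$ with the inclusion $\fa_H\hookrightarrow\fa$ followed by projection to $\fap$, up to a bounded error; the analogous identity holds exactly for Jordan projections. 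The compact and central parts of $H$ contribute only bounded errors or linear pieces on which both projections agree, since on the center $Z(H)^\circ$ the hyperbolic parts coincide. This reduces the problem to a Zariski-dense subgroup of a connected semisimple $H$ (modulo the split center).

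In the Zariski-dense case, follow Benoist's argument. The key input is the existence, for Zariski-dense $\Gamma$, of a finite set $F\subset\Gamma$ and a constant $C$ such that for every $\gamma\in\Gamma$ there is $f\in F$ with $\gamma f$ proximal (loxodromic) in every fundamental representation and $\|\lambda(\gamma f)-\mu(\gamma)\|\le C$. This is Benoist's "$(r,\varepsilon)$-loxodromic" lemma, proved via Abels--Margulis--Soifer. Given $t_\ell\mu(\gamma_\ell)\to v$, we then get $t_\ell\lambda(\gamma_\ell f_\ell)\to v$, which places $v$ in the Jordan cone.

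The main obstacle is this uniform comparison $\mu(\gamma)\approx\lambda(\gamma f)$. It relies on the Abels--Margulis--Soifer finite set producing $(r,\varepsilon)$-proximality simultaneously in all representations, together with the estimate that an $(r,\varepsilon)$-proximal element $g$ satisfies $\log\|g\|-\log\lambda_1(g)=O(1)$. I would cite these results from \cite{Benoist-97} and \cite{Dey-Oh-25} rather than reprove them. The reductive-reduction step, which requires compatible Cartan subspaces, is routine but must be done carefully.
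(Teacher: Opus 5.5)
Your proposal is correct and has the same structure as the paper's proof. The paper simply cites Dey--Oh for two facts: the equality of the Jordan and Cartan cones for the open chamber $\fapp$ when the Zariski closure is reductive (Benoist's theorem, whose standard proof is the Mostow reduction plus the uniform comparison $\|\lambda(\gamma f)-\mu(\gamma)\|\le C$ that you sketch), and the projection identity $\scrL_{\Gamma,\mathrm C}^{\scrC}=\pi_\scrC(\scrL_{\Gamma,\mathrm C}^{\fapp})$, combined exactly as in your first paragraph.

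One side remark needs correcting, though it is harmless. The closedness you need already follows from $\scrL_{\Gamma,\mathrm C}^{\scrC}$ being an asymptotic cone. Your additional claim that $\pi_\scrC$ is injective on $\fap$ ``in the required sense'' does not follow from $\pi_\scrC(\fapp)=\scrC$, and it fails for non-simple $G$: for $G=\mathrm{SL}_2(\R)\times\mathrm{SL}_2(\R)$ and $\scrC$ the positive ray of the first factor, $\ker\pi_\scrC$ contains the closed positive ray of the second factor.
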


This follows from
\cite[Theorem~2.3 and the identity following (2.10)]{Dey-Oh-25}.
If $\Gamma$ is Zariski dense in $G$, then these common cones are
convex and have nonempty interior in $\fa_\scrC$; see
\cite[Theorem~1.2(a)]{Benoist-97}, together with the projection identity above.

\subsubsection{Dual cones}

For $\bullet\in\{\mathrm J,\mathrm C\}$, we define
\[
\bigl(\scrL_{\Gamma,\bullet}^{\scrC}\bigr)^*
:=
\left\{
\alpha\in\fa_\scrC^*
~:~
\alpha(v)\geq0\quad\text{for every }v\in\scrL_{\Gamma,\bullet}^{\scrC}
\right\},
\]
and denote its interior by
$\bigl(\scrL_{\Gamma,\bullet}^{\scrC}\bigr)^{*,\circ}$. The unconditional
inclusion between the limit cones gives the reversed inclusion
\[
\bigl(\scrL_{\Gamma,\mathrm C}^{\scrC}\bigr)^*
\subset
\bigl(\scrL_{\Gamma,\mathrm J}^{\scrC}\bigr)^*.
\]
Recall that the ambient group $G$ considered in this section is
semisimple. Hence $\overline{\scrC}$ is pointed. Since both limit cones are
contained in $\overline{\scrC}$, each of their dual cones contains
$\scrC^*$ and therefore has nonempty interior.

Moreover, for $\bullet\in\{\mathrm J,\mathrm C\}$ and every
$\alpha\in\bigl(\scrL_{\Gamma,\bullet}^{\scrC}\bigr)^{*,\circ}$,
compactness of the spherical section of the corresponding limit cone gives
an $\varepsilon>0$ such that
\begin{equation}\label{eq:interior-dual-limit-cone}
\alpha(v)\geq\varepsilon\lVert v\rVert,
\qquad
\forall v\in\scrL_{\Gamma,\bullet}^{\scrC}.
\end{equation}

\subsubsection{$\scrC$-divergent sequences. Convergence at infinity} To proceed with the definition of the limit set, we first introduce the notion of $\scrC$-divergent sequences. A sequence $(H_\ell)_{\ell\geq 0}\subset\fa$ is called \emph{$\scrC$-divergent} if $\alpha(H_\ell)\to+\infty$ for all $\alpha\in \roots_{\scrC}^+$; we then write $H_\ell \to_{\scrC} \infty$. A sequence $(\gamma_\ell)_{\ell\geq 0}\subset G$ is called \emph{$\scrC$-divergent} if $\mu_\fapp(\gamma_\ell)\to_{\scrC}\infty$, and we write $\gamma_\ell \to_{\scrC}\infty$; by Lemma~\ref{lem:adjacent_open_weyl_chamber} and \eqref{eq:stab-Weyl-and-projector} this does not depend on the choice of $\fapp\geq\scrC$.

\begin{definition}[Convergence at infinity]
\label{definition:conv-infinity}
 Let $\scrC$ be a generalized Weyl chamber, let $\fapp\geq \scrC$ be an open Weyl chamber, and let $(\gamma_\ell)_{\ell \geq 0}\subset G$ be a $\scrC$-divergent sequence. We call it \emph{convergent at infinity} if there is a Cartan decomposition $\gamma_\ell = k_{1,\ell}\exp(H_\ell)k_{2,\ell}^{-1}$ with $H_\ell\in \fap$ such that $k_{1,\ell}, k_{2,\ell} \in K$ both converge; we write $k_{1/2,\infty}:=\lim_{\ell\to\infty} k_{1/2,\ell}$. In this case, we set
\[
\gamma^+_\scrC :=k_{1,\infty}P_\scrC,\hspace{20pt}  \gamma^-_{\scrC}:=k_{2,\infty}P_{-\scrC}.
\]
They are called the \emph{limit points} of the sequence.
\end{definition}
Every $\scrC$-divergent sequence has a subsequence that is convergent at infinity. Although $k_{1,\infty}$ and $k_{2,\infty}$ are unique only up to right multiplication by an element of $K_\scrC$, the limit points are unique because $K_\scrC < M_\scrC< P_{\pm\scrC}$.
A straightforward computation involving the Bruhat decomposition leads to the following important dynamical interpretation of the limit points as attracting limit points for the action of the sequence $(\gamma_\ell)_{\ell \geq 0}$ on the flag manifolds $G/P_{\pm\scrC}$:

\begin{proposition}\label{prop:proximal}
Let $(\gamma_\ell)_{\ell \geq 0}$ be a $\scrC$-divergent sequence converging at infinity. The following holds:
\begin{align*}
gP_\scrC \pitchfork \gamma^-_{\scrC}		&\Longrightarrow \gamma_\ell gP_\scrC \to_{\ell \to \infty} \gamma^+_\scrC, \\
gP_{-\scrC} \pitchfork \gamma^+_{\scrC} 	&\Longrightarrow \gamma_\ell^{-1} gP_{-\scrC} \to_{\ell \to \infty} \gamma^-_{\scrC}, \\
\end{align*}
Additionally, the convergence is locally uniform in $g$.
\end{proposition}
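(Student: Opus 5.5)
The plan is to reduce both statements to a computation in the Cartan decomposition of $\gamma_\ell$, using the open Bruhat cell to control the flag that is being moved. Write $\gamma_\ell = k_{1,\ell} e^{H_\ell} k_{2,\ell}^{-1}$ with $H_\ell \in \fap$, $H_\ell \to_\scrC \infty$ and $k_{i,\ell}\to k_{i,\infty}$. We treat the first implication; the second follows by applying the first to the inverse sequence, with $\scrC$ replaced by $-\scrC$. Indeed, $\gamma_\ell^{-1} = k_{2,\ell} e^{-H_\ell} k_{1,\ell}^{-1}$ and $-H_\ell \in -\fap$. The sequence is $(-\scrC)$-divergent, because $\roots^+_{-\scrC} = -\roots^+_\scrC$. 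Its limit points relative to the chamber $-\fapp \geq -\scrC$ are $k_{2,\infty}P_{-\scrC}$ and $k_{1,\infty}P_{\scrC}$, which are exactly $\gamma^-_\scrC$ and $\gamma^+_\scrC$.

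For the first implication, set $g' := k_{2,\infty}^{-1} g$. The hypothesis $gP_\scrC \pitchfork k_{2,\infty}P_{-\scrC}$ is, by \eqref{eq_transversality_cond}, the statement that $g' \in N^-_\scrC P_\scrC$, the open cell (modulo $P_\scrC$). Since the cell is open, for $\ell$ large and for $g$ in a small neighbourhood of a fixed $g_0$, we still have $k_{2,\ell}^{-1} g \in N^-_\scrC P_\scrC$. Using the analytic diffeomorphism $N^-_\scrC \times P_\scrC \to N^-_\scrC P_\scrC$ from the Bruhat decomposition paragraph, we may write $k_{2,\ell}^{-1} g = n_\ell p_\ell$ with $n_\ell$ ranging in a compact set $Q \subset N^-_\scrC$, uniformly in $\ell$ and in $g$ near $g_0$; this uniformity is what gives local uniformity at the end. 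Then
\[
\gamma_\ell g P_\scrC = k_{1,\ell}\, e^{H_\ell} n_\ell e^{-H_\ell}\, e^{H_\ell} p_\ell P_\scrC = k_{1,\ell}\,\bigl(e^{H_\ell} n_\ell e^{-H_\ell}\bigr) P_\scrC ,
\]
where we used that $e^{H_\ell}\in A \subset P_\scrC$.

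By the contraction property \eqref{eq:unipotent_contraction} applied to the compact set $Q\subset N^-_\scrC$ and the $\scrC$-divergent sequence $H_\ell$, we get $e^{H_\ell} n_\ell e^{-H_\ell} \to e_G$ uniformly. Hence $\gamma_\ell g P_\scrC \to k_{1,\infty} P_\scrC = \gamma^+_\scrC$, uniformly for $g$ near $g_0$.

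The main point to check carefully is the uniform bound on $n_\ell$. This needs the transversality locus to be open, together with continuity of the inverse of the Bruhat diffeomorphism restricted to the $N^-_\scrC$-component. That is immediate once one notes that the map $gP_\scrC \mapsto n$ is the inverse of \eqref{equation:diffeo-la}. A second, minor point is the sign convention: we must make sure that conjugation contracts $N^-_\scrC$ rather than $N^+_\scrC$ under $e^{H_\ell}$ with $H_\ell\in\fap$. This is dictated by \eqref{eq:unipotent_contraction} and is consistent with transversality being read off through $N^-_\scrC P_\scrC$ in \eqref{eq_transversality_cond}.
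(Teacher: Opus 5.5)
Your proof is correct and follows essentially the argument the paper intends. The paper only says ``a straightforward computation involving the Bruhat decomposition'', and carries out exactly this computation in the proof of Lemma~\ref{lemma:translation-length}: writing $k_{2,\ell}^{-1}g = n^-_\ell p_\ell$ in the open cell and using \eqref{eq:unipotent_contraction} to get $e^{H_\ell}n^-_\ell e^{-H_\ell}\to e_G$. Your reduction of the second implication to the first, via $\gamma_\ell^{-1}$ and the chamber $-\fapp\geq-\scrC$, is consistent with Lemma~\ref{lem:forward_backward_same}(i), and your uniformity argument through the inverse of \eqref{equation:diffeo-la} is sound.
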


As a consequence we obtain
\begin{lemma}\label{lem:limit_points_leftright_mult}
	Let $(\gamma_\ell)_{\ell \geq 0}$ be a $\scrC$-divergent sequence converging at infinity and $x,y\in G$. Then $(x\gamma_\ell y)_{\ell \geq 0}$ is $\scrC$-divergent and convergent at infinity, with 
	\begin{equation}\label{eq:limit_left_right_mult}
	(x\gamma_\ell y)_\scrC^+ = x\gamma_\scrC^+~\text{ and }~(x\gamma_\ell y)_\scrC^- = y^{-1}\gamma_\scrC^-
	\end{equation}
\end{lemma}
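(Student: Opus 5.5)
The statement is Lemma~\ref{lem:limit_points_leftright_mult}. I would derive it from Proposition~\ref{prop:proximal} together with Lemma~\ref{lemma:Benoist-compact-Cartan}.

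\emph{Divergence.} Apply Lemma~\ref{lemma:Benoist-compact-Cartan} with a compact set $Q$ containing $x$, $y$ and $e$. It gives $\mu_\fapp(x\gamma_\ell y)\in\mu_\fapp(\gamma_\ell)+Q'$ with $Q'$ compact. Since every $\alpha\in\roots_\scrC^+$ is bounded on $Q'$, the sequence $(x\gamma_\ell y)$ is again $\scrC$-divergent.

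\emph{Convergence at infinity and identification of the limits.} Every subsequence of $(x\gamma_\ell y)$ has a further subsequence that converges at infinity, with limit points $\eta^\pm$. It therefore suffices to show that these limits are forced to be $\eta^+=x\gamma^+_\scrC$ and $\eta^-=y^{-1}\gamma^-_\scrC$. Then the limits do not depend on the subsequence, and the full sequence converges at infinity: the $K$-components can be chosen convergent, since the $K_\scrC$-ambiguity does not affect the limit flags, and one can re-choose representatives along the sequence in a coherent way.

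\emph{Attracting point.} Take $g$ with $gP_\scrC\pitchfork\eta^-$ and $y gP_\scrC\pitchfork\gamma^-_\scrC$. Both are open dense conditions, so such $g$ exist. Proposition~\ref{prop:proximal} applied to $(\gamma_\ell)$ gives $\gamma_\ell ygP_\scrC\to\gamma^+_\scrC$, hence $x\gamma_\ell ygP_\scrC\to x\gamma^+_\scrC$. Applied to the subsequence of $(x\gamma_\ell y)$, it gives convergence to $\eta^+$. So $\eta^+=x\gamma^+_\scrC$.

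\emph{Repelling point.} The same argument with the second implication of Proposition~\ref{prop:proximal} applies to $(x\gamma_\ell y)^{-1}gP_{-\scrC}=y^{-1}\gamma_\ell^{-1}x^{-1}gP_{-\scrC}$. Choosing $g$ generic so that both transversality hypotheses hold, this yields $\eta^-=y^{-1}\gamma^-_\scrC$.

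\emph{Main obstacle.} The delicate step is passing from limits along subsequences to convergence at infinity of the whole sequence, in the sense of Definition~\ref{definition:conv-infinity}. That definition asks for convergent Cartan factors $k_{1,\ell},k_{2,\ell}$, and not merely convergent flags. To handle this, I would use that the Cartan factors $k_{i,\ell}$ are determined modulo $K_\scrC$ once $\scrC$-divergence makes the relevant roots large. Then convergence of $k_{i,\ell}P_{\pm\scrC}$ in $K/K_\scrC\cong G/P_{\pm\scrC}$ lets one pick convergent lifts via a local section of $K\to K/K_\scrC$. The uniformity in Proposition~\ref{prop:proximal} is what makes the subsequence argument legitimate.
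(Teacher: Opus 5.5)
Your core argument is the paper's. You get $\scrC$-divergence from Lemma~\ref{lemma:Benoist-compact-Cartan}. You identify the limit flags of every subsequence of $(x\gamma_\ell y)$ that converges at infinity by intersecting two open dense transversality conditions and applying Proposition~\ref{prop:proximal} to both sequences. The paper phrases this as ``two open dense attracting sets force a single limit point''. That part is correct.

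The repair you propose for the ``main obstacle'' does not work, and the statement it aims at is in fact false. The Cartan factors of $g_\ell:=x\gamma_\ell y$ are not determined independently modulo $K_\scrC$. They are determined only up to a simultaneous change $(k_{1,\ell},k_{2,\ell})\mapsto(k_{1,\ell}m,k_{2,\ell}m)$ by a single $m\in Z_K(\mu_\fapp(g_\ell))$. For large $\ell$ this group lies in $K_\scrC$, but it can be much smaller (finite when $\mu_\fapp(g_\ell)$ is regular). So lifting $k_{1,\ell}K_\scrC$ and $k_{2,\ell}K_\scrC$ separately through local sections of $K\to K/K_\scrC$ does not give a Cartan decomposition of $g_\ell$.

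Here is a concrete counterexample. Take $G=\mathrm{SL}_3(\R)$ with $\fa_\scrC=\R\,\mathrm{diag}(2,-1,-1)$. Let $\gamma_\ell=\exp\mathrm{diag}(2\ell,-\ell+c_\ell,-\ell-c_\ell)$, with $c_\ell=0$ for $\ell$ even and $c_\ell=\ell$ for $\ell$ odd. Let $x=e$ and $y=\mathrm{diag}(1,u)$ with $u=\bigl(\begin{smallmatrix}1&1\\0&1\end{smallmatrix}\bigr)$.
\begin{itemize}
\item The sequence $(\gamma_\ell)$ is $\scrC$-divergent, with constant Cartan factors $e,e$.
\item For $\ell\geq1$, $\gamma_\ell y$ has regular Cartan projection, so its left factor is unique up to diagonal signs.
\item For even $\ell$, $k_{1,\ell}e_2=\pm(0,\cos\theta_0,\sin\theta_0)$ with $\theta_0=\arctan\frac{\sqrt5-1}{2}$.
\item Along odd $\ell$, $k_{1,\ell}e_2\to\pm e_2$.
\end{itemize}
Hence no choice of $K$-factors converges, even though $k_{1,\ell}P_\scrC=P_\scrC$ for every $\ell$.

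The only tenable reading of ``convergent at infinity'' is convergence of $k_{1,\ell}P_\scrC$ and $k_{2,\ell}P_{-\scrC}$ in $G/P_{\pm\scrC}\cong K/K_\scrC$. The paper uses this reading implicitly: only the flags ever enter, and they are well defined because $Z_K(\mu_\fapp(g_\ell))<K_\scrC<P_{\pm\scrC}$ for large $\ell$. Under that reading your subsequence argument already finishes the proof, by compactness of $K/K_\scrC$ and uniqueness of subsequential limits. The uniformity in Proposition~\ref{prop:proximal} is not what makes it work. Drop the re-lifting step.
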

\begin{proof}
$\scrC$-divergence follows from Lemma~\ref{lemma:Benoist-compact-Cartan}. Let $D:=\{gP_\scrC \in G/P_\scrC ~:~ gP_\scrC\pitchfork \gamma_\scrC^-\}$. For all $h P_\scrC\in y^{-1}D\subset G/P_\scrC$, Proposition~\ref{prop:proximal} implies $(x\gamma_\ell y)hP_\scrC \to x\gamma^+_\scrC$. Since $D\subset G/P_\scrC$ is open and dense, there cannot be a second limit point. Indeed, if there were one, we could pass to the corresponding subsequence, and Proposition~\ref{prop:proximal} would imply that this subsequence also has an open dense attracting set. Hence $(x\gamma_\ell y)_{\ell \geq 0}$ is converging at infinity and we get \eqref{eq:limit_left_right_mult}. The proof for the backward limit point is analogous.
\end{proof}
Furthermore, the following lemma, whose proof is a straightforward computation, records some useful identities for these limit points.

\begin{lemma}\label{lem:forward_backward_same}
Let $(\gamma_\ell)_{\ell \geq 0}$ be $\scrC$-divergent and convergent at infinity and define $\eta_\ell := \gamma_\ell^{-1}$. Let $\fapp\geq \scrC$ be an open Weyl chamber. Then:
\begin{enumerate}[label=\emph{(\roman*)}]
 \item $\gamma_\scrC^- = \eta^+_{-\scrC}$
 \item \label{it:limit_-scrC} If $(\gamma_\ell)_{\ell \geq 0}$ is additionally $\scrC^\circ$-divergent then:
\begin{equation*}
\gamma_{-\scrC}^+ = k_{1,\infty}\dot{w}_\fapp P_{-\scrC},\hspace{20pt}  \gamma_{-\scrC}^- = k_{2,\infty}\dot{w}_\fapp P_{\scrC}.
\end{equation*}
It follows that $\gamma_{\scrC}^\pm \not\pitchfork \gamma_{-\scrC}^\pm$.
\item If $\scrC$ is $\iota_\fapp$-invariant, then:
 \[
  \eta^+_\scrC = \gamma_{-\scrC}^{-} = \mathcal I_{-\scrC} \gamma_\scrC^-,\qquad \eta_\scrC^- = \gamma_{-\scrC}^+ = \mathcal I_\scrC \gamma^+_\scrC.
 \]
\end{enumerate}
\end{lemma}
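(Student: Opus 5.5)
The plan is to reduce all three items to the Cartan decomposition $\gamma_\ell = k_{1,\ell} e^{H_\ell} k_{2,\ell}^{-1}$, with $k_{i,\ell}\to k_{i,\infty}$, and to compute the Cartan decomposition of $\eta_\ell=\gamma_\ell^{-1}$ explicitly. Relative to the chamber $\fapp$ one has
\[
\eta_\ell = k_{2,\ell} e^{-H_\ell} k_{1,\ell}^{-1} = (k_{2,\ell}\dot w_\fapp)\, e^{w_\fapp(-H_\ell)}\, (k_{1,\ell}\dot w_\fapp)^{-1},
\]
and $w_\fapp(-H_\ell)=\iota_\fapp(H_\ell)\in\fap$. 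This is a Cartan decomposition with respect to $\fapp$ whose $K$-factors converge to $k_{2,\infty}\dot w_\fapp$ and $k_{1,\infty}\dot w_\fapp$. Alternatively, $\eta_\ell = k_{2,\ell}e^{-H_\ell}k_{1,\ell}^{-1}$ is already a Cartan decomposition with respect to the chamber $-\fapp\geq -\scrC$. Limit points do not depend on the choice of open chamber above the generalized chamber, by the same invariance argument used for $\mu_\scrC$ via Lemma~\ref{lem:adjacent_open_weyl_chamber}; I would verify this briefly, since it underlies all three items.

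For item (i), I use the decomposition with respect to $-\fapp\geq-\scrC$. Since $-H_\ell\in -\fap$ and $\alpha(-H_\ell)\to\infty$ for $\alpha\in\roots^+_{-\scrC}=-\roots^+_\scrC$, the sequence $\eta_\ell$ is $(-\scrC)$-divergent. By Definition~\ref{definition:conv-infinity}, $\eta^+_{-\scrC}=k_{2,\infty}P_{-\scrC}=\gamma^-_\scrC$.

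For item (ii), the extra hypothesis of $\scrC^\circ$-divergence guarantees that $\gamma_\ell$ is also $(-\scrC)$-divergent. I need the relevant roots positive along $H_\ell$ relative to some chamber containing $-\scrC$. The chamber $-\scrC$ lies in $-\fapp$, so I rewrite $\gamma_\ell = (k_{1,\ell}\dot w_\fapp) e^{w_\fapp H_\ell}(k_{2,\ell}\dot w_\fapp)^{-1}$ with $w_\fapp H_\ell\in -\fap$. Divergence then needs $\alpha(w_\fapp H_\ell)\to\infty$ for $\alpha\in\roots^+_{-\scrC}$, equivalently $\beta(H_\ell)\to\infty$ for $\beta\in\roots^+_{\iota_\fapp\scrC}$. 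This holds because $\roots^+_{\iota_\fapp\scrC}\subset\roots^+_{\scrC^\circ}$, as $\iota_\fapp\scrC\leq\scrC^\circ$. Reading off the limit points gives $\gamma^+_{-\scrC}=k_{1,\infty}\dot w_\fapp P_{-\scrC}$ and $\gamma^-_{-\scrC}=k_{2,\infty}\dot w_\fapp P_\scrC$.

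For non-transversality in item (ii), I use \eqref{eq_transversality_cond}. The condition $k_{1,\infty}P_\scrC\pitchfork k_{1,\infty}\dot w_\fapp P_{-\scrC}$ would require $\dot w_\fapp\in N^+_\scrC P_{-\scrC}$, that is, $\dot w_\fapp\in P_\scrC P_{-\scrC}$. But $\dot w_\fapp$ lies in a Bruhat cell different from the open one, since $w_\fapp\notin\langle\roots^0_\scrC\rangle$ as $\scrC\neq0$; this follows from \eqref{eq:generalized-Bruhat}. The same argument applies to the minus signs.

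For item (iii), when $\iota_\fapp\scrC=\scrC$ we have $\scrC^\circ=\scrC$, so item (ii) applies. Item (i) applied to $-\scrC$ (with $\gamma$ and $\eta$ swapped) gives $\eta^+_\scrC=\gamma^-_{-\scrC}=k_{2,\infty}\dot w_\fapp P_\scrC=\mathcal I_{-\scrC}(k_{2,\infty}P_{-\scrC})=\mathcal I_{-\scrC}\gamma^-_\scrC$, by \eqref{equation:ilesttard}. Similarly $\eta^-_\scrC=\gamma^+_{-\scrC}=\mathcal I_\scrC\gamma^+_\scrC$. The main obstacle is the bookkeeping in item (ii): checking which roots must diverge and that the $\dot w_\fapp$-twisted decomposition is a legitimate Cartan decomposition for a chamber above $-\scrC$. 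The rest is direct.
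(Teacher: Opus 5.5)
Your proposal is correct and is essentially the "straightforward computation" the paper alludes to without giving details: rewrite the Cartan decompositions of $\gamma_\ell$ and $\gamma_\ell^{-1}$ relative to $\pm\fapp$ using $\dot w_\fapp$, read off the limit points, and get non-transversality from the generalized Bruhat decomposition together with $w_\fapp\notin\langle\roots^0_\scrC\rangle$. One harmless slip: in (iii), applying (i) to $-\scrC$ and the sequence $(\gamma_\ell)$ itself (no swap) gives $\gamma^-_{-\scrC}=\eta^+_\scrC$, while swapping $\gamma$ and $\eta$ and keeping $\scrC$ gives the other identity $\eta^-_\scrC=\gamma^+_{-\scrC}$.
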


\begin{remark}\label{rem:limit_point_projection}
As the limit points of a $\scrC$-divergent sequence are constructed using a Cartan decomposition adapted to $\fapp\geq\scrC$, for a smaller generalized Weyl chamber $\scrC'\leq\scrC$ the sequence is also $\scrC'$-divergent and its limit points are obtained by projecting via $\mathrm{pr}_{\pm\scrC'} : G/P_{\pm\scrC} \to G/P_{\pm\scrC'}$:
\begin{equation}\label{eq:limit_point_projection}
\gamma_{\scrC'}^\pm = \mathrm{pr}_{\pm\scrC'}(\gamma^\pm_\scrC).
\end{equation}
\end{remark}

\begin{example}\label{example:limit-point-for-powers}
Let $g \in G$ be such that the sequence $\gamma_\ell := g^\ell$ is $\scrC$-divergent; this holds, for instance, for every element of infinite order in a $\scrC$-Anosov subgroup (see Definition~\ref{def:divergent-regular-transverse-anosov} below). Then $(\gamma_\ell)_{\ell \geq 0}$ converges at infinity in the sense of Definition~\ref{definition:conv-infinity}, the limit points $\gamma^\pm_\scrC$ are fixed by $g$, and by Proposition~\ref{prop:proximal}, they are the attracting and repelling fixed points of $g$ acting on $G/P_{\pm\scrC}$; see \cite[Lemma 5.11]{Gueritaud-Guichard-Kassel-Wienhard-17}.
\end{example}

\subsubsection{Limit set} \label{sssection:limit-set} Next, we recall the notion of a limit set introduced by Quint \cite{Quint-2001, Quint-2002}. The limit set of a $\scrC$-divergent subgroup $\Gamma < G$ is the set of all limit points of sequences in $\Gamma$ that are convergent at infinity (here and below, $\gamma_\ell\to\infty$ means that the sequence eventually leaves every compact subset of $G$):

\begin{definition}[Limit set]
Let $\Gamma < G$ be a $\scrC$-divergent group. Then
\[
\begin{split}
\Lambda_\scrC & := \{\xi \in G/P_{ \scrC} ~:~ \exists (\gamma_\ell)_{\ell \geq 0} \subset \Gamma, \gamma_\ell \to \infty, \text{convergent at }\infty, (\gamma_\ell)_\scrC^+ = \xi \} \\
& \subset G/P_{\scrC}.
\end{split}
\]
This set is the \emph{$\scrC$-limit set} of $\Gamma$.
\end{definition}

Since, for an arbitrary sequence, $(\gamma_\ell)_\scrC^-=(\gamma_\ell^{-1})_{-\scrC}^+$, and since $\Gamma$ is a group, we could equally have defined $\Lambda_{-\scrC}$ using the backward limit points $(\gamma_\ell)_\scrC^-$. Furthermore, if $\scrC$ is $\iota_\fapp$-invariant, then there is a natural identification $G/P_\scrC \simeq G/P_{-\scrC}$ (see \eqref{equation:ilesttard}) and correspondingly, Lemma~\ref{lem:forward_backward_same} implies
\[
\mathcal{I}_\scrC(\Lambda_\scrC) = \Lambda_{-\scrC}.
\]

The set $\Lambda_{\fapp}$ corresponds to the set $\Lambda_\Gamma$ defined in \cite[\S3.6]{Benoist-97}. According to Observation~3 following that definition, $\Lambda_\scrC = \mathrm{pr}_\scrC\Lambda_\Gamma$ (see \eqref{eq:limit_point_projection} for the definition of $\mathrm{pr}_\scrC$). The same reference also shows that \cite[\S3.6]{Benoist-97} the sets $\Lambda_\scrC$ are closed and $\Gamma$-invariant subsets of $G/P_{\scrC}$.

Similarly, we introduce
\[
\begin{split}
\Lambda^{(2)}_{\scrC} & := \left\{ (\xi^+,\xi^-) \in G/P_\scrC \overset{\pitchfork}{\times} G/P_{-\scrC} ~:~ \exists (\gamma_\ell)_{\ell \geq 0} \subset \Gamma, \gamma_\ell \to \infty,(\gamma_\ell)_\scrC^\pm = \xi^\pm \right\} \\
& \subset G/P_\scrC \overset{\pitchfork}{\times} G/P_{-\scrC}.
\end{split}
\]
We emphasize that $\Lambda^{(2)}_{\scrC}$ only contains transverse pairs by definition; there may be limit points $((\gamma_\ell)_{\scrC}^+,(\gamma_\ell)_\scrC^-)$ (for sequences $(\gamma_\ell)_{\ell \geq 0}$) which are not transverse, but these are not recorded by $\Lambda^{(2)}_{\scrC}$. The first observation is that this set is also closed:

\begin{lemma}
\label{lemma:closed}
$\Lambda^{(2)}_{\scrC}\subset \Lambda_{\scrC} \overset{\pitchfork}{\times} \Lambda_{-\scrC}$ is closed and invariant under the diagonal action of $\Gamma$.
\end{lemma}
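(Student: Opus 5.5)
The inclusion $\Lambda^{(2)}_\scrC\subset \Lambda_\scrC\overset{\pitchfork}{\times}\Lambda_{-\scrC}$ comes straight from the definitions. If $(\xi^+,\xi^-)\in\Lambda^{(2)}_\scrC$, then there is a sequence $\gamma_\ell\to\infty$ in $\Gamma$ with $(\gamma_\ell)^\pm_\scrC=\xi^\pm$, so $\xi^+\in\Lambda_\scrC$. By Lemma~\ref{lem:forward_backward_same}(i), $\xi^-=(\gamma_\ell)^-_\scrC=(\gamma_\ell^{-1})^+_{-\scrC}$, and $\gamma_\ell^{-1}\in\Gamma$, so $\xi^-\in\Lambda_{-\scrC}$. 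Transversality of the pair holds by definition of $\Lambda^{(2)}_\scrC$.

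$\Gamma$-invariance follows from Lemma~\ref{lem:limit_points_leftright_mult}. Given $g\in\Gamma$, the sequence $(g\gamma_\ell g^{-1})_{\ell\ge0}$ lies in $\Gamma$, tends to infinity, and is $\scrC$-divergent and convergent at infinity. Its limit points are $g\xi^+$ and $(g^{-1})^{-1}\xi^-=g\xi^-$. Since the $G$-action preserves transversality, $(g\xi^+,g\xi^-)\in\Lambda^{(2)}_\scrC$.

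Closedness in $G/P_\scrC\overset{\pitchfork}{\times}G/P_{-\scrC}$ is the main step, and I would prove it by a diagonal extraction. Take $(\xi^+_n,\xi^-_n)\in\Lambda^{(2)}_\scrC$ converging to a transverse pair $(\xi^+,\xi^-)$, and for each $n$ a sequence $(\gamma_{n,\ell})_\ell$ witnessing $(\xi^\pm_n)$. I want to choose $\ell_n$ so that $\gamma'_n:=\gamma_{n,\ell_n}$ tends to infinity, is $\scrC$-divergent, and has Cartan factors $k_{1},k_{2}$ close to those of the $n$-th limit. Fix a metric $d$ on $K$. For each $n$, choose Cartan decompositions $\gamma_{n,\ell}=k^{(n)}_{1,\ell}e^{H_{n,\ell}}(k^{(n)}_{2,\ell})^{-1}$ with $k^{(n)}_{i,\ell}\to k^{(n)}_{i,\infty}$, and pick $\ell_n$ such that $d(k^{(n)}_{i,\ell_n},k^{(n)}_{i,\infty})<1/n$, $\min_{\alpha\in\roots^+_\scrC}\alpha(H_{n,\ell_n})>n$, and $\gamma_{n,\ell_n}$ lies outside the $n$-th set of a compact exhaustion of $G$. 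By compactness of $K$, pass to a subsequence along which $k^{(n)}_{i,\infty}\to k_{i,\infty}$. Then $k^{(n)}_{i,\ell_n}\to k_{i,\infty}$, so $(\gamma'_n)$ is $\scrC$-divergent and convergent at infinity, with limit points $k_{1,\infty}P_\scrC$ and $k_{2,\infty}P_{-\scrC}$.

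The only delicate point is checking that these limit points are $\xi^\pm$. I would handle it using continuity of the projections $K\to G/P_{\pm\scrC}$: we have $k^{(n)}_{1,\infty}P_\scrC=\xi^+_n\to\xi^+$, hence $k_{1,\infty}P_\scrC=\xi^+$, and the same argument gives $k_{2,\infty}P_{-\scrC}=\xi^-$. The ambiguity of the factors $k_{i,\infty}$ modulo $K_\scrC$ is harmless, because we never compare them and only fix one choice per $n$. Since $(\xi^+,\xi^-)$ is transverse by hypothesis, it belongs to $\Lambda^{(2)}_\scrC$, which proves closedness.
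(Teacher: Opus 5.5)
Your proof is correct. The paper states Lemma~\ref{lemma:closed} without proof, so there is no argument in the paper to compare against. Your diagonal extraction is the standard one: choose $\ell_n$ so that the $K$-factors are $1/n$-close to their limits while $\min_{\alpha\in\Delta^+_\scrC}\alpha$ of the Cartan projection exceeds $n$, then use compactness of $K$ and continuity of $K\to G/P_{\pm\scrC}$ to identify the limit points. It is essentially the same device the paper uses to show that the complement of $\odms$ is closed (Lemma~\ref{lemma:basic-feature-Omega-DMS}).

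You are right to read ``closed'' as relatively closed in $G/P_\scrC\overset{\pitchfork}{\times}G/P_{-\scrC}$, equivalently in $\Lambda_\scrC\overset{\pitchfork}{\times}\Lambda_{-\scrC}$. This is the only sensible reading, since the closure in $G/P_\scrC\times G/P_{-\scrC}$ generally contains non-transverse pairs.

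Your argument does use one fact implicitly: the limit points of a sequence convergent at infinity do not depend on which Cartan decomposition with convergent $K$-factors is chosen. The paper records this right after Definition~\ref{definition:conv-infinity}, so the step is justified.
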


For Zariski dense subgroups the inclusion is in fact an equality (see \cite[Lemma, item (iv), p. 16]{Benoist-97} for a proof):
\begin{lemma}
\label{lemma:equivalent-lambda}
Suppose $\Gamma < G$ is Zariski dense. Then $\Lambda_{\scrC} \overset{\pitchfork}{\times} \Lambda_{-\scrC} = \Lambda^{(2)}_{\scrC}$.
\end{lemma}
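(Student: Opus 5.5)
The statement to prove is Lemma \ref{lemma:equivalent-lambda}: for Zariski dense $\Gamma$, $\Lambda_\scrC\overset{\pitchfork}{\times}\Lambda_{-\scrC}=\Lambda^{(2)}_\scrC$. The inclusion $\Lambda^{(2)}_\scrC\subset\Lambda_\scrC\overset{\pitchfork}{\times}\Lambda_{-\scrC}$ is Lemma \ref{lemma:closed}. For the reverse inclusion, fix a transverse pair $(\xi^+,\xi^-)$ with $\xi^+\in\Lambda_\scrC$ and $\xi^-\in\Lambda_{-\scrC}$. We must produce a single sequence $(\gamma_\ell)\subset\Gamma$ with $(\gamma_\ell)^+_\scrC=\xi^+$ and $(\gamma_\ell)^-_\scrC=\xi^-$.

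\textbf{Step 1: two sequences.} Pick $(\alpha_\ell)\subset\Gamma$, convergent at infinity, with $(\alpha_\ell)^+_\scrC=\xi^+$ and backward point $a^-$. Since $\Lambda_{-\scrC}$ can equally be described by backward limit points, pick $(\beta_\ell)\subset\Gamma$ with $(\beta_\ell)^-_\scrC=\xi^-$ and forward point $b^+$.

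\textbf{Step 2: a connecting element $g\in\Gamma$.} We want $g$ such that $g\,b^+\pitchfork a^-$ in $G/P_\scrC\times G/P_{-\scrC}$. The set of $h\in G$ for which $h\,b^+$ is not transverse to $a^-$ is a proper Zariski closed subset: it is the complement of a translate of the open Bruhat cell $P_{-\scrC}P_\scrC$. By Zariski density, some $g\in\Gamma$ avoids it.

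\textbf{Step 3: the product sequence.} Set $\gamma_\ell:=\alpha_\ell\, g\, \beta_{m(\ell)}$, where $m(\ell)$ is chosen to make the divergence balanced, e.g.\ $m(\ell)=\ell$ after passing to subsequences. Write $\beta_m=k_1e^{H'}k_2^{-1}$ and $\alpha_\ell=k'_1e^{H}k'^{-1}_2$. The middle factor $k'^{-1}_2 g k_1$ converges to an element $c$ with $cP_\scrC\pitchfork P_{-\scrC}$; this is exactly the transversality obtained in Step 2. Hence, modulo a compact perturbation, $c$ lies in $N^-_\scrC L_\scrC N^+_\scrC$ with bounded factors. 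The resulting element $e^{H}\,c\,e^{H'}$ therefore has Cartan projection $H+H'+O(1)$ and Cartan frames close to $k'_1$ and $k_2$, by \eqref{eq:unipotent_contraction} together with Lemma \ref{lemma:Benoist-compact-Cartan}. Consequently $\gamma_\ell$ is $\scrC$-divergent and convergent at infinity, with $(\gamma_\ell)^+_\scrC=\xi^+$ and $(\gamma_\ell)^-_\scrC=\xi^-$. An alternative route to the same conclusion is to use Proposition \ref{prop:proximal} and Lemma \ref{lem:limit_points_leftright_mult} on open dense attracting sets: $\gamma_\ell$ sends a generic flag to $\xi^+$, and $\gamma_\ell^{-1}$ sends a generic flag to $\xi^-$.

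\textbf{Main obstacle.} The main difficulty is the Cartan-frame estimate in Step 3 for non-minimal $\scrC$. There $c$ only lies in the big cell $N^-_\scrC M_\scrC A_\scrC N^+_\scrC$, and the $M_\scrC$-component is noncompact. One must control this component so that it does not destroy the convergence of the frames. I would handle it by passing to $\scrC$-projections: work on the flag varieties via proximality, as in the alternative route above, rather than through explicit $KAK$ computations. Finally, transversality of the resulting pair is automatic since $\xi^+\pitchfork\xi^-$ by assumption, so $(\xi^+,\xi^-)\in\Lambda^{(2)}_\scrC$.
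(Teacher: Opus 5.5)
Your argument is correct once you commit to the proximality route. It is genuinely different from what the paper does: the paper gives no proof of its own and simply cites Benoist. There, the statement belongs to his analysis of limit sets of Zariski dense groups, which runs through loxodromic elements and their attracting/repelling fixed points, combined with closedness as in Lemma~\ref{lemma:closed}.

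You instead build the sequence directly as $\gamma_\ell=\alpha_\ell g\beta_\ell$ and use Zariski density exactly once. Writing $b^+=xP_\scrC$ and $a^-=yP_{-\scrC}$, you need $g\in\Gamma$ outside the proper Zariski closed set $y(G\setminus P_{-\scrC}P_\scrC)x^{-1}$. Everything else is Proposition~\ref{prop:proximal}, applied first to $\beta_\ell$ and then, by local uniformity, to $\alpha_\ell$. This gives $\gamma_\ell\eta\to\xi^+$ for $\eta\pitchfork\xi^-$, and $\gamma_\ell^{-1}\zeta\to\xi^-$ for $\zeta\pitchfork\xi^+$.

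Your route is elementary and self-contained within the paper's sequence-based definitions. It also proves the conclusion for any $\scrC$-divergent $\Gamma$ that admits such connecting elements. Benoist's route needs the heavier loxodromic machinery, but typically gives more: approximation of transverse limit pairs by fixed-point pairs of single elements of $\Gamma$.

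Three points to tidy.

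\begin{enumerate}
\item Discard the $KAK$ computation in Step~3. For non-minimal $\scrC$ the claim $\mu_\fapp(e^{H}ce^{H'})=H+H'+O(1)$ is false in general. The problem is not the $M_\scrC$-component of $c$, which is bounded. It is the unbounded elements $e^{H-\pi_\scrC H}$ and $e^{H'-\pi_\scrC H'}$ of $A_\scrC^\perp<M_\scrC$, which that bounded component can rearrange. For instance, take $c$ a Weyl element of $M_\scrC$ in $\mathrm{SL}_3(\R)$ for the $(1,2)$ parabolic. At best, only the $\fa_\scrC$-component is additive up to $O(1)$.

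\item In the proximal route, say explicitly why $\gamma_\ell\to\infty$. A bounded sequence in the discrete group $\Gamma$ has a constant subsequence, and a single element cannot collapse the open set $\{\eta\pitchfork\xi^-\}$ onto $\xi^+$. Hence $(\gamma_\ell)$ is $\scrC$-divergent, by the $\scrC$-divergence of $\Gamma$ that is implicit in the definition of $\Lambda_\scrC$.

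\item Then pass to a subsequence converging at infinity. Identify its limit points with $\xi^\pm$ by intersecting the relevant open dense sets of generic flags, as in the proof of Lemma~\ref{lem:limit_points_leftright_mult}. The lemma itself does not apply, since both outer factors diverge.
\end{enumerate}

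Also, no balancing of $m(\ell)$ is needed: any $m(\ell)\to\infty$ works.
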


Finally, we can define several properties of discrete subgroups:
\begin{definition}[Properties of discrete subgroups]\label{def:divergent-regular-transverse-anosov}
	Let $\Gamma < G$ be a finitely generated, discrete, torsion-free subgroup. Then $\Gamma$ is called:
	\begin{enumerate}[label=(\roman*)]
	\item\label{it:divergent} \textbf{$\scrC$-divergent} if, for every sequence $(\gamma_\ell)_{\ell \geq 0}\subset \Gamma$ satisfying $\gamma_\ell\to\infty$,
	\[
	\alpha(\mu_\fapp(\gamma_\ell))\to+\infty, \qquad \forall \alpha\in\Delta^+_\scrC.
	\]
	\item\label{it:regular} \textbf{$\scrC$-regular} if there exists $C>0$ such that
	\[
	\alpha(\mu_\fapp(\gamma))> C^{-1}\|\mu_\fapp(\gamma)\| - C, \qquad \forall \alpha\in\Delta^+_\scrC, \gamma\in \Gamma.
	\]
		\item\label{it:transverse} \textbf{$\scrC$-transverse} if $\Gamma$ is $\scrC$-divergent, $\scrC$ is $\iota_\fapp$-invariant and for all $\xi_1,\xi_2 \in \Lambda_\scrC$ with $\xi_1\neq \xi_2$ we have $\xi_1\pitchfork \mathcal I_\scrC(\xi_2)$. More generally, if $\scrC$ is not $\iota_\fapp$-invariant, we say that $\Gamma$ is $\scrC$-transverse if it is $\scrC^\circ$-transverse.
	\item\label{it:Anosov} \textbf{$\scrC$-Anosov} if for some (equivalently, any) word metric $|\bullet|$ on $\Gamma$, there exists $C>0$ such that:
	\[
	\alpha(\mu_\fapp(\gamma))> C^{-1} |\gamma|-C, \qquad \forall \alpha\in\Delta^+_\scrC, \gamma\in\Gamma.
	\]
\end{enumerate}
\end{definition}
These notions form a clear hierarchy: by definition, $\scrC$-Anosov implies $\scrC$-regular, which implies $\scrC$-divergent. In addition, the following holds (see \cite[Section 5.6]{Gueritaud-Guichard-Kassel-Wienhard-17} or \cite{Kapovich-Leeb-Porti-17, Bochi-Potrie-Sambarino-19} for a proof):
\begin{lemma}
\label{lemma:anosov-transverse}
Every $\scrC$-Anosov subgroup $\Gamma < G$ is $\scrC$-transverse.
\end{lemma}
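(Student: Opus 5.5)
The plan is to prove that $\Gamma$ is $\scrC$-transverse, i.e.\ $\scrC^\circ$-transverse, by reducing to the $\iota_\fapp$-invariant case and then using the boundary map given by the Anosov property.

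\emph{Reduction to the symmetric chamber.} First I show that $\scrC$-Anosov implies $\scrC^\circ$-Anosov. The set $\Delta^+_{\scrC^\circ}$ consists of the roots positive on $\mathrm{conv}(\scrC,\iota_\fapp\scrC)$. Such a root is positive either on $\scrC$ or on $\iota_\fapp\scrC$. Since $\iota_\fapp$ preserves $\fap$, we have $\alpha\circ\iota_\fapp\in\Delta^+_\fapp$. We also have $\mu_\fapp(\gamma^{-1})=\iota_\fapp\mu_\fapp(\gamma)$ and $|\gamma^{-1}|=|\gamma|$. Applying the Anosov inequality to $\gamma^{-1}$ then gives the Anosov bound for the roots coming from $\iota_\fapp\scrC$. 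So from now on we may assume that $\scrC$ is $\iota_\fapp$-invariant. Divergence is immediate from the Anosov inequality and properness of the word length.

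\emph{Boundary map.} Since $\Gamma$ is word-hyperbolic, I would use the standard characterization of Anosov subgroups (\cite{Gueritaud-Guichard-Kassel-Wienhard-17,Kapovich-Leeb-Porti-17,Bochi-Potrie-Sambarino-19}), which I treat as an external input. It provides continuous equivariant boundary maps $\xi^\pm:\partial_\infty\Gamma\to G/P_{\pm\scrC}$ that are transverse on distinct points. It also says that for a quasi-geodesic sequence $\gamma_\ell\to x\in\partial_\infty\Gamma$, the Cartan limit point $(\gamma_\ell)^+_\scrC$ equals $\xi^+(x)$. Concretely, one shows that the Cartan attracting flags $k_{1,\ell}P_\scrC$ along a geodesic ray form a Cauchy sequence. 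This uses the linear growth of the gaps $\alpha(\mu_\fapp)$ along the ray together with a bounded-distortion estimate for consecutive products.

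\emph{Conclusion.} Let $\gamma_\ell\to\infty$ converge at infinity. After passing to a subsequence, $\gamma_\ell\to x$ in the Gromov compactification. Lemma~\ref{lemma:Benoist-compact-Cartan} shows that replacing $\gamma_\ell$ by nearby points on a geodesic ray toward $x$ changes the limit point only by a bounded left multiplication, and Lemma~\ref{lem:limit_points_leftright_mult} controls this. Hence $\Lambda_\scrC=\xi^+(\partial_\infty\Gamma)$. In the same way, $\Lambda_{-\scrC}=\xi^-(\partial_\infty\Gamma)$, and Lemma~\ref{lem:forward_backward_same} together with the identity $\mathcal I_\scrC(\Lambda_\scrC)=\Lambda_{-\scrC}$ gives $\xi^-=\mathcal I_\scrC\circ\xi^+$. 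Since $\partial_\infty\Gamma$ is infinite in the non-elementary case, $\xi^+$ must be injective: if $\xi^+(x)=\xi^+(y)$ with $x\neq y$, then $\xi^+(x)\pitchfork\mathcal I_\scrC\xi^+(y)$ would fail. Therefore two distinct points of $\Lambda_\scrC$ have the form $\xi^+(x),\xi^+(y)$ with $x\neq y$, and they satisfy $\xi^+(x)\pitchfork\mathcal I_\scrC\xi^+(y)$. This is exactly $\scrC$-transversality.

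\emph{Main obstacle.} The hard step is the existence of transverse boundary maps and their identification with Cartan limit points. This is precisely the content of the cited equivalence theorems, and I would invoke them rather than reprove them.
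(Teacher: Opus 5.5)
The paper gives no argument of its own for this lemma. It simply defers to \cite[Section 5.6]{Gueritaud-Guichard-Kassel-Wienhard-17} and \cite{Kapovich-Leeb-Porti-17,Bochi-Potrie-Sambarino-19}, so your plan of quoting the boundary-map characterization matches it. Your reduction to $\scrC^\circ$ is correct: it is Lemma~\ref{lemma:i-invariant}, item (iii), and your bookkeeping $\Delta^+_{\scrC^\circ}=\Delta^+_\scrC\cup\Delta^+_{\iota_\fapp\scrC}$ together with $\mu_\fapp(\gamma^{-1})=\iota_\fapp\mu_\fapp(\gamma)$ is right. The final deduction is also correct. Injectivity of $\xi^+$ is not even needed there, since $\xi^+(x_1)\neq\xi^+(x_2)$ already forces $x_1\neq x_2$. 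The problem is the inclusion $\Lambda_\scrC\subset\xi^+(\partial_\infty\Gamma)$, on which the whole argument rests.

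$\Lambda_\scrC$ is defined through \emph{all} sequences convergent at infinity. The input you quote only identifies Cartan limit points along quasi-geodesic sequences. Your bridge between the two is that a sequence $\gamma_\ell\to x$ in $\Gamma\cup\partial_\infty\Gamma$ can be replaced by nearby points of a geodesic ray toward $x$. That claim is false. In a free group $\langle a,b\rangle$ (which occurs as a $\scrC$-Anosov subgroup, e.g.\ a Schottky group), $\gamma_\ell=a^\ell b^\ell$ converges to $a^{+\infty}$, yet lies at distance $\ell$ from the ray $(a^n)_{n\geq0}$.

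If you write $\gamma_\ell=\eta_\ell f_\ell$ with $\eta_\ell$ on the ray, the factors $f_\ell$ are unbounded. Neither Lemma~\ref{lemma:Benoist-compact-Cartan} (which controls Cartan projections, not Cartan flags) nor Lemma~\ref{lem:limit_points_leftright_mult} (which is for fixed multipliers) then says anything about $(\gamma_\ell)^+_\scrC$. Note also that bounded word distance gives a bounded \emph{right} factor, which leaves the forward limit point unchanged, not a left one.

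The missing ingredient is that $k_{1,\ell}P_\scrC\to\xi^+(x)$ for \emph{every} sequence $\gamma_\ell\to x$. To prove it, one uses that $[e,\gamma_\ell]$ fellow-travels the ray for a time comparable to the Gromov product $(\gamma_\ell\,|\,x)_e\to\infty$. One combines this with an exponential contraction estimate for attracting flags of products along a geodesic. That estimate needs the linear gap growth along the whole geodesic (dominated splittings in \cite{Bochi-Potrie-Sambarino-19}, the higher-rank Morse lemma in \cite{Kapovich-Leeb-Porti-18}), not just bounded perturbations.

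This continuity statement is part of the cited characterizations. Equivalently, asymptotic embeddedness in \cite{Kapovich-Leeb-Porti-17} gives $\Lambda_\scrC=\xi^+(\partial_\infty\Gamma)$ and antipodality at once. If you invoke it in that form rather than try to derive it from Lemmas~\ref{lemma:Benoist-compact-Cartan} and~\ref{lem:limit_points_leftright_mult}, your argument closes.
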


Furthermore, all these notions behave well under the action of the Weyl group on $\scrC$.
\begin{lemma}
\label{lemma:i-invariant}
If $\Gamma < G$ has any of the properties $\scrC$-divergent, $\scrC$-regular, $\scrC$-transverse, or $\scrC$-Anosov, then
\begin{enumerate}[label=\emph{(\roman*)}]
 \item\label{it:lemma:i-invariant-i} For any $w\in W(G,A)$, $\Gamma$ is $w\cdot\scrC$-divergent/regular/transverse/Anosov.
 \item\label{it:lemma:i-invariant-ii} $\Gamma$ is $-\scrC$-divergent/regular/transverse/Anosov.
 \item\label{it:lemma:i-invariant-iii} $\Gamma$ is $\scrC^\circ$-divergent/regular/transverse/Anosov.
\end{enumerate}
\end{lemma}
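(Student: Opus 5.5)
We treat the three items separately, each time reducing to how the Cartan projection and the limit points transform under the relevant operation.

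For item \ref{it:lemma:i-invariant-i}, fix $w=[\dot w]\in W(G,A)$ and replace the reference chamber $\fapp\geq\scrC$ by $w\cdot\fapp\geq w\cdot\scrC$. The first step is to record two facts:
\[
\mu_{w\cdot\fapp}=w\cdot\mu_\fapp, \qquad \Delta^+_{w\cdot\scrC}=\{\alpha\circ w^{-1}:\alpha\in\Delta^+_\scrC\}.
\]
From these we get $\alpha\circ w^{-1}(\mu_{w\cdot\fapp}(\gamma))=\alpha(\mu_\fapp(\gamma))$. Since $w$ acts isometrically, $\|\mu_{w\cdot\fapp}(\gamma)\|=\|\mu_\fapp(\gamma)\|$ as well. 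So the defining inequalities for divergence, regularity and the Anosov property carry over verbatim; for divergence we use that the notion is independent of the choice of open chamber containing the generalized chamber.

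For transversality in \ref{it:lemma:i-invariant-i}, we use that right multiplication by $\dot w^{-1}$ gives $G$-equivariant diffeomorphisms $G/P_{\pm\scrC}\to G/P_{\pm w\cdot\scrC}$, by \eqref{eq:w_conj_parabolics}. These maps carry limit points to limit points, because the Cartan decomposition adapted to $w\cdot\fapp$ is $k_1\dot w^{-1}\,e^{w H}\,\dot w k_2^{-1}$. They also preserve transversality, since the open $G$-orbit is mapped to the open $G$-orbit. The chamber $(w\cdot\scrC)^\circ$ relative to $w\cdot\fapp$ is $w\cdot\scrC^\circ$, and $\mathcal I$ intertwines correctly because $w_{w\cdot\fapp}=ww_\fapp w^{-1}$. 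Hence transversality of distinct limit points transfers.

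For item \ref{it:lemma:i-invariant-ii}, note that $-\scrC\leq-\fapp$, and use $\mu_{-\fapp}(g^{-1})=-\mu_\fapp(g)$ together with $\Delta^+_{-\scrC}=-\Delta^+_\scrC$. This gives $\alpha'(\mu_{-\fapp}(\gamma^{-1}))=\alpha(\mu_\fapp(\gamma))$ for $\alpha'=-\alpha$. Since $\gamma\mapsto\gamma^{-1}$ is a bijection of $\Gamma$ preserving word length and norms, all three inequality-type properties follow. For transversality, we use $\Lambda_{-\scrC}=\mathcal I_\scrC(\Lambda_\scrC)$ (or more generally $(\gamma_\ell)^-_\scrC=(\gamma_\ell^{-1})^+_{-\scrC}$) and check that the condition $\xi_1\pitchfork\mathcal I(\xi_2)$ is symmetric under swapping the roles of $\pm\scrC$, as in Lemma~\ref{lem:forward_backward_same}. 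Alternatively, $-\scrC=w_\fapp\cdot\iota_\fapp(\scrC)$, which combined with \ref{it:lemma:i-invariant-i} reduces this to the $\iota_\fapp$ case.

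For item \ref{it:lemma:i-invariant-iii}, the key observation is
\[
\Delta^+_{\scrC^\circ}=\Delta^+_\scrC\cup\Delta^+_{\iota_\fapp\scrC}, \qquad \Delta^+_{\iota_\fapp\scrC}=\{-\alpha\circ w_\fapp:\alpha\in\Delta^+_\scrC\}.
\]
This holds because $\scrC^\circ$ is the smallest chamber containing both $\scrC$ and $\iota_\fapp\scrC$. For a root $-\alpha\circ w_\fapp$ we compute
\[
-\alpha(w_\fapp\mu_\fapp(\gamma))=\alpha(\iota_\fapp\mu_\fapp(\gamma))=\alpha(\mu_\fapp(\gamma^{-1})),
\]
which reduces to the estimate for $\gamma^{-1}$, as in item \ref{it:lemma:i-invariant-ii}. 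Transversality is immediate, since $\scrC$-transverse is defined as $\scrC^\circ$-transverse.

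The main obstacle is the transversality part of \ref{it:lemma:i-invariant-i} and \ref{it:lemma:i-invariant-ii}. There one must carefully track how $\mathcal I_\scrC$ and the opposite involution transform when $\fapp$ changes. The plan is to verify the conjugation identity $\dot w\,\dot w_\fapp\,\dot w^{-1}\in \dot w_{w\cdot\fapp}M$ and use it to show that the equivariant diffeomorphisms commute with $\mathcal I$.
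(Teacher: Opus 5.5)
The paper states this lemma without proof, implicitly relying on the identities $\mu_{w\cdot\fapp}=w\cdot\mu_\fapp$, $\mu_\fapp(g^{-1})=\iota_\fapp\mu_\fapp(g)$ and \eqref{eq:w_conj_parabolics}. Your argument is correct and supplies exactly these verifications, including the compatibility of $\iota_\fapp$, $\scrC^\circ$ and $\mathcal I_\scrC$ with the change of reference chamber $\fapp\mapsto w\cdot\fapp$.

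One small simplification is available for the transversality part of item (ii). For $\iota_\fapp$-invariant $\scrC$ you have $-\scrC=w_\fapp\cdot\scrC$, with reference chamber $-\fapp=w_\fapp\cdot\fapp$, so this case is literally a special case of item (i). The general case then follows from $(-\scrC)^\circ=-\scrC^\circ$.
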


Let us collect several further properties of transverse subgroups. First, $\Gamma$ is $\scrC$-regular if and only if
 \begin{equation}\label{eq:scrC_reg_limit_cone}
\scrL_{\Gamma,\mathrm C}^{\fapp}
\cap\ker\alpha = \{0\}, \qquad \forall \alpha\in\Delta^+_\scrC.
 \end{equation}
Indeed, one implication follows by multiplying the regularity estimate
by $t_\ell\to0^+$ along a sequence for which
$t_\ell\mu_\fapp(\gamma_\ell)$ converges. Conversely, failure of the
regularity estimate would, after choosing a root in the finite set
$\Delta_\scrC^+$ and normalizing an unbounded sequence of Cartan projections,
produce a nonzero element of
$\scrL_{\Gamma,\mathrm C}^{\fapp}\cap\ker\alpha$.

In addition, if $\Gamma$ is Zariski dense and $\scrC$ is $\iota_\fapp$-invariant, then Lemma~\ref{lemma:equivalent-lambda} and transversality imply
\[
\Lambda_\scrC^{(2)}
= \Lambda_\scrC \overset{\pitchfork}{\times} \Lambda_{-\scrC}
= (\Lambda_\scrC \times\Lambda_{-\scrC}) \setminus \mathrm{diag},
\text{ with }\mathrm{diag} = \{(\xi,\mathcal I_\scrC(\xi)) ~:~ \xi\in\Lambda_\scrC\}.
\]
Independently, for $\scrC$-transverse groups, the limit set does not depend on the choice of face, i.e., for $\scrC'\leq \scrC$ the projection map
\begin{equation}\label{eq:transverse_grp_limit_projection}
\mathrm{pr}_{\scrC'} : \Lambda_\scrC \to \Lambda_{\scrC'}
\end{equation}
is a homeomorphism.

Another feature of transverse groups is that each $\xi_+ \in \Lambda_\scrC$ fails to be transverse to exactly one point in $\Lambda_{-\scrC}$. This defines a homeomorphism
\begin{equation}\label{eq:limit_homeo}
\kappa\colon \Lambda_\scrC \overset{\simeq}{\longrightarrow} \Lambda_{-\scrC}.
\end{equation}
If $\scrC$ is $\iota_\fapp$-invariant, this is simply the restriction of $\mathcal I_\scrC$. If $\scrC$ is not $\iota_\fapp$-invariant, it is a composition of $\mathcal I_{\scrC^\circ}$ with the homeomorphisms of \eqref{eq:transverse_grp_limit_projection}.

\section{Domains of discontinuity}
\label{section:domain}

In this section, we show that $\scrC$-divergent subgroups admit a natural $A_{\scrC}$-invariant domain of discontinuity in $G/M_{\scrC}$, which we call a \emph{DMS domain}. Under the additional assumption of $\scrC$-transversality—and, more strongly, the $\scrC$-Anosov property—the dynamics of $A_{\scrC}$ become progressively better behaved. In particular, one can ultimately show (Theorem~\ref{thm:DMS-main}) that they satisfy all the assumptions of an Axiom A action of type $(k,1)$ (Definition~\ref{definition:axiom-a-action}).

Such domains were first constructed by Delarue, Monclair, and Sanders in \cite{Delarue-Monclair-Sanders-24} for projective Anosov groups and later extended to general Anosov subgroups in \cite{Delarue-Monclair-Sanders-25}.
 We provide a self-contained proof of the existence and key properties of these DMS domains, together with additional results relevant to our spectral study of higher-rank actions.
 Moreover, our techniques and arguments differ in flavor from those in \cite{Delarue-Monclair-Sanders-24,Delarue-Monclair-Sanders-25}.

\subsection{Definition}
As above, let $\scrC$ be a fixed generalized Weyl chamber, let $\Gamma < G$ be a discrete, $\scrC$-divergent subgroup of $G$, and let $\fapp\geq\scrC$ be an auxiliary open Weyl chamber (all constructions will ultimately be independent of this choice). Recall that the Hopf coordinates (see \eqref{eq:Hopf}) provide a diffeomorphism
\[
G/M_\scrC \simeq G/P_\scrC \overset{\pitchfork}{\times} G/P_{-\scrC} \times \fa_\scrC.
\]
The definition of the domain goes as follows:

\begin{definition}[DMS domain]
\label{definition:dms-domain}
Let $\Gamma < G$ be a discrete and torsion-free $\scrC$-divergent subgroup. The DMS domain associated with $\Gamma$ is defined by
\[
\odms :=\left \{gM_\scrC ~:~ \begin{matrix*}  \forall (\gamma_\ell)_{\ell \geq 0}\subset \Gamma \text{ convergent at infinity,} \\ \text{ either } gP_\scrC\pitchfork \gamma^-_\scrC \text{ or } gP_{-\scrC}\pitchfork \gamma^-_{-\scrC}\end{matrix*} \right \} \subset G/M_\scrC.
\]
\end{definition}

Note that $\gamma^-_\scrC$ and $\gamma^-_{-\scrC}$ are the repelling points of the $\scrC$-divergent sequence $(\gamma_\ell)_{\ell \geq 0}$ (see Proposition~\ref{prop:proximal}) acting respectively on $G/P_{-\scrC}$ and $G/P_{\scrC}$. Under the stronger assumption that $\scrC$ is $\iota_{\fapp}$-invariant, we have $\gamma^-_{\scrC} = \mathcal I_\scrC(\gamma^-_{-\scrC})$ (Lemma \ref{lem:forward_backward_same}, item (iii)), so the DMS domain can be rewritten in a simpler way as
\begin{equation}
 \label{eq:simpler_DMS_def}
\odms := \left\{gM_\scrC ~:~  \forall \xi \in \Lambda_{\scrC}, \text{ either } gP_\scrC\pitchfork \mathcal I_\scrC(\xi) \text{ or } gP_{-\scrC}\pitchfork \xi \right\}.
\end{equation}
We will now establish properties of $\odms$ under progressively stronger assumptions on $\Gamma$.

\subsection{Proper discontinuity}
We begin by showing the most basic features of this domain:
\begin{lemma}\label{lemma:basic-feature-Omega-DMS}
The set $\odms$ is an open left $\Gamma$- and right $A_\scrC$-invariant set. Additionally, if $\scrC'\leq \scrC$, then the natural projection from $G/M_\scrC$ to $G/M_{\scrC'}$ maps $\odms$ into $\Omega^{\scrC'}_\mathrm{DMS}$.
\end{lemma}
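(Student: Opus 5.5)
The proof is a direct verification from Definition~\ref{definition:dms-domain}, taking the three properties (invariance, openness, compatibility with projection) in turn.

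\emph{Right $A_\scrC$-invariance.} For $a\in A_\scrC\subset M_\scrC A_\scrC=P_\scrC\cap P_{-\scrC}$ we have $gaP_{\pm\scrC}=gP_{\pm\scrC}$. The defining condition only involves the flags $gP_{\pm\scrC}$, so it is unchanged when $g$ is replaced by $ga$. The same observation with $m\in M_\scrC$ shows that the condition is well defined on cosets $gM_\scrC$.

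\emph{Left $\Gamma$-invariance.} Fix $\gamma_0\in\Gamma$ and a sequence $(\gamma_\ell)\subset\Gamma$ that is convergent at infinity. By Lemma~\ref{lem:limit_points_leftright_mult}, the sequence $(\gamma_0^{-1}\gamma_\ell\gamma_0)$ is again $\scrC$-divergent and convergent at infinity, and its backward limit points are $\gamma_0^{-1}\gamma^-_{\pm\scrC}$. Since $\Gamma$ is a group, this conjugated sequence still lies in $\Gamma$. Transversality is $G$-invariant, so $\gamma_0gP_\scrC\pitchfork\gamma^-_\scrC$ is equivalent to $gP_\scrC\pitchfork\gamma_0^{-1}\gamma^-_\scrC$, and the same holds for $-\scrC$. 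Hence $gM_\scrC\in\odms$ implies $\gamma_0gM_\scrC\in\odms$.

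\emph{Openness.} This is the main step. First rewrite the defining condition as one over a compact set of pairs. Let
\[
\Lambda^-:=\{(\gamma^-_\scrC,\gamma^-_{-\scrC})\}\subset G/P_{-\scrC}\times G/P_{\scrC},
\]
taken over all sequences in $\Gamma$ convergent at infinity. Then $\odms$ is the set of $g$ such that every $(\eta_1,\eta_2)\in\Lambda^-$ satisfies $gP_\scrC\pitchfork\eta_1$ or $gP_{-\scrC}\pitchfork\eta_2$.

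The set $\Lambda^-$ is closed by a diagonal argument. Take a sequence of pairs converging to some limit, and for each pair realize it by a sequence with Cartan data $k_{2,\ell}$ close to the realizing $k_{2,\infty}$. Extracting a diagonal subsequence gives a sequence that is still divergent and convergent at infinity; a further subsequence makes it converge at infinity with respect to both $\scrC$ and $-\scrC$, and it realizes the limit pair. The flag manifolds are compact, so $\Lambda^-$ is compact.

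Transversality is an open condition in both variables. Therefore the complement of $\odms$ is the projection to the first factor of the closed set
\[
\{(g,\eta):\eta\in\Lambda^-,\ gP_\scrC\not\pitchfork\eta_1,\ gP_{-\scrC}\not\pitchfork\eta_2\}.
\]
Projection along a compact factor is a closed map, so this complement is closed and $\odms$ is open.

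The point needing the most care is checking that the $-\scrC$ limit point $\gamma^-_{-\scrC}$ is defined for the sequences in question. It may be necessary to restrict to sequences that are also $(-\scrC)$-divergent, which is automatic by Lemma~\ref{lemma:i-invariant}, and to pass to subsequences.

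\emph{Projection to $\scrC'\leq\scrC$.} By Remark~\ref{rem:limit_point_projection} and \eqref{eq:limit_point_projection}, every sequence convergent at infinity for $\scrC$ is also convergent for $\scrC'$, and its limit points are $\mathrm{pr}_{\pm\scrC'}(\gamma^-_{\pm\scrC})$. By \eqref{eq:limit_pt_projection}, transversality at level $\scrC$ implies transversality of the projections, and $gP_{\pm\scrC'}=\mathrm{pr}_{\pm\scrC'}(gP_{\pm\scrC})$. Conversely, every sequence in $\Gamma$ convergent at infinity for $\scrC'$ has a subsequence convergent for $\scrC$, and this subsequence has the same $\scrC'$-limit points. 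So the defining condition for $\Omega^{\scrC'}_\mathrm{DMS}$ holds for the image of any $gM_\scrC\in\odms$.
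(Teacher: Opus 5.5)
Your proof is correct and follows essentially the same route as the paper: invariance and compatibility with $\scrC'\leq\scrC$ come directly from the definition together with \eqref{eq:limit_pt_projection} and \eqref{eq:limit_point_projection}, and openness rests on the same diagonal extraction of witnessing sequences, made $\pm\scrC$-divergent via Lemma~\ref{lemma:i-invariant}. The only difference is packaging: the paper runs the diagonal argument directly on a sequence in the complement of $\odms$ converging to $gM_\scrC$, whereas you use it to show that the set $\Lambda^-$ of backward limit pairs is compact, and then conclude because projection along a compact factor is a closed map.
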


\begin{proof}
Right $A_\scrC$-invariance and left $\Gamma$-invariance follow directly from the definition. The behavior under the natural projection maps follows from the compatibility of the projection with transversality and with taking limit points, see \eqref{eq:limit_pt_projection} and \eqref{eq:limit_point_projection}.

Let $(g_\ell M_\scrC)_{\ell \geq 0}$ be a sequence in the complement $(\odms)^c$. This means that for each $\ell \geq 0$, there exists a sequence $(\gamma_{\ell,j})_{j\geq 0}\subset \Gamma$ converging at infinity to some $\gamma^\pm_{\pm\scrC,\ell}$, such that $g_\ell P_\scrC \cancel{\pitchfork} \gamma_{\scrC,\ell}^-$ and $g_\ell P_{-\scrC} \not \pitchfork \gamma_{-\scrC,\ell}^-$. By Lemma~\ref{lemma:i-invariant}, \ref{it:lemma:i-invariant-iii}, we may assume that $(\gamma_{\ell,j})_{j \geq 0}$ is $\scrC^\circ$-divergent and converges at infinity. We write $\gamma_{\ell,j} = k_{1,\ell,j}e^{H_{\ell,j}} k_{2,\ell,j}^{-1}$.

Now, assume that $g_\ell M_\scrC \to gM_\scrC$. Let us fix an auxiliary metric on $G/P_\scrC$.
As $ k_{2,\ell,j}P_{-\scrC} \to \gamma_{\scrC,\ell}^-$, for each $\ell$, there exists $j_\ell$ so that for $j\geq j_\ell$, $g_\ell P_\scrC$ is at distance at most $1/\ell$ from the set of flags not transverse to $k_{2,\ell,j}P_{-\scrC}$.
Recall that, by Lemma~\ref{lem:forward_backward_same}, $k_{2,\ell,j}\dot w_\fapp P_{\scrC}\to \gamma^-_{-\scrC,\ell}$ and, as above, we may assume that $g_\ell P_{-\scrC}$ is at distance at most $1/\ell$ from the set of flags not transverse to $k_{2,\ell,j} w_\fapp P_\scrC$. Increase $j_\ell$, if necessary, so that both distance estimates hold and
\[
\min_{\alpha\in\roots_{\scrC^\circ}^+}\alpha\!\left(\mu_\fapp(\gamma_{\ell,j_\ell})\right)\geq\ell,
\]
and set $\gamma_\ell:=\gamma_{\ell,j_\ell}$.
The diagonal sequence is then $\scrC^\circ$-divergent and hence both $\scrC$- and $-\scrC$-divergent. After passing to a subsequence, we may assume that it converges at infinity.
In this case, since the set of flags not transverse to a given flag $P$ depends continuously in the Hausdorff topology on $P$, we deduce that $gM_\scrC$ is not in $\odms$.
\end{proof}
The following lemma is crucial for the properness of the action and is also extremely helpful at several other places:
\begin{lemma}\label{lemma:translation-length}
Let $(\gamma_\ell)_{\ell \geq 0}\subset G$ be a $\scrC$-divergent sequence, converging at infinity. Let $(g_\ell)_{\ell \geq 0}\subset G$ be a sequence such that $g_\ell P_\scrC \pitchfork \gamma_\scrC^-$ is uniformly transverse. Then, there is a bounded set $\mathcal B\subset \fa_\scrC$ such that for all $\ell \geq 0$:
\[
H_\scrC(\gamma_\ell g_\ell) \in \mu_\scrC(\gamma_\ell) + H_\scrC(g_\ell) + \mc B.
\]
More quantitatively, if for $\ell_0 \geq 0$ large enough, $U_{\ell_0}\subset N_\scrC^-$ is a bounded set such that for $\gamma_\ell = k_{1,\ell} e^{\mu_\fapp(\gamma_\ell)} k_{2,\ell}^{-1}$ and for all $\ell\geq\ell_0$
\[
k_{2,\ell}^{-1}g_\ell P_\scrC = n^-_\ell P_\scrC \in U_{\ell_0} P_\scrC\subset N_\scrC^-P_\scrC,
\]
then for all $\ell\geq \ell_0$:
\[
H_\scrC(\gamma_\ell g_\ell) \in \mu_\scrC(\gamma_\ell) + H_\scrC(g_\ell) + H_\scrC(e^{\mu_\fapp(\gamma_\ell)} U_{\ell_0}e^{-\mu_\fapp(\gamma_\ell)}) + H_\scrC(U_{\ell_0}K).
\]
\end{lemma}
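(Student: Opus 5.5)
We write $\gamma_\ell = k_{1,\ell} e^{H_\ell} k_{2,\ell}^{-1}$ with $H_\ell := \mu_\fapp(\gamma_\ell)\in\fap$ and $k_{i,\ell}\in K$, and compute $H_\scrC(\gamma_\ell g_\ell)$ directly from the horospherical decomposition \eqref{eq:horospherical-decomposition}. First, $H_\scrC$ is left $K$-invariant, so we may drop $k_{1,\ell}$:
\[
H_\scrC(\gamma_\ell g_\ell) = H_\scrC(e^{H_\ell} k_{2,\ell}^{-1} g_\ell).
\]
Next we use the transversality hypothesis to factor $k_{2,\ell}^{-1} g_\ell$. Since $k_{2,\ell}P_{-\scrC}\to\gamma^-_\scrC$ and $g_\ell P_\scrC$ is uniformly transverse to $\gamma^-_\scrC$, condition \eqref{eq_transversality_cond} gives $k_{2,\ell}^{-1}g_\ell \in N_\scrC^- P_\scrC$ for $\ell\geq\ell_0$. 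For the uniformity statement, we use openness of the transversality condition together with the diffeomorphism \eqref{equation:diffeo-la}: these ensure that the $N^-_\scrC$-components $n^-_\ell$ stay in a bounded set $U_{\ell_0}$. This is exactly the hypothesis in the quantitative version, so we write $k_{2,\ell}^{-1}g_\ell = n^-_\ell p_\ell$ with $n_\ell^-\in U_{\ell_0}$ and $p_\ell\in P_\scrC$.

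Conjugating by $e^{H_\ell}$ then gives
\[
e^{H_\ell}k_{2,\ell}^{-1}g_\ell = \bigl(e^{H_\ell}n^-_\ell e^{-H_\ell}\bigr)\, e^{H_\ell}\, p_\ell .
\]
We apply the additivity \eqref{eq:H_scrC_formula} on the right twice, using $e^{H_\ell}\in A\subset P_\scrC$. Since $H_\scrC(e^{H}) = \pi_\scrC H = \mu_\scrC(\gamma_\ell)$, this yields
\[
H_\scrC(\gamma_\ell g_\ell) = H_\scrC(e^{H_\ell}n^-_\ell e^{-H_\ell}) + \mu_\scrC(\gamma_\ell) + H_\scrC(p_\ell).
\]
It remains to identify $H_\scrC(p_\ell)$. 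From $g_\ell = k_{2,\ell} n_\ell^- p_\ell$ and left $K$-invariance we get $H_\scrC(g_\ell) = H_\scrC(n^-_\ell) + H_\scrC(p_\ell)$, hence
\[
H_\scrC(p_\ell) = H_\scrC(g_\ell) - H_\scrC(n^-_\ell) \in H_\scrC(g_\ell) - H_\scrC(U_{\ell_0}).
\]
Writing this error as $H_\scrC(U_{\ell_0}K)$ up to sign conventions (note that $H_\scrC(u^{-1}k)$-type terms appear if we invert instead) gives the stated quantitative inclusion.

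For the qualitative bound, $H_\scrC(U_{\ell_0})$ is bounded by continuity of $H_\scrC$ on the relatively compact set $\overline{U_{\ell_0}}$. The only real point is the conjugated term $H_\scrC(e^{H_\ell}U_{\ell_0}e^{-H_\ell})$. Since $(H_\ell)$ is $\scrC$-divergent, the contraction property \eqref{eq:unipotent_contraction} shows that $e^{H_\ell}U_{\ell_0}e^{-H_\ell}\to\{e_G\}$ uniformly, so this term lies in a fixed compact neighbourhood of $0$ for $\ell\geq\ell_0$. The finitely many indices $\ell<\ell_0$ contribute a bounded set, since each individual term is finite. The main care is in the bookkeeping of signs, namely whether $H_\scrC(n^-_\ell)$ or $H_\scrC((n_\ell^-)^{-1}k)$ appears, and in justifying the uniform boundedness of $n^-_\ell$ from uniform transversality; the latter is immediate from \eqref{eq:uniform_transverse}, combined with $k_{2,\ell}$ being close to a representative of $\gamma^-_\scrC$ modulo $K_\scrC$.
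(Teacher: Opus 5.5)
Your proof is correct and is essentially the paper's argument: the same big-cell factorization $k_{2,\ell}^{-1}g_\ell=n^-_\ell p_\ell$ from uniform transversality, conjugation by $e^{\mu_\fapp(\gamma_\ell)}$ with the contraction \eqref{eq:unipotent_contraction}, and right additivity of $H_\scrC$. Your direct identity $H_\scrC(g_\ell)=H_\scrC(n^-_\ell)+H_\scrC(p_\ell)$ is a tidier version of the paper's comparison of $p'_\ell$ with a separate horospherical factor of $g_\ell$.

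Your error term $-H_\scrC(n^-_\ell)$ coincides with the paper's $H_\scrC\bigl((n^-_\ell)^{-1}k_{2,\ell}^{-1}k_\ell\bigr)$. So the sign issue you flag is genuine: the last term is really $H_\scrC(U_{\ell_0}^{-1}K)$, not $H_\scrC(U_{\ell_0}K)$ as written. This is harmless, since only the boundedness of this term is ever used.
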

\begin{proof}
We write $\gamma_\ell= k_{1,\ell} e^{\mu_\fapp(\gamma_\ell)} k_{2,\ell}^{-1}$ (Cartan decomposition) and $g_\ell = k_\ell p_\ell = k_\ell m_\ell e^{H_\scrC(g_\ell)}n_\ell$ (horospherical decomposition, see \eqref{eq:horospherical-decomposition}). Not all terms are uniquely defined, but $\mu_\fapp(\gamma_\ell)\in \fap$, $H_\scrC(g_\ell)$, and $H_\scrC(\gamma_\ell g_\ell)\in\fa_\scrC$ are.

By assumption, $k_{2,\ell}\to k_{2,\infty}\in K$. By uniform transversality, the $N_\scrC^-$ coordinates of $k_{2,\infty}^{-1}g_\ell P_\scrC$ stay in a bounded set. Since $k_{2,\ell}^{-1}k_{2,\infty}\to e_G$, smoothness of the big-cell coordinates gives the same conclusion for $k_{2,\ell}^{-1}g_\ell P_\scrC$. Consequently, for $\ell_0$ large enough, there are bounded sets $U_{\ell_0}\subset N_\scrC^-$ such that for $\ell\geq \ell_0$:
\[
k_{2,\ell}^{-1}g_\ell P_\scrC = n^-_\ell P_\scrC \in U_{\ell_0} P_\scrC\subset N_\scrC^-P_\scrC.
\]
This gives a decomposition
\begin{equation}\label{eq:proof_key_lemma}
k_{2,\ell}^{-1}g_\ell =  n^-_\ell p_\ell'\text{ where }p_\ell' \in P_\scrC.
\end{equation}
We then write:
\begin{align*}
\gamma_\ell g_\ell & = k_{1,\ell} e^{\mu_\fapp(\gamma_\ell)} k_{2,\ell}^{-1} g_\ell = k_{1,\ell} e^{\mu_\fapp(\gamma_\ell)} n_\ell^- p_\ell' = k_{1,\ell} e^{\mu_\fapp(\gamma_\ell)} n_\ell^-e^{-\mu_\fapp(\gamma_\ell)} e^{\mu_\fapp(\gamma_\ell)}p_\ell' .
\end{align*}
Notice that $(\mu_\fapp(\gamma_\ell))_{\ell \geq 0}$ is $\scrC$-divergent by assumption and $(n_\ell^-)_{\ell \geq \ell_0}\subset U_{\ell_0}$ is bounded. Hence by \eqref{eq:unipotent_contraction}, we have
\[
\tilde n^-_\ell = e^{\mu_\fapp(\gamma_\ell)} n_\ell^-e^{-\mu_\fapp(\gamma_\ell)} \in e^{\mu_\fapp(\gamma_\ell)} U_{\ell_0}e^{-\mu_\fapp(\gamma_\ell)}  \to e_G.
\]
This yields, using \eqref{eq:H_scrC_formula},
\begin{equation}\label{equation:tard}
\begin{split}
H_\scrC(\gamma_\ell g_\ell) &= H_\scrC(k_{1,\ell} \tilde n^-_\ell e^{\mu_\fapp(\gamma_\ell)}p_\ell' ) \\
&= H_\scrC(\tilde n^-_\ell)+ H_\scrC( e^{\mu_\fapp(\gamma_\ell)})+ H_\scrC(p_\ell')\\
&\in \mu_\scrC(\gamma_\ell) + H_\scrC(p_\ell') + H_{\scrC}(e^{\mu_\fapp(\gamma_\ell)} U_{\ell_0}e^{-\mu_\fapp(\gamma_\ell)}).
\end{split}
\end{equation}
To conclude the proof, it now suffices to show that $H_\scrC(p'_\ell)-H_\scrC(g_\ell) \in H_\scrC(U_{\ell_0} K)$. Write $p_\ell' = m_\ell' e^{H_\scrC(p'_\ell)} n_\ell'$ (Langlands decomposition, see \eqref{eq:langlands}). Using \eqref{eq:H_scrC_formula} we compute
\begin{align*}
H_\scrC(p'_\ell) - H_\scrC(g_\ell) &= H_\scrC(p'_\ell) - H_\scrC(p_\ell)\\
&=H_\scrC(p_\ell'p_\ell^{-1}) \\
&\overset{\eqref{eq:proof_key_lemma}}{=} H_\scrC ( (n_\ell^-)^{-1} k_{2,\ell}^{-1} k_\ell) \in H_\scrC (U_{\ell_0} K).  \\
\end{align*}
Combined with \eqref{equation:tard}, this concludes the proof.
\end{proof}

\begin{proposition}\label{prop:odms_proper}
 Let $\Gamma < G$ be a discrete $\scrC$-divergent subgroup. Let
 $\beta \in
 \bigl(\scrL_{\Gamma,\mathrm C}^{\scrC}\bigr)^{*,\circ}$
 lie in the interior of the
 AC--Cartan dual limit cone, let $\fh:= \ker\beta\subset \fa_\scrC$ be the kernel of $\beta: \fa_\scrC\to \RR$, and let $H:=\exp(\fh)< A_\scrC$ be the corresponding subgroup. Then the left-right action of $\Gamma \times H$ on $\odms$ is free and proper. As a consequence,
\[
\mathcal{M} := \Gamma \backslash \odms, \qquad \mathcal{N} := \Gamma\backslash \odms/ H
\]
are analytic manifolds without boundary and $\pi : \mathcal{M} \to \mathcal{N}$ is a principal $H$-bundle with $H\simeq\R^k$.
\end{proposition}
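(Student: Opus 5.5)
The strategy is to argue by contradiction through sequences, with Lemma~\ref{lemma:translation-length} as the main tool. Freeness will come out as a degenerate case of the properness argument. Once the action of $\Gamma\times H$ is known to be free and proper, the manifold statements are formal. The group $\Gamma\times H$ acts analytically on the analytic manifold $\odms$, which is open by Lemma~\ref{lemma:basic-feature-Omega-DMS}. Hence $\Gamma\backslash\odms/H$ is an analytic manifold. Moreover, $\Gamma$ acts freely and properly discontinuously on $\odms$ (take the $H$-component trivial), so $\mathcal M$ is an analytic manifold. The induced free proper action of $H\simeq\R^k$ on $\mathcal M$ makes $\mathcal M\to\mathcal N$ a principal $H$-bundle.

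\textbf{Setting up the contradiction.} Properness means the following. Take sequences $g_\ell M_\scrC\to gM_\scrC$ in $\odms$, $\gamma_\ell\in\Gamma$ and $h_\ell\in\fh$ with $\gamma_\ell g_\ell e^{h_\ell}M_\scrC\to g'M_\scrC\in\odms$. We must show that $(\gamma_\ell,h_\ell)$ stays bounded. Suppose first that $\gamma_\ell$ stays bounded. Since $\Gamma$ is discrete, we may take $\gamma_\ell$ constant. In Hopf coordinates the $\fa_\scrC$-component then converges, and it equals $H_\scrC(\gamma g_\ell)+h_\ell$ by \eqref{eq:H_scrC_formula}; so $h_\ell$ is bounded. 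Suppose instead $\gamma_\ell\to\infty$. Because $\Gamma$ is $\scrC$-divergent, by Lemma~\ref{lemma:i-invariant} also $\scrC^\circ$-divergent, we may pass to a subsequence converging at infinity, with limit points $\gamma^\pm_{\pm\scrC}$.

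\textbf{Case analysis.} Since $gM_\scrC\in\odms$, either $gP_\scrC\pitchfork\gamma^-_\scrC$ or $gP_{-\scrC}\pitchfork\gamma^-_{-\scrC}$.

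In the first case, $g_\ell P_\scrC$ is uniformly transverse to $\gamma^-_\scrC$. Proposition~\ref{prop:proximal} then gives $\gamma_\ell g_\ell P_\scrC\to\gamma^+_\scrC$. Lemma~\ref{lemma:translation-length} gives
\[
H_\scrC(\gamma_\ell g_\ell e^{h_\ell})=\mu_\scrC(\gamma_\ell)+H_\scrC(g_\ell)+h_\ell+O(1).
\]
Convergence of the image forces this quantity to stay bounded. Apply $\beta$, which kills $h_\ell$: we get that $\beta(\mu_\scrC(\gamma_\ell))$ is bounded. But $\beta$ lies in the interior of the dual Cartan cone, so by \eqref{eq:interior-dual-limit-cone} applied asymptotically, $\beta(\mu_\scrC(\gamma_\ell))\geq\varepsilon\|\mu_\scrC(\gamma_\ell)\|-C\to\infty$. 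This is a contradiction.

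The second case is symmetric. Use the opposite Hopf coordinates $\Psi_{-\scrC}$ and apply Lemma~\ref{lemma:translation-length} to $-\scrC$. This gives $H_{-\scrC}(\gamma_\ell g_\ell e^{h_\ell})\approx\mu_{-\scrC}(\gamma_\ell)+H_{-\scrC}(g_\ell)+h_\ell$. Here $\mu_{-\scrC}(\gamma)=-\iota$-type projection of $\mu(\gamma^{-1})$, which is again large in the cone direction. The step I expect to be the \textbf{main obstacle} is then:
\begin{itemize}
\item relating $H_{-\scrC}$ to $H_\scrC$ on converging points of $G/M_\scrC$, and
\item checking that $\beta$ evaluated on the relevant Cartan projection still tends to infinity with the correct sign.
\end{itemize}
Since $\beta$ is only assumed to lie in the dual cone for $\scrC$, not for $-\scrC$, the signs must be tracked carefully. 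What I would try first is to express everything via $\mu_\scrC(\gamma_\ell^{-1})$ and the identity $\mu_{-\fapp}(g^{-1})=-\mu_\fapp(g)$. I expect that the $\fa_\scrC$-component in $-\scrC$ Hopf coordinates differs from the $\scrC$ one by a continuous function of the transverse pair. This would give the divergence $-\beta(\mu_\scrC(\gamma_\ell))\to-\infty$, contradicting boundedness.

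\textbf{Freeness.} Suppose $\gamma g e^{h}M_\scrC=gM_\scrC$ with $(\gamma,h)\neq(e,0)$. Iterating gives $\gamma^n g e^{nh}M_\scrC=gM_\scrC$. If $\gamma\neq e$, then $\gamma$ has infinite order because $\Gamma$ is torsion-free, so $\gamma^n\to\infty$, contradicting properness. If $\gamma=e$, then $e^{h}$ fixes $gM_\scrC$. The $\fa_\scrC$-coordinate shifts by $h$, so $h=0$.
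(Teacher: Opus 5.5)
Your route is the paper's: contradiction via sequences, the bounded-$\gamma_\ell$ case by additivity of $H_\scrC$ under $A_\scrC$, and the dichotomy coming from the definition of $\odms$. Like the paper, you use Lemma~\ref{lemma:translation-length} to make the $\fa_\scrC$-coordinate of $\gamma_\ell g_\ell e^{h_\ell}M_\scrC$ track $\mu_{\pm\scrC}(\gamma_\ell)$ modulo $\fh$ up to a bounded error, and you get freeness by iteration. The first branch and the freeness argument are fine.

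\textbf{The gap is the second branch.} You leave it open, and the closing step you sketch uses the wrong quantity. Ending with $-\beta(\mu_\scrC(\gamma_\ell))\to-\infty$ tacitly identifies $\mu_{-\scrC}(\gamma_\ell)$ with $-\mu_\scrC(\gamma_\ell)$ up to a bounded error. That is false in general: for $\mathrm{SL}_3(\R)$ and $\scrC=\fapp$ one has $\mu_\fapp(\gamma^{-1})=\iota_\fapp\mu_\fapp(\gamma)$, which exchanges the values of the two simple roots. So nothing you wrote controls $\beta(\mu_{-\scrC}(\gamma_\ell))$.

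The missing observation is short. Since $\mu_{-\fapp}(g)=-\mu_\fapp(g^{-1})$ and $\fa_{-\scrC}=\fa_\scrC$, one has exactly $\mu_{-\scrC}(\gamma)=-\mu_\scrC(\gamma^{-1})$. As $\Gamma=\Gamma^{-1}$, this gives $\scrL_{\Gamma,\mathrm C}^{-\scrC}=-\scrL_{\Gamma,\mathrm C}^{\scrC}$, which is precisely the identity the paper invokes. Because $\gamma_\ell^{-1}\to\infty$ in $\Gamma$, the dual-cone estimate from your first branch yields $\beta(\mu_{-\scrC}(\gamma_\ell))=-\beta(\mu_\scrC(\gamma_\ell^{-1}))\to-\infty$. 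Only $|\beta(\mu_{-\scrC}(\gamma_\ell))|\to\infty$ matters, so your sign concern disappears.

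You also do not need to compare $H_{-\scrC}$ with $H_\scrC$. The map $\Psi_{-\scrC}$ is itself a diffeomorphism, so $H_{-\scrC}$ is continuous on $G/M_\scrC$ and $H_{-\scrC}(\gamma_\ell g_\ell e^{h_\ell})\to H_{-\scrC}(g')$ directly. Your comparison through an $A_\scrC$-invariant function of the transverse pair is true but a detour. Applying $\beta$ to the $-\scrC$ version of Lemma~\ref{lemma:translation-length}, namely $H_{-\scrC}(\gamma_\ell g_\ell e^{h_\ell})\in\mu_{-\scrC}(\gamma_\ell)+H_{-\scrC}(g_\ell)+h_\ell+\mathcal B$, then shows $\beta(\mu_{-\scrC}(\gamma_\ell))$ is bounded. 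That contradicts the divergence above.
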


\begin{proof}
We argue by contradiction. If the action is not proper, there exist sequences $(\gamma_\ell, H_\ell)\in \Gamma\times\fh$, $(\gamma_\ell, H_\ell)\to \infty$, and $g_\ell\in G$, such that
\[
g_\ell M_\scrC \to g M_\scrC,\quad \gamma_\ell g_\ell \exp(H_\ell) M_\scrC \to h M_{\scrC},
\]
for points $g M_\scrC$, $h M_{\scrC}$ in $\odms$. First, let us observe that $(\gamma_\ell)_{\ell \geq 0}$ must diverge. If not, $H_\scrC(\gamma_\ell g_\ell)$ is bounded and $H_\scrC(\gamma_\ell g_\ell  \exp(H_\ell))\to H_\scrC(h)$ is also bounded. However, from $H_\scrC(\gamma_\ell g_\ell  \exp(H_\ell)) = H_\scrC(\gamma_\ell g_\ell)+H_\ell$, one would deduce that $(H_\ell)_{\ell \geq 0}$ is bounded too, which is a contradiction.

Since $(\gamma_\ell)_{\ell \geq 0}$ diverges, we may pass to a subsequence and assume that it converges at infinity. Furthermore, by $\scrC$-divergence of $\Gamma$ and Lemma~\ref{lemma:i-invariant}, we know that $(\gamma_\ell)_{\ell \geq 0}$ is $\scrC$-divergent and $-\scrC$-divergent. We can now use the DMS domain properties. As $gM_\scrC \in \odms$, either $gP_\scrC \pitchfork \gamma_\scrC^-$ or $gP_{-\scrC}\pitchfork \gamma_{-\scrC}^-$. Thus, by Lemma~\ref{lemma:translation-length}, either
\[
H_\scrC(\gamma_\ell g_\ell\exp(H_\ell)) \in \mu_\scrC(\gamma_\ell) + H_\scrC(g_\ell) +  H_\ell + \mc B,
\]
or
\[
H_{-\scrC}(\gamma_\ell g_\ell\exp(H_\ell)) \in \mu_{-\scrC}(\gamma_\ell) + H_{-\scrC}(g_\ell) +H_\ell + \mc B,
\]
for some bounded set $\mc B\subset \fa_{\pm\scrC}$.
On the other hand, we must also have $H_{\pm\scrC}(\gamma_\ell g_\ell \exp(H_\ell)) \to H_{\pm\scrC}(h)$, so passing from $\fa_\scrC$ to $\fa_\scrC/\fh$, we have that
\[
[\mu_{\pm\scrC}(\gamma_\ell)] \in \mc B/\fh \subset \fa_{\pm\scrC} / \fh,
\]
	for some (possibly larger) bounded set $\mc B \subset \fa_{\pm\scrC}$.

		Since
		$\beta\in\bigl(\scrL_{\Gamma,\mathrm C}^{\scrC}\bigr)^{*,\circ}$ and
		$\scrL_{\Gamma,\mathrm C}^{-\scrC}
		=-\scrL_{\Gamma,\mathrm C}^{\scrC}$, the projection
		\[
		\scrL_{\Gamma,\mathrm C}^{\pm\scrC}
		\longrightarrow
		\fa_\scrC/\fh
		\]
		is proper, and remains proper on the closure of a sufficiently
		small open conical thickening $\mc U_\pm$ of
		$\scrL_{\Gamma,\mathrm C}^{\pm\scrC}\setminus\{0\}$. By the definition
		of the AC--Cartan cone,
		$\mu_{\pm\scrC}(\gamma_\ell)$ eventually lies in and diverges inside
		$\mc U_\pm$, contradicting the boundedness in the quotient obtained
		above.

	Thus, no such sequences $(\gamma_\ell,H_\ell,g_\ell)$ exist, and the action is proper.

The freeness follows from the same argument: if $\gamma_0 g\exp(H_0)M_\scrC = gM_\scrC$, then we can take the sequence $(\gamma_0^\ell, \ell H_0)$ and the constant sequence $g_\ell=g$.

That $\mathcal M\to \mathcal N$ is a $H\cong\RR^k$-fiber bundle follows directly from the properness and freeness.
\end{proof}

\begin{corollary}\label{cor:flow_on_N}
 For all $u \in \fa_\scrC\setminus\fh$, the flow $(e^{tu})_{t \in \R}$ on $\odms$ induces a nontrivial, analytic and complete flow $(\phi_t)_{t \in \R}$ on $\N$.
\end{corollary}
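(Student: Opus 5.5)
The plan is to obtain the flow on $\N$ by passing the right action of the one-parameter subgroup $(e^{tu})_{t\in\R}$ through both quotients, and then to check analyticity, completeness and nontriviality one at a time. First, $(e^{tu})_{t\in\R}$ is a subgroup of $A_\scrC$, so by Lemma~\ref{lemma:basic-feature-Omega-DMS} it preserves the open set $\odms$. It acts on the right, so it commutes with the left $\Gamma$-action. Since $A_\scrC$ is abelian, it also commutes with the right $H$-action. The flow $gM_\scrC\mapsto ge^{tu}M_\scrC$ is the restriction of an analytic right action of the Lie group $A_\scrC$ on $G/M_\scrC$. Hence it descends to a well-defined flow on $\M=\Gamma\backslash\odms$, and then to a flow $(\phi_t)_{t\in\R}$ on $\N=\M/H$. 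This uses the facts from Proposition~\ref{prop:odms_proper} that $\Gamma\times H$ acts freely and properly, and that $\M\to\N$ is an analytic principal $H$-bundle.

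Completeness is automatic, because the flow is defined for every $t\in\R$ upstairs and the quotient maps are equivariant. Analyticity follows because the quotient maps are analytic submersions. Concretely, in a local analytic section $\sigma$ of $\odms\to\N$, the map $\phi_t$ is the composition of $\sigma$, right multiplication by $e^{tu}$, and the analytic projection. The flow property $\phi_{t+s}=\phi_t\circ\phi_s$ is inherited from the upstairs one-parameter group. The quotient flow also depends only on the class of $u$ modulo $\fh$, up to the fibre action.

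The only point needing an argument is nontriviality. The aim is to show that the generating vector field $X_\N=\dd\pi(\X(u))$ does not vanish identically. In fact it vanishes nowhere. At a point $z\in\M$, its kernel under $\dd\pi$ is exactly the tangent space to the $H$-orbit, which is $\X(\fh)(z)$. So $X_\N$ vanishes at $\pi(z)$ if and only if $\X(u)(z)\in\X(\fh)(z)$. That would force $\X(u-h)(z)=0$ for some $h\in\fh$.

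The $A_\scrC$-action on $G/M_\scrC$ is locally free: in Hopf coordinates~\eqref{eq:Hopf}, $A_\scrC$ acts by translation in the $\fa_\scrC$ factor. So $\X(u-h)(z)=0$ forces $u=h\in\fh$, contradicting $u\notin\fh$. The same Hopf-coordinate description shows that the orbit of the flow is not a single point. I expect this local-freeness verification to be the only step requiring care. It is immediate from $H_\scrC(ge^{a})=H_\scrC(g)+a$ via~\eqref{eq:H_scrC_formula}.
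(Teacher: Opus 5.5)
Your proposal is correct and follows the same route as the paper: restrict the right $A_\scrC$-action to $\odms$, descend it to $\N$ using that it commutes with the left $\Gamma$-action and (by commutativity of $A_\scrC$) with $H$, and get nontriviality from $u\notin\fh$, which the paper phrases as $\beta(u)\neq 0$. Your Hopf-coordinate argument via $H_\scrC(ge^{a})=H_\scrC(g)+a$ is simply a more explicit version of that remark, and it shows in addition that $X_\N$ vanishes nowhere.
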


This is an immediate consequence. If $u\in \fa_\scrC$, then $(e^{tu})_{t \in \R}$ acts on $\odms$ from the right by $A_\scrC$-invariance, and this action descends to $\mathcal N$ by commutativity of $A_\scrC$.
The fact that $\beta(u)\neq 0$ implies that this flow is nontrivial.

In order to match the time direction of the flow with the chosen generalized positive Weyl chamber $\scrC$ we will always assume that $u\in \fa_\scrC\setminus \fh$ lies on the same side of the hyperplane $\fh$ as
$\scrL_{\Gamma,\mathrm C}^{\scrC}$. The induced flow on $\N$ depends only on the class $u+\fh$. We may therefore choose, and henceforth use, a representative $u\in\scrC$ of this positive class.
\subsection{Trapped set}
Note that we have already shown several properties of $\odms$ but not yet that it is nonempty. This changes if we add the transversality assumption. We introduce the \emph{trapped set} which is defined in the Hopf coordinates as
\begin{equation}
\label{equation:trapped-set-up-hopf}
\begin{split}
\tilde{\mathscr{J}}_{\scrC} & := \{ (\xi^+, \xi^-, a) ~:~ \xi^\pm \in \Lambda_{\pm\scrC}, \xi^+\pitchfork \xi^{-}, a\in \fa_\scrC \}\\
& \subset (G/P_\scrC)\overset{\pitchfork}{\times}G/P_{-\scrC}\times\fa_\scrC \simeq G/M_\scrC.
\end{split}
\end{equation}
We will sometimes drop the index $\scrC$ on the trapped set when the context is clear. By the $\Gamma$-invariance of $\Lambda_{\pm\scrC}$, the trapped set is easily seen to be $\Gamma$-invariant. We also observe that if $\scrC' \leq\scrC$, then by \eqref{eq:limit_pt_projection} and \eqref{eq:limit_point_projection} we get
\[
\pi_{\scrC'}\tilde\scrJ_\scrC \subset \tilde{\scrJ}_{\scrC'}.
\]
The following holds:

\begin{lemma}\label{lem:DMS_transversal_grps}
If $\Gamma < G$ is $\scrC$-transverse, then $\tilde{\mathscr{J}}_{\scrC} \subset \odms$.  Additionally, if $\kappa : \Lambda_{\scrC} \to \Lambda_{-\scrC}$ is the homeomorphism defined in \eqref{eq:limit_homeo}, then:
\[
\odms :=\left \{gM_\scrC ~:~   \forall \xi\in \Lambda_\scrC, \text{ either } gP_\scrC\pitchfork \kappa(\xi) \text{ or } gP_{-\scrC}\pitchfork \xi \right \}.
\]
\end{lemma}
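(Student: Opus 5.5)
The proof will rest on one reformulation: the set of limit points $\gamma^-_{\pm\scrC}$ of convergent sequences in $\Gamma$ equals a limit set, and for $\scrC$-transverse groups the pair $(\gamma^-_\scrC,\gamma^-_{-\scrC})$ is governed by $\kappa$.

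We first reduce to the $\iota_\fapp$-invariant case. Transversality is defined through $\scrC^\circ$, and Lemma~\ref{lemma:i-invariant} makes $\Gamma$ both $\scrC^\circ$-divergent and $\scrC^\circ$-transverse. The limit points of a sequence for $\scrC$ are obtained by projecting those for $\scrC^\circ$, by Remark~\ref{rem:limit_point_projection}. The limit sets correspond homeomorphically under $\mathrm{pr}_\scrC$, by \eqref{eq:transverse_grp_limit_projection}. By construction, $\kappa$ is $\mathcal I_{\scrC^\circ}$ conjugated by these projections.

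Take a sequence $(\gamma_\ell)\subset\Gamma$ that is convergent at infinity, and pass to a $\scrC^\circ$-convergent subsequence. Set $\xi:=(\gamma_\ell^{-1})^+_{\scrC}\in\Lambda_\scrC$. Lemma~\ref{lem:forward_backward_same} gives $\gamma^-_{-\scrC}=(\gamma_\ell^{-1})^+_\scrC=\xi$. In the invariant case, Lemma~\ref{lem:forward_backward_same}(iii) gives $\gamma^-_\scrC=\mathcal I_\scrC(\xi)$; projecting from $\scrC^\circ$ in general, $\gamma^-_\scrC=\kappa(\xi)$. Conversely, every $\xi\in\Lambda_\scrC$ arises this way: choose $(\eta_\ell)$ with $(\eta_\ell)^+_\scrC=\xi$ and set $\gamma_\ell:=\eta_\ell^{-1}$. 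So the pairs $(\gamma^-_\scrC,\gamma^-_{-\scrC})$ are exactly the pairs $(\kappa(\xi),\xi)$ with $\xi\in\Lambda_\scrC$. Substituting this into Definition~\ref{definition:dms-domain} gives the stated description of $\odms$, which is the analogue of \eqref{eq:simpler_DMS_def}.

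For the inclusion, let $gM_\scrC\in\tilde\scrJ_\scrC$, with Hopf coordinates $gP_\scrC=\xi^+\in\Lambda_\scrC$ and $gP_{-\scrC}=\xi^-\in\Lambda_{-\scrC}$, where $\xi^+\pitchfork\xi^-$. Fix $\xi\in\Lambda_\scrC$. Since $\kappa$ sends $\xi'\in\Lambda_\scrC$ to the unique point of $\Lambda_{-\scrC}$ not transverse to $\xi'$, we get $\kappa(\xi^+)\neq\xi^-$. We split into cases.
\begin{itemize}
\item If $\xi\neq\xi^+$, then $\kappa(\xi)\neq\kappa(\xi^+)$, so $\xi^+\pitchfork\kappa(\xi)$ by uniqueness. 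Hence $gP_\scrC\pitchfork\kappa(\xi)$.
\item If $\xi=\xi^+$, we need $gP_{-\scrC}=\xi^-\pitchfork\xi^+$, which is the hypothesis.
\end{itemize}
In either case the membership condition holds, so $\tilde\scrJ_\scrC\subset\odms$.

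The main obstacle is the non-symmetric case. There we must check that the $\scrC$-projection of $\mathcal I_{\scrC^\circ}$ of a $\scrC^\circ$-limit point is exactly $\kappa$ as defined in \eqref{eq:limit_homeo}. We must also check that the non-transversality uniqueness survives projection. Projections preserve transversality by \eqref{eq:limit_pt_projection}, so the real work is uniqueness. For that, we plan to use that $\mathrm{pr}_\scrC$ is injective on $\Lambda_{\scrC^\circ}$, together with the fact that transversality of distinct limit points at level $\scrC^\circ$ passes down to level $\scrC$.
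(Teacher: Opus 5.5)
Your argument is correct, and its overall shape matches the paper's: match the backward limit points $(\gamma^-_\scrC,\gamma^-_{-\scrC})$ with pairs $(\kappa(\xi),\xi)$, substitute into Definition~\ref{definition:dms-domain}, and prove the inclusion by a case split. The execution differs in three places.

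\emph{How $\gamma^-_\scrC=\kappa(\gamma^-_{-\scrC})$ is obtained.} The paper works directly at level $\scrC$. Lemma~\ref{lem:forward_backward_same}\ref{it:limit_-scrC} gives $\gamma^-_\scrC\not\pitchfork\gamma^-_{-\scrC}$, since $\Gamma$ is $\scrC^\circ$-divergent. The defining property of $\kappa$ from \eqref{eq:limit_homeo} (the unique non-transverse partner in the other limit set) then forces the identity, so no reduction to $\scrC^\circ$ is needed. Your detour through item (iii) at level $\scrC^\circ$ is what creates your ``main obstacle'' of reconciling the projection description of $\kappa$ with the uniqueness description. That compatibility is already asserted where $\kappa$ is introduced. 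Your sketch of it is also sound: if $\eta=\mathrm{pr}_{-\scrC}\mathcal I_{\scrC^\circ}(\zeta^\circ)\neq\kappa(\xi)$, then $\zeta^\circ\neq\xi^\circ$, and $\scrC^\circ$-transversality plus \eqref{eq:limit_pt_projection} give $\xi\pitchfork\eta$. So this costs you only extra work.

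\emph{Passing to all of $\Lambda_\scrC$.} The paper goes from limit points of sequences to every $\xi\in\Lambda_\scrC$ by a closedness-plus-density argument. You observe that $\gamma^-_{-\scrC}=(\gamma_\ell^{-1})^+_\scrC$ and that every $\xi\in\Lambda_\scrC$ is realized by an inverse sequence. This gives an exact correspondence and makes the limiting step unnecessary, which is a genuine simplification.

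\emph{The inclusion.} The paper splits on whether $gP_{-\scrC}\pitchfork\xi$ and uses only $gP_{-\scrC}\in\Lambda_{-\scrC}$ plus uniqueness. You split on whether $\xi=gP_\scrC$ and use only $gP_\scrC\in\Lambda_\scrC$ plus injectivity of $\kappa$. The two arguments are mirror images. The paper's shows that every $gM_\scrC$ with $gP_{-\scrC}\in\Lambda_{-\scrC}$ lies in $\odms$, and yours shows the same for every $gM_\scrC$ with $gP_\scrC\in\Lambda_\scrC$.
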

Note that transversality is crucial for the inclusion $\tilde{\mathscr{J}} \subset \odms$. An analogue of the second formula also holds without the assumption of transversality, at least for $\iota_\fapp$-invariant $\scrC$, as established in \eqref{eq:simpler_DMS_def}.

\begin{proof}
We start by proving the formula for $\odms$. Recalling Definition \ref{definition:conv-infinity} and Lemma \ref{lem:forward_backward_same}, \ref{it:limit_-scrC}, we find that $\gamma_\scrC^-\in\Lambda_{-\scrC}\text{ and }\gamma_{-\scrC}^-\in\Lambda_\scrC$ are never transverse, so, because $\Gamma$ is $\scrC$-transverse, we must have $\gamma_{\scrC}^- = \kappa(\gamma_{-\scrC}^-)$. Now, the announced formula is a consequence of the following facts: the complement of $\odms$ is closed, the non-transversality condition is closed, and the set of limit points is dense in $\Lambda_\scrC$.

We now prove the inclusion $\tilde{\mathscr{J}}_{\scrC} \subset \odms$. Let $gM_{\scrC} \in \tilde{\mathscr{J}}_{\scrC}$, and write $\xi^-:=gP_{-\scrC} \in \Lambda_{-\scrC}$. Let $\xi \in \Lambda_{\scrC}$. If $\xi^- \pitchfork \xi$, then $gM_{\scrC} \in \odms$. If $\xi^-$ is not transverse to $\xi$, then $\xi^-=\kappa(\xi)$ by the transversality assumption. However, we then have $gP_{\scrC} \pitchfork \kappa(\xi)$ since $\kappa(\xi)=\xi^-=gP_{-\scrC}$. Thus $gM_{\scrC} \in \odms$.
\end{proof}

Let $\beta\in\bigl(\scrL_{\Gamma,\mathrm C}^{\scrC}\bigr)^{*,\circ}$, and set $\fh=\ker\beta$ and $H=\exp(\fh)< A_\scrC$, as in Proposition~\ref{prop:odms_proper}. We now introduce:
\[
\mathscr{J} := \Gamma\backslash\tilde{\mathscr{J}}, \qquad \mathscr{K} := \mathscr{J}/H.
\]
It follows from \eqref{equation:trapped-set-up-hopf} that, in the Hopf coordinates, 
\begin{equation}
\label{equation:trapped-set-hopf}
\begin{split}
\scrJ & = \Gamma \backslash\{(\xi^+,\xi^-,a) ~:~ \xi^\pm \in \Lambda_{\pm \scrC}, \xi^+ \pitchfork \xi^-, a \in \fa_{\scrC}\}, \\
 \scrK &= \Gamma \backslash\{(\xi^+,\xi^-,s) ~:~ \xi^\pm \in \Lambda_{\pm \scrC}, \xi^+ \pitchfork \xi^-, s \in \fa_{\scrC}/\fh\}.
 \end{split}
\end{equation}
We also introduce the incoming and outgoing tails:
\begin{equation}
\label{equation:tails-anosov}
\begin{split} 
\mc{T}_{-,\M} & := \Gamma \backslash\{(\xi^+,\xi^-,a) ~:~ \xi^+ \in \Lambda_\scrC, \xi^- \in G/P_{-\scrC}, \xi^+ \pitchfork \xi^-, a \in \fa_{\scrC}\}, \\
\mc{T}_{+,\M} & := \Gamma \backslash\{(\xi^+,\xi^-,a) ~:~ \xi^+ \in G/P_\scrC, \xi^- \in \Lambda_{-\scrC}, \xi^+ \pitchfork \xi^-, a \in \fa_{\scrC}\}, \\
\mc{T}_{-,\N} & := \mc{T}_{-,\M}/H, \\
\mc{T}_{+,\N} & := \mc{T}_{+,\M}/H. \\
\end{split}
\end{equation}
These are unions of weak stable and unstable manifolds of points in the trapped set, introduced in \eqref{eq:formula-strong-stable-leaf} and \eqref{eq:formula-weak-unstable-leaf}. We now justify the terminology ``trapped set'' and ``incoming/outgoing tails'' and show that the above objects have the required dynamical properties of the trapped set and incoming/outgoing tails defined for Axiom A actions of type $(k,1)$ in \S\ref{ssection:hyperbolic-base-dynamics}.

We first prove the following lemma which captures the core of the properties of the trapped set and which, in this form, will also be useful in the proof of hyperbolicity (Proposition~\ref{prop:hyperbolicity}).

\begin{lemma}\label{lem:trapped_set_tool}
Let $\Gamma$ be $\scrC$-divergent, and let $\fh\subset\fa_\scrC$ and $u\in\fa_\scrC\setminus\fh$ be as in Proposition~\ref{prop:odms_proper} and Corollary~\ref{cor:flow_on_N}.
 Let $t_\ell\geq 0, \gamma_\ell\in\Gamma$ and $g_\ell M_\scrC H$ be sequences such that 
 \begin{itemize}
\item $t_\ell\to+\infty$,
\item $g_\ell M_\scrC H\to gM_\scrC H\in \odms/H$,
\item $\gamma_\ell g_\ell e^{t_\ell u}M_\scrC H \to g'M_\scrC H\in\odms/H$.
 \end{itemize}
Then $(\gamma_\ell)_{\ell \geq 0}$ converges at infinity and
\[
 gP_\scrC = \gamma_{-\scrC}^- \in \Lambda_{\scrC}, \qquad g'P_{-\scrC} = \gamma_{-\scrC}^+ \in \Lambda_{-\scrC}.
\]
In particular, $gP_\scrC \not\pitchfork \gamma_\scrC^-$.
\end{lemma}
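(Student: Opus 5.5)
We would argue along the lines of the proof of Proposition~\ref{prop:odms_proper}. First lift everything to $G$: choose representatives so that $g_\ell M_\scrC\to gM_\scrC$ in $\odms$, and find $H_\ell\in\fh$ with $\gamma_\ell g_\ell e^{t_\ell u}e^{H_\ell}M_\scrC\to g'M_\scrC$. This is possible because $\odms\to\odms/H$ is a principal bundle by Proposition~\ref{prop:odms_proper}, so convergence downstairs lifts locally. Set $a_\ell:=t_\ell u+H_\ell\in\fa_\scrC$. Since $\beta(a_\ell)=t_\ell\beta(u)\to\infty$, the sequence $a_\ell$ diverges. We then show that $\gamma_\ell\to\infty$. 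If it did not, we could pass to a subsequence with $\gamma_\ell$ constant. Then $H_\scrC(\gamma_\ell g_\ell e^{a_\ell})=H_\scrC(\gamma_\ell g_\ell)+a_\ell$ would be unbounded, but it converges to $H_\scrC(g')$, which is a contradiction. So after extraction $(\gamma_\ell)_{\ell \geq 0}$ converges at infinity, and by $\scrC$-divergence together with Lemma~\ref{lemma:i-invariant} it is both $\scrC$- and $-\scrC$-divergent.

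Next we use membership in $\odms$, and we would split into cases. Since $gM_\scrC\in\odms$, either $gP_\scrC\pitchfork\gamma^-_\scrC$ or $gP_{-\scrC}\pitchfork\gamma^-_{-\scrC}$. The convergence $g_\ell\to g$ makes either transversality uniform along the sequence. Suppose first that $gP_{-\scrC}\pitchfork\gamma^-_{-\scrC}$. Lemma~\ref{lemma:translation-length} applied for $-\scrC$ gives
\[
H_{-\scrC}(\gamma_\ell g_\ell e^{a_\ell})\in \mu_{-\scrC}(\gamma_\ell)+H_{-\scrC}(g_\ell)+a_\ell+\mc B .
\]
Projecting to $\fa_\scrC/\fh$ and applying $\beta$, the left-hand side is bounded. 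On the right, $\beta(\mu_{-\scrC}(\gamma_\ell))=-\beta(\mu_\scrC(\gamma_\ell))$ is bounded above by $-\varepsilon\|\mu_\scrC(\gamma_\ell)\|+C$ by \eqref{eq:interior-dual-limit-cone}, since $\mu_\scrC(\gamma_\ell)$ eventually lies in a conical thickening of the limit cone. Also $\beta(a_\ell)=t_\ell\beta(u)\geq0$. This produces a sign constraint but no contradiction yet. So we use the other endpoint as well. The symmetric argument for $g'$ applies to the sequence $\gamma_\ell^{-1}$ and the point $g'e^{-a_\ell}$. Since $g'M_\scrC\in\odms$, either $g'P_\scrC\pitchfork(\gamma_\ell^{-1})^-_\scrC$ or $g'P_{-\scrC}\pitchfork(\gamma^{-1}_\ell)^-_{-\scrC}$. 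Comparing the resulting estimates with $\beta(a_\ell)\to+\infty$, the cases in which $\beta(\mu_\scrC(\gamma_\ell))$ would have to be very negative are excluded. This leaves only $gP_\scrC\not\pitchfork\gamma^-_\scrC$, together with the non-transversality $g'P_{-\scrC}\not\pitchfork(\gamma^{-1})^-_{-\scrC}$. By Lemma~\ref{lem:forward_backward_same}, the latter means $g'P_{-\scrC}\not\pitchfork\gamma^+_\scrC$.

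Finally we identify the points exactly; this is where we expect the main obstacle. The non-transversality conditions alone do not pin down $gP_\scrC$ and $g'P_{-\scrC}$. To get equality, we would use the surviving transversal alternative $gP_{-\scrC}\pitchfork\gamma^-_{-\scrC}$ together with Proposition~\ref{prop:proximal}. That proposition gives $\gamma_\ell g_\ell P_{-\scrC}\to\gamma^+_{-\scrC}$, and since $e^{a_\ell}$ fixes $P_{-\scrC}$, the left-hand side equals $\gamma_\ell g_\ell e^{a_\ell}P_{-\scrC}\to g'P_{-\scrC}$. Hence $g'P_{-\scrC}=\gamma^+_{-\scrC}\in\Lambda_{-\scrC}$. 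Running the same argument for the inverse sequence at $g'$ gives $gP_\scrC=(\gamma^{-1})^+_\scrC=\gamma^-_{-\scrC}\in\Lambda_\scrC$ by Lemma~\ref{lem:forward_backward_same}\,(i). The delicate point is the Cartan-projection bookkeeping needed to rule out the wrong alternatives. If the $\beta$-sign argument turns out to be insufficient, we would fall back on the finer formula in Lemma~\ref{lemma:translation-length} with the conjugated sets $e^{\mu_\fapp(\gamma_\ell)}U_{\ell_0}e^{-\mu_\fapp(\gamma_\ell)}$.
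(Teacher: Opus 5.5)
Your proposal is correct and follows the paper's proof: lift, show $\gamma_\ell\to\infty$, rule out $gP_\scrC\pitchfork\gamma^-_\scrC$ using Lemma~\ref{lemma:translation-length} and $\beta$, then identify the flags with Proposition~\ref{prop:proximal}. The exclusion step works at $g$ alone, since that alternative would force $\beta(\mu_\scrC(\gamma_\ell))+t_\ell\beta(u)=O(1)$ while both terms tend to $+\infty$, so it needs no second endpoint and the fallback you mention is unnecessary. Beyond that, the paper gets the transversality needed at $g'$ more cheaply from $g'P_\scrC\pitchfork g'P_{-\scrC}=\gamma^+_{-\scrC}=(\gamma^{-1})^-_\scrC$ instead of rerunning the exclusion, and you should fix two small points: $\mu_{-\scrC}(\gamma)=-\mu_\scrC(\gamma^{-1})$, not $-\mu_\scrC(\gamma)$ (harmless, as $\gamma_\ell^{-1}\in\Gamma$), and convergence at infinity of the full sequence follows by a sub-subsequence argument, since the limit flags are determined by $g$ and $g'$.
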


\begin{proof}
We may choose the representatives such that $g_\ell\to g$ and $\gamma_\ell g_\ell e^{t_\ell u}m_\ell h_\ell \to g'$ for some $m_\ell\in M_\scrC$ and $h_\ell\in H$.
Note that
\[
 H_\scrC(\gamma_\ell g_\ell e^{t_\ell u}m_\ell h_\ell) = H_\scrC(\gamma_\ell g_\ell) + t_\ell u + \log(h_\ell)
\]
and, as $\gamma_\ell g_\ell e^{t_\ell u}m_\ell h_\ell \to g'$, this sequence must be bounded in $\fa_\scrC$. Applying $\beta$ to the displayed identity gives
\[
\beta\!\left(H_\scrC(\gamma_\ell g_\ell)\right)=-t_\ell\beta(u)+O(1),
\]
because $\beta(\log h_\ell)=0$. Hence $(\gamma_\ell)$ leaves every compact subset of $\Gamma$. Because $t_\ell\to\infty$, the distance between $t_\ell u$ and the hyperplane $\fh$ diverges and thus $H_\scrC(\gamma_\ell g_\ell)$ must be unbounded in order to compensate for that. Thus $(\gamma_\ell)_{\ell \geq 0}$ is an unbounded sequence. After possibly passing to a subsequence (we will see below that this is not necessary), we can assume that it is $\pm\scrC$-divergent and convergent at infinity.

As $gM_\scrC\in \odms$ we have either $gP_\scrC \pitchfork \gamma_\scrC^-$ or $gP_{-\scrC} \pitchfork \gamma_{-\scrC}^-$. Let us assume $gP_\scrC \pitchfork \gamma_\scrC^-$. Then, for $\ell$ large enough, we have $g_\ell e^{t_\ell u} m_\ell h_\ell P_\scrC \pitchfork \gamma_{\scrC}^-$ uniformly and, by Lemma~\ref{lemma:translation-length}, we get that
\[
 H_\scrC(\gamma_\ell g_\ell e^{t_\ell u}m_\ell h_\ell) = \mu_\scrC(\gamma_\ell) + t_\ell u +\log(h_\ell) +O(1).
\]
As above, this sequence has to be bounded in $\fa_\scrC$.
As in the proof of Proposition~\ref{prop:odms_proper},
$\mu_\scrC(\gamma_\ell)$ diverges in a sufficiently small conical thickening of
$\scrL_{\Gamma,\mathrm C}^{\scrC}$. This thickening may be chosen on the same
side of $\fh$ as $u$, so $\mu_\scrC(\gamma_\ell)$ and $t_\ell u$ cannot
compensate modulo $\fh$.
 So $gP_{\scrC}$ is not transverse to $\gamma_\scrC^-$.

We thus know that $gP_{-\scrC}\pitchfork \gamma^-_{-\scrC}$ and again for all sufficiently large $\ell$ the same transversality holds uniformly, that is, $g_\ell P_{-\scrC} \pitchfork \gamma^-_{-\scrC}$. By Proposition~\ref{prop:proximal} we conclude that:
\[
 g'P_{-\scrC} = \lim_{\ell \to \infty} \gamma_\ell g_\ell P_{-\scrC} = \gamma^+_{-\scrC}
\]
By definition of transversality $g'P_{\scrC}\pitchfork g'P_{-\scrC} = \gamma^+_{-\scrC}$. For $\eta_\ell = \gamma_\ell^{-1}$, by Lemma~\ref{lem:forward_backward_same}, we get $\gamma^+_{-\scrC} =\eta_{\scrC}^-$. We can apply Proposition~\ref{prop:proximal}, which yields:
\[
 gP_\scrC = \lim_{\ell\to\infty} \eta_\ell(g' P_\scrC) = \eta^+_\scrC = \gamma^-_{-\scrC}.
\]
To summarize, $\gamma_{-\scrC}^{\pm}$ are uniquely determined by $gP_\scrC$ and $g'P_{-\scrC}$. Apply the preceding argument to an arbitrary subsequence: it has a further subsequence converging at infinity, and every such further subsequence has these same two limit flags. Consequently, the original sequence converges at infinity.

The last point of the lemma follows from Lemma~\ref{lem:forward_backward_same}, \ref{it:limit_-scrC}.
\end{proof}

We now use Lemma~\ref{lem:trapped_set_tool} to give a dynamical description of $\mc{T}_{\pm,\N}$ and $\scrK$ (this should be compared to Assumption $\hyperlink{AA2}{\rm(A2)}$):

\begin{proposition}\label{prop:dynamics_trapped_set}
 Let $\Gamma$ be $\scrC$-transverse, and let $\fh\subset\fa_\scrC$ and $u\in\fa_\scrC\setminus\fh$ be chosen as in Proposition~\ref{prop:odms_proper} and Corollary~\ref{cor:flow_on_N}, and let $(\phi_t)_{t \in \R}$ be the induced flow on $\N$. Then
 \begin{enumerate}[label=\emph{(\roman*)}]
  \item For any compact set $C \subset \N \setminus \mc{T}_{\mp,\N}$, the map $C\times\mathbb{R}_\pm \ni (x,t)\mapsto \phi_t(x)$ is proper.
  \item If $\mathscr K$ is compact, then there exists an open relatively compact set $\mathscr V$ such that for any $x\in \mc{T}_{\pm,\N}$, there is $T \geq 0$ such that for all $t\geq T$ we have $\phi_{\mp t}(x) \in \mathscr V$.
 \end{enumerate}
\end{proposition}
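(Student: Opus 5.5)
Both items follow from Lemma~\ref{lem:trapped_set_tool} by contradiction and compactness; the main work is in (ii). Throughout, we identify points of $\N$ with classes $\Gamma gM_\scrC H$. By reversing time (replacing $u$ by $-u$ and $\scrC$ by $-\scrC$, which is harmless by Lemma~\ref{lemma:i-invariant}), it suffices to treat the sign $+$.

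\emph{Item (i).} Suppose properness fails for $C\subset\N\setminus\mc{T}_{-,\N}$ compact and forward times. Then there are $x_\ell\in C$ and $t_\ell\to+\infty$ with $\phi_{t_\ell}(x_\ell)$ staying in a compact set. Bounded times are excluded because the flow is continuous. After extracting, we lift to $\odms$: there are $\gamma_\ell\in\Gamma$ and representatives $g_\ell$ of $x_\ell$ such that $g_\ell M_\scrC H\to gM_\scrC H$ and $\gamma_\ell g_\ell e^{t_\ell u}M_\scrC H\to g'M_\scrC H$. This lifting uses only that $\odms\to\N$ is a covering-type quotient map, by Proposition~\ref{prop:odms_proper}. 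Lemma~\ref{lem:trapped_set_tool} then gives $gP_\scrC=\gamma^-_{-\scrC}\in\Lambda_\scrC$. By \eqref{equation:tails-anosov}, this means the limit point $x=\Gamma gM_\scrC H$ lies in $\mc{T}_{-,\N}$, since $\xi^+=gP_\scrC\in\Lambda_\scrC$. But $x\in C$, which is closed, and $C$ is disjoint from $\mc{T}_{-,\N}$; this is a contradiction. The backward case is symmetric and yields $g'P_{-\scrC}\in\Lambda_{-\scrC}$, i.e.\ the point lies in $\mc{T}_{+,\N}$.

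\emph{Item (ii).} Take $\mathscr V$ to be a relatively compact neighbourhood of the compact set $\scrK$. Let $x\in\mc{T}_{-,\N}$, represented by $(\xi^+,\xi^-,a)$ with $\xi^+\in\Lambda_\scrC$. The plan is to show that $\phi_t(x)$ converges to $\scrK$ as $t\to+\infty$. In Hopf coordinates, $e^{tu}$ only translates the $\fa_\scrC$-coordinate, while $\xi^\pm$ stay fixed. The point $x$ lies on the weak stable leaf \eqref{eq:formula-strong-stable-leaf} of a trapped point $y=(\xi^+,\eta^-,a')$. Such a $y$ exists because $\xi^+\pitchfork\xi^-$, $\Lambda_{-\scrC}$ is nonempty, and we may choose $\eta^-\in\Lambda_{-\scrC}\setminus\{\kappa(\xi^+)\}$ transverse to $\xi^+$, which is possible by transversality as soon as $\Lambda$ has at least two points. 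Then $x=y\cdot n$ for some $n\in N^+_\scrC$, and
\[
\phi_t(x)=y\,e^{tu}\,\bigl(e^{-tu}ne^{tu}\bigr).
\]
Since $u\in\scrC$, the conjugate $e^{-tu}ne^{tu}$ tends to $e$, so the distance between $\phi_t(x)$ and $\phi_t(y)\in\scrK$ tends to $0$. Compactness of $\scrK$ then gives $\phi_t(x)\in\mathscr V$ for $t$ large.

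The delicate point is the choice of metric. The contraction argument needs a metric that is uniform along $\scrK$, whereas $G$-left-invariant metrics have no $M_\scrC$-invariance. To handle this, we work on $G/M_\scrC$ near a fundamental domain of the cocompact $\scrK$ and use a left-$G$-invariant metric locally. The set $\{ye^{tu}\}$ stays in $\Gamma$-translates of a compact set, which gives uniform control. If this route runs into difficulties, the fallback is a contradiction argument: suppose $\phi_{t_\ell}(x)$ stays outside $\mathscr V$ along a sequence $t_\ell\to\infty$. Then $\phi_{t_\ell}(x)$ must escape to infinity, since otherwise a limit point would lie in $\mc{T}_-\cap\mc{T}_+$ by (i). This escape is to be contradicted by the bounded distance to $\phi_{t_\ell}(y)\in\scrK$ obtained above.
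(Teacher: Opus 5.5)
Your item (i) is the paper's argument, and in (ii) you follow the paper's route too. You pick $y\in\tilde{\mathscr J}$ on the strong stable leaf of $x$; you need $a'=a$ there, otherwise $x\neq yn$ with $n\in N^+_\scrC$. You then write $\phi_t(x)$ as $\phi_t(y)$ corrected by the right factor $e^{-tu}ne^{tu}\to e$, and use compactness of $\scrK$. The paper states the passage from $e^{-tu}ne^{tu}\to e$ to $d(\phi_t(x),\phi_t(y))\to0$ in one line. You are right that this step needs more. However, the justification you propose does not work, and your fallback only restates the same claim as ``bounded distance'', so it does not help.

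\emph{Why the proposed justification fails.} Compactness of $\scrK$ holds only in $\N=\Gamma\backslash\odms/H$. So what you actually know is $ye^{tu}=\gamma_t b_t m_t h_t$, with $\gamma_t\in\Gamma$ and $b_t$ in a fixed compact subset of $G$, but with $m_t\in M_\scrC$ and $h_t\in H$ unbounded in general. The orbit $\{ye^{tu}\}$ does not stay in $\Gamma$-translates of a compact subset of $G/M_\scrC$, and there is no $G$-invariant metric there. You get
\[
\phi_t(x)=\Gamma\, b_t\exp\bigl(\mathrm{Ad}(m_th_te^{-tu})\log n\bigr)M_\scrC H,\qquad \phi_t(y)=\Gamma\, b_t M_\scrC H,
\]
and $e^{-tu}ne^{tu}\to e$ alone does not control $\mathrm{Ad}(m_th_te^{-tu})\log n$, because $\mathrm{Ad}(m_th_t)$ may expand $\fn^+_\scrC$.

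\emph{What is missing.} The needed input is the contraction $\|\mathrm{Ad}(m_th_te^{-tu})\|_{\fn^+_\scrC}\to0$. This is exactly \eqref{equation:adml} in Lemma~\ref{lemma:hyperbolicity-tool}, which uses only $\scrC$-divergence and does not depend on the present proposition.

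\emph{A metric-free alternative.} Given $t_\ell\to\infty$, pass to a subsequence with $\gamma_\ell ye^{t_\ell u}h_\ell^{-1}M_\scrC\to g'M_\scrC\in\odms$. The points $\gamma_\ell yne^{t_\ell u}h_\ell^{-1}M_\scrC$ have the same $G/P_\scrC$-coordinate. By \eqref{eq:H_scrC_formula} and $H_\scrC|_{N^+_\scrC}=0$, they also have the same $\fa_\scrC$-coordinate. Their $G/P_{-\scrC}$-coordinate is $\gamma_\ell\xi^-$. Lemma~\ref{lem:trapped_set_tool} gives $\xi^+=yP_\scrC=\gamma^-_{-\scrC}$ and $g'P_{-\scrC}=\gamma^+_{-\scrC}$. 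Since $\xi^-\pitchfork\xi^+$, Proposition~\ref{prop:proximal} yields $\gamma_\ell\xi^-\to g'P_{-\scrC}$. Hence both sequences converge to $g'M_\scrC$ in Hopf coordinates. So every sequence $\phi_{t_\ell}(x)$ has a subsequence converging to a point of $\scrK$, which gives $\phi_t(x)\in\scrV$ for all large $t$.
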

\begin{proof} 
(i) Let $C \subset \mc{N} \setminus \mc T_{-,\N}$ be a compact subset. We need to show that $C \times \R_+ \ni (x,t) \mapsto \phi_t(x)$ is proper.
If not, there exist sequences $(x_n)\subset C$ and $(t_n)$ with $t_n\to+\infty$ such that $\phi_{t_n} x_n \to y \in \mc{N}$.
By passing to a subsequence we can assume $x_n\to x\in C$. This yields a sequence $g_n M_{\scrC} H\to g M_\scrC H \in \odms/H \subset G/M_\scrC H$
and a sequence $(\gamma_n)\subset\Gamma$ such that
\[
\gamma_n g_n e^{t_n u} M_{\scrC}H \to g' M_{\scrC}H \in \odms/H.
\]
Lemma~\ref{lem:trapped_set_tool} thus implies that $gP_\scrC = \gamma_{-\scrC}^-\in \Lambda_\scrC$.
However, this contradicts $C\cap \mc T_{-,\N}  = \emptyset$. The case $\mc T_{+,\N}$ follows by reversing time in the above argument.\\

(ii) Finally, we need to show that elements in $\mc T_{-,\N}$ are trapped in the future in a neighborhood $\scrV$ of the trapped set $\scrK$. Choose a smooth metric on $\N$ and a relatively compact open neighborhood $W$ of $\scrK$. If $0<\varepsilon<d(\scrK,\N\setminus W)$, then
\[
\scrV:=\{x\in\N:d(x,\scrK)<\varepsilon\}
\]
is an open neighborhood with $\overline{\scrV}\subset W$, and hence is relatively compact. Indeed, let $x:= \Gamma g M_\scrC H \in \mc{T}_{-,\N}$, where the lift $gM_{\scrC}$ is given in the Hopf coordinates by $(gP_{\scrC}, gP_{-\scrC}, H_{\scrC}(g))$ with $gP_{\scrC} \in \Lambda_{\scrC}$. There exists at least one point $\xi^- \in \Lambda_{-\scrC}$ which is transverse to $gP_{\scrC}$. By \eqref{equation:diffeo-la}, we thus have $\xi^- = g n^+ P_{-\scrC}$ for some $n^+ \in N^+_{\scrC}$. Then $g n^+ M_{\scrC} \in \tilde{\scrJ}$. Moreover, $g n^+ e^{tu} M_{\scrC} = g e^{t u} e^{-t u} n^+ e^{tu} M_{\scrC}$. In the quotient space $\Gamma \backslash G/(M_{\scrC}H)$, $g n^+ M_{\scrC}$ projects onto a point $x_0 = \Gamma g n^+ M_{\scrC}H \in \scrK$ and thus $\phi_t(x_0) = \Gamma g n^+ e^{tu} M_{\scrC}H  \in \scrK$, which is compact. Using that $e^{-t u} n^+ e^{tu} = e_G + \mc{O}(e^{-\eps t})$, we find that $d(\Gamma g n^+ e^{tu} M_{\scrC}H, \Gamma g e^{tu}M_{\scrC}H) \to_{t \to +\infty} 0$. That is, the point $x \in \mc{T}_{-,\N}$ satisfies $d(\phi_t (x),\scrK) \to_{t \to +\infty} 0$. This proves the claim.
\end{proof}

\subsection{Hyperbolicity}
\label{sec:divergence-and-hyperbolicity}

In \S\ref{ssec:locally-homogeneous-dynamics}, we introduced $G$-invariant, $A_{\scrC}$-invariant subbundles $E_0,E_s,E_u$ of $T(G/M_\scrC)$, see \eqref{eq:stable-unstable}. They descend naturally to $\M$ and $\N$ and are the natural candidates for stable/unstable bundles. By abuse of notation, we use the same symbols for these descended bundles. However, since $M_\scrC$ is noncompact there is in general no $G$-invariant Riemannian metric on $G/M_\scrC$.
We have to content ourselves with an arbitrary analytic Riemannian metric on $\N$. The proof that the above-mentioned bundles are (un)stable bundles is quite subtle and requires the assumption of $\scrC$-regularity (see Definition~\ref{def:divergent-regular-transverse-anosov}\,(\ref{it:regular})), which we will make from now on.

Recall from \S\ref{ssec:locally-homogeneous-dynamics} the notation $[g,X]$ for tangent vectors to $G/M_\scrC$. Under the projection $G/M_\scrC\supset\odms\to\Gamma\backslash\odms/H=\mc N$, tangent vectors in $T\mc N$ can be represented by equivalence classes $[\Gamma g,X]$. Here, $[\Gamma g,X]=[\Gamma g',X']$ if and only if there exist $\gamma\in\Gamma$, $m\in M_\scrC$, and $h\in H$ such that $\gamma gmh=g'$ and $X=\mathrm{Ad}(mh)X'$.

Let $\tilde\Omega_{\mathrm{DMS}}^\scrC\subset G$ be the preimage of $\odms$ under the natural projection $G\to G/M_\scrC$. We can choose a real-analytic Riemannian metric on $\mc N$. Set
\[
\mathfrak v:=\mathbb Ru\oplus\fn_\scrC^+\oplus\fn_\scrC^-.
\]
Since $\fa_\scrC=\fh\oplus\mathbb Ru$, for every $g\in\tilde\Omega_{\mathrm{DMS}}^\scrC$ the map
\[
\mathfrak v\ni X\longmapsto[\Gamma g,X]\in T_{\Gamma gM_\scrC H}\mc N
\]
is an isomorphism. Moreover, $\mathfrak v$ is preserved by $\mathrm{Ad}(M_\scrC A_\scrC)$. The Riemannian metric on $\mc N$ therefore defines a norm $\|X\|_g:=\|[\Gamma g,X]\|$ on $\mathfrak v$, depending continuously on $g$. Hence, for any compact set $B\subset \tilde\Omega_{\mathrm{DMS}}^\scrC$, there exists a constant $C>0$ such that for $g,g'\in B$,
\[
\frac{1}{C}\leq \frac{\|[\Gamma g,X]\|}{\|[\Gamma g',X]\|}\leq C,
\qquad X\in\mathfrak v\setminus\{0\}.
\]
To simplify notation we take an arbitrary reference point $o\in B$ and define a reference norm on $\mathfrak v$ by setting $\|X\|_{\mathfrak v} := \|[\Gamma o, X]\|$. We then get for $g \in B$:
\begin{equation}\label{eq:riemann_metric_bounds_general}
\frac{1}{C}\leq\frac{\|[\Gamma g,X]\|}{\|X\|_{\mathfrak v}}\leq C,
\qquad X\in\mathfrak v\setminus\{0\}.
\end{equation}

This provides an estimate on the derivative of the flow in Lie-theoretic terms:

\begin{lemma}\label{lemma:local-bound-differential-flow}
Fix an arbitrary Riemannian metric on $\mc N$. Let $B\subset \mathcal N$ be compact, and let $\tilde B\subset\tilde\Omega_{\mathrm{DMS}}^\scrC$ be compact such that $B = \Gamma\tilde B M_\scrC H$.
Then there exists a constant $C>0$ such that, for all $x \in B$ and $t > 0$ with $\phi_t(x)\in B$, there exist $g,g'\in\tilde B$, $h\in H$, and $m\in M_\scrC$
such that $g$ and $g'$ represent $x$ and $\phi_t(x)$, respectively, and the following holds for every $X\in\mathfrak v\setminus\{0\}$:
\begin{equation}\label{eq:bdd_norms}
\frac{1}{C}\leq
\frac{\|(d\phi_t)([\Gamma g,X])\|_{T_{\phi_t(x)}\mathcal N}}
{\|\mathrm{Ad}(hm e^{-tu})X\|_{\mathfrak v}}
\leq C.
\end{equation}
\end{lemma}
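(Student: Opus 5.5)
The plan is to compute the differential of the flow directly in the homogeneous picture and then use compactness of $\tilde B$ to compare norms. Fix $x\in B$ and $t>0$ with $\phi_t(x)\in B$. Since $B=\Gamma\tilde B M_\scrC H$, choose a representative $g\in\tilde B$ of $x$. The point $\phi_t(x)$ is represented by $g e^{tu}$, and also by some $g'\in\tilde B$. Hence there exist $\gamma\in\Gamma$, $m\in M_\scrC$ and $h\in H$ with $\gamma g e^{tu} m h = g'$; after renaming, we may write $g' = \gamma g e^{tu} (hm)^{-1}$, using that $H$ and $M_\scrC$ commute. These are the elements $g,g',h,m$ appearing in the statement.

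The second step is to compute $d\phi_t$ on representatives. The flow is induced by right multiplication by $e^{tu}$, so a tangent vector $[\Gamma g,X]$ is sent to $[\Gamma g e^{tu},\mathrm{Ad}(e^{-tu})X]$. This follows from the convention for associated bundles: right multiplication by $a$ sends $[g,X]$ to $[ga,\mathrm{Ad}(a^{-1})X]$, because the tangent vector is $\frac{d}{ds}g e^{sX}M_\scrC$ and $g e^{sX} a = ga\, e^{s\mathrm{Ad}(a^{-1})X}$. Using the equivalence $[\Gamma g'',X]=[\Gamma g''mh,\mathrm{Ad}((mh)^{-1})X]$ described above, and left $\Gamma$-invariance, we then rewrite this vector with base point $g'$. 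This gives
\[
(d\phi_t)[\Gamma g,X] = [\Gamma g',\mathrm{Ad}(hm e^{-tu})X],
\]
with the sign and ordering of $hm$ adjusted to match the chosen convention. One also checks that $\mathrm{Ad}(hme^{-tu})$ preserves $\mathfrak v$; this holds because $\mathrm{Ad}(M_\scrC A_\scrC)$ preserves $\mathfrak v$.

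The final step applies \eqref{eq:riemann_metric_bounds_general} at the base point $g'\in\tilde B$, which bounds $\|[\Gamma g',Y]\|$ above and below by $\|Y\|_{\mathfrak v}$ up to a constant $C$. Taking $Y=\mathrm{Ad}(hme^{-tu})X$ yields \eqref{eq:bdd_norms}, with $C$ depending only on $\tilde B$.

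The main obstacle is bookkeeping rather than analysis. One must get the equivalence relation and the direction of the $\mathrm{Ad}$ action consistent, and keep track of the $H$-quotient, so that $d\phi_t$ is genuinely represented by $\mathrm{Ad}(hme^{-tu})$ on $\mathfrak v$ and not on a quotient modulo $\fh$. The $u$-component of $\mathfrak v$ is fixed by $\mathrm{Ad}(A_\scrC)$ and is unaffected, so no further issue arises there.
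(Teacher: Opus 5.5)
Your proposal is correct and follows the paper's proof essentially verbatim: you compute $d\phi_t[\Gamma g,X]=[\Gamma g e^{tu},\mathrm{Ad}(e^{-tu})X]$, rebase at $g'$ using $\Gamma g e^{tu}=\Gamma g' hm$ to obtain $[\Gamma g',\mathrm{Ad}(hme^{-tu})X]$, and then apply the uniform comparison \eqref{eq:riemann_metric_bounds_general} at $g'\in\tilde B$. With your relation $\gamma g e^{tu}=g'hm$ the ordering already comes out exactly as $\mathrm{Ad}(hme^{-tu})$, so the hedge about adjusting signs or ordering is unnecessary.
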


\begin{proof}
By assumption, we have for $x = \Gamma g M_\scrC H\in B$
\[
g e^{tu} = \gamma g' h m,
\]
with $g'\in \tilde B$, $\gamma\in\Gamma$, $h\in H$, $m\in M_\scrC$. We write
\begin{align*}
d(e^{t u})[\Gamma g, X] &= \frac{d}{ds}_{|s=0} \Gamma g e^{sX} e^{tu}H M_\scrC \\
						&= \frac{d}{ds}_{|s=0} \Gamma g e^{tu} e^{-tu}e^{sX}e^{tu} H M_\scrC \\
						&= [\Gamma g e^{tu}, \mathrm{Ad}(e^{-tu}) X]\\
						&= [\Gamma g', \mathrm{Ad}(hm e^{-tu})X].
\end{align*}
Since $g$ and $g'$ lie in $\tilde B$, a bounded set, we can use \eqref{eq:riemann_metric_bounds_general} and get the required estimate.
\end{proof}

With Lemma \ref{lemma:local-bound-differential-flow} in hand, we can give a general contraction estimate. (We restrict to the $E_s$ bundle, the statement for $E_u$ is obtained by reversing time.)
\begin{lemma}\label{lemma:hyperbolicity-tool}
Let $\Gamma<G$ be a $\scrC$-divergent group. Assume that for some sequence $g_\ell\in G$, there are $\gamma_\ell\in\Gamma$, $m_\ell\in M_\scrC$, and $a_\ell\in\fa_\scrC$ such that
\[
g_\ell M_\scrC\to g M_\scrC,\qquad \gamma_\ell g_\ell m_\ell e^{a_\ell} M_\scrC \to g' M_\scrC,\qquad \gamma_\ell\to\infty,
\]
with $gM_\scrC, g'M_\scrC\in\odms$. Then, for all sufficiently large $\ell$, $a_\ell\in \scrC\cup-\scrC$.

If $a_\ell\in \scrC$ for all sufficiently large $\ell$, we choose an open Weyl chamber $\fapp\geq \scrC$ and consider the induced open Weyl chamber $\fa_{M_\scrC,++}\subset \fa_\scrC^\perp$ of $M_\scrC$ (see \S\ref{sssec:levi}). If we assume that $g_\ell$ and $g'_\ell = \gamma_\ell g_\ell m_\ell e^{a_\ell}$ are bounded (which can always be achieved by choosing the $m_\ell$ appropriately), then there exists a constant $C>0$ such that
\begin{equation}\label{eq:approximation-of-mu}
\|\mu_\fapp (\gamma_\ell^{-1}) - (a_\ell + \mu_{\fa_{M_\scrC,++}}(m_\ell))\|\leq C,
\end{equation}
and
\begin{equation}
\label{equation:adml}
\| \mathrm{Ad}(m_\ell^{-1} e^{-a_\ell})\|_{\fn^+_\scrC} \leq C \exp\left(-\inf_{\alpha\in\Delta^+_\scrC} \alpha(\mu_\fapp(\gamma_\ell^{-1}))\right) \to_{\ell \to \infty} 0.
\end{equation}
The constant $C>0$ depends only on a bounded set containing the sequences $(g_\ell)_{\ell \geq 0}$ and $(g'_\ell)_{\ell \geq 0}$.
\end{lemma}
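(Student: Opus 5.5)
The plan is to reduce everything to Cartan-projection bookkeeping for the group elements $\gamma_\ell^{-1}$. First I will show that $a_\ell\in\scrC\cup-\scrC$ eventually. Set $g'_\ell:=\gamma_\ell g_\ell m_\ell e^{a_\ell}$ and choose $m_\ell$ so that both $(g_\ell)$ and $(g'_\ell)$ stay in a compact subset of $G$. Then
\[
\gamma_\ell^{-1}=g_\ell m_\ell e^{a_\ell}(g'_\ell)^{-1},
\]
and Lemma~\ref{lemma:Benoist-compact-Cartan} gives $\mu_\fapp(\gamma_\ell^{-1})=\mu_\fapp(m_\ell e^{a_\ell})+O(1)$.

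Since $m_\ell$ and $e^{a_\ell}$ commute and lie in the Levi group $L_\scrC=M_\scrC A_\scrC$, I will use a Cartan decomposition of $m_\ell$ inside $M_\scrC$: $m_\ell=k_\ell e^{b_\ell}k_\ell'$ with $k_\ell,k'_\ell\in K_\scrC$ and $b_\ell=\mu_{\fa_{M_\scrC,++}}(m_\ell)\in\overline{\fa_{M_\scrC,++}}$. Because $K_\scrC$ centralizes $\fa_\scrC$, this yields
\[
m_\ell e^{a_\ell}=k_\ell e^{a_\ell+b_\ell}k'_\ell .
\]
Hence $\mu_\fapp(m_\ell e^{a_\ell})$ is the $W(G,A)$-representative in $\fap$ of $a_\ell+b_\ell$.

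The next step is to locate $a_\ell$. Applying $\pi_\scrC$ and using \eqref{eq:stab-Weyl-and-projector}, the $\fa_\scrC$-component of $a_\ell+b_\ell$ is exactly $a_\ell$. By $\scrC$-divergence and Lemma~\ref{lemma:i-invariant}, $\alpha(\mu_\fapp(\gamma_\ell^{-1}))\to\infty$ for every $\alpha\in\Delta^+_{\scrC}$, and also for $\Delta^+_{-\scrC}$ after composing with $\iota_\fapp$. From this I will argue that $a_\ell+b_\ell$ must eventually lie in $w\cdot\fapp$ for some $w$ with $w\cdot\fapp\geq\pm\scrC$, i.e. in a chamber adjacent to $\scrC$ or $-\scrC$. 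Otherwise some root in $\Delta^+_\scrC$ would stay bounded along a subsequence.

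To make this precise I will also use the dynamical input that $gM_\scrC$ and $g'M_\scrC$ lie in $\odms$, exactly as in the proof of Lemma~\ref{lem:trapped_set_tool}. Transversality of $gP_{\pm\scrC}$ with the relevant limit point, combined with Lemma~\ref{lemma:translation-length}, gives
\[
H_{\pm\scrC}(\gamma_\ell g_\ell)=\mu_{\pm\scrC}(\gamma_\ell)+O(1).
\]
Comparing with $H_{\pm\scrC}(g'_\ell)$ bounded, this forces $a_\ell=\pm\mu_{\pm\scrC}(\gamma_\ell^{-1})+O(1)$ up to the $\iota$-identification. Since $\mu_\scrC$ takes values in $\overline\scrC$ and roots of $\Delta^+_\scrC$ diverge, $a_\ell$ lies in $\scrC$ or $-\scrC$ for large $\ell$. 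I expect this sign/chamber identification to be the main obstacle. It needs a careful case split according to which of the two transversality conditions in the definition of $\odms$ holds, mirroring Lemma~\ref{lem:trapped_set_tool}.

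Now assume $a_\ell\in\scrC$. Since $b_\ell\in\overline{\fa_{M_\scrC,++}}$ and $a_\ell\in\scrC$ deep inside (its $\Delta_\scrC^+$-values diverge), the roots in $\Delta^+_\scrC$ evaluated on $a_\ell+b_\ell$ equal their values on $a_\ell$, which are positive. Roots in $\Delta^0_\scrC\cap\Delta^+_\fapp$ are nonnegative on $b_\ell$. Hence $a_\ell+b_\ell\in\fap$ already, so
\[
\mu_\fapp(m_\ell e^{a_\ell})=a_\ell+b_\ell ,
\]
which gives \eqref{eq:approximation-of-mu}.

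For \eqref{equation:adml} I will decompose $\fn^+_\scrC=\bigoplus_{\alpha\in\Delta_\scrC^+}\fg_\alpha$. The operator $\mathrm{Ad}(e^{-a_\ell})$ acts on $\fg_\alpha$ by $e^{-\alpha(a_\ell)}$. The Ad-action of $m_\ell^{-1}=k_\ell'^{-1}e^{-b_\ell}k_\ell^{-1}$ preserves $\fn^+_\scrC$, the $K_\scrC$ factors are isometries, and $e^{-b_\ell}$ scales a root space $\fg_\beta\subset\fn^+_\scrC$ by $e^{-\beta(b_\ell)}$. Together the total weight on $\fg_\beta$ is $e^{-\beta(a_\ell+b_\ell)}$. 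Because $a_\ell+b_\ell\in\fap$, we have $\beta(a_\ell+b_\ell)\geq\inf_{\alpha\in\Delta^+_\scrC}\alpha(a_\ell+b_\ell)$; I will check this via the invariance of $\Delta^+_\scrC$ under $\langle\Delta^0_\scrC\rangle$, e.g. through the simple roots. Combined with \eqref{eq:approximation-of-mu}, this gives the bound $C\exp(-\inf_{\alpha\in\Delta^+_\scrC}\alpha(\mu_\fapp(\gamma_\ell^{-1})))$, which tends to $0$ by $\scrC$-divergence.
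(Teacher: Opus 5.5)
There is a genuine gap in your second half, at the claim that $a_\ell+b_\ell\in\fap$ once $a_\ell\in\scrC$. You justify it by saying that roots in $\Delta^+_\scrC$ take the same values on $a_\ell+b_\ell$ as on $a_\ell$, i.e.\ that they vanish on $b_\ell\in\fa_\scrC^\perp$. This is false whenever $\Delta^0_\scrC\neq\emptyset$. The space $\fa_\scrC^\perp$ is spanned by the coroots of $\Delta^0_\scrC$, and only the $\langle\Delta^0_\scrC\rangle$-averages of roots in $\Delta^+_\scrC$ vanish on it (this is \eqref{eq:stab-Weyl-and-projector}), not the roots themselves.

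For a concrete example, take $\mathrm{SL}_3(\R)$ with $\fa_\scrC=\ker\alpha_2$, $a=\mathrm{diag}(2t,-t,-t)\in\scrC$ and $b=\mathrm{diag}(0,s,-s)\in\overline{\fa_{M_\scrC,++}}$. Then $\alpha_1(a+b)=3t-s<0$ as soon as $s>3t$, however large $t$ is. So knowing that $a_\ell$ lies deep in $\scrC$ does not control the $M_\scrC$-part. If $b_\ell$ dominates, $\mu_\fapp(m_\ell e^{a_\ell})$ is a nontrivial Weyl conjugate of $a_\ell+b_\ell$, and your derivation of \eqref{eq:approximation-of-mu} breaks. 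Your proof of \eqref{equation:adml} uses $a_\ell+b_\ell\in\fap$, so it inherits the gap. What is actually needed is the stronger statement $\min_{\alpha\in\Delta^+_\scrC}\alpha(a_\ell+b_\ell)\to+\infty$.

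The paper obtains this from the proximal dynamics. Lemma~\ref{lem:trapped_set_tool} gives $gP_\scrC=\gamma^-_{-\scrC}$ and $g'P_{-\scrC}=\gamma^+_{-\scrC}$. Proposition~\ref{prop:proximal} then shows that $\gamma_\ell g_\ell e^{X}P_{-\scrC}=g'_\ell\exp(\mathrm{Ad}(m_\ell^{-1}e^{-a_\ell})X)P_{-\scrC}$ converges to $g'P_{-\scrC}$ uniformly for $X$ in a small ball of $\fn^+_\scrC$. This forces $\|\mathrm{Ad}(m_\ell^{-1}e^{-a_\ell})\|_{\fn^+_\scrC}\to0$. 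After the $K_\scrC\exp(\overline{\fa_{M_\scrC,++}})K_\scrC$ decomposition, this is exactly $\min_{\alpha\in\Delta^+_\scrC}\alpha(a_\ell+b_\ell)\to\infty$, and $a_\ell+b_\ell\in\fap$ and $a_\ell\in\scrC$ follow at once.

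You could instead repair your own route. Your second transversality case, via Lemma~\ref{lemma:translation-length}, actually yields the sharper estimate $a_\ell=-\mu_{-\scrC}(\gamma_\ell)+O(1)=\mu_\scrC(\gamma_\ell^{-1})+O(1)$. Take $\lambda:=\sum_{\beta\in\Delta^+_\scrC}\beta$; it lies in $\fa_\scrC^*$, so the estimate gives $\lambda(\mu_\fapp(m_\ell e^{a_\ell}))-\lambda(a_\ell+b_\ell)=O(1)$. Since $\lambda$ is dominant with stabilizer exactly $\langle\Delta^0_\scrC\rangle$, one has $(\lambda-w\lambda)(H)\geq c\min_{\alpha\in\Delta^+_\scrC}\alpha(H)$ for $H\in\fap$ and $w\notin\langle\Delta^0_\scrC\rangle$, with $c>0$ depending only on the root system. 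Then $\scrC$-divergence forces the sorting Weyl element into $\langle\Delta^0_\scrC\rangle$, which gives $a_\ell+b_\ell\in\fap$. Either way, this step has to be supplied.

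Separately, drop your heuristic that $a_\ell+b_\ell$ must lie in a chamber adjacent to $\pm\scrC$ because ``otherwise some root in $\Delta^+_\scrC$ would stay bounded''. The Cartan projection only sees the Weyl orbit of $a_\ell+b_\ell$, so this does not follow. What correctly proves the first claim is your case split via Lemma~\ref{lemma:translation-length}, along subsequences converging at infinity. It gives $a_\ell=-\mu_\scrC(\gamma_\ell)+O(1)\in-\scrC$ in one case and $a_\ell=\mu_\scrC(\gamma_\ell^{-1})+O(1)\in\scrC$ in the other; no $\iota$-identification is needed. That part is a valid alternative to the paper's argument via properness modulo $\ker\beta$.
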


\begin{proof}
For any $\beta\in
\bigl(\scrL_{\Gamma,\mathrm C}^{\scrC}\bigr)^{*,\circ}$,
properness of the action of $\Gamma$ on $\odms/\exp(\ker\beta)$ (Proposition \ref{prop:odms_proper}) implies that $a_\ell \mod \ker\beta\to \infty$. We can take $u\in \scrC$ not in $\ker \beta$, so that $a_\ell = t_\ell u \mod \ker\beta$, with $t_\ell\to\pm \infty$. Let us consider the case $t_\ell\to +\infty$ for which we will show that $a_\ell \in \scrC$ (the other case leads to $a_\ell \in -\scrC$).  Lemma~\ref{lem:trapped_set_tool} then applies and gives
\[
 g'P_{-\scrC} = \gamma_{-\scrC}^+,\quad gP_\scrC =\gamma_{-\scrC}^-.
\]
Since we must have $g P_{-\scrC}\pitchfork gP_\scrC$, we deduce that for all sufficiently large $\ell$ and sufficiently small $\varepsilon>0$, $g_\ell \exp(B_\varepsilon(\fn_\scrC^+))P_{-\scrC}\pitchfork \gamma_{-\scrC}^-$ uniformly. By Proposition~\ref{prop:proximal}, for $X_\ell \in B_\varepsilon(\fn_\scrC^+)$,
\[
 \gamma_\ell g_\ell e^{X_\ell}P_{-\scrC} \to\gamma^+_{-\scrC} = g'P_{-\scrC}.
\]
Plugging in $\gamma_\ell g_\ell = g'_\ell m_\ell^{-1} e^{-a_\ell}$, we get
\[
 g_\ell'm_\ell^{-1} e^{-a_\ell} e^{X_\ell} P_{-\scrC} = g_\ell' \exp(\mathrm{Ad}(m_\ell^{-1} e^{-a_\ell}){X_\ell})P_{-\scrC} \to g'P_{-\scrC},
\]
so we conclude
\begin{equation}\label{eq:hyper_first_step}
\|\mathrm{Ad}(m_\ell^{-1} e^{-a_\ell})\|_{\fn^+_\scrC} \to 0.
\end{equation}

Now, we introduce the $K_\scrC \exp(\overline{\fa_{M_\scrC,++}}) K_\scrC$ decomposition of $m_\ell$:
\[
m_\ell = k_{1,\ell} e^{Y_\ell} k_{2,\ell}^{-1},\qquad k_{1,\ell},k_{2,\ell}\in K_\scrC,\ Y_\ell\in \overline{\fa_{M_\scrC,++}}.
\]
Then, using that $K_{\scrC} < M_{\scrC}$ commutes with $A_{\scrC}$, we can write
\[
\mathrm{Ad}(m_\ell^{-1} e^{-a_\ell}) = \mathrm{Ad}(k_{2,\ell} )\mathrm{Ad}(e^{-Y_\ell-a_\ell})\mathrm{Ad}(k_{1,\ell}^{-1}). 
\]
Observe that, by compactness of $K_{\scrC}$, there exists a uniform constant $C>0$ such that
\begin{equation}
 \label{equation:gustos1}
\frac{1}{C}\|\mathrm{Ad}(e^{-Y_\ell-a_\ell})\|_{\fn^+_\scrC} \leq \|\mathrm{Ad}(m_\ell^{-1} e^{-a_\ell})\|_{\fn^+_\scrC} \leq C \|\mathrm{Ad}(e^{-Y_\ell-a_\ell})\|_{\fn^+_\scrC}.
\end{equation}
Now let us consider $\|\mathrm{Ad}(e^{-Y_\ell-a_\ell})\|_{\fn^+_\scrC}$: $\mathrm{Ad}(e^{-Y_\ell-a_\ell})$ preserves the root space decomposition $\fn_\scrC^+ = \bigoplus_{\alpha\in\Delta^+_\scrC}\fg_\alpha$ and acts on $\fg_\alpha$ by multiplication with $e^{-\alpha(Y_\ell + a_\ell)}$; since the root spaces are orthogonal with respect to the scalar product $\langle\cdot,\cdot\rangle$ introduced in \eqref{equation:produit-scalaire}, we have
\begin{equation}
\label{equation:gustos2}
\|\mathrm{Ad}(e^{-Y_\ell-a_\ell})\|_{\fn^+_\scrC}  = e^{-\lambda_\ell},
\end{equation}
where $\lambda_\ell = \min\{ \alpha(a_\ell+Y_\ell)~:~\alpha\in\Delta^+_\scrC\}$.  From \eqref{eq:hyper_first_step}, we deduce that $\lambda_\ell \to+\infty$. On the other hand, for $\alpha \in \Delta^+_{\fapp}\cap \Delta^0_\scrC$, $\alpha(a_\ell + Y_\ell) = \alpha(Y_\ell) \geq 0$ by definition of $\overline{\fa_{M_\scrC,++}}$. Together with the definition of $\lambda_\ell$, this shows that $a_\ell+Y_\ell\in\fa_+=\overline{\fapp}$ for all sufficiently large $\ell$. Since $a_\ell=\pi_\scrC(a_\ell+Y_\ell)$, formula~\eqref{eq:stab-Weyl-and-projector} and the invariance of $\Delta_\scrC^+$ under $\langle\Delta_\scrC^0\rangle$ give, for every $\alpha\in\Delta_\scrC^+$,
\[
\alpha(a_\ell)
=\frac{1}{|\langle\Delta_\scrC^0\rangle|}
\sum_{w\in\langle\Delta_\scrC^0\rangle}
(w^{-1}\alpha)(a_\ell+Y_\ell)
\geq\lambda_\ell>0
\]
for all sufficiently large $\ell$. Hence $a_\ell\in\scrC$. From
\[
\gamma_\ell^{-1} = g_\ell m_\ell e^{a_\ell} (g'_\ell)^{-1}, 
\]
the boundedness of $(g_\ell)_{\ell \geq 0},(g_\ell')_{\ell \geq 0}$ and Lemma \ref{lemma:Benoist-compact-Cartan}, we deduce that there is a $C\geq 0$ depending on the sets bounding $(g_\ell)_{\ell \geq 0},(g_\ell')_{\ell \geq 0}$ such that
\[
\|\mu_\fapp(\gamma_\ell^{-1}) - (a_\ell + Y_\ell) \| \leq C.
\]
This proves \eqref{eq:approximation-of-mu}. The estimate \eqref{equation:adml} then follows from \eqref{eq:approximation-of-mu}, \eqref{equation:gustos1} and \eqref{equation:gustos2}.
\end{proof}

\begin{proposition}\label{prop:hyperbolicity}
Let $\Gamma < G$ be a $\scrC$-divergent group, and let $u$ and $\fh=\ker\beta$ be as in Proposition~\ref{prop:odms_proper} and Corollary~\ref{cor:flow_on_N}.
Fix an arbitrary Riemannian metric on $\mc N$ and a compact set $B\subset\mathcal N$.
Then
\begin{equation}\label{eq:stable_nonquant}
\lim_{\substack{t\to +\infty,\\ x, \phi_t(x)\in B}} \|d_x \phi_t|_{E_s} \| = 0.
\end{equation}
In addition, if $\Gamma$ is $\scrC$-regular (Definition \ref{def:divergent-regular-transverse-anosov}, \ref{it:regular}), let $\fapp\geq\scrC$ be an open Weyl chamber. Then, for any
\begin{equation}\label{eq:condition_lyapunov}
 0\leq \lambda < \inf_{H \in \scrL_{\Gamma,\mathrm C}^{\fapp} \cap (u+\fh+\fa_\scrC^\perp)} \min_{\alpha \in\roots_\scrC^+} \alpha(H),
\end{equation}
there exists $C>0$ such that for all $x\in B$ and $t>0$ such that $\phi_t(x)\in B$, we have:
\begin{equation}\label{eq:stable_expon}
 \|d_x\phi_t|_{E_s}\|\leq Ce^{-\lambda t}.
\end{equation}
\end{proposition}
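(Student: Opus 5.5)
We argue with Lemma~\ref{lemma:local-bound-differential-flow} and Lemma~\ref{lemma:hyperbolicity-tool}. Fix $\tilde B\subset\tilde\Omega^\scrC_{\mathrm{DMS}}$ compact with $\Gamma\tilde BM_\scrC H=B$. For $x\in B$ and $t>0$ with $\phi_t(x)\in B$, Lemma~\ref{lemma:local-bound-differential-flow} gives representatives $g,g'\in\tilde B$, $\gamma\in\Gamma$, $h\in H$, $m\in M_\scrC$ with $ge^{tu}=\gamma g'hm$. For $X\in\fn^+_\scrC$ it also gives
\[
\|d_x\phi_t[\Gamma g,X]\|\asymp\|\mathrm{Ad}(hme^{-tu})X\|_{\mathfrak v}.
\]
Since $\fn_\scrC^+$ is $\mathrm{Ad}(M_\scrC A_\scrC)$-invariant, this reduces both claims to bounding the operator norm $\|\mathrm{Ad}(m^{-1}e^{-a})\|_{\fn^+_\scrC}$ with $a:=tu-\log h\in\fa_\scrC$. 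Here we use that $m$ and $A_\scrC$ commute, and that $\mathrm{Ad}(hm)$ versus $\mathrm{Ad}(m^{-1}h^{-1})$ only differ by inversion, which after rewriting $g'=\gamma^{-1}ge^{tu}m^{-1}h^{-1}$ matches the setup of Lemma~\ref{lemma:hyperbolicity-tool} with $g_\ell=g$, $\gamma_\ell\to\gamma^{-1}$, $m_\ell=m^{-1}$, $a_\ell=a$.

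For \eqref{eq:stable_nonquant}, argue by contradiction. Take $t_\ell\to\infty$ and $x_\ell,\phi_{t_\ell}(x_\ell)\in B$ with $\|d\phi_{t_\ell}|_{E_s}\|\geq\eps$, and pass to convergent subsequences of the representatives in $\tilde B$. Since $\beta(a_\ell)=t_\ell\beta(u)\to\infty$ and $B$ is compact, the group elements $\gamma_\ell$ must diverge; this is the same argument as in Lemma~\ref{lem:trapped_set_tool}. Then Lemma~\ref{lemma:hyperbolicity-tool} applies with $a_\ell$ eventually in $\scrC$; the sign is fixed because $\beta(a_\ell)\to+\infty$ and $u$ lies on the side of $\scrC$. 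Estimate~\eqref{equation:adml} then gives $\|\mathrm{Ad}(m_\ell^{-1}e^{-a_\ell})\|_{\fn^+_\scrC}\to0$, which is a contradiction. Because the constants depend only on $\tilde B$, we may instead reason directly with uniform constants.

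For the exponential rate under $\scrC$-regularity, use \eqref{equation:adml} with its uniform constant $C$, depending only on $\tilde B$:
\[
\|d_x\phi_t|_{E_s}\|\leq C\exp\Bigl(-\min_{\alpha\in\Delta^+_\scrC}\alpha(\mu_\fapp(\gamma^{\pm1}))\Bigr).
\]
It remains to show $\min_\alpha\alpha(\mu_\fapp(\gamma^{\pm1}))\geq\lambda t-C'$. By \eqref{eq:approximation-of-mu}, $\mu_\fapp(\gamma^{\pm1})=a+Y+O(1)$ with $Y\in\fa_\scrC^\perp$ and $a\in tu+\fh$. So $H_t:=\mu_\fapp(\gamma^{\pm1})/t$ lies within $O(1/t)$ of $u+\fh+\fa_\scrC^\perp$.

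The main obstacle is showing that $H_t$ is asymptotically close to the Cartan limit cone $\scrL^\fapp_{\Gamma,\mathrm C}$ in a controlled, bounded region. Boundedness of $H_t$ should follow from the properness used in Proposition~\ref{prop:odms_proper}: since $\beta\in(\scrL^\scrC_{\Gamma,\mathrm C})^{*,\circ}$, the bound \eqref{eq:interior-dual-limit-cone} gives $\|\mu_\scrC(\gamma)\|\lesssim\beta(\mu_\scrC(\gamma))+O(1)=t\beta(u)+O(1)$. $\scrC$-regularity then controls $\|\mu_\fapp(\gamma)\|$ by $\|\mu_\scrC(\gamma)\|$, because by \eqref{eq:scrC_reg_limit_cone} the cone avoids the walls $\ker\alpha$, which bounds the $\fa_\scrC^\perp$-component linearly.

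With boundedness in hand, argue by contradiction. If $\min_\alpha\alpha(H_{t_\ell})<\lambda'<\inf$ along $t_\ell\to\infty$, then $H_{t_\ell}$ accumulates at a point $H_\infty$. This point lies in $\scrL^\fapp_{\Gamma,\mathrm C}$ by the definition of the asymptotic cone, lies in $u+\fh+\fa_\scrC^\perp$, and satisfies $\min_\alpha\alpha(H_\infty)\leq\lambda'$. This contradicts \eqref{eq:condition_lyapunov}. The resulting bound holds for $t$ large; small $t$ are absorbed into $C$, using the bounded growth of $d\phi_t$ on compact time intervals.
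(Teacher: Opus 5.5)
Your proposal is correct and follows essentially the same route as the paper: you reduce via Lemma~\ref{lemma:local-bound-differential-flow} to the operator norm controlled by \eqref{equation:adml}, you obtain divergence of the group elements as in Lemma~\ref{lem:trapped_set_tool}, and for the rate you use coercivity of $\beta$ and $\scrC$-regularity to bound $t^{-1}\mu_\fapp(\gamma^{\pm1})$, whose accumulation points lie in $\scrL^{\fapp}_{\Gamma,\mathrm C}\cap(u+\fh+\fa_\scrC^\perp)$. One small notational slip: with $ge^{tu}=\gamma g'hm$, the relevant operator is $\mathrm{Ad}(me^{-a})$ rather than $\mathrm{Ad}(m^{-1}e^{-a})$; this is exactly what \eqref{equation:adml} controls once you set $m_\ell=m^{-1}$, so the argument is unaffected.
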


\begin{remark}
Under the $\scrC$-regularity assumption, the slice
$\scrL_{\Gamma,\mathrm C}^{\fapp}\cap(u+\fh+\fa_\scrC^\perp)$ is compact and disjoint from the root walls
$\bigcup_{\alpha\in\roots_\scrC^+}\ker\alpha$. Hence the infimum in
\eqref{eq:condition_lyapunov} is strictly positive whenever the slice is nonempty. If the slice is empty, the proof shows that the return times $t>0$ with $x,\phi_t(x)\in B$ are bounded.
\end{remark}

\begin{proof}
Let us first choose $\tilde B\subset \tilde\Omega_{\mathrm{DMS}}^\scrC\subset G$ compact such that $B= \Gamma \tilde BM_\scrC H$.
We also pick an arbitrary sequence $x_\ell \in B$ and an arbitrary sequence of times $t_\ell\to +\infty$ such that $\phi_{t_\ell}(x_\ell)\in B$.
We write $x_\ell = \Gamma g_\ell M_\scrC H$ and $\phi_{t_\ell}(x_\ell) = \Gamma g'_\ell M_\scrC H$ with $g_\ell,g'_\ell\in \tilde B$. By definition of the flow $(\phi_t)_{t \in \R}$, we can write
\begin{equation}
 \label{eq:hyper_proof_relations}
\gamma_\ell g_\ell e^{t_\ell u} =  g'_\ell m_\ell h_\ell  \text{ with } \gamma_\ell\in \Gamma,~ m_\ell\in M_\scrC,~ h_\ell\in H.
\end{equation}
By Lemma~\ref{lemma:local-bound-differential-flow} and \eqref{eq:stable-unstable}, we deduce that for some constant $C>0$ depending only on $B$
\[
 \|d_{x_\ell}\phi_{t_\ell}|_{E_s}\| \leq C \|\mathrm{Ad}(h_\ell m_\ell e^{-t_\ell u})  \|_{\fn^+_\scrC}.
\]
After passing to a subsequence, we can assume that $g_\ell \to g \in \tilde B\subset \tilde\Omega_{\mathrm{DMS}}^\scrC$ and $g'_\ell \to g' \in \tilde B$ as well. Arguing as in the proof of Lemma~\ref{lem:trapped_set_tool}, we find that $\gamma_\ell\to\infty$, so that Lemma~\ref{lemma:hyperbolicity-tool} applies. This gives directly \eqref{eq:stable_nonquant}. For the quantitative estimate \eqref{eq:stable_expon} we use Lemma~\ref{lemma:hyperbolicity-tool} again. In the notation of that lemma, the $M_\scrC$-factor is $m_\ell^{-1}$ and $a_\ell=t_\ell u-\log(h_\ell)$. A normalization argument using \eqref{eq:approximation-of-mu}, $\scrC$-regularity, and the coercivity estimate \eqref{eq:interior-dual-limit-cone} for $\beta$ shows that
$t_\ell^{-1}\mu_\fapp(\gamma_\ell^{-1})$ is bounded. Every accumulation point belongs, by the definition of the AC--Cartan cone, to $\scrL_{\Gamma,\mathrm C}^{\fapp}$; the same approximation and the expression for $a_\ell$ place it in $u+\fh+\fa_\scrC^\perp$. Hence
\[
d\Big(t_\ell^{-1} \mu_\fapp(\gamma_\ell^{-1}),\ \scrL_{\Gamma,\mathrm C}^{\fapp}\cap(u+ \fh+\fa_\scrC^\perp) \Big) \to 0.
\]
Together with the $\scrC$-regularity of $\Gamma$ (in the incarnation of \eqref{eq:scrC_reg_limit_cone}) and \eqref{equation:adml}, this implies the required quantitative estimate \eqref{eq:stable_expon}.
\end{proof}

\subsection{Anosov subgroups and Axiom A actions of type $(k,1)$}
Finally, we can establish the result announced in the introduction asserting the existence of a suitable discontinuity domain for the left-right $\Gamma \times A_{\scrC}$-action on $G/M_{\scrC}$ such that the $A_{\scrC}$-action is an Axiom A action of type $(k,1)$ in the sense of Definition \ref{definition:axiom-a-action}:

\begin{theorem}[Properties of the DMS domains]
\label{thm:DMS-main}
Let $\Gamma< G$ be a torsion-free $\scrC$-Anosov subgroup. Then there exists an open set invariant under the left $\Gamma$-action and the right $A_\scrC$-action $\odms\subset G/M_\scrC$, such that $\Gamma$ acts properly discontinuously on $\odms$. The right $A_\scrC$-action on the analytic manifold $\mc M := \Gamma \backslash \odms$ is an Axiom A action of type $(k,1)$ in the sense of Definition~\ref{definition:axiom-a-action}. Furthermore, the hyperplane may be chosen as $\fh:=\ker\beta\subset\fa_\scrC$ for any $\beta$ in the interior of the AC--Cartan dual cone, i.e., for any $\beta\in\bigl(\scrL_{\Gamma,\mathrm C}^{\scrC}\bigr)^{*,\circ}$.
\end{theorem}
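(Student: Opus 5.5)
The plan is to take $\odms$ as in Definition~\ref{definition:dms-domain}, fix $\beta\in\bigl(\scrL_{\Gamma,\mathrm C}^{\scrC}\bigr)^{*,\circ}$, set $\fh:=\ker\beta$, and then check \hyperlink{AA1}{\rm(A1)}--\hyperlink{AA3}{\rm(A3)} one at a time. Invariance and openness of $\odms$ are given by Lemma~\ref{lemma:basic-feature-Omega-DMS}. An Anosov subgroup is $\scrC$-divergent, $\scrC$-regular and (Lemma~\ref{lemma:anosov-transverse}) $\scrC$-transverse. So Lemma~\ref{lem:DMS_transversal_grps} shows $\tilde\scrJ_\scrC\subset\odms$. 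The limit set of a nonelementary hyperbolic group is nonempty and has transverse pairs (attracting and repelling fixed points of a loxodromic element, Example~\ref{example:limit-point-for-powers}), so $\odms\neq\emptyset$. Proper discontinuity of $\Gamma$, and freeness and properness of $\Gamma\times H$, come from Proposition~\ref{prop:odms_proper}. This yields the analytic principal $H$-bundle $\M\to\N$, which is \hyperlink{AA1}{\rm(A1)}. Local freeness of $A_\scrC$ on $\M$ follows in the same way: the flow $e^{tu}$ of Corollary~\ref{cor:flow_on_N} has no fixed points, since a fixed point would give $\gamma g e^{tu}=gmh$, and Lemma~\ref{lemma:hyperbolicity-tool} applied to powers $\gamma^n$ rules this out for small $t$.

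Assumption \hyperlink{AA2}{\rm(A2)} will follow from Proposition~\ref{prop:dynamics_trapped_set}, with tails $\mc{T}_{\pm,\N}$ as in \eqref{equation:tails-anosov}. Its item (ii) needs $\scrK$ to be compact, and this is the step I expect to be hardest. The plan here is to identify $\Gamma\backslash(\Lambda_\scrC\overset{\pitchfork}{\times}\Lambda_{-\scrC})$ with $\Gamma\backslash\partial^{(2)}\Gamma$ via the Anosov boundary maps; note that transversality gives $\Lambda^{(2)}_\scrC$ equal to all transverse pairs. One then shows that $\Gamma$ acts cocompactly on $\partial^{(2)}\Gamma\times\fa_\scrC/\fh\simeq\partial^{(2)}\Gamma\times\R$. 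For that, use Gromov's geodesic flow of $\Gamma$, which acts cocompactly on $\partial^{(2)}\Gamma\times\R$. The point is to show that the $\fa_\scrC/\fh$-coordinate is a $\Gamma$-equivariant quasi-isometric reparametrisation of this $\R$-coordinate. Concretely, Lemma~\ref{lemma:translation-length} together with \eqref{eq:interior-dual-limit-cone} gives $\beta(H_\scrC(\gamma g))=\beta(\mu_\scrC(\gamma))+\beta(H_\scrC(g))+O(1)$, and the Anosov estimate gives $\beta(\mu_\scrC(\gamma))\asymp|\gamma|$. If this becomes too technical, the fallback is a direct sequential argument: a sequence in $\scrK$ escaping every compact set lifts to points whose $\fa_\scrC/\fh$ coordinates must be corrected by $\gamma_\ell$ with $|\gamma_\ell|\to\infty$. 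By Lemma~\ref{lem:trapped_set_tool} the flags then degenerate to a nontransverse pair, which contradicts uniform transversality on a compact fundamental set of $\partial^{(2)}\Gamma$; that uniformity comes from cocompactness of $\Gamma$ on $\partial^{(2)}\Gamma$ minus a small neighbourhood of diagonals, i.e.\ the convergence group property. Once compactness holds, the local maximality statements follow.

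For \hyperlink{AA3}{\rm(A3)}, take the splitting $E_0\oplus E_s\oplus E_u$ from \eqref{eq:stable-unstable}. It is analytic and invariant, and it projects to $T\N$ with $E_0/\fh$ spanned by $X_\N$. Since $\scrK$ is compact, we can take $B$ to be a compact neighbourhood of $\scrK$ in Proposition~\ref{prop:hyperbolicity}. This gives $\|d\phi_t|_{E_s}\|\le Ce^{-\lambda t}$ along $\scrK$, with $\lambda>0$ by $\scrC$-regularity and the Remark following that proposition; the slice there is nonempty because $u$ lies on the side of $\scrL$. The estimate for $E_u$ follows by time reversal, i.e.\ by applying the proposition to $-\scrC$, which is allowed by Lemma~\ref{lemma:i-invariant}. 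The standing requirements are positive entropy and that $\scrK$ is not a finite union of closed orbits; both hold because $\Gamma$ is nonelementary. Independence from the choice of $\beta$ is built into the argument, since $\beta$ was arbitrary in the open dual cone.
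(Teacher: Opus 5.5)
Your outline follows the paper's proof step for step. Openness and invariance come from Lemma~\ref{lemma:basic-feature-Omega-DMS}, nonemptiness from $\scrC$-transversality and Lemma~\ref{lem:DMS_transversal_grps}, (A1) from Proposition~\ref{prop:odms_proper}, (A2) from Proposition~\ref{prop:dynamics_trapped_set} once $\scrK$ is compact, and (A3) from Proposition~\ref{prop:hyperbolicity} using $\scrC$-regularity. The only real divergence is the compactness of $\scrK$. The paper does not prove it. It notes that $\beta$ also lies in the interior of the Jordan dual cone, because $\scrL_{\Gamma,\mathrm J}^{\scrC}\subset\scrL_{\Gamma,\mathrm C}^{\scrC}$, and then imports cocompactness of $\Gamma$ on $\tilde\scrJ/\exp(\ker\beta)$ from \cite[Theorem 3.2.2]{Sambarino-24}. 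You propose to prove this cocompactness from the geometry of $\Gamma$. That makes the argument self-contained, but it puts all the work into the one step you only sketch.

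Two points on that sketch. First, Lemma~\ref{lemma:translation-length} gives $\beta(H_\scrC(\gamma g))=\beta(\mu_\scrC(\gamma))+\beta(H_\scrC(g))+O(1)$ only along one sequence converging at infinity, under uniform transversality. A fibrewise quasi-isometry needs this uniformly over $\Gamma$. A cleaner route than comparing with Gromov's flow is the following.
\begin{itemize}
\item Translate so that $(x,y)\in\partial^{(2)}\Gamma$ has bounded Gromov product at $e$, and take a Cayley-graph geodesic $(\eta_n)_{n\in\Z}$ from $y$ to $x$ through $e$.
\item Follow the $\fa_\scrC/\fh$-coordinate of $\eta_n^{-1}\cdot(\xi^+(x),\xi^-(y),s)$. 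The pairs $\eta_n^{-1}(x,y)$ stay in a fixed compact subset of $\partial^{(2)}\Gamma$.
\item Consecutive coordinates differ by $\beta$ of an increment $H_\scrC(\eta g')-H_\scrC(g')$ with $\eta$ a generator. This is bounded, since it equals $H_\scrC(\eta k)$ for the $K$-part $k$ of $g'$.
\item The coordinate tends to $-\infty$ at one end and to $+\infty$ at the other. This holds because $\xi^+(\eta_n^{-1}x)$ is uniformly transverse to the repelling flag of $\eta_n$, and $\xi^+(x)$ to that of $\eta_n^{-1}$ (by continuity of the boundary maps and compactness), together with $\beta(\mu_\scrC(\eta))\geq c|\eta|-C$.
\end{itemize}
Hence some translate lands in a fixed compact set, which gives a compact fundamental domain directly.

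Second, your fallback is circular as written. Excluding an escaping sequence in $\scrK$ means finding $\gamma_\ell$ that keep the boundary pair in a compact set and the $\fa_\scrC/\fh$-coordinate bounded at the same time, and that is the cocompactness itself. Lemma~\ref{lem:trapped_set_tool} also cannot be invoked, since it presupposes that the translated points converge in $\odms/H$.

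Finally, local freeness of the $A_\scrC$-action needs no dynamics. Since $\fa_\scrC\cap\fm_\scrC=0$, the infinitesimal action is injective on $G/M_\scrC$, and $\odms\to\M$ is a local diffeomorphism.
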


\begin{proof}
As $\scrC$-Anosov subgroups are $\scrC$-divergent, the well-definedness and openness of $\odms$ and the left-right $\Gamma \times A_\scrC$-invariance follow from Lemma~\ref{lemma:basic-feature-Omega-DMS}. As Anosov groups are also transverse (Lemma \ref{lemma:anosov-transverse}), the nonemptiness follows from Lemma~\ref{lem:DMS_transversal_grps}. We now need to check that assumptions $\hyperlink{AA1}{\rm(A1)}$, $\hyperlink{AA2}{\rm(A2)}$ and $\hyperlink{AA3}{\rm(A3)}$ are fulfilled.

For any $\beta \in \bigl(\scrL_{\Gamma,\mathrm C}^{\scrC}\bigr)^{*,\circ}$, let $\fh :=\ker\beta\subset\fa_\scrC$ and $H := \exp(\fh)< A_\scrC$. By $\scrC$-divergence of $\Gamma$, $H$ is a codimension-one subgroup that acts freely and properly according to Proposition~\ref{prop:odms_proper}. This shows $\hyperlink{AA1}{\rm(A1)}$.

Let $u \in \fa_{\scrC} \setminus \fh$. Recall the definition of $\tilde \scrJ, \scrJ, \scrK$ in \eqref{equation:trapped-set-up-hopf} and \eqref{equation:trapped-set-hopf}. The definitions show that $\mc T_{\pm,\N}$ are closed and flow-invariant and that $\mc T_{-,\N}\cap\mc T_{+,\N}=\scrK$. The inclusion $\scrL_{\Gamma,\mathrm J}^{\scrC}\subset\scrL_{\Gamma,\mathrm C}^{\scrC}$ and the reversed inclusion of the dual cones imply that $\beta\in\bigl(\scrL_{\Gamma,\mathrm J}^{\scrC}\bigr)^{*,\circ}$. For $\scrC$-Anosov subgroups it is known that for all $\beta$ as chosen above, $\Gamma$ acts co-compactly on $\tilde{\mathscr J}/\exp(\ker\beta)$ (see e.g. \cite[Theorem 3.2.2]{Sambarino-24}). Consequently $\mathscr K$ is compact. Thus Proposition~\ref{prop:dynamics_trapped_set} shows precisely that $\hyperlink{AA2}{\rm(A2)}$ is fulfilled.

Finally, as $\scrC$-Anosov implies that $\Gamma$ is $\scrC$-regular, the assumption on uniform hyperbolicity (Assumption $\hyperlink{AA3}{\rm(A3)}$) follows from Proposition~\ref{prop:hyperbolicity} together with the fact that $\scrK$ is compact for Anosov subgroups.
\end{proof}

\subsubsection{Connection with the general theory of Axiom A actions}\label{sec:connection-axiom-a}

 We now want to further relate algebraic properties of $\Gamma$ to the general dynamical properties of Axiom A actions in \S\ref{sec:introduction-geometric-setting}--\S\ref{section:decay}.

 We have now shown in Theorem \ref{thm:DMS-main} that any $\scrC$-Anosov subgroup gives rise to an Axiom A action of type $(k,1)$. We translate this into the terminology used in \S\ref{sec:introduction-geometric-setting}--\S\ref{section:decay}. We begin with the right $A_\scrC$-action on $\mc M = \Gamma \backslash \odms$. Let $\fh := \ker \beta$ for some $\beta \in \bigl(\scrL_{\Gamma,\mathrm C}^{\scrC}\bigr)^{*,\circ}$ and $u \in \fa_{\scrC} \setminus \fh$. As before, we regard $u$ as a vector field on $\mc M$, using the right $A_\scrC$-action. Let
\[
\mc{N} := \Gamma\backslash \odms / \exp(\fh).
\]
The flow $(e^{tu})_{t \in \R}$ on $\M$ descends to the quotient $\N$ and defines a flow $(\phi_t)_{t \in \R}$. Observe that
\begin{equation}
\label{equation:description-flow}
\phi_t(\Gamma gM_\scrC H) = \Gamma ge^{tu}M_\scrC H, \qquad t \in \R, g \in G.
\end{equation}
In the terminology of \S\ref{sec:introduction-geometric-setting}, we have $X_\M = u$.

The following holds:

\begin{proposition}[Properties of the free Abelian cocycle]
\label{proposition:wagons}
Let $\Gamma < G$ be a non-elementary $\scrC$-Anosov subgroup of $G$ and $\beta\in (\scrL_{\Gamma,C}^\scrC)^{*,\circ}$. Then the $A_\scrC$ action on $\Gamma\backslash\Omega^{\scrC}_{\mathrm{DMS}}$, with $H=\ker\beta< A_\scrC$  satisfies the assumptions $\hyperlink{AA1}{\rm(A1)-(A4)}$ of an Axiom A action of type $(k,1)$. In addition, if $\Gamma$ is Zariski dense, then the free Abelian cocycle has full rank and is non-arithmetic.
\end{proposition}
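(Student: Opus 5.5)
The first part of Proposition~\ref{proposition:wagons} is almost entirely contained in Theorem~\ref{thm:DMS-main}. That theorem gives \hyperlink{AA1}{(A1)}--\hyperlink{AA3}{(A3)} for $\fh=\ker\beta$. The two side conditions in \S\ref{sssecton:assumptions} ($\scrK$ is not a finite union of periodic orbits, and the topological entropy is positive) follow from non-elementarity: $\Gamma$ is then word-hyperbolic and non-elementary, so it contains free subgroups, hence infinitely many primitive conjugacy classes, with exponential growth. The remaining step for \hyperlink{AA4}{(A4)} is transitivity of $(\phi_t)$ on $\scrK$. I would deduce it from the description \eqref{equation:trapped-set-hopf} together with the standard fact that $\Gamma$ acts topologically transitively on transverse pairs of the limit set, i.e.\ on $\Lambda^{(2)}_\scrC\simeq\partial^{(2)}\Gamma$. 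The limit set is identified with the Gromov boundary $\partial\Gamma$ through the Anosov boundary maps. For a non-elementary hyperbolic group, the diagonal action on $\partial^{(2)}\Gamma$ is topologically transitive: choose a loxodromic element whose fixed points are close to a given pair, and use north--south dynamics. This gives transitivity on $\Gamma\backslash\Lambda^{(2)}$. The extra $\fa_\scrC/\fh\simeq\R$ coordinate in $\scrK$ is exactly the flow direction, so orbit closures in $\scrK$ project onto closures of $\Gamma$-orbits of pairs. A dense $\Gamma$-orbit in $\Lambda^{(2)}$ then yields a dense $\phi_t$-orbit.

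For the Zariski dense statement, I would first identify the Lyapunov spectrum. If $\gamma\in\Gamma$ is nontrivial, its attracting and repelling flags $\gamma^\pm_{\pm\scrC}$ (Example~\ref{example:limit-point-for-powers}) are transverse. Choose $g$ with $gP_{\pm\scrC}=\gamma^\pm_{\pm\scrC}$. Then $g^{-1}\gamma g\in P_\scrC\cap P_{-\scrC}=M_\scrC A_\scrC$, and its $A_\scrC$-component is $\exp(\lambda_\scrC(\gamma))$, by the Jordan decomposition and \eqref{eq:jordan_scrC}. Consequently $\gamma gM_\scrC=g\exp(\lambda_\scrC(\gamma))M_\scrC$, so $\Gamma gM_\scrC$ is fixed by $\tau_{\lambda_\scrC(\gamma)}$. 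By \eqref{eq:lyapunov_are_A_periods}, this identifies $\mathrm L^+=\{\lambda_\scrC(\gamma):\gamma\neq e\}$, with the sign fixed by the choice $u\in\scrC$ in Corollary~\ref{cor:flow_on_N}. Hence the limit cone of Definition~\ref{definition:limit-cone-cocycle} equals $\scrL^\scrC_{\Gamma,\mathrm J}$.

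\emph{Full rank.} By Benoist's theorem (Theorem~\ref{theorem:benoist}, together with the remark after it), $\scrL^\scrC_{\Gamma,\mathrm J}$ has nonempty interior in $\fa_\scrC$ when $\Gamma$ is Zariski dense. Therefore it is not contained in any proper subspace, and the cocycle has full rank in the sense of Definition~\ref{definition:rank}.

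\emph{Non-arithmeticity.} Here I would invoke Benoist's stronger result \cite{Benoist-97}: for Zariski dense $\Gamma$, the subgroup of $\fa$ generated by the Jordan projections $\lambda_\fapp(\Gamma)$ is dense. Its image under the projection $\pi_\scrC$ is then dense in $\fa_\scrC$. A nonzero $\theta\in\fa_\scrC^*$ with $\theta(\lambda_\scrC(\gamma))\in2\pi\Z$ for all $\gamma$ would force this dense group into $\theta^{-1}(2\pi\Z)$, which is a discrete union of hyperplanes. That is a contradiction.

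I expect the main obstacle to be the careful identification of the $A_\scrC$-periods with $\lambda_\scrC(\gamma)$. This requires matching the horospherical decomposition and the choice of sign, and checking that every period comes from some $\gamma\in\Gamma$; the latter holds because a fixed point of $\tau_a$ on $\M$ lifts to $\gamma gM_\scrC=ge^aM_\scrC$. The transitivity step should be routine by comparison.
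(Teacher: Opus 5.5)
Your proof follows the paper's argument. (A1)--(A3) come from Theorem~\ref{thm:DMS-main}. (A4) comes from north--south dynamics of loxodromic elements on $\Lambda_\scrC\overset{\pitchfork}{\times}\Lambda_{-\scrC}\simeq\partial_\infty\Gamma^{(2)}$, together with the nonzero drift $\beta(u)$ in the $\fa_\scrC/\fh$-coordinate. Full rank and non-arithmeticity come from Benoist once Jordan projections are matched with Lyapunov projections. The paper does this matching separately in Lemma~\ref{lemma:jordan-lyapunov-spectrum}. The density of the group generated by $\lambda_\fapp(\Gamma)$ is due to \cite{Benoist-00}, not \cite{Benoist-97}.

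Both full rank and non-arithmeticity need only the inclusion $\{\lambda_\scrC(\gamma):\gamma\neq e\}\subset\mathrm L^+$, and your fixed-flag argument gives it. Taking $g$ from the Jordan decomposition makes the $A_\scrC$-component equal to $\lambda_\scrC(\gamma)$, and $gM_\scrC\in\tilde{\mathscr J}_\scrC\subset\odms$ by Lemma~\ref{lem:DMS_transversal_grps}. So the reverse inclusion you flag as the main obstacle is not needed for this statement.
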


\begin{proof}
Theorem~\ref{thm:DMS-main} established that $\hyperlink{AA1}{\rm(A1)-(A3)}$ are satisfied. Assumption \hyperlink{AA4}{\rm(A4)} follows from the topological transitivity of the $\Gamma$-action on $\Lambda_{\scrC} \overset{\pitchfork}{\times} \Lambda_{-\scrC}$. This is a standard fact, but we briefly recall the proof. It suffices to show that for open subsets $U_1, U_2 \subset \Lambda_{\scrC}$ and $V_1,V_2 \subset \Lambda_{-\scrC}$ such that $U_i \times V_i \subset \Lambda_{\scrC} \overset{\pitchfork}{\times} \Lambda_{-\scrC}$, there exists $\gamma \in \Gamma$ such that $\gamma(U_1 \times V_1) \cap U_2 \times V_2 \neq \emptyset$. Since $\Gamma$ is $\scrC$-Anosov, it is word-hyperbolic and admits continuous equivariant boundary maps $\xi_\scrC^+:\partial_\infty\Gamma\to\Lambda_{\scrC}$ and $\xi_\scrC^-:\partial_\infty\Gamma\to\Lambda_{-\scrC}$, see \cite{Gueritaud-Guichard-Kassel-Wienhard-17, Kapovich-Leeb-Porti-17, Bochi-Potrie-Sambarino-19}.
Since $\Gamma$ is non-elementary, $\partial_\infty\Gamma$ is perfect. Hence $(\xi_\scrC^+)^{-1}(U_1)\times(\xi_\scrC^-)^{-1}(V_2)$ contains an off-diagonal pair. The attracting and repelling fixed-point pairs of infinite-order elements are dense in $\partial_\infty\Gamma^{(2)}$, see \cite[Proposition~4.2(4)]{Kapovich-Benakli-02}; hence their images are dense in $\Lambda_{\scrC}\overset{\pitchfork}{\times}\Lambda_{-\scrC}$.
Thus there exists $\gamma \in \Gamma$ such that $\gamma^+_{\scrC} \in U_1$ and $\gamma^-_{\scrC} \in V_2$.
Then, applying $\gamma^{-n}(U_1 \times V_1)$ for $n \gg 1$ large enough, we find that $\gamma^{-n}(U_1 \times V_1) \cap U_2 \times V_2 \neq \emptyset$.
Since the flow translates the $\fa_\scrC/\fh$-coordinate with nonzero speed $\beta(u)$, this boundary-pair transitivity proves \hyperlink{AA4}{\rm(A4)}.

We now assume that $\Gamma$ is Zariski dense. By Lemma~\ref{lemma:jordan-lyapunov-spectrum} below, the dynamical limit cone is $\scrL_{\Gamma,\mathrm J}^{\scrC}$. Theorem~\ref{theorem:benoist} shows that this cone has nonempty interior in $\fa_\scrC$, so the cocycle has full rank. Moreover, the additive group generated by $\lambda_\fapp(\Gamma)$ is dense in $\fa$, see \cite{Benoist-00}. Its image under $\pi_\scrC$, which is the group generated by $\lambda_\scrC(\Gamma)$, is therefore dense in $\fa_\scrC$. Lemma~\ref{lemma:jordan-lyapunov-spectrum} identifies this with the group generated by the Lyapunov spectrum, so the cocycle is non-arithmetic.
\end{proof}

We now discuss periodic orbits of the flow $(\phi_t)_{t \in \R}$:

\begin{lemma}[Jordan and Lyapunov spectra agree]\label{lemma:jordan-lyapunov-spectrum}
The Jordan spectrum of the Anosov representation $\{\lambda_\scrC (\gamma) ~:~ \gamma \in \Gamma\setminus\{e\}\} \subset \fa_\scrC$ coincides with the Lyapunov spectrum of the Axiom A action of type $(k,1)$, see \eqref{equation:lyapunov-spectrum}. Consequently, the limit cone of the Axiom A action of type $(k,1)$ (Definition \ref{definition:limit-cone-cocycle}) and the Jordan limit cone $\scrL_{\Gamma,\mathrm J}^{\scrC}$ (Definition~\ref{definition:limit-cone-anosov}) coincide.
\end{lemma}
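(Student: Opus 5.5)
The plan is to identify the points of $\M=\Gamma\backslash\odms$ that are fixed by some nonzero $a\in\fa_\scrC$. By \eqref{eq:lyapunov_are_A_periods}, this set of periods is exactly $\mathrm L=\mathrm L^+\cup(-\mathrm L^+)$. The goal is to show that it equals $\{\pm\lambda_\scrC(\gamma):\gamma\in\Gamma\setminus\{e\}\}$, with the sign matching the half-space of $X_\M=u$.

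Suppose $\Gamma g e^{a}M_\scrC=\Gamma gM_\scrC$ with $a\neq 0$. Then $g e^{a}=\gamma g m$ for some $\gamma\in\Gamma$ and $m\in M_\scrC$, so $g^{-1}\gamma g=e^{a}m^{-1}\in L_\scrC=M_\scrC A_\scrC$. Properness of the $H$-action (Proposition~\ref{prop:odms_proper}) forces $\gamma\neq e$. The element $\gamma$ then has infinite order, and by Example~\ref{example:limit-point-for-powers} the sequence $(\gamma^n)$ is $\scrC$-divergent with attracting and repelling fixed points $\gamma^\pm_{\pm\scrC}$. Applying Lemma~\ref{lemma:hyperbolicity-tool} to the sequence $\gamma^n$ with $a_n=na$ and $m_n=m^{-n}$ gives two facts. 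First, $a\in\scrC\cup-\scrC$. Second, by \eqref{eq:approximation-of-mu}, $\mu_\fapp(\gamma^{\mp n})=na+\mu_{\fa_{M_\scrC,++}}(m^{\mp n})+O(1)$.

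Dividing by $n$ and using $\lambda_\fapp(g)=\lim n^{-1}\mu_\fapp(g^n)$, then projecting with $\pi_\scrC$ (which kills $\fa_\scrC^\perp$), we obtain $\lambda_\scrC(\gamma^{\mp1})=\pm a$. Here we use that $\pi_\scrC\circ\mu_{\fapp}$ agrees with $\mu_\scrC$ and is Weyl-invariant under $\langle\roots^0_\scrC\rangle$, by \eqref{eq:stab-Weyl-and-projector}. So every period is $\pm\lambda_\scrC$ of a nontrivial element. Alternatively, one can avoid the asymptotic argument by taking the Jordan decomposition of $e^{a}m^{-1}$ inside $L_\scrC$: the hyperbolic part of $m^{-1}$ lies in $M_\scrC$ and projects to zero under $\pi_\scrC$, which gives $\lambda_\scrC(\gamma)=a$ directly, up to the sign convention.

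For the converse, take $\gamma\neq e$. It has fixed points $\gamma^+_\scrC\in\Lambda_\scrC$ and $\gamma^-_{\scrC}\in\Lambda_{-\scrC}$, and these are transverse because $\Gamma$ is $\scrC$-transverse. Choose $g$ with $gP_\scrC=\gamma^+_\scrC$ and $gP_{-\scrC}=\gamma^-_\scrC$; then $gM_\scrC\in\tilde\scrJ\subset\odms$ by Lemma~\ref{lem:DMS_transversal_grps}. Since $g^{-1}\gamma g$ stabilizes both $P_\scrC$ and $P_{-\scrC}$, it lies in $M_\scrC A_\scrC$, so $g^{-1}\gamma g=m e^{a}$. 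The point $\Gamma gM_\scrC$ is then periodic with period $a$ (or $-a$), and the previous computation gives $a=\lambda_\scrC(\gamma)$. The attracting/repelling roles ensure that $a$ lies in $\scrC$, hence on the same side of $\fh$ as $u$, since $u$ was chosen in $\scrC$ as in Corollary~\ref{cor:flow_on_N}. This places $\lambda_\scrC(\gamma)$ in $\mathrm L^+$.

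The two inclusions give equality of the spectra, and the equality of limit cones follows by taking closures of $\R_+$-multiples.

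I expect the main obstacle to be the sign bookkeeping: checking that $\lambda_\scrC(\gamma)$, rather than $\lambda_\scrC(\gamma^{-1})=\iota(\cdot)$ projected, lands in $\mathrm L^+$ and not in $-\mathrm L^+$. I would settle this using Proposition~\ref{prop:proximal} together with the convention that $u\in\scrC$.
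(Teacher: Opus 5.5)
Your overall route is the paper's: for one inclusion you feed the powers of the holonomy element into Lemma~\ref{lemma:hyperbolicity-tool}, and for the other you conjugate $\gamma$ into $L_\scrC=M_\scrC A_\scrC$ at a point of $\tilde\scrJ$. However, the sign bookkeeping you single out as the main obstacle is wrong at the points where you commit to it.

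\textbf{Applying the lemma.} From $ge^{a}=\gamma gm$ one gets $\gamma^{-n}gm^{-n}e^{na}=g$. So the lemma must be applied to $\gamma_n=\gamma^{-n}$, with $m_n=m^{-n}$ and $a_n=na$. With $\gamma_n=\gamma^{n}$ as you write, $\gamma^{n}gm^{-n}e^{na}M_\scrC=ge^{2na}M_\scrC$ diverges, and the hypotheses of the lemma fail. With the correct choice:
\begin{itemize}
\item on the positive branch, \eqref{eq:approximation-of-mu} reads $\mu_\fapp(\gamma^{n})=na+\mu_{\fa_{M_\scrC,++}}(m^{-n})+O(1)$, i.e.\ $\lambda_\scrC(\gamma)=a$;
\item on the negative branch one gets $\lambda_\scrC(\gamma^{-1})=-a$.
\end{itemize}
Your formula $\lambda_\scrC(\gamma^{\mp1})=\pm a$ matches neither. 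This matters: in general $\lambda_\scrC(\gamma^{-1})=\pi_\scrC\iota_\fapp\lambda_\fapp(\gamma)\neq\lambda_\scrC(\gamma)$, and your converse uses exactly $a=\lambda_\scrC(\gamma)$.

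\textbf{The half-space.} The inference ``$a\in\scrC$, hence on the same side of $\fh$ as $u$, since $u\in\scrC$'' is not valid. The form $\beta$ is only positive on $\scrL^{\scrC}_{\Gamma,\mathrm C}\setminus\{0\}$, so $\fh=\ker\beta$ may cut through $\scrC$. What settles the signs, and what the paper uses, is the following. The branch of Lemma~\ref{lemma:hyperbolicity-tool} is selected by the sign of $\beta(a_\ell)$, so $a\in\mathrm{L}^+$ forces $a\in\scrC$. Conversely, $\beta(\lambda_\scrC(\gamma))>0$ because $0\neq\lambda_\scrC(\gamma)\in\scrL^{\scrC}_{\Gamma,\mathrm J}\subset\scrL^{\scrC}_{\Gamma,\mathrm C}$.

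\textbf{The ``alternative'' shortcut.} Reading off $\lambda_\scrC(\gamma)=a$ from the Jordan decomposition of $e^{a}m^{-1}$ inside $L_\scrC$ is not valid as stated. The projection $\lambda_\fapp(\gamma)$ is the $\fap$-representative of $W(G,A)\cdot(a+Y)$, where $Y$ is the Jordan projection of $m^{-1}$ in $M_\scrC$. It projects to $a$ under $\pi_\scrC$ when $\alpha(a+Y)>0$ for all $\alpha\in\roots^+_\scrC$, but not in general. For example, in $\mathrm{SL}_3(\R)$ take $\fa_\scrC=\ker\alpha_2$, $a=\mathrm{diag}(2t,-t,-t)$ and $m^{-1}=\mathrm{diag}(1,e^{5t},e^{-5t})$; then $\lambda_\scrC(e^{a}m^{-1})=2a$. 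The required positivity is exactly the dynamical input that the lemma extracts from $gM_\scrC\in\odms$ via \eqref{eq:hyper_first_step}.

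\textbf{Comparison with the paper's converse.} The paper uses the Jordan decomposition only in the converse, where it is harmless. It writes $g_0\gamma g_0^{-1}=k_0e^{H_0}n_0$ with $H_0=\lambda_\fapp(\gamma)\in\fap$ already $\scrC$-regular. This forces $k_0,n_0\in M_\scrC$, so $\Gamma g_0^{-1}M_\scrC$ has period $\pi_\scrC(H_0)=\lambda_\scrC(\gamma)$ directly. Your converse chooses $g$ from the fixed flags and then re-runs the forward computation. It can be completed, but it needs two further steps:
\begin{itemize}
\item $a\neq0$, which follows from freeness of $\Gamma$ on $\odms$;
\item a real argument excluding $a\in-\scrC$. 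For instance, Lemma~\ref{lem:trapped_set_tool} would otherwise make $gP_\scrC$ the attracting flag of $\gamma^{-1}$ rather than $\gamma^+_\scrC$.
\end{itemize}
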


\begin{proof}
Consider a periodic point $\phi_{T_0}(x_0) = x_0$ with orbit $c$. By \eqref{equation:description-flow}, we can write $x_0 = \Gamma g_0 M_\scrC H $ for some $g_0 \in G$ and there exist $\gamma_0 \in \Gamma, m_0\in M_\scrC, H_0\in \fh$ such that
\begin{equation}\label{eq:lyapunof_jordan_calc}
 g_0e^{T_0u}m_0  e^{H_0} = \gamma_0 g_0.
\end{equation}
We deduce that $\Gamma g_0 \exp(T_0 u + H_0)M_\scrC = \Gamma g_0 M_\scrC$. Following the discussion in \S\ref{sssection:lyapunov}, by \eqref{eq:lyapunov_are_A_periods}, the Lyapunov projection $\lambda(c)$ of the periodic orbit from \S\ref{sec:introduction-geometric-setting} (defined in \eqref{equation:lyapunov-projection}) satisfies
\[
\lambda(c)= T_0 u + H_0.
\]
Let us show its relation to the group-theoretic Jordan projection $\lambda_\scrC$ defined in \eqref{eq:jordan_scrC}. Choose an auxiliary open Weyl chamber $\fapp\geq\scrC$.
We need to use the Jordan projection in $M_\scrC$ and we choose the compatible open Weyl chamber $\fa_{M_\scrC,++}$ for $M_\scrC$ (see \S\ref{sssec:levi}).
Equation~\eqref{eq:lyapunof_jordan_calc} gives
\[
\gamma_0^{-\ell}g_0m_0^\ell e^{\ell\lambda(c)}=g_0.
\]
Apply Lemma~\ref{lemma:hyperbolicity-tool} with $\gamma_\ell=\gamma_0^{-\ell}$, $g_\ell=g_0$, $m_\ell=m_0^\ell$, and $a_\ell=\ell\lambda(c)$. Since $\beta(a_\ell)=\ell T_0\beta(u)>0$, its positive branch yields $\lambda(c)\in\scrC$.
Then, relying on Example~\ref{example:limit-point-for-powers},
\[
g_0 P_{ \scrC} = (\gamma_0)^+_{\scrC},\ g_0 P_{-\scrC} = (\gamma_0)^-_{\scrC},
\]
so that the orbit $c$ is uniquely determined by $\gamma_0$. Using $\lambda_\fapp(\gamma)=\lim_{n\to\infty}\frac1n\mu_\fapp(\gamma^n)$ and \eqref{eq:approximation-of-mu}, we also obtain
\[
\lambda_{\fapp}(\gamma_0) = T_0 u + H_0 + \lambda_{\fa_{M_\scrC,++}}(m_0).
\]
Hence $\lambda(c) \in \{\lambda_\scrC(\gamma), \gamma\in \Gamma\}$ as claimed.

To see the converse inclusion, take an arbitrary $\gamma\in \Gamma\setminus\{e\}$; then by the Jordan decomposition there is $g_0\in G$ such that $g_0 \gamma g_0^{-1} = k_0 e^{H_0} n_0$. For $\gamma\neq e$, the Anosov property gives $H_0\neq0$. Since $H_0=\lambda_\fapp(\gamma)\in\scrL_{\Gamma,\mathrm J}^{\fapp}\subset\scrL_{\Gamma,\mathrm C}^{\fapp}$ and $\Gamma$ is $\scrC$-regular, $\alpha(H_0)>0$ for $\alpha\in\roots^+_\scrC$. Now by commutativity of $k_0, e^{H_0}$ and $n_0$, we conclude that $k_0\in K\cap M_\scrC$ and $n_0\in N_\fapp^+\cap M_\scrC$. To see this, let us write
$n_0 = \exp(X)$ for some $X\in\fn = \oplus_{\alpha\in \roots_\fapp^+} \fg_\alpha$. By commutativity we conclude that $\operatorname{ad}(H_0)(X) = 0$. But as for $\alpha\in \roots_\scrC^+$ we have $\alpha(H_0)>0$, this implies that the component of $X$ in these $\fg_\alpha$ must vanish and we conclude that $X\in \bigoplus_{\alpha\in \roots_\fapp^+\cap\roots_\scrC^0} \fg_\alpha \subset \fm_\scrC$. For the compact factor, commutativity gives $k_0\in Z_K(H_0)$. Since $\alpha(H_0)>0$ for every $\alpha\in\roots_\scrC^+$, we have $Z_K(H_0)< K_\scrC=K\cap M_\scrC$, and hence $k_0\in M_\scrC$.
By Example~\ref{example:limit-point-for-powers}, $g_0^{-1}P_\scrC=\gamma_\scrC^+$ and $g_0^{-1}P_{-\scrC}=\gamma_\scrC^-$; hence Lemma~\ref{lem:DMS_transversal_grps} gives $g_0^{-1}M_\scrC\in\tilde{\mathscr J}_\scrC\subset\odms$. Writing $H_0=\pi_\scrC(H_0)+(H_0-\pi_\scrC(H_0))$, where $H_0-\pi_\scrC(H_0)\in\fa_\scrC^\perp$, and setting $m_0:=k_0e^{H_0-\pi_\scrC(H_0)}n_0\in M_\scrC$, we obtain
\[
\Gamma g_0^{-1}e^{\lambda_\scrC(\gamma)}M_\scrC=\Gamma g_0^{-1}M_\scrC,
\]
so $\lambda_\scrC(\gamma)=\pi_\scrC(H_0)$ is an $A_\scrC$-period of $\Gamma g_0^{-1}M_\scrC$; since $\beta(\lambda_\scrC(\gamma))>0$ and $\beta(u)>0$, \eqref{eq:lyapunov_are_A_periods} identifies it with a Lyapunov projection.
\end{proof}

\begin{remark}
If $\scrC$ is $\iota_\fapp$-invariant, the time reversal symmetry (Lemma \ref{lemma:def-time-symmetry}) maps a periodic orbit with Lyapunov projection $\lambda$ to another one with projection $\iota_\fapp(\lambda)$. If $\scrC$ is not invariant, what survives of the symmetry is that for every orbit with projection $\pi_\scrC \lambda$, there is another one with projection $\pi_\scrC \iota(\lambda)$. 
\end{remark}

Recall from \eqref{equation:delta} that, given $\varphi\in \bigl(\scrL_{\Gamma,\mathrm J}^{\scrC}\bigr)^{*,\circ} \subset \fa_\scrC^\ast$, $\delta(\varphi) \in (0,\infty)$ is defined by
\[
\delta(\varphi) := \lim_{T \to +\infty} \frac{1}{T}\log \sharp \{ [\gamma]\in [\Gamma] ~:~ \varphi(\lambda_\scrC(\gamma))\leq T\},
\]
where $[\Gamma]$ is the set of conjugacy classes of the subgroup $\Gamma < G$. Indeed, $[\Gamma]\setminus\{[e]\}$ is in one-to-one correspondence with the set of periodic orbits $\mathbf{P}$ for the flow $(\phi_t)_{t \in \R}$ (and $\mathbf{P}^\sharp$ identifies with $[\Gamma]^\sharp$). The following statement gives a relation between the asymptotic growth of Cartan and Jordan projections:

\begin{proposition}[Growth of Cartan and Jordan projections]\label{prop:cartan==lyapunov}
Let $\varphi\in \bigl(\scrL_{\Gamma,\mathrm J}^{\scrC}\bigr)^{*,\circ} \subset \fa_\scrC^\ast$. Then
\begin{equation}
\label{equation:delta-cartan}
\delta(\varphi) = \lim_{T\to+\infty} \frac{1}{T}\log \sharp\{ \gamma\in\Gamma ~:~ \varphi(\mu_\scrC(\gamma))\leq T\}.
\end{equation}
\end{proposition}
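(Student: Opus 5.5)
We will prove that the exponential growth rate of $\varphi\circ\mu_\scrC$ over all of $\Gamma$ equals the rate $\delta(\varphi)$ for conjugacy classes, by sandwiching the Cartan count between two counts over periodic orbits. First, note that by Theorem~\ref{theorem:benoist}, or more elementarily by the unconditional inclusion $\scrL_{\Gamma,\mathrm J}^{\scrC}\subset\scrL_{\Gamma,\mathrm C}^{\scrC}$ together with the fact that for Anosov subgroups the two cones coincide (the Cartan cone is the asymptotic cone of $\mu_\scrC$, and Anosov elements satisfy $\mu_\scrC(\gamma)=\lambda_\scrC(\gamma)+O(|\gamma|\text{-independent})$ after cyclic conjugation), $\varphi$ lies in the interior of the Cartan dual cone. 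Consequently \eqref{eq:interior-dual-limit-cone} gives $\varphi(\mu_\scrC(\gamma))\geq \varepsilon\|\mu_\scrC(\gamma)\|-C$, and the Anosov property gives $\|\mu_\scrC(\gamma)\|\asymp|\gamma|$. Call the right-hand side of \eqref{equation:delta-cartan} $\delta_\mu(\varphi)$; we first show that the limit exists and is finite, using quasi-subadditivity $\mu_\scrC(\gamma\gamma')\leq\mu_\scrC(\gamma)+\mu_\scrC(\gamma')+O(1)$ along ``transverse'' products, in the standard Quint fashion \cite{Quint-2001}.

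For the lower bound $\delta_\mu\geq\delta$, we use the dynamical picture of Theorem~\ref{thm:DMS-main} and Lemma~\ref{lemma:jordan-lyapunov-spectrum}. Fix a compact fundamental domain $\tilde B\subset\tilde\scrJ$ for $\scrK$. Every $[\gamma]\in[\Gamma]$ corresponds to a periodic orbit, which meets $B$; we pick a representative $\gamma$ with $g_0\in\tilde B$, and set $g_0 e^{\lambda(c)}m_0=\gamma g_0$. Then $\gamma=g_0 m_0' e^{\lambda(c)}g_0^{-1}$, and \eqref{eq:approximation-of-mu} gives $\mu_\fapp(\gamma)=\lambda(c)+\mu_{\fa_{M_\scrC,++}}(m_0)+O(1)$. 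Projecting by $\pi_\scrC$, the $M_\scrC$ part disappears, so $\mu_\scrC(\gamma)=\lambda_\scrC(\gamma)+O(1)$ uniformly. This yields an injection from conjugacy classes into $\Gamma$ with $\varphi\circ\mu_\scrC\leq\varphi\circ\lambda_\scrC+C$, hence $\delta_\mu\geq\delta$.

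For the upper bound $\delta_\mu\leq\delta$, we send each $\gamma\in\Gamma$ to a periodic orbit of comparable $\varphi$-length, with a subexponential number of preimages. By Lemma~\ref{lemma:translation-length}, if $g\in\tilde B$ has $gP_\scrC$ uniformly transverse to $\gamma^-_\scrC$, then $H_\scrC(\gamma g)=\mu_\scrC(\gamma)+O(1)$, so $\Gamma g M_\scrC$ and $\Gamma\gamma gM_\scrC$ are both in $\Gamma\tilde B$ while being joined by an $A_\scrC$-displacement $a$ with $\varphi(a)=\varphi(\mu_\scrC(\gamma))+O(1)$. In $\N_\varphi=\M/H_\varphi$ this is an orbit segment of the flow $\phi^\varphi$ of length $\varphi(\mu_\scrC(\gamma))+O(1)$ starting and ending in a compact neighbourhood of $\scrK_\varphi$. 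Using a finite set $F\subset\Gamma$ so that for each $\gamma$ some $f\gamma$ has this uniform transversality, and then transitivity plus the closing lemma, we close the segment into a periodic orbit with $\varphi$-period $\varphi(\mu_\scrC(\gamma))+O(1)$. The map $\gamma\mapsto$ orbit is at most polynomially-to-one in $T$, because of the $H_\varphi$-fiber and the bounded shadowing. The growth rate of periods of $\phi^\varphi$ is exactly $\delta(\varphi)$ by Proposition~\ref{proposition:proper-action-phi}(iii), which gives $\delta_\mu\leq\delta$.

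The main obstacle will be this multiplicity control. The $H_\varphi$ direction is noncompact, so distinct $\gamma$ shadowing the same closed orbit could a priori differ by large $\fh_\varphi$-translations. We will handle this by bounding the count at fixed $\varphi$-length by the volume of an $O(T)$-ball in $\fh_\varphi\cong\R^k$, which is polynomial and hence harmless for exponential rates. If this fails, a fallback is to cite Sambarino's equality of Cartan and Jordan growth rates \cite{Sambarino-24}.
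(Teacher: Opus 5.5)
Your lower bound is sound. Apply Lemma~\ref{lemma:hyperbolicity-tool} to $\gamma_\ell=\gamma_{c_\ell}^{-1}$, $g_\ell=g'_\ell=g_{c_\ell}$, $m_\ell=m_{c_\ell}$, $a_\ell=\lambda(c_\ell)$ along any sequence of distinct periodic orbits, and use a contradiction argument for uniformity. This gives $\mu_\scrC(\gamma_c)=\lambda_\scrC(\gamma_c)+O(1)$, hence $\liminf\geq\delta(\varphi)$. Once both inequalities hold the limit exists automatically, so your Quint-type subadditivity step is unnecessary.

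The upper bound, however, is not established. The step based on Lemma~\ref{lemma:translation-length} does not give what you claim. $\Gamma\gamma gM_\scrC$ and $\Gamma gM_\scrC$ are the same point of $\M$, and the estimate $H_\scrC(\gamma g)=\mu_\scrC(\gamma)+O(1)$ controls only the $\fa_\scrC$-Hopf coordinate of $\gamma gM_\scrC$. To obtain an $A_\scrC$-orbit segment of displacement $\approx\mu_\scrC(\gamma)$ that starts and ends in a fixed compact set, you also need $\gamma gP_{-\scrC}$ to stay uniformly transverse to $\gamma gP_\scrC$. That forces $gP_{-\scrC}$ to be exponentially close to the repelling Cartan flag $k_2P_{-\scrC}$ of $\gamma=k_1e^{\mu_\fapp(\gamma)}k_2^{-1}$, i.e.\ $g$ must sit on an ``axis'' of $\gamma$ through a compact set. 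Such an axis exists uniformly only when the attracting and repelling flags of $\gamma$ are uniformly transverse, which fails for general $\gamma$.

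The finite set $F$ is the right instinct, but it carries the whole upper bound and both of its properties must be proved:
\emph{(a)} a ping-pong argument, using transversality of $\Lambda_{\pm\scrC}$ and non-elementarity, giving $F$ such that every $\gamma$ has some $f\gamma$ whose Cartan attracting and repelling flags are $\epsilon$-transverse;
\emph{(b)} Benoist's proximality lemma, showing that such a very regular $f\gamma$ has fixed flags $(f\gamma)^\pm_\scrC$ within $O(\epsilon)$ of those Cartan flags. Then $f\gamma=gme^{\lambda_\scrC(f\gamma)}g^{-1}$ with $gM_\scrC$ in a fixed compact subset of $\odms$.
With (a) and (b), your lower-bound estimate applied to $f\gamma$, together with Lemma~\ref{lemma:Benoist-compact-Cartan}, gives $\lambda_\scrC(f\gamma)=\mu_\scrC(\gamma)+O(1)$ directly. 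No closing lemma or transitivity is needed. The equality of Cartan and Jordan cones should also be deduced from (a)--(b): Theorem~\ref{theorem:benoist} requires reductive Zariski closure, which is not assumed here.

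Your multiplicity argument misidentifies the difficulty; the noncompact $\fh_\varphi$-fibre is not the danger. A start point in a compact set plus a displacement $a$ known up to $O(1)$ does not determine $\gamma$ up to bounded ambiguity. The number of $\gamma$ with $\gamma\tilde B\cap\tilde Be^{a}\neq\emptyset$ grows exponentially in $\varphi(a)$, which is exactly the quantity being counted. Likewise, a single class $[\eta]$ contains exponentially many elements (in $T-\varphi(\lambda_\scrC(\eta))$) with $\varphi(\mu_\scrC)\leq T$, namely conjugates $h\eta h^{-1}$ with $h$ long. So a bound by ``the volume of an $O(T)$-ball in $\fh_\varphi$'' is unjustified.

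What does work is the following. The elements $f\gamma\in[\eta]$ produced above are conjugates whose periodic $A_\scrC$-cylinder meets the fixed compact set $\Gamma\tilde B$. That cylinder has bounded $\fh$-extent inside $\Gamma\tilde B$, and $\Gamma$ acts properly discontinuously, so there are $O(\varphi(\lambda_\scrC(\eta)))$ such conjugates modulo the virtually cyclic centralizer. This yields $\#\{\gamma:\varphi(\mu_\scrC(\gamma))\leq T\}\leq C|F|\,T\,\#\{[\eta]:\varphi(\lambda_\scrC(\eta))\leq T+C\}$.

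The paper itself gives no argument for this statement and simply cites \cite[Corollary 5.5.3]{Sambarino-24}, which is your fallback. As a self-contained proof, your proposal is incomplete in the upper bound.
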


We refer to \cite[Corollary 5.5.3]{Sambarino-24} for a proof.  In particular, the critical hypersurface defined in the literature as the $1$-level set of the right-hand side of \eqref{equation:delta-cartan} (see \cite[Theorem A]{Sambarino-24} for instance) coincides with the critical hypersurface defined in \S\ref{section:leading-resonance}.

\subsubsection{Consequences}
It follows from Proposition \ref{proposition:wagons} that all the following results established in \S\ref{section:results-cocycles}--\S\ref{section:decay} hold for discrete and Zariski-dense Anosov subgroups. We also emphasize that all these results are intrinsic to the Anosov action on $\M$ and \emph{independent} of $\mc{N}$, provided such a quotient exists, as explained there.
\begin{enumerate}[label=(\roman*)]
\item Existence of a resonance spectrum $\sigma_{\mathrm{RS},+}$ in $(\fa_\scrC^*)_\C$ and resonant states supported in $\mc{T}_{+,\M} \subset \M$ (Theorem \ref{theorem:rt-anosov1});
\item Holomorphic extension to $(\fa_\scrC^*)_\C$ of the dynamical determinant $\zeta_{\E}$ (see \eqref{eq:def-general-zeta}) defined for any admissible bundle $\E \to \M$ (Theorem \ref{theorem:general-zeta}), and meromorphic extension to $(\fa_\scrC^*)_\C$ of the Ruelle zeta function
\[
\zeta(\mathbf{s}) = \prod_{[\gamma]\in[\Gamma]^\sharp} (1-e^{-\mathbf{s}(\lambda_{\scrC}(\gamma))})^{-1},
\]
see Corollary \ref{corollary:zeta-ruelle};
\item Description of the leading resonant hypersurface $\mathbf{C}^{(d_s)} \subset \fa_\scrC^*$ (Theorem \ref{theorem:leading});
\item Description of the measures of maximal entropy $\mu_\varphi$ supported on $\scrJ$ and defined for any $\varphi \in \mathbf{C}^{(d_s)}$ (Theorem \ref{theorem:measures-max-entropy});
\item Local mixing/decay of correlations for the diagonal flow $(e^{tu_\varphi})_{t\in\R}$ with respect to $\mu_\varphi$ (see \eqref{equation:leading-term-asymptotic} and the discussion in \S\ref{section:decay}), and an asymptotic expansion of the correlation function provided the representation is Diophantine (Theorem \ref{theorem:mixing-sharp});
\item Meromorphic extension of the product resolvent (Theorem \ref{theorem:product-resolvents}).
\end{enumerate}

In particular, this establishes Theorems \ref{theorem:intro-rt}, \ref{theorem:intro-leading} and \ref{theorem:intro-zeta-anosov} stated in the introduction, as well as Theorem \ref{theorem:mixing-intro}. The genericity of Diophantine property, which is established under certain dimension assumptions for almost every representation with respect to Lebesgue measure is established in \S\ref{ssection:diophantine-representations}. Theorem \ref{theorem:intro-poincare} on the meromorphic extension of Poincaré series is postponed to \S\ref{section:poincare}.

\begin{proof}[Proof of Theorems \ref{theorem:intro-rt}, \ref{theorem:intro-leading} and \ref{theorem:intro-zeta-anosov}, and \ref{theorem:mixing-intro}]
Theorem \ref{theorem:intro-rt} follows from item (i) above; Theorem \ref{theorem:intro-leading} follows from item (iii); Theorem \ref{theorem:intro-zeta-anosov} follows from item (ii); Theorem \ref{theorem:mixing-intro} follows from items (iv) and (v), combined with the Diophantine property of Anosov representations established in \S\ref{ssection:diophantine-representations}.
\end{proof}

\section{Poincaré series}
\label{section:poincare}

Let $\Gamma < G$ be a discrete subgroup and let $\fapp$ be an open Weyl chamber. The associated multivariate Poincaré series is defined as
\[
P_{\Gamma,\fapp}(x,y ; \mathbf s) := \sum_{\gamma\in\Gamma} e^{-\mathbf s(\mu_\fapp(y^{-1}\gamma x))}, \qquad x, y \in \Gamma\backslash G /K,\ \s \in \fa^\ast_\C.
\] Here and below, $x$ and $y$ also denote arbitrary representatives in $G$; the $K$-bi-invariance of the Cartan projection and reindexing of the sum over $\Gamma$ show that the result is independent of these choices.
This defines a function on the subset of $(\Gamma\backslash G /K)^2\times \fa^\ast_\C$ where the series converges; for $\scrC$-Anosov subgroups this domain will be shown to be nonempty (see Theorem \ref{theorem:Poincare}).
Geometrically, this corresponds to summing over all geodesic segments connecting $x$ to $y$ in the quotient $\Gamma\backslash G/K$ and weighting them by a higher-rank analogue of length (the Cartan projection). Recall that we use the Euclidean structure of $\fa$ to identify $\fa_\scrC^\ast$ with a subspace of $\fa^\ast$. For an appropriate choice of generalized Weyl chamber $\scrC \leq \fapp$, we can restrict $P_{\Gamma,\fapp}(x,y ; \mathbf s)$ to $(\fa_\scrC^*)_\C$ and obtain
\[
 P_{\Gamma,\scrC}(x,y ; \mathbf s):=P_{\Gamma,\fapp}|_{(\fa_\scrC^*)_\C}(x,y ; \mathbf s) = \sum_{\gamma\in\Gamma} e^{-\mathbf s(\mu_\scrC(y^{-1}\gamma x))}, \quad \s \in (\fa_\scrC^\ast)_\C.
\]
Observe that $P_{\Gamma,\scrC}(x,y ; \mathbf s)$ is well-defined and independent of the choice of maximal Weyl chamber $\fapp\geq\scrC$ because $\mu_\scrC$ is independent of this choice.

\medskip

The main result of this section is that $P_{\Gamma,\scrC}(x,y;\bullet)$ admits a meromorphic extension to $(\fa_{\scrC}^*)_{\C}$:

\begin{theorem}\label{theorem:Poincare}
Let $\Gamma < G$ be a torsion-free $\scrC$-Anosov subgroup. Then, for all $x,y \in \Gamma\backslash G /K$, $P_{\Gamma,\scrC}(x,y;\bullet)$ converges absolutely in $\{\s \in (\fa_{\scrC}^*)_{\C} ~:~ \Re(\s) \in \mathbf{C}^{(d_s),+}\}$ and admits a meromorphic continuation to $(\fa_\scrC^*)_{\C}$.
\end{theorem}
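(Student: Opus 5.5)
We will reduce the Poincaré series to a matrix coefficient of the Laplace transform $T^{\scrC'}_\pm(\s)$ of Theorem~\ref{theorem:product-resolvents}, applied to the Axiom A action of type $(k,1)$ on $\M=\Gamma\backslash\odms$ given by Theorem~\ref{thm:DMS-main}.

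\emph{Step 1: unfolding.} Fix lifts $x,y\in G$. Since $\mu_\scrC$ is $K$-bi-invariant, $e^{-\s(\mu_\scrC(y^{-1}\gamma x))}$ only depends on the relative position of $yK$ and $\gamma xK$. We will use the $KA_+K$ decomposition to write
\[
y^{-1}\gamma x=k_1e^{H}k_2^{-1},
\]
and then reorganize the sum over $\gamma$ as an integral over $a\in\overline{\scrC}\subset\fa_\scrC$ of $e^{-\s(a)}$ against a measure counting the points $\gamma$. Concretely, choose smooth compactly supported bump functions $f_x,f_y$ on $G/M_\scrC$, built from the $K$-orbits $xKM_\scrC$ and $yKM_\scrC$ smeared in the $N^\pm_\scrC$ and $\fa_\scrC^\perp$-directions, and periodize them over $\Gamma$. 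We then aim for
\[
\int_{\scrC}e^{-\s(a)}\langle e^{a}F_x,F_y\rangle\,\dd a,
\]
which equals $P_{\Gamma,\scrC}(x,y;\s)$ up to a correction coming from $M_\scrC$ being noncompact and from non-transverse configurations.

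The key geometric input is Lemma~\ref{lemma:translation-length}. When $k_2^{-1}$ is uniformly transverse, $H_\scrC(\gamma g)=\mu_\scrC(\gamma)+H_\scrC(g)+O(1)$, with an error $H_\scrC(e^{\mu}Ue^{-\mu})$ that is exponentially small. This lets us replace $\mu_\scrC(y^{-1}\gamma x)$ by the $A_\scrC$-return time of a point of $\odms$, up to a smooth, holomorphically controlled correction.

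\emph{Step 2: splitting the sum.} We separate $\gamma$ into two classes:
\begin{itemize}
\item those for which the relevant flags of $x$, $y$ are uniformly transverse to $\gamma^\mp$; these contribute the matrix coefficient above;
\item a remainder near non-transverse configurations.
\end{itemize}
Since $\Lambda_\scrC$ is small, we will use a finite partition of unity on $K/K_\scrC$, i.e.\ a choice of several $x$-translates and of $\pm\scrC$. The point is that each boundary flag pair is transverse for at least one chart, which is exactly the DMS dichotomy ``either $gP_\scrC\pitchfork\kappa(\xi)$ or $gP_{-\scrC}\pitchfork\xi$.''

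\emph{Step 3: matrix coefficients.} Each transverse piece becomes
\[
\bigl(T^{\scrC'}_-(\s)F_x,\;F_y\bigr)
\]
for smooth compactly supported $F_x,F_y$ on $\M$, or rather distributions conormal to $K$-orbit images, whose wavefront set we check avoids $E_s^*$ so that they pair correctly. Theorem~\ref{theorem:product-resolvents} then gives meromorphy, with the wavefront set bound \eqref{equation:wftc} guaranteeing that the pairing is well-defined. The exponentially small error of Lemma~\ref{lemma:translation-length} is expanded as a Taylor series in $e^{-\alpha(a)}$, $\alpha\in\roots^+_\scrC$. This produces shifted terms $T^{\scrC'}_-(\s+\sum n_\alpha\alpha)$ paired with modified test functions. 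Each shift moves to the right, so finitely many terms give a meromorphic continuation to any half-space, and exhausting yields all of $(\fa_\scrC^*)_\C$.

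Absolute convergence for $\Re\s\in\mathbf{C}^{(d_s),+}$ follows from Proposition~\ref{prop:cartan==lyapunov} and Theorem~\ref{theorem:leading}(vi).

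\emph{Main obstacle.} The hardest part is handling $\gamma$ whose Cartan-frame flags are close to non-transverse to the limit set, together with the noncompactness of $M_\scrC$. Here the smearing in the $\fa_\scrC^\perp$-direction and the cone cutoff introduce boundary wavefront terms at $\partial\scrC'$. I would first try to choose $\scrC'\supset\overline{\scrL}$ and use Proposition~\ref{prop:Laplace-outside-L} to show that the sum over $\gamma$ with $\mu_\scrC(\gamma)$ outside $\scrC'$ is finite, hence holomorphic. I would then use the $\pm\scrC$ charts of the DMS domain to cover the rest.
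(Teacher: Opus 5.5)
Your overall frame (unfold the series, express it through the Laplace transform of Theorem~\ref{theorem:product-resolvents}, and use \eqref{equation:wftc} to justify the pairing) is the right one. There is, however, a genuine gap in Steps~1 and~3: your reduction is only approximate, and the proposed treatment of the discrepancy does not work.

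With smooth bumps $F_x,F_y$, unfolding $\int_\scrC e^{-\s(a)}\langle e^aF_x,F_y\rangle\,\dd a$ does not give $e^{-\s(\mu_\scrC(y^{-1}\gamma x))}$ for each $\gamma$. It gives this factor times the Laplace transform of an overlap profile. That profile depends on $\gamma$, through where the intersection point sits on $xKM_\scrC$ and the transverse geometry there. So what you obtain is a weighted Poincaré series.

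Lemma~\ref{lemma:translation-length} cannot repair this by a Taylor expansion. Its discrepancy has two parts.
\begin{itemize}
\item The term $H_\scrC(e^{\mu_\fapp(\gamma)}n^-_\gamma e^{-\mu_\fapp(\gamma)})$ is small, but it expands in $e^{-\alpha(\mu_\fapp(\gamma))}$. This involves the full Cartan projection, whose $\fa_\scrC^\perp$-component is in general unbounded along $\Gamma$; this is where the noncompactness of $M_\scrC$ really bites. The coefficients also depend on the $\gamma$-dependent relative position $n^-_\gamma$.
\item There is in addition a term in $H_\scrC(U_{\ell_0}K)$, which is merely $O(1)$ and not small at all.
\end{itemize}
Such $\gamma$-dependent corrections cannot be encoded as fixed test functions paired with $T^{\scrC'}_-(\s+\sum n_\alpha\alpha)$. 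Nothing in your plan therefore shows that the difference from $P_{\Gamma,\scrC}$ is meromorphic. Separately, smearing ``in the $\fa_\scrC^\perp$-direction'' is vacuous on $G/M_\scrC$, since $\fa_\scrC^\perp\subset\fm_\scrC$.

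The missing idea is that no approximation is needed if you keep the currents $[\Gamma xK]^{(\fo)}$ and $[\Gamma yK]^{(\fo)}$ themselves; your parenthetical ``distributions conormal to $K$-orbit images'' points this way.
\begin{itemize}
\item By Proposition~\ref{prop:intersection-non-borel}, $xKM_\scrC$ and $\gamma yK\exp(\scrC)M_\scrC$ meet transversally at $z_\scrC(x,\gamma y)=z_{-\scrC}(\gamma y,x)e^{\mu_\scrC(y^{-1}\gamma^{-1}x)}$. This happens at the propagation parameter $a=\mu_\scrC(y^{-1}\gamma^{-1}x)$ exactly.
\item Contract with the action fields (Lemma~\ref{lem:submanifold_propagation}) and fix (twisted) orientations so that every such intersection sign is $+1$ (Lemma~\ref{lem:orientations:compatible}). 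Then each $\gamma$ contributes exactly $e^{-\s(\mu_\scrC(y^{-1}\gamma^{-1}x))}$, and the truncated series is an exact pairing of currents (Proposition~\ref{prop:formula-lifting}).
\end{itemize}

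The substantive work your plan omits is the following.
\begin{itemize}
\item The other intersection points are indexed by nontrivial $[w]\in W(G,A)/\langle\Delta^0_\scrC\rangle$ (Lemma~\ref{lemma:form-of-intersection}). You must show that they meet a given $\Gamma$-invariant set with precompact quotient for only finitely many $\gamma$ (Lemma~\ref{lemma:bad-intersection-are-away}).
\item You must show that the regular points $z_\scrC(x,\gamma y)$ stay, for all but finitely many $\gamma$, in one fixed such set, accumulating on the preimage of $\mc T_{+,\M}$ (Proposition~\ref{prop:late-intersections-are-in-DMS}). This is where transversality of $\Gamma$ enters, and it means a single cutoff suffices, with no partition into $\pm\scrC$ charts.
\item You need the wavefront check: $\V_x^\perp$ must meet $E_u^*$ and $(\V_{y,H}+E_0)^\perp$ only in the zero section. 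The base point $H_0$ must be chosen generically to avoid the $\partial\scrC$ term of \eqref{equation:wftc}.
\end{itemize}
Your argument for absolute convergence, via Proposition~\ref{prop:cartan==lyapunov} and item~(vi) of Theorem~\ref{theorem:leading}, is fine.
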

\begin{remark}\label{rem:spectral}
It follows also from the proof that the singularities of $P_{\Gamma, \scrC}$ have a spectral interpretation. For example, if $\scrC$ is $\iota_\fapp$-invariant, then the singularities are contained in the resonances $\sigma^{(d_s)}_{\mathrm{RS}}$ of the action (or in a version twisted by an orientation bundle, see below for details).
\end{remark}
In the Borel Anosov case, we obtain a continuation to $\fa^\ast_\C$, while in the weaker $\scrC$-Anosov case we have to restrict to subspaces. Our proof strategy follows that of \cite{Dang-Riviere-24} in the rank-one case.

\begin{remark}\label{rem:sym_scrC}
Assume $\scrC\geq\scrC'$ and $\Gamma$ is $\scrC$-Anosov (thus also $\scrC'$-Anosov). By definition, $P_{\Gamma, \scrC'} =( P_{\Gamma,\scrC})_{|(\fa_{\scrC'}^*)_\C}$, so the statement of Theorem~\ref{theorem:Poincare} for $\scrC'$ follows directly from the statement for the larger $\scrC$ by restricting the meromorphic function to an analytic subvariety not contained in its polar divisor. This restriction is well-defined because $\fa^*_{\scrC'}$ intersects the domain of absolute convergence of $P_{\Gamma,\scrC}$. Given a $\scrC$-Anosov group $\Gamma$, we may therefore assume that $\scrC$ is maximal, and in particular $\iota_\fapp$-invariant.
\end{remark}

\subsection{Basic properties of currents} \label{ssection:currents}
We use the notion of currents to prove the meromorphic continuation (Theorem \ref{theorem:Poincare}). Here, we briefly recall elementary facts about currents and refer the reader to \cite[Chapter 10]{Lefeuvre-book} for further discussion.

On an $n$-dimensional manifold $\mc M$, for $0\leq m\leq n$, one defines the space of \emph{$m$-dimensional currents} to be the topological dual of $C^\infty_{\comp}(\mc M, \Lambda^mT^*\mc M)$, the space of differential forms of degree $m$ with compact support.
When $\mc{M}$ is oriented, $m$-currents can be identified with distributional forms of degree $n-m$ because every continuous linear functional $\omega \mapsto \alpha(\omega)$ can be written uniquely as $\omega \mapsto (\omega, \eta)$ for some $\eta \in \mc{D}'(\mc{M},\Lambda^{n-m}T^*\M)$ (see \S\ref{sssection:distributions} for a brief reminder on distributions), where $(\bullet,\bullet)$ denotes the continuous extension to distributions of the nondegenerate pairing
\begin{equation}
 \label{eq:bilinear_current_pairing}
C^\infty_{\comp}(\mc M, \Lambda^mT^*\mc M) \times C^\infty(\mc M, \Lambda^{n-m}T^*\mc M) \ni (\omega,u) \mapsto \int_{\mc{M}} \omega \wedge u.
\end{equation}
Note that this identification is non-canonical; the ordering of wedge products must be consistent to avoid orientation-related sign errors.

An $m$-dimensional oriented properly embedded submanifold $S\subset \mc M$ defines a current $[S]$ via
\[
(\omega, [S]):=\int_S\omega.
\]
In local coordinates, we can always express such a current as follows. Choose a coordinate chart $\kappa: U\to V\subset \RR_x^{m}\times \RR^{n-m}_y$  such that $\kappa(U\cap S) \subset \RR^{m}\times\{0\}$, $(\partial_{x_1},\ldots \partial_{x_{m}})$ is a positively oriented frame on $S$, and $(\partial_{x_1},\ldots \partial_{x_{m}},\partial_{y_1},\ldots,\partial_{y_{n-m}})$ is positively oriented on $\mc M$. Then, one computes that
\begin{equation}\label{eq:submanifold_current_in_coordinates}
\kappa_*[S] = \delta_{y=0} \cdot \dd y_1\wedge \ldots \wedge \dd y_{n-m} \in \mathcal D'(V, \Lambda^{n-m}T^*V),
\end{equation}
where $\delta_{y=0}$ is the standard Dirac distribution supported on the set $\{y=0\}\subset V$. From this expression, we deduce that $\WF([S]) = N^*S$. This allows us to define pairings of transverse submanifolds:

\begin{lemma}\label{lem:submanifold_pairing}
Let $S_1, S_2\subset\M$ be two oriented submanifolds of respective dimensions $n-m$ and $m$ that intersect transversely. If the intersection is finite and $f$ is a smooth function on $S_2$, the pairing $([S_1], f \cdot[S_2])$ is well-defined and
 \[
  ([S_1], f\cdot [S_2]) = \sum_{x\in S_1 \cap S_2} \sigma(x) f(x).
 \]
Here $\sigma(x)= \pm 1$ is the orientation sign of the frame $(v_1,\ldots, v_{n-m}, w_1,\ldots w_m) \in T_x\mc M$ relative to the orientation of $\mc M$, where $(v_1,\ldots v_{n-m}) \in T_xS_1$ is positively oriented on $S_1$ and $(w_1,\ldots w_m)\in T_xS_2$ is positively oriented on $S_2$.
\end{lemma}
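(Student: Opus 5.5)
The plan is to reduce the pairing to a local computation near each intersection point and then to the coordinate formula \eqref{eq:submanifold_current_in_coordinates}.

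\emph{Well-definedness.} We view $[S_1]$ as a distributional form of degree $m$ and $f\cdot[S_2]$ as a distributional form of degree $n-m$. Their pairing is the integral of the wedge product, which requires the product $[S_1]\wedge f[S_2]$ to be defined. Since $\WF([S_i]) = N^*S_i$ and multiplication by $f$ (extended smoothly near $S_2$) does not enlarge the wavefront set, we get $\WF(f[S_2])\subset N^*S_2$. Transversality gives $N^*_xS_1\cap -N^*_xS_2 = \{0\}$ at every $x\in S_1\cap S_2$, so the wavefront-set product criterion (\cite[Lemma 4.3.1]{Lefeuvre-book}) applies. The product is then a well-defined distributional $n$-form supported in $S_1\cap S_2$. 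This set is finite, so the product is compactly supported and its integral over $\M$ makes sense.

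\emph{Localization.} Choose a partition of unity $\chi_x$ with small supports around the points $x\in S_1\cap S_2$, together with a remainder supported away from the intersection. The remainder contributes zero by the support property. Hence it suffices to compute the contribution of a single point $x$.

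\emph{Local computation.} Near $x$, transversality and the complementary dimensions allow us to choose coordinates $(x',y)\in\R^{n-m}\times\R^m$ centered at $x$ with $S_1=\{y=0\}$ and $S_2=\{x'=0\}$. The coordinates are oriented so that $\partial_{x'}$ is positive on $S_1$ and $\partial_y$ is positive on $S_2$. By \eqref{eq:submanifold_current_in_coordinates}, up to the orientation sign of the full frame relative to $\M$, we have $[S_1]=\delta_{y=0}\,\dd y_1\wedge\dots\wedge \dd y_m$ and $[S_2]=\pm\delta_{x'=0}\,\dd x'_1\wedge\dots\wedge\dd x'_{n-m}$.

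The sign in the second formula deserves a remark. For $S_2$, the frame $(\partial_y,\partial_{x'})$ is the one adapted to $S_2$ first. Reordering it to $(\partial_{x'},\partial_y)$ introduces a factor $(-1)^{m(n-m)}$, and the ordering of the wedge in the pairing introduces a matching factor. These two factors cancel, leaving only $\sigma(x)$, the sign of $(\partial_{x'},\partial_y)$ relative to the orientation of $\M$.

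The product of the two currents is $\delta_{y=0}\delta_{x'=0} f\,\dd y\wedge\dd x'$, which equals $\delta_0 f$ times the orientation form with sign $\sigma(x)$. Integrating gives $\sigma(x)f(x)$. Summing over the intersection points yields the claimed formula.

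The only delicate point is this sign bookkeeping. To avoid circularity, I would handle it by approximating $[S_i]$ by smooth bump forms $\rho_\eps(y)\,\dd y$ and $\rho_\eps(x')\,\dd x'$. Continuity of the product in the Hörmander topology then identifies the limit, and the sign can be read off directly from the Fubini integral.
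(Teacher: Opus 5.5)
Your proof is correct. The paper gives no argument of its own here, only a pointer to \cite[Proposition 10.2.6]{Lefeuvre-book}, and your route is the standard proof of that statement: the wavefront-set product criterion (using $N^*_xS_1\cap N^*_xS_2=\{0\}$), localization at the finitely many intersection points, one chart straightening both $S_1$ and $S_2$, and an explicit coordinate computation.

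There is one small slip in the sign bookkeeping, which does not affect your final formula. With the convention \eqref{eq:submanifold_current_in_coordinates} one has $[S_1]=\sigma(x)\,\delta_{y=0}\,\dd y_1\wedge\dots\wedge\dd y_m$ and $[S_2]=(-1)^{m(n-m)}\sigma(x)\,\delta_{x'=0}\,\dd x'_1\wedge\dots\wedge\dd x'_{n-m}$. Hence $[S_1]\wedge f[S_2]=f\,\delta_0\,\dd x'\wedge\dd y$, not $f\,\delta_0\,\dd y\wedge\dd x'$ as you wrote. It is this form that integrates to $\sigma(x)f(x)$.
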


\begin{proof}See \cite[Proposition 10.2.6]{Lefeuvre-book}.\end{proof}
 We will actually be in a slightly more involved geometric situation in which the submanifolds are not orientable. Recall that if $E\to\M$ is a vector bundle and $\fo$ is a $\Z_2$-bundle, we denote by $E^\fo$ the vector bundle over $\M$ whose fiber over $x$ is
\[
\{ (u, \eps) ~:~ u\in E_x,\ \eps\in \fo_x\}/\{(u,\eps)\sim (-u,-\eps)\}. 
\]
If $S\subset \M$ is a non-orientable submanifold of dimension $m$, we denote by $\fo(TS)$ its bundle of orientations. Assume that $\fo(TS)$ is the restriction to $S$ of a $\Z_2$-bundle $\fo$ over $\M$. Then we can regard the total space $\fo$ as a double cover of $\M$, and $\fo(TS)$ as a submanifold of $\fo$ that is orientable and forms a double cover of $S$. Note that $\fo$ has a unique nontrivial deck transformation $\tau$, which reverses the orientation of $\fo(TS)$. It also reverses the orientation of the $\Z_2$-bundle $\fo^\ast\fo$ (the pullback to the total space $\fo$ of the bundle $\fo$). Since $\fo^\ast\fo$ is trivial, it admits a section $\sigma$, and 
\[
\tau^\ast ( [\fo(TS)]\times \sigma ) = (-[\fo(TS)])\times (-\sigma ) = [\fo(TS)]\times \sigma . 
\]
It follows that $[\fo(TS)]\times \sigma$ is a well-defined distributional section of twisted forms $(\Lambda^{n-m}T^*\M)^\fo$ which we denote by $[S]^\fo$. It only depends on choosing an orientation of the double cover $\fo$ and of $\fo(TS)$.
\begin{corollary}\label{cor:submanifold-pairing-non-oriented}
Let $S_1,S_2\subset \M$ be two non-orientable connected submanifolds intersecting transversely at a finite number of points. Let $\mathfrak{o}$ be a $\Z_2$-bundle over $\M$ such that for $j=1,2$, as bundles
\[
\mathfrak{o}_{|S_j} \simeq \mathfrak{o}(TS_j). 
\]
Then, once orientations have been chosen for $\fo$, $\fo(TS_1)$, and $\fo(TS_2)$, $[S_1]^\fo \wedge [S_2]^\fo$ is a well-defined distributional section of $\Lambda^nT^*\M$ and
\[
[S_1]^\fo \wedge [S_2]^\fo = \sum_{z\in S_1\cap S_2} \sigma(z)[\{z\}],
\]
for some signs $\sigma(z)\in\{\pm 1\}$. 
\end{corollary}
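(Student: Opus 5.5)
The plan is to reduce to the oriented case, Lemma \ref{lem:submanifold_pairing}, by passing to the double cover given by the total space of $\fo$, and then to push the resulting count back down to $\M$. Let $p:\fo\to\M$ be the covering map, with deck involution $\tau$, and give $\fo$ the chosen orientation. Since $\fo_{|S_j}\simeq\fo(TS_j)$, the orientation double cover $\fo(TS_j)$ sits inside $\fo$ as the full preimage $\tilde S_j:=p^{-1}(S_j)$. It is an orientable submanifold with the chosen orientation, and $\tau$ reverses that orientation.

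\textbf{Well-definedness.} Upstairs $[S_j]^\fo$ is represented by $[\tilde S_j]\times\sigma$, where $\sigma$ is a section of the trivial bundle $p^\ast\fo$. The wavefront set satisfies $\WF([\tilde S_j])=N^\ast\tilde S_j$, and multiplying by the smooth section $\sigma$ does not enlarge it. Transversality of $S_1$ and $S_2$ lifts to transversality of $\tilde S_1$ and $\tilde S_2$, because $p$ is a local diffeomorphism. Hence $N^\ast\tilde S_1\cap(-N^\ast\tilde S_2)=0$, and the wedge product $[\tilde S_1]\wedge[\tilde S_2]$ is defined by \cite[Lemma 4.3.1]{Lefeuvre-book}.

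\textbf{Reduction to top-degree forms on $\M$.} The product of the two twisting factors satisfies $\sigma\otimes\sigma\in\fo\otimes\fo\simeq\underline{\Z_2}$, and this is canonically trivial. So the twisted product becomes an honest distributional section of $\Lambda^nT^\ast\M$. Concretely, $([\tilde S_1]\times\sigma)\wedge([\tilde S_2]\times\sigma)$ is $\tau$-invariant, since the two sign changes cancel. It therefore descends through $p$: pairing with $\omega\in C^\infty_{\comp}(\M)$ is given by $\tfrac12\bigl(p^\ast\omega,[\tilde S_1]\wedge[\tilde S_2]\bigr)$, with the sign conventions absorbed. This descent is the step I expect to be the main obstacle. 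It has to be checked consistently with the ordering convention in \eqref{eq:bilinear_current_pairing} and with the fact that $\Lambda^nT^\ast\M$ itself may be twisted if $\M$ is non-orientable. Working locally on trivializing charts of $\fo$ handles both issues, because there everything reduces to the oriented formula \eqref{eq:submanifold_current_in_coordinates}.

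\textbf{Computing the sum.} Apply Lemma \ref{lem:submanifold_pairing} upstairs. This gives
\[
[\tilde S_1]\wedge[\tilde S_2]=\sum_{\tilde z\in\tilde S_1\cap\tilde S_2}\tilde\sigma(\tilde z)[\{\tilde z\}].
\]
The intersection $\tilde S_1\cap\tilde S_2=p^{-1}(S_1\cap S_2)$ is finite, with two points over each $z\in S_1\cap S_2$, swapped by $\tau$. At $\tau\tilde z$ both orientations of $T\tilde S_1$ and $T\tilde S_2$ flip, and $\tau$ acts on the orientation of $\fo$ by some fixed sign. So the local signs at $\tilde z$ and $\tau\tilde z$ are related by that fixed sign, and the twisted factors $\sigma\otimes\sigma$ compensate it. The two lifts therefore contribute equally after pushforward. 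This yields $\sum_{z}\sigma(z)[\{z\}]$ with $\sigma(z)\in\{\pm1\}$ determined by the chosen orientations.
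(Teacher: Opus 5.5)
Your proposal is correct and follows the paper's proof. You lift to the double cover $\fo$, observe that the twisted currents, and hence their wedge product, are $\tau$-invariant with twist factors multiplying to a constant, apply Lemma~\ref{lem:submanifold_pairing} upstairs, and use $\tau$-invariance to conclude that the two lifts of each $z\in S_1\cap S_2$ carry the same sign. One small correction to your last step: $\sigma\otimes\sigma$ is constant, so it compensates nothing; the equality of the signs at $\tilde z$ and $\tau\tilde z$ follows directly from the $\tau$-invariance of the product you had already established (in the paper's oriented setting $\tau$ preserves the orientation of $\fo$, so your ``fixed sign'' is simply $+1$).
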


\begin{proof}
As above, let $\tau$ denote the non-trivial deck transformation of the cover $\mathfrak{o}\to\M$. Since $[\fo(TS_j)]\times \sigma_j$ is $\tau$-invariant for $j=1,2$, so is the wedge product
\[
([\fo(TS_1)] \times \sigma_1) \wedge ( [\fo(TS_2)]\times \sigma_2) = [\fo(TS_1)]\wedge [\fo(TS_2)] \times (\sigma_1\sigma_2). 
\]
Since $\fo^\ast\fo$ is trivial, there is a global $\eps\in\{\pm 1\}$ such that $\sigma_2 = \eps \sigma_1$. Lemma~\ref{lem:submanifold_pairing} then gives
\[
([\fo(TS_1)] \times \sigma_1) \wedge ( [\fo(TS_2)]\times \sigma_2) = \sum_{z\in \fo(TS_1)\cap \fo(TS_2)} \sigma(z) [\{z\}]. 
\]
Here, $\tau$-invariance means that the equality $\sigma(-z) = \sigma(z)$ shows that this sign depends only on the base point $[z]\in S_1\cap S_2$. 
\end{proof}

We will also need the following result:
\begin{lemma}\label{lem:submanifold_propagation}
 Let $\tau: \RR^{\ell}\curvearrowright \mc M$ be a locally free smooth action on an $n$-dimensional oriented manifold $\M$. Let $X_1,\ldots, X_\ell$ be the vector fields on $\M$ induced by $\tau$ from the standard basis of $\RR^\ell$. Let $S\subset \mc M$ be an oriented properly embedded $m$-dimensional submanifold. Let $\mathcal C\Subset\RR^\ell$ be open and assume that the action map
 \[
 F:\mathcal C\times S\longrightarrow\mc M,\qquad (a,x)\longmapsto\tau(a)x
 \]
is an embedding. Set $R:=F(\mathcal C\times S)$ and define
 \[
  \mathbf a: R\to \RR^\ell,\quad F(a,x)\mapsto a
 \]
which is smooth. Then, for $f$ a smooth function defined in a neighborhood of $\overline{\mathcal C}$ we have
 \[
   \iota_{X_\ell}\ldots \iota_{X_1} \int_{\mc C} f(a)\cdot(\tau(a)_*[S]) \dd a = (f\circ \mathbf a)\cdot[R],
 \]
where $\dd a$ denotes the Lebesgue measure on $\RR^\ell$. We orient $R$ so that, whenever $(v_1,\ldots, v_m) \in T_xS$ is a positively oriented frame, $((\dd\tau(a))_xv_1,\ldots,(\dd\tau(a))_xv_m,X_1,\ldots,X_\ell)\in T_{\tau(a)x}R$ is positively oriented.
\end{lemma}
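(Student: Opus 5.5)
The plan is to reduce the identity to a local computation in the coordinates furnished by the embedding $F$, and then to test it against compactly supported forms. Both sides are currents supported in $\overline{R}$. Because $F$ is an embedding, the map $F$ identifies $\mathcal C\times S$ with $R$, and pushing forward by $F$ commutes with everything in sight. It therefore suffices to verify the identity when paired against a test form $\omega\in C^\infty_{\comp}(\M,\Lambda^{m+\ell}T^*\M)$ supported in a small neighborhood of a point of $R$ (and, by a partition of unity, near $\overline R$; away from $\overline{R}$ both sides vanish).

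The first step is to unwind the contractions by duality. For an $(m+\ell)$-form $\omega$ and the distributional form $T:=\int_{\mathcal C} f(a)\,\tau(a)_*[S]\,\dd a$, the convention $(\omega,T)=\int\omega\wedge T$ gives the following: contracting $T$ by $X_1,\dots,X_\ell$ in the stated order corresponds, up to a sign fixed by the wedge ordering, to pairing $T$ with $\iota_{X_1}\cdots\iota_{X_\ell}\omega$ (an $m$-form). Interpreting $\iota_{X_\ell}\cdots\iota_{X_1}$ as acting on the current side, one obtains
\[
\Bigl(\omega,\iota_{X_\ell}\cdots\iota_{X_1}T\Bigr)=\int_{\mathcal C} f(a)\int_S \tau(a)^*\bigl(\iota_{X_\ell}\cdots\iota_{X_1}\omega\bigr)\,\dd a .
\]
I will fix the sign convention here so that it matches the chosen orientation of $R$, namely the frame $(\dd\tau(a)v,X_1,\dots,X_\ell)$.

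The second step is to compute $\int_R (f\circ\mathbf a)\,\omega$ using the parametrization $F$. Write $F^*\omega$ on $\mathcal C\times S$. Since $\partial_{a_j}F=X_j\circ F$, which holds because the action is abelian so that $\partial_{a_j}\tau(a)x=X_j(\tau(a)x)$, only the component of $F^*\omega$ containing $\dd a_1\wedge\cdots\wedge\dd a_\ell$ contributes. That component equals
\[
\pm\,\tau(a)^*\bigl(\iota_{X_\ell}\cdots\iota_{X_1}\omega\bigr)|_{S}\wedge \dd a_1\wedge\dots\wedge \dd a_\ell .
\]
Fubini on the product $\mathcal C\times S$, with the orientation just fixed, then gives exactly the expression obtained in the first step. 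Since $f\circ\mathbf a\circ F(a,x)=f(a)$, the two sides agree.

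The only genuine obstacle is sign bookkeeping. One has to check that the order of the contractions, the ordering convention in \eqref{eq:bilinear_current_pairing}, and the orientation convention for $R$ produce the same sign. I would settle this by checking the model case $\M=\R^\ell\times\R^{n-\ell}$, with $S=\{0\}\times\R^m\times\{0\}$ and $\tau$ given by translations, using \eqref{eq:submanifold_current_in_coordinates}. Embeddedness of $F$ and compact closure of $\mathcal C$ guarantee that all integrals are finite and that the distributional manipulations are justified.
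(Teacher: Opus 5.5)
Your argument is correct, but it runs dual to the paper's proof.

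\textbf{The paper's route.} The paper works on the current side, in a flow-box chart $\kappa=(x,\tau,y)$ adapted to $\tau(a_0)S$ with $\partial_{\tau_j}=X_j$. Such a chart exists because $F$ is an immersion. By \eqref{eq:submanifold_current_in_coordinates}, $\kappa_*[\tau(a_0+a)S]=\delta_{y=0}\delta_{\tau=a}\,\dd\tau_1\wedge\cdots\wedge\dd\tau_\ell\wedge\dd y_1\wedge\cdots\wedge\dd y_q$. Integrating against $f(a_0+a)\,\dd a$ replaces $\delta_{\tau=a}$ by $f(a_0+\tau)$. The contractions $\iota_{\partial_{\tau_\ell}}\cdots\iota_{\partial_{\tau_1}}$ then strip off the $\dd\tau_j$'s with sign $+1$. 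What remains is $f(a_0+\tau)\delta_{y=0}\,\dd y$, which is visibly $(f\circ\mathbf a)[R]$ for the orientation $(\partial_x,\partial_\tau)$.

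\textbf{Your route.} You test against $\omega$ and move the contractions onto $\omega$ by duality. You then compute $\int_R (f\circ\mathbf a)\,\omega$ globally, pulling back along $F$ and using Fubini together with $\partial_{a_j}F=X_j\circ F$. This needs no adapted chart and no localization. Your partition-of-unity remark is superfluous, since the Fubini computation is already global. The price is that the sign is now the entire content of the lemma, and the paper's chart makes it trivial to read off.

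\textbf{The sign.} The sign is not something you get to ``fix''. It is forced by \eqref{eq:bilinear_current_pairing}, by the definition of $\iota_X$ on distributional forms (by density from smooth forms), and by the prescribed orientation of $R$. It does come out right. When $\deg\alpha+\deg\beta=n+1$, one has $\alpha\wedge\iota_X\beta=(-1)^{\deg\alpha+1}\,\iota_X\alpha\wedge\beta$. Applying this $\ell$ times gives
\[
\bigl(\omega,\iota_{X_\ell}\cdots\iota_{X_1}T\bigr)=(-1)^{m\ell}\int_{\mathcal C}f(a)\int_S\tau(a)^*\bigl(\iota_{X_\ell}\cdots\iota_{X_1}\omega\bigr)\,\dd a .
\]
On the other side,
\[
F^*\omega(v_1,\dots,v_m,\partial_{a_1},\dots,\partial_{a_\ell})=\omega(\dd\tau(a)v,X_1,\dots,X_\ell)=(-1)^{m\ell}\,\tau(a)^*\bigl(\iota_{X_\ell}\cdots\iota_{X_1}\omega\bigr)(v),
\]
so $\int_R(f\circ\mathbf a)\,\omega$ equals the same expression.

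Note that your text says $\iota_{X_1}\cdots\iota_{X_\ell}\omega$ while your display has $\iota_{X_\ell}\cdots\iota_{X_1}\omega$. These differ by $(-1)^{\ell(\ell-1)/2}$, which the computation above absorbs. Your proposed model-case check would also settle the sign correctly, because your derivation shows each side is a universal sign (depending only on $m$ and $\ell$) times the same integral. With the explicit computation, however, the check is unnecessary.
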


Similarly, if $S$ is non-orientable and its orientation bundle coincides with a globally defined $\Z_2$-bundle $\fo$, combining this lemma with the discussion before Corollary \ref{cor:submanifold-pairing-non-oriented}, we obtain
 \[
   \iota_{X_\ell}\ldots \iota_{X_1} \int_{\mc C} f(a)\cdot(\tau(a)_*[S]^\fo) \dd a = (f\circ\mathbf a)\cdot[R]^\fo.
 \]

\begin{proof}
By hypothesis, $F$ identifies $\mc C\times S$ diffeomorphically with $R$; in particular, $\mathbf a=\operatorname{pr}_1\circ F^{-1}$ is smooth. It therefore remains to verify the asserted identity of currents, which is a local statement.

 Fix an arbitrary point $\tau(a_0)x_0$ in $R$, with $a_0\in \mc C$ and $x_0\in S$. Then we can choose a local coordinate chart on an open neighborhood $U\subset \M$
 \[
 \kappa: U\to V\subset \RR_x^{m}\times\RR_\tau^\ell\times \RR_y^{q}, \qquad q=n-m-\ell
 \]
 positively oriented on $\mc M$, such that $\kappa(U \cap \tau(a_0) S) = \RR^m\times \{0\}$ and such that
 $\partial_{\tau_j} = X_j$ and $\partial_{x_1}\ldots,\partial_{x_m}$ is positively oriented on $\tau(a_0) S$. We may further assume that $V= B^{\RR^m}_\varepsilon(0) \times B^{\RR^\ell}_\varepsilon(0)
 \times B^{\RR^q}_\varepsilon(0)$. In these coordinates, \eqref{eq:submanifold_current_in_coordinates} gives
 \[
  \kappa_*[\tau(a_0 +a)S] = \delta_{y=0}\delta_{\tau =a} \dd \tau_1\wedge\ldots\wedge \dd \tau_\ell\wedge \dd y_1\wedge\ldots\wedge \dd y_q
 \]
 and hence
 \begin{align*}
 \int_{B_\varepsilon(a_0)}f(a)\kappa_*( \iota_{X_\ell}\ldots \iota_{X_1}(\tau(a))_*[S])\dd a=&\int_{B_\varepsilon(a_0)}f(a)\iota_{\partial_{\tau_\ell}}\ldots \iota_{\partial_{\tau_1}}\kappa_*([\tau(a)S])\dd a\\
  =& f(a_0+\tau)\delta_{y=0}\cdot \dd y_1\wedge\ldots\wedge \dd y_q.
 \end{align*}
The latter expression can be identified (once more using \eqref{eq:submanifold_current_in_coordinates}) with $\kappa_*((f\circ\mathbf a)[R])$.
\end{proof}

\subsection{Intersection of propagated cospheres}

Following \cite{Dang-Riviere-24} in the rank-one case, we aim to lift the problem from $G/K$ to the phase space $G/M_\scrC$. We first need to understand the possible intersections of submanifolds of the form $(xK)M_\scrC$ and $(yKe^H)M_\scrC$ in $G/M_\scrC$ for $x,y\in G$ and $H\in\scrC\subset \fa_\scrC$. The Borel Anosov case is comparatively simple, so we explain it first as a guide to the more challenging arguments below.

Let $x,y\in G$ and assume that $xK\cap yKe^H$ is nonempty for some $H\in \fapp$; then there are $k_1,k_2\in K$ such that
\[
 xk_2 = yk_1e^{H} \Rightarrow y^{-1}x = k_1e^{H}k_2^{-1}.
\]
The right-hand side is precisely the KAK decomposition of $y^{-1}x$, which (by regularity of $H\in\fapp$) is unique up to right multiplication of $k_1$ and $k_2$ by elements of $M_\fapp$. Thus, given $x,y\in G$ in regular position, there is precisely one translation element $H\in \fapp$ for which the intersection is nonempty, and the unique intersection point $xk_2M_\fapp = yk_1e^HM_\fapp$ can be computed from the KAK decomposition.

Let us now consider the non-Borel case. Let $\scrC$ be a generalized Weyl chamber and $\fapp\geq\scrC$ an open Weyl chamber. Given $x,y\in G$, we define
\begin{equation}\label{eq:def-z_C}
z_\scrC(x,y) := xk_2M_\scrC,\qquad \text{where } y^{-1} x = k_1 e^{\mu_\fapp(y^{-1}x)} k_2^{-1}.
\end{equation}
The inverse decomposition $x^{-1}y=k_2e^{-\mu_\fapp(y^{-1}x)}k_1^{-1}$ is the KAK decomposition relative to $-\fapp\geq-\scrC$. Since $M_{-\scrC}=M_\scrC$, we have
\[
\begin{aligned}
 &z_{-\scrC}(y,x)=yk_1M_\scrC,\quad \gamma z_\scrC(x,y)=z_\scrC(\gamma x,\gamma y),\\
 &\gamma z_{-\scrC}(x,y)=z_{-\scrC}(\gamma x,\gamma y),\quad \gamma\in G.
\end{aligned}
\]
As $\fa_{\scrC}^\perp\subset \fm_\scrC$, we conclude that
\[
 z_\scrC(x,y) = xk_2M_\scrC =z_{-\scrC}(y,x)e^{\mu_\scrC(y^{-1}x)} = yk_1e^{\mu_\scrC(y^{-1}x)}M_\scrC \in (xK)M_\scrC \cap (yKe^{\mu_\scrC(y^{-1}x)})M_\scrC
\]
belongs to $(xK)M_\scrC \cap (yKe^{\overline{\scrC}})M_\scrC$. Although it is not the only intersection point, as Lemma~\ref{lemma:form-of-intersection} shows, it is the only one relevant for the Poincaré series. 

Another subtlety in the general case is that the submanifolds $xKM_\scrC$ are not leaves of a foliation of $G/M_\scrC$. For notational convenience, define
\begin{equation}\label{eq:VxBundle}
\V_x (xkM_\scrC) = T_{xkM_\scrC}(xKM_\scrC) \text{ and }\V_{y,H}(yke^H M_\scrC) = T_{yke^H M_\scrC} (y K e^H M_\scrC). 
\end{equation}
We caution that $\V_x$ is only a subbundle of $T(G/M_\scrC)_{|xKM_\scrC}$ and cannot be described as an associated bundle except in very special cases. 

We now prove the following proposition.
\begin{proposition}\label{prop:intersection-non-borel}
Let $x,y\in G$ satisfy $\alpha(\mu_\fapp(y^{-1}x))>0$ for all $\alpha \in \Delta^+_\scrC$.
Then $z_\scrC(x,y)$ does not depend on the choice of the open Weyl chamber $\fapp\geq \scrC$ and is a point at which $(xK)M_\scrC$ and $(yK\exp(\scrC))M_\scrC$ intersect transversely.
\end{proposition}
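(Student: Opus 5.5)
Two things need proof: that $z_\scrC(x,y)$ is independent of $\fapp$, and transversality. Both rest on the $K_\scrC$-freedom of the $KAK$ decomposition and on the tangent decomposition \eqref{eq:hyperbolic_splitting_first}.

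\emph{Independence of $\fapp$.} Let $\fapp'\geq\scrC$ be another open chamber. By Lemma~\ref{lem:adjacent_open_weyl_chamber} there is $w\in\langle\Delta^0_\scrC\rangle$ with $w\fapp=\fapp'$, and I choose a representative $\dot w\in N_K(\fa)$. Since $w$ fixes $\fa_\scrC$ pointwise, $\dot w$ centralizes $\fa_\scrC$, so $\dot w\in Z_K(\fa_\scrC)=K_\scrC\subset M_\scrC$. From $y^{-1}x=k_1e^{H}k_2^{-1}$ with $H=\mu_\fapp(y^{-1}x)$ we get
\[
y^{-1}x=(k_1\dot w^{-1})e^{\mathrm{Ad}(\dot w)H}(k_2\dot w^{-1})^{-1},
\]
which is a $KAK$ decomposition relative to $\fapp'$. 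So $k_2$ becomes $k_2\dot w^{-1}$, and $xk_2\dot w^{-1}M_\scrC=xk_2M_\scrC$. I also need that, for fixed $\fapp$, the class $xk_2M_\scrC$ does not depend on the choice of $(k_1,k_2)$. When $H$ is singular in $\fapp$ but $H\in\scrC$-regular, i.e.\ $\alpha(H)>0$ on $\Delta^+_\scrC$, the pair is determined up to right multiplication by a common element of $Z_K(H)$. By the argument in the proof of Lemma~\ref{lemma:jordan-lyapunov-spectrum}, $Z_K(H)\subset K_\scrC$, and this settles it.

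\emph{Transversality.} By $G$-equivariance I may left-translate by $(xk_2)^{-1}$ and assume $z=eM_\scrC$, $x=e$ up to $K$, with $y^{-1}=k_1e^{H}$ after relabeling. Write $H=H_\scrC+H^\perp$ with $H_\scrC=\mu_\scrC(y^{-1}x)\in\scrC$ and $H^\perp\in\fa_\scrC^\perp$. In the reductive splitting \eqref{eq:hyperbolic_splitting_first} at $eM_\scrC$, with $\fp:=\fs$, the tangent space of $KM_\scrC$ is the image of $\fk$ modulo $\fm_\scrC$. Using $\fk=\fk_\scrC\oplus\{X+\theta X: X\in\fn^+_\scrC\}$, this image is the graph-type subspace $\{X+\theta X\}$, which projects isomorphically onto $\fn^+_\scrC$ (or $\fn^-_\scrC$).

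For the second manifold, $yKe^{\scrC}M_\scrC$ passes through $eM_\scrC=yk_1e^{H_\scrC}M_\scrC$. Its tangent space there is spanned by two pieces. The first is $\fa_\scrC$, coming from the $\exp(\scrC)$ direction, which is open in $\fa_\scrC$. The second is $\mathrm{Ad}(e^{-H_\scrC}m^{-1})$ applied to $\{X+\theta X\}$, with $m=e^{-H^\perp}$ up to $K_\scrC$, so the relevant operator is $\mathrm{Ad}(e^{-H})$ after absorbing the $M_\scrC$ factor. On $\fg_\alpha$ with $\alpha\in\Delta^+_\scrC$, the vector $X+\theta X$ is sent to $e^{-\alpha(H)}X+e^{\alpha(H)}\theta X$. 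Modulo $\fm_\scrC$, this gives the subspace $\{e^{-\alpha(H)}X+e^{\alpha(H)}\theta X\}$.

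The dimensions add up: $\dim\fn^+_\scrC+\dim\fn^+_\scrC+\dim\fa_\scrC=\dim G/M_\scrC$. So transversality reduces to showing that, root space by root space, the lines $\{X+\theta X\}$ and $\{e^{-\alpha(H)}X+e^{\alpha(H)}\theta X\}$ inside $\fg_\alpha\oplus\fg_{-\alpha}$ are complementary, together with $\fa_\scrC$ being complementary to both. The root-space claim holds exactly when $e^{2\alpha(H)}\neq1$, i.e.\ $\alpha(H)\neq0$, which is the hypothesis. The $\fa_\scrC$ claim is clear, since the two other subspaces lie in $\fn^+_\scrC\oplus\fn^-_\scrC$.

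\emph{Main obstacle.} The delicate point is the second tangent space. Because $M_\scrC$ is noncompact, conjugating by the $M_\scrC$-component $e^{H^\perp}$ mixes root spaces within a single $\Delta^0_\scrC$-orbit. I would handle this by decomposing $\fn^\pm_\scrC$ into $\mathrm{Ad}(e^{H})$-eigenspaces and checking that the transversality determinant, taken over each block, is a product of the factors $(1-e^{2\alpha(H)})$ with $\alpha\in\Delta^+_\scrC$. I would then confirm that the common $K_\scrC$ factors cancel.
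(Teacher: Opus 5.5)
Your proposal is correct and is essentially the paper's argument. For independence of $\fapp$, you absorb a representative of $w\in\langle\Delta^0_\scrC\rangle$ into $K_\scrC\subset M_\scrC$. For transversality, you identify the two tangent spaces at $z_\scrC(x,y)$ with $\mathfrak v=\bigoplus_{\alpha\in\Delta^+_\scrC}\fk_\alpha$ and $\mathrm{Ad}(e^{-H})\mathfrak v\oplus\fa_\scrC$, check $\mathfrak v\cap\mathrm{Ad}(e^{-H})\mathfrak v=\{0\}$ root space by root space using $\alpha(H)\neq 0$, and count dimensions. Your remark that $Z_K(H)\subset K_\scrC$ also makes $z_\scrC$ well defined for a fixed $\fapp$, a point the paper leaves implicit.

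The ``main obstacle'' you raise is not one. The element relating the two representatives of the intersection point is $e^{-H^\perp}$, with $H^\perp\in\fa_\scrC^\perp\subset\fa$, so the change of representative contributes $\mathrm{Ad}(e^{-H^\perp})$. Your intermediate $\mathrm{Ad}(e^{-H_\scrC}m^{-1})$ with $m=e^{-H^\perp}$ has the sign of $H^\perp$ flipped, though you correctly land on $\mathrm{Ad}(e^{-H})$. Since $e^{-H^\perp}\in A$ acts diagonally on each $\fg_\alpha$, there is no mixing of root spaces and no $K_\scrC$ factor to cancel. Your root-by-root computation with eigenvalue $e^{-\alpha(H)}$ is therefore already complete.
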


\begin{proof}
Let $w\cdot\fapp$ be another Weyl chamber satisfying $w\cdot\fapp\geq \scrC$. Recall from Lemma~\ref{lem:adjacent_open_weyl_chamber} that $w$ must lie in $\langle \Delta^0_\scrC\rangle$ and preserve $\scrC$. Since $\langle\Delta^0_\scrC\rangle\simeq W(M_\scrC,A_\scrC^\perp)$, we can represent $w=[\dot{w}]$ with $\dot{w}\in N_{K_\scrC}(\fa_\scrC^\perp)$; the corresponding KAK decomposition is
\[
y^{-1}x = k_1\dot{w}^{-1} e^{w\cdot\mu_\fapp(y^{-1}x)}\dot{w} k_2^{-1}.
\]
The corresponding $z_\scrC(x,y)$ is $xk_2 \dot{w}^{-1} M_\scrC$. However, since $\dot{w}\in M_\scrC$, this is the same point as before; hence $z_\scrC(x,y)$ is defined independently of the choice of the adjacent open Weyl chamber.

To study the intersection of $xK M_\scrC$ and $yKM_\scrC \exp(\scrC)$, recall from \eqref{eq:hyperbolic_splitting_first} that for fixed $g\in G$, we can identify the tangent space
\[
T_{gM_\scrC}G/M_\scrC \cong \fa_\scrC\oplus \fn_\scrC^+\oplus\fn_\scrC^- \text{ via } X\mapsto \left.\frac{d}{dt}\right|_{t=0} g\exp(tX)M_\scrC.
\]

Let us thus fix a KAK decomposition $y^{-1} x = k_1 e^{H} k_2^{-1}$, where $H:=\mu_\fapp(y^{-1}x)$, and set $H_\scrC:=\pi_\scrC(H)$. Then, since $e^{H-H_\scrC}\in A_\scrC^\perp< M_\scrC$,
\[
xk_2M_\scrC=yk_1e^HM_\scrC=yk_1e^{H_\scrC}M_\scrC.
\]
After identifying $T_{xk_2M_\scrC} G/M_\scrC\cong \fa_\scrC\oplus \fn_\scrC^+\oplus\fn_\scrC^-$ using the representative $xk_2=yk_1e^H$, a straightforward computation yields
\begin{equation}\label{eq:TxK}
T_{xk_2 M_\scrC}(xK)M_\scrC \cong \bigoplus_{\alpha\in \roots_\scrC^+}\fk_\alpha=:\mathfrak v, \text{ where } \fk_\alpha = \{X+\theta X : X\in\fg_\alpha\}
\end{equation}
and
\begin{equation}\label{eq:TxKexpA}
T_{xk_2M_\scrC} (yKe^{H_\scrC})M_\scrC \cong \mathrm{Ad}(e^{-H})\mathfrak v.
\end{equation}
Indeed, changing the representative from $yk_1e^{H_\scrC}$ to $xk_2=yk_1e^{H_\scrC}e^{H-H_\scrC}$ changes the fiber coordinates by $\mathrm{Ad}(e^{-(H-H_\scrC)})$, and
\[
\mathrm{Ad}(e^{-(H-H_\scrC)})\mathrm{Ad}(e^{-H_\scrC})=\mathrm{Ad}(e^{-H}).
\]
However, as $\alpha(H)>0$ for all $\alpha\in \roots_\scrC^+$, we directly get that
\begin{equation}\label{eq:transverse_inter_lie_algebra}
\left(\bigoplus_{\alpha\in \roots_\scrC^+}\fk_\alpha\right) \cap \mathrm{Ad}(e^{-H})\left(\bigoplus_{\alpha \in \roots_\scrC^+}\fk_\alpha\right)=\{0\},
\end{equation}
The full tangent space of $(yK\exp(\scrC))M_\scrC$ at this point is $\mathrm{Ad}(e^{-H})\mathfrak v\oplus\fa_\scrC$. Equation~\eqref{eq:transverse_inter_lie_algebra}, the root-space splitting, and the dimension count therefore prove transversality.
\end{proof}

We now describe the remaining intersection points, which do not contribute to the Poincaré sum. To simplify the analysis of the intersection $xKM_\scrC\cap yK\exp(\scrC)M_\scrC$, we can replace $x$ and $y$ respectively by $g x k$ and $g y k'$ for some $g\in G$ and $k,k'\in K$. We can thus assume that $x=e$ and $y = e^{-H}$ for some $H=\mu_\fapp(y^{-1} x)\in \fap$. Let us write $H_\scrC:=\pi_\scrC H\in \scrC$ for the projection to $\fa_\scrC$. After these modifications, $z_\scrC(x,y) = eM_\scrC$.

 Recall also that for any $H\in\fa$ with $\alpha(H)>0$ for all $\alpha\in \Delta_\scrC^+$, we have $Z_K(H) < K_\scrC = K \cap M_{\scrC}$.

\begin{lemma}\label{lemma:form-of-intersection}
Let $\scrC$ be a generalized Weyl chamber and let $H\in \fa$ satisfy $\alpha(H)>0$ for all $\alpha\in \Delta_\scrC^+$. Then
\[
\bigcup_{[w]\in W(G,A)/\langle \Delta_\scrC^0\rangle} \left\{ k \dot{w} M_\scrC : k\in Z_K(H)\right\} = KM_\scrC \cap (e^{-H} KA_\scrC)M_\scrC,
\]
where, for each $[w]\in W(G,A)/ \langle \Delta^0_\scrC\rangle$, we choose a representative $\dot{w}\in N_K(\fa)$. If $A_\scrC$ is replaced by $\exp(\scrC)$, we must impose the additional condition
\begin{equation}\label{eq:lem_full_intersection}
\pi_\scrC (w^{-1} H )\in \scrC.
\end{equation}
\end{lemma}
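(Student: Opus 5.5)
The plan is to prove both inclusions directly from the uniqueness properties of the $KAK$ decomposition. For ``$\supseteq$'', take $z\in KM_\scrC\cap(e^{-H}KA_\scrC)M_\scrC$. We write $z=kM_\scrC=e^{-H}k'e^{a}M_\scrC$ with $k,k'\in K$ and $a\in\fa_\scrC$, so $k=e^{-H}k'e^{a}m$ for some $m\in M_\scrC$. This gives $e^{H}k=k'e^{a}m$.

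\begin{enumerate}
\item Decompose $m=k_3e^{Y}k_4^{-1}$ using the Cartan decomposition of $M_\scrC$, with $k_3,k_4\in K_\scrC$ and $Y\in\fa_\scrC^\perp$. Since $K_\scrC$ commutes with $A_\scrC$, we get
\[
e^{H}k=(k'k_3)\,e^{a+Y}\,k_4^{-1}.
\]
\item The left side has $KAK$ form $e\cdot e^{H}\cdot k$. By uniqueness of the $\fa$-component up to the Weyl group, there is $w\in W(G,A)$ with $a+Y=w^{-1}\cdot H$.
\item Uniqueness of the $K$-components up to the centralizer then says the following. Using the representative $\dot w$, there is $c\in Z_K(H)$ with $k'k_3=c\,\dot w$ (up to the normalizing conventions). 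Likewise $k=c\,\dot w\,k_4^{-1}$ modulo the corresponding freedom.
\item Since $k_4\in K_\scrC<M_\scrC$, we obtain $z=kM_\scrC=c\dot wM_\scrC$.
\item Changing $w$ by an element of $\langle\Delta^0_\scrC\rangle$ changes $\dot w$ by an element of $N_{K_\scrC}(\fa_\scrC^\perp)\subset M_\scrC$. So only the class $[w]\in W(G,A)/\langle\Delta^0_\scrC\rangle$ matters.
\end{enumerate}

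For ``$\subseteq$'', take $k\in Z_K(H)$ and a representative $\dot w$, and set $z=k\dot wM_\scrC$. Then
\[
e^{-H}k\dot w=k\,e^{-H}\dot w=k\dot w\,e^{-w^{-1}H}.
\]
We split $-w^{-1}H=-\pi_\scrC(w^{-1}H)-(\text{component in }\fa_\scrC^\perp)$. The $\fa_\scrC^\perp$ part is absorbed into $M_\scrC$. This exhibits $z=e^{-H}(k\dot w)e^{\pi_\scrC(w^{-1}H)}M_\scrC$, so $z$ lies in both sets.

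The main obstacle is step 3: making the uniqueness of the $K$-components in the $KAK$ decomposition precise when $H$ is singular in $\fa$ and only $\scrC$-regular. The ambiguity is then $Z_K(H)$ rather than $M$. Here I would use the recalled fact $Z_K(H)<K_\scrC$ together with the standard statement that $k_1e^{H_1}k_2=k_1'e^{H_2}k_2'$ with $H_1,H_2\in\fa$ forces $H_2=w\cdot H_1$ and $k_1'=k_1c\dot w^{-1}$ with $c\in Z_K(H_1)$. To get the latter, I would pass to $G/K$ and use conjugacy of maximal flats through a point (the polar decomposition), following Knapp.

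The variant with $A_\scrC$ replaced by $\exp(\scrC)$ is then immediate. The $\fa_\scrC$-parameter of the intersection point in the construction above is exactly $a=\pi_\scrC(w^{-1}H)$, so requiring $a\in\scrC$ is precisely condition \eqref{eq:lem_full_intersection}. Finally, the condition is independent of the representative of $[w]$ by \eqref{eq:stab-Weyl-and-projector}.
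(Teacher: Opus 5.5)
Your argument is correct. The inclusion ``$\subseteq$'' and your treatment of the $\exp(\scrC)$ variant coincide with the paper's. For ``$\supseteq$'' you take a genuinely different route.

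The paper uses the polar decomposition $m=k_\scrC e^{s}$ with $s\in\fs\cap\fm_\scrC$. Uniqueness of $G=K\exp(\fs)$ then gives $\mathrm{Ad}(k^{-1})H=s+H_\scrC$. After that it must move $\mathrm{Ad}(k)\fa_\scrC$ into $\fa$ in two steps: first by conjugacy of maximal abelian subspaces inside $Z_G(H)$, and then by an auxiliary sublemma producing $k_\scrC\in K_\scrC$ with $k_\scrC k^{-1}\in N_K(\fa)$.

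You instead use the $KAK$ decomposition of $M_\scrC$ with Cartan subspace $\fa_\scrC^\perp$. In effect this first conjugates $s=\mathrm{Ad}(k_4)Y$ into $\fa_\scrC^\perp$, so that $a+Y$ already lies in $\fa$. Uniqueness of the $\fa$-component of $KAK$ in $G$ up to $W(G,A)$ then yields the Weyl element directly, and the sublemma is not needed.

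The step you flag as the main obstacle is in fact immediate and needs no regularity of $H$. Set $u=kk_4\dot w^{-1}$ and $v=k'k_3\dot w^{-1}$. Then $e^{H}u=ve^{H}=e^{\mathrm{Ad}(v)H}v$, and uniqueness of $G=\exp(\fs)K$ gives $u=v=:c\in Z_K(H)$. This is exactly $k=c\dot w k_4^{-1}$ and $k'k_3=c\dot w$, with no residual ``normalizing conventions.'' Consequently your proof never uses $Z_K(H)<K_\scrC$, nor the positivity of $H$ on $\Delta^+_\scrC$.

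Both arguments rest on the same structure theory (Knapp, Ch.~VII). Yours is shorter because conjugating $s$ into $\fa_\scrC^\perp$ inside $M_\scrC$ accomplishes in one step what the paper achieves with two conjugacy arguments. The paper's version, on the other hand, only needs the polar decomposition of $M_\scrC$ rather than its full $KAK$ decomposition.
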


\begin{proof}
Note that the left-hand side is indeed well-defined and does not depend on the representative $\dot w$. First, because the Weyl group is $W(G,A) = N_K(\fa)/Z_K(\fa)$, right multiplication of $\dot w$ by an element of $Z_K(\fa) = M_\fapp$ has no effect, since $M_\fapp$ is contained in $M_\scrC$.
Second, we may alter $w$ by an element of $\langle \roots_\scrC^0\rangle$, but this is precisely the Weyl group $W(M_\scrC,A_\scrC^\perp)$
(see \eqref{eq:weylgrp_of_MscrC}) and this is also absorbed in $M_\scrC$.

We first prove the inclusion ``$\subseteq$''. That $k\dot wM_\scrC\in KM_\scrC$ is clear by definition. For membership in the other submanifold, use $k\in Z_K(H)$ and $\fa_\scrC^\perp\subset\fm_\scrC$ to compute
\[
 k\dot wM_\scrC = e^{-H} k \dot w (\dot w^{-1} e^{H} \dot w) M_\scrC = e^{-H} k \dot w e^{\pi_\scrC(w^{-1} H)}M_\scrC \in (e^{-H} k A_\scrC)M_\scrC.
\]
If we assume \eqref{eq:lem_full_intersection}, this point even lies in $(e^{-H}k\exp(\scrC))M_\scrC$.

The inclusion ``$\supseteq$'' is more demanding. An arbitrary point of intersection $kM_\scrC$ must satisfy
\[
k = e^{-H} k' m e^{H_\scrC}
\]
for some $k'\in K$, $m\in M_\scrC$, and $H_\scrC\in \fa_\scrC$. First note that the point of intersection does not change if we multiply $k$ or $k'$ on the right by an element of $K_\scrC$. Multiplying both $k$ and $k'$ on the left by an element of $Z_K(H)$ changes the point of intersection, but as our aim is to prove that there is $\dot w\in N_K(\fa)$ such that $k\dot w \in Z_K(H)$, we may do this as needed.

Start with the Cartan decomposition of $M_\scrC$. We write $m= k_\scrC e^s$, with $k_\scrC \in K_\scrC$ and $s\in\fs\cap\fm_\scrC$. Replacing $k'$ by $k'k_\scrC$, we may assume $m\in \exp\fs$ and find
\begin{equation}\label{eq:a-conjugate-s+H}
k e^{\mathrm{Ad}(k^{-1})H} = e^H k = k' e^{s+H_\scrC}.
\end{equation}
By uniqueness of the Cartan decomposition in $G$, this implies that $k=k'$ and $\mathrm{Ad}(k^{-1}) H = s+H_\scrC$.
Next, we claim that, by multiplying $k$ on the left by an element of $Z_K(H)$, we can ensure that $\mathrm{Ad}(k)\fa_\scrC \subset \fa$. This can be seen as follows:

By definition, $\fa$ commutes with $H$, so $\fa \subset \fs\cap Z_\fg(H)$ is a maximal abelian subalgebra. But since $\fa_\scrC$ commutes with $s+H_\scrC$, $\mathrm{Ad}(k)\fa_\scrC$ must also commute with $H$. We thus have another abelian subalgebra $\mathrm{Ad}(k)\fa_\scrC\subset \fs\cap Z_\fg(H)$. According to \cite[Proposition 7.29]{Knapp-96} (with reductive group $Z_G(H)$ instead of $G$), there exists $k_0 \in Z_K(H) $ such that
\[
\mathrm{Ad}(k_0)\mathrm{Ad}(k)\fa_\scrC \subset \fa
\]
and we have proved the claim.

We use the following lemma.
\begin{lemma}
Let $\scrC$ be a generalized Weyl chamber. If $\fa_\scrC$ is contained in another maximal abelian subspace $\fa'$, there exists $k_\scrC \in K_\scrC$ such that $\mathrm{Ad}(k_\scrC)\fa' = \fa$.
\end{lemma}

\begin{proof}
Since all maximal abelian subspaces of $\fs$ are conjugate under $K$, we find some $k\in K$ such that
\[
k \fa' k^{-1} = \fa.
\]
Then $k \fa_\scrC k^{-1} \subset \fa$. Let $H_0\in \fa_\scrC$ be regular. By \cite[Lemma 7.38]{Knapp-96}, there exists $k_0\in N_K(\fa)$ so that
\[
\mathrm{Ad}(k_0) H_0 = \mathrm{Ad}(k) H_0.
\]
Replacing $k$ by $k_0^{-1}k$, we may thus assume that $k \in Z_K(H_0)=K_\scrC$.
\end{proof}

Applying this lemma to $\fa_\scrC \subset \mathrm{Ad}(k^{-1})\fa$, we find $k_\scrC \in K_\scrC$ such that
\[
\mathrm{Ad}(k_\scrC)\mathrm{Ad}(k^{-1})\fa = \fa.
\]
We deduce that $k_\scrC k^{-1}\in N_K(\fa)$. Thus, after appropriate left and right multiplication, we have shown that $k\in N_K(\fa)$ as required.
\end{proof}

\subsection{Orientability}

In order to express the Poincar\'e series as a pairing of currents, we need to discuss the orientation of $G/M_\scrC$ and those of its submanifolds $(xK)M_\scrC$.
We start by observing that $G/M_\scrC$ is itself orientable. Indeed, recall the $\mathrm{Ad}(M_\scrC)$-invariant decomposition \eqref{eq:hyperbolic_splitting_first}. Naturally, $\mathrm{Ad}(M_\scrC)$ acts trivially on the first summand $\fa_\scrC$. Now, the Cartan involution $\theta$ of $\fg$ swaps $\fn_\scrC^+$ and $\fn_\scrC^-$, and conjugates the $\mathrm{Ad}(M_\scrC)$-action, so that $\mathrm{Ad}(M_\scrC)$ acts on $\fn_\scrC^+ \oplus \fn_\scrC^-$ with positive determinant.

Next, we observe that there is a diffeomorphism
\[
G/P_\scrC\simeq K/K_\scrC \owns k K_\scrC \mapsto xkM_\scrC \in (xK) M_\scrC\subset G/M_\scrC.
\]
The partial flag manifold is not always orientable (see \cite{Patrao-et-al-2012} for a rather complete discussion of this subject). However, recall that the unstable distribution $E_u$ is integrable, and its leaves are subsets of $G/P_\scrC$; this suggests that the nonorientability of $xKM_\scrC$ is related to the orientation of $E_u$.
\begin{lemma}
If $G/P_\scrC$ is non-orientable, then $E_u$ is not orientable, and 
\[
\fo(E_u)_{|(xK)M_\scrC} \simeq \fo(T(xK)M_\scrC).
\]
Likewise, 
\[
\fo(E_{ws})_{|(xKA_\scrC) M_\scrC} \simeq \fo(T (xKA_\scrC)M_\scrC),
\]
Since $G/M_\scrC$ is orientable, $\fo(E_{ws}) \simeq \fo(E_u)$. 
\end{lemma}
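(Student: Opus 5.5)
The plan is to identify both orientation bundles with $G$-equivariant line bundles built from $\mathrm{Ad}$-characters, and then compare those characters on the relevant stabilizers. The homogeneous descriptions come first. By \eqref{eq:stable-unstable}, $\Lambda^{\mathrm{top}}E_u=G\times_{M_\scrC}\Lambda^{\mathrm{top}}\fn^-_\scrC$, so $\fo(E_u)$ is the flat $\Z_2$-bundle associated with the character $\chi^-(m):=\mathrm{sign}\det\mathrm{Ad}(m)|_{\fn^-_\scrC}$ of $M_\scrC$. On the other side, $(xK)M_\scrC\simeq K/K_\scrC\simeq G/P_\scrC$. By \eqref{eq:TxK}, using the representative $xk$, its tangent space is $\mathfrak v=\bigoplus_{\alpha\in\roots^+_\scrC}\fk_\alpha$. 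Hence $T(xK)M_\scrC\simeq K\times_{K_\scrC}\mathfrak v$, with $K_\scrC$ acting by $\mathrm{Ad}$.

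The key step is to compare these two descriptions over the submanifold. The map $X\mapsto X+\theta X$ sends $\fn^-_\scrC$ isomorphically onto $\mathfrak v$. Since $\theta$ commutes with $\mathrm{Ad}(K_\scrC)$, this map is $\mathrm{Ad}(K_\scrC)$-equivariant. It follows that $\fo(T(xK)M_\scrC)$ is the bundle on $K/K_\scrC$ associated with $\chi^-|_{K_\scrC}$. The restriction of $E_u$ to $(xK)M_\scrC$ is $\{[xk,X]:X\in\fn^-_\scrC\}$, which is $K\times_{K_\scrC}\fn^-_\scrC$ with the same character. This gives $\fo(E_u)_{|(xK)M_\scrC}\simeq\fo(T(xK)M_\scrC)$. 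We also need the first claim, that $E_u$ is non-orientable when $G/P_\scrC$ is. If $E_u$ were orientable, then its restriction to $(xK)M_\scrC\simeq G/P_\scrC$ would be orientable. By the isomorphism just proved, $G/P_\scrC$ would then be orientable, a contradiction.

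For the weak stable statement, the tangent space of $(xKA_\scrC)M_\scrC$ at $xke^aM_\scrC$ is $\mathrm{Ad}(e^{-a})\mathfrak v\oplus\fa_\scrC$. The $\fa_\scrC$-part is trivial for orientation because $\mathrm{Ad}(M_\scrC)$ acts trivially on it. The factor $\mathrm{Ad}(e^{-a})$ varies continuously in the connected group $A_\scrC$, so it does not affect orientation classes. It therefore suffices to compare $\mathfrak v$ with $\fa_\scrC\oplus\fn^+_\scrC$, which gives $\fo(E_{ws})$, through the equivariant map $X\mapsto X+\theta X$ from $\fn^+_\scrC$. This is the same map as before. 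It shows that $\chi^+|_{K_\scrC}$ and $\chi^-|_{K_\scrC}$ both equal the orientation character of $\mathfrak v$. The argument then proceeds as in the $E_u$ case.

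Finally, I would show $\fo(E_{ws})\simeq\fo(E_u)$ globally from orientability of $G/M_\scrC$. The bundle $T(G/M_\scrC)$ splits as $E_0\oplus E_s\oplus E_u$, and $E_0$ is trivial. Hence $\fo(T)=\fo(E_s)\otimes\fo(E_u)$ is trivial, so $\fo(E_s)\simeq\fo(E_u)$. Since $\fo(E_{ws})=\fo(E_s)$, the claim follows. At the level of characters this says $\chi^+=\chi^-$ on all of $M_\scrC$, which is exactly the determinant computation via $\theta$ given above. The main obstacle is this last identity on the noncompact $M_\scrC$ rather than just on $K_\scrC$. I would handle it through the Cartan decomposition $M_\scrC=K_\scrC\exp(\fs\cap\fm_\scrC)$: the exponential factor lies in the identity component, so the characters agree there, and they have already been shown to agree on $K_\scrC$.
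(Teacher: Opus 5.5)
Your proof is correct and is essentially the paper's argument: the paper observes that $T(xKM_\scrC)$ is transverse to $E_{ws}$ and projects it along $E_{ws}$ onto $E_u$. Fiberwise this projection $X+\theta X\mapsto\theta X$ is exactly the inverse of your $\mathrm{Ad}(K_\scrC)$-equivariant map $\fn^-_\scrC\to\mathfrak v$, so your associated-bundle/character formulation makes that same bundle isomorphism explicit, and your non-orientability and $\fo(E_{ws})\simeq\fo(E_u)$ arguments are fine. The only loose step is the claim that $\mathrm{Ad}(e^{-a})$ ``does not affect orientation classes''; better to note that $\mathrm{Ad}(e^{-a})$ commutes with $\mathrm{Ad}(K_\scrC)$ (or to project along $\fn^-_\scrC$ onto $\fa_\scrC\oplus\fn^+_\scrC$, as the paper does), which gives the required equivariant isomorphism directly.
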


\begin{proof}
Recall from \eqref{eq:TxK} that $T_{xk M_\scrC} (xKM_\scrC)$ is transverse to $E_{ws}$, so that the projection map 
\[
\pi : E_u \oplus E_{ws} \to E_u
\]
restricts to an isomorphism $\pi|_{T(xKM_\scrC)}:T(xKM_\scrC)\xrightarrow{\sim}(E_u)|_{xKM_\scrC}$ and therefore induces an isomorphism of orientation bundles. Since $G/P_\scrC$ is not orientable, neither is $xKM_\scrC$, so that, as a covering space, $\fo(TxKM_\scrC)$ must be connected. Since $G/M_\scrC$ is itself connected, this implies that $\fo(E_u)$ is connected, and thus $E_u$ is non-orientable. The case of $xKA_\scrC M_\scrC$ is identical.
\end{proof} Fix also an orientation of $\fa_\scrC$, use it to orient the $A_\scrC$-action directions, and require the ordered action basis introduced below to be positive for this orientation.

\begin{lemma}\label{lem:orientations:compatible}
We choose orientations separately in two cases. 
\begin{enumerate}[label=\emph{(\roman*)}]
	\item If $G/P_\scrC$ is orientable, we choose an orientation $\eps$ of $G/M_\scrC$, and an orientation $\delta$ of $KM_\scrC$. Then we obtain a natural orientation on each $xKM_\scrC$ and each $xKA_\scrC M_\scrC$ by pushforward.
	\item If $G/P_\scrC$ is non-orientable, we choose an orientation $\eps$ of $\fo(E_u)$, and an orientation $\delta$ of $\fo(T KM_\scrC)$. Then we obtain a natural orientation on each $\fo(TxKM_\scrC)$ and each $\fo(TxKA_\scrC M_\scrC)$ by pushforward. 
\end{enumerate}
By choosing $\eps$ and $\delta$ appropriately, we can ensure that whenever $x,y\in G$ satisfy $\alpha(\mu_\fapp(y^{-1}x))>0$ for each $\alpha\in\Delta^+_\scrC$, and $z_\scrC(x,y)\in xKM_\scrC \cap y K \exp(\scrC) M_\scrC$ is the point of transverse intersection introduced in \eqref{eq:def-z_C}, the intersection number introduced in Lemma~\ref{lem:submanifold_pairing} or Corollary~\ref{cor:submanifold-pairing-non-oriented} satisfies
\[
\sigma(z_\scrC(x,y)) = 1.
\]
\end{lemma}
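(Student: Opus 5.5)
The plan is to reduce the sign computation to a single model configuration and then use connectedness and $G$-equivariance to propagate the sign. By $G$-equivariance of $z_\scrC$ and of all the submanifolds involved ($\gamma z_\scrC(x,y)=z_\scrC(\gamma x,\gamma y)$, and left translation by $g\in G$ maps $xKM_\scrC$ to $gxKM_\scrC$ and $yK\exp(\scrC)M_\scrC$ to $gyK\exp(\scrC)M_\scrC$), and since orientations on the translates are defined by pushforward, it suffices to treat the normalized configuration $x=e$, $y=e^{-H}$ with $H=\mu_\fapp(y^{-1}x)$, $\alpha(H)>0$ for $\alpha\in\Delta^+_\scrC$. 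One point needs checking here: left translation by $g$ must preserve the global orientation $\eps$ of $G/M_\scrC$ (resp.\ of $\fo(E_u)$). Since $G$ is connected, this follows by continuity. The reduction also uses that replacing $(x,y)$ by $(xk,yk')$ does not change the oriented submanifolds, because $xkKM_\scrC=xKM_\scrC$ and $K$ is connected modulo components absorbed in the pushforward convention. This is the first place I would be careful: if $K$ is disconnected, I would instead define the orientation of $xKM_\scrC$ directly as the image of $\delta$ under $g\mapsto xg$, and note that this depends only on the coset $xK$ provided left multiplication by $K$ preserves $\delta$. Failing that, I would pass to the $\fo$-twisted version, where the sign ambiguity is absorbed.

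In the normalized configuration the intersection point is $eM_\scrC$. By \eqref{eq:TxK} and \eqref{eq:TxKexpA}, the two tangent spaces are $\mathfrak v=\bigoplus_{\alpha\in\Delta^+_\scrC}\fk_\alpha$ and $\mathrm{Ad}(e^{-H})\mathfrak v\oplus\fa_\scrC$ inside $\fa_\scrC\oplus\fn^+_\scrC\oplus\fn^-_\scrC$. The orientation sign $\sigma(eM_\scrC)$ is the sign of the determinant of the frame built from a positive basis of $\mathfrak v$ followed by $\mathrm{Ad}(e^{-H})$ of that basis and then the ordered action basis of $\fa_\scrC$. The key observation is that this sign depends continuously on $H$ in the connected set $\{H\in\fa:\alpha(H)>0\ \forall\alpha\in\Delta^+_\scrC\}$ (a convex cone), and the frame remains a basis throughout by \eqref{eq:transverse_inter_lie_algebra}. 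Hence the sign is constant in $H$, and also constant along the choice of $x,y$ since the configuration space $\{(x,y):\alpha(\mu_\fapp(y^{-1}x))>0\}$ maps to this cone with the sign depending only on $H$ after reduction.

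Finally, I would compute the sign once, for instance in the limit $H\to\infty$ in $\scrC$. There, for $X\in\fg_\alpha$, $\mathrm{Ad}(e^{-H})(X+\theta X)$ is dominated by $e^{\alpha(H)}\theta X\in\fn^-_\scrC$ after rescaling. So the frame becomes, up to positive rescaling, (basis of $\fk$-graph) $\oplus$ (basis of $\fn^-_\scrC$) $\oplus$ (basis of $\fa_\scrC$). Its sign is some fixed $\pm1$. If it is $-1$, replace $\delta$ by $-\delta$, which flips the orientation of the first factor and hence the sign. In the nonorientable case, the same argument runs on the double cover $\fo$, using Corollary~\ref{cor:submanifold-pairing-non-oriented}, where the sign is well defined on the base, and the choice of $\delta$ on $\fo(TKM_\scrC)$ is the one adjusted. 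The main obstacle I expect is the compatibility between the pushforward orientations of $xKM_\scrC$ and $yK\exp(\scrC)M_\scrC$ under the disconnectedness issues, specifically $K$ or $M_\scrC$ having several components acting by orientation reversal. I would handle this by checking that $\mathrm{Ad}(M_\scrC)$ preserves orientation on $\fn^-_\scrC$ relative to $\mathfrak v$, which is exactly what the identification $\fo(E_u)|_{xKM_\scrC}\simeq\fo(T(xK)M_\scrC)$ from the preceding lemma encodes.
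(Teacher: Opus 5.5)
Your reduction (equivariance of $z_\scrC$ and of the pushforward orientations, followed by the $KAK$ normalization) and your local-constancy argument over the convex cone of admissible $H$ are exactly the paper's proof. The step that fails is the last one, where you fix the sign by replacing $\delta$ with $-\delta$. This leaves $\sigma$ unchanged, because $\delta$ orients \emph{both} submanifolds. The manifold $xKM_\scrC$ carries the pushforward of $\delta$ by $x$, and $yK\exp(\scrC)M_\scrC$ carries the pushforward of $\delta$ by $y$, completed by the fixed orientation of $\fa_\scrC$ as in Lemma~\ref{lem:submanifold_propagation}. Your own description of the frame shows this: it is a positive basis $b$ of $\mathfrak v$, then $\mathrm{Ad}(e^{-H})b$, then the action basis. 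Reversing the orientation of $b$ reverses the first two blocks simultaneously, so the sign of the determinant does not change. The same happens in the twisted case, where $\delta$ orients both $\fo(TxKM_\scrC)$ and $\fo(TyKA_\scrC M_\scrC)$. The parameter that actually multiplies $\sigma$ by $-1$ is the ambient orientation $\eps$, and this is how the paper concludes. With that choice, your asymptotic evaluation of the sign as $H\to\infty$ is unnecessary, since constancy alone suffices.

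Two further points. First, you justify that left translations preserve $\eps$ by connectedness of $G$. Here, however, $G=\mathbf G(\R)$ is only Zariski-connected (e.g.\ $\mathrm{SO}(p,q)$). The correct reason is that $T(G/M_\scrC)=G\times_{M_\scrC}(\fa_\scrC\oplus\fn^+_\scrC\oplus\fn^-_\scrC)$, and $\mathrm{Ad}(M_\scrC)$ acts trivially on $\fa_\scrC$ and with positive determinant on $\fn^+_\scrC\oplus\fn^-_\scrC$ (the two summands are dual under the Killing form). Hence $G/M_\scrC$ carries a $G$-invariant orientation.

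Second, your worry about components of $K$ is legitimate, and the paper's proof passes over it too. Deducing equality of signs from $z_\scrC(xk,yk')=z_\scrC(x,y)$ requires left multiplication by $K$ to preserve $\delta$, which is automatic only when $K$ is connected. Your fallbacks do not resolve this. If some $k\in K$ reverses $\delta$, then $(x,y)$ and $(xk,y)$ both satisfy the hypothesis and give the same intersection point with opposite signs, so no choice of $\eps$ and $\delta$ yields $\sigma\equiv 1$. This really occurs, e.g.\ for $G=\mathrm{PGL}_2(\R)$: there $[\mathrm{diag}(1,-1)]\in M$ acts by $-1$ on $\fk$, although $G/P\simeq\mathbb{RP}^1$ is orientable. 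So an extra assumption such as $K$ connected, or $K$ preserving $\delta$, is needed; passing to the twisted currents does not by itself supply it.
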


\begin{proof}
Replacing $\eps$ by $-\eps$ changes $\sigma(z_\scrC(x,y))$ to $-\sigma(z_\scrC(x,y))$, so we only need to show that $\sigma(z_\scrC(x,y))$ does not depend on $x,y$. The first observation is that,
since we defined our orientation by pushforward, and $G$ preserves the orientation of $G/M_\scrC$, for $\gamma\in G$,
\[
\sigma(z_\scrC(\gamma x, \gamma y)) = \sigma(z_\scrC(x,y)). 
\]
Also, recall that $z_\scrC(xk, yk') = z_\scrC(x,y)$ for $k,k'\in K$. Using the $KAK$ decomposition in \eqref{eq:def-z_C}, we conclude that
\[
\sigma(z_\scrC(x,y)) = \sigma(z_\scrC( e^{\mu_{\fapp}(y^{-1}x)}, e ) ). 
\]
The second observation is that because $z_\scrC(x,y)$ is a transverse point of intersection and because $xKM_\scrC$ and $yK\exp(\scrC) M_\scrC$ vary in smooth families with smoothly varying orientations, the number $\sigma(z_\scrC(x,y))$ must be locally constant. Since the domain
\[
\{ a\in \fap ~:~ \alpha(a)>0,\ \alpha\in\Delta^+_\scrC\}
\]
is connected, $\sigma(z_\scrC(x,y))$ is thus constant, independent of $x,y$. 
\end{proof}

\subsection{Anosov subgroups and asymptotics of intersections}

In the previous subsection, we have studied the intersection points on $G/M_\scrC$. We now prove the results needed to work on the biquotient $\Gamma\backslash G/M_\scrC$, or, more precisely, on the smooth subset $\Gamma\backslash \odms$ of this biquotient.

For fixed $x,y\in G$, studying the intersection of $\Gamma (xK)M_\scrC$ and $\Gamma (yK\exp(\scrC)) M_\scrC$  inside $\Gamma \backslash G/M_\scrC$ is equivalent to studying the intersection points of $(xK)M_\scrC$ and $(\gamma yK\exp(\scrC)) M_\scrC$  inside $G/M_\scrC$ for all $\gamma\in \Gamma$. We first consider the regular intersection points $z_\scrC(x,\gamma y)$.

\begin{proposition}\label{prop:late-intersections-are-in-DMS}
Let $\Gamma$ be a $\scrC$-transverse subgroup and let $\fapp\geq \scrC$ be an open Weyl chamber. Let $x,y\in G$ be fixed. There exists a precompact set $B\subset \odms$ such that, for all but finitely many $\gamma\in\Gamma$, the following statements hold:
\begin{enumerate}[label=\emph{(\roman*)}]
 \item\label{it:intersect_dms_1} $\alpha(\mu_\fapp(y^{-1}\gamma^{-1} x))>0$ for all $\alpha\in\roots^+_\scrC$.
 \item $z_\scrC( x, \gamma y)\in B$, with accumulation points lying in the $\Gamma$-invariant preimage in $\odms$ of $\mc{T}_{+,\M}\subset\M$.
\end{enumerate}
\end{proposition}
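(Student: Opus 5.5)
We argue by contradiction along sequences: it suffices to show that any sequence of pairwise distinct elements $\gamma_\ell\in\Gamma$ has a subsequence along which (i) holds and $z_\scrC(x,\gamma_\ell y)$ converges to a point of $\odms$ lying over $\mc T_{+,\M}$. Indeed, the finitely many exceptions then come for free, and the closure $B$ of the set of all $z_\scrC(x,\gamma y)$, minus finitely many, is compact in $\odms$. Since $\Gamma$ is discrete, $\gamma_\ell\to\infty$, and by $\scrC$-transversality, hence $\scrC^\circ$-divergence (Lemma~\ref{lemma:i-invariant}), the sequence $\eta_\ell:=y^{-1}\gamma_\ell^{-1}x$ is $\scrC$- and $-\scrC$-divergent (Lemma~\ref{lemma:Benoist-compact-Cartan}). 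So $\alpha(\mu_\fapp(\eta_\ell))\to+\infty$ for all $\alpha\in\roots^+_\scrC$, which gives (i) for $\ell$ large. After passing to a subsequence, $(\gamma_\ell^{-1})$ converges at infinity, and by Lemma~\ref{lem:limit_points_leftright_mult} so does $(\eta_\ell)$.

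Next we identify the limit. We write $\eta_\ell=k_{1,\ell}e^{H_\ell}k_{2,\ell}^{-1}$ with $k_{i,\ell}\to k_{i,\infty}$. By \eqref{eq:def-z_C} applied to $(x,\gamma_\ell y)$, we have $z_\scrC(x,\gamma_\ell y)=xk_{2,\ell}M_\scrC$, and this converges to $z:=xk_{2,\infty}M_\scrC$ in $G/M_\scrC$. The Hopf coordinates of $z$ are $(xk_{2,\infty}P_\scrC,\ xk_{2,\infty}P_{-\scrC},\ H_\scrC(xk_{2,\infty}))$. Here $xk_{2,\infty}P_{-\scrC}=x\,\eta^-_{\scrC}$ (Definition~\ref{definition:conv-infinity}), and by Lemma~\ref{lem:limit_points_leftright_mult} this equals $(\gamma_\ell^{-1})^-_\scrC$, a point of $\Lambda_{-\scrC}$ (using Lemma~\ref{lem:forward_backward_same}(i): $(\gamma_\ell^{-1})^-_\scrC=(\gamma_\ell)^+_{-\scrC}$). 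By \eqref{equation:tails-anosov}, $z$ lies in the lift of $\mc T_{+,\M}$, provided $z\in\odms$.

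The main obstacle is showing $z\in\odms$, and in fact the argument must produce a fixed precompact $B$ rather than merely limits. We would use the characterization of Lemma~\ref{lem:DMS_transversal_grps}: for each $\xi\in\Lambda_\scrC$, we need either $zP_\scrC\pitchfork\kappa(\xi)$ or $zP_{-\scrC}\pitchfork\xi$. Put $\xi^-:=zP_{-\scrC}\in\Lambda_{-\scrC}$. If $\xi^-\pitchfork\xi$, we are done. Otherwise, transversality of $\Gamma$ forces $\xi^-=\kappa(\xi)$, and we need $zP_\scrC\pitchfork\xi^-$, which holds automatically since $zP_\scrC$ and $zP_{-\scrC}$ are transverse by the Hopf coordinates. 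This is exactly the argument of Lemma~\ref{lem:DMS_transversal_grps}, now using only the backward flag.

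Precompactness of $B$ in $\odms$ then follows: every sequence in $B$ has a subsequence converging to a point of $\odms$. Points coming from finitely many fixed $\gamma$ are harmless, and distinct sequences are handled by the above. A subtle point to check is that the $k_{2,\ell}$ can be chosen convergent consistently with the $K_\scrC$-ambiguity, which holds because $K_\scrC<M_\scrC$.
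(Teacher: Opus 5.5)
Your proof is correct and follows the paper's argument: you pass to a subsequence with convergent Cartan factors, identify the limit $xk_{2,\infty}M_\scrC$, show its backward flag lies in $\Lambda_{-\scrC}$ via the left/right multiplication rule for limit points, and conclude membership in $\odms$ from transversality of $\Gamma$, exactly as in the proof that $\tilde{\mathscr J}_\scrC\subset\odms$. The differences are cosmetic: you phrase the last step with $\kappa$ instead of reducing to $\iota_\fapp$-symmetric $\scrC$ and using $\mathcal I_\scrC$, and you spell out how $B$ is extracted from the sequential statement more carefully than the paper does.
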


\begin{proof}
To prove this proposition, we will first introduce notation for Cartan decompositions. We start by fixing a sequence of pairwise distinct elements $\gamma_\ell \in\Gamma$, and points $x,y\in G$. We write the Cartan decompositions
\begin{equation}\label{eq:def-k_ell-k'_ell-...}
y^{-1} \gamma_\ell^{-1} x = k_\ell e^{H_\ell} k_\ell'^{-1},\quad \gamma_\ell^{-1} = \tilde{k}_\ell e^{\tilde{H}_\ell} \tilde{k}_\ell'^{-1}.
\end{equation}
After passing to a subsequence, we may assume that the resulting sequences in $K$ converge. Since $\scrC$-transversality implies $\scrC$-divergence, $\alpha(\tilde{H}_\ell)\to +\infty$ for every $\alpha\in\roots^+_\scrC$. Thus, by Lemma~\ref{lemma:Benoist-compact-Cartan}, $\alpha(\tilde{H}_\ell)>0$ and $\alpha(H_\ell)>0$ for all sufficiently large $\ell$. This proves \ref{it:intersect_dms_1}.

The intersection point $z_\scrC(x,\gamma_\ell y)$ is $xk'_\ell M_\scrC$, corresponding to Hopf coordinate boundary points
\[
\xi_\ell^\pm = x k'_\ell P_{\pm \scrC}. 
\]
Using the convergence of $k_\ell'$ we can introduce
$\xi^\pm =\lim_{\ell\to\infty}\xi^\pm_\ell = xk'_\infty P_{\pm\scrC}$ and by definition we have \begin{equation}\label{eq:xipm}
 \xi^+ \pitchfork \xi^-.
\end{equation}
Now, we claim that $\xi^-\in\Lambda_{-\scrC}$. To see this, we invert \eqref{eq:def-k_ell-k'_ell-...}, $x^{-1} \gamma_\ell y = k'_\ell e^{-H_\ell} k_\ell^{-1}$, and deduce by applying twice Lemma~\ref{lem:limit_points_leftright_mult} that 
\begin{equation}
	\label{eq:xi-inlimitset}
\gamma_{-\scrC}^+ =  (\gamma_\ell y)_{-\scrC}^+ = (x x^{-1}\gamma_\ell y)_{-\scrC}^+ =x k'_\infty P_{-\scrC} = \xi^-,
\end{equation}
which proves the claim. 

Since $z_\scrC(x,\gamma_\ell y)\in xK M_\scrC$ and the latter is compact in $G/M_\scrC$, the sequence can leave every precompact subset of $\odms$ only if, for $\xi^\pm := \lim_{\ell\to \infty} \xi_\ell^\pm$, there is an $\eta\in \Lambda_\scrC$ such that $\xi^-\not\pitchfork \eta$ and $\xi^+\not\pitchfork \mathcal{I}_{\scrC}(\eta)$. These are precisely the points removed in the definition of $\odms$. By $\scrC$-transversality of $\Gamma$ (recall also that, by Remark~\ref{rem:sym_scrC}, we can work with $\iota_\fapp$-symmetric $\scrC$), $\xi^-\not\pitchfork \eta$ implies
$\xi^- = \mathcal{I}_{\scrC}(\eta)$. Then $\xi^+\not\pitchfork \mathcal{I}_{\scrC}(\eta)$ would imply $\xi^+\not\pitchfork \xi^-$, contradicting \eqref{eq:xipm}.

\end{proof}

Let
\[
q:\odms\longrightarrow\Gamma\backslash\odms
\]
be the quotient map.

\begin{lemma}\label{lemma:DMS-ambient-separation}
Let $\scrC$ be a generalized Weyl chamber, and let $\Gamma<G$ be a discrete, torsion-free, $\scrC$-divergent subgroup. If $K\subset\Gamma\backslash\odms$ is compact, then
\[
\overline{q^{-1}(K)}^{\,G/M_{\scrC}}=q^{-1}(K)\subset\odms.
\]
\end{lemma}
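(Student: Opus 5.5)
Since $q^{-1}(K)\subset\odms$ holds by definition and $\odms$ is open in $G/M_\scrC$ (Lemma~\ref{lemma:basic-feature-Omega-DMS}), the content of the statement is that $q^{-1}(K)$ is closed in $G/M_\scrC$. In other words, no sequence in $q^{-1}(K)$ may accumulate at a point of $G/M_\scrC$ outside $q^{-1}(K)$. We argue by sequences. Take $g_\ell M_\scrC\in q^{-1}(K)$ with $g_\ell M_\scrC\to gM_\scrC$ in $G/M_\scrC$; we must show $gM_\scrC\in q^{-1}(K)$.

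First, we lift $K$ to a compact set upstairs. The map $q$ is a covering map, because $\Gamma$ acts freely and properly on $\odms$ (Proposition~\ref{prop:odms_proper}, taking $H$ trivial, or directly the $\Gamma$-part of that argument). Hence there is a compact set $B\subset\odms$ with $q(B)\supset K$. So we may write $g_\ell M_\scrC=\gamma_\ell b_\ell$ with $b_\ell\in B$ and $\gamma_\ell\in\Gamma$. Passing to a subsequence, we have $b_\ell\to b\in B\subset\odms$.

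If $(\gamma_\ell)$ is bounded, then it is eventually constant after passing to a subsequence, by discreteness. In that case $gM_\scrC=\gamma b\in\odms$, and by continuity $q(gM_\scrC)=\lim q(b_\ell)\in K$, since $K$ is closed. This is the desired conclusion.

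The main obstacle is the case $\gamma_\ell\to\infty$, which we must rule out. By $\scrC$-divergence, together with Lemma~\ref{lemma:i-invariant} (which also gives $\scrC^\circ$-divergence, hence $\pm\scrC$-divergence), we may assume $(\gamma_\ell)$ is $\pm\scrC$-divergent and convergent at infinity. Since $b\in\odms$, Definition~\ref{definition:dms-domain} gives either $bP_\scrC\pitchfork\gamma^-_\scrC$ or $bP_{-\scrC}\pitchfork\gamma^-_{-\scrC}$.

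Consider the first case. Transversality then holds uniformly for $b_\ell$ near $b$, so Proposition~\ref{prop:proximal} gives $\gamma_\ell b_\ell P_\scrC\to\gamma^+_\scrC$. Moreover, Lemma~\ref{lemma:translation-length} gives
\[
H_\scrC(\gamma_\ell b_\ell)=\mu_\scrC(\gamma_\ell)+H_\scrC(b_\ell)+O(1),
\]
which diverges because $\mu_\scrC(\gamma_\ell)\to\infty$ by $\scrC$-divergence and properness of the Cartan projection. On the other hand, $H_\scrC(g_\ell)$ converges to $H_\scrC(g)$, since the horospherical $\fa_\scrC$-component is continuous on $G/M_\scrC$. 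Taking the representatives $g_\ell=\gamma_\ell b_\ell m_\ell$ with $m_\ell\in M_\scrC$, and using $H_\scrC(\cdot\, m)=H_\scrC(\cdot)$, this is a contradiction. The second case is symmetric, using $H_{-\scrC}$, $\mu_{-\scrC}$ and Lemma~\ref{lemma:translation-length} applied to $-\scrC$.

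The delicate point is checking that the uniform-transversality hypothesis of Lemma~\ref{lemma:translation-length} holds along $b_\ell\to b$. This follows because transversality is an open condition, so nearby representatives $b_\ell\in K$ can be chosen with bounded $N^-_\scrC$-coordinates. We also need that the representatives of $g_\ell$ in $G$ may be chosen convergent, which is harmless since $H_\scrC$ is $M_\scrC$-invariant.

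Finally, the argument above shows $\gamma_\ell$ is eventually in a finite set, so $gM_\scrC\in\Gamma B\subset\odms$ and $q(gM_\scrC)\in K$. This proves $\overline{q^{-1}(K)}\subset q^{-1}(K)$.
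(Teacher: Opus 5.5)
Your proposal is correct and follows the paper's proof essentially step by step. You lift $K$ to a compact set upstairs, extract $b_\ell\to b$, dispose of bounded $\gamma_\ell$ by discreteness, and rule out $\gamma_\ell\to\infty$ by combining the DMS dichotomy at $b$ with Lemma~\ref{lemma:translation-length}, which makes $H_{\pm\scrC}(\gamma_\ell b_\ell)$ unbounded and contradicts convergence in $G/M_\scrC$. (The appeal to Proposition~\ref{prop:proximal} is unnecessary. Also, the divergence of $\mu_\scrC(\gamma_\ell)$ follows from $\scrC$-divergence together with the $\langle\Delta^0_\scrC\rangle$-averaging formula \eqref{eq:stab-Weyl-and-projector}, not from properness of the Cartan projection.)
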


\begin{proof}
By Proposition~\ref{prop:odms_proper}, there is a compact set $\widetilde K\subset\odms$ such that
\[
q^{-1}(K)=\Gamma\widetilde K.
\]
Assume that $z_\ell=\gamma_\ell p_\ell\in\Gamma\widetilde K$ converges to some $z\in G/M_{\scrC}\setminus q^{-1}(K)$, where $\gamma_\ell\in\Gamma$ and $p_\ell\in\widetilde K$. After passing to a subsequence, we may assume that $p_\ell\to p\in\widetilde K$. Necessarily $\gamma_\ell\to\infty$; after passing to a further subsequence, we may assume that $(\gamma_\ell)$ converges at infinity. By Lemma~\ref{lemma:i-invariant}, it is both $\scrC$- and $-\scrC$-divergent.

Write $p=gM_{\scrC}$ and choose representatives $p_\ell=g_\ell M_{\scrC}$ such that $g_\ell\to g$. Since $p\in\odms$, either
\[
gP_{\scrC}\pitchfork\gamma_{\scrC}^-
\qquad\text{or}\qquad
gP_{-\scrC}\pitchfork\gamma_{-\scrC}^-.
\]
In the first case, $g_\ell P_{\scrC}\pitchfork\gamma_{\scrC}^-$ is uniformly transverse for all sufficiently large $\ell$. Lemma~\ref{lemma:translation-length} gives a bounded set $\mc B\subset\fa_{\scrC}$ such that
\[
H_{\scrC}(\gamma_\ell g_\ell)
\in
\mu_{\scrC}(\gamma_\ell)+H_{\scrC}(g_\ell)+\mc B.
\]
The sequence $H_{\scrC}(g_\ell)$ is bounded, whereas $\|\mu_{\scrC}(\gamma_\ell)\|\to\infty$. It follows that $H_{\scrC}(\gamma_\ell g_\ell)$ is unbounded, contradicting the convergence of $z_\ell=\gamma_\ell g_\ell M_{\scrC}$ in $G/M_{\scrC}$. The second case follows analogously.
\end{proof}

\begin{lemma}\label{lemma:bad-intersection-are-away}
Assume that $\scrC$ is an $\iota_\fapp$-symmetric face, that $x,y\in G$ are fixed and that $B\subset \odms$ is a $\Gamma$-invariant set such that $\Gamma \backslash B\subset \Gamma\backslash \odms$ is precompact. Then there are only finitely many $\gamma\in \Gamma$ such that the nonregular intersection points of $xKM_\scrC$ and $\gamma y K \exp(\scrC) M_\scrC$ lie in $B$ (see Lemma~\ref{lemma:form-of-intersection}). In other words,
\[
\begin{aligned}
\#\Bigl\{\gamma\in\Gamma:\;&
\bigl(B\cap xKM_\scrC\cap\gamma yK\exp(\scrC)M_\scrC\bigr) \setminus\{z_\scrC(x,\gamma y)\}\neq\varnothing
\Bigr\}<\infty.
\end{aligned}
\]
\end{lemma}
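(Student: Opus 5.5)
We argue by contradiction. Suppose there is a sequence of pairwise distinct $\gamma_\ell\in\Gamma$ together with nonregular intersection points $p_\ell\in B\cap xKM_\scrC\cap\gamma_\ell yK\exp(\scrC)M_\scrC$, $p_\ell\neq z_\scrC(x,\gamma_\ell y)$. As in the proof of Proposition~\ref{prop:late-intersections-are-in-DMS}, we write $y^{-1}\gamma_\ell^{-1}x=k_\ell e^{H_\ell}k_\ell'^{-1}$. By $\scrC$-divergence and Lemma~\ref{lemma:Benoist-compact-Cartan}, $\alpha(H_\ell)\to+\infty$ for all $\alpha\in\roots^+_\scrC$.

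\textbf{Step 1: normalize.} After translating by $(xk'_\ell)^{-1}$ we are in the normalized situation preceding Lemma~\ref{lemma:form-of-intersection}: $x\mapsto e$ and $\gamma_\ell y\mapsto e^{-H_\ell}$ (up to right $K$-factors). That lemma says
\[
p_\ell=xk'_\ell k_{\ell}^0\dot w_\ell M_\scrC,\qquad k^0_\ell\in Z_K(H_\ell)\subset K_\scrC,
\]
for some $[w_\ell]\in W(G,A)/\langle\roots^0_\scrC\rangle$ with $[w_\ell]\neq[e]$ (the class $[e]$ gives $z_\scrC$) and $\pi_\scrC(w_\ell^{-1}H_\ell)\in\scrC$. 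Since the Weyl group is finite, we pass to a subsequence with $w_\ell=w$ fixed, $k'_\ell\to k'_\infty$, and $k^0_\ell\to k^0_\infty\in K_\scrC$. Then $p_\ell\to p:=xk'_\infty k^0_\infty\dot wM_\scrC$ in $G/M_\scrC$.

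\textbf{Step 2: the limit lies in $\odms$.} Since $\Gamma\backslash B$ is precompact, Lemma~\ref{lemma:DMS-ambient-separation} (applied to a compact $K\supset\Gamma\backslash B$) shows that the closure of $B$ in $G/M_\scrC$ is contained in $\odms$. Hence $p\in\odms$.

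\textbf{Step 3: the limit is not in $\odms$.} This is the main obstacle. From \eqref{eq:xi-inlimitset} we have $\xi^-:=xk'_\infty P_{-\scrC}=\gamma^+_{-\scrC}\in\Lambda_{-\scrC}$, and symmetrically $\xi^+:=xk'_\infty P_\scrC\in\Lambda_\scrC$ is a limit point as well (it is the $-\scrC$-repelling point, via Lemma~\ref{lem:forward_backward_same} with $\scrC$ symmetric). Since $k^0_\infty\in K_\scrC\subset P_{\pm\scrC}$, the Hopf coordinates of $p$ are $(xk'_\infty\dot wP_\scrC,\;xk'_\infty\dot wP_{-\scrC})$. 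We would show that for $[w]\notin\langle\roots^0_\scrC\rangle$ both of the following hold:
\[
\dot wP_\scrC\not\pitchfork P_{-\scrC}\quad\text{or}\quad\dot wP_\scrC\not\pitchfork\dot w_\fapp P_\scrC,
\]
and correspondingly for $\dot wP_{-\scrC}$. By Bruhat \eqref{eq:generalized-Bruhat}, $\dot wP_\scrC\pitchfork P_{-\scrC}$ iff $\dot w\in P_{-\scrC}P_\scrC$, i.e. iff $[w]$ is trivial in the double coset. If $[w]$ is nontrivial only as a left/right coset, we would use the additional constraint $\pi_\scrC(w^{-1}H_\ell)\in\scrC$ with $H_\ell$ $\scrC$-regular: together with $\scrC$-symmetry it should force $w$ to lie in $\langle\roots^0_\scrC\rangle w'\langle\roots^0_\scrC\rangle$ with $w'\neq e$.

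Granting this, $p$ has $pP_\scrC\not\pitchfork\mathcal I_\scrC(\eta)$ and $pP_{-\scrC}\not\pitchfork\eta$ for the limit point $\eta$ attached to the sequence $(\gamma_\ell)$, with $\eta$ determined by $\xi^\pm$ through transversality and $\kappa$. By \eqref{eq:simpler_DMS_def} this means $p\notin\odms$, contradicting Step 2. The point I expect to need the most care is identifying precisely which $\eta\in\Lambda_\scrC$ witnesses both nontransversalities. I would first try $\eta=\mathcal I_{-\scrC}(\xi^-)$, i.e. the flag $xk'_\infty\dot w_\fapp P_\scrC$, and check non-transversality using $\dot w\notin P_{\pm\scrC}$-double cosets of $e$ and of $\dot w_\fapp$.
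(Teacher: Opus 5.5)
Your Steps 1 and 2 coincide with the paper's proof: Lemma~\ref{lemma:form-of-intersection}, a subsequence with $w_\ell$ fixed, and Lemma~\ref{lemma:DMS-ambient-separation} to place the limit $p$ in $\odms$. The witness $\eta=\mathcal I_{-\scrC}(\xi^-)\in\Lambda_\scrC$ that you propose at the very end is also the one the paper uses. The gap is Step 3, which carries the whole content of the lemma. You leave it at \emph{we would show} and \emph{granting this}, and your sketch puts the decisive input in the wrong place.

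With this $\eta$ one has $\mathcal I_\scrC(\eta)=\xi^-$. So by \eqref{eq:simpler_DMS_def} you need exactly two facts: (a) $pP_\scrC\not\pitchfork\xi^-$, and (b) $pP_{-\scrC}\not\pitchfork\eta=xk'_\infty\dot w_\fapp P_\scrC$.

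Fact (a) needs no positivity. Since $k^0_\infty\in K_\scrC<P_{-\scrC}$, it reduces to $\dot w\notin P_{-\scrC}P_\scrC$. By \eqref{eq:generalized-Bruhat} this is $w\notin\langle\Delta^0_\scrC\rangle$, which is literally $[w]\neq[e]$. The trivial double coset is $\langle\Delta^0_\scrC\rangle$ itself, so there is no case of $[w]$ being nontrivial only as a one-sided coset.

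Fact (b) is where an argument is needed, and it is missing. Since $\scrC$ is $\iota_\fapp$-symmetric, $w_\fapp$ preserves $\fa_\scrC$, so $\dot w_\fapp$ normalizes $K_\scrC=Z_K(\fa_\scrC)$ and $k^0_\infty$ drops out again. Hence (b) fails iff $\dot w^{-1}\dot w_\fapp\in P_{-\scrC}P_\scrC$, i.e.\ iff $w\in w_\fapp\langle\Delta^0_\scrC\rangle$. The constraint \eqref{eq:positivity-condition}, $\pi_\scrC(w^{-1}H_\ell)\in\scrC$, rules out exactly this case. Indeed, if $w=w_\fapp w'$ with $w'\in\langle\Delta^0_\scrC\rangle$, then \eqref{eq:stab-Weyl-and-projector} gives $\pi_\scrC(w^{-1}H_\ell)=\pi_\scrC(w_\fapp H_\ell)=-\iota_\fapp(\pi_\scrC H_\ell)\in-\scrC$. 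So positivity is what excludes the double coset of $w_\fapp$, not the trivial one. With (a) and (b) in hand, the contradiction with $p\in\odms$ follows.

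Three assertions in your sketch are wrong and should be removed.
\begin{itemize}
\item $xk'_\infty P_\scrC$ is in general not in $\Lambda_\scrC$. Already in rank one it is the forward endpoint of the geodesic through $x$ whose backward endpoint is $\xi^-$, and this is generically outside the limit set. The forward limit point of $(\gamma_\ell)$ in $G/P_\scrC$ is $xk'_\infty\dot w_\fapp P_\scrC=\mathcal I_{-\scrC}(\xi^-)$, which is why that is the right witness.
\item The Hopf coordinates of $p$ are not $(xk'_\infty\dot wP_\scrC,\,xk'_\infty\dot wP_{-\scrC})$. The factor $k^0_\infty$ stands to the left of $\dot w$ and cannot be absorbed into $P_{\pm\scrC}$. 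Only the two relations (a) and (b) are insensitive to it, for the reasons given above.
\item Your condition $\dot wP_\scrC\not\pitchfork\dot w_\fapp P_\scrC$ compares two points of $G/P_\scrC$. The relevant pair is $\dot wP_{-\scrC}$ and $\dot w_\fapp P_\scrC$.
\end{itemize}
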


\begin{proof}
Assume that there is a diverging sequence $\gamma_\ell \in \Gamma$ and, for each $\ell$, a point $z_\ell\in B$ distinct from $z_\scrC(x, \gamma_\ell y)$ and lying in the intersection $xKM_\scrC\cap \gamma_\ell yK\exp(\scrC) M_\scrC $.
Set $Q:=\overline{\Gamma\backslash B}\subset\Gamma\backslash\odms$, which is compact by assumption.
According to Lemma \ref{lemma:form-of-intersection}, $z_\ell = xk'_\ell k_{\scrC,\ell} w_\ell M_\scrC$, where $k'_\ell$ is defined as in \eqref{eq:def-k_ell-k'_ell-...} and $k_{\scrC,\ell}\in K_\scrC$ and $w_\ell \in N_K(\fa)$ satisfy $[w_\ell]\neq [e]$ in $ W(G,A)/\langle \Delta^0_\scrC\rangle$.
Additionally, \eqref{eq:lem_full_intersection} gives
\begin{equation}\label{eq:positivity-condition}
\pi_\scrC(  w_\ell ^{-1} \mu_\fapp(y^{-1}\gamma_\ell^{-1} x)) \in \scrC.
\end{equation}
After possibly passing to a subsequence, we can ensure that
\[
k'_\infty = \lim_{\ell\to\infty}k'_\ell,\quad k_{\scrC,\infty} = \lim_{\ell\to\infty} k_{\scrC, \ell}, \quad w_\infty = \lim_{\ell\to\infty} w_\ell
\]
exist. By construction,
\[
z_\ell\longrightarrow z_\infty:=xk'_\infty k_{\scrC,\infty}w_\infty M_\scrC.
\]
Since $B\subset q^{-1}(Q)$, Lemma~\ref{lemma:DMS-ambient-separation} gives
\[
z_\infty\in\overline{B}^{\,G/M_\scrC}\subset q^{-1}(Q)\subset\odms.
\]
Moreover,
\[
xk'_\infty k_{\scrC, \infty}  w_\infty P_\scrC \not\pitchfork xk'_\infty P_{-\scrC} =\xi^-
\]
Furthermore, recall from \eqref{eq:xi-inlimitset} that $\xi^-\in\Lambda_{-\scrC}$.
On the other hand,
\[
x k'_\infty k_{\scrC,\infty} w_\infty P_{-\scrC} \pitchfork xk'_\infty w_\fapp P_\scrC = \mathcal I_{-\scrC}(\xi^-)
\]
if and only if (using $\iota_\fapp$-invariance and $k_{\scrC,\infty}\in K_\scrC < P_{-\scrC}$)
\[
w_\infty w_\fapp P_\scrC \pitchfork P_{-\scrC}.
\]
This is equivalent to $w_\infty w_\fapp \in \langle \Delta^0_\scrC \rangle$, i.e., $w_\infty = w_\fapp w'$ with $w'\in \langle \Delta^0_\scrC\rangle$ since $w_\fapp \in N_{W(G,A)}(\langle\Delta^0_\scrC\rangle)$.
However, this contradicts \eqref{eq:positivity-condition}. Indeed, for $H\in\fa$,
\[
\pi_\scrC( (w')^{-1} w_\fapp H) = -\iota_\fapp(\pi_\scrC H).
\]
To summarize, we have found $\xi^-\in\Lambda_{-\scrC}$ such that
\[
 \lim_{\ell\to\infty}z_\ell P_\scrC \not\pitchfork \xi^-\text{ and }
 \lim_{\ell\to\infty}z_\ell P_{-\scrC} \not\pitchfork \mathcal I_{-\scrC}(\xi^-)
\]
which contradicts $z_\infty\in\odms$ by Definition~\ref{definition:dms-domain}.
\end{proof}
We summarize the results of this section in the following technical but useful lemma on the existence of suitable cutoff domains, which will play a crucial role in the proof of the meromorphic continuation of the Poincaré series.

\begin{lemma}\label{lem:cutoff_poinc}
 Let $\Gamma$ be a $\scrC$-transverse subgroup and let $x,y\in G$. Then there are $\Gamma$-invariant open subsets $U\subset W\subset\odms$ and $H_0\in \scrC$ such that:
 \begin{enumerate}[label=\emph{(\roman*)}]
  \item $\Gamma\backslash U\Subset \Gamma\backslash W\Subset\Gamma\backslash \odms$ are precompact subsets.
  \item\label{it:2lem_cutoff} If $\mu_\scrC(y^{-1}\gamma^{-1} x) \in H_0+\scrC$ then $z_\scrC(x, \gamma y) \in U$ and $z_{-\scrC}(\gamma y,x) \in U$.
  \item If $\mu_\scrC(y^{-1}\gamma^{-1} x) \in H_0+\scrC$ then $xKM_\scrC\cap \gamma y K\exp(\scrC)M_\scrC\cap W = \{z_\scrC(x,\gamma y)\}$.
 \end{enumerate}
\end{lemma}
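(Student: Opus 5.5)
The plan is to assemble Lemma~\ref{lem:cutoff_poinc} from Proposition~\ref{prop:late-intersections-are-in-DMS}, Lemma~\ref{lemma:bad-intersection-are-away} and Lemma~\ref{lemma:DMS-ambient-separation}. By Remark~\ref{rem:sym_scrC} we may assume that $\scrC$ is $\iota_\fapp$-symmetric. Fix $x,y$.

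\textbf{Step 1: choose $U$.} Proposition~\ref{prop:late-intersections-are-in-DMS} gives a precompact set $B_1\subset\odms$ containing $z_\scrC(x,\gamma y)$ for all but finitely many $\gamma$. For the points $z_{-\scrC}(\gamma y,x)$ we run the same proposition with $(x,y,\gamma)$ replaced by $(y,x,\gamma^{-1})$, for the chamber $-\scrC$. The required facts are that $\Gamma$ is also $-\scrC$-transverse (Lemma~\ref{lemma:i-invariant}) and that $\Omega^{-\scrC}_{\mathrm{DMS}}=\odms$. The latter holds because Definition~\ref{definition:dms-domain} is symmetric in $\pm\scrC$ and $M_{-\scrC}=M_\scrC$. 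This yields a second precompact set $B_2\subset\odms$. Set $U:=\Gamma\cdot V$ for an open precompact $V\Supset B_1\cup B_2$ with $\overline V\subset\odms$. Then $\Gamma\backslash U$ is precompact in $\Gamma\backslash\odms$, since it is the image of the compact set $\overline V$.

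\textbf{Step 2: choose $W$ and $H_0$.} Pick a $\Gamma$-invariant open $W\supset\overline U$ with $\Gamma\backslash W\Subset\Gamma\backslash\odms$; this proves (i). Apply Lemma~\ref{lemma:bad-intersection-are-away} with $B=W$. It leaves a finite exceptional set $F_1\subset\Gamma$ for which nonregular intersection points fall in $W$. Proposition~\ref{prop:late-intersections-are-in-DMS} leaves further finite exceptional sets $F_2,F_3$ for $B_1$ and $B_2$. Since $\mu_\scrC$ is proper on $\Gamma$ (by $\scrC$-divergence and Lemma~\ref{lemma:Benoist-compact-Cartan}), we can choose $H_0\in\scrC$ deep enough that $\mu_\scrC(y^{-1}\gamma^{-1}x)\in H_0+\scrC$ forces $\gamma\notin F:=F_1\cup F_2\cup F_3$. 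One way to do this is to choose $H_0=tH_1$ with $t$ large, so that $H_0+\scrC$ avoids the finite set $\mu_\scrC(y^{-1}F^{-1}x)$.

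\textbf{Step 3: verify (ii) and (iii).} For $\gamma\notin F$, part (ii) holds by construction. For (iii), the point $z_\scrC(x,\gamma y)$ lies in the intersection by the computation preceding Proposition~\ref{prop:intersection-non-borel}. Item (i) of Proposition~\ref{prop:late-intersections-are-in-DMS} supplies the needed regularity. The point lies in $U\subset W$, and every other intersection point avoids $W$ since $\gamma\notin F_1$.

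\textbf{Main obstacle.} The delicate point is the $z_{-\scrC}(\gamma y,x)$ half of (ii). One must check that the reparametrized application of Proposition~\ref{prop:late-intersections-are-in-DMS} is legitimate. It requires $\alpha(\mu_{-\fapp}(x^{-1}\gamma y))>0$ for all $\alpha\in\roots^+_{-\scrC}$, which follows from $\mu_{-\fapp}(g^{-1})=-\mu_\fapp(g)$. It also requires that the resulting precompact set lie in the same domain $\odms$. Should this symmetry argument fail, the fallback is the identity $z_{-\scrC}(\gamma y,x)=z_\scrC(x,\gamma y)e^{-\mu_\scrC(y^{-1}\gamma^{-1}x)}$. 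One would then reprove boundedness in $\Gamma\backslash\odms$ via Lemma~\ref{lem:trapped_set_tool}-type arguments, using that this point lies on $\gamma yKM_\scrC$.
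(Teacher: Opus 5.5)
Your proposal is correct and follows essentially the same route as the paper. You apply Proposition~\ref{prop:late-intersections-are-in-DMS} to $(x,y)$ for $\scrC$ and to $(y,x)$ for $-\scrC$, using $\Omega^{-\scrC}_{\mathrm{DMS}}=\odms$, then saturate by $\Gamma$ to get $U$, enlarge to $W$, invoke Lemma~\ref{lemma:bad-intersection-are-away}, and push $H_0$ deep into $\scrC$ to exclude the finitely many exceptional $\gamma$. The one step you leave implicit is that the reparametrized proposition controls $z_{-\scrC}(y,\gamma^{-1}x)$, and the conclusion follows from the equivariance $z_{-\scrC}(\gamma y,x)=\gamma\, z_{-\scrC}(y,\gamma^{-1}x)$ together with the $\Gamma$-invariance of $U$; this is exactly how the paper closes the argument, so your fallback is not needed.
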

\begin{proof}
 First, Lemma~\ref{lemma:i-invariant} shows that $\Gamma$ is also $-\scrC$-transverse, while Definition~\ref{definition:dms-domain} gives $\Omega_{\mathrm{DMS}}^{-\scrC}=\odms$. Apply Proposition~\ref{prop:late-intersections-are-in-DMS} to $(x,y)$ for $\scrC$ and to $(y,x)$ for $-\scrC$, using the auxiliary chamber $-\fapp$ in the latter case. We obtain precompact open sets $B_+,B_-\subset\odms$ such that, for all but finitely many $\gamma\in\Gamma$, $z_\scrC(x,\gamma y)\in B_+$ and $z_{-\scrC}(y,\gamma^{-1}x)\in B_-$. Set $B:=B_+\cup B_-$ and $U:=\bigcup_{\delta\in\Gamma}\delta B$. By equivariance, $z_{-\scrC}(\gamma y,x)=\gamma z_{-\scrC}(y,\gamma^{-1}x)\in\gamma B_-\subset U$, while $z_\scrC(x,\gamma y)\in B_+\subset U$. Next, we take a slightly larger open $\Gamma$-invariant subset $W\subset \odms$ such that $\Gamma\backslash W\subset \Gamma \backslash \odms$ is still precompact. More precisely, choose an open set $V\subset\Gamma\backslash\odms$ such that
\[
\overline{q(U)}\subset V\Subset\Gamma\backslash\odms
\]
and set $W:=q^{-1}(V)$. Then, by Lemma~\ref{lemma:bad-intersection-are-away}, we can ensure that for all but finitely many $\gamma\in\Gamma$:
 \[
 xKM_\scrC\cap \gamma y K\exp(\scrC)M_\scrC\cap W = \{z_\scrC(x,\gamma y)\}.
 \]
Next, we simply take $H_0\in\scrC$ large enough that, for each of the finitely many exceptional elements $\gamma$ above, $\mu_\scrC(y^{-1}\gamma^{-1} x) \not \in H_0 + \scrC$.
\end{proof}

\subsection{Proof of meromorphic continuation}

For $\mc B \subset \fa_\scrC$, define a truncated version of the Poincaré series ($x,y\in \Gamma\backslash G/ K$, $\s\in(\fa_\scrC^*)_\C$):
\[
  P_{\Gamma,\mc B}(x,y,\s) := \sum_{\gamma\in \Gamma,~ \mu_\scrC(y^{-1}\gamma x)\in\mc B} e^{-\s(\mu_\scrC(y^{-1}\gamma x))}.
\]
If $\mc B$ is bounded, this is a finite sum, since $\Gamma$ is discrete.

Following \S\ref{section:laplace_transform} (see \eqref{eq:Laplace-transform-dynamics}), we introduce
\[
T^{\mc B}(\s):= \int_{\mc B} e^{-\s(a)}(\tau(a))_*  \dd a = \int_{\mc B} e^{-(\X+\s)(a)} \dd a.
\]
For bounded $\mc B$, this is a well-defined operator on currents and it was shown in \S\ref{section:laplace_transform} that it can be meromorphically extended to $(\fa^*_{\scrC})_{\C}$ for certain noncompact cones. In what follows, we will set ($\fo$ standing here for the bundle of orientations on $E_u$)
\[
[xK]^{(\fo)} = \begin{cases} [xK] & \text{ if $G/P_\scrC$ is orientable}\\ [xK]^\fo & \text{if $G/P_\scrC$ is not orientable} \end{cases}.
\] Fix a positively oriented, Lebesgue-volume-one basis $(H_1,\ldots,H_{k+1})$ of $\fa_\scrC$, use this ordering to orient the action directions, and denote the induced action vector fields by the same symbols.
The following holds:

\begin{lemma}
 Let $\mc B\subset\scrC$ be a bounded open set and let $x,y\in G$, and assume that $\mu_\scrC(y^{-1}x)\notin\partial\mc B$. Then the pairing of the currents $[xK]^{(\fo)}$ and $T^{\mc B}(\s)[yK]^{(\fo)}$ is well-defined after restriction to a small neighborhood $U$ of $z_\scrC(x,y)$, and we have for $\s\in(\fa_\scrC^\ast)_\C$
 \[
  \big([xK]^{(\fo)}, \iota_{H_{k+1}}\ldots\iota_{H_1}T^{\mc B}(\s)[yK]^{(\fo)}\big)_{U\subset G/M_\scrC} =   \mathbf{1}_{\mc B}(\mu_\scrC(y^{-1}x)) e^{-\s(\mu_\scrC(y^{-1}x))}.
 \]
\end{lemma}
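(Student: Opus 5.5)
The plan is to reduce the pairing to the intersection formula of Lemma~\ref{lem:submanifold_pairing} (or Corollary~\ref{cor:submanifold-pairing-non-oriented} in the non-orientable case), using Lemma~\ref{lem:submanifold_propagation} to turn the averaged pushforward $\iota_{H_{k+1}}\ldots\iota_{H_1}T^{\mc B}(\s)[yK]^{(\fo)}$ into a weighted current of integration over a swept-out submanifold. Concretely, take $S:=(yK)M_\scrC$ (with its orientation from Lemma~\ref{lem:orientations:compatible}), the action $\tau(a)=$ right multiplication by $e^{a}$ for $a\in\fa_\scrC$, the open set $\mc C:=\mc B$, and $f(a):=e^{-\s(a)}$. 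The first step is to check that the action map $F:\mc B\times S\to G/M_\scrC$ is an embedding after restricting to a neighbourhood of the relevant point. Once this holds, Lemma~\ref{lem:submanifold_propagation} gives
\[
\iota_{H_{k+1}}\ldots\iota_{H_1}T^{\mc B}(\s)[yK]^{(\fo)}=(e^{-\s\circ\mathbf a})\cdot[R]^{(\fo)},\qquad R=F(\mc B\times S)\subset (yK\exp(\scrC))M_\scrC .
\]
There is a possible sign convention issue: whether $T^{\mc B}$ is a pushforward by $\tau(a)$ or $\tau(-a)$. This is fixed by the definition $T^{\mc B}(\s)=\int e^{-\s(a)}\tau(a)_*\,\dd a$ adopted just above.

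Next comes the local analysis near $z:=z_\scrC(x,y)$. There are two cases.

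\emph{Case 1: $\mu_\scrC(y^{-1}x)\notin\overline{\mc B}$.} Here $z\notin\overline R$ near $z$, so the pairing restricted to a small $U$ vanishes, matching the indicator. One still has to make sure no other intersection point sits in $U$. Lemma~\ref{lemma:form-of-intersection} shows the intersection points of $xKM_\scrC$ with $yKA_\scrC M_\scrC$ form finitely many closed sets $Z_K(H)\dot wM_\scrC$. For $\alpha(\mu_\fapp(y^{-1}x))>0$ on $\Delta_\scrC^+$, which holds whenever $\mu_\scrC(y^{-1}x)\in\scrC$ (I will assume the statement implicitly requires this, or else treat that degenerate case as the zero case), these sets are separated from the $[w]=[e]$ point $z$. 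Hence $U$ can be chosen to meet only $z$.

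\emph{Case 2: $\mu_\scrC(y^{-1}x)\in\mc B$.} By Proposition~\ref{prop:intersection-non-borel}, $xKM_\scrC$ and $R$ intersect transversely at $z=yk_1e^{\mu_\scrC(y^{-1}x)}M_\scrC$, with $\mathbf a(z)=\mu_\scrC(y^{-1}x)$. The embedding property of $F$ near $(\mu_\scrC(y^{-1}x),yk_1M_\scrC)$ follows from this transversality plus local freeness: the tangent space there is $\mathrm{Ad}(e^{-H})\mathfrak v\oplus\fa_\scrC$, of full dimension $\dim\mathfrak v+k+1$. Since the dimensions are complementary, Lemma~\ref{lem:submanifold_pairing} (resp.\ Corollary~\ref{cor:submanifold-pairing-non-oriented}, with the $\Z_2$-bundle $\fo=\fo(E_u)\simeq\fo(E_{ws})$ as in the orientability lemma) gives the pairing $\sigma(z)e^{-\s(\mu_\scrC(y^{-1}x))}$. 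Then Lemma~\ref{lem:orientations:compatible} gives $\sigma(z)=1$, provided the orientation of $R$ prescribed in Lemma~\ref{lem:submanifold_propagation} (action directions appended after those of $S$, in the order $H_1,\dots,H_{k+1}$) agrees with the one used there for $(yK\exp(\scrC))M_\scrC$. The well-definedness of the pairing on $U$ follows from $\WF([S])=N^*S$ and transversality.

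The main obstacle I expect is the bookkeeping of orientations, in two places. First, the order of contractions $\iota_{H_{k+1}}\ldots\iota_{H_1}$ must be matched with the orientation convention of Lemma~\ref{lem:submanifold_propagation} and of Lemma~\ref{lem:orientations:compatible}. Second, in the non-orientable case the twisted currents $[\cdot]^\fo$ must be checked to pair to an untwisted top-degree current. I would handle this by fixing the orientation of $(yKA_\scrC)M_\scrC$ in Lemma~\ref{lem:orientations:compatible} to be exactly the one induced by Lemma~\ref{lem:submanifold_propagation}. Since $\sigma$ is locally constant, this choice costs nothing. The hypothesis $\mu_\scrC(y^{-1}x)\notin\partial\mc B$ is used only to keep $z$ away from $\partial R$, where the wavefront set of $\mathbf 1_{\mc B}$ would obstruct the pairing.
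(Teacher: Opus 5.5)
Your route is the paper's: Lemma~\ref{lem:submanifold_propagation} turns $\iota_{H_{k+1}}\ldots\iota_{H_1}T^{\mc B}(\s)[yK]^{(\fo)}$ into $e^{-\s\circ\mathbf a}[R]^{(\fo)}$. Proposition~\ref{prop:intersection-non-borel} gives transversality at $z=z_\scrC(x,y)$, and Lemma~\ref{lem:submanifold_pairing}, Corollary~\ref{cor:submanifold-pairing-non-oriented} and Lemma~\ref{lem:orientations:compatible} give the value.

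\textbf{First gap: the embedding step.} You only show that $dF$ is injective at $(\mu_\scrC(y^{-1}x),yk_1M_\scrC)$ and then ``restrict to a neighbourhood''. That does not localize the current. The restriction to $U$ of the averaged current sees \emph{every} $(a,s)\in\mc B\times S$ with $F(a,s)\in U$. To know that exactly one sheet of $R$ passes through a small $U$ in Case~2, and none in Case~1 (your assertion $z\notin\overline R$), you need $F$ to be injective on $\overline{\mc B}\times S$; compactness then does the rest. Local freeness and transversality do not give injectivity. Lemma~\ref{lemma:form-of-intersection}, which you invoke in Case~1, describes intersection points of the two submanifolds, not the fibres of $F$.

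The missing observation is the one the paper uses. By \eqref{eq:H_scrC_formula}, in the Hopf coordinates \eqref{eq:Hopf},
\[
\Psi_\scrC\bigl(yke^{a}M_\scrC\bigr)=\bigl(ykP_\scrC,\ ykP_{-\scrC},\ H_\scrC(yk)+a\bigr),
\]
and $kK_\scrC\mapsto ykP_\scrC$ is a diffeomorphism $K/K_\scrC\to G/P_\scrC$. So $F$ is an embedding of all of $(K/K_\scrC)\times\fa_\scrC$. Hence $F^{-1}(U)$ shrinks to the single point $(k_1K_\scrC,\mu_\scrC(y^{-1}x))$ as $U$ shrinks, which settles both of your cases at once.

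\textbf{Second gap: the regularity claim.} Your parenthetical claim is that $\alpha(\mu_\fapp(y^{-1}x))>0$ for all $\alpha\in\Delta^+_\scrC$ holds whenever $\mu_\scrC(y^{-1}x)\in\scrC$. This is false unless $\scrC$ is an open chamber. In $\mathrm{SL}_3(\mathbb R)$ with $\Delta^0_\scrC=\{\pm\alpha_2\}$, take $H=\mathrm{diag}(1,1,-2)\in\overline{\fapp}$, $x=e^{H}$, $y=e$. Then $\alpha_1(H)=0$, although $\pi_\scrC(H)=\mathrm{diag}(1,-\tfrac12,-\tfrac12)\in\scrC$. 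Moreover $\mathrm{Ad}(e^{-H})$ fixes $\fk_{\alpha_1}$, so transversality at $z_\scrC(x,y)$ genuinely fails.

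Since the indicator equals $1$ when $\mu_\scrC(y^{-1}x)\in\mc B$, this case cannot be ``treated as the zero case'' either. The correct reading is that the regularity condition $\alpha(\mu_\fapp(y^{-1}x))>0$ on $\Delta^+_\scrC$ is a hypothesis of the lemma. This is implicit in the paper as well, since its proof rests on Proposition~\ref{prop:intersection-non-borel}. In the application, the condition holds for all relevant $\gamma$ by Proposition~\ref{prop:late-intersections-are-in-DMS} and the choice of $H_0$ in Lemma~\ref{lem:cutoff_poinc}.
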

\begin{proof}
 According to Lemma~\ref{lem:submanifold_propagation}, the current $\iota_{H_{k+1}}\ldots\iota_{H_1}T^{\mc B}(\s)[yK]^{(\fo)}$ is the current $[yK\exp(\mc B)]^{(\fo)}$ multiplied by the smooth function $e^{-\s(a)}$. Indeed, by the Hopf-coordinate diffeomorphism \eqref{eq:Hopf}, the map
 \[
 (K/K_\scrC)\times\mc B\longrightarrow G/M_\scrC,\qquad (kK_\scrC,a)\longmapsto yk e^aM_\scrC
 \]
 is an embedding. Thus Lemma~\ref{lem:submanifold_propagation} applies directly to $\mc B$. Proposition~\ref{prop:intersection-non-borel} ensures that the pairing is well-defined, and Lemma~\ref{lem:submanifold_pairing}, Corollary~\ref{cor:submanifold-pairing-non-oriented}, and Lemma~\ref{lem:orientations:compatible} give the formula.
\end{proof}

We deduce that the truncated Poincaré series can be expressed as a pairing of currents:

\begin{proposition}\label{prop:formula-lifting}
 Let $\mc B\subset\scrC$ be bounded, let $x,y\in G$ and $\s\in(\fa_\scrC^*)_\C$, and let $W\subset G/M_\scrC$ be a $\Gamma$-invariant open set. Assume that $z_\scrC(x, \gamma y)\in W$ for every $\gamma$ satisfying $\mu_\scrC(y^{-1}\gamma^{-1} x)\in \mc B$ and that, inside $W$, every point of $xKM_\scrC\cap\Gamma yKM_\scrC\exp(\mc B)$ is one of these distinguished points. Then the truncated Poincar\'e series is given by the following pairing of currents on $\Gamma\backslash G/M_\scrC$:
\[
 P_{\Gamma,\mc B}(x,y,\s) = \big([\Gamma xK]^{(\fo)}, \iota_{H_{k+1}}\ldots \iota_{H_1} T^{\mc B}(\s)[\Gamma y K]^{(\fo)}\big)_{\Gamma\backslash W}
\].
\end{proposition}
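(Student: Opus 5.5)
The plan is to unfold the pairing on the quotient $\Gamma\backslash W$ into a sum over $\Gamma$ of pairings on $G/M_\scrC$, and then to apply the preceding lemma to each term. Since $\mc B$ is bounded, $T^{\mc B}(\s)$ is an integral over a compact set of parameters. The current $[\Gamma yK]^{(\fo)}$ on $\Gamma\backslash G/M_\scrC$ lifts to the locally finite sum $\sum_{\gamma\in\Gamma}[\gamma yK]^{(\fo)}$ on $G/M_\scrC$. Because the $A_\scrC$-action commutes with the left $\Gamma$-action, $T^{\mc B}(\s)$ and the contractions $\iota_{H_j}$ commute with this lift. The orientation conventions of Lemma~\ref{lem:orientations:compatible} are pushed forward by left translations, so they are $\Gamma$-equivariant and the lift respects the twisting by $\fo$.

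First step: restrict to $W$ and choose a fundamental-domain type decomposition. Fix the compact submanifold $xKM_\scrC$, and use a partition of unity on $\Gamma\backslash W$ pulled back along the covering. By Proposition~\ref{prop:odms_proper}, $\Gamma$ acts properly discontinuously on $W\subset\odms$. The pairing on $\Gamma\backslash W$ of $[\Gamma xK]^{(\fo)}$ with the lifted current then equals
\[
\sum_{\gamma\in\Gamma}\big([xK]^{(\fo)},\iota_{H_{k+1}}\cdots\iota_{H_1}T^{\mc B}(\s)[\gamma yK]^{(\fo)}\big)_{W}.
\]
Here one uses that the image of $xKM_\scrC$ in the quotient is covered once by $xKM_\scrC$ itself: the stabilizer of $xKM_\scrC$ in $\Gamma$ is trivial, because the stabilizer of $xK$ in $G$ is compact, $\Gamma$ is discrete and torsion-free, and hence the stabilizer is finite and therefore trivial. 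Only finitely many $\gamma$ contribute. Indeed, the support of $T^{\mc B}[\gamma yK]$ is $\gamma yK\exp(\overline{\mc B})M_\scrC$, which meets the compact set $xKM_\scrC$ only if $\mu_\fapp(y^{-1}\gamma^{-1}x)$ is bounded, and discreteness then gives finiteness.

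Second step: evaluate each term. By hypothesis, inside $W$ the intersection $xKM_\scrC\cap\gamma yK\exp(\mc B)M_\scrC$ consists only of $z_\scrC(x,\gamma y)$. Localizing near that point and invoking the preceding lemma with $y$ replaced by $\gamma y$, the term equals $\mathbf 1_{\mc B}(\mu_\scrC(y^{-1}\gamma^{-1}x))\,e^{-\s(\mu_\scrC(y^{-1}\gamma^{-1}x))}$. Terms with empty intersection in $W$ vanish, since there the product of currents has support disjoint from $W$. Reindexing $\gamma\mapsto\gamma^{-1}$ yields $P_{\Gamma,\mc B}(x,y,\s)$.

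The main obstacle is the boundary case $\mu_\scrC(y^{-1}\gamma^{-1}x)\in\partial\mc B$, where the wavefront set of $T^{\mc B}$ from its $\partial\mc B$ part, as in Proposition~\ref{prop:Laplace-outside-L}(ii), is not transverse to $N^*(xKM_\scrC)$. I would handle it as an implicit convention, taking $\mc B$ open with the finitely many relevant values off $\partial\mc B$. Alternatively, I would perturb $\mc B$ slightly, which changes neither side because the relevant set of $\gamma$ is finite. A secondary point is justifying the wedge product of the twisted currents on the quotient. Since both are $\Gamma$-invariant with the consistent $\fo$ conventions, this reduces to the local statement of Corollary~\ref{cor:submanifold-pairing-non-oriented}.
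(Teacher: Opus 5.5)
Your overall route is the paper's: unfold the pairing on $\Gamma\backslash W$ into $\sum_{\gamma\in\Gamma}\big([xK]^{(\fo)},\iota_{H_{k+1}}\cdots\iota_{H_1}T^{\mc B}(\s)[\gamma yK]^{(\fo)}\big)_W$, evaluate each term with the preceding lemma applied to $(x,\gamma y)$, and reindex $\gamma\mapsto\gamma^{-1}$. However, your justification that only finitely many $\gamma$ contribute is wrong whenever $M_\scrC$ is noncompact.

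A point of $\gamma yK\exp(\overline{\mc B})M_\scrC\cap xKM_\scrC$ only gives $x^{-1}\gamma y\in KM_\scrC\exp(-\overline{\mc B})K$. This is an unbounded subset of $G$, so nothing bounds $\mu_\fapp(y^{-1}\gamma^{-1}x)$. Lemma~\ref{lemma:form-of-intersection} makes this concrete. Besides $z_\scrC(x,\gamma y)$, there are intersection points indexed by classes $[w]\neq[e]$, with translation parameter $\pi_\scrC(w^{-1}H)$, where $H=\mu_\fapp(y^{-1}\gamma^{-1}x)$. This parameter can stay in a fixed bounded $\mc B$ while $H\to\infty$. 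For instance, take $\mathrm{SL}_4(\R)$ with the $\iota_\fapp$-invariant $(2,2)$-parabolic, $H=\mathrm{diag}(N+2,N,-N-1,-N-1)$, and $w$ the transposition of the two middle entries. Then $\pi_\scrC(w^{-1}H)=\tfrac12\mathrm{diag}(1,1,-1,-1)\in\scrC$ for every $N$. These non-distinguished intersections are exactly why the paper restricts to $W$ and needs Lemma~\ref{lemma:bad-intersection-are-away}.

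The repair uses the hypothesis rather than a support argument. Since the pairing only sees $W$, the $\gamma$-th term vanishes unless $xKM_\scrC\cap\gamma yK\exp(\mc B)M_\scrC\cap W\neq\emptyset$, and by assumption this forces $\mu_\scrC(y^{-1}\gamma^{-1}x)\in\mc B$. That set of $\gamma$ is finite by $\scrC$-divergence, not by discreteness alone, since a bounded $\mu_\scrC$ does not bound $\mu_\fapp$. Indeed, by \eqref{eq:stab-Weyl-and-projector}, $\alpha\circ\pi_\scrC$ is an average of roots in $\Delta^+_\scrC$, so $\|\mu_\scrC(\gamma)\|\to\infty$ as $\gamma\to\infty$, and Lemma~\ref{lemma:Benoist-compact-Cartan} absorbs $x$ and $y$.

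The same noncompactness affects your preliminary remark. The family $\{\gamma yKM_\scrC\}_{\gamma\in\Gamma}$ is locally finite on $\odms$, by the argument of Lemma~\ref{lemma:DMS-ambient-separation}, which is where you need it. On $G/M_\scrC$ local finiteness can fail, so the lifted sum should only be used on $W$. With these corrections, the rest of your argument, including your convention for the $\partial\mc B$ case, matches the paper's proof.
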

\begin{proof}
Using a standard unfolding argument, we obtain
\begin{align*}
 \big([\Gamma xK]^{(\fo)}, \iota_{H_{k+1}}\ldots \iota_{H_1} T^{\mc B}(\s)[\Gamma y K]^{(\fo)}\big)_{\Gamma\backslash W} =& \sum_{\gamma\in \Gamma}\big([xK]^{(\fo)}, \iota_{H_{k+1}}\ldots \iota_{H_1} T^{\mc B}(\s)[ \gamma y K]^{(\fo)}\big)_{W}\\
 =&\sum_{\substack{\gamma\in \Gamma\\ \mu_\scrC(y^{-1}\gamma^{-1} x)\in\mc B}} e^{-\s(\mu_\scrC(y^{-1}\gamma^{-1} x))}.
\end{align*}
This proves the claim.
\end{proof}

We can now prove Theorem \ref{theorem:Poincare}.
\begin{proof}[Proof of Theorem \ref{theorem:Poincare}]
Let us take $U, W\subset \odms$ and $H_0\in\scrC$ from Lemma~\ref{lem:cutoff_poinc}. Since $\scrC$-Anosov implies $\scrC$-divergence, Lemma~\ref{lemma:Benoist-compact-Cartan} shows that all but finitely many $\gamma\in \Gamma$ satisfy
$\mu_\scrC(y^{-1}\gamma^{-1} x)\in H_0+\scrC$.

Let $\scrC_r :=\{H\in \scrC, \|H\|< r\}$. Then $\mathcal B = H_0 + \scrC_r$ and $W$ satisfy the assumptions of Proposition~\ref{prop:formula-lifting}. For $|\Re(\s)|\gg0$, choose radii tending to infinity outside the countable set $\{\|\mu_\scrC(y^{-1}\gamma^{-1}x)-H_0\|:\gamma\in\Gamma\}$; along these radii, the Anosov assumption on $\Gamma$ permits us to take the limit. We obtain
\[
P_{\Gamma,\scrC}(x,y,\s)  = ([\Gamma x K]^{(\fo)}, \iota_{H_{k+1}}\dots\iota_{H_1} T^{H_0+\scrC}(\s)[\Gamma y K]^{(\fo)})_{\Gamma\backslash W} + \text{ holomorphic}.
\]
The holomorphic remainder is a finite sum corresponding to the finitely many exceptional elements $\gamma$, which thus has a holomorphic continuation to $(\fa_\scrC^\ast)_\C$.

Let $\chi\in C^\infty_{\comp}(\Gamma\backslash W)$ be a cutoff function equal to $1$ in $\Gamma\backslash U$. Then
\[
\begin{split}
([\Gamma x K]^{(\fo)}, \iota_{H_{k+1}}\dots\iota_{H_1} & T^{H_0+\scrC}(\s)[\Gamma y K]^{(\fo)})_{\Gamma\backslash W} \\
&  = (\chi[\Gamma x K]^{(\fo)}, \iota_{H_{k+1}}\dots\iota_{H_1} T^{H_0+\scrC}(\s)\chi[\Gamma y K]^{(\fo)})_{\M}.
\end{split}
\]
In order to insert $\chi$ in front of the current $[\Gamma yK]$, observe that every $\gamma$ occurring in the series satisfies
$z_\scrC(x,\gamma y)=z_{-\scrC}(\gamma y,x)e^{\mu_\scrC(y^{-1}\gamma^{-1}x)}$. Thus, $z_{-\scrC}(\gamma y,x)$ is the corresponding unpropagated input point, and it belongs to $U$ by Lemma~\ref{lem:cutoff_poinc}, item~\ref{it:2lem_cutoff}.

The wavefront sets of $[\Gamma xK]^{(\fo)}$ and $[\Gamma yK]^{(\fo)}$ are contained in the conormals $\V_x^\perp$ and $\V_y^\perp$, respectively (recall the definition of the $xK$-dependent vertical bundle $\V_x$ in \eqref{eq:VxBundle} as the tangent bundle to submanifolds $xKM_\scrC$). Recalling \eqref{eq:transverse_inter_lie_algebra}, we find that at each contributing distinguished intersection point $p=z_\scrC(x,\gamma y)$, if $\widehat H:=\mu_\fapp(y^{-1}\gamma^{-1}x)$ and $H:=\pi_\scrC(\widehat H)\in\scrC$, then
\[
(\V_{y,H})_p\oplus(\V_x)_p=(E_s)_p\oplus(\V_x)_p,
\]
and
\[
T_p(G/M_\scrC)=(\V_x)_p\oplus(E_s)_p\oplus(E_0)_p.
\]
Since $E^\ast_s= (E_0 \oplus E_s)^\perp$, it must be transverse to $\V_x^\perp$. By symmetry, we also have $E^\ast_u \cap  \V_x^\perp = \{0\}$. We also obtain that $(\V_{y,H} + E_0)^\perp$ is transverse to $ \V_x^\perp$. Because the action is abelian, $T^{H_0+\scrC}(\s)=e^{-(\X+\s)(H_0)}T^\scrC(\s)$, and contractions by the invariant fields $H_j$ commute with the transform. Moreover, $\iota_{H_{k+1}}\cdots\iota_{H_1}[\Gamma yK]^{(\fo)}$ is a $d_s$-form in the transverse bundle $\Lambda^{d_s}(E_s^*\oplus E_u^*)$, with the orientation twist retained when necessary. Thus the finite translation changes no poles and the relevant coefficient degree is $d_s$.

The action of $T^{H_0+\scrC}$ on $[\Gamma y K]^{(\fo)}$ is defined through the natural lift of the action to the bundle of (twisted) forms, and this is certainly an admissible lift in the sense of \S~\ref{sssection:admissible}. We deduce from Theorem~\ref{theorem:product-resolvents} and \eqref{equation:wftc} that $\iota_{H_{k+1}}\cdots\iota_{H_1}T^{H_0+\scrC}(\s)\chi[\Gamma y K]^{(\fo)}$ has a meromorphic continuation as a form-valued distribution, with wavefront set contained in
\[
\begin{split}
\bigcup_{H\in H_0+\partial \scrC} \V_{y,H}^\perp \ \cup\  \bigcup_{H\in H_0+\scrC} (\V_{y,H}+E_0)^\perp \ \cup\ E^\ast_u.
\end{split}
\]
The first summand here corresponds to contributions from parameters $a$ on the boundary of $H_0+\scrC$. For almost every value of $H_0$, the corresponding intersection is empty, so that the wavefront set condition is satisfied, and
\[
\chi [\Gamma x K]^{(\fo)} \wedge \iota_{H_{k+1}}\dots \iota_{H_1} T^{H_0+\scrC}(\s)\chi[\Gamma y K]^{(\fo)}
\]
is a well-defined compactly supported top-form-valued distribution, depending meromorphically on $\s$.
\end{proof}

\newpage

\appendix

\section{Triviality of $\R^k$-principal bundles}

\label{appendix:trivial-topology}

\begin{lemma}
\label{lemma:trivial}
Every $\R^k$-principal bundle over a paracompact manifold $N$ admits a section (i.e. is trivial). This holds in $C^\infty$ \emph{and} analytic settings. 
\end{lemma}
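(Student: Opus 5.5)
The plan is to build a global section by patching local sections together with a partition of unity. The affine structure of $\R^k$ makes this possible: a convex combination of points in one fiber is again a point of that fiber, and it does not depend on any chosen origin.

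Let $\pi:P\to N$ be a principal $\R^k$-bundle. Fix a locally finite open cover $(U_i)$ of $N$ over which $P$ is trivial, and choose local smooth sections $s_i:U_i\to P$. On overlaps we have $s_j=s_i+g_{ij}$ for smooth functions $g_{ij}:U_i\cap U_j\to\R^k$. Choose a smooth partition of unity $(\rho_j)$ subordinate to $(U_j)$, which exists because $N$ is paracompact.

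On $U_i$ we then set
\[
s(x):=s_i(x)+\sum_j\rho_j(x)g_{ij}(x),
\]
where each term is taken to be zero off $\operatorname{supp}\rho_j$. This is just the fiberwise barycenter $\sum_j\rho_j s_j$. To check that these local definitions glue, we use the cocycle relation $g_{ij}=g_{ik}+g_{kj}$ together with $\sum_j\rho_j=1$. On $U_i\cap U_k$ this gives
\[
s_i+\sum_j\rho_jg_{ij}=s_k+g_{ik}+\sum_j\rho_j(g_{kj}-g_{ki}),
\]
and since $g_{ki}=-g_{ik}$ the right-hand side equals $s_k+\sum_j\rho_jg_{kj}$. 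Hence $s$ is well defined and smooth. Once we have a section $s$, the map $(x,h)\mapsto s(x)+h$ is an equivariant diffeomorphism from $N\times\R^k$ onto $P$, so the bundle is trivial.

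The main obstacle is the analytic case, since analytic partitions of unity do not exist. I would reduce it to a cohomological statement. Analytic principal $\R^k$-bundles are classified by $H^1(N,\mathcal O^\omega_{\R^k})$, the first cohomology of the sheaf of real-analytic $\R^k$-valued functions. By Grauert's theorem, $N$ embeds in a Stein neighborhood, and Cartan's Theorem B then gives that $H^1(N,\mathcal A)=0$ for the coherent sheaf $\mathcal A$ of real-analytic functions. Consequently the cocycle $g_{ij}$ is an analytic coboundary, $g_{ij}=u_j-u_i$, and $s_i-u_i$ glues to a global analytic section.

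If I want to avoid invoking Theorem B, there is an alternative along the lines of the argument in the proof of Proposition~\ref{proposition:proper-action-phi}. I would take the smooth section $s$ constructed above, write $s=s_0+v$ with $v$ smooth relative to a reference analytic local structure, and approximate the smooth data by analytic data using Grauert's approximation theorem \cite[Proposition 8]{Grauert-58}. However, this still requires an exact analytic solution, so the cohomological route is the one to use.
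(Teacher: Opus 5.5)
Your proof is correct. In the analytic case you follow the paper's route: complexify, use Grauert's Stein neighbourhoods and Cartan's Theorem~B to write the analytic $\R^k$-valued transition cocycle as a coboundary $g_{ij}=u_j-u_i$, and then correct the local sections to $s_i-u_i$. In the smooth case your route is different and cleaner. The paper enumerates a locally finite cover and patches inductively over $U_0\cup\dots\cup U_\ell$; at each step it corrects by cutoff extensions defined on enlarged sets $\tilde U$, and it needs local finiteness to see that the construction stabilizes. Your fiberwise barycentre $\sum_j\rho_j s_j$ does the job in one step. It also makes plain that the only input is the affine structure of the fibers, i.e.\ the fineness of the sheaf of smooth $\R^k$-valued functions, which is exactly what fails in the analytic setting.

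There is a sign slip in your gluing computation. From $s_j=s_i+g_{ij}$ you get $s_i=s_k+g_{ki}$, not $s_k+g_{ik}$. As written, your right-hand side equals $s_k+2g_{ik}+\sum_j\rho_j g_{kj}$. With $g_{ki}$ in its place, the constant term cancels against $-g_{ki}\sum_j\rho_j$ because $\sum_j\rho_j=1$, and you get $s_k+\sum_j\rho_jg_{kj}$ without needing $g_{ki}=-g_{ik}$.

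Also, the approximation route you set aside does produce an exact analytic section. Approximate $s$, viewed as a map $N\to P$, by an analytic map $\tilde s$ in the strong $C^1$ topology. Then $\pi\circ\tilde s$ is $C^1$-close to $\mathrm{id}_N$, hence an analytic diffeomorphism, since diffeomorphisms form an open set in that topology. It follows that $\tilde s\circ(\pi\circ\tilde s)^{-1}$ is an analytic section. Grauert's approximation theorem rests on the same Stein-neighbourhood theory, so this is an alternative to Theorem~B rather than a more elementary proof.
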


\begin{proof}
One can use the theory of classifying spaces, or directly construct a global section for any such bundle. Let $\mathcal{U}=\{U\}$ be a locally finite open cover of $N$, sufficiently fine so that we have a system of local smooth sections $\{\sigma_U\}$ (that can be smoothly extended to slightly larger open sets $\{\tilde{U}\}$). Enumerate the cover as $U_0, \dots, U_\ell,\dots$, and set $V_\ell = U_0 \cup \dots \cup U_\ell$. Then we construct a sequence $\sigma_\ell$ of sections of the bundle over $V_\ell$. First, set $\sigma_0 = \sigma_{U_0}$. Next, observe that on $U_0\cap U_1$,
\[
\sigma_{U_1}= \sigma_{U_0} + f_{1},
\]
for some smooth $\R^k$-valued function $f_{1}$, which we can extend continuously to $U_1$ as a function $\tilde{f}_{1}$, using some cutoff function supported in $\tilde{U}_0\cap\tilde{U}_1$. Then set 
\[
\sigma_1 = \begin{cases} \sigma_0 & \text{ on $U_0$,} \\  \sigma_{U_1} - \tilde{f}_{1} & \text{ on $U_1$,}\end{cases}.
\]
We proceed by induction, replacing at each step $\ell\geq 2$, $U_0$ by $V_{\ell-1}$, and $U_1$ by $U_\ell$. Since $N$ is paracompact, for any $x$ in $N$, $\sigma(x):=\sigma_\ell(x)$ is defined and constant for all $\ell>\ell(x)$ for some $\ell(x)<\infty$. This provides a smooth global section $\sigma$ as announced.

The argument above relies on the fineness of the sheaf of $C^\infty$ functions, which implies trivial cohomology. For the sheaf of real analytic functions the same argument cannot apply. However we can use a series of results to reach the same conclusion. 
\begin{enumerate}
	\item Each real analytic paracompact manifold $E$ admits an embedding into a complex manifold $E^\C$, where it is the fixed point set of an anti-holomorphic involution (a theorem of Whitney).
	\item Each neighbourhood of $E$ in $E^\C$ contains a neighbourhood of $E$ that is Stein (a theorem of Grauert). 
	\item The $p$-cohomology of the sheaf of holomorphic sections on Stein manifolds vanishes for $p>0$ (Cartan's Theorem B). 
\end{enumerate}
In the argument above, if the bundle were real analytic, we could pick our local sections $\sigma_j$ to be real analytic, and consider a small complex neighbourhood $E^\C$ of $E$, covered by open sets $U_j^\C$, so that $U_j^\C \cap E = U_j$, $j\geq 0$, and so that each $\sigma_j$ extends holomorphically to $U_j^\C$. Then Cartan's theorem B ensures that the 1-cocycle $\sigma_i-\sigma_j$ on $U_i^\C \cap U_j^\C$ is a coboundary and we conclude that there exists a real analytic global section of the bundle. 
\end{proof}

\section{Multivariable Fredholm theory}

Let $E_1$ and $E_2$ be Banach spaces, and let $U \subset \C^n$. We recall the following standard terminology:

\begin{definition}[Holomorphic and meromorphic families of operators]
A family of bounded operators $U \ni \mathbf{s} \mapsto Q(\mathbf{s}) \in \mc{L}(E_1,E_2)$ is \emph{holomorphic} if, for every $u \in E_1$ and $v \in E_2^*$, the function $U \ni \mathbf{s} \mapsto (Q(\mathbf{s})u, v) \in \C$ is holomorphic. A family $\mathbf{s} \mapsto P(\mathbf{s}) \in \mc{L}(E_1,E_2)$ is \emph{meromorphic} if, locally, it can be written as $P(\mathbf{s}) = Q(\mathbf{s})/r(\mathbf{s})$ for a holomorphic family $\mathbf{s} \mapsto Q(\mathbf{s}) \in \mc{L}(E_1,E_2)$ and a holomorphic function $r$ that is not identically zero. Finally, we call $\mathbf{s} \mapsto P(\mathbf{s})$ \emph{finitely meromorphic} if it is the sum of a meromorphic family of finite-rank operators and a holomorphic family of operators.
\end{definition}

We prove the following version of the analytic Fredholm theorem in several variables:

\begin{theorem}[The analytic Fredholm theorem]
\label{theorem:fredholm-several}
Let $U \subset \C^n$ be open, connected, and let $U \ni \mathbf{s} \mapsto P(\mathbf{s}) \in \mc{L}(E_1,E_2)$ be a holomorphic family of Fredholm operators between Banach spaces $E_1$ and $E_2$. Suppose that there exists $\mathbf{s}_0 \in U$ such that $P(\mathbf{s}_0) : E_1 \to E_2$ is invertible. Then $U \ni \mathbf{s} \mapsto P(\mathbf{s})^{-1} \in \mc{L}(E_2,E_1)$ is a finitely meromorphic family of operators. The subset
\[
S := \{\mathbf{s} \in U ~:~ P(\mathbf{s}) : E_1 \to E_2 \text{ is not invertible} \}
\]
is a complex variety of codimension $1$; i.e., it is locally the zero set of a holomorphic function, and $\mathbf{s} \mapsto P(\mathbf{s})^{-1} \in \mc{L}(E_2,E_1)$ is holomorphic on $U \setminus S$.
\end{theorem}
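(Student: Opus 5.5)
The plan is a local-to-global argument: reduce to a finite-dimensional determinant near each point of $U$, then use connectedness to propagate the nontriviality coming from $\mathbf{s}_0$.

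\emph{Step 1: local reduction (Grushin problem).} Fix $\mathbf{s}_1 \in U$. Since $P(\mathbf{s}_1)$ is Fredholm, choose finite-dimensional data $R_- : \C^N \to E_2$ and $R_+ : E_1 \to \C^N$, built from bases of $\operatorname{coker} P(\mathbf{s}_1)$ and of the dual of $\ker P(\mathbf{s}_1)$. With these,
\[
\mathcal P(\mathbf{s}) = \begin{pmatrix} P(\mathbf{s}) & R_- \\ R_+ & 0\end{pmatrix} : E_1 \oplus \C^N \to E_2 \oplus \C^N
\]
is invertible at $\mathbf{s}_1$. If the index is nonzero, pad with zeros; the index is locally constant, and in fact $0$ because $P(\mathbf{s}_0)$ is invertible and $U$ is connected. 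Invertibility is open, so $\mathcal P(\mathbf{s})$ stays invertible on a neighbourhood $V$ of $\mathbf{s}_1$. Its inverse $\begin{pmatrix} F & F_+ \\ F_- & F_{-+}\end{pmatrix}$ is holomorphic there, since a Neumann series preserves holomorphy; weak holomorphy implies strong holomorphy by the standard Cauchy-integral argument.

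The Schur complement formula then gives two facts on $V$. First, $P(\mathbf{s})$ is invertible iff the $N\times N$ matrix $F_{-+}(\mathbf{s})$ is invertible. Second, in that case
\[
P(\mathbf{s})^{-1} = F(\mathbf{s}) - F_+(\mathbf{s})F_{-+}(\mathbf{s})^{-1}F_-(\mathbf{s}).
\]
Set $d(\mathbf{s}) := \det F_{-+}(\mathbf{s})$, which is holomorphic on $V$. Cramer's rule writes $F_{-+}^{-1} = \operatorname{adj}(F_{-+})/d$. Hence, provided $d \not\equiv 0$ on $V$, the family $P^{-1}$ is holomorphic plus a finite-rank meromorphic term with denominator $d$, and $S\cap V = \{d=0\}$.

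\emph{Step 2: globalization, which is the main obstacle.} We must rule out $d \equiv 0$ on some $V$. Let $\Omega$ be the set of $\mathbf{s}_1 \in U$ such that $S$ has empty interior near $\mathbf{s}_1$. It is nonempty because $\mathbf{s}_0 \in U\setminus S$ and $U\setminus S$ is open. It is open, since near each point $S$ is either all of $V$ (when $d\equiv 0$) or nowhere dense (when $d\not\equiv 0$). It is closed in $U$, by a connectedness argument on each local chart. Take $\mathbf{s}_1 \in \overline{\Omega}$ with chart $V$, which we may take connected. Then $V$ meets $\Omega$, so $S\cap V$ has empty interior at some point of $V$. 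But $S\cap V=\{d=0\}$, and $d$ is holomorphic on the connected set $V$, so the identity theorem forces $d\not\equiv 0$ on $V$. Thus $\mathbf{s}_1\in\Omega$, and connectedness of $U$ gives $\Omega = U$. This settles the codimension-one claim: $S$ is locally the zero set of the nontrivial holomorphic function $d$. Finally, $P^{-1}$ is holomorphic on $U\setminus S$, because there $d\neq 0$ in each chart; equivalently, a Neumann series around any invertible point gives the same conclusion.
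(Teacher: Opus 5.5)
Your proof is correct and follows essentially the paper's route: a local reduction to an $N\times N$ Schur complement whose determinant cuts out $S$, Cramer's rule to isolate a finite-rank singular part, and the invertible point $\mathbf{s}_0$ to rule out identically vanishing determinants. Your Grushin problem is an equivalent packaging of the paper's block decomposition relative to $F_1\oplus\ker P(\mathbf{s}_1)$ and $\ran P(\mathbf{s}_1)\oplus G_2$. Your Step 2 spells out the connectedness and identity-theorem argument that the paper compresses into a single sentence asserting that all local determinants $f_k$ are nonzero.
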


Notice that the invertibility assumption at $\mathbf{s}_0$ implies that the family of Fredholm operators $\mathbf{s} \mapsto P(\mathbf{s})$ has index $0$.

\begin{proof}
If $E_1 = E_2 \simeq \C^N$, the statement is immediate because
\[
P(\mathbf{s})^{-1} = \dfrac{1}{\det P(\mathbf{s})} \operatorname{adj}(P(\mathbf{s})),
\]
where $\operatorname{adj}$ stands for the adjugate matrix. The maps $\mathbf{s} \mapsto \operatorname{adj}(P(\mathbf{s}))$ and $\mathbf{s} \mapsto \det(P(\mathbf{s}))$ are holomorphic, and $S$ is the zero set of $\det(P(\bullet))$. (The function $\det(P(\bullet))$ is not identically zero because $P(\mathbf{s}_0)$ is invertible.)

We now prove the general claim by reducing it to the finite-dimensional case. Let $\mathbf{s}_1 \in U$. Since $P(\mathbf{s}_1)$ is Fredholm (of index $0$), we can find $F_1\subset E_1, G_2 \subset E_2$ such that
\[
F_1 \oplus \ker P(\mathbf{s}_1) = E_1, \qquad  \ran P(\mathbf{s}_1)\oplus G_2 = E_2.
\]
Because $P(\mathbf{s}_1)$ has index $0$, $\dim G_2 = \dim \ker P(\mathbf{s}_1)$. We may therefore represent $P(\mathbf{s}_1)$ in $2 \times 2$ block-matrix form as
\[
P(\mathbf{s}_1) = \begin{pmatrix} A(\mathbf{s}_1) & 0 \\ 0 & 0 \end{pmatrix},
\]
where $A(\mathbf{s}_1)$ denotes the induced (and invertible) operator $A(\mathbf{s}_1) : F_1 \to \ran(P(\mathbf{s}_1))$. In the same basis, we can write
\[
P(\mathbf{s}) = \begin{pmatrix} A(\mathbf{s}) & B(\mathbf{s}) \\ C(\mathbf{s}) & D(\mathbf{s}) \end{pmatrix},
\]
where $B,C,D$ are finite-rank operators, and $D$ is a linear map between finite-dimensional spaces of the same dimension. For $\mathbf{s}$ near $\mathbf{s}_1$, $A(\mathbf{s})$ is invertible, and its inverse is given by a convergent power series and is therefore holomorphic. Moreover, $P$ is invertible if and only if the linear map between finite-dimensional spaces $D- CA^{-1}B: \ker(P(\mathbf{s}_1)) \to G_2$ is invertible; in that case, $P^{-1}$ is given by
\begin{equation}
\label{equation:2x2}
 \begin{pmatrix} A^{-1} + A^{-1}B(D- CA^{-1}B)^{-1}C A^{-1} & -A^{-1}B(D- CA^{-1}B)^{-1} \\ -(D- CA^{-1}B)^{-1}CA^{-1} & (D- CA^{-1}B)^{-1} \end{pmatrix}.
\end{equation}
This is verified by direct multiplication. Notice that $(D- CA^{-1}B)(\mathbf{s})$ is invertible if and only if $f(\mathbf{s}) \neq 0$, where $f(\mathbf{s}) := \det\!\bigl((D-CA^{-1}B)(\mathbf{s})\bigr)$; the function $f$ is holomorphic in $\mathbf{s}$.

Choose a locally finite cover $(U_k)_{k \geq 0}$ of $U$ and, on each $U_k$, a holomorphic function $f_k$ (defined as above) such that $f_k(\mathbf{s}) \neq 0$ if and only if $P(\mathbf{s})$ is invertible. By assumption, there exists $\mathbf{s}_0 \in U$ such that $P(\mathbf{s}_0)$ is invertible; this implies that the functions $f_k$ are all non-zero. On each $U_k$, $\mathbf{s} \mapsto ((D- CA^{-1}B)(\mathbf{s}))^{-1}$ is meromorphic by the finite-dimensional part of the proof. Going back to \eqref{equation:2x2}, this implies that $\mathbf{s} \mapsto P(\mathbf{s})^{-1}$ is meromorphic too. Finally, the only term in \eqref{equation:2x2} that is not of finite rank is the holomorphic family $A^{-1}$; hence, $P(\mathbf s)^{-1}$ is finitely meromorphic.
\end{proof}

We now write $\mathbf{s} = (z,w) \in \C \times \C^{n-1}$ and consider the more specific case where $E_1 = E_2 =: E$ and $P(\mathbf{s}) = A(w)-z$ is a holomorphic family of Fredholm operators, where $w \mapsto A(w) \in \mc{L}(E)$ is a holomorphic family of operators. In this case, for each fixed $w$, assuming that there is a $z_0$ such that $A(w)-z_0$ is invertible, the operator $z \mapsto (A(w)-z)^{-1} \in \mc{L}(E)$ is meromorphic on its domain of definition, as follows from the standard analytic Fredholm theorem; see, for example, \cite[Theorem 21.1.23]{Lefeuvre-book}. The poles of this family of operators are given by the intersection $\sigma(A(w)) := S \cap \C_{w}$, where $\C_w := \{(z,w) ~:~ z \in \C\}$. We let $\rho(A(w))$ be the complement of $\sigma(A(w))$ on the domain of definition of $z \mapsto A(w)-z$.

Let $\mathbf{s}_0 = (z_0,w_0) \in S$. Choose a positively oriented simple closed contour $\gamma\subset\rho(A(w_0))$ enclosing $z_0$ and no other point of $\sigma(A(w_0))$, and define the spectral projector by
\begin{equation}
\label{equation:piw}
\Pi_{w} = -\dfrac{1}{2i\pi} \int_\gamma (A(w)-z)^{-1} \dd z \in \mc{L}(E).
\end{equation}
After shrinking the $w$-neighborhood, $\gamma\subset\rho(A(w))$ for every $w$, so the contour integral defines a holomorphic projection $\Pi_w$. Finite meromorphy implies that $N := \dim(\ran \Pi_{w_0})<\infty$. At $w=w_0$, $\Pi_{w_0}$ is the spectral projector onto the generalized eigenspace $\ker (A(w_0)-z_0)^N$. Notice that $\dim(\ran \Pi_w) = \Tr(\Pi_w) = N$ is constant.

In the following statement, for $n \geq 1$, we let $\mathrm{Sym}(\C^n) := \C^n/\mathfrak{S}_n$, where $\mathfrak{S}_n$ is the permutation group of $n$ elements, and the action of $\sigma \in \mathfrak{S}_n$ on $z=(z_1,...,z_n) \in \C^n$ is given by $\sigma . z := (z_{\sigma(1)}, ..., z_{\sigma(n)})$. Note that an element $[z] \in \mathrm{Sym}(\C^n)$ is simply an unordered set $\{z_1, ..., z_n\}$ with $z_i \in \C$.

\begin{theorem}
\label{theorem:fredholm-several2}
Let $\C^n \ni (z,w) \mapsto A(w) -z \in \mc{L}(E)$ be a holomorphic family of Fredholm operators such that for each $w \in \C^{n-1}$, there exists $z_0 \in \C$ such that $A(w)-z_0$ is invertible. Let $\mathbf s_0 = (z_0,w_0)\in S$. Then there exist open neighborhoods $V \subset \C^n$ of $\mathbf{s}_0$, $W \subset \C^{n-1}$ of $w_0$, and a continuous map
\[
W \to \mathrm{Sym}(\C^N), \qquad w \mapsto \{z_1(w), ..., z_N(w)\}
\]
such that $z_i(w_0)=z_0$ for all $1 \leq i \leq N$ and the following statements hold:

\begin{enumerate}[label=\emph{(\roman*)}]
\item The function $f : V \to \C$ defined by $f(z,w) := (z-z_1(w)) ... (z-z_N(w))$ is holomorphic and $S \cap V = \{f = 0\}$. In addition, for all $w$ close enough to $w_0$, the poles of $z \mapsto (A(w)-z)^{-1}$ are given by $\{z_1(w), ..., z_N(w)\}$ (counted with algebraic multiplicity).
\item There exist holomorphic families $V \ni (z,w) \mapsto H(z,w), R(z,w) \in \mc{L}(E)$, with $\operatorname{rank}R(z,w)\leq N$ such that
\[
(A(w)-z)^{-1} = H(z,w) + \dfrac{R(z,w)}{(z-z_1(w))...(z-z_N(w))}.
\]
On $V\setminus S$, these families are given by the following expressions, whose right-hand sides extend holomorphically to $V$:
\[
\begin{split}
R(z,w)&= \Pi_w (z-z_1(w))...(z-z_N(w))(A(w)-z)^{-1}\Pi_w, \\
H(z,w) & = (\mathbf{1}-\Pi_w) (A(w)-z)^{-1}(\mathbf{1}-\Pi_w).
\end{split}
\]
\end{enumerate}
\end{theorem}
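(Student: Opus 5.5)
The strategy is to reduce everything to finite-dimensional linear algebra through the holomorphic spectral projector $\Pi_w$ of \eqref{equation:piw}. First, I would fix the contour $\gamma$ around $z_0$ and a neighborhood $W$ of $w_0$ such that $\gamma\subset\rho(A(w))$ for all $w\in W$. Then $w\mapsto\Pi_w$ is holomorphic and $\Pi_w^2=\Pi_w$, and both properties follow from the resolvent identity exactly as in one variable. Moreover $\Pi_w$ commutes with $A(w)$, and its rank is the constant $N$ (the trace of a continuous family of finite-rank projectors is integer-valued and continuous). Hence $E=\ran\Pi_w\oplus\ker\Pi_w$ is an $A(w)$-invariant splitting.

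To trivialize the range holomorphically, I would use $\ran\Pi_{w_0}$ together with the map $\Pi_w|_{\ran\Pi_{w_0}}:\ran\Pi_{w_0}\to\ran\Pi_w$. This map is an isomorphism for $w$ near $w_0$ and depends holomorphically on $w$. Conjugating by it turns $A(w)|_{\ran\Pi_w}$ into a holomorphic family of $N\times N$ matrices $A_N(w)$. I would then define
\[
f(z,w):=\det\bigl(z-A_N(w)\bigr).
\]
This $f$ is holomorphic, monic of degree $N$ in $z$, and its roots $\{z_1(w),\dots,z_N(w)\}$ are the eigenvalues of $A_N(w)$ counted with algebraic multiplicity. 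At $w_0$ they all equal $z_0$, since $\Pi_{w_0}$ projects onto the generalized eigenspace of $z_0$. Continuity of $w\mapsto\{z_j(w)\}$ into $\mathrm{Sym}(\C^N)$ follows from continuity of the roots of monic polynomials, for example via Rouché or the theory of symmetric functions.

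On the complementary space, $A(w)-z$ restricted to $\ker\Pi_w$ is invertible for all $z$ inside $\gamma$, because its spectrum there lies outside $\gamma$. Its inverse is holomorphic in $(z,w)$ on a neighborhood $V=\{z \text{ inside } \gamma\}\times W$. This can be seen from the Cauchy-type formula
\[
(1-\Pi_w)(A(w)-z)^{-1}(1-\Pi_w)=\frac{1}{2i\pi}\int_\gamma\frac{(A(w)-\zeta)^{-1}(1-\Pi_w)}{\zeta-z}\,\dd\zeta,
\]
which is manifestly holomorphic in $(z,w)$. This yields the holomorphic family $H$. On $\ran\Pi_w$, Cramer's rule gives
\[
(z-A_N)^{-1}=\operatorname{adj}(z-A_N)/f,
\]
so $R(z,w):=-\Pi_w\operatorname{adj}(z-A_N(w))\Pi_w$ (transported back to $\ran\Pi_w$) is holomorphic with rank at most $N$. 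It agrees with $f\cdot\Pi_w(A(w)-z)^{-1}\Pi_w$ off $S$. Adding the two pieces gives the decomposition in (ii), and it also shows $S\cap V=\{f=0\}$, since invertibility of $A(w)-z$ on $V$ is equivalent to invertibility on $\ran\Pi_w$. The poles for fixed $w$ are then exactly the roots $z_j(w)$, which proves (i).

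The main obstacle is ensuring that the neighborhood $V$ captures all of $S$ near $\mathbf s_0$. This requires $\gamma$ to stay in the resolvent set uniformly in $w$, which holds because the set of invertible operators is open and $\gamma$ is compact. A second point of care is the holomorphic trivialization of $\ran\Pi_w$, which I handle via $\Pi_w\Pi_{w_0}$ as described above.
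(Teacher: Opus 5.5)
Your proposal is correct and follows essentially the same route as the paper. Both arguments split $E=\ran\Pi_w\oplus\ker\Pi_w$ via the Riesz projector, invert the complementary block holomorphically, and treat the $N$-dimensional block by transporting it to $\ran\Pi_{w_0}$ and applying the adjugate formula. The differences are only streamlining. You recognize $f$ directly as the monic characteristic polynomial $\det(z-A_N(w))$, which avoids the Weierstrass preparation theorem the paper invokes; its nonvanishing factor $h$ is in fact the constant $(-1)^N$. Your Cauchy-integral formula also makes the holomorphy of $H$ on all of $V$ explicit, whereas the paper simply asserts invertibility of the complementary block after shrinking $V$.
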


The theorem can be immediately extended to any open subset $U \subset \C^n$ such that for every $w\in \C^{n-1}$, $U_w:=U \cap (\C \times \{w\})$ is connected and nonempty. It can also be shown that $R(z_0,w_0)=-(A(w_0)-z_0)^{N-1}\Pi_{w_0}$. In the particular case where $N=1$, $w \mapsto z(w) := z_1(w)$ is holomorphic; the expression can then be simplified (by Taylor expanding $R$ with respect to $z$) to
\begin{equation}
\label{equation:cas-cool}
(A(w)-z)^{-1} = \widetilde{H}(z,w) - \dfrac{\Pi_w}{z-z(w)},
\end{equation}
for some holomorphic operator-valued map $(z,w) \mapsto \widetilde{H}(z,w)$. More generally, near a point $(z_0,w_0)$ where the spectrum is analytic and can be locally described as a graph $\{(z(w),w) ~:~ w \text{ near } w_0\}$ for some holomorphic function $w \mapsto z(w)$, we find that
\begin{equation}
\label{equation:cas-cool-2}
(A(w)-z)^{-1} = \widetilde{H}(z,w) - \sum_{k=1}^N \dfrac{(A(w)-z(w))^{k-1}\Pi_w}{(z-z(w))^k}.
\end{equation}
This is a holomorphically parameterized version of the standard analytic Fredholm theorem; see, for example, \cite[Exercise 21.1.25]{Lefeuvre-book}.

\begin{proof}
Write
\begin{equation}
\label{equation:decomp-holo}
A(w)-z = (\mathbf{1}-\Pi_w)(A(w)-z)(\mathbf{1}-\Pi_w) + \Pi_w (A(w)-z) \Pi_w.
\end{equation}
After shrinking $V$, the operator $(\mathbf{1}-\Pi_w)(A(w)-z)(\mathbf{1}-\Pi_w)$ is invertible on $\ran(\mathbf{1}-\Pi_w)$ with inverse given by $H(z,w)$.

Define $f(z,w) := \det(\Pi_w(A(w)-z)\Pi_w)$; this is a holomorphic function for $(z,w)$ close enough to $(z_0,w_0)$ as both $(z,w) \mapsto \Pi_w, A(w)-z$ are holomorphic. For $w=w_0$, $z \mapsto f(z,w)$ vanishes to order $N$ at $z=z_0$. By the Weierstrass preparation theorem (see \cite[Corollary 6.1.2]{Hormander-90} for instance), we can find continuous functions $w \mapsto z_i(w)$ for $1 \leq i \leq N$ and $w$ close to $w_0$ (satisfying $z_i(w_0)=z_0$), and a nonvanishing holomorphic function $h$ such that $f(z,w) = h(z,w)(z-z_1(w))...(z-z_N(w))$. Notice that $(z,w) \mapsto (z-z_1(w))...(z-z_N(w))$ is holomorphic in both variables. The subset
\[
S := \{(z,w) ~:~ A(w)-z : E \to E \text{ is not invertible} \}
\]
is thus given for $(z,w)$ close to $(z_0,w_0)$ by $\{f=0\} = \{(z_i(w), w), 1 \leq i \leq N\}$.

The finite-dimensional spaces $F_w := \ran(\Pi_w)$ may all be identified with $F_{w_0}$ because $\Pi_{w_0} : F_{w} \to F_{w_0} \simeq \C^N$ is an isomorphism for $w$ close to $w_0$. Under this identification, we have
\[
\Pi_w (A(w)-z) \Pi_w = \Pi_{w_0}^{-1} (\underbrace{\Pi_{w_0} \Pi_w (A(w)-z) \Pi_w \Pi_{w_0}^{-1}}_{=: Q(w)-z}) \Pi_{w_0},
\]
where $Q(w) \in \mc{L}(F_{w_0}) \simeq \mc{L}(\C^N)$ depends holomorphically on $w$. Observe that
\[
(Q(w)-z)^{-1} = \dfrac{1}{\det(Q(w)-z)} \operatorname{adj}(Q(w)-z) = \dfrac{1}{f(z,w)}\operatorname{adj}(Q(w)-z),
\]
where $\operatorname{adj}$ denotes the adjugate matrix. Notice that $(z,w) \mapsto \operatorname{adj}(Q(w)-z) \in \mc{L}(\C^N)$ depends holomorphically on $(z,w)$. Going back to \eqref{equation:decomp-holo}, we find
\begin{equation}
\label{equation:lala}
(A(w)-z)^{-1} =  H(z,w) + \dfrac{1}{f(z,w)} \Pi_{w_0}^{-1} \operatorname{adj}(Q(w)-z) \Pi_{w_0}.
\end{equation}
Notice that
\[
\begin{split}
R(z,w) & := \Pi_w (z-z_1(w))...(z-z_N(w))(A(w)-z)^{-1}\Pi_w \\
& = \Pi_{w_0}^{-1}(z-z_1(w))...(z-z_N(w))(Q(w)-z)^{-1} \Pi_{w_0} \\
& =  \dfrac{1}{h(z,w)} \Pi_{w_0}^{-1}\operatorname{adj}(Q(w)-z) \Pi_{w_0}.
\end{split}
\]
This expression is holomorphic for $(z,w)$ near $(z_0,w_0)$ because $h(z,w) \neq 0$. Returning to \eqref{equation:lala} yields
\[
(A(w)-z)^{-1} =  H(z,w) + \dfrac{R(z,w)}{(z-z_1(w))...(z-z_N(w))},
\]
which proves the claim.

\end{proof}

\section{A complex extension lemma}

In what follows, let $\mathbf{s} = (z,w) \in \C \times \C^{n-1}$ and let $\C_w := \{(z,w) ~:~z \in \C\}$. We will need the following result.

\begin{lemma}
\label{lemma:extension}
Let $U \subset \C^n$ be an open subset such that for every $w\in \C^{n-1}$, $U_w:=U \cap (\C \times \{w\})$ is connected and nonempty. Let $h : U \to \C$ be a nonvanishing holomorphic function. Suppose that 
\[
(z,w) \mapsto h_1(z,w) := \partial_z h(z,w)/h(z,w)
\]
admits a meromorphic extension to $\C^n$. Assume further that for all $w \in \C^{n-1}$, $\C \ni z \mapsto h_1(z, w)$ is a meromorphic function with simple poles and positive integer residues. Then $h$ admits a holomorphic extension to $\C^n$.
\end{lemma}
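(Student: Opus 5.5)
We first construct, for each fixed $w$, an entire extension of $z\mapsto h(z,w)$. Then we show that these glue into a jointly holomorphic function on $\C^n$.

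\emph{Step 1 (fixed $w$).} Fix $w\in\C^{n-1}$ and a base point $(z_w,w)\in U_w$. Since $z\mapsto h_1(z,w)$ is meromorphic on $\C$ with simple poles and positive integer residues, the function
\[
\tilde h(z,w):=h(z_w,w)\exp\Big(\int_{z_w}^z h_1(\zeta,w)\,\dd\zeta\Big)
\]
is well defined on $\C$. Changing the path changes the integral by $2i\pi\sum(\text{residues})\in 2i\pi\Z$, so the exponential does not depend on the path. Near a pole $z_j$ with residue $m_j\in\Z_{>0}$ we have $\tilde h=(z-z_j)^{m_j}\cdot(\text{holomorphic nonvanishing})$, hence $\tilde h(\cdot,w)$ is entire. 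On $U_w$, which is connected and on which $\partial_z\log h=h_1$, the functions $\tilde h(\cdot,w)$ and $h(\cdot,w)$ satisfy the same first-order ODE with the same initial value at $z_w$, so they agree. The result is therefore independent of the choice of $z_w$. This defines $\tilde h:\C^n\to\C$, holomorphic in $z$ and equal to $h$ on $U$.

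\emph{Step 2 (joint holomorphy).} We must show that $\tilde h$ is holomorphic in $(z,w)$, and we expect this to be the main obstacle. The polar divisor $D$ of $h_1$ may move with $w$, and the base point $z_w$ is not obviously chosen holomorphically. Near a point $(z_*,w_*)$ we pick $z_1$ with $(z_1,w_*)\in U$; since $U$ is open, $(z_1,w)\in U$ for $w$ near $w_*$. We then choose a path $\Gamma$ from $z_1$ to $z_*$ in $\C_{w_*}$ avoiding $D\cap\C_{w_*}$, except possibly at its endpoint.

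Case 1: $(z_*,w_*)\notin D$. Then a tubular neighborhood of $\Gamma\times\{w_*\}$ avoids $D$ (it is closed). On it the integral $\int_{z_1}^z h_1(\zeta,w)\,\dd\zeta$ along a perturbed path is holomorphic in $(z,w)$, since we differentiate under the integral sign. Hence $\tilde h=h(z_1,w)\exp(\cdots)$ is jointly holomorphic there.

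Case 2: $(z_*,w_*)\in D$. Then $\tilde h$ is holomorphic off the analytic hypersurface $D$ near this point, by Case 1. We also need local boundedness there. To get it, take a small circle $|z-z_*|=r$ in $\C_{w_*}$ avoiding $D$. For $w$ close to $w_*$ this circle still avoids $D$, and the values of $\tilde h$ on it are bounded by continuity, using Case 1 on a compact set. By the maximum principle in $z$, $\tilde h$ is bounded on the disc of radius $r$, uniformly for $w$ near $w_*$. Riemann's extension theorem then shows that $\tilde h$ extends holomorphically across $D$. This extension coincides with $\tilde h$, because $\tilde h(\cdot,w)$ is continuous in $z$ for each fixed $w$.

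Finally, the extension is unique since $U$ is a nonempty open subset and $\C^n$ is connected. Nonvanishing is not claimed: the zeros of the extension sit exactly on the positive-residue poles of $h_1$.
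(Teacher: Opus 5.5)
Your proof is correct. Step 1 and Case 1 of Step 2 coincide with the paper's argument: the fiberwise primitive of $h_1$, path-independence from the integer residues, independence of the base point via connectedness of $U_w$, and holomorphy off the polar set by differentiating under the integral with a base point $z_1$ that stays in $U_w$ for nearby $w$.

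Where you genuinely differ is the local boundedness near the polar divisor.

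\emph{The paper's route.} It shrinks to a polydisc where the reduced polar set is cut out by a product of pairwise distinct irreducible Weierstrass polynomials $g_a$. It shows that the fiberwise residue $(g_a h_1)/\partial_z g_a$ is holomorphic and integer-valued on the connected set $D_a^\circ$, hence equal to a constant $k_a$. This gives $h_1=\partial_z Q/Q+H$ with $Q=\prod_a g_a^{k_a}$ and $H$ holomorphic, and then the local normal form $\tilde h=c(w)\,Q\,e^{G}$, from which boundedness follows.

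\emph{Your route.} You reuse what Step 1 already provides, namely that each $\tilde h(\cdot,w)$ is entire. You bound $\tilde h$ on a disc by its values on a circle $\{|z-z_*|=r\}$ that misses $D$ for all $w$ near $w_*$; there Case 1 gives joint continuity. Riemann's extension theorem and fiberwise continuity then finish the argument.

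\emph{Comparison.} Your route is shorter and avoids all local algebra of the divisor: no Weierstrass decomposition, no constancy of residues along components, no control of the factor $c(w)$. The paper's route gives an explicit local factorization, showing that the extension vanishes to order exactly $k_a$ along each component $D_a$. That is more information than the lemma requires.

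\emph{One point left implicit.} The path $\Gamma$ and the circle exist only if $D\cap\C_{w}$ is discrete. This holds because $D\cap U=\emptyset$ (since $h_1=\partial_z h/h$ is holomorphic on $U$) and $U_w\neq\emptyset$. Hence $D\cap\C_w$ is a proper analytic subset of $\C_w\simeq\C$.
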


\begin{proof}
Since each $U_w$ is nonempty, choose $z_0(w)\in U_w$ for each $w\in\C^{n-1}$, and set
\[
\tilde{h}(z,w):= h(z_0(w),w) \exp\left( \int_{z_0(w)}^z h_1(z',w)dz' \right). 
\] Here the integral is taken along a path from $z_0(w)$ to $z$ in the complement of the poles of $h_1(\bullet,w)$.
Whenever $(z,w)$ is in the non-singular set of $h_1$, this is a well-defined complex number. Indeed, the choice of integration path in $\C_w$ is immaterial because $h_1(\bullet, w)$ has integer residues: taking a different path multiplies the result by $\exp(2\pi i m)=1$ for some $m\in\mathbb Z$. Now consider $(z,w)$ in the singular set of $h_1$. Then near $z$,
\[
h_1(z',w) = \frac{k}{z'-z} + \mathcal{H}(z'),
\]
where $k$ is a positive integer and $\mathcal H$ is holomorphic. We deduce that $\tilde{h}(z',w)= (z'-z)^k C + (z'-z)^{k+1}\tilde{ \mathcal{H}}(z')$ with $\tilde{\mathcal{H}}$ holomorphic, and we can thus extend $\tilde{h}$ to a well-defined function on $\C^n$ that is holomorphic in the $z$-variable.

Let us check that $\tilde{h}$ is actually holomorphic in all variables and extends $h$. For this, we claim that $\tilde{h}$ does not depend on the choice of $z_0(w)$. Indeed, let $\widehat z_0(w)$ be another choice of point in $U_w$ for each $w$. Since $U_w$ is connected, there is a path in $U_w$ joining $z_0(w)$ to $\widehat z_0(w)$. This path lies in the domain of $h$, and integration along it therefore gives
\[
\tilde{h}(\widehat z_0(w),w) = h(\widehat z_0(w),w). 
\]
This shows both that $\tilde{h}$ does not depend on the choice of $z_0(w)$ and that $\tilde{h}\equiv h$ on $U$.

To prove holomorphy away from the polar set, fix $(z,w)$ in its complement and choose $z_0\in U_w$. By openness, $z_0\in U_{w'}$ for all $w'$ near $w$. Choose a compact path from $z_0$ to $z$ that avoids the poles in the fiber over $w$; after shrinking the parameter and endpoint neighborhoods, this path, followed by a short endpoint segment, avoids the polar set for all nearby $(z',w')$. Differentiating under the integral then shows that $\tilde{h}$ is $C^1$ in $(z,w)$ and satisfies the Cauchy--Riemann equations near any point $(z,w)$ not in the singular set of $h_1$. Thus, $\tilde{h}$ is holomorphic on the complement of an analytic set (the singular set of $h_1$). To conclude that it is globally holomorphic, it remains to prove that $\tilde{h}$ is locally bounded near each point of the singular set of $h_1$; the removable singularity theorem in several variables then applies.

Fix $\s_\ast=(z_\ast,w_\ast)$ in the polar set of $h_1$. After shrinking to a polydisc $P=\Delta\times V$ centered at $\s_\ast$, the reduced polar set of $h_1$ is defined by a monic Weierstrass polynomial
\[
q(z,w)=\prod_{a=1}^N g_a(z,w),
\]
where the $g_a$'s are pairwise distinct irreducible Weierstrass polynomials. Denote by $D_a=\{g_a=0\}$ the corresponding irreducible components. We can thus write $h_1(z,w)=d(z,w)/q(z,w)$ where $d$ is holomorphic and does not vanish identically on any of the $D_a$'s. We let $D_a^{\mathrm{reg}}$ denote the open dense subset of smooth points such that the projection $D_a \to V$ is a smooth submersion, and define
\[
D_a^\circ := D_a^{\mathrm{reg}} \setminus \left(\bigcup_{b \neq a} D_b \cup \{\partial_z g_a=0\}\right).
\]
This is a dense open subset of $D_a$ which is also connected as the complement of a (proper) analytic subset of a complex analytic variety does not disconnect. On $D_a^\circ$, we have
\[
h_1 = \dfrac{f_a}{g_a} + H_a,
\]
where $f_a$ and $H_a$ are holomorphic. The residue of $z \mapsto h_1(z,w)$ at the corresponding fiberwise pole $D_a \cap \C_w$ is then
\[
\rho_a := f_a/\partial_z g_a|_{D_a^\circ} = (g_ah_1)/\partial_z g_a|_{D_a^\circ}.
\]
By hypothesis, $\rho_a$ takes values in $\Z_{> 0}$ and $\rho_a$ is holomorphic as $\partial_z g_a \neq 0$ on $D_a^{\circ}$. Consequently, by connectedness of $D_a^\circ$, we deduce that $\rho_a = k_a$ is constant on $D_a^{\circ}$.

We deduce that $f_a - k_a \partial_z g_a$ is holomorphic and vanishes on $D_a^{\circ}$. It thus vanishes on $D_a$ and we may write $f_a-k_a \partial_z g_a = g_a p_a$ on an open neighborhood of $\s_\ast$ for some holomorphic function $p_a$. This yields:
\[
h_1 = f_a/g_a + H_a = k_a \partial_z g_a/g_a + p_a + H_a
\]
Repeating this process for every $a$, we obtain that
\[
h_1 = \partial_z Q / Q + H,
\]
where $Q := \prod_{a=1}^N g_a^{k_a}$ and $H$ are both holomorphic.

Choose a holomorphic function $G$ on $P$ such that
\[
\partial_zG=H.
\]
For instance, one may integrate $H$ in the $z$-variable from a fixed point of $\Delta$. On $P\setminus\{Q=0\}$, we then have
\[
\partial_z\log(\tilde h/(Qe^G)) = \dfrac{\partial_z(\tilde h/(Qe^G))}{\tilde h/(Qe^G)} =h_1-\frac{\partial_zQ}{Q}-\partial_zG=0.
\]
Hence $\partial_z(\tilde h/(Qe^G)) = 0$ away from $\{Q=0\}$, so
\begin{equation}
\label{equation:jarze}
\tilde{h}(z,w) = c(w) Q(z,w) e^{G(z,w)},
\end{equation}
for some holomorphic function $w \mapsto c(w)$, where the equality holds on $\{Q\neq 0\}$. However, letting $(z,w)$ converge to $\{Q=0\}$, we deduce from \eqref{equation:jarze} that $\tilde{h}(z,w)$ remains uniformly bounded. Hence, $\tilde{h}$ is locally bounded near the singular set, and is therefore holomorphic. This concludes the proof.

\end{proof}

\section{Diophantine subsets and representations}

\label{appendix:diophantine}

\subsection{Diophantine subsets in Euclidean space} 

Recall from Definition \ref{definition:diophantine} that a subset $A \subset \R^k$ is \emph{Diophantine} if there exist $C,\nu>0$ such that, for every $\xi\in\R^k$ with $|\xi|\geq C$, there exists $\lambda \in A$ such that
\begin{equation}
\label{equation:diophantine-appendix}
|e^{i\xi\cdot\lambda}-1|\geq |\xi|^{-\nu}.
\end{equation}
More generally, given a fixed exponent $\nu > 0$, we say that $A \subset \R^k$ is $\nu$-Diophantine if there exists $C > 0$ such that \eqref{equation:diophantine-appendix} holds for all $|\xi| \geq C$.

\begin{proposition}
\label{proposition:null-set}
For all $\nu > k$, the set of $\nu$-Diophantine $(k+1)$-tuples has full measure in $(\R^k)^{k+1}$.

In other words,
\[
\{ \boldsymbol{\lambda}=(\lambda_0, ..., \lambda_k) \in (\R^k)^{k+1} ~:~ \exists C > 0, \forall |\xi| \geq C, \exists \ell \in \{0,...,k\}, |e^{i\xi\cdot\lambda_\ell}-1| \geq |\xi|^{-\nu}\}
\]
has full measure in $(\R^k)^{k+1}$.
\end{proposition}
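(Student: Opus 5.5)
The plan is a Borel--Cantelli argument over dyadic shells in frequency space, carried out separately on each bounded box of tuples, since full measure is a local property. First I reduce to a condition on the phases. Since $|e^{i\theta}-1| = 2|\sin(\theta/2)|$ is comparable to $d(\theta,2\pi\Z)$, failure of \eqref{equation:diophantine-appendix} at $\xi$ for every $\ell$ means
\[
d(\xi\cdot\lambda_\ell, 2\pi\Z)\lesssim |\xi|^{-\nu}, \qquad \ell=0,\dots,k.
\]
It therefore suffices to fix a box $Q=[-R,R]^{k(k+1)}$ and show the following. For almost every $\boldsymbol\lambda\in Q$, there are only boundedly many dyadic scales $2^j\le|\xi|<2^{j+1}$ containing a bad $\xi$, meaning a $\xi$ for which all $k+1$ phases are $\lesssim 2^{-j\nu}$-close to $2\pi\Z$.

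Next I discretize in $\xi$ at each scale $j$. If $\xi$ is bad and $|\xi'-\xi|\le\delta_j$ with $\delta_j:=2^{-j\nu}/R$, then $|\xi'\cdot\lambda_\ell-\xi\cdot\lambda_\ell|\le\sqrt k\,2^{-j\nu}$, so $\xi'$ is bad with a doubled constant. It thus suffices to control bad points on a $\delta_j$-net $\mathcal N_j$ of the shell, which has cardinality $\lesssim (2^j/\delta_j)^k\sim 2^{jk(1+\nu)}$.

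For a fixed $\xi$ with $|\xi|\sim2^j$, I then estimate the measure of the set $B_\xi$ of bad tuples in $Q$. The $\lambda_\ell$ are independent, so $|B_\xi|$ factors as a product over $\ell$. For each $\ell$, the map $\lambda\mapsto\xi\cdot\lambda$ is a linear form with gradient of size $|\xi|$, so by Fubini along the direction $\xi/|\xi|$ the set $\{\lambda\in[-R,R]^k:d(\xi\cdot\lambda,2\pi\Z)\le c2^{-j\nu}\}$ has measure $\lesssim R^k\,2^{-j\nu}$. This holds uniformly in $j$ because the strip count $\sim R|\xi|$ times width $2^{-j\nu}/|\xi|$ gives exactly this bound. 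Hence $|B_\xi|\lesssim 2^{-j\nu(k+1)}$, and
\[
\Big|\bigcup_{\xi\in\mathcal N_j}B_\xi\Big|\lesssim 2^{jk(1+\nu)}\,2^{-j\nu(k+1)}=2^{-j(\nu-k)}.
\]
This is summable in $j$ exactly when $\nu>k$. Borel--Cantelli then gives that almost every $\boldsymbol\lambda\in Q$ lies in only finitely many of these unions, which yields a constant $C(\boldsymbol\lambda)$ such that \eqref{equation:diophantine-appendix} holds for $|\xi|\ge C$. Taking the countable union over $R\in\mathbb N$ finishes the proof.

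The main obstacle I expect is the bookkeeping in the discretization: the net spacing must be chosen so that badness with the constant $2^{-j\nu}$ propagates to nearby net points, and it is this spacing that produces the $2^{jk\nu}$ loss. The counting only closes because there are $k+1$ independent vectors, giving the gain $2^{-j\nu(k+1)}$. One must also make sure that the exponents $|\xi|^{-\nu}$ and $2^{-j\nu}$ agree up to constants within a shell, which they do.
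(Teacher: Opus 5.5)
Your proposal is correct and follows essentially the same route as the paper. Both arguments run Borel--Cantelli over shells in $\xi$ on bounded boxes of tuples, use a net in each shell with spacing comparable to $|\xi|^{-\nu}$ so that badness propagates to net points, obtain the per-$\xi$ bound $|\xi|^{-\nu(k+1)}$ from the $k+1$ independent vectors, and sum exactly when $\nu>k$. The differences are cosmetic: the paper uses unit-width annuli $\{j\le|\xi|\le j+1\}$, giving the series $\sum_j j^{k-\nu-1}$, where you use dyadic shells and get $\sum_j 2^{-j(\nu-k)}$; and you prove the single-$\xi$ strip estimate directly by Fubini along $\xi/|\xi|$, whereas the paper treats coordinate directions first and then covers by cones.
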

More generally, one could formulate the proposition for $d$-tuples rather than $(k+1)$-tuples. We emphasize that the proof fails for $d \leq k$.

\begin{proof}
It suffices to show that
\[
\{ \boldsymbol{\lambda} \in(\R^k)^{k+1} ~:~  \exists |\xi_n| \to +\infty, \forall \ell = 0,\ldots, k, |e^{i\lambda_\ell \cdot\xi_n}-1| \leq |\xi_n|^{-\nu}\}
\]
has zero Lebesgue measure in $(\R^k)^{k+1}$. In turn, it suffices to show that for all $R \geq 1$,
\begin{equation}
\label{equation:la}
 Z= \{ \boldsymbol{\lambda} \in(R\mathbf{D})^{k+1} ~:~  \exists |\xi_n| \to +\infty, \forall \ell = 0,\ldots, k, |e^{i\lambda_\ell \cdot\xi_n}-1| \leq |\xi_n|^{-\nu}\}
\end{equation}
has zero measure, where $\mathbf{D} := (-2\pi,2\pi)^k \subset \R^k$.

For $j\geq1$, let $\mathbf{A}_j := \{j \leq |\xi| \leq j+1\} \subset \R^k$ denote the annulus and define
\[
Z_j := \{ \boldsymbol{\lambda} \in(R\mathbf{D})^{k+1} ~:~ \exists \xi \in \mathbf{A}_j, \forall \ell = 0,\ldots, k, |e^{i\xi\cdot\lambda_\ell}-1| \leq |\xi|^{-\nu}\}.
\]
By definition, $Z\subset \limsup_{j\to\infty} Z_j$, where the limit superior is taken in the sense of the Borel--Cantelli lemma. By this lemma, it suffices to establish that $\sum_{j \geq 1} |Z_j| < \infty$ to prove that $|Z| = 0$.

Now fix $\xi \in \R^k$, $L > 0$, and define
\[
Z(\xi,L) := \{ \boldsymbol{\lambda} \in (R \mathbf{D})^{k+1}~:~ \forall \ell = 0,\ldots, k, |e^{i\xi\cdot\lambda_\ell}-1| \leq L |\xi|^{-\nu}\}.
\]
A direct computation shows that there exists $C > 0$ such that
\begin{equation}
\label{equation:zjixieps}
|Z(\xi,L)| \leq C R^{k(k+1)}L^{k+1} |\xi|^{-\nu(k+1)},
\end{equation}
for all $|\xi|\geq1$ and $L>0$. Indeed, this can be first established for $\xi=|\xi|\mathbf{e}_m, 1 \leq m \leq k$, and by covering $\R^k$ with cones of the form $|\xi_m| \geq C |\xi|$.

We now cover $\mathbf{A}_j$ by $M_j$ balls of radius $j^{-\nu}$, centered at $\xi_1, ..., \xi_{M_j} \in \mathbf{A}_j$. This can be achieved with
\begin{equation}
\label{equation:mj}
M_j \leq C j^{k-1}/j^{-\nu k} = C j^{k(\nu+1)-1},
\end{equation}
for some uniform constant $C > 0$. We now claim that
\[
Z_j \subset \bigcup_{s=1}^{M_j} Z(\xi_s, L),
\]
for some $L=L(R,k,\nu)>0$ independent of $j$. Indeed, suppose that $\boldsymbol{\lambda}$ lies in $Z_j$; that is, there exists $\xi\in\mathbf{A}_j$ such that, for every $\ell$, $|e^{i\xi\cdot\lambda_\ell}-1| \leq |\xi|^{-\nu}$. One has $\xi \in B(\xi_s,j^{-\nu})$ for some $s \in \{1,...,M_j\}$. Hence, writing $\eta := \xi_s-\xi$, we find:
\[
|e^{i \lambda_\ell \cdot\xi_s}-1| = |e^{i\lambda_\ell \cdot\xi} e^{i\lambda_\ell cdot\eta}-1| = |e^{i \lambda_\ell \cdot\xi}(1+\mc{O}(j^{-\nu})) -1| \leq |e^{i\lambda_\ell \cdot\xi}-1| + \mc{O}(j^{-\nu}) \leq L |\xi_s|^{-\nu},
\]
for some $L=L(R,k,\nu)>0$ independent of $j$. Combining \eqref{equation:zjixieps} and \eqref{equation:mj}, we thus find that for some $C_R>0$,
\[
|Z_j| \leq C_R j^{-\nu(k+1)} j^{k(\nu+1)-1} = C_R j^{k-\nu-1}.
\]
(Note that if the length of the tuple $\boldsymbol{\lambda}$ were smaller than the dimension $k$, then $\nu$ would appear with a positive sign and the proof would break down). By assumption, $\nu>k$, so $k-\nu-1<-1$ and the series $\sum_{j\geq1}|Z_j|$ converges. This concludes the proof.
\end{proof}

\subsection{Diophantine representations}

\label{ssection:diophantine-representations}

Given a word-hyperbolic group $\Gamma$ and a semisimple Lie group $G$ of noncompact type, we show that, under an appropriate condition on $\Gamma$ and $G$, \emph{most} representations $\rho : \Gamma \to G$ are Diophantine in the sense that their Lyapunov spectrum satisfies the Diophantine condition (see Definition \ref{definition:diophantine}). Throughout this subsection, we assume that $\Gamma$ is finitely generated, with $s\geq1$ (nontrivial) generators of infinite order, and finitely presented, with $r\geq0$ relations. That is,
\[
\Gamma = \langle \gamma_1, ..., \gamma_s  ~:~ r_i(\gamma_1, ..., \gamma_s) = 1, 1 \leq i \leq r \rangle,
\]
where each $r_i$ is a relation involving products and inverses. Consequently, $\mathrm{Hom}(\Gamma,G)$ can be identified with
\[
\mathrm{Hom}(\Gamma,G) \simeq \{ (g_1, ..., g_s) \in G^s ~:~ R_i(g_1, ..., g_s)=1, 1 \leq i \leq r\},
\]
where $R_i : G^s \to G$ are nonconstant real-analytic maps corresponding to the relations $r_i$, $1 \leq i \leq r$. The space $\mathrm{Hom}(\Gamma,G)$ is a real-analytic subvariety of $G^s$; it is therefore a stratified space with an open dense maximal stratum $S_0 \subset \mathrm{Hom}(\Gamma,G)$ which is an analytic submanifold of $G^s$ (see \cite[2.10-2.11]{Bierstone-Milman-88}). By definition of the maximal stratum, on $S_0$, the rank of $G^{s} \to G^r, (g_1,...,g_s) \mapsto (R_1(g_1,...,g_s), ..., R_r(g_1,...,g_s))$ is maximal, equal to $0\leq d \leq r \cdot \dim G$. For simplicity, we will further assume that $d = r \cdot \dim G$, which is the case in most examples, and therefore $\dim S_0 =  (s-r)\dim(G)$. The general case can be handled by a slight adaptation of the argument. As $S_0$ is an analytic submanifold, there is a well-defined Lebesgue class; it therefore makes sense to speak of Lebesgue-almost every point of $S_0$.

\begin{proposition}[Lebesgue almost every representation is Diophantine]
\label{proposition:almost-diophantine}
Assume that $\dim S_0 = (s-r) \dim(G)$ and that $s \geq (r\dim(G)+1)(\mathrm{rank}(G)+1)$. Then Lebesgue almost every Borel Anosov representation $\rho : \Gamma \to G$ in $S_0$ is Diophantine.
\end{proposition}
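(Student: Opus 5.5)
The plan is to reduce the statement to Proposition~\ref{proposition:null-set} by means of a parametrization of the Jordan projections of finitely many fixed group elements. For a Borel Anosov $\rho$, Lemma~\ref{lemma:jordan-lyapunov-spectrum} identifies the Lyapunov spectrum with $\{\lambda_\fapp(\rho(\gamma))\}$. It therefore suffices to exhibit $n:=\mathrm{rank}(G)+1$ elements $\eta_0,\dots,\eta_{n-1}\in\Gamma$ such that, for Lebesgue almost every $\rho\in S_0$, the tuple $(\lambda_\fapp(\rho(\eta_0)),\dots,\lambda_\fapp(\rho(\eta_{n-1})))\in\fa^{n}\simeq(\R^{\mathrm{rank} G})^{\mathrm{rank} G+1}$ is Diophantine. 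The natural choice is to take the $\eta_j$ to be generators $\gamma_{i_j}$, or disjoint groups of generators.

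The counting hypothesis $s\geq (r\dim G+1)(\mathrm{rank}(G)+1)$ is designed for the following splitting. Partition the $s$ generators into $n$ blocks of size at least $r\dim G+1$. Since $S_0$ is cut out of $G^s$ by $r\dim G$ independent analytic equations, near any point of $S_0$ one can solve the relations for at most $r\dim G$ of the coordinates (after choosing coordinates in $G^s$) via the implicit function theorem. A pigeonhole argument then shows that in each block at least one generator, say $\gamma_{i_j}$, remains a free coordinate, with its full $\dim G$ parameters, in a local chart of $S_0$. Thus, locally on $S_0$, the map $\rho\mapsto(\rho(\gamma_{i_0}),\dots,\rho(\gamma_{i_{n-1}}))\in G^{n}$ is a submersion. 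This step should be done carefully: the pivot coordinates solved by the implicit function theorem are components in $\R^{s\dim G}$ rather than whole generators, so the choice of the free generators has to be made chart by chart. Since $S_0$ is second countable, countably many charts suffice.

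Next, compose with Jordan projections. On the open set of loxodromic elements, which contains $\rho(\gamma)$ for Borel Anosov $\rho$ and $\gamma\neq e$, the map $\lambda_\fapp:G\to\fa$ is real-analytic and submersive: conjugating and multiplying by $\exp(\fa)$ moves $\lambda_\fapp$ freely. Hence the composite $\Phi:S_0\cap\{\text{Borel Anosov}\}\to\fa^{n}$ is a submersion. The Borel Anosov locus is open by Labourie/Guichard--Wienhard. Preimages of Lebesgue-null sets under submersions are null, so Proposition~\ref{proposition:null-set} (applied with $k=\mathrm{rank}\,G$ and any $\nu>k$) yields that $\Phi(\rho)$ is $\nu$-Diophantine for almost every $\rho$. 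By Definition~\ref{definition:diophantine-lyapunov}, $\rho$ is then Diophantine.

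The main obstacle is the submersion step, namely guaranteeing that, after imposing the relations, enough generators vary freely and independently. This is exactly where the counting hypothesis and the assumption $\dim S_0=(s-r)\dim G$ enter, via the implicit function theorem applied to $(R_1,\dots,R_r)$.
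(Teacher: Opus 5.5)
Your proposal is correct. Its skeleton is the paper's: find $\mathrm{rank}(G)+1$ generators whose joint evaluation map $S_0\to G^{\mathrm{rank}(G)+1}$ is a submersion, compose with the Jordan projection (analytic and submersive at loxodromic elements), and pull back the full-measure set of Proposition~\ref{proposition:null-set}. The difference is in how the submersion is obtained.

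The paper partitions the generators the other way, into $r\dim(G)+1$ consecutive blocks of size $\mathrm{rank}(G)+1$, and argues by contradiction on generic ranks. If no block map $\phi_k$ were generically submersive, $S_0$ would locally lie in a product $V_0\times\cdots\times V_{r\dim(G)}\times G^{s-(r\dim(G)+1)(\mathrm{rank}(G)+1)}$ with each $V_k$ of positive codimension. That product has dimension at most $(s-r)\dim(G)-1$, a contradiction. This yields one \emph{fixed} block that is submersive on an open set of full measure.

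You instead take a local graph description of $S_0$ from the implicit function theorem. Its $r\dim(G)$ pivot coordinates touch at most $r\dim(G)$ generators, so every block of size $r\dim(G)+1$ contains a free generator. This gives a submersion at \emph{every} point, with a chart-dependent choice of generators. That dependence is harmless, because Definition~\ref{definition:diophantine-lyapunov} only asks for some finite Diophantine subset of the Lyapunov spectrum, and countably many charts cover $S_0$.

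Your route is more elementary, since no generic-rank argument is needed, and it actually proves more. Apply the pigeonhole to all generators at once: at least $s-r\dim(G)$ generators are free near each point, so $s\geq r\dim(G)+\mathrm{rank}(G)+1$ already suffices and the blocks are unnecessary. For surface groups in $\mathrm{SL}_n(\R)$ this gives $2g\geq n^2+n-1$ instead of $2g\geq n^3$. The paper's argument, in exchange, produces a single set of generators that witnesses the Diophantine property on a full-measure subset.

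A minor point: $\rho(\gamma)$ is loxodromic for $\gamma$ of infinite order, not for every $\gamma\neq e$ when $\Gamma$ has torsion. Since you only use generators, which are assumed to have infinite order, this does not affect the argument.
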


The statement is vacuous if there are no Borel Anosov representations. A particular case where the proposition applies is $\Gamma = \pi_1(\Sigma_g)$ and $G = \mathrm{SL}_n(\R)$, provided $2g \geq n^3$. In this case $\Gamma = \langle a_1, b_1, ..., a_g,b_g  ~:~ \prod_{i=1}^g [a_i,b_i]=1\rangle$, so $s = 2g, r = 1$, $\dim(\mathrm{SL}_n(\R))=n^2-1$ and $\mathrm{rank}(\mathrm{SL}_n(\R))+1=n$. Note that the open subset $\mc{M}$ of $\mathrm{Hom}(\pi_1(\Sigma_g),\mathrm{SL}_n(\R))$ corresponding to the Hitchin component (the connected component of Fuchsian representations) is known to be an analytic manifold of dimension $(2g-1)(n^2-1)$, see \cite{Hitchin-92} (we do not mod out by the action of inner automorphisms of $\mathrm{SL}_n(\R)$ here). The map $R : G^{s} \to G, R(a_1,b_1,\ldots,a_g,b_g) = \prod_{i=1}^g [a_i,b_i]$ is analytic, and has maximal rank equal to $\dim G$ on an open subset of $\mc{M}$ which corresponds to the maximal stratum $S_0$. Indeed, if the rank were strictly smaller, then $\mc{M}$ would be a higher-dimensional analytic submanifold near the regular points of $R$; this would then contradict the fact that it is analytic and of dimension $(2g-1)(n^2-1)$. Consequently, the condition $\dim S_0 = (s-r)\dim(G)$ is satisfied in this case.

\begin{proof}
Let $n:=\mathrm{rank}(G)+1$, and denote by $\mathbf g:=(g_1,\ldots,g_s)\in S_0$ a generic point. Consider the maps $\phi_k : S_0 \to G^n$ defined for $0 \leq k \leq r\dim(G)$ by $\phi_k(\mathbf{g}) = (g_{kn+1},g_{kn+2},...,g_{(k+1)n})$. We claim that there exists an index $k \in \{0, ..., r\dim(G)\}$ such that the differential of $\phi_k$ is surjective on an open subset of full measure of $S_0$. This will imply the claimed Diophantine property.

For $0 \leq k \leq r\dim(G)$, $\phi_k : S_0 \to G^n$ is an analytic map; therefore, its differential has maximal rank $d_k \leq n\dim(G)$ on an open subset of full measure of $S_0$. Assume for a contradiction that $d_k < n\dim(G)$ for all $0 \leq k \leq r\dim(G)$. Let $\mathbf{g}_k$ be a regular point of $\phi_k$, that is, a point where the differential has maximal rank. Then there exists an open subset $U_k \subset S_0$ around $\mathbf{g}_0$ such that $d\phi_k$ has maximal rank and $V_k := \phi_k(U_k)$ is an analytic submanifold of dimension strictly less than $n\dim(G)$. We may also further assume that $U := \cap_{0\leq k \leq r\dim(G)}U_k \neq \emptyset$ is nonempty by choosing the $U_k$'s as nested subsets. We then set
\[
X_k := \underbrace{G^n \times ... \times G^n}_{k \text{ times}} \times V_k \times \underbrace{G^n \times ... \times G^n}_{r\dim(G)-k \text{ times}} \times G^{s-(r\dim(G)+1)n} \supset S_0 \cap U_k
\]
and consider the nested intersection over $k$,  $X := \cap_{0\leq k \leq r\dim(G)} X_k$. By construction, $S_0 \cap U \subset X$. However, the manifolds $X_k$ are all transverse and of dimension $< s \dim(G)$. Therefore, $\dim X \leq s\dim(G) - (r\dim(G)+1) = (s-r)\dim(G)-1$, which contradicts $\dim S_0 = (s-r)\dim(G)$. Hence, one of the maps $\phi_k$ has a surjective differential on an open subset of full measure of $S_0$. Up to reordering the generating set, we can assume that this property holds for $k=0$.

Now consider the map
\begin{equation}
\label{equation:psi-beau}
\Psi_{\mathrm J} : S_0 \to \fa^{n}, \qquad \Psi_{\mathrm J}(\mathbf{g}) := (\lambda_\fapp(g_1),\ldots,\lambda_\fapp(g_n)).
\end{equation}
It is analytic as the representation is assumed to be Borel Anosov and all the elements $g_i$ have distinct eigenvalues (the Jordan projection is analytic on elements with distinct eigenvalues). Since Diophantine subsets of $\fa^n \simeq (\R^{n-1})^n$ have full measure (see Proposition \ref{proposition:null-set}), it suffices to show that the differential of $\Psi_{\mathrm J}$ is surjective on an open subset of full measure of $S_0$. However, this is immediate as $\phi_0$ satisfies this property (by construction) and the Jordan projection $\lambda_{\fapp}$ does as well on the Borel-Anosov locus. This concludes the proof.

\end{proof}

\bibliographystyle{alpha}
\bibliography{biblio}

\begin{thebibliography}{PSMdSS12}

\bibitem[Alb99]{Albuquerque-99}
Paul Albuquerque.
\newblock {Patterson--Sullivan} theory in higher rank symmetric spaces.
\newblock {\em Geom. Funct. Anal.}, 9(1):1--28, 1999.

\bibitem[AM25]{Anantharaman-Monk-25}
Nalini Anantharaman and Laura Monk.
\newblock Friedman--{Ramanujan} functions in random hyperbolic geometry and
  application to spectral gaps {II}, 2025.

\bibitem[Ana00]{Anantharaman-00}
Nalini Anantharaman.
\newblock Precise counting results for closed orbits of {Anosov} flows.
\newblock {\em Ann. Sci. {\'E}c. Norm. Sup{\'e}r. (4)}, 33(1):33--56, 2000.

\bibitem[Ben97]{Benoist-97}
Y.~Benoist.
\newblock Propriet\'es asymptotiques des groups lin\'eaires.
\newblock {\em Geom. Funct. Anal.}, 7(1):1--47, 1997.

\bibitem[Ben00]{Benoist-00}
Yves Benoist.
\newblock Asymptotic properties of linear groups. {II}.
\newblock In {\em Analysis on homogeneous spaces and representation theory of
  Lie groups. Based on activities of the RIMS Project Research '97,
  Okayama-Kyoto, Japan, during July and August 1997}, pages 33--48. Tokyo:
  Kinokuniya Company Ltd., 2000.

\bibitem[BGHW25]{Guedes-Bonthonneau-Guillarmou-Hilgert-Weich-20}
Yannick~Guedes Bonthonneau, Colin Guillarmou, Joachim Hilgert, and Tobias
  Weich.
\newblock Ruelle-taylor resonances of {Anosov} actions.
\newblock {\em J. Eur. Math. Soc. (JEMS)}, 27(8):3085--3147, 2025.

\bibitem[BL98]{Babillot-Ledrappier-98}
Martine Babillot and Fran{\c{c}}ois Ledrappier.
\newblock Lalley's theorem on periodic orbits of hyperbolic flows.
\newblock {\em Ergodic Theory Dyn. Syst.}, 18(1):17--39, 1998.

\bibitem[BLW26]{arxiv_v1}
Yannick~Guedes Bonthonneau, Thibault Lefeuvre, and Tobias Weich.
\newblock The spectrum of anosov representations.
\newblock {\em arXiv preprint arXiv:2603.24519 (v1)}, 2026.

\bibitem[BM88]{Bierstone-Milman-88}
Edward Bierstone and Pierre~D. Milman.
\newblock Semianalytic and subanalytic sets.
\newblock {\em Publ. Math., Inst. Hautes {\'E}tud. Sci.}, 67:5--42, 1988.

\bibitem[Bor07]{Borthwick-07}
David Borthwick.
\newblock {\em Spectral theory of infinite-area hyperbolic surfaces}, volume
  256 of {\em Prog. Math.}
\newblock Basel: Birkh{\"a}user, 2007.

\bibitem[Bow74]{Bowen-74}
Rufus Bowen.
\newblock Maximizing entropy for a hyperbolic flow.
\newblock {\em Math. Systems Theory}, 7:300--303, 1974.

\bibitem[BPS19]{Bochi-Potrie-Sambarino-19}
Jairo Bochi, Rafael Potrie, and Andr{\'e}s Sambarino.
\newblock Anosov representations and dominated splittings.
\newblock {\em J. Eur. Math. Soc. (JEMS)}, 21(11):3343--3414, 2019.

\bibitem[BT72]{Borel-Tits-72}
Armand Borel and Jacques Tits.
\newblock Complements {\`a} l'article: ''{Groupes} reductifs''.
\newblock {\em Publ. Math., Inst. Hautes {\'E}tud. Sci.}, 41:253--276, 1972.

\bibitem[BT07]{Baladi-Tsuji-07}
Viviane Baladi and Masato Tsujii.
\newblock Anisotropic {H{\"o}lder} and {Sobolev} spaces for hyperbolic
  diffeomorphisms.
\newblock {\em Ann. Inst. Fourier}, 57(1):127--154, 2007.

\bibitem[Bur90]{Burger-90}
Marc Burger.
\newblock Horocycle flow on geometrically finite surfaces.
\newblock {\em Duke Math. J.}, 61(3):779--803, 1990.

\bibitem[Bur93]{Burger-93}
M.~Burger.
\newblock Intersection, the {Manhattan} curve, and {Patterson--Sullivan} theory
  in rank 2.
\newblock {\em Int. Math. Res. Not.}, 1993(7):217--225, 1993.

\bibitem[BW22]{Bonthonneau-Weich-22}
Yannick~Guedes Bonthonneau and Tobias Weich.
\newblock Ruelle-pollicott resonances for manifolds with hyperbolic cusps.
\newblock {\em J. Eur. Math. Soc. (JEMS)}, 24(3):851--923, 2022.

\bibitem[BWS21]{BornsWeil-Shen-2020}
Yonah Borns-Weil and Shu Shen.
\newblock Dynamical zeta functions in the nonorientable case.
\newblock {\em Nonlinearity}, 34(10):7322--7334, 2021.

\bibitem[CDDP22]{Cekic-Delarue-Dyatlov-Paternain-22}
Mihajlo Ceki{\'c}, Benjamin Delarue, Semyon Dyatlov, and Gabriel~P. Paternain.
\newblock The {Ruelle} zeta function at zero for nearly hyperbolic 3-manifolds.
\newblock {\em Invent. Math.}, 229(1):303--394, 2022.

\bibitem[CE71]{Conley-Easton-71}
C.~Conley and R.~Easton.
\newblock Isolated invariant sets and isolating blocks.
\newblock {\em Trans. Am. Math. Soc.}, 158:35--61, 1971.

\bibitem[CG97]{Choi-Goldman-97}
Suhyoung Choi and William~M. Goldman.
\newblock The classification of real projective structures on compact surfaces.
\newblock {\em Bull. Am. Math. Soc., New Ser.}, 34(2):161--171, 1997.

\bibitem[CGL24]{Cekic-Guillarmou-Lefeuvre-24}
Mihajlo Ceki{\'c}, Colin Guillarmou, and Thibault Lefeuvre.
\newblock Local lens rigidity for manifolds of {Anosov} type.
\newblock {\em Anal. PDE}, 17(8):2737--2795, 2024.

\bibitem[CK26]{Choi-Kim-26}
Inhyeok Choi and Dongryul~M. Kim.
\newblock Classification of horospherical invariant measures in higher rank:
  {The Full Story}, 2026.
\newblock Version 2.

\bibitem[CL24]{Cekic-Lefeuvre-24}
Mihajlo {Ceki{\'c}} and Thibault {Lefeuvre}.
\newblock {Semiclassical analysis on principal bundles}.
\newblock {\em arXiv e-prints}, page arXiv:2405.14846, May 2024.

\bibitem[CLMT26]{Cekic-Lefeuvre-Munoz-26}
Mihajlo Cekić, Thibault Lefeuvre, and Sebastián Muñoz-Thon.
\newblock Decay of correlations on abelian covers of isometric extensions of
  volume-preserving anosov flows, 2026.

\bibitem[CS23]{Chow-Sarkar-23}
Michael Chow and Pratyush Sarkar.
\newblock Local mixing of one-parameter diagonal flows on {Anosov} homogeneous
  spaces.
\newblock {\em Int. Math. Res. Not.}, 2023(18):15834--15895, 2023.

\bibitem[CS24]{Chow-Sarkar-24}
Michael Chow and Pratyush Sarkar.
\newblock Exponential mixing and essential spectral gaps for {Anosov}
  subgroups.
\newblock Preprint, {arXiv}:2408.11274 [math.{DS}] (2024), 2024.

\bibitem[Dan24]{Dang-24}
Nguyen-Thi Dang.
\newblock Topological mixing of positive diagonal flows.
\newblock {\em Isr. J. Math.}, 260(1):1--71, 2024.

\bibitem[DFG15]{Dyatlov-Faure-Guillarmou-15}
Semyon Dyatlov, Fr{\'e}d{\'e}ric Faure, and Colin Guillarmou.
\newblock Power spectrum of the geodesic flow on hyperbolic manifolds.
\newblock {\em Anal. PDE}, 8(4):923--1000, 2015.

\bibitem[DG16]{Dyatlov-Guillarmou-16}
Semyon Dyatlov and Colin Guillarmou.
\newblock Pollicott-ruelle resonances for open systems.
\newblock {\em Ann. Henri Poincar{\'e}}, 17(11):3089--3146, 2016.

\bibitem[DG18]{Dyatlov-Guillarmou-18}
Semyon Dyatlov and Colin Guillarmou.
\newblock Afterword: {Dynamical} zeta functions for axiom {A} flows.
\newblock {\em Bull. Am. Math. Soc., New Ser.}, 55(3):337--342, 2018.

\bibitem[DGM25]{Delarue-Guillarmou-Monclair-2025}
Benjamin Delarue, Colin Guillarmou, and Daniel Monclair.
\newblock Spectra of lorentzian quasi-fuchsian manifolds, 2025.

\bibitem[DGRS20]{Dang-Guillarmou-Riviere-Shen-20}
Nguyen~Viet Dang, Colin Guillarmou, Gabriel Rivi{\`e}re, and Shu Shen.
\newblock The {Fried} conjecture in small dimensions.
\newblock {\em Invent. Math.}, 220(2):525--579, 2020.

\bibitem[DKLZ26]{DKLZ26}
Nguyen-Bac Dang, Michael Kapovich, Mikhail Lyubich, and Shengyuan Zhao.
\newblock Equidistribution of currents under anosov group actions.
\newblock {\em arXiv preprint arXiv:2607.22920}, 2026.

\bibitem[DKV79]{Duistermaat-Kolk-Varadarajan-79}
J.~J. Duistermaat, J.~A.~C. Kolk, and V.~S. Varadarajan.
\newblock Spectra of compact locally symmetric manifolds of negative curvature.
\newblock {\em Invent. Math.}, 52:27--93, 1979.

\bibitem[DMS25a]{Delarue-Monclair-Sanders-24}
Benjamin Delarue, Daniel Monclair, and Andrew Sanders.
\newblock Locally homogeneous {Axiom} {A} flows {I}: projective {Anosov}
  subgroups and exponential mixing.
\newblock {\em Geom. Funct. Anal.}, 35:673--735, 2025.

\bibitem[DMS25b]{Delarue-Monclair-Sanders-25}
Benjamin Delarue, Daniel Monclair, and Andrew Sanders.
\newblock Locally homogeneous {Axiom} {A} flows {II}: geometric structures for
  {Anosov} subgroups.
\newblock Preprint, {arXiv}:2502.20195 [math.{GT}] (2025), 2025.

\bibitem[DO25]{Dey-Oh-25}
Subhadip Dey and Hee Oh.
\newblock Deformations of {Anosov} subgroups: limit cones and growth
  indicators.
\newblock {\em J. Lond. Math. Soc. (2)}, 112(3):e70280, 2025.

\bibitem[Dol98]{Dolgopyat-98-2}
Dmitry Dolgopyat.
\newblock Prevalence of rapid mixing in hyperbolic flows.
\newblock {\em Ergodic Theory Dyn. Syst.}, 18(5):1097--1114, 1998.

\bibitem[DR24]{Dang-Riviere-24}
Nguyen~Viet Dang and Gabriel Rivi{\`e}re.
\newblock Poincar{\'e} series and linking of {Legendrian} knots.
\newblock {\em Duke Math. J.}, 173(1):1--74, 2024.

\bibitem[DZ16]{Dyatlov-Zworski-16}
Semyon Dyatlov and Maciej Zworski.
\newblock Dynamical zeta functions for {Anosov} flows via microlocal analysis.
\newblock {\em Ann. Sci. {\'E}c. Norm. Sup{\'e}r. (4)}, 49(3):543--577, 2016.

\bibitem[DZ17]{Dyatlov-Zworski-17}
Semyon Dyatlov and Maciej Zworski.
\newblock Ruelle zeta function at zero for surfaces.
\newblock {\em Invent. Math.}, 210(1):211--229, 2017.

\bibitem[ELMV11]{Einsiedler-Lindenstrauss-Michel-Venkatesh-11}
Manfred Einsiedler, Elon Lindenstrauss, Philippe Michel, and Akshay Venkatesh.
\newblock Distribution of periodic torus orbits and {Duke}'s theorem for cubic
  fields.
\newblock {\em Ann. of Math. (2)}, 173(2):815--885, 2011.

\bibitem[ELO23]{ELO23}
Samuel Edwards, Minju Lee, and Hee Oh.
\newblock Anosov groups: local mixing, counting and equidistribution.
\newblock {\em Geometry \& Topology}, 27(2):513--573, 2023.

\bibitem[FH19]{Fisher-Hasselblatt-19}
Todd Fisher and Boris Hasselblatt.
\newblock {\em Hyperbolic flows}.
\newblock Zur. Lect. Adv. Math. Berlin: European Mathematical Society (EMS),
  2019.

\bibitem[FMT07]{Field-Melbourne-Torok-07}
Michael Field, Ian Melbourne, and Andrei T{\"o}r{\"o}k.
\newblock Stability of mixing and rapid mixing for hyperbolic flows.
\newblock {\em Ann. Math. (2)}, 166(1):269--291, 2007.

\bibitem[FRS08]{Faure-Roy-Sjostrand-08}
Fr{\'e}d{\'e}ric Faure, Nicolas Roy, and Johannes Sj{\"o}strand.
\newblock Semi-classical approach for {Anosov} diffeomorphisms and {Ruelle}
  resonances.
\newblock {\em Open Math. J.}, 1:35--81, 2008.

\bibitem[FS11]{Faure-Sjostrand-11}
Fr{\'e}d{\'e}ric Faure and Johannes Sj{\"o}strand.
\newblock Upper bound on the density of {Ruelle} resonances for {Anosov} flows.
\newblock {\em Commun. Math. Phys.}, 308(2):325--364, 2011.

\bibitem[FT13]{Faure-Tsuji-13}
Fr{\'e}d{\'e}ric Faure and Masato Tsujii.
\newblock Band structure of the {Ruelle} spectrum of contact {Anosov} flows.
\newblock {\em C. R., Math., Acad. Sci. Paris}, 351(9-10):385--391, 2013.

\bibitem[Gan68]{Gangolli-68}
Ramesh Gangolli.
\newblock Asymptotic behaviour of spectra of compact quotients of certain
  symmetric spaces.
\newblock {\em Acta Math.}, 121:151--192, 1968.

\bibitem[GBGW24]{Guedes-Bonthonneau-Guillarmou-Weich-24}
Yannick Guedes~Bonthonneau, Colin Guillarmou, and Tobias Weich.
\newblock {SRB} measures for {Anosov} actions.
\newblock {\em J. Differ. Geom.}, 128(3):959--1026, 2024.

\bibitem[Gel63]{Gelfand-62}
I.~M. Gel'fand.
\newblock Automorphic functions and the theory of representations.
\newblock In {\em Proceedings of the International Congress of Mathematicians,
  Stockholm 1962}, pages 74--85, Djursholm, 1963. Institut Mittag-Leffler.

\bibitem[GGKW17]{Gueritaud-Guichard-Kassel-Wienhard-17}
Fran{\c{c}}ois Gu{\'e}ritaud, Olivier Guichard, Fanny Kassel, and Anna
  Wienhard.
\newblock Anosov representations and proper actions.
\newblock {\em Geom. Topol.}, 21(1):485--584, 2017.

\bibitem[GHW18]{Guillarmou-Hilgert-Weich-18}
Colin Guillarmou, Joachim Hilgert, and Tobias Weich.
\newblock Classical and quantum resonances for hyperbolic surfaces.
\newblock {\em Math. Ann.}, 370(3-4):1231--1275, 2018.

\bibitem[GHW21]{Guillarmou-Hilgert-Weich-21}
Colin Guillarmou, Joachim Hilgert, and Tobias Weich.
\newblock High frequency limits for invariant {Ruelle} densities.
\newblock {\em Ann. Henri Lebesgue}, 4:81--119, 2021.

\bibitem[GL06]{Gouezel-Liverani-06}
S{\'e}bastien Gou{\"e}zel and Carlangelo Liverani.
\newblock Banach spaces adapted to {Anosov} systems.
\newblock {\em Ergodic Theory Dyn. Syst.}, 26(1):189--217, 2006.

\bibitem[GLP13a]{Giulietti-Liverani-Pollicott-13}
Paolo Giulietti, Carlangelo Liverani, and Mark Pollicott.
\newblock Anosov flows and dynamical zeta functions.
\newblock {\em Ann. Math. (2)}, 178(2):687--773, 2013.

\bibitem[GLP13b]{GLP-13}
Paolo Giulietti, Carlangelo Liverani, and Mark Pollicott.
\newblock Anosov flows and dynamical zeta functions.
\newblock {\em Ann. Math. (2)}, 178(2):687--773, 2013.

\bibitem[GM12]{Guillarmou-Mazzeo-12}
C.~Guillarmou and R.~Mazzeo.
\newblock Resolvent of the {Laplacian} on geometrically finite hyperbolic
  manifolds.
\newblock {\em Invent. Math.}, 187:99--144, 2012.

\bibitem[Gol88]{Goldman-88}
William~M. Goldman.
\newblock Topological components of spaces of representations.
\newblock {\em Invent. Math.}, 93(3):557--607, 1988.

\bibitem[Gra58]{Grauert-58}
Hans Grauert.
\newblock On {Levi}'s problem and the imbedding of real-analytic manifolds.
\newblock {\em Ann. Math. (2)}, 68:460--472, 1958.

\bibitem[GW12]{Guichard-Wienhard-12}
Olivier Guichard and Anna Wienhard.
\newblock Anosov representations: domains of discontinuity and applications.
\newblock {\em Invent. Math.}, 190(2):357--438, 2012.

\bibitem[GZ97]{Guillope-Zworski-97}
Laurent Guillop{\'e} and Maciej Zworski.
\newblock Scattering asymptotics for {Riemann} surfaces.
\newblock {\em Ann. of Math. (2)}, 145(3):597--660, 1997.

\bibitem[Hit92]{Hitchin-92}
Nigel~J. Hitchin.
\newblock Lie groups and {Teichm{\"u}ller} space.
\newblock {\em Topology}, 31(3):449--473, 1992.

\bibitem[HM23]{Hide-Magee-23}
Will Hide and Michael Magee.
\newblock Near optimal spectral gaps for hyperbolic surfaces.
\newblock {\em Ann. of Math. (2)}, 198(2):791--824, 2023.

\bibitem[HMT25]{Hide-Macera-Thomas-25}
Will Hide, Davide Macera, and Joe Thomas.
\newblock Spectral gap with polynomial rate for {Weil--Petersson} random
  surfaces, 2025.

\bibitem[H{\"o}r90]{Hormander-90}
Lars H{\"o}rmander.
\newblock {\em An introduction to complex analysis in several variables.},
  volume~7 of {\em North-Holland Math. Libr.}
\newblock Amsterdam etc.: North-Holland, 3rd revised ed. edition, 1990.

\bibitem[Hub56]{Huber-56}
H.~Huber.
\newblock {\"U}ber eine neue {Klasse} automorpher {Funktionen} und ein
  {Gitterpunktproblem} in der hyperbolischen {Ebene}. {I}.
\newblock {\em Comment. Math. Helv.}, 30:20--62, 1956.

\bibitem[Hub59]{Huber-59}
H.~Huber.
\newblock Zur analytischen {Theorie} hyperbolischer {Raumformen} und
  {Bewegungsgruppen}.
\newblock {\em Math. Ann.}, 138:1--26, 1959.

\bibitem[Hum25a]{Humbert-24}
Tristan Humbert.
\newblock First {Ruelle} resonance for an {Anosov} flow with smooth potential.
\newblock {\em Ergodic Theory Dyn. Syst.}, 45(8):2439--2475, 2025.

\bibitem[Hum25b]{Humbert-25}
Tristan Humbert.
\newblock Measure of maximal entropy for minimal {Anosov} actions.
\newblock Preprint, {arXiv}:2503.03457 [math.{DS}] (2025), 2025.

\bibitem[HWW23]{Hilgert-Weich-Wolf-23}
Joachim Hilgert, Tobias Weich, and Lasse~L. Wolf.
\newblock Higher rank quantum-classical correspondence.
\newblock {\em Anal. PDE}, 16(10):2241--2265, 2023.

\bibitem[J{\'e}z21]{Jezequel-21}
Malo J{\'e}z{\'e}quel.
\newblock Global trace formula for ultra-differentiable {Anosov} flows.
\newblock {\em Commun. Math. Phys.}, 385(3):1771--1834, 2021.

\bibitem[JT25]{Jin-Tao-25}
Long Jin and Zhongkai Tao.
\newblock Counting {Pollicott}-{Ruelle} resonances for axiom a flows.
\newblock {\em Commun. Math. Phys.}, 406(2):43, 2025.
\newblock Id/No 26.

\bibitem[KB02]{Kapovich-Benakli-02}
Ilya Kapovich and Nadia Benakli.
\newblock Boundaries of hyperbolic groups.
\newblock In {\em Combinatorial and geometric group theory}, volume 296 of {\em
  Contemporary Mathematics}, pages 39--93. American Mathematical Society,
  Providence, RI, 2002.

\bibitem[Kim25]{Kim-25}
Dongryul~M. Kim.
\newblock Conformal measure rigidity and ergodicity of horospherical
  foliations, 2025.
\newblock Version 3.

\bibitem[KLP17]{Kapovich-Leeb-Porti-17}
Michael Kapovich, Bernhard Leeb, and Joan Porti.
\newblock Anosov subgroups: dynamical and geometric characterizations.
\newblock {\em Eur. J. Math.}, 3(4):808--898, 2017.

\bibitem[KLP18]{Kapovich-Leeb-Porti-18}
Michael Kapovich, Bernhard Leeb, and Joan Porti.
\newblock A {Morse} lemma for quasigeodesics in symmetric spaces and
  {Euclidean} buildings.
\newblock {\em Geom. Topol.}, 22(7):3827--3923, 2018.

\bibitem[Kna96]{Knapp-96}
Anthony~W. Knapp.
\newblock {\em Lie groups beyond an introduction}, volume 140 of {\em Prog.
  Math.}
\newblock Boston: Birkh{\"a}user, 1996.

\bibitem[KOP26]{KOP26}
Dongryul Kim, Hee Oh, and Wenyu Pan.
\newblock Higher-rank local mixing for cusped positive representations.
\newblock {\em in preparation}, 2026.

\bibitem[KS86]{Katsuda-Sunada-86}
Atsushi Katsuda and Toshikazu Sunada.
\newblock Homology of closed geodesics in certain {Riemannian} manifolds.
\newblock {\em Proc. Am. Math. Soc.}, 96:657--660, 1986.

\bibitem[KS94]{Katok-Spatzier-94}
Anatole Katok and Ralph~J. Spatzier.
\newblock First cohomology of {Anosov} actions of higher rank abelian groups
  and applications to rigidity.
\newblock {\em Publications Math\'ematiques de l'IH\'ES}, 79:131--156, 1994.

\bibitem[Lab06]{Labourie-06}
Fran{\c{c}}ois Labourie.
\newblock Anosov flows, surface groups and curves in projective space.
\newblock {\em Invent. Math.}, 165(1):51--114, 2006.

\bibitem[Lal87]{Lalley-87}
S.~P. Lalley.
\newblock Distribution of periodic orbits of symbolic and axiom {A} flows.
\newblock {\em Adv. Appl. Math.}, 8:154--193, 1987.

\bibitem[Lal89a]{Lalley-89b}
Steven~P. Lalley.
\newblock Closed geodesics in homology classes on surfaces of variable negative
  curvature.
\newblock {\em Duke Math. J.}, 58(3):795--821, 1989.

\bibitem[Lal89b]{Lalley-89a}
Steven~P. Lalley.
\newblock Renewal theorems in symbolic dynamics, with applications to geodesic
  flows, noneuclidean tessellations and their fractal limits.
\newblock {\em Acta Math.}, 163(1-2):1--55, 1989.

\bibitem[Led95]{Ledrappier-95}
Fran{\c{c}}ois Ledrappier.
\newblock Structure at the border of manifolds with negative curvature.
\newblock In {\em S\'eminaire de th\'eorie spectrale et g\'eom\'etrie. Ann\'ee
  1994-1995}, pages 97--122. St. Martin d'H{\`e}res: Univ. de Grenoble I,
  Institut Fourier, 1995.

\bibitem[Lef25]{Lefeuvre-book}
Thibault Lefeuvre.
\newblock {\em Microlocal analysis in hyperbolic dynamics and geometry. {With}
  a contributed chapter by {Yann} {Chaubet}}, volume~32 of {\em Cours Sp{\'e}c.
  (Paris)}.
\newblock Paris: Soci{\'e}t{\'e} Math{\'e}matique de France (SMF), 2025.

\bibitem[Lin04]{Link-04}
Gabriele Link.
\newblock Hausdorff dimension of limit sets of discrete subgroups of higher
  rank {Lie} groups.
\newblock {\em Geom. Funct. Anal.}, 14(2):400--432, 2004.

\bibitem[Lin06]{Link-06}
Gabriele Link.
\newblock Ergodicity of generalised {Patterson--Sullivan} measures in higher
  rank symmetric spaces.
\newblock {\em Math. Z.}, 254(3):611--625, 2006.

\bibitem[Liv04]{Liverani-04}
Carlangelo Liverani.
\newblock On contact {Anosov} flows.
\newblock {\em Ann. Math. (2)}, 159(3):1275--1312, 2004.

\bibitem[LO23]{Lee-Oh-20}
Minju Lee and Hee Oh.
\newblock {Invariant Measures for Horospherical Actions and Anosov Groups}.
\newblock {\em International Mathematics Research Notices},
  2023(19):16226--16295, 10 2023.

\bibitem[LW24]{Lipnowski-Wright-24}
Michael Lipnowski and Alex Wright.
\newblock Towards optimal spectral gaps in large genus.
\newblock {\em Ann. Probab.}, 52(2):545--575, 2024.

\bibitem[LWW26]{Lutsko-Weich-Wolf-26}
Christopher Lutsko, Tobias Weich, and Lasse~L. Wolf.
\newblock Polyhedral bounds on the joint spectrum and temperedness of locally
  symmetric spaces.
\newblock {\em Duke Math. J.}, 2026.
\newblock To appear.

\bibitem[Mar69]{Margulis-69}
Grigory~A. Margulis.
\newblock Applications of ergodic theory to the investigation of manifolds of
  negative curvature.
\newblock {\em Funct. Anal. Appl.}, 3:335--336, 1969.

\bibitem[MM87]{Mazzeo-Melrose-87}
Rafe~R. Mazzeo and Richard~B. Melrose.
\newblock Meromorphic extension of the resolvent on complete spaces with
  asymptotically constant negative curvature.
\newblock {\em J. Funct. Anal.}, 75(2):260--310, 1987.

\bibitem[MS75]{Melin-Sjostrand-75}
Anders Melin and Johannes Sj{\"o}strand.
\newblock Fourier integral operators with complex-valued phase functions.
\newblock Fourier {Integr}. {Oper}. part. differ. {Equat}., {Colloq}. int.
  {Nice} 1974, {Lect}. {Notes} {Math}. 459, 120-223 (1975)., 1975.

\bibitem[Oh26]{Oh-ICM-26}
Hee Oh.
\newblock Dynamics and rigidity through the lens of circles.
\newblock In {\em International Congress of Mathematicians 2026}, pages
  240--271. SIAM, 2026.

\bibitem[OP19]{Oh-Pan-19}
Hee Oh and Wenyu Pan.
\newblock Local mixing and invariant measures for horospherical subgroups on
  abelian covers.
\newblock {\em Int. Math. Res. Not.}, 2019(19):6036--6088, 2019.

\bibitem[Pat76]{Patterson-76}
Samuel~J. Patterson.
\newblock The limit set of a {Fuchsian} group.
\newblock {\em Acta Math.}, 136:241--273, 1976.

\bibitem[Pol85]{Pollicott-85}
Mark Pollicott.
\newblock On the rate of mixing of {A}xiom {A} flows.
\newblock {\em Inventiones Mathematicae}, 81(3):413--426, 1985.

\bibitem[Pol91]{Pollicott-91}
Mark Pollicott.
\newblock Homology and closed geodesics in a compact negatively curved surface.
\newblock {\em Am. J. Math.}, 113(3):379--385, 1991.

\bibitem[PP90]{Parry-Pollicott-90}
William Parry and Mark Pollicott.
\newblock {\em Zeta functions and the periodic orbit structure of hyperbolic
  dynamics}, volume 187-188 of {\em Ast{\'e}risque}.
\newblock Paris: Soci{\'e}t{\'e} Math{\'e}matique de France, 1990.

\bibitem[PS87]{Phillips-Sarnak-87}
Ralph Phillips and Peter Sarnak.
\newblock Geodesics in homology classes.
\newblock {\em Duke Math. J.}, 55:287--297, 1987.

\bibitem[PS01]{Pollicott-Sharp-01}
Mark Pollicott and Richard Sharp.
\newblock Asymptotic expansions for closed orbits in homology classes.
\newblock {\em Geom. Dedicata}, 87(1-3):123--160, 2001.

\bibitem[PS17]{Potrie-Sambarino-17}
Rafael Potrie and Andr{\'e}s Sambarino.
\newblock Eigenvalues and entropy of a {Hitchin} representation.
\newblock {\em Invent. Math.}, 209(3):885--925, 2017.

\bibitem[PS24]{Pollicott-Sharp-24}
Mark Pollicott and Richard Sharp.
\newblock Zeta functions in higher {Teichm{\"u}ller} theory.
\newblock {\em Math. Z.}, 306(3):20, 2024.
\newblock Id/No 37.

\bibitem[PSMdSS12]{Patrao-et-al-2012}
Mauro Patr{\~a}o, Luiz A.~B. San~Martin, La{\'e}rcio~J. dos Santos, and Lucas
  Seco.
\newblock Orientability of vector bundles over real flag manifolds.
\newblock {\em Topology Appl.}, 159(10-11):2774--2786, 2012.

\bibitem[Qui01]{Quint-2001}
Jean-Fran{\c{c}}ois Quint.
\newblock Exponential divergence of discrete groups in higher rank.
\newblock {\em C. R. Acad. Sci., Paris, S{\'e}r. I, Math.}, 333(2):87--90,
  2001.

\bibitem[Qui02]{Quint-2002}
J.-F. Quint.
\newblock Patterson-{Sullivan} measures in higher rank.
\newblock {\em Geom. Funct. Anal.}, 12(4):776--809, 2002.

\bibitem[Rob03]{Roblin-03}
Thomas Roblin.
\newblock {\em Ergodicit{\'e} et {\'e}quidistribution en courbure
  n{\'e}gative}.
\newblock Number~95 in M{\'e}m. Soc. Math. Fr. (N.S.). Soci{\'e}t{\'e}
  Math{\'e}matique de France, Paris, 2003.

\bibitem[Rue78]{Ruelle-78}
David Ruelle.
\newblock {\em Thermodynamic formalism. {The} mathematical structures of
  classical equilibrium. {Statistical} mechanics. {With} a foreword by
  {Giovanni} {Gallavotti}}, volume~5 of {\em Encycl. Math. Appl.}
\newblock Cambridge University Press, Cambridge, 1978.

\bibitem[Rue86]{Ruelle-86}
David Ruelle.
\newblock Resonances of chaotic dynamical systems.
\newblock {\em Physical Review Letters}, 56(5):405--407, 1986.

\bibitem[Rue87]{Ruelle-87}
David Ruelle.
\newblock Resonances for {A}xiom {A} flows.
\newblock {\em Journal of Differential Geometry}, 25(1):99--116, 1987.

\bibitem[Sam14]{Sambarino-14}
A.~Sambarino.
\newblock Hyperconvex representations and exponential growth.
\newblock {\em Ergodic Theory Dyn. Syst.}, 34(3):986--1010, 2014.

\bibitem[Sam15]{Sambarino-15}
Andr{\'e}s Sambarino.
\newblock The orbital counting problem for hyperconvex representations.
\newblock {\em Ann. Inst. Fourier}, 65(4):1755--1797, 2015.

\bibitem[Sam24]{Sambarino-24}
Andr{\'e}s Sambarino.
\newblock A report on an ergodic dichotomy.
\newblock {\em Ergodic Theory Dyn. Syst.}, 44(1):236--289, 2024.

\bibitem[Sel56]{Selberg-56}
Atle Selberg.
\newblock Harmonic analysis and discontinuous groups in weakly symmetric
  {R}iemannian spaces with applications to {D}irichlet series.
\newblock {\em J. Indian Math. Soc. (N.S.)}, 20:47--87, 1956.

\bibitem[Sha92]{Sharp-92}
Richard Sharp.
\newblock Prime orbit theorems with multi-dimensional constraints for {Axiom}
  {A} flows.
\newblock {\em Monatsh. Math.}, 114(3-4):261--304, 1992.

\bibitem[Sha93]{Sharp-93}
Richard Sharp.
\newblock Closed orbits in homology classes for {Anosov} flows.
\newblock {\em Ergodic Theory Dyn. Syst.}, 13(2):387--408, 1993.

\bibitem[SP16]{Petkov-Stoyanov-16}
Luchezar Stoyanov and Vesselin Petkov.
\newblock Ruelle transfer operators with two complex parameters and
  applications.
\newblock {\em Discrete Contin. Dyn. Syst.}, 36(11):6413--6451, 2016.

\bibitem[Sul79]{Sullivan-79}
Dennis Sullivan.
\newblock The density at infinity of a discrete group of hyperbolic motions.
\newblock {\em Publ. Math. Inst. Hautes {\'E}tudes Sci.}, 50:171--202, 1979.

\bibitem[Thi09]{Thirion-09}
Xavier Thirion.
\newblock Mixing properties of {Weyl} chamber flows of ping-pong groups.
\newblock {\em Bull. Soc. Math. Fr.}, 137(3):387--421, 2009.

\bibitem[Wad96]{Waddington-96}
Simon Waddington.
\newblock Large deviation asymptotics for {Anosov} flows.
\newblock {\em Ann. Inst. Henri Poincar{\'e}, Anal. Non Lin{\'e}aire},
  13(4):445--484, 1996.

\bibitem[WW23]{Weich-Wolf-23}
Tobias Weich and Lasse~L. Wolf.
\newblock Absence of principal eigenvalues for higher rank locally symmetric
  spaces.
\newblock {\em Commun. Math. Phys.}, 403(3):1275--1295, 2023.

\bibitem[WX22]{Wu-Xue-22}
Yunhui Wu and Yuhao Xue.
\newblock Random hyperbolic surfaces of large genus have first eigenvalues
  greater than $\frac{3}{16}-\varepsilon$.
\newblock {\em Geom. Funct. Anal.}, 32(2):340--410, 2022.

\end{thebibliography}

\end{document}